\documentclass[reqno,11pt]{amsart}
\usepackage{mathtools}
\usepackage{amsmath}
\usepackage{amssymb}
\usepackage{amsfonts,verbatim,enumitem,leftidx,dsfont}
\usepackage{yhmath}
\usepackage{graphicx}
\usepackage{mathrsfs}
\usepackage{xcolor}
\usepackage{tikz-cd}
\usepackage{tikz}
\usetikzlibrary{patterns}
\usepackage{amsthm}
\usepackage{booktabs,longtable,array}
\usepackage{microtype}
\usepackage{lmodern}
\numberwithin{equation}{section}

\usepackage{amsxtra}
\usepackage[T1]{fontenc} 
\usepackage[utf8]{inputenc}
\usepackage{csquotes}
\usepackage[english]{babel}
\usepackage[linktocpage]{hyperref}

\usetikzlibrary{arrows,matrix,positioning}
\usepackage{bm}

\usepackage{cleveref}
\hypersetup{hidelinks}
\usetikzlibrary{decorations.pathreplacing}

\usepackage{appendix}
\DeclareMathAlphabet{\mathpzc}{OT1}{pzc}{m}{it}

\usepackage{thmtools}
\usepackage{thm-restate}

\usepackage{caption}

\newtheorem{theorem}{Theorem}[section]

\newtheorem{lemma}[theorem]{Lemma}
\newtheorem{corollary}[theorem]{Corollary}

\newtheorem{proposition}[theorem]{Proposition}

\theoremstyle{definition}
\newtheorem{definition}[theorem]{Definition}
\newtheorem{example}[theorem]{Example}

\theoremstyle{remark}
\newtheorem{remark}[theorem]{Remark}



\newcommand{\op}{\operatorname}

\newcommand{\be}{\begin{equation}}
\newcommand{\ee}{\end{equation}}

\newcommand{\rank}{\op{rank}}

\newcommand{\R}{\mathbb R}

\newcommand{\Z}{\mathbb Z}

\newcommand{\La}{\Lambda}

\newcommand{\br}{\mathbb R}
\newcommand{\SO}{\op{SO}}

\newcommand{\vol}{\op{vol}}
\newcommand{\q}{\mathbb Q}

\let\hacek=\v 
\renewcommand{\v}{\mathsf v}
\newcommand{\e}{\varepsilon}
\newcommand{\Leb}{\op{Leb}}

\newcommand{\M}{\op{M}}
\renewcommand{\S}{\mathbb S}

\newcommand{\bp}{B_0^+}

\newcommand{\id}{\op{id}}

\newcommand{\GL}{\op{GL}}

\let\cedilla=\c 

\newcommand{\SL}{\op{SL}}

\newcommand{\z}{\mathbb Z}
\newcommand{\diag}{\op{diag}}

\newcommand{\covol}{\op{covol}}

\renewcommand{\r}{\mathsf r}

\newcommand{\Gr}{\op{Gr}}

\newcolumntype{L}[1]{>{\raggedright\arraybackslash}p{#1}}

\newcommand{\eqlabel}[2]
{
\begin{equation}
{#2}\label{#1}
\end{equation}
}

\allowdisplaybreaks[2]
\begin{document}

\title[Determinant values on lattices]{Determinant values on lattices}
\begin{abstract}
We study the distribution of determinant values on lattices in $\M_n(\mathbb R)$ for $n\ge 2$. Let $\Lambda<\M_n(\mathbb R)$ be a lattice whose elements have algebraic entries. We prove that if $\det(\Lambda)$ is not contained in a scalar multiple of $\mathbb Z$, then for every $a<b$, as $T\to\infty$,
$$ \#\{v\in\Lambda:\|v\|<T,\ a<\det v<b,\ \det v\ne0\}
\sim
\frac{C_n}{\covol(\Lambda)}
(b-a)T^{n(n-1)}
$$
 where $\|\cdot\|$ is the Frobenius norm and $C_n>0$ depends only on $n$.

Under an additional hypothesis on the isotropic subspaces associated with $\Lambda$, which is automatic for $n=2,3$ and is satisfied when  $n\ge4$, by every lattice spanned by scalar multiples of the elementary matrices, we also obtain an asymptotic formula for the singular lattice points. Both conclusions extend to the broader class of Diophantine lattices under the corresponding hypotheses.

For $n=2$, our theorem recovers the Eskin--Margulis--Mozes theorem on the quantitative Oppenheim problem for quadratic forms of signature $(2,2)$; more generally, our results may be viewed as higher-degree analogues of that theorem.
\end{abstract}

\author{Wooyeon Kim}
\address{Korea Institute for
Advanced Study, Seoul, Republic of Korea}
\email{wooyeonkim@kias.re.kr}

\author{Hee Oh}
\address{Department of Mathematics, Yale University, New Haven, CT}
\email{hee.oh@yale.edu}

\maketitle


\section{Introduction}
The distribution of values of a polynomial $F(x_1,\ldots,x_N)$ on lattice
points is a classical problem in number theory. For indefinite quadratic
forms, this circle of questions has been extensively studied as the quantitative Oppenheim problem; see
\cite{Margulis:1987,Margulis:1989,dani-margulis:1993,eskin-margulis-mozes:1998,eskin-margulis-mozes:2005,Kim}.  Although higher-degree forms in many variables are often accessible by the
Hardy--Littlewood circle method, the determinant lies far outside its usual
range; see \Cref{circle}.

In this paper we study the distribution of determinant values on lattices in
$\M_n(\mathbb R)$. A
lattice in $\M_n(\mathbb R)$ means a discrete subgroup of full rank,
equivalently the $\mathbb Z$-span of $n^2$ linearly independent
matrices. Given such a lattice $\Lambda$, our aim is to understand how
$$
\det(\Lambda):=\{\det v:\ v\in\Lambda\}
$$ is distributed in $\mathbb R$.

Fix a norm
$\|\cdot\|$ on $\M_n(\mathbb R)$ that is invariant under left and right
multiplication by $\SO(n)$; the Frobenius norm is the basic example. For a
lattice $\Lambda<\M_n(\mathbb R)$, real numbers $a<b$, and $T>0$, set
$$
N_\Lambda(a,b;T)
:=
\#\{v\in\Lambda:\ \|v\|<T,\ a<\det v<b\},
$$
and
$$
N_\Lambda^\times(a,b;T)
:=
\#\{v\in\Lambda:\ \|v\|<T,\ a<\det v<b,\ \det v\ne0\}.
$$

This distinction is essential when $0\in(a,b)$, since singular lattice
points may contribute on the same scale as the nonsingular main term.

\subsection{The nonsingular counting theorem}

Our first main result gives a precise asymptotic for
$N_\Lambda^\times(a,b;T)$.

\begin{theorem}\label{m1} 
Let $\Lambda<\M_n(\mathbb R)$ be a lattice whose elements have
algebraic entries, and assume that
\begin{equation}\label{irr}
\det(\Lambda)\not\subset\lambda\mathbb Z
\qquad
\text{for every }\lambda\in\mathbb R^\times.
\end{equation}
Then there exists a constant $C_{\|\cdot\|}>0$, depending only on the norm $\|\cdot\|$, such that, for every $a<b$,
\begin{equation}\label{eq:intro-regular-asymptotic}
N_\Lambda^\times(a,b;T)
\sim
\frac{C_{\|\cdot\|}}{\covol(\Lambda)}
(b-a)T^{n(n-1)}
\qquad(T\to\infty).
\end{equation}
\end{theorem}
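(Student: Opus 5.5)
We follow the Eskin--Margulis--Mozes strategy, replacing $\SO(2,2)$ by the symmetry group of the determinant. Let $G=\SL_n(\mathbb R)\times\SL_n(\mathbb R)$ act on $V=\M_n(\mathbb R)$ by $(g,h)\cdot v=gvh^{-1}$. This action preserves $\det$, and $G$ maps onto a finite-index subgroup of $\SL(V)\cap\op{Stab}(\det)$. Write $\Lambda=g_0\M_n(\mathbb Z)$-type, i.e.\ $\Lambda=\gamma\cdot\mathbb Z^{n^2}$ with $\gamma\in\GL(V)$, and consider the point $x_\Lambda=\gamma\SL_{n^2}(\mathbb Z)$ of $X=\SL_{n^2}(\mathbb R)/\SL_{n^2}(\mathbb Z)$ (after normalizing the covolume). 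Let $K=\SO(n)\times\SO(n)$ and let $A=\{(a_t,a_t^{-1})\}$ with $a_t=\diag(e^{(n-1)t},e^{-t},\dots,e^{-t})$. The point of this choice is that $a_t$ expands the direction of matrices with a large top-left block, so the norm ball of radius $T=e^{(n-1)t}$ is swept out by $K a_t K$-translates of a fixed bounded region.

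\textbf{Step 1 (reduction to an averaged Siegel transform).} For $f\in C_c(V)$ define the Siegel transform $\hat f(x)=\sum_{v\in x\setminus\{0\}}f(v)$. Choose $f$ to approximate the indicator of a thin shell $\{v:\ a<\det v<b,\ \|v\|\in[1,1+\e]\}$, restricted away from $\{\det=0\}$. Polar decomposition $v=k_1 a k_2$ (Cartan decomposition of matrices of given determinant) then gives
$$N^\times_\Lambda(a,b;T)\approx c\,T^{n(n-1)}\int_{K}\hat f\big((a_t k,1)\cdot x_\Lambda\big)\,dk,$$
up to errors controlled by $\e$, exactly as in EMM. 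The exponent $n(n-1)$ arises as $\dim\{\det=c\}-\deg\det=(n^2-1)-(n-1)$, i.e.\ as the volume growth of the level sets $\{\det=c\}\cap B_T\asymp T^{n^2-n}$. Integrating over $(a,b)$ gives the factor $b-a$, and the constant $C_{\|\cdot\|}$ comes from the Siegel mean value formula $\int_X\hat f=\int_V f$.

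\textbf{Step 2 (equidistribution of expanding translates).} By Ratner's theorem, in the form of Dani--Margulis and Mozes--Shah for expanding $K$-translates, the measures $(a_tk,1)_*dk$ on the orbit of $x_\Lambda$ equidistribute to the Haar measure on the closure $\overline{Gx_\Lambda}$. Here the algebraicity hypothesis together with \eqref{irr} enters. The closure is $H x_\Lambda$ for some closed subgroup $H\supseteq G$ of $\SL(V)$. The only intermediate groups between $G$ and $\SL(V)$ preserve $\det$ up to a character. If $Gx_\Lambda$ were closed, then $\Lambda$ would be commensurable with a $\q$-structure on which $\det$ is a rational form up to scaling, which forces $\det(\Lambda)\subset\lambda\mathbb Z$. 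Hence $\overline{Gx_\Lambda}=X$. For bounded continuous $\hat f$ this already yields the lower bound, with the constant given by Siegel's formula.

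\textbf{Step 3 (upper bound, the main obstacle).} The function $\hat f$ is unbounded, so we need uniform integrability, namely a bound
$$\sup_t\int_K\alpha\big((a_tk,1)x_\Lambda\big)^{1+\delta}\,dk<\infty,$$
where $\alpha$ is the EMM height function built from covolumes of primitive subgroups. For $\SO(2,2)$ this is false in general, because of isotropic subspaces; the same danger appears here from subspaces on which $\det$ vanishes, such as matrices of rank $\le r$ with a fixed kernel or image. My plan is to construct Margulis-type functions $\alpha_i$ on the lattices of rank $i$ in $V$ and to prove an averaged contraction inequality
$$\int_K\alpha_i\big((a_tk,1)\Delta\big)\,dk\le c\,e^{-\eta t}\alpha_i(\Delta)+C\max_{j}\sqrt{\alpha_{i+j}\alpha_{i-j}}(\Delta)$$
for powers slightly above $1$. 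This in turn requires representation-theoretic bounds on $\int_K\|(a_tk)\cdot w\|^{-s}$ for decomposable vectors $w\in\wedge^iV$, and the exponent $s$ is allowed to exceed $1$ except along the $\det$-isotropic subspaces. Since the counting here excludes singular points, contributions from those subspaces contain only $\det=0$ vectors and can be removed from the count. What remains is to show that near-isotropic but non-isotropic subspaces do not contribute. For this I would use algebraicity via a Diophantine (Liouville-type) lower bound on how close lattice subspaces can be to the isotropic variety, replacing EMM's irrationality input. Combining Steps 1–3 with $\e\to0$ gives the asymptotic \eqref{eq:intro-regular-asymptotic}.
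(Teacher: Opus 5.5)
Your outline has the right architecture: density of $H\Lambda$ via Ratner/Shah when $\Lambda$ is not determinant-rational, a Siegel transform with the isotropic lattice points removed, a $(1+\delta)$-moment bound for a Margulis-type height, and a Liouville-type Diophantine input coming from algebraicity. Two steps, however, fail as stated.

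First, Step~1 is not ``exactly as in EMM'' once $n\ge3$. For $\SO(p,q)$ the $K$-orbit of $v$ is determined by $(\|v\|,Q(v))$. Here, matrices with given norm and determinant form an $(n-2)$-parameter family of $K$-orbits. The correct relation is $\int_K f(a_tk\cdot v)\,dk\approx c\,T^{-n(n-1)}J_f\bigl(T^{-1}b^0(v),\det v\bigr)$ with $T=e^{2t}$. (The $(n-1)\times(n-1)$ block is rescaled by $e^{2t}$, so your $T=e^{(n-1)t}$ is wrong for $n\ne3$.) Here $J_f$ is a fiber integral over the $(n-1)$-dimensional space $B_0^+$ of the top $n-1$ singular values. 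A thin-shell $f$ therefore produces a non-constant weighted count, not $N^\times_\Lambda(a,b;T)$. To get the actual count you must prescribe $J_f$; every $\mathsf M$-invariant $h\in C_c(B_0^+\times\mathbb R)$ is some $J_f$. But such $J_f$ are compactly supported in $B_0^+\times\mathbb R$, whereas the norm ball corresponds to $\{\mathsf r:\|\operatorname{diag}(\mathsf r,0)\|<1\}$, which reaches $\mathsf r_{n-1}\to0$. A dyadic decomposition in $\|v\|$ does not remove this. You therefore also need a uniform lattice-point bound $\ll e^{-R/2}T^{n(n-1)}+T^{n(n-1)-n/2}$ for non-isotropic $v$ with $\|v\|<T$, $|\det v|\le D$, and $\kappa_{n-1}(v)<\log T-R$. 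This is not a volume statement. The paper proves it by decomposing into singular-value shells and averaging at a shell-dependent time $\tau/2$. It uses a Lipschitz principle for boxes that are long in the $E_{nn}$-direction together with the height moment bound, and treats deep shells separately.

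Second, in Step~3 it is false that $\|\cdot\|^{-s}$ with $s>1$ contracts away from the isotropic subspaces. On the exceptional summands $\mathcal M_{k,1},\mathcal M_{k,2}$ the critical exponent is exactly $1$; for $s>1$ there is only bounded expansion. So a Pl\"ucker vector whose component off these summands is small behaves like an exceptional vector for a long time, and no uniform contraction constant exists. A Liouville bound for $\Lambda$ at time $0$ does not rescue this: along the flow $a_tu$ can shrink $w-\pi_{k,m}(w)$, so intermediate lattices can have rational Pl\"ucker vectors much closer to $\mathcal M_{k,m}$ than $\Lambda$ does. The paper needs three further ingredients here:
\begin{itemize}
\item a modified local height $\|v\|^{-1+4\theta}\|v-\pi_{k,1}v\|^{-2\theta}\|v-\pi_{k,2}v\|^{-2\theta}$ in degrees $kn$, which does contract beyond exponent $1$;
\item the Benoist--Quint Mother Inequality, so that this penalty survives the intersection--sum step;
\item an avoidance estimate showing that the flow rarely brings a non-isotropic rational Pl\"ucker vector near an exceptional summand. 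This uses quantitative nondivergence for a selected set of primitive vectors, together with a Diophantine condition that also covers wedges of $\dim\mathcal M_{k,m}$ Pl\"ucker vectors.
\end{itemize}
The resulting Margulis inequality has a logarithmic loss, which must then be absorbed by a tailored iteration. Even on the nonexceptional summands, getting an exponent above $1$ for $n\ge3$ requires sublevel estimates for higher-degree horospherical polynomials; ``representation-theoretic bounds'' does not supply these. Finally, removing only isotropic points leaves non-isotropic singular matrices in your count when $0\in(a,b)$. These are $o(T^{n(n-1)})$ by squeezing with $(-\varepsilon,\varepsilon)$, but that step should be stated.
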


Let $\vol$ denote Lebesgue measure on $\M_n(\mathbb R)$. All covolumes are computed with respect to $\vol$. The constant
$C_{\|\cdot\|}$ is characterized by the volume asymptotic
\begin{equation}\label{eq:intro-volume-asymptotic}
\operatorname{vol}
\{v\in\M_n(\mathbb R):\ \|v\|<T,\ a<\det v<b\}
\sim
C_{\|\cdot\|}(b-a)T^{n(n-1)},
\end{equation}
proved in \Cref{lem:volume-main-term}. Thus \Cref{m1} may also be
written as
$$
N_\Lambda^\times(a,b;T)
\sim
\frac{1}{\covol(\Lambda)}
\operatorname{vol}
\{v\in\M_n(\mathbb R):\ \|v\|<T,\ a<\det v<b\}.
$$

\subsection{The singular contribution}
The restriction $\det v\ne0$ in \Cref{m1} is necessary. Indeed, for any
fixed proper subspace $\mathsf U<\mathbb R^n$, the determinant vanishes
identically on the space of matrices whose columns lie in $\mathsf U$, and
likewise on the space of matrices whose rows lie in $\mathsf U$. We call
these spaces column- and row-isotropic, respectively. If
$\dim\mathsf U=n-1$, each has dimension $n(n-1)$, precisely the exponent
appearing in \Cref{m1}. Thus a $\mathbb Z$-submodule of
$\Lambda$ of rank $n(n-1)$ contained in such a space may contribute on the same scale as
the nonsingular main term.

The isotropic noncoincidence condition, defined in
\Cref{def:isotropic-noncoincidence}, rules out certain commensurabilities
associated with distinct maximal rational column- or row-isotropic
subspaces. Almost every lattice contains no nonzero singular matrices and
hence satisfies this condition vacuously. Moreover, the condition is
automatic for every lattice when $n=2,3$ and, when $n\ge4$, is
satisfied by every diagonal lattice\footnote{A lattice
$\Lambda<\M_n(\R)$ is called \emph{diagonal} if it is the
$\mathbb Z$-span of nonzero scalar multiples of the elementary matrices.}; see \Cref{ex:diagonal-isotropically-noncoincident}.
\begin{theorem}\label{m11}
Let $\Lambda$ be as in \Cref{m1}. Suppose either that $n\le 3$, or that
 $\Lambda$ satisfies the isotropic noncoincidence condition. Then the
limit
\begin{equation}\label{eq:intro-singular-asymptotic}
c_\Lambda^{\operatorname{sing}}
:=
\lim_{T\to\infty}T^{-n(n-1)}
\#\{v\in\Lambda:\ \|v\|< T,\ \det v=0\}
\end{equation}
exists and is finite. Moreover, $c_\Lambda^{\operatorname{sing}}>0$ if and
only if $\Lambda$ contains a $\mathbb Z$-submodule of rank $n(n-1)$ on which the
determinant vanishes identically.

Consequently, for every $a<b$,
\begin{equation}\label{eq:intro-full-asymptotic}
N_\Lambda(a,b;T)
\sim
\left(
\frac{C_{\|\cdot\|}}{\covol(\Lambda)}(b-a)
+c_\Lambda^{\operatorname{sing}}\mathbf 1_{\{0\in(a,b)\}}
\right)T^{n(n-1)}.
\end{equation}
\end{theorem}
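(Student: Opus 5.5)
Given \Cref{m1}, it suffices to establish the existence and finiteness of the limit \eqref{eq:intro-singular-asymptotic}, together with its positivity criterion. Then \eqref{eq:intro-full-asymptotic} follows by adding $N_\Lambda^\times(a,b;T)$ and the singular count when $0\in(a,b)$. We may also assume $\Lambda$ has nonzero singular elements, since otherwise the singular count is $O(1)$. The plan is to decompose the singular set $\{v\in\Lambda:\det v=0\}$ according to the rational structure it spans.

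Every singular $v$ has a nontrivial kernel and cokernel. So $v$ lies in the column-isotropic space $\{w: w\mathbb R^n\subset \mathsf U\}$ with $\mathsf U=\operatorname{Im}v$, and in the row-isotropic space attached to its row space. The first step is a structural reduction. Consider the rational (with respect to $\Lambda$) maximal column- and row-isotropic subspaces $\mathsf W$, namely those real subspaces of dimension $n(n-1)$ contained in $\{\det=0\}$ with $\Lambda\cap\mathsf W$ of full rank $n(n-1)$. These are finite in number up to the relevant counting, which I expect follows from the algebraicity of $\Lambda$ and a Noetherian/height argument. Each contributes
\[
\#\{v\in\Lambda\cap\mathsf W:\|v\|<T\}\sim \frac{\vol_{\mathsf W}(B_T\cap \mathsf W)}{\covol(\Lambda\cap\mathsf W)}=c_{\mathsf W}T^{n(n-1)}
\]
by standard lattice point counting in a homogeneously expanding ball.

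The second step is to show that singular points lying in no such rational maximal isotropic space contribute $o(T^{n(n-1)})$. Here I would reuse the machinery behind \Cref{m1}. That theorem (via a $\SL_n\times\SL_n$ equidistribution/non-divergence argument \`a la Eskin--Margulis--Mozes, with algebraicity giving Diophantine control) bounds the number of lattice points with $|\det v|<\e$ outside exceptional subspaces by $O(\e T^{n(n-1)})$ plus the contribution of exceptional isotropic subspaces. Letting $\e\to0$ kills the non-rational singular points. The rational isotropic subspaces of non-maximal dimension, or those whose intersection with $\Lambda$ has rank $<n(n-1)$, have $O(T^{n(n-1)-1})$ points each. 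The remaining issue is summability over the infinitely many such subspaces, which should follow from the same upper bounds (an Eskin--Margulis--Mozes type $\alpha$-function estimate) used in \Cref{m1}.

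The third step, which I expect to be the main obstacle, is inclusion--exclusion among the finitely many rational maximal spaces $\mathsf W_1,\dots,\mathsf W_k$. The singular count is $\#\bigl(\Lambda\cap\bigcup_i\mathsf W_i\cap B_T\bigr)$ plus $o(T^{n(n-1)})$. The pairwise intersections $\mathsf W_i\cap\mathsf W_j$ have dimension less than $n(n-1)$: two distinct column spaces meet in matrices with columns in $\mathsf U_i\cap\mathsf U_j$, and a column space meets a row space in dimension $(n-1)^2$. So these overlaps are negligible provided their lattice points grow at most like their dimension. This fails only if infinitely many overlapping configurations accumulate, or if commensurabilities force extra mass. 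This is precisely what the isotropic noncoincidence condition of \Cref{def:isotropic-noncoincidence} rules out. For $n\le3$ I would verify it directly: the only singular structures are rank-one and rank-two ones, and their intersections can be counted by hand. Hence
\[
c_\Lambda^{\mathrm{sing}}=\sum_i \frac{\vol(B_1\cap\mathsf W_i)}{\covol(\Lambda\cap\mathsf W_i)}.
\]
This is finite, and it is positive if and only if some $\mathsf W_i$ exists. That is exactly the condition that $\Lambda$ contains a rank-$n(n-1)$ submodule on which $\det$ vanishes, since any such submodule spans an $n(n-1)$-dimensional subspace of $\{\det=0\}$. By the classical (Dieudonné/Flanders) classification of maximal singular spaces, such a subspace is column- or row-isotropic.
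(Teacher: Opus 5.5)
There is a genuine gap, and it sits at the core of the singular count. Your first step assumes that $\Lambda$ has only finitely many rational maximal isotropic subspaces $\mathsf W_1,\dots,\mathsf W_k$ of dimension $n(n-1)$. Your inclusion--exclusion and your formula for $c^{\mathrm{sing}}_\Lambda$ both rest on this.

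The assumption holds for $n=2$, where there are at most four such planes, but it fails from $n=3$ on, even for algebraic, non-determinant-rational $\Lambda$. For $\Lambda_{\sqrt2}$ in \eqref{ex1}, every primitive $(p,q)$ gives a distinct rational row-isotropic $6$-plane $\{v:pv_2+qv_3=0\}$, so no Noetherian or height argument can give finiteness.

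The missing idea is to organize these infinitely many subspaces into finitely many families and to count each family as a whole. Pass to the dual lattice: the orthogonal complements of the $n(n-1)$-dimensional rational column-isotropic subspaces are $\Lambda^*$-rational spaces $\mathcal L(\mathbb Ru)$ with $\Lambda^*\cap\mathcal L(\mathbb Ru)=u\otimes\Lambda_u$. Grouping the $u$'s by homothetic commensurability of the $\Lambda_u<(\mathbb R^n)^*$ gives finitely many subspaces $\mathsf U_j$ such that $\Lambda^*\cap\mathcal L(\mathsf U_j)$ is commensurable with a tensor-product lattice (\Cref{lem:global-isotropic-structure}).

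The union of the family attached to $\mathsf U_j$ is exactly $\{v\in\Lambda:\rank(\iota_j(p_{W_j}(v)))\le k_j-1\}$ for a rational coordinate map $\iota_j$. Counting it is a fibered bounded-rank (Katznelson-type) problem. You decompose by primitive left kernel $L$; each kernel contributes $\ll T^{n(n-1)}\covol(L)^{-n}$, and the sum over $L$ converges precisely because $k_j=\dim\mathsf U_j<n$.

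This is where non-determinant-rationality enters a second time. If some $\mathsf U_j=\mathbb R^n$, the sum diverges logarithmically, exactly as in Katznelson's $T^{n(n-1)}\log T$ for $\M_n(\mathbb Z)$.

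Your suggestion that summability over infinitely many isotropic subspaces should follow from Eskin--Margulis--Mozes $\alpha$-function bounds cannot work. The exceptional summands $\mathcal M_{k,m}$ are exactly where the local height has critical exponent $1$. That is why the paper removes isotropic subspaces from the modified height and counts them by this separate arithmetic argument; the ordinary $\alpha$-function yields only $T^{n(n-1)+\varepsilon}$ (\Cref{thm:general-irrational-bounds}).

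You have also misplaced the role of the isotropic noncoincidence condition. It does not control overlaps between maximal $n(n-1)$-dimensional spaces; the paper handles those by separate rank estimates. Within a family, points lying in two subspaces have projected rank $\le k_j-2$ and are of lower order. Same-type families have $\mathsf U_j\cap\mathsf U_{j'}=0$ and overlap in $O(T^{n(n-2)})$ points. Column--row overlaps are $O(T^{n^2-n-1}(\log T)^{3n})$, via compatible rational split coordinates and the rank-extension lemma.

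What noncoincidence actually guarantees is finiteness of the maximal proper rational column- or row-isotropic subspaces of dimension $kn$ with $\lceil n/2\rceil\le k\le n-2$. For $k<n/2$ there is at most one, by an intersection argument. Each such subspace contributes only $O(T^{n(n-2)})$, but infinitely many could accumulate to main-term order. This is also why no hypothesis is needed for $n\le 3$: the range of $k$ is empty, not because the intersections can be counted by hand.

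Your step two, killing the singular points off $\Lambda_{\mathrm{iso}}$ with the $|\det v|<\varepsilon$ window count of \Cref{thm:13.11-corrected}, and your use of Dieudonn\'e's theorem for the positivity criterion do match the paper.
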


The irrationality hypothesis \eqref{irr} in \Cref{m1} is easy to test: it is
equivalent to the existence of $v,w\in\Lambda$ such that $\det w\ne0$ and
$$
\frac{\det v}{\det w}\notin\mathbb Q.
$$
For example, when $n=3$, both theorems apply to
\begin{equation}\label{ex1}
\Lambda_{\sqrt2}
:=
\begin{pmatrix}
\sqrt2\,\mathbb Z&\mathbb Z&\mathbb Z\\
\mathbb Z&\mathbb Z&\mathbb Z\\
\mathbb Z&\mathbb Z&\mathbb Z
\end{pmatrix},
\end{equation}
since $1,\sqrt2\in\det(\Lambda_{\sqrt2})$. The matrices with first row zero
form a singular submodule of rank $6$, so
$c_{\Lambda_{\sqrt2}}^{\operatorname{sing}}>0$.
Moreover, $\Lambda_{\sqrt2}$ contains infinitely many submodules of rank $6$
on which the determinant vanishes identically. Indeed, writing
$v_1,v_2,v_3$ for the columns of $v\in\Lambda_{\sqrt2}$, each primitive
pair $(p,q)\in\mathbb Z^2$, considered up to sign, gives a distinct such
submodule:
$$
\{v\in\Lambda_{\sqrt2}:pv_2+qv_3=0\}.
$$
Thus the finite constant $c_{\Lambda_{\sqrt2}}^{\operatorname{sing}}$
may receive contributions
from infinitely many singular submodules of rank-$n(n-1)$.

\subsection{Diophantine lattices} The algebraicity assumption on $\Lambda$ in the preceding theorems can be
replaced by a Diophantine condition on rational Pl\"ucker configurations;
see \Cref{lattice_Diophantine,main}. Informally, this condition requires
polynomial separation from the exceptional isotropic summands unless the
configuration lies exactly in one of them.

The critical behavior of exact isotropic contributions is illustrated by
Katznelson's estimate \cite{katznelson:1994}: the number of 
matrices of rank $n-1$ in $\M_n(\mathbb Z)$ of norm at most $T$ is of order
$$
T^{n(n-1)}\log T.
$$
The logarithmic factor reflects the failure of summability at the critical scale
$T^{n(n-1)}$ in the split rational lattice. Exact isotropic
subspaces, which may occur in infinite families, are treated separately in
\Cref{thm:quasinullcontribution2}.

Without a Diophantine hypothesis, we still obtain the optimal growth
exponent $n(n-1)$, with a loss of $T^\varepsilon$, for every
non-determinant-rational lattice; see
\Cref{thm:general-irrational-bounds}.

\subsection{The arithmetic obstruction and determinant forms}

We say that a lattice $\Lambda<\M_n(\mathbb R)$ is
\emph{determinant-rational} if
$$
\det(\Lambda)\subset\lambda\mathbb Q
\qquad
\text{for some }\lambda\in\mathbb R^\times,
$$
or, equivalently,
$
\det(\Lambda)\subset\lambda\mathbb Z$
for some $\lambda\in\mathbb R^\times
$ (see \Cref{thm:det-rational-characterization}).
Thus the hypothesis in \Cref{m1} is precisely that $\Lambda$ is not
determinant-rational.
See \Cref{s:det-rational} for an algebraic description of all determinant-rational lattices.

The same arithmetic obstruction has a dynamical interpretation. After
rescaling, we may assume that $\Lambda$ is a unimodular lattice and hence regard it  as a
point of
$$
X=\SL_{n^2}(\mathbb R)/\SL_{n^2}(\mathbb Z).
$$
The left-right action
$$
(g,h)\cdot v=g v h^{\intercal},
\qquad (g,h\in\SL_n(\mathbb R), v\in \M_n(\br))
$$
induces the tensor-product representation
$(g,h)\mapsto g\otimes h$ on
$\M_n(\mathbb R)\simeq\mathbb R^n\otimes\mathbb R^n$. We denote its image by
$$
H:=\SL_n(\mathbb R)\otimes\SL_n(\mathbb R)<G:=\SL_{n^2}(\mathbb R).
$$
 Ratner's orbit-closure theorem \cite{ratner:1991}, together with the
maximality of $H$, gives a closed-or-dense dichotomy (\Cref{rat}); we show that the closed case is precisely the
determinant-rational case. This yields the following:
\begin{theorem}\label{prop:intro-density}
If $\Lambda<\M_n(\mathbb R)$ is not determinant-rational, then
$$
\overline{\det(\Lambda)}=\mathbb R.
$$
\end{theorem}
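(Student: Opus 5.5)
We will prove the density statement by the dynamical route sketched just before it: Ratner's theorem gives a closed-or-dense dichotomy for $H$-orbits, the closed case is identified with determinant-rationality, and density of the orbit in $X$ is then turned into density of determinant values. First rescale: replacing $\Lambda$ by $c\Lambda$ scales $\det(\Lambda)$ by $c^n$, so both the hypothesis and the conclusion are unchanged. We may therefore take $\Lambda$ unimodular and regard it as a point $x_\Lambda\in X$.

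By the dichotomy (\Cref{rat}), which rests on Ratner's orbit-closure theorem and the maximality of $H$, the orbit $Hx_\Lambda$ is either closed or dense in $X$. We first exclude the closed case. If $Hx_\Lambda$ is closed, its stabilizer $H\cap g\SL_{n^2}(\mathbb Z)g^{-1}$ is a lattice in $H$, where $\Lambda=g\mathbb Z^{n^2}$. By Borel density, this stabilizer is Zariski dense in $H$. Moreover, $\det$ is, up to scaling, the unique $H$-invariant polynomial of degree $n$ on $\M_n(\mathbb R)$. Pulling $\det$ back by $g$ gives a polynomial on $\mathbb R^{n^2}$ whose invariance group contains the Zariski dense arithmetic group $g^{-1}\Gamma_H g\subset\SL_{n^2}(\mathbb Z)$. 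The space of such invariant polynomials is defined over $\mathbb Q$ and is one-dimensional, so it contains a rational generator. Hence $\det\circ g=\lambda P$ with $P\in\mathbb Q[x]$, which gives $\det(\Lambda)\subset\lambda\mathbb Q$. This contradicts our hypothesis, so the orbit $Hx_\Lambda$ is dense in $X$.

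Now fix $t\in\mathbb R$ and $\e>0$; we want some $v\in\Lambda$ with $|\det v-t|<\e$. Choose a unimodular lattice $\Lambda_0\in X$ that contains a matrix $w_0$ with $\det w_0=t$. This is possible because $\SL_{n^2}(\mathbb R)$ acts transitively on nonzero vectors, so we can choose $\Lambda_0$ whose primitive vectors include the given $w_0$. Since $Hx_\Lambda$ is dense, there are $h_k\in H$ with $h_k\Lambda\to\Lambda_0$ in $X$. Convergence in $X$ supplies vectors $v_k\in\Lambda$ with $h_kv_k\to w_0$. Because $\det$ is $H$-invariant ($\det(gvh^{\intercal})=\det v$ for $g,h\in\SL_n(\mathbb R)$), continuity gives $\det v_k=\det(h_kv_k)\to t$. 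Hence $t\in\overline{\det(\Lambda)}$.

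We expect the main obstacle to be the closed-orbit step. Two points there need care. One is the uniqueness of the degree-$n$ invariant, which holds because the invariant ring of $\SL_n\times\SL_n$ acting on $\M_n$ is $\mathbb R[\det]$. The other is the rationality of the invariant line, which follows by Galois descent: the line is stable under $\op{Aut}(\mathbb C/\mathbb Q)$ since the invariance group consists of integer matrices.
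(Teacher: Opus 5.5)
Your proposal is correct and follows essentially the same route as the paper: Ratner's closed-or-dense dichotomy (\Cref{rat}), exclusion of the closed case via Borel density together with uniqueness of the degree-$n$ invariant (\Cref{detuni}) and rationality of the invariant line, and then transfer of orbit density to determinant values using the $H$-invariance of $\det$. The differences are cosmetic: you read off $\det(\Lambda)\subset\lambda\mathbb Q$ directly from $\det\circ g=\lambda P$ rather than citing \Cref{thm:det-rational-characterization}, and your sequence $h_kv_k\to w_0$ is the pointwise form of the paper's inclusion $\overline{\det(\Lambda)}\supset\det(\overline{Hg\Gamma}e_1)=\det(Ge_1)$.
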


We next reformulate \Cref{m1} in terms of determinant forms. Choose a
$\mathbb Z$-basis $\mathcal B=\{v_{ij}:1\le i,j\le n\}$ of $\Lambda$ and
set
$$
F_{\Lambda,\mathcal B}(x)
:=
\det\!\left(\sum_{i,j}x_{ij}v_{ij}\right),
\qquad x=(x_{ij})\in\M_n(\mathbb R).
$$
We call $F_{\Lambda,\mathcal B}$ a \emph{determinant form}. This gives the following equivalent formulation of \Cref{m1}. 
\begin{theorem}\label{m2}
Let $F$ be a determinant form on $\M_n(\mathbb R)$ with algebraic
coefficients, and suppose that $F$ is not proportional to a polynomial with
rational coefficients. Given any norm $\|\cdot \|$ on $\M_n(\br)$ invariant under a maximal compact subgroup
of the stabilizer of $F$ in $\SL_{n^2}(\mathbb R)$,  there exists
$c_F=c(F,\|\cdot\|)>0$ such that, for every $a<b$,
$$
\#\{x\in\M_n(\mathbb Z):\ \|x\|<T,\ a<F(x)<b,\ F(x)\ne0\}
\sim
c_F(b-a)T^{n(n-1)}.
$$
\end{theorem}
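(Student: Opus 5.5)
The plan is to deduce \Cref{m2} from \Cref{m1} by transporting the count from $\M_n(\mathbb Z)$ to the lattice $\Lambda:=\{\sum_{i,j}x_{ij}v_{ij}:\ x\in\M_n(\mathbb Z)\}$. Here $F=F_{\Lambda,\mathcal B}$ for a basis $\mathcal B=\{v_{ij}\}$ with algebraic entries. I would carry this out in four steps.

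\emph{Step 1: set up the linear change of variables.} Let $\iota:\M_n(\mathbb R)\to\M_n(\mathbb R)$ be the linear isomorphism $x\mapsto\sum x_{ij}v_{ij}$. Then $F=\det\circ\iota$ and $\iota(\M_n(\mathbb Z))=\Lambda$. Since $F$ has algebraic coefficients, I may choose the $v_{ij}$ with algebraic entries, so that $\Lambda$ satisfies the algebraicity hypothesis of \Cref{m1}. If only $F$ is given, one has to justify that such a basis exists. This should follow from the classification of linear maps preserving $\det$ up to scalar (Frobenius/Dieudonn\'e): $\iota$ is determined by $F$ up to the left–right action of $\{(g,h):\det g\det h=1\}$ and transposition. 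That set is defined over $\overline{\mathbb Q}$, so it has algebraic points. Alternatively, the equation $\det\circ\iota=F$ is algebraic over $\overline{\mathbb Q}$ and has a real solution, hence has a real algebraic one by Tarski–Seidenberg density.

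\emph{Step 2: translate the irrationality hypothesis.} The values $F(x)=\det(\iota x)$ for $x\in\M_n(\mathbb Z)$ are exactly $\det(\Lambda)$. The hypothesis that $F$ is not proportional to a rational polynomial should be equivalent to \eqref{irr}; this is the main obstacle. Suppose $\det(\Lambda)\subset\lambda\mathbb Z$. Then $\lambda^{-1}F$ is a polynomial taking integer values on $\mathbb Z^{n^2}$, and integer-valued polynomials have rational coefficients (Newton interpolation with binomial coefficients). This gives the implication we need. Conversely, if $F=\lambda P$ with $P\in\mathbb Q[x]$, clearing denominators shows $\Lambda$ is determinant-rational. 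Either way, \eqref{irr} holds for $\Lambda$.

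\emph{Step 3: compare the norms.} The stabilizer of $F$ in $\SL_{n^2}(\mathbb R)$ is $\iota^{-1}\,\mathrm{Stab}(\det)\,\iota$. A maximal compact subgroup of $\mathrm{Stab}(\det)$ is conjugate to the group generated by $\SO(n)\otimes\SO(n)$ and transposition. Conjugating within the stabilizer, I would assume the given norm $\|\cdot\|$ is invariant under $\iota^{-1}K\iota$, where $K\supset\SO(n)\otimes\SO(n)$. Then $\|v\|':=\|\iota^{-1}v\|$ is a norm on $\M_n(\mathbb R)$ invariant under left and right multiplication by $\SO(n)$. If the conjugating element were not algebraic, I would instead absorb it by replacing $\iota$ with $\iota\circ k$. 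Since $\Lambda$ changes only by an element of $\mathrm{Stab}(\det)$, algebraicity must be rechecked via the argument of Step 1.

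\emph{Step 4: apply \Cref{m1}.} The count in \Cref{m2} equals $\#\{v\in\Lambda:\ \|v\|'<T,\ a<\det v<b,\ \det v\ne0\}$. By \Cref{m1} this is asymptotic to $\frac{C_{\|\cdot\|'}}{\covol(\Lambda)}(b-a)T^{n(n-1)}$. So we can take $c_F=C_{\|\cdot\|'}/|\det\iota|>0$, using $\covol(\Lambda)=|\det\iota|$.
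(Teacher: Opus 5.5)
Steps 1, 2 and 4 are fine; Step 2 is just \Cref{thm:det-rational-characterization} and is not the real obstacle. Step 3, however, has a genuine gap.

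\Cref{m1} needs two things at once: a lattice with algebraic entries, and a norm invariant under left and right multiplication by $\SO(n)$. To put the given norm in that form, you replace $\iota$ by $s^{-1}\iota$ for some $s\in\operatorname{Stab}_{\SL_{n^2}(\mathbb R)}(\det)$. By \Cref{Fro}, this replaces $\Lambda$ by $g_1\Lambda g_2$ (possibly transposed), where $g_1,g_2\in\GL_n(\mathbb R)$ are dictated by the given maximal compact subgroup of $\operatorname{Stab}(F)$ and need not be algebraic.

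Every other realization $\iota''$ with $\det\circ\iota''=F$ for which $\|\iota''^{-1}(\cdot)\|$ is bi-$\SO(n)$-invariant differs from $s^{-1}\iota$ by an element normalizing $\SO(n)\otimes\SO(n)$ inside $\operatorname{Stab}(\det)$. That is essentially an element of $O(n)\times O(n)$, up to a sign and transposition. If such an $\iota''$ were algebraic, the Kronecker-factor argument would force $g_1^{\intercal}g_1$ and $g_2g_2^{\intercal}$ to be algebraic up to reciprocal scalars, which fails for generic $g_1,g_2$.

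So the Tarski argument of Step 1 does produce an algebraic realization, but in general not one compatible with the norm. ``Rechecking algebraicity'' therefore cannot be carried out, and \Cref{m1} does not apply to $s^{-1}\Lambda$.

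The missing idea is to work with the Diophantine class, which, unlike the algebraic class, survives this normalization. By \Cref{prop:algebraicityimpliesDiophantine}, your algebraic $\Lambda$ is Diophantine. The Diophantine property is invariant under left and right multiplication by $\GL_n(\mathbb R)$ and under transposition (see the remark after \Cref{lattice_Diophantine}), and determinant-rationality is invariant too. Hence $s^{-1}\Lambda$ is a Diophantine, non-determinant-rational lattice.

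Now apply \Cref{main} to $s^{-1}\Lambda$ with the bi-$\SO(n)$-invariant norm $\|v\|':=\|\iota^{-1}s\,v\|$. This gives the asymptotic with $c_F=C_{\|\cdot\|'}/|\det\iota|$, using $\det s=1$. This is exactly the paper's route: \Cref{cor:algebraic-form-diophantine} (Tarski transfer plus \Cref{prop:algebraicityimpliesDiophantine}), combined with the realization-independence of the Diophantine condition built into \Cref{m2general}.
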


The same conclusion holds for every Diophantine determinant form that is not
proportional to a rational polynomial; see \Cref{m2general}.

\subsection{Relation to the quantitative Oppenheim problem} When $n=2$, the determinant $\det\begin{pmatrix}x&y\\ z&w\end{pmatrix}=xw-yz $ is a quadratic form of signature
$(2,2)$, so the corresponding counting problem is a special case of the
quantitative Oppenheim theorem of Eskin--Margulis--Mozes
\cite{eskin-margulis-mozes:2005}, building on
\cite{dani-margulis:1993,eskin-margulis-mozes:1998}; see also \cite{Kim}
for signature $(2,1)$. Thus \Cref{m1,m11} may be viewed as
higher-degree analogues of that theorem.

\subsection{Comparison with the circle method}
\label{circle}
Even for the standard determinant polynomial, which has rational
coefficients, the codimension of its singular locus is too small relative
to its degree for Birch's theorem to apply. Indeed, for a rational
homogeneous form $P$ of degree $k$ in $N$ variables, Birch's theorem
requires
$$
N-\dim V_P^*>(k-1)2^k,
\qquad
V_P^*:=\{x\in\mathbb C^N:\nabla P(x)=0\};
$$
see \cite{birch:1962}. The determinant on $\M_n(\mathbb C)$ has degree
$n$ in $N=n^2$ variables, while
$$
V_{\det}^*
=
\{X\in\M_n(\mathbb C):\operatorname{rank}X\le n-2\}
$$
has dimension $N-4$. Thus the left-hand side of Birch's condition is $4$:
it equals the threshold $(n-1)2^n$ when $n=2$, so the required strict
inequality already fails, and it lies far below the threshold when
$n\ge3$.

The determinant forms considered here lie even further outside
this classical setting: their coefficients cannot all be made rational by a common rescaling, and
our problem concerns a Diophantine inequality rather than an equation.
General results for irrational inequalities, based on the
Davenport--Heilbronn method, require substantially more variables and
additional geometric hypotheses; see
\cite{RydinMyersonThesis,BrowningDH}.

\subsection{Main ideas and new difficulties for $n\ge3$}

We now outline the proof, emphasizing the points where the determinant
problem differs from the quadratic Oppenheim problem. We retain the notation
$G$, $X$, and $H$ introduced above, and regard the normalized lattice
$\Lambda$ as a point of $X$. If $\Lambda$ is not determinant-rational, then
its $H$-orbit is dense by the orbit-closure discussion above. Shah's equidistribution theorem for expanding translates of maximal compact orbits in $H$ \cite{shah} therefore gives equidistribution for compactly supported continuous functions on $X$.

To pass from orbit equidistribution to counting, we use Siegel transforms.
For $f\in C_c(\M_n(\mathbb R)\smallsetminus\{0\})$, its Siegel
transform is
\begin{equation}\label{eq:Siegeltransform}
    \widehat f(\Delta)
:=
\sum_{v\in\Delta\smallsetminus \{0\}}f(v),
\qquad
\Delta\in X.
\end{equation}
When $f$ approximates the indicator of a region, $\widehat f$ represents the
corresponding lattice-point count. The difficulty is that $\widehat f$ is unbounded on $X$.
We therefore
modify the sum in \eqref{eq:Siegeltransform} by retaining only lattice points outside the union of
the proper rational isotropic subspaces and introduce a height adapted
to this restriction. A uniform $L^{1+\theta}$-bound for the modified height
controls the resulting modified Siegel transform. The omitted isotropic
lattice points are counted separately.

The cusp is controlled by the Margulis $\alpha$-function. If $V$ is an
$r$-dimensional $\Delta$-rational subspace where $1\le r\le n^2-1$, then its Pl\"ucker vector
$\mathsf w_{\Delta,V}\in\wedge^r\M_n(\mathbb R)$
has norm equal to the covolume of $\Delta\cap V$ in $V$. The $\alpha$-function is given by
$$
\alpha(\Delta)
=
\max_{1\le r\le n^2-1}
\sup_V
\|\mathsf w_{\Delta,V}\|^{-1}.
$$
 Thus all exterior powers of the matrix space enter the cusp analysis.

The relevant diagonal ray is dictated by the singular-value geometry of a
bounded determinant window.  In the main compact singular-value
regime, $n-1$ singular values have a common large scale, while the remaining
one compensates to keep the determinant bounded. The determinant-preserving
normalization of such a  configuration is
$$
b_t=\operatorname{diag}(e^{-t},\ldots,e^{-t},e^{(n-1)t}),
\quad
a_t=b_t\otimes b_t\in H,\quad  T= e^{2t} .
$$
Let $K=\SO(n)\otimes\SO(n)$ and let $dk$ denote its  Haar probability measure. The $a_tK$-averages are the expanding translates to which Shah's
theorem applies. The same principal ray governs the height estimates and
produces the main term. 
Here the first major difference from $n=2$ appears. In that case, the relevant exterior-power representations have, up to scale, only the three weights $-1,0$ and $1$.
 For $n\ge3$, the skew
Cauchy formula gives
$$
\wedge^r\M_n(\mathbb R)
\simeq
\bigoplus_{\substack{\lambda\vdash r,\, 
\lambda\subseteq n\times n}}
{\mathsf S}_\lambda(\mathbb R^n)\otimes
{\mathsf S}_{\lambda^{\intercal}}(\mathbb R^n),
$$
where ${\mathsf S}_\lambda$ is the Schur module associated with $\lambda$
and $\lambda^{\intercal}$ is its conjugate partition.  The possible $b_t$-weights on a Schur factor are 
$-|\lambda|+ni$, with $\lambda_n\le i\le\lambda_1$. Thus a single exterior power may contain many irreducible $H$-summands with long weight
strings.

In each critical degree $kn$ where
$1\le k\le n-1$, there are two exceptional summands, namely
$
{\mathsf S}_{(n^k)}(\mathbb R^n)\otimes\mathbf 1$ and
$\mathbf 1\otimes {\mathsf S}_{(n^k)}(\mathbb R^n).
$
The nonzero decomposable vectors in these summands are, respectively, the Pl\"ucker vectors of
column- and row-isotropic $kn$-planes. 
All other irreducible summands are called \emph{nonexceptional}.

For $0<\eta<1$, $M>1$, and $1\le k\le n-1$, a $kn$-dimensional $\Delta$-rational subspace
$V$ is called \emph{$(\eta,M)$-quasi-null} if its Pl\"ucker vector lies within distance
$\eta\|\mathsf w_{\Delta,V}\|^{-M}$ of one of the exceptional summands.
If $\Delta$ is $(\eta,M)$-Diophantine, its $(\eta,M)$-quasi-null
rational subspaces are precisely its rational isotropic subspaces. 
With this terminology, the main new difficulties are the following.

\begin{enumerate}[label=\textnormal{(\arabic*)}]

\item \emph{Higher-degree sublevel estimates.}
Let  $u_\xi$ belong to the expanding horospherical subgroup of $\SL_n(\br)$ associated with $b_t$.
On a nonexceptional Schur summand, the first nonnegative-weight coordinate
of $u_\xi v$ is a
vector-valued polynomial in $\xi\in\mathbb R^{n-1}$. For $n=2$,
the corresponding polynomial is at most affine-linear, and the required
sublevel estimates are elementary. For $n\ge3$, it may have higher degree,
and its order of vanishing may vary from one zero to another.

The proof combines algebraic
and analytic inputs. We use weight estimates to obtain lower bounds for the complex codimensions
of the zero loci of the homogeneous polynomials associated with weight
vectors, and hence for the corresponding complex singularity exponents. We then combine Demailly--Koll\'ar's stability theorem for complex
singularity exponents \cite{DemaillyKollar2001} with Cutkosky's relative
monomialization theorem \cite{Cutkosky2017} to obtain uniform real
negative-moment estimates for the full horospherical polynomials, and
hence the required local contraction inequalities.

\item \emph{A uniform $L^{1+\theta}$-bound for the modified height.} The ordinary $\alpha$-function is insufficient because the local
heights on the exceptional summands have critical exponent $1$. For $h\in H$,
define
$$
\widehat\alpha_{\eta,M}(h;\Lambda)
:=\max\left\{ 1, \sup_V\|h\mathsf w_{\Lambda,V}\|^{-1}
\right\},
$$
where the supremum is over all nonzero proper $\Lambda$-rational subspaces
$V$ that are not contained in an $(\eta,M)$-quasi-null subspace. 

For the parameters $\eta,M$ supplied
by the Diophantine hypothesis on $\Lambda$, there exists $\theta>0$ such that
$$
\sup_{t\ge0}
\int_K
\widehat\alpha_{\eta,M}(a_tk;\Lambda)^{1+\theta}\,dk
<\infty.
$$
Following the avoidance-and-iteration framework of \cite{Kim}, we introduce
an auxiliary modified height and establish a global Margulis inequality for
it, with a logarithmic loss, outside an explicit exceptional set. The local
estimates in (1) handle the case in which only
a few short rational
subspaces are relevant; when many such subspaces compete, we use the intersection--sum
mechanism of Eskin--Margulis--Mozes
\cite{eskin-margulis-mozes:1998}. Since the usual intersection--sum
inequality controls covolumes but not distances from the exceptional
summands, we supplement it with the Mother Inequality of Benoist--Quint
\cite{benoist-quint:2012}.
Here the exceptional set consists of configurations in which  a rational Pl\"ucker vector
that is not quasi-null for the base lattice is moved close to an
exceptional summand. Quantitative nondivergence \cite{kleinbock:2008},
together with the Diophantine condition, shows that this set has small measure. The
downward-closed definition of the modified height ensures that the auxiliary
height majorizes $\widehat\alpha_{\eta,M}$. We first establish
contraction and avoidance for averages over the expanding horospherical
subgroup of $H$ associated with $a_t$. Iteration, with step sizes chosen to
absorb the logarithmic loss, yields a uniform horospherical moment bound.
The argument of \cite[Section~7.2]{Kim} transfers this bound to the
$K$-averages, permitting us to truncate the modified Siegel transform,
apply Shah's theorem, and remove the truncation. 

\item \emph{Infinitely many exact isotropic contributions.}
The isotropic lattice points omitted from the modified Siegel transform must
be counted separately. For $n=2$, a non-determinant-rational lattice has
only finitely many maximal rational isotropic planes. For $n\ge3$, even a
non-determinant-rational algebraic lattice may have infinitely many
top-dimensional rational isotropic subspaces, each potentially contributing
on the same $T^{n(n-1)}$ scale as the nonsingular main term.

We show that these subspaces can nevertheless be organized into finitely
many families. Within each family, the singular counting problem reduces to
counting bounded-rank matrices in a rational lattice. We use weighted asymptotic formulas and summable majorants to sum these
contributions, and combine same-type and mixed column--row overlap
estimates with the isotropic noncoincidence condition to rule out
additional contributions of main-term order. This yields the singular constant in
\eqref{eq:intro-full-asymptotic}.

\item \emph{From the principal-ray dynamics to norm balls.}
Finally, we pass from the dynamical statement along $a_t$ to the original
norm-ball counting problem. The invariance of the norm under left and right
multiplication by $\SO(n)$ allows us to use singular-value coordinates.
After factoring out the common scale of the $n-1$ large singular
values, their ratios serve as parameters in the fiber kernels. On compact ratio windows, the
modified Siegel-transform limit applies uniformly, while the unbalanced
regions are controlled by weighted shell estimates. The same fiber
integrals identify the constant $C_{\|\cdot\|}$ in
\eqref{eq:intro-volume-asymptotic} and complete the proof of the counting
asymptotic.

\end{enumerate}

In a separate paper \cite{KimOh_two}, we give a self-contained treatment of the case $n=2$, which also serves as a model for the present argument.

\subsection*{Organization of the paper}

\Cref{s:det-rational} classifies determinant-rational lattices and proves
the closed-or-dense orbit dichotomy. \Cref{s:height-main} develops the height
formalism and the Schur-functor decomposition, introduces  Diophantine lattices,
quasi-null subspaces, and isotropic subspaces, and states the main technical results. \Cref{s:sublevel-general-n,s:weightedlocalestimates,s:globalheight} establish the local and global height estimates, starting from uniform sublevel bounds and passing through weighted estimates for the $H$-action to lattice-height inequalities. \Cref{s:avoidance,s:iterations} prove avoidance and combine it with contraction in an iteration argument to obtain the required uniform integrability. \Cref{s:quasinull,s:isotropic} analyze the singular directions, first describing
the structure of rational isotropic subspaces and then proving the bounded-rank
estimates and the singular asymptotic. \Cref{s:fiberintegral,s:maincounting}
convert the dynamical estimates into counting results by establishing the relevant
fiber-integral identities and proving equidistribution for modified Siegel
transforms. \Cref{s:example} proves that algebraic lattices and algebraic
determinant forms are Diophantine, gives examples with  positive
singular constants, and shows that all diagonal lattices satisfy the
isotropic noncoincidence condition.
Finally, \Cref{app:analytic-stability} establishes the uniform negative-moment estimate used in \Cref{s:sublevel-general-n}, using the stability of complex singularity exponents and relative monomialization.

\subsection*{Acknowledgments}
 The authors  thank the Korea Institute for Advanced Study and Yale University for their hospitality;
a substantial part of this work was completed during visits to
these institutions. Part of this research was conducted while the authors were visiting the Simons Laufer Mathematical Sciences Institute (SLMath),  which is supported by NSF grant DMS-2424139. W.K. is supported
by KIAS individual grant HP101301. H.O. is partially supported by NSF grant DMS-2450703.

\subsection*{AI disclosure}
The authors used AI tools to assist with literature searches, checking proofs, and editing the manuscript. All mathematical ideas and contributions are due to the authors.
\section{Determinant-rational lattices and the orbit-closure dichotomy}
\label{s:det-rational}
In this section, we characterize determinant-rational lattices both arithmetically and dynamically. We first classify their determinant forms over $\mathbb Q$ and then prove the corresponding closed-or-dense orbit dichotomy for the left--right action of $\SL_n(\mathbb R)\times \SL_n(\mathbb R)$.

Let $n\ge2$ and set $N=n^2$. We identify $\M_n(\mathbb R)$ with
$\mathbb R^N$ via the linear isomorphism determined by
$$
E_{ij}\longleftrightarrow e_{(i-1)n+j},
\qquad 1\le i,j\le n.
$$
Equivalently, under
$\mathbb R^N\simeq\mathbb R^n\otimes\mathbb R^n$, we identify
$E_{ij}$ with $e_i\otimes e_j$.
Under this identification, the determinant is a homogeneous
polynomial of degree $n$ on $\mathbb R^N$.

The group $\GL_n(\mathbb R)\times \GL_n(\mathbb R)$ acts on
$\M_n(\mathbb R)$ by
\begin{equation}\label{gla}
    (g_1,g_2)\cdot v=g_1vg_2^{\intercal}\qquad(\text{$g_1, g_2\in \GL_n(\br), v\in \M_n(\br)$}).
\end{equation}

Given a lattice $\Lambda<\M_n(\mathbb R)$, choose a $\mathbb Z$-basis
$\mathcal B=\{v_{ij}\}$ of $\Lambda$. We associate to $\Lambda$ the
determinant form
$$
    F_\Lambda(x)=
    \det\!\Bigl(\sum_{i,j}x_{ij}v_{ij}\Bigr),
    \qquad x=(x_{ij})\in\M_n(\mathbb R).
$$
This polynomial is well-defined up to composition with an element of
$\GL_N(\mathbb Z)$. 

\begin{definition}
    We say that a lattice $\Lambda<\M_n(\br)$ is
\emph{determinant-rational} if
$$ \det(\Lambda)=F_\Lambda(\mathbb Z^N)
    \subset \lambda\mathbb Q
    \qquad
    \text{for some }\lambda\in\mathbb R^\times .
$$
\end{definition}
\subsection{Determinant forms over $\mathbb Q$}

We begin with an algebraic description of rational forms of the determinant.
A quadratic \'etale algebra over $\mathbb Q$ is either a quadratic field
extension of $\mathbb Q$ or the split algebra
$\mathbb Q\times\mathbb Q$. Denote its nontrivial
$\mathbb Q$-automorphism by $z\mapsto \overline z$.
\begin{definition}[Determinant algebras]
\label{def:determinant-algebra}
A \emph{determinant algebra of degree $n$ over $\mathbb Q$} is a triple
$$
    (\mathscr{K},\mathscr{A},\tau),
$$
satisfying one of the following two set of conditions:

\begin{enumerate}
\item $\mathscr{K}/\mathbb Q$ is a quadratic field extension, $\mathscr{A}$ is a central
simple $\mathscr{K}$-algebra of degree $n$, and $\tau$ is a unitary involution on
$\mathscr{A}$, i.e. a $\mathbb Q$-linear anti-automorphism satisfying
$$
    \tau^2=\mathrm{id},
    \qquad
    \tau(zx)=\overline z\,\tau(x)
    \quad (z\in \mathscr{K},\ x\in \mathscr{A}).
$$

\item $\mathscr{K}=\mathbb Q\times\mathbb Q$, $\mathscr{A}=\mathscr{A}_0\times \mathscr{A}_0^{\mathrm{op}}$ for
some central simple $\mathbb Q$-algebra $\mathscr{A}_0$ of degree $n$, and
$$
    \tau(x,y)=(y,x).
$$
\end{enumerate}

We write
$$
    J(\mathscr{A},\tau):=\{x\in \mathscr{A}:\tau(x)=x\}.
$$
 For $x\in J(\mathscr{A},\tau)$, define
$$
    N_{\mathscr{A},\tau}(x):=
    \begin{cases}
    \operatorname{Nrd}_{\mathscr{A}/\mathscr{K}}(x),&
    \text{in case (1)} \\
    \operatorname{Nrd}_{\mathscr{A}_0/\mathbb Q}(x_0),&
    \text{in case (2), where }x=(x_0,x_0)\in
    \mathscr{A}_0\times \mathscr{A}_0^{\mathrm{op}}.
    \end{cases}
$$
Then $N_{\mathscr{A},\tau}$ is a homogeneous polynomial of degree $n$, defined
over $\mathbb Q$, on the $n^2$-dimensional vector space $J(\mathscr{A},\tau)$.
\end{definition}
Case (1) gives the outer determinant forms, while the case (2) gives the inner forms, namely the usual
reduced-norm forms on central simple $\mathbb Q$-algebras.

We say that a determinant algebra $(\mathscr{K},\mathscr{A},\tau)$ is \emph{split over
$\mathbb R$} if there is an $\mathbb R$-linear isomorphism
$\iota:J(\mathscr{A},\tau)\otimes_{\mathbb Q}\mathbb R
    \xrightarrow{\sim}
    \M_n(\mathbb R)$
such that $ N_{\mathscr{A},\tau}(x)=\det(\iota(x))$.
Only determinant algebras split over $\mathbb R$ will occur below. An order in $\mathscr{A}$ means a subring $\mathcal O$ that is a full
$\mathbb Z$-lattice in the underlying $\mathbb Q$-vector space. If
$\tau(\mathcal O)=\mathcal O$, we write
$ \mathcal O^\tau:=\mathcal O\cap J(\mathscr{A},\tau)$, which
 is a full lattice in $J(\mathscr{A},\tau)$.

The following Frobenius--Dieudonn\'e theorem describes the linear maps
preserving the determinant up to a scalar; see \cite[Theorem~2]{MarcusMoyls}.

\begin{lemma}
\label{Fro}
Let $L$ be a field of characteristic zero, and let
$T\in\GL(\M_n(L))$ satisfy
$$
    \det(Tv)=c\,\det(v)
    \qquad
    (v\in\M_n(L))
$$
for some $c\in L^\times$. Then either
$$
    Tv=g_1vg_2
    \qquad
    (v\in\M_n(L))\quad\text{or}\quad 
    Tv=g_1v^{\intercal}g_2
    \qquad
    (v\in\M_n(L)),
$$
for some $g_1,g_2\in\GL_n(L)$ satisfying $\det(g_1)\det(g_2)=c$.
\end{lemma}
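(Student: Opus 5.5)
One route is the classical Dieudonné argument: show that $T$ preserves the singular locus, then use the structure of maximal linear subspaces of singular matrices. Since $c\ne0$, $\det(Tv)=0$ exactly when $\det v=0$. Hence $T$ and $T^{-1}$ map the variety $\mathcal S=\{\det=0\}$ onto itself.

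The first step is to recover rank from the polynomial structure, so that $T$ preserves each rank stratum. For fixed $v$, the polynomial $x\mapsto\det(x+v)$ vanishes identically exactly when $\det(x+v)=0$ for all $x\in\M_n(L)$. A cleaner formulation uses the function $w\mapsto\det(v+sw)$ in $s$. The rank of $v$ equals the maximal $r$ such that there is $w$ with $\det(v+sw)$ of order exactly $n-r$ in $s$. Equivalently, $\operatorname{rank}v\le r$ if and only if $\det(v+sw)$ is divisible by $s^{n-r}$ for all $w$. Because $\det(T(v+sw))=c\det(v+sw)$ and $T$ is linear, the characterization in terms of vanishing order is transported by $T$. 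It follows that $T$ preserves $\operatorname{rank}\le r$ for every $r$, and in particular the rank-one matrices; this step is pure algebra in characteristic zero. Working over the algebraic closure $\overline L$ is harmless for this step.

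Next I would use that the rank-$\le1$ cone is the set of decomposable tensors $x\otimes y$. The maximal linear subspaces inside this cone are of two types: $\{x\otimes y: y\in L^n\}$ with $x$ fixed, and $\{x\otimes y: x\in L^n\}$ with $y$ fixed. Two such spaces of the same type meet only in $0$, while spaces of different types meet in a line. Since $T$ is a linear bijection preserving the cone, it permutes these maximal spaces and preserves their intersection pattern. Therefore $T$ either preserves both families or swaps them. Composing with transpose in the swapping case, we may assume that $T$ maps column spaces to column spaces and row spaces to row spaces.

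This gives bijections $\phi,\psi$ on $\mathbb P^{n-1}$ with $T(x\otimes y)\in\phi([x])\otimes\psi([y])$. Because $T$ is linear, $\phi$ sends lines to lines, since the relevant spans are preserved. The restriction of $T$ to each maximal space is linear, so $\phi$ is induced by a linear map. Concretely, fix $y_0$; the map $x\mapsto T(x\otimes y_0)$ is linear and injective with image $\{u\otimes y'\}$ for a fixed $y'$, so it equals $g_1x\otimes y'$ with $g_1$ linear. I would use this directly rather than invoke the fundamental theorem of projective geometry. A compatibility check using $T(x\otimes(y_0+y_1))$ then shows that the scalars combine, giving $T(x\otimes y)=g_1x\otimes g_2'y$ and hence $Tv=g_1vg_2$. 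Comparing determinants gives $\det g_1\det g_2=c$.

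I expect the main obstacle to be the compatibility of scalars across different fixed vectors, where one must rule out $x$-dependent twists. This is settled by linearity applied to sums of rank-one elements. Since the result is cited in the paper from \cite[Theorem~2]{MarcusMoyls}, one can simply invoke it.
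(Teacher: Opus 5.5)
The paper gives no proof of this lemma; it simply cites \cite[Theorem~2]{MarcusMoyls}. Your outline is a correct, self-contained proof along the classical Frobenius--Dieudonn\'e lines, so it differs from the paper only in that it proves what the paper quotes.

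\textbf{Rank preservation.} The criterion $\operatorname{rank}v\le r$ if and only if $s^{n-r}$ divides $\det(v+sw)$ for all $w$ is right; to see it, write $v=P\,\mathrm{diag}(I_r,0)\,Q$. It transfers because $w\mapsto Tw$ is onto and $\det(Tv+sTw)=c\det(v+sw)$ holds as a polynomial identity in $s$, since $L$ is infinite. Hence $T$ and $T^{-1}$ both preserve the rank-$\le1$ cone. You never need $\overline L$ for this step.

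\textbf{Two families.} For $n\ge2$ (the case $n=1$ is trivial), every linear subspace of the cone lies in one of the two families. Indeed, a subspace containing $x_1y_1^{\intercal}$ and $x_2y_2^{\intercal}$ with both pairs independent would contain a rank-two sum. The bipartite intersection pattern then forces $T$ or $T\circ(\cdot)^{\intercal}$ to preserve both families.

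\textbf{Compatibility.} This is the only step you leave compressed. Write $T(x\otimes y)=A_yx\otimes\tilde y$. For $[y_1]\ne[y_0]$, the element
$T(x\otimes(y_0+y_1))=A_{y_0}x\otimes\tilde y_0+A_{y_1}x\otimes\tilde y_1$
has rank $\le1$, and $\tilde y_0,\tilde y_1$ are independent. Hence $A_{y_1}x\in L\,A_{y_0}x$ for every $x$, so $A_{y_0}^{-1}A_{y_1}$ has every vector as an eigenvector and is a scalar $\lambda(y_1)$. Linearity of $T$ then makes $y\mapsto\lambda(y)\tilde y$ additive, hence linear. Evaluating at $v=I_n$ gives $\det g_1\det g_2=c$.

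\textbf{Comparison with the citation.} The citation is shorter. Your argument works verbatim over any field of characteristic zero, with no passage to an algebraic closure and no descent. This includes $L=\mathbb R$, which is where the paper actually applies the lemma, in the proof of \Cref{thm:det-rational-characterization}.
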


We use the following elementary uniqueness statement.

\begin{lemma}
\label{commute}
If $g_1,g_2\in\GL_n(L)$ satisfy
$$
    g_1vg_2=v
    \qquad
    \text{for all }v\in\M_n(L),
$$
then there exists $c\in L^\times$ such that
$$
    g_1=cI_n,
    \qquad
    g_2=c^{-1}I_n.
$$
\end{lemma}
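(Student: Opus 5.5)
Let $x:=g_2$ and rewrite the hypothesis as
$$
    g_1 v = v g_2^{-1}
    \qquad\text{for all }v\in\M_n(L).
$$
Since the identity holds for every $v$, we can plug in convenient test matrices. The cleanest route is first to take $v=I_n$. This gives $g_1=g_2^{-1}$, so the hypothesis becomes
$$
    g_1 v = v g_1\qquad\text{for all }v\in\M_n(L).
$$
In other words, $g_1$ lies in the center of $\M_n(L)$.

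The only remaining step is the standard fact that the center of $\M_n(L)$ consists of the scalar matrices. I will verify this directly with the elementary matrices $E_{ij}$. The identity $g_1E_{ij}=E_{ij}g_1$ compares two matrices: the left side has $j$-th column equal to the $i$-th column of $g_1$ and all other columns zero, while the right side has $i$-th row equal to the $j$-th row of $g_1$ and all other rows zero. Comparing entries gives two conclusions. First, $(g_1)_{ki}=0$ for $k\ne i$, so $g_1$ is diagonal. Second, $(g_1)_{ii}=(g_1)_{jj}$, so all diagonal entries agree. Hence $g_1=cI_n$ for some $c\in L$.

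Finally, $c\ne0$ because $g_1$ is invertible. Then $g_2=g_1^{-1}=c^{-1}I_n$, which is the claim.

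There is no real obstacle here. The only point needing care is the bookkeeping of indices in the $E_{ij}$ comparison, and this step could equally be replaced by citing the fact that the center of $\M_n(L)$ is $L\cdot I_n$.
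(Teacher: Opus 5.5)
Your proof is correct and follows the paper's argument: setting $v=I_n$ gives $g_2=g_1^{-1}$, so $g_1$ is central in $\M_n(L)$ and hence scalar. Your explicit $E_{ij}$ check of the center is accurate where the paper simply cites the fact, and the unused notation $x:=g_2$ at the start can be dropped.
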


\begin{proof}
Taking $v=I_n$, we obtain $g_1g_2=I_n$, so $g_2=g_1^{-1}$. Hence
$g_1vg_1^{-1}=v$ for every $v\in\M_n(L)$. Therefore $g_1$ lies in
the center of $\M_n(L)$, and so $g_1=cI_n$ for some $c\in L^\times$.
Then $g_2=c^{-1}I_n$.
\end{proof}

We also record a simple rationality criterion for polynomials.

\begin{lemma}
\label{polynomial}
Let $p\in\mathbb R[x_1,\dots,x_\ell]$. If $p(\mathbb Z^\ell)\subset\mathbb Q$,
then $p\in\mathbb Q[x_1,\dots,x_\ell]$.
\end{lemma}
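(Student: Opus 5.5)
We prove the statement by Lagrange interpolation on integer points, arguing by induction on the number of variables $\ell$. For $\ell=0$ the polynomial is a constant $p=p()\in\mathbb Q$, so there is nothing to show.

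For the inductive step, write $d=\deg_{x_\ell}p$ and expand
$$
p(x_1,\dots,x_\ell)=\sum_{j=0}^{d} p_j(x_1,\dots,x_{\ell-1})\,x_\ell^{\,j},
\qquad p_j\in\mathbb R[x_1,\dots,x_{\ell-1}].
$$

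Fix $m\in\mathbb Z^{\ell-1}$ and evaluate at the $d+1$ points $x_\ell=0,1,\dots,d$. This gives the linear system
$$
\sum_{j=0}^d p_j(m)\,k^j=p(m,k)\in\mathbb Q,\qquad k=0,\dots,d.
$$
Its coefficient matrix $(k^j)_{0\le k,j\le d}$ is a Vandermonde matrix with distinct integer nodes. It is therefore invertible over $\mathbb Q$, and its inverse has rational entries. Solving the system expresses each $p_j(m)$ as a $\mathbb Q$-linear combination of the rational numbers $p(m,k)$, so $p_j(m)\in\mathbb Q$.

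Since $m\in\mathbb Z^{\ell-1}$ was arbitrary, each $p_j$ satisfies $p_j(\mathbb Z^{\ell-1})\subset\mathbb Q$. The inductive hypothesis then gives $p_j\in\mathbb Q[x_1,\dots,x_{\ell-1}]$ for every $j$, and hence $p=\sum_j p_j x_\ell^j\in\mathbb Q[x_1,\dots,x_\ell]$.

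The only step needing care is recovering the coefficients from the values, and the Vandermonde inversion over $\mathbb Q$ handles it. Equivalently, one can expand $p$ in the binomial basis $\binom{x_\ell}{j}$ and note that the coefficients are rational iterated finite differences of values at integer points.
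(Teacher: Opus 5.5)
Your proof is correct and follows essentially the paper's route: induction on $\ell$ combined with inverting a rational Vandermonde system in the last variable. The differences are minor. You invert on the values $p_j(m)$ and then apply the induction hypothesis to each $p_j$, which lets you start from the trivial case $\ell=0$. The paper instead applies the induction hypothesis to the slices $p(\cdot,r)$ and inverts on their coefficients, with a separate finite-difference argument for $\ell=1$.
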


\begin{proof}
We first consider the case $\ell=1$. Let $p\in\mathbb R[x]$ have degree
$m$. Let
$$
    \Delta p(x):=p(x+1)-p(x).
$$
 Then $\Delta^m p$ is the constant $m!a_m$, where $a_m$ is the leading coefficient of $p$. Moreover,
each $\Delta^k p(0)$ is an integer linear combination of
$p(0),\dots,p(k)$. Since $p(\mathbb Z)\subset\mathbb Q$, all these
finite differences are rational. It follows that every coefficient of
$p$ is rational.

For general $\ell$, we argue by induction. Write
$$
    p(x_1,\dots,x_\ell)
    =
    \sum_{j=0}^m p_j(x_1,\dots,x_{\ell-1})x_\ell^j.
$$
For each integer $r$, the polynomial $p(\cdot,r)$ takes rational values
on $\mathbb Z^{\ell-1}$. By the induction hypothesis, it has rational
coefficients. Evaluating at $m+1$ distinct integers $r$ and inverting
the corresponding Vandermonde matrix, we conclude that each $p_j$ has
rational coefficients.
\end{proof}

We use the following standard classification of twisted forms of the
determinant \cite{WaterhouseTwistedDeterminant}.

\begin{lemma}[Twisted determinant forms]
\label{lem:twisted-determinant-forms}
Let $F\in\mathbb Q[x_1,\dots,x_N]$ be a homogeneous polynomial of degree
$n$. Suppose that $F$ becomes linearly equivalent to $\det$ over
$\overline{\mathbb Q}$. Then there exist a determinant algebra
$(\mathscr{K},\mathscr{A},\tau)$ of degree $n$ over $\mathbb Q$, a
$\mathbb Q$-basis $\{w_{ij}\}$ of $J(\mathscr{A},\tau)$, and $c\in\mathbb Q^\times$ such that
$$ F(x)
    =
    c\, N_{\mathscr{A},\tau}\!\left(\sum_{i,j}x_{ij}w_{ij}\right).
$$
 Conversely, every polynomial of this
form becomes linearly equivalent to the determinant over
$\overline{\mathbb Q}$.

Moreover, $F$ is linearly equivalent over $\mathbb R$ to the ordinary
determinant if and only if $(\mathscr{K},\mathscr{A},\tau)$ is split over $\mathbb R$.
\end{lemma}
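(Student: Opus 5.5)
The statement is a Galois-descent classification, so I will argue via Galois cohomology. The first step is to identify the linear automorphism group of the determinant form over $\overline{\mathbb Q}$. Define
$$G_{\det}:=\{T\in\GL(\M_n(\overline{\mathbb Q})):\det\circ T=\det\}.$$
By \Cref{Fro} applied with $c=1$ over $\overline{\mathbb Q}$, every such $T$ is of the form $v\mapsto g_1vg_2$ or $v\mapsto g_1v^{\intercal}g_2$ with $\det g_1\det g_2=1$. By \Cref{commute}, the pair $(g_1,g_2)$ is determined up to $(cI,c^{-1}I)$. Hence
$$G_{\det}\simeq \bigl((\SL_n\times\SL_n)/\mu_n\bigr)\rtimes\mathbb Z/2,$$
at least on the level of the group scheme. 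Here the subgroup $(\GL_n\times\GL_n)^{\det=1}/\mathbb G_m$ is identified with $(\SL_n\times\SL_n)/\mu_n$, and the $\mathbb Z/2$ factor acts by transposition, swapping the two factors. I will then compare this with the automorphism group of the split determinant algebra $(\overline{\mathbb Q}\times\overline{\mathbb Q},\M_n\times\M_n^{\rm op},\tau)$. Its symmetric part $J$ is $\{(x,x)\}\simeq\M_n$ with $N=\det$. Its algebra automorphisms commuting with $\tau$ are $(x,y)\mapsto(gxg^{-1},\,\cdot\,)$ combined with the swap. These are inner conjugations on the two factors twisted by $\tau$, and this swap corresponds exactly to transpose. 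The similitudes $v\mapsto g_1vg_2$ arise by modifying the identification of $J$ by a unit, which accounts for the twist by the center.

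To handle scaling cleanly, I will treat forms up to scalar. Consider the groups of similitudes: $\{T:\det\circ T=c\det\}$ on the form side, and automorphisms of triples $(\mathscr K,\mathscr A,\tau)$ together with the choice of a symmetric element on the algebra side. A polynomial $F$ over $\mathbb Q$ that is $\overline{\mathbb Q}$-equivalent to $\det$ yields a cocycle in $H^1(\mathbb Q,G_{\det})$ by the standard recipe: pick $\phi$ with $F=\det\circ\phi$ and set $\sigma\mapsto\phi\,{}^\sigma\phi^{-1}$. Likewise, $H^1(\mathbb Q,\op{Aut}(\text{split determinant algebra}))$ classifies degree-$n$ algebras with unitary involution over quadratic étale algebras. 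The image in $H^1(\mathbb Z/2)$ recovers $\mathscr K$, and the inner case $\mathscr K=\mathbb Q\times\mathbb Q$ gives $\mathscr A_0\times\mathscr A_0^{\rm op}$. The key claim is that the natural map between these group schemes induces a surjection on $H^1$. The discrepancy between the two groups is a central $\mathbb G_m$ or $\mu_n$, and its effect is absorbed precisely by the scalar $c\in\mathbb Q^\times$ together with the choice of twisting $J$ by a norm-type element. By Hilbert 90, $H^1(\mathbb Q,\mathbb G_m)=0$, which makes the $\mathbb G_m$ part harmless. Given a cocycle, I twist the split algebra to obtain $(\mathscr K,\mathscr A,\tau)$ and an isomorphism $J\otimes\overline{\mathbb Q}\to\M_n(\overline{\mathbb Q})$ intertwining $N_{\mathscr A,\tau}$ with $\det$ up to a scalar. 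Descending then gives a $\mathbb Q$-linear identification $\mathbb Q^N\to J(\mathscr A,\tau)$ carrying $F$ to $cN$. The resulting scalar is Galois-invariant, hence lies in $\mathbb Q^\times$; \Cref{polynomial}-style rationality is not even needed here. Choosing $w_{ij}$ as the image of the standard basis gives the displayed formula. The converse is immediate, since over $\overline{\mathbb Q}$ every determinant algebra splits and $N$ becomes $\det$.

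For the real statement, I run the same argument over $\mathbb R$. $F$ is $\mathbb R$-equivalent to $\det$ if and only if the class becomes trivial in $H^1(\mathbb R,G_{\det})$ modulo scalars. Since $c$ can absorb sign only when $n$ is odd, I need care with sign. The cleanest route is to use directly the definition of "split over $\mathbb R$": $F=c\,N\circ\ell$, so $F$ is $\mathbb R$-equivalent to $\det$ if and only if $cN$ is. When $n$ is odd, $c$ is absorbed by $v\mapsto c^{1/n}v$. When $n$ is even and $c<0$, I apply $v\mapsto \diag(-1,1,\dots)v$ composed appropriately, or else rescale by $|c|^{1/n}$; note that $-\det$ is equivalent to $\det$ via a row sign change. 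This is compatible with the definition of splitness, which only asks for $N=\det\circ\iota$.

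I expect the main obstacle to be the precise bookkeeping in the middle step: matching the similitude group of $\det$ with the automorphism group of the split determinant algebra including the center, and verifying that the extra central cohomology is exactly absorbed by $c\in\mathbb Q^\times$. Given this is a standard result, the paper presumably cites \cite{WaterhouseTwistedDeterminant}, and I would do the same for that step.
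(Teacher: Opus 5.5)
Your overall plan is the paper's: Galois descent, with the identification of the descended object attributed to Waterhouse. Your treatment of the real assertion is correct and more explicit than the paper's. Since $c\det$ is $\mathbb R$-equivalent to $\det$ for every $c\in\mathbb R^\times$ (scale by $|c|^{1/n}$ and, if needed, negate one row), $F$ is $\mathbb R$-equivalent to $\det$ if and only if $N_{\mathscr A,\tau}$ is, and that is the definition of split over $\mathbb R$.

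The problem is the justification you give for the key step. The automorphism group of the split triple acts on $J\simeq\M_n$ only through $x\mapsto gxg^{-1}$ and $x\mapsto gx^{\intercal}g^{-1}$. So it is $\mathrm{PGL}_n\rtimes\mathbb Z/2$, of dimension $n^2-1$, whereas $G_{\det}^\circ=(\SL_n\times\SL_n)/\mu_n$ has dimension $2n^2-2$. A map $v\mapsto g_1vg_2$ with $g_1g_2$ non-scalar is left multiplication by $g_1g_2$ composed with a conjugation. It is not central, and $G_{\det}/(\mathrm{PGL}_n\rtimes\mathbb Z/2)$ is the $(n^2-1)$-dimensional variety $\{\det=1\}$. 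So the discrepancy is not ``a central $\mathbb G_m$ or $\mu_n$'', and Hilbert~90 gives no surjectivity on $H^1$.

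Concretely, a $G_{\det}$-valued cocycle $\phi\,{}^\sigma\phi^{-1}$ can twist the algebra $\M_n\times\M_n^{\mathrm{op}}$, but its values do not commute with the swap involution. Your step ``twist the split algebra to obtain $(\mathscr K,\mathscr A,\tau)$'' therefore produces $\mathscr A$ without $\tau$. Along that route you would need to show that the twisted $\mathscr K$-algebra has trivial corestriction to $\mathbb Q$. You would then need the Albert--Riehm--Scharlau theorem to obtain a unitary involution, which is a genuinely new input.

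The idea that closes the gap with no further input is a base point. The stabilizer of $I$ in $G_{\det}$ is exactly the copy of $\mathrm{PGL}_n\rtimes\mathbb Z/2$ above. Also, $G_{\det}(\overline{\mathbb Q})$ is transitive on $\{\det=1\}$ via $v\mapsto w^{-1}v$. Since $F\neq0$ and $\mathbb Q$ is infinite, choose $e\in\mathbb Q^N$ with $c:=F(e)\ne0$.

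Write $F=\det\circ\phi$, take $\lambda\in\overline{\mathbb Q}$ with $\lambda^n=c^{-1}$, and set $w:=\lambda\phi(e)$, so $\det w=1$. Then $\psi(x):=w^{-1}\bigl(\lambda\phi(x)\bigr)$ satisfies $c^{-1}F=\det\circ\psi$ and $\psi(e)=I$. Hence $\psi\,{}^\sigma\psi^{-1}$ fixes both $\det$ and $I$, so it is a cocycle with values in the automorphism group of the split triple.

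Twisting the triple by this cocycle gives $(\mathscr K,\mathscr A,\tau)$ with $J(\mathscr A,\tau)=\psi(\mathbb Q^N)$ inside $\M_n(\overline{\mathbb Q})$. On this space $N_{\mathscr A,\tau}$ is the restriction of $\det$, and $1$ corresponds to $I$. Therefore $F=c\,N_{\mathscr A,\tau}\circ\psi|_{\mathbb Q^N}$ with $c=F(e)\in\mathbb Q^\times$.

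So the scalar is absorbed by choosing a rational point with $F(e)\neq0$, not by Hilbert~90. If you instead simply cite Waterhouse for this step, your outline reduces to the paper's. The explanation you offered in place of that citation, however, would not work.
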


\begin{proof}
This is the classification of twisted forms of the determinant due to
Waterhouse; see \cite{WaterhouseTwistedDeterminant}. We recall the descent
mechanism in order to fix the notation.
Choose $g\in\GL_N(\overline{\mathbb Q})$ such that
$F=\det\circ g$.
For each $\sigma\in\operatorname{Gal}(\overline{\mathbb Q}/\mathbb Q)$,
put
$$
z_\sigma:=g\,\sigma(g)^{-1}.
$$
Since $F$ is defined over $\mathbb Q$, the element $z_\sigma$ stabilizes
the determinant. Moreover, $(z_\sigma)$ is a Galois $1$-cocycle with
values in the stabilizer of the determinant. By \Cref{Fro}, this
stabilizer has two connected components, consisting respectively of the
left-right transformations and the transpose-left-right transformations.
Hence, by passage to the component group, $(z_\sigma)$ induces a character
$$
\chi:\operatorname{Gal}(\overline{\mathbb Q}/\mathbb Q)
\to\mathbb Z/2\mathbb Z.
$$
The character $\chi$ determines a quadratic \'etale algebra
$\mathscr K/\mathbb Q$: if $\chi$ is nontrivial, $\mathscr K$ is the
quadratic field fixed by $\ker\chi$, while if $\chi$ is trivial, then
$\mathscr K=\mathbb Q\times\mathbb Q$.

After base change to $\mathscr K$, the component obstruction disappears,
so the descent datum takes values in the identity component of the determinant
stabilizer. Waterhouse's classification identifies such a descent datum
with a central simple $\mathscr K$-algebra $\mathscr A$ of degree $n$.
The remaining descent from $\mathscr K$ to $\mathbb Q$, in which the
nontrivial automorphism of $\mathscr K/\mathbb Q$ is accompanied by the
transpose component, gives a unitary involution $\tau$ on $\mathscr A$.
The descended $\mathbb Q$-vector space is
$
J(\mathscr A,\tau)
=
\{a\in\mathscr A:\tau(a)=a\},
$
and the descended determinant form is, up to a nonzero scalar, the
restriction of the reduced norm:
$$
N_{\mathscr A,\tau}
=
\left.
\operatorname{Nrd}_{\mathscr A/\mathscr K}
\right|_{J(\mathscr A,\tau)}.
$$
In particular, $\dim_{\mathbb Q}J(\mathscr A,\tau)=n^2=N$. Choosing the
$\mathbb Q$-basis $\{w_{ij}\}$ of $J(\mathscr A,\tau)$ corresponding,
under the descended linear identification, to the standard basis of
$\mathbb Q^N$, we obtain
$$
F(x)
=
c\,N_{\mathscr A,\tau}
\left(\sum_{i,j}x_{ij}w_{ij}\right)
$$
for some $c\in\overline{\mathbb Q}^{\times}$. Since both sides are defined
over $\mathbb Q$, the scalar $c$ is Galois invariant, and hence
$c\in\mathbb Q^\times$.

Conversely, let $(\mathscr K,\mathscr A,\tau)$ be a determinant algebra
of degree $n$ over $\mathbb Q$. After extension of scalars to
$\overline{\mathbb Q}$, the quadratic \'etale algebra $\mathscr K$ splits,
the central simple algebra $\mathscr A$ becomes a matrix algebra, and the
unitary involution becomes the standard transpose descent datum. Consequently,
$$
J(\mathscr A,\tau)\otimes_{\mathbb Q}\overline{\mathbb Q}
\simeq
\M_n(\overline{\mathbb Q}),
$$
under an identification for which $N_{\mathscr A,\tau}$ becomes the
ordinary determinant. It follows that every polynomial of the stated form
is linearly equivalent to $\det$ over $\overline{\mathbb Q}$.

Finally, applying the same classification after extension of scalars to
$\mathbb R$, the corresponding twisted determinant form is linearly
equivalent over $\mathbb R$ to the ordinary determinant precisely when
the determinant algebra $(\mathscr K,\mathscr A,\tau)$ is split over
$\mathbb R$. This proves the last assertion.
\end{proof}

 We also use the following terminology. A determinant form $F$ is called \emph{$\mathbb Q$-split} if it is proportional to
$\det\circ T$ for some $T\in\GL_N(\mathbb Q)$. 
We say that a lattice $\Lambda$ is of
$\mathbb Q$-split type if there exist
$g_1,g_2\in\GL_n(\mathbb R)$ such that
$$
\Lambda
\quad\text{is commensurable with}\quad
g_1\M_n(\mathbb Z)g_2.
$$
We now give the arithmetic characterizations of determinant-rational
lattices.

\begin{theorem}
\label{thm:det-rational-characterization}
Let $\Lambda<\M_n(\mathbb R)$ be a lattice. The following are
equivalent:
\begin{enumerate}
\item $\Lambda$ is determinant-rational;
\item $\det(\Lambda)\subset \lambda\mathbb Z$
    for some $\lambda\in\mathbb R^\times$;
\item $F_\Lambda$ is proportional to a polynomial with rational
coefficients;
\item there exist a determinant algebra $(\mathscr{K},\mathscr{A},\tau)$ of degree $n$ over
$\mathbb Q$, split over $\mathbb R$, a $\tau$-stable order
$\mathcal O\subset \mathscr{A}$, an $\mathbb R$-linear isomorphism
$$
    \iota:J(\mathscr{A},\tau)\otimes_{\mathbb Q}\mathbb R
    \xrightarrow{\sim}
    \M_n(\mathbb R),
$$ satisfying
$$
\det(\iota(x))=N_{\mathscr A,\tau}(x)
\qquad
\text{for all }x\in
J(\mathscr A,\tau)\otimes_{\mathbb Q}\mathbb R,
$$
and $g_1,g_2\in\GL_n(\mathbb R)$, such that $\Lambda$ is commensurable
with
$$
    g_1\,\iota(\mathcal O^\tau)\,g_2;
$$
\item there exist a determinant algebra $(\mathscr{K},\mathscr{A},\tau)$ of degree $n$ over
$\mathbb Q$, split over $\mathbb R$, a $\mathbb Q$-basis
$\{w_{ij}\}$ of $J(\mathscr{A},\tau)$,  and $c\in\mathbb R^\times$ such that
$$
    F_\Lambda(x)
    =
    c\,
    N_{\mathscr{A},\tau}\!\left(\sum_{i,j}x_{ij}w_{ij}\right) .
$$

\end{enumerate}
Moreover, $\Lambda$ is of $\mathbb Q$-split type if and only if $F_\La$ is $\mathbb Q$-split.
\end{theorem}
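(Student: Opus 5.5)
The plan is to prove the cycle of implications (1)$\Rightarrow$(3)$\Rightarrow$(5)$\Rightarrow$(4)$\Rightarrow$(2)$\Rightarrow$(1), and then the final assertion about $\mathbb Q$-split type.

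\emph{(2)$\Rightarrow$(1)} is trivial.

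\emph{(1)$\Rightarrow$(3).} Since $\det(\Lambda)\subset\lambda\mathbb Q$, the polynomial $\lambda^{-1}F_\Lambda$ takes rational values on $\mathbb Z^N$. By \Cref{polynomial} it has rational coefficients.

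\emph{(3)$\Rightarrow$(5).} Suppose $F_\Lambda=c'F$ with $F\in\mathbb Q[x]$. Then $F$ is linearly equivalent to $\det$ over $\mathbb R$, and hence over $\overline{\mathbb Q}$. To see the latter, note that linear equivalence to $\det$ is an algebraic condition on $g$: the variety $\{g\in\GL_N:\det\circ g=\mu F\}$ is defined over $\mathbb Q$ and has a real point. It therefore has a $\overline{\mathbb Q}$-point, possibly after absorbing the scalar $\mu$ into $g$, which is legitimate because $\det$ is homogeneous and $\overline{\mathbb Q}$ is algebraically closed. \Cref{lem:twisted-determinant-forms} then gives $(\mathscr K,\mathscr A,\tau)$, a basis $\{w_{ij}\}$, and $c\in\mathbb Q^\times$. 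Its last assertion shows that the algebra is split over $\mathbb R$, since $F_\Lambda$, and hence $F$ up to a real scalar, is $\mathbb R$-equivalent to $\det$. One has to absorb a possible sign; if the scalar is negative, compose with a matrix of negative determinant when $n$ is odd, or otherwise just keep $c\in\mathbb R^\times$.

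\emph{(5)$\Rightarrow$(4).} Let $\mathcal O$ be any $\tau$-stable order, for instance $\mathcal O_0+\tau(\mathcal O_0)$ enlarged to the ring it generates, which is still an order. The $\mathbb Z$-span $L$ of $\{w_{ij}\}$ is then commensurable with $\mathcal O^\tau$, since both are full lattices in $J(\mathscr A,\tau)$. Fix a real splitting $\iota$. The map $\phi:\M_n(\mathbb R)\to\M_n(\mathbb R)$ sending $\sum x_{ij}v_{ij}$ to $\iota(\sum x_{ij}w_{ij})$ is a linear isomorphism with $\det\circ\phi=c^{-1}\det$ and $\phi(\Lambda)=\iota(L)$. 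By \Cref{Fro} over $L=\mathbb R$, $\phi^{-1}$ is $v\mapsto g_1vg_2$ or $v\mapsto g_1v^\intercal g_2$. In the transpose case, replace $\iota$ by $\iota^\intercal$, which is still a splitting. Hence $\Lambda=g_1\iota(L)g_2$, which is commensurable with $g_1\iota(\mathcal O^\tau)g_2$.

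\emph{(4)$\Rightarrow$(2).} A finite-index sublattice $\Lambda'$ of both $\Lambda$ and $g_1\iota(\mathcal O^\tau)g_2$ exists. On $g_1\iota(\mathcal O^\tau)g_2$ the determinant equals $\det(g_1g_2)N_{\mathscr A,\tau}$, and $N_{\mathscr A,\tau}$ takes values in a fractional ideal of $\mathbb Q$, since $\mathcal O^\tau$ is a finitely generated $\mathbb Z$-module and $N$ is a rational polynomial. Thus $\det(\Lambda')\subset\lambda\mathbb Z$. Since $m\Lambda\subset\Lambda'$ for some $m$, we get $\det(\Lambda)\subset m^{-n}\lambda\mathbb Z$.

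\emph{Final assertion.} If $\Lambda$ is commensurable with $g_1\M_n(\mathbb Z)g_2$, then a basis of $\Lambda$ is a $\GL_N(\mathbb Q)$ change of basis from $\{g_1E_{ij}g_2\}$, so $F_\Lambda$ is proportional to $\det\circ T$ with $T\in\GL_N(\mathbb Q)$. Conversely, suppose $F_\Lambda=c\det\circ T$. As in (5)$\Rightarrow$(4), the map sending $\sum x_{ij}v_{ij}$ to $T$-image matrices preserves $\det$ up to scalar, so by \Cref{Fro} $\Lambda=g_1\,\Lambda_0\,g_2$ (up to a transpose) with $\Lambda_0=\phi(\Lambda)$ spanned by the rational matrices $T(e_{ij})$. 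Then $\Lambda_0$ is commensurable with $\M_n(\mathbb Z)$, and transposition preserves $\M_n(\mathbb Z)$.

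\emph{Main obstacle.} The step I expect to need the most care is (3)$\Rightarrow$(5): upgrading real equivalence to $\overline{\mathbb Q}$-equivalence so that \Cref{lem:twisted-determinant-forms} applies, while keeping track of the scalar and the splitting over $\mathbb R$. My first approach would be the algebraic-points argument sketched above for the scheme of equivalences.
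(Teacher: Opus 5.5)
Your proposal is correct and follows essentially the paper's route. It uses the same cycle of implications; the paper also proves (3)$\Rightarrow$(2) directly, which your cycle makes redundant. The Nullstellensatz-plus-Waterhouse step for (3)$\Rightarrow$(5) is the same, and so is the Frobenius--Dieudonn\'e argument, with $\iota$ replaced by its transpose when necessary, for (5)$\Rightarrow$(4) and for the $\mathbb Q$-split assertion. Two details need repair, and one implication is argued differently.

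\emph{The $\tau$-stable order.} Your suggested construction does not work, because the ring generated by two orders need not be finitely generated over $\mathbb Z$. For example, let $P_p:=\operatorname{diag}(p,1)\M_2(\mathbb Z)\operatorname{diag}(p,1)^{-1}$. Then $\M_2(\mathbb Z)$ and $P_p$ generate a subring of $\M_2(\mathbb Q)$ containing $(p^{-1}E_{21}E_{12})^k=p^{-k}E_{22}$ for every $k$. In the split algebra $\M_2(\mathbb Q)\times\M_2(\mathbb Q)^{\mathrm{op}}$, the order $\mathcal O_0=\M_2(\mathbb Z)\times P_p^{\mathrm{op}}$ and $\tau(\mathcal O_0)$ therefore generate a non-order. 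Take $\mathcal O:=\mathcal O_0\cap\tau(\mathcal O_0)$ instead, as the paper does. Here $\tau(\mathcal O_0)$ is again an order, and the intersection of two orders is an order because they are commensurable full lattices. Since $\tau^2=\mathrm{id}$, the intersection is $\tau$-stable.

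\emph{The scalar in (3)$\Rightarrow$(5).} Drop the scalar $\mu$ and the parity discussion. A real $\mu$ in the defining equation would spoil the $\mathbb Q$-rationality of the variety. Moreover, no sign issue arises: for every $n$ and every $\lambda\in\mathbb R^\times$, the map $v\mapsto\operatorname{diag}(\lambda,1,\ldots,1)v$ multiplies $\det$ by $\lambda$. Hence $F=c'^{-1}F_\Lambda$ is itself $\mathbb R$-equivalent to $\det$. The $\mathbb Q$-variety $\{g:\det\circ g=F\}$ therefore has a real point, hence a $\overline{\mathbb Q}$-point. The last clause of \Cref{lem:twisted-determinant-forms} then gives splitting over $\mathbb R$.

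\emph{The implication (4)$\Rightarrow$(2).} Your argument here is genuinely different from the paper's, and more elementary. You use only that $N_{\mathscr A,\tau}$ is a polynomial over $\mathbb Q$ on $J(\mathscr A,\tau)$, so it has bounded denominators on any full lattice there; this never uses that $\mathcal O$ is an order. The paper instead uses that $\operatorname{Nrd}_{\mathscr A/\mathscr K}$ of an element of an order is an algebraic integer of $\mathscr K$. For a $\tau$-fixed element this integer is fixed by conjugation, so it lies in $\mathbb Z$. That yields the sharper inclusion $\det(g_1\iota(\mathcal O^\tau)g_2)\subset\det(g_1g_2)\mathbb Z$, though either argument suffices for (2).
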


\begin{proof}
The implication $(2)\Rightarrow(1)$ is immediate. If (3) holds, then
there are $c\in\mathbb R^\times$ and
$P\in\mathbb Q[x_1,\dots,x_N]$ such that $F_\Lambda=cP$. Since the
values of $P$ on $\mathbb Z^N$ have bounded denominators, there exists
$m\ge1$ such that $ mP(\mathbb Z^N)\subset\mathbb Z$.
Thus $\det(\Lambda)=F_\Lambda(\mathbb Z^N)\subset c\,m^{-1}\mathbb Z$, which proves $(3)\Rightarrow(2)$.

We next prove $(1)\Rightarrow(3)$. If $\Lambda$ is
determinant-rational, then $F_\Lambda(\mathbb Z^N)\subset\lambda\mathbb Q$ for some $\lambda\in\mathbb R^\times$. Hence
$\lambda^{-1}F_\Lambda$ takes rational values on $\mathbb Z^N$.
By \Cref{polynomial},
$$
    \lambda^{-1}F_\Lambda\in\mathbb Q[x_1,\dots,x_N],
$$
so $F_\Lambda$ is proportional to a polynomial with rational
coefficients.

We now prove $(3)\Rightarrow(5)$. After rescaling, we may assume that
$F_\Lambda\in\mathbb Q[x_1,\dots,x_N]$. Since $F_\Lambda$ is a real
determinant form, the scheme
$\{g\in\GL_N:F_\Lambda=\det\circ g\}$ is defined over $\mathbb Q$ and
has an $\mathbb R$-point. By the Nullstellensatz, it therefore has an
$\overline{\mathbb Q}$-point,
so $F_\Lambda$ is linearly equivalent to $\det$ over
$\overline{\mathbb Q}$. By
\Cref{lem:twisted-determinant-forms}, there exist a determinant algebra
$(\mathscr{K},\mathscr{A},\tau)$ over $\mathbb Q$ and a $\mathbb Q$-basis
$\{w_{ij}\}$ of $J(\mathscr{A},\tau)$ such that
$$
    F_\Lambda(x)
    =
    c\,
    N_{\mathscr{A},\tau}\!\left(\sum_{i,j}x_{ij}w_{ij}\right)
$$
for some $c\in\mathbb R^\times$. Since $F_\Lambda$ is linearly
equivalent over $\mathbb R$ to the ordinary determinant, the determinant
algebra is split over $\mathbb R$.

We prove $(5)\Rightarrow(4)$. Let $\{v_{ij}\}$ be the
$\mathbb Z$-basis of $\Lambda$ used to define $F_\Lambda$, and let
$W$ be the $\mathbb Z$-span of the basis $\{w_{ij}\}$ in
$J(\mathscr{A},\tau)$. Choose an order $\mathcal O_0\subset\mathscr A$ and
set $\mathcal O:=\mathcal O_0\cap\tau(\mathcal O_0)$. Then $\mathcal O$ is
a $\tau$-stable order in $\mathscr A$, and $W$ and $\mathcal O^\tau$ are full lattices in the same
$\mathbb Q$-vector space $J(\mathscr{A},\tau)$, hence are commensurable. Let $ L:J(\mathscr{A},\tau)\otimes_{\mathbb Q}\mathbb R\to \M_n(\mathbb R)$
be the real linear isomorphism determined by
$L(w_{ij})=v_{ij}$. Since $(\mathscr K,\mathscr A,\tau)$ is split over $\mathbb R$, there is
an $\mathbb R$-linear isomorphism
$\iota:J(\mathscr A,\tau)\otimes_{\mathbb Q}\mathbb R
\xrightarrow{\sim}\M_n(\mathbb R)$
such that
$$
N_{\mathscr A,\tau}(x)=\det(\iota(x)).
$$
Then $T:=L\circ\iota^{-1}$ satisfies
$\det(Tv)=c\,\det(v)$ for all $v\in\M_n(\mathbb R)$.
By \Cref{Fro}, either
$$
    T(v)=g_1vg_2\quad\text{or}\quad
    T(v)=g_1v^{\intercal}g_2
$$
for some $g_1,g_2\in\GL_n(\mathbb R)$.  In the second case, replace
$\iota$ by the $\mathbb R$-linear isomorphism
$x\to \iota(x)^{\intercal}$.
Since the determinant is invariant under transpose, this new isomorphism
still satisfies
$$
N_{\mathscr A,\tau}(x)=\det(\iota(x)).
$$ Thus, after this replacement, we may assume that
$
T(v)=g_1vg_2$ for $v\in\M_n(\mathbb R)$.
Since $T=L\circ\iota^{-1}$, it follows that
$$
L(x)=T(\iota(x))=g_1\iota(x)g_2
\qquad
\bigl(x\in J(\mathscr A,\tau)\otimes_{\mathbb Q}\mathbb R\bigr).
$$

Since $L(W)=\Lambda$ and $W$ is commensurable with $\mathcal O^\tau$,
it follows that $\Lambda$ is commensurable with
$g_1\,\iota(\mathcal O^\tau)\,g_2$. This proves (4).

Finally, we prove $(4)\Rightarrow(2)$. Let $x\in\mathcal O^\tau$.
Since $\mathcal O$ is an order, $\operatorname{Nrd}_{\mathscr{A}/\mathscr{K}}(x)$ is an
algebraic integer of $\mathscr{K}$. Since $x=\tau(x)$, this reduced norm is fixed
by the nontrivial automorphism of $\mathscr{K}/\mathbb Q$, and hence lies in
$\mathbb Z$. Therefore
$$
    \det(g_1\iota(x)g_2)
    =
    \det(g_1g_2)\,N_{\mathscr{A},\tau}(x)
    \in
    \det(g_1g_2)\mathbb Z.
$$
If $\Lambda$ is commensurable with $g_1\iota(\mathcal O^\tau)g_2$, then
there exists $m\ge1$ such that $m\Lambda\subset g_1\iota(\mathcal O^\tau)g_2$. Consequently,
$$
    \det(\Lambda)
    \subset
    m^{-n}\det(g_1g_2)\mathbb Z.
$$
This proves (2).
It remains to prove the final assertion. Let
$$
L_\Lambda:\M_n(\mathbb R)\to\M_n(\mathbb R)
$$
be the linear isomorphism determined by the chosen
$\mathbb Z$-basis $\{v_{ij}\}$ of $\Lambda$, so that
$L_\Lambda(E_{ij})=v_{ij}$. Then
$$
\Lambda=L_\Lambda\M_n(\mathbb Z)
\qquad\text{and}\qquad
F_\Lambda=\det\circ L_\Lambda.
$$

Recall that, for $S_1,S_2\in\GL_N(\mathbb R)$, the lattices
$S_1\mathbb Z^N$ and $S_2\mathbb Z^N$ are commensurable if and only if $S_1^{-1}S_2\in\GL_N(\mathbb Q)$.
Suppose first that $\Lambda$ is of $\mathbb Q$-split type. Then there
exist $g_1,g_2\in\GL_n(\mathbb R)$ such that $\Lambda$ is
commensurable with $g_1\M_n(\mathbb Z)g_2$.
Let
$
S:\M_n(\mathbb R)\to\M_n(\mathbb R)$ be given by $S(X)=g_1Xg_2$.
The lattices $L_\Lambda\M_n(\mathbb Z)$ and
$S\M_n(\mathbb Z)$ are commensurable. Hence
$$
T:=S^{-1}L_\Lambda\in\GL_N(\mathbb Q).
$$
It follows that
$$
\begin{aligned}
F_\Lambda(X)
&=\det(L_\Lambda X)=\det(S(TX))=\det(g_1)\det(g_2)\det(TX).
\end{aligned}
$$
Thus
$F_\Lambda=
\det(g_1)\det(g_2)\,\det\circ T,
$
and therefore $F_\Lambda$ is $\mathbb Q$-split.

Conversely, suppose that $F_\Lambda$ is $\mathbb Q$-split. Then there
exist $c\in\mathbb R^\times$ and $T\in\GL_N(\mathbb Q)$ such that $ F_\Lambda=c\,\det\circ T$. Set $S:=L_\Lambda T^{-1}$. For every $X\in\M_n(\mathbb R)$, we have
$$
\det(SX)
=
\det(L_\Lambda T^{-1}X)
=
c\,\det(X).
$$
By \Cref{Fro}, there exist $g_1,g_2\in\GL_n(\mathbb R)$ such that either $S(X)=g_1Xg_2$
for all $X$, or
$
S(X)=g_1X^{\intercal}g_2$
for all $X$. Since $T\in\GL_N(\mathbb Q)$, the lattice
$T\M_n(\mathbb Z)$ is commensurable with $\M_n(\mathbb Z)$. Hence
$$
\begin{aligned}
\Lambda
&=L_\Lambda\M_n(\mathbb Z) =S\bigl(T\M_n(\mathbb Z)\bigr)
\end{aligned}
$$
is commensurable with $S\M_n(\mathbb Z)=g_1\M_n(\mathbb Z)g_2$.
Thus $\Lambda$ is of $\mathbb Q$-split type.

\end{proof}

\subsection{Orbit closures}

Let
$$
    G=\SL_N(\mathbb R),
    \qquad
    \Gamma=\SL_N(\mathbb Z),
    \qquad
    X=G/\Gamma.
$$
The space $X$ parametrizes unimodular lattices in $\M_n(\mathbb R)$.
For $g=(g_{ij})\in\SL_n(\mathbb R)$ and $h\in\SL_n(\mathbb R)$, their
Kronecker product is
$$
    g\otimes h
    =
    \begin{pmatrix}
    g_{11}h&\cdots&g_{1n}h\\
    \vdots&\ddots&\vdots\\
    g_{n1}h&\cdots&g_{nn}h
    \end{pmatrix}
    \in\SL_N(\mathbb R).
$$
We set
$$
    H=\SL_n(\mathbb R)\otimes\SL_n(\mathbb R).
$$
Under the identification $E_{ij}=e_i\otimes e_j$, one has
$$
    (h_1\otimes h_2)v=h_1vh_2^{\intercal}.
$$

The following consequence of Ratner's orbit-closure theorem \cite{ratner:1991} is the
dynamical input.

\begin{theorem}
\label{rat}
For every $[g]\in G/\Gamma$, the orbit $H[g]$ is either closed or
dense. If $H[g]$ is closed, then
$ H\cap g\Gamma g^{-1}$
is a lattice in $H$.
\end{theorem}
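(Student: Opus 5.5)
We apply Ratner's orbit-closure theorem to the connected group $H$. Since $H\cong(\SL_n(\mathbb R)\times\SL_n(\mathbb R))/\{\pm(I,I)\}$ (or its finite quotient) is a connected semisimple group without compact factors, it is generated by unipotent one-parameter subgroups. Ratner's theorem then gives a closed connected subgroup $L$ with $H\le L\le G$ such that
$$
\overline{H[g]}=L[g],
$$
and $L\cap g\Gamma g^{-1}$ is a lattice in $L$. The plan is to show that the only possibilities are $L=H$ and $L=G$.

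\textbf{Step 1: classify intermediate subgroups.} The key claim is that $H$ is a maximal connected subgroup of $G=\SL_{n^2}(\mathbb R)$. I would argue at the Lie algebra level. Let $\mathfrak l$ be a Lie subalgebra with $\mathfrak h\subseteq\mathfrak l\subseteq\mathfrak{sl}_{n^2}$. As an $\mathfrak h$-module,
$$
\mathfrak{sl}_{n^2}\cong\mathfrak{sl}(V\otimes W)\cong(\mathfrak{sl}_n\otimes 1)\oplus(1\otimes\mathfrak{sl}_n)\oplus(\mathfrak{sl}_n\otimes\mathfrak{sl}_n),
$$
and for $n\ge2$ the last summand is irreducible and nontrivial. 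Hence $\mathfrak l$ is $\mathfrak h$ plus some $\mathfrak h$-submodule of $\mathfrak{sl}_n\otimes\mathfrak{sl}_n$, so either $\mathfrak l=\mathfrak h$ or $\mathfrak l=\mathfrak g$.

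Alternatively, one can cite Dynkin's classification: tensor-product subgroups $\SL(V)\otimes\SL(W)$ are maximal. Since $L$ is connected, $L=H$ or $L=G$.

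\textbf{Step 2: conclude.} If $L=G$, the orbit is dense. If $L=H$, then $\overline{H[g]}=H[g]$, so the orbit is closed, and Ratner's theorem gives that $H\cap g\Gamma g^{-1}$ is a lattice in $H$.

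For the converse direction, which is not needed by Ratner but is worth verifying, suppose $H[g]$ is closed. Its closure is $L[g]$ with $L=H$ by Step 1, unless $L=G$. But $L=G$ would force $H[g]=G[g]=X$, which is impossible by dimension count when $H\ne G$. So $L=H$, and the lattice property again follows from the Ratner statement. One can also invoke the standard fact that a closed orbit of a group generated by unipotents carries a finite invariant measure (Dani--Margulis/Ratner).

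\textbf{Main obstacle.} The main point needing care is Step 1, the maximality of $H$. In particular, one must check two things:
\begin{enumerate}
\item irreducibility of $\mathfrak{sl}_n\otimes\mathfrak{sl}_n$ under $\mathfrak{sl}_n\oplus\mathfrak{sl}_n$, which follows because the tensor product of irreducible representations of the two factors is irreducible, even for $n=2$;
\item that Ratner's $L$ is connected and contains $H$, rather than merely containing a finite-index subgroup of $H$.
\end{enumerate}
The remaining steps are formal.
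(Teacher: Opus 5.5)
Your proposal is correct and follows the paper's argument: $H$ is generated by unipotent one-parameter subgroups and is a maximal connected subgroup of $G$, so Ratner's orbit-closure theorem forces the intermediate group $L$ to be $H$ or $G$. The one difference is that you prove maximality directly, via the $\mathfrak h$-module decomposition $\mathfrak{sl}_{n^2}=\mathfrak h\oplus(\mathfrak{sl}_n\otimes\mathfrak{sl}_n)$ with an absolutely irreducible complement, whereas the paper cites Dynkin; your argument is a valid self-contained substitute.
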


\begin{proof}
The group $H$ is
generated by one-parameter unipotent subgroups. Moreover, $H$ is a
maximal connected Lie subgroup of $G$; see \cite{dynkin}. Ratner's
orbit-closure theorem \cite{ratner:1991} therefore implies the claim.
\end{proof}

We next identify the invariant polynomials for the $H$-action.

\begin{lemma}
\label{detuni}
For any $g\in G$, the space of homogeneous  polynomials of degree $n$ on
$\M_n(\mathbb R)$ invariant under $g^{-1}Hg$ is one-dimensional and is
spanned by
$$
    F_g:=\det\circ g.
$$
\end{lemma}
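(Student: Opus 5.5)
Since $p\mapsto p\circ g$ intertwines $H$-invariants with $g^{-1}Hg$-invariants (if $P$ is $H$-invariant, then $P\circ g$ is $g^{-1}Hg$-invariant, and conversely), we may assume $g=I$. We must then show that the space of degree-$n$ homogeneous polynomials on $\M_n(\mathbb R)$ invariant under $v\mapsto h_1vh_2^{\intercal}$, for $h_1,h_2\in\SL_n(\mathbb R)$, is spanned by $\det$. Clearly $\det$ is invariant, since $\det(h_1vh_2^{\intercal})=\det v$. It remains to prove that the space has dimension at most one.

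The approach is an orbit-density argument. First, the $H$-orbit of the identity, $\{h_1h_2^{\intercal}\}=\SL_n(\mathbb R)$, equals the level set $\{\det=1\}$. More generally, for $c\neq0$ the set $\{v:\det v=c\}$ is a single $H$-orbit: it is the orbit of $\operatorname{diag}(c,1,\dots,1)$, since for $\det v=\det w=c$ we have $w=(wv^{-1})v$ with $wv^{-1}\in\SL_n(\mathbb R)$. Consequently any $H$-invariant polynomial $P$ is constant on each level set $\{\det=c\}$, $c\ne0$.

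Now let $P$ be homogeneous of degree $n$. Put $v_s=\operatorname{diag}(s,1,\dots,1)$. The polynomial $f(s):=P(v_s)$ has degree at most $n$, and $\det v_s=s$. By invariance, $P(v)=f(\det v)$ for all $v$ with $\det v\neq0$. Homogeneity gives $P(\lambda v)=\lambda^nP(v)$, so
$$f(\lambda^n s)=\lambda^n f(s)\qquad(s\neq0,\ \lambda\in\mathbb R^\times).$$
Fixing $s=1$ and letting $\lambda$ range over $\mathbb R^\times$, the values $\lambda^n$ sweep out an interval of positive reals, so $f(x)=f(1)\,x$ there. Since $f$ is a polynomial, $f(x)=f(1)\,x$ identically. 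Hence $P=f(1)\det$ on the dense open set $\{\det\ne0\}$, and therefore everywhere by continuity.

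The only point needing care is the transitivity of $H$ on the nonzero level sets. This is the elementary computation above; note that the left factor alone already acts transitively. No deeper input, such as Ratner's theorem or the lemma on determinant-preserving linear maps, is needed.
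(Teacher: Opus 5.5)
Your proof is correct and follows essentially the same route as the paper: reduce to $g=e$, use that the nonzero level sets of $\det$ are single $H$-orbits to write $P(v)=f(\det v)$ with $f(s)=P(\operatorname{diag}(s,1,\ldots,1))$, use homogeneity to force $f(x)=f(1)\,x$ for $x>0$, and extend by density. The differences are cosmetic. The paper first normalizes $f(I_n)=1$ and extends from $\{\det>0\}$ by Zariski density. You instead extend $f$ as a one-variable polynomial and then use continuity on the dense set $\{\det\neq0\}$.
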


\begin{proof}  Let $f$ be a homogeneous degree-$n$ polynomial invariant under
$g^{-1} Hg $ for some $g\in G$.
After replacing $f$ by $f\circ g^{-1}$, it suffices to treat the case
$g=e$. If $f(I_n)=0$, then $f$ vanishes on the $H$-orbit of $I_n$,
which is $\SL_n(\mathbb R)$. By homogeneity, $f$ vanishes on
the open subset $\br^+\SL_n(\mathbb R) $ of matrices with positive determinants. This set is Zariski dense in  
$\M_n(\br)$. Hence $f$ vanishes identically. Thus for $f\ne 0$, we may assume after rescaling that
$f(I_n)=1$.

For $X\in\GL_n(\mathbb R)$, the $H$-orbit of $X$ is determined by
$\det X$. Hence
$$
    f(X)=
    f\bigl(\operatorname{diag}(\det X,1,\ldots,1)\bigr).
$$
For every $t\ne0$, $\operatorname{diag}(t^n,1,\ldots,1)$
lies in the $H$-orbit of $tI_n$. Therefore
$$
    f\bigl(\operatorname{diag}(t^n,1,\ldots,1)\bigr)
    =
    f(tI_n)=t^n.
$$
It follows that for all $s>0$, 
$ f\bigl(\operatorname{diag}(s,1,\ldots,1)\bigr)=s$.
  Hence $f(X)=\det X$ on the open set
$\{X:\det X>0\}$. Since this set is Zariski dense in
$\M_n(\mathbb R)$, the two polynomials agree identically.
\end{proof}

We can now characterize determinant-rational lattices dynamically.

\begin{theorem}
\label{prop:closed-rational-csa}
Let $g\in G$, and let
$\Lambda=g\mathbb Z^N$.
Then the following hold:
\begin{enumerate}
\item $\Lambda$ is determinant-rational if and only if $H[g]$ is closed
in $X$.
\item If $\Lambda$ is not determinant-rational, then $H[g]$ is dense in
$X$, and
$$
    \overline{\det(\Lambda)}=\mathbb R.
$$
\end{enumerate}
\end{theorem}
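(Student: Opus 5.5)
The argument has two directions for (1), and (2) then follows from (1) together with \Cref{rat}. Note that $F_\Lambda=\det\circ g$ with respect to the basis $g e_{(i-1)n+j}$ of $\Lambda$, which is the polynomial $F_g$ of \Cref{detuni}. The stabilizer of $[g]$ in $H$ is $H\cap g\Gamma g^{-1}$, and conjugating by $g$ identifies it with $\Gamma_g:=g^{-1}Hg\cap\Gamma$.

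\emph{Closed $\Rightarrow$ determinant-rational.} If $H[g]$ is closed, then by \Cref{rat} the group $\Gamma_g$ is a lattice in $g^{-1}Hg$. Since $H\simeq \SL_n(\mathbb R)\times\SL_n(\mathbb R)/(\pm)$ is semisimple without compact factors, the Borel density theorem shows that $\Gamma_g$ is Zariski dense in $g^{-1}Hg$. Consequently, a polynomial is $\Gamma_g$-invariant if and only if it is $g^{-1}Hg$-invariant. Let $\mathcal P_n$ be the space of degree-$n$ homogeneous polynomials on $\mathbb R^N$, which carries a natural $\mathbb Q$-structure preserved by $\Gamma\subset\SL_N(\mathbb Z)$. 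The subspace of $\Gamma_g$-invariants is cut out by linear equations with rational coefficients, so it is defined over $\mathbb Q$. By \Cref{detuni} this subspace is the line $\mathbb R F_g$. A one-dimensional subspace defined over $\mathbb Q$ contains a nonzero rational vector, so $F_g$ is proportional to a rational polynomial. By \Cref{thm:det-rational-characterization} (3)$\Rightarrow$(1), $\Lambda$ is determinant-rational.

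\emph{Determinant-rational $\Rightarrow$ closed.} Suppose $\Lambda$ is determinant-rational, so that after scaling $F_g\in\mathbb Q[x]$. The stabilizer $S:=\{\gamma\in\SL_N: F_g\circ\gamma=F_g\}$ is then a $\mathbb Q$-algebraic group. By \Cref{Fro} its identity component is $g^{-1}Hg$ up to the finite center: elements of $\SL_N$ preserving $\det$ have the form $v\mapsto g_1vg_2$ or its transpose variant, and the scalars are absorbed into $H$ modulo a finite group. Since $S$ is defined over $\mathbb Q$, so is its identity component $S^\circ=g^{-1}Hg$, and it is semisimple. Borel--Harish-Chandra then implies that $S^\circ(\mathbb R)\cap\Gamma$ is a lattice in $S^\circ(\mathbb R)$, and the standard criterion gives that the orbit $S^\circ(\mathbb R)[e]$ is closed in $G/\Gamma$. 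Translating by $g$ shows $H[g]$ is closed. I expect this to be the main point requiring care: checking that the $\mathbb R$-points of $S^\circ$ coincide with $g^{-1}Hg$ up to finite index, since $S(\mathbb R)$ contains transpose components and the subgroup of $\{(g_1,g_2):\det g_1\det g_2=1\}$ with $\det g_i=\pm1$. A closed orbit of a finite-index supergroup still makes the $H$-orbit closed, because a closed orbit of a group containing $H$ with finite index is a finite union of $H$-orbits, each of which is closed. An alternative for this direction is to use that a closed orbit is equivalent to $\Gamma_g$ being a lattice, and to cite \cite[Thm.~3.3]{ratner:1991}-type criteria or Dani--Margulis.

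\emph{Part (2).} If $\Lambda$ is not determinant-rational, then $H[g]$ is not closed by (1), hence dense by \Cref{rat}. Now fix $s\in\mathbb R$ and $\varepsilon>0$, and choose a unimodular lattice $\Delta_0=g_0\mathbb Z^N$ containing a primitive vector $v_0$ with $\det v_0=s$; for instance, take any matrix $v_0$ with $\det v_0=s$ and complete it to a unimodular basis. By density there are $h_k\in H$ with $h_k g\Gamma\to g_0\Gamma$, so there exist $\gamma_k$ with $h_kg\gamma_k\to g_0$. Then $w_k:=h_kg\gamma_k g_0^{-1}v_0\to v_0$, and $w_k=h_k u_k$ with $u_k:=g\gamma_kg_0^{-1}v_0\in\Lambda$. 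Since $H$ preserves $\det$, we get $\det u_k=\det w_k\to s$. Hence $s\in\overline{\det(\Lambda)}$.
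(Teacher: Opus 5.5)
Your proposal is correct and follows the paper's proof essentially step for step: Borel density, the $\mathbb Q$-structure on the $\Gamma_g$-invariant polynomials, and \Cref{detuni} for closed $\Rightarrow$ determinant-rational; the $\mathbb Q$-stabilizer of $F_g$ with Borel--Harish-Chandra for the converse; and density of $Hg\Gamma$ applied to a primitive lattice vector for $\overline{\det(\Lambda)}=\mathbb R$. Your extra care that the real points of the stabilizer contain $g^{-1}Hg$ only with finite index (the paper simply writes $\mathbf L(\mathbb R)^\circ=g^{-1}Hg$) is a legitimate refinement, though the discrepancy comes from the transpose component and the finitely many real components of a Zariski-connected group, not from the finite center.
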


\begin{proof}
 Suppose
that $H[g]$ is closed. By \Cref{rat},
$\Gamma\cap g^{-1}Hg$
is a lattice in $g^{-1}Hg$.
Let $W_0$ be the space of homogeneous degree-$n$ polynomials on
$\mathbb R^N$ invariant under $\Gamma\cap g^{-1}Hg$. This is an
$\mathbb R$-subspace of $\operatorname{Sym}^n((\mathbb R^N)^*)$ defined over $\mathbb Q$, since
$\Gamma\cap g^{-1}Hg\subset \SL_N(\mathbb Z)$. Moreover $F_g\in W_0$.
By the Borel density theorem, $\Gamma\cap g^{-1}Hg$ is Zariski dense in
$g^{-1}Hg$; see, for instance, \cite{zimmer1984ergodic}. Hence every
element of $W_0$ is invariant under $g^{-1}Hg$. By \Cref{detuni},
$$
    W_0=\mathbb R F_g.
$$
Since $W_0$ is defined over $\mathbb Q$, the line
$\mathbb R F_g$ is defined over $\mathbb Q$. Hence $F_g$, and
therefore $F_\Lambda$, is proportional to a polynomial with rational
coefficients. By \Cref{thm:det-rational-characterization}, $\Lambda$ is
determinant-rational.

Conversely, suppose that $\Lambda=g\mathbb Z^N$ is determinant-rational.
After rescaling $F_g$, we may assume that
$ F_g\in\mathbb Q[x_1,\dots,x_N]$.
Let
$$
    \mathbf L
    :=
    \operatorname{Stab}_{\SL_N}(F_g)^\circ .
$$
Since $F_g$ has rational coefficients, $\mathbf L$ is an algebraic
group defined over $\mathbb Q$. Moreover,
$\mathbf L(\mathbb R)^\circ=g^{-1}Hg$.
The group $\mathbf L$ is semisimple. By the Borel--Harish-Chandra
theorem \cite{borel-harish:1962}, $\mathbf L(\mathbb Q)\cap \SL_N(\mathbb Z)$
is a lattice in $\mathbf L(\mathbb R)^\circ$. Hence
$ g^{-1}Hg\cap \Gamma$
is a lattice in $g^{-1}Hg$. Therefore $H[g]$ is closed.
This proves (1). 
Part (2) follows immediately from \Cref{rat}: if
$\Lambda$ is not determinant-rational, then $H[g]$ is not closed by (1), and
hence is dense in $X$. Since
$\Lambda=g\mathbb Z^N\supset g\Gamma e_1$
and $ Ge_1=\mathbb R^N-\{0\}$,
we obtain
$$
    \overline{\det(\Lambda)}
    \supset
    \det\bigl(\overline{Hg\Gamma}\,e_1\bigr)
    =
    \det(Ge_1)
    =
    \det(\M_n(\mathbb R)-\{0\})
    =
    \mathbb R.
$$
\end{proof}

\section{Height functions and main technical results}
\label{s:height-main}

In this section, we introduce the height functions and the representation-theoretic
framework used in the proof of \Cref{main}. We use
$\SL_n(\mathbb R)\times\SL_n(\mathbb R)$ as a covering group of its image $H$ under the
left--right action and do not distinguish a pair from its image. We then define the Diophantine and quasi-null conditions
that isolate the exceptional subspaces for this action, and state the
principal technical results proved in the subsequent sections.

\subsection{Rational subspaces of a lattice}
Recall that $N=n^2$ and that $\M_n(\mathbb R)=\mathbb R^N$.
 We equip $\M_n(\mathbb R)$ with the
Euclidean inner product
$$
\langle v,w\rangle=\operatorname{tr}(v w^{\intercal}),\qquad(v, w\in \M_n(\br))
$$
 and use the induced inner
products on all exterior powers.

Let $V<\M_n(\mathbb R)$ be an $r$-dimensional subspace for $1\leq r\leq N-1$. A
\emph{Pl\"ucker vector} of $V$ is a nonzero decomposable vector
\begin{equation}\label{wvv}
\mathsf w_V=v_1\wedge\cdots\wedge v_r
\in \wedge^r\M_n(\mathbb R),
\end{equation}
where $v_1,\ldots,v_r$ form a basis of $V$. Thus the Pl\"ucker line
$\mathbb R\mathsf w_V$ depends only on $V$.

Let $\Delta<\M_n(\mathbb R)$ be a lattice. A subspace
$V<\M_n(\mathbb R)$ is \emph{$\Delta$-rational} if $\Delta\cap V$ is a
lattice in $V$. For such a subspace, its \emph{$\Delta$-Pl\"ucker vector}
is
\begin{equation}\label{wv}
\mathsf w_{\Delta,V}=v_1\wedge\cdots\wedge v_r,
\end{equation}
where $v_1,\ldots,v_r$ is a $\mathbb Z$-basis of $\Delta\cap V$. It is well-defined up to sign, and
$$
\|\mathsf w_{\Delta,V}\|=\operatorname{covol}_V(\Delta\cap V).
$$
We write
$$
d_\Delta(V):=
\begin{cases}
1 &\text{if } V=\{0\},\\
\|\mathsf w_{\Delta,V}\|& \text{if }V\neq\{0\}.
\end{cases}
$$

\subsection{The height function \texorpdfstring{$\alpha$}{alpha}}
Recall that $X=\SL_N(\br)/\SL_N(\z)$ is the space of unimodular lattices in $\M_n(\br)$.  The
\emph{$\alpha$-characteristics} are the functions
$\alpha_1,\ldots,\alpha_{N-1}:X\to(0,\infty)$ defined by
$$
\alpha_i(\Delta)
:=
\sup\left\{
\frac{1}{d_\Delta(V)}:
V<\M_n(\mathbb R)\text{ is $\Delta$-rational and }\dim V=i
\right\}.
$$
Equivalently, $\alpha_i(\Delta)^{-1}$ is the least covolume of an
$i$-dimensional primitive sublattice of $\Delta$. The $\alpha$-function is defined by
$$
\alpha(\Delta):=\max_{1\le i\le N-1}\alpha_i(\Delta)
$$

\subsection{Schur functors}\label{subsection:SchurFunctors}

Let
$$
H_1=\SL_n(\mathbb R)\times\{e\},
\quad
H_2=\{e\}\times\SL_n(\mathbb R),
\quad
H=\SL_n(\mathbb R)\times \SL_n(\mathbb R)=H_1H_2.
$$
The action 
$$
(g,h)\cdot v=gvh^{\intercal}
\qquad
(g,h\in\SL_n(\mathbb R),\ v\in\M_n(\mathbb R))
$$
induces an action of $H$ on every exterior power of
$\M_n(\mathbb R)$.

Let $\mathsf C$ and $\mathsf R$ be two copies of $\mathbb R^n$,
representing the column and row coordinates, respectively. Under the
identification $E_{ij}\leftrightarrow e_i\otimes f_j$, one has
$$
\M_n(\mathbb R)\simeq \mathsf C\otimes\mathsf R,
$$
and the two factors of $H$ act in the standard way on $\mathsf C$ and
$\mathsf R$. 

The exterior powers of matrix space decompose into irreducible $H$-summands
by the skew Cauchy formula for all $1\le r\le N-1$:
$$
\wedge^r\M_n(\mathbb R)
\simeq
\bigoplus_{\substack{\lambda\vdash r\\ \lambda\subseteq n\times n}}
{\mathsf S}_\lambda(\mathbb R^n)\otimes
{\mathsf S}_{\lambda^{\intercal}}(\mathbb R^n),
$$
where $\lambda$ ranges over
partitions of $r$ whose Young diagram lies in the $n\times n$ box,
${\mathsf S}_\lambda$ denotes the corresponding Schur module, and
$\lambda^{\intercal}$ denotes the conjugate partition. We use the standard
notation for Schur functors; see \cite[Section~4.2]{FultonHarris} or
\cite[Chapter~I]{FultonYT}.

If $\lambda=(\lambda_1,\ldots,\lambda_n)$, with
$\lambda_1\ge\cdots\ge\lambda_n\ge0$, then 
${\mathsf S}_\lambda(\mathbb R^n)$, viewed as an $\SL_n(\mathbb R)$-representation, has highest weight
$$
\sum_{j=1}^{n-1}(\lambda_j-\lambda_{j+1})\omega_j
$$ where $\omega_1,\ldots, \omega_{n-1}$ are the fundamental weights of $\SL_n(\br)$.
It is therefore trivial precisely when $\lambda=(q^n)$ for some
$q\ge0$. Consequently, a trivial factor can occur in
$\wedge^i\M_n(\mathbb R)$ only when $n\mid  i$.

For $1\le k\le n-1$, define
$$
\mathcal M_{k,1}
:=
\bigl(\wedge^{kn}\M_n(\mathbb R)\bigr)^{H_2}
\simeq {\mathsf S}_{(n^k)}(\mathsf C)\otimes\mathbf 1,
$$
$$
\mathcal M_{k,2}
:=
\bigl(\wedge^{kn}\M_n(\mathbb R)\bigr)^{H_1}
\simeq \mathbf 1\otimes {\mathsf S}_{(n^k)}(\mathsf R).
$$
Thus $\mathcal M_{k,1}$ and $\mathcal M_{k,2}$ are the maximal subspaces
on which $H_2$ and $H_1$, respectively, act trivially. Since the skew
Cauchy decomposition is multiplicity-free, it gives an $H$-equivariant
direct-sum decomposition
\begin{equation}\label{eq:decompositionatdimnn-1}
\wedge^{kn}\M_n(\mathbb R)
=
\mathcal M_{k,0}\oplus\mathcal M_{k,1}\oplus\mathcal M_{k,2},
\end{equation}
where $\mathcal M_{k,0}$ is the sum of the remaining irreducible summands.
For each $m=1,2$, let
\begin{equation}\label{eq:pikmdefinition}
    \pi_{k,m}:\wedge^{kn}\M_n(\mathbb R)\to\mathcal M_{k,m}
\end{equation}
be the $H$-equivariant projection associated with
\eqref{eq:decompositionatdimnn-1}. For $r\ge 1$ and
$m=1,2$, put
$$
\pi_{k,m}^{(r)}:=\wedge^r\pi_{k,m}:
\wedge^r\!\bigl(\wedge^{kn}\M_n(\mathbb R)\bigr)
\to \wedge^r\mathcal M_{k,m}.
$$
Thus
$$
\pi_{k,m}^{(r)}(w_1\wedge\cdots\wedge w_r)
=
\pi_{k,m}(w_1)\wedge\cdots\wedge\pi_{k,m}(w_r).
$$

\subsection{Isotropic subspaces}
For a matrix $v\in\M_n(\mathbb R)$, let $\operatorname{Col}(v)$ and
$\operatorname{Row}(v)$ denote  the subspaces of $\mathbb R^n$
spanned by the columns and the rows of $v$, respectively. For a subspace
$\mathsf U<\mathbb R^n$, define
\begin{equation*}
\begin{aligned}
\mathcal L(\mathsf U)
&:=\{v\in\M_n(\mathbb R):\operatorname{Col}(v)\subset \mathsf U\},\\
\mathcal R(\mathsf U)
&:=\{v\in\M_n(\mathbb R):\operatorname{Row}(v)\subset \mathsf U\}.
\end{aligned}
\end{equation*}
Both spaces have dimension $n\dim \mathsf U$.
\begin{definition}
Let $1\le k\le n-1$. A $kn$-dimensional subspace
$V<\M_n(\mathbb R)$ is called \emph{column-isotropic} if
$$
V=\mathcal L(\mathsf U)
$$
for some $k$-dimensional subspace $\mathsf U<\mathbb R^n$, and
\emph{row-isotropic} if
$$
V=\mathcal R(\mathsf U)
$$
for some $k$-dimensional subspace $\mathsf U<\mathbb R^n$.
We call $V$ \emph{isotropic} if it is either column-isotropic or
row-isotropic.
\end{definition}

Every isotropic subspace is contained in the singular hypersurface
$\{X:\det X=0\}$. We use the term \emph{isotropic} only for the two
families above; an arbitrary linear subspace on which the determinant
vanishes identically need not be isotropic in this sense. In the maximal
dimension $n(n-1)$, however, every $n(n-1)$-dimensional linear subspace of $\{\det=0\}$ is
column- or row-isotropic, by a classical theorem of Dieudonn\'e
\cite{dieudonne:1948}; this fact will be used in the proof of
\Cref{m11}.

The exceptional summands in \eqref{eq:decompositionatdimnn-1} have the
following interpretation.

\begin{lemma}\label{lem:isotropic_iff_pi1_pi2}
Let $V<\M_n(\mathbb R)$ have dimension $kn$ for some $1\le k\le n-1$. Then
$$
V\text{ is column-isotropic}
\quad\Longleftrightarrow\quad
\mathsf w_V\in\mathcal M_{k,1};
$$
$$
V\text{ is row-isotropic}
\quad\Longleftrightarrow\quad
\mathsf w_V\in\mathcal M_{k,2}.
$$
\end{lemma}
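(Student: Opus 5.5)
By the symmetry $v\mapsto v^{\intercal}$, which swaps the two factors $\mathsf C$ and $\mathsf R$ and hence swaps $\mathcal M_{k,1}$ with $\mathcal M_{k,2}$, it suffices to prove the column statement. The plan is to identify $\mathcal M_{k,1}$ concretely and then use a stabilizer argument.

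\emph{Forward direction.} Let $V=\mathcal L(\mathsf U)$ with $\dim\mathsf U=k$. Under $\M_n(\mathbb R)\simeq\mathsf C\otimes\mathsf R$ we have $V=\mathsf U\otimes\mathsf R$. This space is stable under $H_2$, and for $h\in H_2$ the restriction $h|_V$ equals $\id_{\mathsf U}\otimes h$, whose determinant is $\det(h)^k=1$. Hence $h\mathsf w_V=\det(h|_V)\mathsf w_V=\mathsf w_V$. So $\mathsf w_V$ is $H_2$-fixed, i.e.\ $\mathsf w_V\in(\wedge^{kn}\M_n(\mathbb R))^{H_2}=\mathcal M_{k,1}$.

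\emph{Converse.} Suppose $\mathsf w_V\in\mathcal M_{k,1}$. Then $H_2$ fixes the line $\mathbb R\mathsf w_V$. Since a decomposable vector determines its subspace, $H_2$ preserves $V$. Thus $V$ is a $\{e\}\times\SL_n(\mathbb R)$-submodule of $\mathsf C\otimes\mathsf R\simeq\mathsf R^{\oplus n}$, an isotypic representation of the irreducible module $\mathsf R$. Every submodule of $\mathsf C\otimes\mathsf R$ therefore has the form $\mathsf U\otimes\mathsf R$ for a subspace $\mathsf U<\mathsf C$. To see this, note that $\operatorname{End}_{H_2}(\mathsf R)=\mathbb R$ by absolute irreducibility, so $\operatorname{Hom}_{H_2}(\mathsf R,\mathsf C\otimes\mathsf R)\simeq\mathsf C$. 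A submodule is then the image of $\mathsf U\otimes\mathsf R$, where $\mathsf U$ is the space of those homomorphisms landing in it, and semisimplicity gives equality. Comparing dimensions, $\dim\mathsf U=k$, so $V=\mathcal L(\mathsf U)$ is column-isotropic. (Identifying $\mathsf U\otimes\mathsf R$ with $\mathcal L(\mathsf U)$ uses only $E_{ij}\leftrightarrow e_i\otimes f_j$.)

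\emph{Main obstacle.} The only real subtlety is the step ``$H_2$ fixes $\mathsf w_V$ implies $H_2$ preserves $V$,'' together with the classification of submodules of an isotypic module over $\mathbb R$. Both are standard. The first holds because the Plücker embedding is injective on subspaces, so $hV$ has Plücker line $\mathbb R\,h\mathsf w_V=\mathbb R\mathsf w_V$ and hence $hV=V$. The second uses Schur's lemma for the absolutely irreducible standard representation.
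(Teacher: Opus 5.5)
Your proposal is correct and follows essentially the same route as the paper. The forward direction uses $\det(\mathrm{id}_{\mathsf U}\otimes h)=(\det h)^k=1$. The converse uses $H_2$-invariance of $V$, which follows because a nonzero decomposable vector determines its subspace, together with the fact that every $H_2$-submodule of $\mathsf C\otimes\mathsf R$ is of the form $\mathsf U\otimes\mathsf R$; your Schur's-lemma argument supplies a justification for that last fact, which the paper only asserts.
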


\begin{proof}
Suppose first that $V=\mathcal L(\mathsf U)$ with $\dim \mathsf U=k$. Then $H_2$ preserves $V$. Under the
identification $V\simeq \mathsf U\otimes\mathsf R$, the determinant of the action
of $h\in H_2$ on $V$ is $(\det h)^k=1$. Hence $H_2$ fixes
$\mathsf w_V$, and therefore $\mathsf w_V\in\mathcal M_{k,1}$.

Conversely, suppose that $\mathsf w_V\in\mathcal M_{k,1}$. Then
$\mathsf w_V$ is fixed by $H_2$. Since a nonzero decomposable exterior
vector determines its underlying subspace, $V$ is $H_2$-invariant. As an
$H_2$-module,
$
\M_n(\mathbb R)\simeq\mathsf C\otimes\mathsf R
$
is the tensor product of a trivial multiplicity space $\mathsf C$ with the
irreducible standard $H_2$-module $\mathsf R$. Hence every
$H_2$-submodule is of the form $\mathsf U\otimes\mathsf R$ for a unique
subspace $\mathsf U<\mathsf C$. Since $\dim V=kn$, one has $\dim \mathsf U=k$, and
thus $V=\mathcal L(\mathsf U)$. The row statement follows by interchanging the
two factors of $H$.
\end{proof}

\subsection{A rationality obstruction for exceptional summands}

The next observation clarifies the relation between determinant-rationality
and the exceptional summands. 
\begin{proposition}[A rational exceptional summand forces determinant-rationality]
\label{prop:rational-exceptional-summand-implies-det-rational}
Let $\Delta<\M_n(\mathbb R)$ be a lattice and
$1\le k\le n-1$. Suppose that, for some $m\in\{1,2\}$, the subspace
$\mathcal M_{k,m}\subset\wedge^{kn}\M_n(\mathbb R)$ is rational with
respect to the exterior lattice $\wedge^{kn}\Delta$. Then $\Delta$ is
determinant-rational.
\end{proposition}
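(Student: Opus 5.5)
Write $\Delta=g\mathbb Z^N$. We aim to show that the orbit $H[g]$ is closed, or equivalently that $F_g=\det\circ g$ is proportional to a rational polynomial. Then \Cref{prop:closed-rational-csa} (together with \Cref{thm:det-rational-characterization}) gives determinant-rationality. Say $m=1$; the case $m=2$ is symmetric.

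The hypothesis says that $\mathcal M_{k,1}$ is defined over $\mathbb Q$ with respect to the rational structure $\wedge^{kn}\Delta$. Transporting by $g^{-1}$, the subspace $W:=g^{-1}\mathcal M_{k,1}$ is a $\mathbb Q$-subspace of $\wedge^{kn}\mathbb R^N$ for the standard structure. Let $\mathbf S\subset\SL_N$ be the subgroup acting trivially on $W$ and preserving it. More usefully, take $\mathbf P$ to be the stabilizer of $W$, which is defined over $\mathbb Q$, and its pointwise stabilizer $\mathbf S$, also defined over $\mathbb Q$.

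The key step is identifying $\mathbf S(\mathbb R)^\circ$. The group $H_2$ fixes $\mathcal M_{k,1}$ pointwise. Conversely, I claim that the identity component of the pointwise stabilizer of $\mathcal M_{k,1}$ in $\SL_N(\mathbb R)$ is exactly $H_2$, or at least a reductive group whose conjugation-normalizer data recover $H$. The justification runs as follows. $\mathcal M_{k,1}$ contains the Plücker vectors $\mathsf w_{\mathcal L(\mathsf U)}$ for all $k$-planes $\mathsf U$, by \Cref{lem:isotropic_iff_pi1_pi2}. Any $s$ fixing them preserves every $\mathcal L(\mathsf U)$. Intersecting these over different $\mathsf U$ shows that $s$ preserves every $\mathcal L(\text{line})=e\otimes\mathsf R$. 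Hence $s$ acts as $v\mapsto vA_e$ on each such space, and these operators must agree, which gives $s\in\GL_n$ acting on the right. Determinant constraints then force $s\in H_2$ up to finite index. So $g^{-1}H_2g=\mathbf S(\mathbb R)^\circ$ is the real points of a $\mathbb Q$-group.

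Next, $H$ is the identity component of the normalizer in $\SL_N(\mathbb R)$ of $H_2$, since the centralizer of $H_2$ is $H_1$ times scalars. Because normalizers of $\mathbb Q$-groups are defined over $\mathbb Q$, $g^{-1}Hg$ is the real points of a semisimple $\mathbb Q$-group $\mathbf L$. Borel--Harish-Chandra then gives that $g^{-1}Hg\cap\Gamma$ is a lattice in $g^{-1}Hg$, so $H[g]$ is closed. Alternatively, to avoid dynamics: $\mathbf L$ is defined over $\mathbb Q$, so the space of $\mathbf L$-invariant degree-$n$ polynomials is a $\mathbb Q$-line. By \Cref{detuni} this line is $\mathbb RF_g$, so $F_g$ is proportional to a rational polynomial, and \Cref{thm:det-rational-characterization}(3) concludes.

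The main obstacle is the stabilizer computation: showing that the pointwise stabilizer of $\mathcal M_{k,1}$ has identity component $H_2$, and not something larger. The worry is that for special $k$ the isotropic Plücker vectors might not pin down enough. I would handle this through the intersection argument on the spaces $\mathcal L(\mathsf U)$, using all $k$-planes to recover the lines when $k\ge2$. If that gets messy, I would instead argue at the Lie algebra level. The Lie algebra of the pointwise stabilizer is an $H$-submodule of $\mathfrak{sl}_N=\mathfrak{sl}(\mathsf C)\oplus\mathfrak{sl}(\mathsf R)\oplus\mathfrak{sl}(\mathsf C)\otimes\mathfrak{sl}(\mathsf R)$. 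One then checks that neither $\mathfrak{sl}(\mathsf C)$ nor the mixed summand kills $\mathcal M_{k,1}$, since the highest weight vector $\mathsf w_{\mathcal L(\mathsf U)}$ is moved by them. This leaves exactly $\mathfrak{sl}(\mathsf R)=\operatorname{Lie}(H_2)$.
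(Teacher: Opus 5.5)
Your proposal is correct. Its core is the same as the paper's proof. You recover the subspaces $\mathbb Rc\otimes\mathsf R$ by intersecting the spaces $\mathcal L(\mathsf U)$ whose Pl\"ucker vectors lie in $\mathcal M_{k,1}$, and then use linearity in $c$ to force the stabilizing element to be $\mathrm{id}_{\mathsf C}\otimes B$. The paper does exactly this to identify the identity component of the pointwise stabilizer with $H_2$. The paper works on the Lie algebra, while you work on the group; the only extra cost is the finite-index bookkeeping $\det(B)^k=\det(B)^n=1$.

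You diverge in the last step. The paper stays with $g^{-1}H_2g$ and computes its degree-$n$ invariants directly. By the Cauchy decomposition $\operatorname{Sym}^n((\mathsf C\otimes\mathsf R)^*)\simeq\bigoplus_{\lambda\vdash n}\mathsf S_\lambda(\mathsf C^*)\otimes\mathsf S_\lambda(\mathsf R^*)$, only $\lambda=(1^n)$ contributes right-$\SL(\mathsf R)$-invariants, so this $\mathbb Q$-defined space is the line $\mathbb RF_g$. You instead enlarge to $g^{-1}Hg$, the identity component of the normalizer of $g^{-1}H_2g$, which is again defined over $\mathbb Q$. You then quote \Cref{detuni}, or Borel--Harish-Chandra together with \Cref{prop:closed-rational-csa}.

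Your route reuses results already proved in \Cref{s:det-rational} and avoids the Cauchy-decomposition computation. The price is the normalizer argument: $N(H_2)^\circ=H_2\cdot C(H_2)^\circ=H_2H_1$, by Schur's lemma and discreteness of outer automorphisms. The paper's route is shorter and never needs $H_1$.

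One point to state precisely: $\mathbf L$ must be the Zariski identity component of the normalizer, not the full normalizer. For even $n$, the element $\operatorname{diag}(-1,1,\ldots,1)\otimes I\in\SL_N(\mathbb R)$ centralizes $H_2$ but sends $\det$ to $-\det$, so the full normalizer has no nonzero degree-$n$ invariants. (Relatedly, the centralizer of $H_2$ in $\SL_N$ is $H_1$ times a finite group, not $H_1$ times scalars; only its identity component matters.)

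Your Lie-algebra fallback also works. The summands $\mathfrak{sl}(\mathsf C)$, $\mathfrak{sl}(\mathsf R)$ and $\mathfrak{sl}(\mathsf C)\otimes\mathfrak{sl}(\mathsf R)$ are pairwise non-isomorphic irreducible $H$-modules, so the annihilator of $\mathcal M_{k,1}$ is a sum of some of them.
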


\begin{proof}  Let $\Delta=g\mathbb Z^N$ for some $g\in \GL_N(\br)$. By assumption, 
$$
M_g:=(\wedge^{kn}g^{-1})\mathcal M_{k,1}
\subset \wedge^{kn}\mathbb R^N 
$$
is a $\mathbb Q$-rational subspace of
$\wedge^{kn}\mathbb R^N$. Hence its pointwise stabilizer
$$
\mathbf L
:=
\left\{
h\in\SL_N:\ (\wedge^{kn}h)v=v
\text{ for every }v\in M_g
\right\}
$$
is a $\mathbb Q$-algebraic subgroup of $\SL_N$. We claim that
\begin{equation}\label{eq:pointwise-stabilizer-exceptional}
\mathbf L(\mathbb R)^\circ=g^{-1}H_2g.
\end{equation}
 It is enough to prove  that the identity component $P$ of
the pointwise stabilizer of $\mathcal M_{k,1}$ in
$\SL(\mathsf C\otimes\mathsf R)$ is $H_2$. Let $\Gr(k,\mathsf C)$ denote the Grassmannian of $k$-dimensional subspaces of $\mathsf C$.
For $\mathsf U\in\Gr(k,\mathsf C)$, set
$$
W_{\mathsf U}:=\mathsf U\otimes\mathsf R.
$$ Let
$Z\in\mathfrak{sl}(\mathsf C\otimes\mathsf R)
$
belong to $\operatorname{Lie}(P)$. Then 
$
Z\mathsf w_{W_{\mathsf U}}=0$ for every $\mathsf U\in\Gr(k,\mathsf C).
$
This implies that
$$
Z(\mathsf U\otimes\mathsf R)\subset \mathsf U\otimes\mathsf R
\qquad
\text{for every }\mathsf U\in\Gr(k,\mathsf C).
$$
We claim that $Z$ preserves each subspace
$\mathbb Rc\otimes\mathsf R$, $0\ne c\in\mathsf C$. If $k=1$, this is
immediate. If $k>1$, then
$\mathbb Rc
=
\bigcap_{\substack{\mathsf U\in\Gr(k,\mathsf C),\, c\in \mathsf U}}\mathsf U,
$
and therefore
$
\mathbb Rc\otimes\mathsf R
=
\bigcap_{\substack{\mathsf U\in\Gr(k,\mathsf C),\,  c\in \mathsf U}}
(\mathsf U\otimes\mathsf R).
$
Since $Z$ preserves every $\mathsf U\otimes\mathsf R$, it preserves this
intersection.
Consequently, for every $0\ne c\in\mathsf C$, there exists
$B_c\in\operatorname{End}(\mathsf R)$ such that
$$
Z(c\otimes r)=c\otimes B_c r
\qquad(r\in\mathsf R).
$$
If $c_1,c_2$ are linearly independent, then applying the same statement to
$c_1+c_2$ and using linearity gives
$$
c_1\otimes B_{c_1}r+c_2\otimes B_{c_2}r
=
(c_1+c_2)\otimes B_{c_1+c_2}r
\qquad(r\in\mathsf R).
$$
Hence
$$
B_{c_1}=B_{c_2}=B_{c_1+c_2}.
$$
The same conclusion for proportional nonzero vectors follows by scaling.
Thus $B_c$ is independent of $c$; write it simply as $B$. We have shown
that
$$
Z=\mathrm{id}_{\mathsf C}\otimes B.
$$
Since $Z\in\mathfrak{sl}(\mathsf C\otimes\mathsf R)$, we have $B\in\mathfrak{sl}(\mathsf R)$.
Thus
$
\operatorname{Lie}(P)
=
\mathrm{id}_{\mathsf C}\otimes\mathfrak{sl}(\mathsf R).
$
Since $H_2$ is connected and is contained in $P$, it follows that
$P=H_2$.
Conjugating by $g$ proves \eqref{eq:pointwise-stabilizer-exceptional}.
In particular, $g^{-1}H_2g$ is
defined over $\mathbb Q$. Let $\mathcal P_n$ be the real vector space of
homogeneous degree-$n$ polynomials on $\mathbb R^N$. Then the subspace
$
\mathcal P_n^{\,g^{-1}H_2g}
$
is a 
linear subspace of $\mathcal P_n$ defined over $\mathbb Q$.
We now identify this invariant subspace. Set
$$
F_g:=\det\circ g.
$$
After composing polynomials with $g^{-1}$, the computation reduces to the
standard right $\SL(\mathsf R)$-action on
$\mathsf C\otimes\mathsf R$. By the Cauchy decomposition,
$$
\operatorname{Sym}^n((\mathsf C\otimes\mathsf R)^*)
\simeq
\bigoplus_{\lambda\vdash n}
{\mathsf S}_\lambda(\mathsf C^*)\otimes
{\mathsf S}_\lambda(\mathsf R^*).
$$
The restriction of ${\mathsf S}_\lambda(\mathsf R^*)$ to
$\SL(\mathsf R)$ contains a trivial subrepresentation if and only if
$\lambda=(1^n)$. Hence the space of right-$\SL(\mathsf R)$-invariant
homogeneous degree-$n$ polynomials is
$$
{\mathsf S}_{(1^n)}(\mathsf C^*)\otimes
{\mathsf S}_{(1^n)}(\mathsf R^*)
=
\wedge^n\mathsf C^*\otimes\wedge^n\mathsf R^*,
$$
which is one-dimensional and is spanned by the determinant. Therefore
$$
\mathcal P_n^{\,g^{-1}H_2g}
=
\mathbb R F_g.
$$

Since this one-dimensional subspace is defined over $\mathbb Q$, the line
$\mathbb R F_g$ is defined over $\mathbb Q$. Equivalently, $F_g$ is
proportional to a polynomial with rational coefficients. By
\Cref{thm:det-rational-characterization}, the lattice
$\Delta=g\mathbb Z^N$ is determinant-rational.

The proof for $\mathcal M_{k,2}$ is identical, with $H_2$ replaced by $H_1$.
\end{proof}

\begin{remark}
When $n=2$, the only possible value of $k$ is $1$,
$\mathcal M_{1,1}$ is three-dimensional, and the Pl\"ucker lines of
column-isotropic $2$-planes form a nondegenerate conic in
$\mathbb P(\mathcal M_{1,1})$. If a non-determinant-rational lattice
$\Delta$ had three distinct $\Delta$-rational column-isotropic planes, the
corresponding three rational points on this conic would be noncollinear and
hence span $\mathbb P(\mathcal M_{1,1})$. Thus $\mathcal M_{1,1}$ would be
rational with respect to $\wedge^2\Delta$, contradicting the preceding
proposition.

Therefore a non-determinant-rational lattice in $\M_2(\mathbb R)$ has at
most two rational column-isotropic planes and, similarly, at most two
rational row-isotropic planes. Consequently, $\{\det=0\}$ contains at most
four maximal $\Delta$-rational linear subspaces. This is the finiteness
phenomenon used in the quadratic determinant case
\cite{eskin-margulis-mozes:2005}.
\end{remark}

\subsection{Diophantine lattices and determinant forms}
The following condition requires rational Pl\"ucker configurations either
to lie in the exceptional subspaces $\mathcal M_{k,1}$ and
$\mathcal M_{k,2}$, where local contraction may fail, or to remain
polynomially separated from them.

\begin{definition}[Diophantine lattices]\label{lattice_Diophantine}
Let $\eta>0$ and $M>1$. A lattice
$\Lambda<\M_n(\mathbb R)$ is \emph{$(\eta,M)$-Diophantine} if  for every $1\le k\le n-1$,  $m\in\{1,2\}$, 
$r\in\{1,\dim\mathcal M_{k,m}\}$, and for every collection of $kn$-dimensional
$\Lambda$-rational subspaces $V_1,\ldots,V_r$, 
the vector
$
\mathbf w
:=
\mathsf w_{\Lambda,V_1}\wedge\cdots\wedge
\mathsf w_{\Lambda,V_r},
$ 
satisfies, whenever it is nonzero,
$$\text{either }
\mathbf w=\pi_{k,m}^{(r)}(\mathbf w)\quad \text{ or }\quad 
\|\mathbf w-\pi_{k,m}^{(r)}(\mathbf w)\|
>
\eta\,\|\mathbf w\|^{-M}.
$$
A lattice is \emph{Diophantine} if it is $(\eta,M)$-Diophantine for some
$\eta>0$ and $M>1$.
\end{definition}

 The case
$r=1$ controls individual rational subspaces, while the case
$r=\dim\mathcal M_{k,m}$ rules out configurations of rational
Pl\"ucker vectors that nearly span an entire exceptional summand.

The Diophantine property is invariant under left and right multiplication by
elements of $\GL_n(\mathbb R)$, and under transposition. Indeed, these
linear transformations preserve the two exceptional families (with
transposition interchanging them), and the relevant norms are comparable
under every fixed invertible linear map.

\begin{proposition}\label{prop:algebraicityimpliesDiophantine}
Every lattice in $\M_n(\mathbb R)$ consisting of matrices with
algebraic entries is Diophantine.
\end{proposition}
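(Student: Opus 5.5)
The plan is a Liouville-type argument: the nonzero defect vector has algebraic coordinates of controlled degree and controlled height, so it cannot be exponentially small.

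First I reduce to a single number field. Let $\Lambda$ have a $\mathbb Z$-basis $v_1,\dots,v_N$ with algebraic entries, and let $F$ be the number field generated by all these entries. Every $kn$-dimensional $\Lambda$-rational subspace $V$ has a $\mathbb Z$-basis of integer combinations of the $v_i$. Hence $\mathsf w_{\Lambda,V}=\sum_I p_I\, v_I$, where the $p_I\in\mathbb Z$ are the Plücker coordinates of an integral $kn$-frame in $\mathbb Z^N$ and $v_I=v_{i_1}\wedge\cdots\wedge v_{i_{kn}}$. Similarly, $\mathbf w=\mathsf w_{\Lambda,V_1}\wedge\cdots\wedge\mathsf w_{\Lambda,V_r}$ is an integer combination $\sum_J q_J u_J$ of finitely many fixed vectors $u_J\in\wedge^r\wedge^{kn}\M_n(F)$.

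Since $\Lambda$ is a lattice, the map $\mathbb R^{\binom{N}{kn}}\to\wedge^{kn}\M_n(\mathbb R)$, $(p_I)\mapsto\sum p_Iv_I$, is a linear isomorphism. Applying $\wedge^r$ of it, the coefficient vector $q=(q_J)$ satisfies $\|q\|\asymp\|\mathbf w\|$, with constants depending only on $\Lambda,k,r$. In particular the integers $q_J$ have size $O(\|\mathbf w\|)$.

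The projection $\pi_{k,m}$ is defined over $\mathbb Q$ in the standard coordinates, because the skew Cauchy decomposition and the $H$-invariants are $\mathbb Q$-rational. Hence $\pi^{(r)}_{k,m}$ is a $\mathbb Q$-linear map on standard coordinates. Therefore $\mathbf w-\pi^{(r)}_{k,m}(\mathbf w)=\sum_J q_J\,(u_J-\pi^{(r)}_{k,m}u_J)$ is a vector in a fixed finite-dimensional space whose coordinates are $\mathbb Z$-linear forms in $q$ with coefficients in $F$.

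Now assume $\mathbf w\neq\pi^{(r)}_{k,m}(\mathbf w)$ and pick a nonzero coordinate $\beta=\sum_J q_J c_J$, with fixed $c_J\in F$. Clearing a common denominator $D$ of the $c_J$ makes $D\beta$ an algebraic integer in $F$. Each Galois conjugate $\sigma(D\beta)$ has absolute value $O(\|q\|)$. Since the norm $N_{F/\mathbb Q}(D\beta)$ is a nonzero rational integer, its absolute value is at least $1$. This gives
\[
|\beta|\ \ge\ D^{-1}\prod_{\sigma\ne\mathrm{id}}|\sigma(D\beta)|^{-1}\ \gg\ \|q\|^{-([F:\mathbb Q]-1)}\gg \|\mathbf w\|^{-([F:\mathbb Q]-1)}.
\]
This is Liouville's inequality. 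Taking $M>\max([F:\mathbb Q]-1,1)$ and $\eta$ small absorbs the implied constants.

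One case needs care: $\|\mathbf w\|$ is bounded below only by a positive constant, since nonzero $\mathbf w$ come from the discrete set of nonzero integer vectors $q$. A uniform $\eta$ therefore works for both small and large $\|\mathbf w\|$. The constants depend on $k,m,r$, but there are finitely many choices, so I take the worst $\eta$ and the largest $M$.

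The main obstacle I expect is the rationality of $\pi_{k,m}$ over $\mathbb Q$ in the standard coordinates, or at least over a number field, which is all the argument needs. To justify it, I would note that $\mathcal M_{k,m}$ is the invariant space of $H_2$ (resp. $H_1$), an algebraic group defined over $\mathbb Q$. The complement $\mathcal M_{k,0}\oplus\mathcal M_{k,3-m}$ is the sum of the nontrivial isotypic components for that factor. It can be described as the image of the Lie algebra action, which is also $\mathbb Q$-rational, so the projection is defined over $\mathbb Q$.
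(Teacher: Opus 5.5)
Your proposal is correct and follows essentially the same route as the paper. The paper writes $\Lambda=g\mathbb Z^N$ with $g$ algebraic, rewrites $\mathbf w$ as the image of the nonzero integral vector $q_1\wedge\cdots\wedge q_r$ (so that $\|q_1\wedge\cdots\wedge q_r\|\ll\|\mathbf w\|$ and $\|\mathbf w\|\gg1$), uses that $\pi_{k,m}$ is defined over $\mathbb Q$, and applies the same norm-based Liouville bound (its lemma on algebraic linear maps) before unifying the finitely many constants; the only cosmetic difference is that you justify the $\mathbb Q$-rationality of $\pi_{k,m}$ via invariants and the image of the Lie algebra action, rather than by citing the $\mathbb Q$-structure of the Schur-functor decomposition.
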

The proof is given in \Cref{s:example}.
Our first counting theorem is the following extension of
\Cref{m1}. \begin{theorem}\label{main}
Let $\Lambda<\M_n(\mathbb R)$ be a Diophantine lattice that is not
determinant-rational. Then, for every $a<b$,
$$
N_\Lambda^\times(a,b;T)
\sim
\frac{C_{\|\cdot\|}}{\covol(\Lambda)}
(b-a)T^{n(n-1)}
\qquad (T\to\infty),
$$
where $C_{\|\cdot\|}$ is the constant appearing in
\eqref{eq:intro-volume-asymptotic}. Equivalently,
$$
N_\Lambda^\times(a,b;T)
\sim
\frac{1}{\covol(\Lambda)}
\operatorname{vol}
\{v\in\M_n(\mathbb R):\ \|v\|<T,\ a<\det v<b\}.
$$
\end{theorem}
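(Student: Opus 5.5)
We follow the Eskin--Margulis--Mozes scheme, with the modified height of the introduction replacing the $\alpha$-function. After rescaling we may assume $\covol(\Lambda)=1$, so that $\Lambda=g\mathbb Z^N$ is a point of $X$. Since $\Lambda$ is not determinant-rational, \Cref{prop:closed-rational-csa} shows that $H[g]$ is dense in $X$.

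\emph{Step 1: reduction to principal-ray averages.} Using the $\SO(n)\times\SO(n)$-invariance of the norm, we pass to singular-value coordinates $v=k_1\,\diag(s_1,\dots,s_n)\,k_2$. The region $\{\|v\|<T,\ a<\det v<b\}$ is dominated by configurations in which $n-1$ singular values are of common size $\asymp T$ and the last is $\asymp T^{-(n-1)}$. We factor out the common scale with $a_t=b_t\otimes b_t$, $T=e^{2t}$, and treat the ratios of the large singular values as fiber parameters. This writes $N^\times_\Lambda(a,b;T)$, up to the unbalanced regions, as an integral over ratio windows and over $t$ of
\[
\int_K\widetilde f(a_tk\Lambda)\,dk ,
\]
where $\widetilde f$ is a \emph{modified} Siegel transform of a compactly supported $f$ approximating the indicator of a fixed region of the form $\{a<\det<b,\ \text{bounded singular values}\}$. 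The modification sums only over lattice points lying outside the $\Lambda$-rational isotropic subspaces. Since isotropic vectors have $\det v=0$, dropping them does not affect $N^\times_\Lambda$.

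\emph{Step 2: equidistribution.} For $f$ continuous and compactly supported, we truncate $\widetilde f$ to a region where the modified height is small. Shah's theorem on expanding translates $a_tK$ of the dense $H$-orbit then gives convergence to $\int_X\widehat f=\int f\,d\vol$. By Siegel's formula, the omitted isotropic vectors have measure zero for a generic lattice, so this limit is the correct one. To remove the truncation we need the uniform bound
\[
\sup_{t\ge0}\int_K\widehat\alpha_{\eta,M}(a_tk;\Lambda)^{1+\theta}\,dk<\infty
\]
for the parameters $(\eta,M)$ supplied by the Diophantine hypothesis. By Diophantineness, the quasi-null subspaces of $\Lambda$ are exactly its rational isotropic subspaces. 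Lattice points outside these subspaces are then controlled by $\widehat\alpha_{\eta,M}$, because a vector $v\notin V$ bounds $\widehat f$ via the Schmidt-type estimate $\widetilde f\ll\widehat\alpha_{\eta,M}$.

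\emph{Step 3: the uniform moment bound (main obstacle).} This is the hardest step. We would proceed in three parts.
\begin{enumerate}[label=\textnormal{(\roman*)}]
\item Prove local contraction inequalities $\int\|u_\xi a_\tau\mathsf w\|^{-\beta}\,d\xi\le c\,e^{-\delta\tau}\|\mathsf w\|^{-\beta}$ on every nonexceptional Schur summand of $\wedge^r\M_n(\mathbb R)$, with some exponent $\beta>1$. The input is the sublevel and negative-moment estimates for the horospherical polynomials, which come from the complex singularity exponent bounds.
\item Assemble these into a global Margulis-type inequality for an auxiliary downward-closed height that majorizes $\widehat\alpha_{\eta,M}$. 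Competing subspaces are handled by the EMM intersection--sum inequality, supplemented by the Benoist--Quint Mother Inequality to keep track of the distance to $\mathcal M_{k,m}$.
\item Remove the exceptional set where a nonexceptional Pl\"ucker vector is pushed near $\mathcal M_{k,m}$. Its measure is small by Kleinbock's quantitative nondivergence combined with the Diophantine bound $\|\mathbf w-\pi^{(r)}_{k,m}\mathbf w\|>\eta\|\mathbf w\|^{-M}$, in both the $r=1$ and $r=\dim\mathcal M_{k,m}$ cases.
\end{enumerate}
Iterating contraction and avoidance, with step sizes chosen to absorb the logarithmic loss, gives horospherical moment bounds. We then transfer these to $K$-averages following \cite[Section~7.2]{Kim}.

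\emph{Step 4: assembling the count.} Summing over ratio windows, the compact windows converge uniformly by Steps 2--3. The unbalanced windows, where the ratios degenerate, are bounded by weighted shell estimates that again use the moment bound and are made small as the windows grow. The resulting fiber integrals coincide with those computing $\vol\{\|v\|<T,\ a<\det v<b\}$, which yields the constant $C_{\|\cdot\|}$ of \eqref{eq:intro-volume-asymptotic} and proves the asymptotic. Finally, we sandwich the indicator between continuous functions to pass from smooth $f$ to sharp windows $(a,b)$.
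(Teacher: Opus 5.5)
Your proposal is correct and follows the paper's proof essentially step for step. The steps are: density of $H\Lambda$ from non-determinant-rationality; a modified Siegel transform over $\Lambda_{\operatorname{ni}}$ dominated by $\widehat\alpha_{\eta,M}$, since quasi-null $=$ isotropic under the Diophantine hypothesis; the uniform $L^{1+\theta}$ bound via sublevel estimates, local contraction, intersection--sum plus the Mother Inequality, Kleinbock-type avoidance and iteration; and finally Shah's theorem with the fiber kernel $J_f$ and singular-value shell estimates for the unbalanced region.

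Two points need to be made explicit. First, at the critical degrees $kn$ the exponent-$>1$ contraction comes from the modified local height $\phi_{kn,\theta}$, which penalizes proximity to $\mathcal M_{k,1}\cup\mathcal M_{k,2}$; also, the contraction integrand should be $\|a_\tau u_\xi\mathsf w\|^{-\beta}$, not $\|u_\xi a_\tau\mathsf w\|^{-\beta}$. Second, $\Lambda_{\operatorname{ni}}$ still contains non-isotropic singular matrices, so passing to $N^\times_\Lambda$ requires showing these contribute $o(T^{n(n-1)})$; the paper does this by applying the $\Lambda_{\operatorname{ni}}$-asymptotic on the window $(-\varepsilon,\varepsilon)$.
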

We also record the corresponding terminology for determinant forms.
\begin{definition}\label{form_Diophantine}
A determinant form $F$ is \emph{Diophantine} if, for one (equivalently,
every) realization $F=F_{\Lambda,\mathcal B}$, the lattice $\Lambda$ is
Diophantine in the sense of \Cref{lattice_Diophantine}.
\end{definition}

This definition is independent of the realization. Indeed, if
$F=\det\circ g=\det\circ g'$, then $g'g^{-1}$ preserves the determinant;
by \Cref{Fro}, it is given by left and right multiplication, possibly
followed by transposition. The preceding invariance observation therefore
applies. A change of integral coordinates only changes the chosen basis of the same lattice.
Theorem \ref{main} is therefore equivalent to the following:

\begin{theorem}\label{m2general}
Let $F$ be a Diophantine determinant form on $\M_n(\mathbb R)$. If
$F$ is not proportional to a polynomial with rational coefficients, then
the asymptotic formula of \Cref{m2} holds for $F$.
\end{theorem}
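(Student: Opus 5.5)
The plan is to derive \Cref{m2general} from \Cref{main} through the dictionary between determinant forms and lattices. Fix a realization $F=F_{\Lambda,\mathcal B}$. Thus $\Lambda<\M_n(\mathbb R)$ is a lattice with $\mathbb Z$-basis $\mathcal B=\{v_{ij}\}$, and $L=L_\Lambda$ is the linear isomorphism with $L(E_{ij})=v_{ij}$. Then $F=\det\circ L$ and $L\M_n(\mathbb Z)=\Lambda$. By \Cref{form_Diophantine}, $\Lambda$ is Diophantine. Since $F$ is not proportional to a rational polynomial, the equivalence (1)$\Leftrightarrow$(3) of \Cref{thm:det-rational-characterization} shows that $\Lambda$ is not determinant-rational. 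Hence \Cref{main} applies to $\Lambda$.

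The remaining step is to transport the norm. The norm $\|\cdot\|$ in \Cref{m2} is invariant under a maximal compact subgroup $K_F$ of $\operatorname{Stab}_{\SL_N(\mathbb R)}(F)$. This stabilizer equals $L^{-1}\operatorname{Stab}(\det)L$. By \Cref{Fro}, its identity component is $L^{-1}HL$, and a maximal compact subgroup of it is conjugate to $L^{-1}(\SO(n)\otimes\SO(n))L$. Replacing $L$ by $L\circ s$ for a suitable $s$ in the identity component of $\operatorname{Stab}(F)$ changes $\mathcal B$ to the basis $L s(E_{ij})$. This is a basis of the different lattice $\Lambda':=L s\,\M_n(\mathbb Z)=g_1\Lambda g_2$ for some $g_i$, up to a sign of determinant. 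The Diophantine property is invariant under left-right multiplication, and so is non-determinant-rationality, since $\det(g_1\Lambda g_2)=\det(g_1g_2)\det(\Lambda)$. So we may assume $K_F=L^{-1}(\SO(n)\otimes\SO(n))L$. (If $K_F$ contains transpose-type elements, we only use its identity part.)

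Now set $\|v\|':=\|L^{-1}v\|$ for $v\in\M_n(\mathbb R)$. This norm is invariant under left and right multiplication by $\SO(n)$. Also $x\in\M_n(\mathbb Z)$ with $\|x\|<T$ and $a<F(x)<b$, $F(x)\neq0$ corresponds bijectively to $v=Lx\in\Lambda$ with $\|v\|'<T$ and $a<\det v<b$, $\det v\ne0$. Thus the count in \Cref{m2} equals $N^\times_\Lambda(a,b;T)$ computed with $\|\cdot\|'$. \Cref{main} then gives the asymptotic $\frac{C_{\|\cdot\|'}}{\covol(\Lambda)}(b-a)T^{n(n-1)}$, so we take $c_F:=C_{\|\cdot\|'}/\covol(\Lambda)>0$. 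Positivity of $C_{\|\cdot\|'}$ comes from the volume asymptotic \eqref{eq:intro-volume-asymptotic}.

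Conversely, \Cref{m2general} recovers \Cref{main} by taking $F=F_{\Lambda,\mathcal B}$ and the induced norm. The main, though mild, obstacle is the bookkeeping in the second paragraph: matching an arbitrary maximal compact subgroup of $\operatorname{Stab}(F)$ with the standard $\SO(n)\otimes\SO(n)$ by conjugation, and checking that the hypotheses of \Cref{main} are stable under the resulting left-right change of lattice.
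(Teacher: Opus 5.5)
Your proposal is correct and follows the same route as the paper. You realize $F=F_{\Lambda,\mathcal B}$, get the Diophantine property from \Cref{form_Diophantine} and non-determinant-rationality from \Cref{thm:det-rational-characterization}, identify the count in \Cref{m2} with $N_\Lambda^\times(a,b;T)$ for the transported norm, and apply \Cref{main}. Your conjugation of the maximal compact subgroup into $L^{-1}(\SO(n)\otimes\SO(n))L$ spells out the norm bookkeeping that the paper leaves implicit; since $s$ lies in $L^{-1}HL$, the new lattice is $h_1\Lambda h_2^{\intercal}$ with $h_i\in\SL_n(\mathbb R)$, so there is no sign issue and $\covol(h_1\Lambda h_2^{\intercal})=\covol(\Lambda)$.
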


\subsection{Quasi-null subspaces and the modified height function}

Motivated by \Cref{lem:isotropic_iff_pi1_pi2}, we now introduce
subspaces whose Pl\"ucker vectors are close to one of the exceptional
summands.

For $0\le\eta<1$, $M\ge1$, and $1\le k\le n-1$, let
\be\label{hkd}
\mathscr Q_{kn,\eta,M}
:=
\left\{
0\ne v\in\wedge^{kn}\M_n(\mathbb R):
\min_{m=1,2}\|v-\pi_{k,m}(v)\|
\le \eta\|v\|^{-M}
\right\}.
\ee

\begin{definition}
Let $0\le\eta<1$, $M\ge1$, and 
$\Delta<\M_n(\mathbb R)$ be a lattice. For $1\le k\le n-1$, a
$kn$-dimensional $\Delta$-rational subspace $V$ is
\emph{$(\eta,M)$-quasi-null} if 
$$\mathsf w_{\Delta,V}\in  \mathscr Q_{kn,\eta,M}.$$
We call it \emph{column quasi-null} when the inequality holds for $m=1$,
and \emph{row quasi-null} when it holds for $m=2$. 
\end{definition}
Let $\mathscr Q_{kn,\eta,M}(\Delta)$ denote the collection of all
$(\eta,M)$-quasi-null $\Delta$-rational subspaces of dimension $kn$.

For $1\le i\le N-1$, let
$$\widetilde{\mathscr Q}_{i,\eta,M}(\Delta)$$ be the collection of all
$i$-dimensional $\Delta$-rational subspaces $V$ 
such that $V\subset V'$ for some $\Delta$-rational subspace $V'\in \mathscr Q_{kn,\eta,M}(\Delta)$ and some $1\le k\le n-1$. Thus $\widetilde{\mathscr Q}_{i,\eta,M}(\Delta)$ is the
$i$-dimensional part of the downward closure of the quasi-null subspaces.

\begin{definition}[Modified Margulis height function]
\label{def:modified-Margulis-height}
For $h\in H$ and $\Delta\in X$, define
\begin{equation}\label{eq:modified-alpha}
\widehat\alpha_{\eta,M}(h;\Delta)
:=
\max_{1\le i\le N-1}\widehat\alpha_{i,\eta,M}(h;\Delta),
\end{equation}
where
\begin{equation}\label{eq:hat-local-height}
\widehat\alpha_{i,\eta,M}(h;\Delta)
:=
\max\left\{
1,\ 
\sup_{\substack{
V\text{ is $\Delta$-rational},\ \dim V=i\\
V\notin\widetilde{\mathscr Q}_{i,\eta,M}(\Delta)
}}
\frac{1}{d_{h\Delta}(hV)}
\right\}.
\end{equation}
Here the supremum over the empty set is understood to be $0$.
\end{definition}

It follows directly from the defining quasi-null inequality that
whenever $1\leq i\leq N-1$, $0\le \eta'\le \eta<1$, $h\in H$ and $\Delta\in X$,
\be \label{lem:modified-height-monotonicity}
\widetilde{\mathscr Q}_{i,\eta',M}(\Delta)
\subset
\widetilde{\mathscr Q}_{i,\eta,M}(\Delta),
\qquad
\widehat\alpha_{\eta,M}(h;\Delta)
\le
\widehat\alpha_{\eta',M}(h;\Delta).
\ee

\subsection{Main technical results}

For $t\ge0$, define
\begin{equation}\label{btdef}
b_t:=\operatorname{diag}(e^{-t},\ldots,e^{-t},e^{(n-1)t})
\in\SL_n(\mathbb R),
\end{equation}
and
\begin{equation}\label{atdef}
a_t:=(b_t,b_t)\in H.
\end{equation}
Let $$K:=\SO(n)\times\SO(n)$$ and let $dk$ denote its probability Haar
measure.

The first result is the following  uniform integrability estimate:

\begin{theorem}\label{thm:modifieduniformboundedness}
Let $\Lambda\in X$ be an $(\eta,M)$-Diophantine lattice for some $0<\eta<1$ and $M>1$. Then there exists
$\theta>0$ such that
$$
\sup_{t\ge0}
\int_K
\widehat\alpha_{\eta,M}(a_tk;\Lambda)^{1+\theta}\,dk
<\infty.
$$
\end{theorem}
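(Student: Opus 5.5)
We follow the avoidance-and-iteration scheme of Eskin--Margulis--Mozes as adapted in \cite{Kim}, working first with averages over the expanding horospherical subgroup $U^+<H$ of $a_t$ and only at the end transferring to $K$-averages. Concretely, let $U^+=U_1^+\times U_2^+$, where $U_j^+\simeq\mathbb R^{n-1}$ is the expanding horospherical subgroup of $b_t$ in the $j$-th factor, parametrized by $u_\xi$. Fix a small $\theta>0$. The first step is a local contraction inequality on each irreducible nonexceptional summand $\mathsf S_\lambda\otimes\mathsf S_{\lambda^\intercal}$ of $\wedge^i\M_n(\mathbb R)$: for every unit vector $w$ in such a summand,
\begin{equation*}
\int_{B}\|a_t u_\xi w\|^{-(1+\theta)}\,d\xi\le c\,e^{-\delta t},
\end{equation*}
where $B$ is the unit ball in $U^+$ and $\delta>0$. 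The two exceptional summands $\mathcal M_{k,1},\mathcal M_{k,2}$ are excluded because they give critical exponent exactly $1$.

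To prove this, expand $w$ in $b_t$-weights, which on $\mathsf S_\lambda$ are $-|\lambda|+ni$. Let $p_w(\xi)$ be the coordinate of $u_\xi w$ in the top nonnegative-weight block; it is a vector-valued polynomial in $\xi$. The estimate reduces to a uniform negative-moment bound $\int_B|p_w|^{-s}<C$ for some $s>1+\theta$, uniformly over unit $w$. We obtain this from lower bounds on the complex codimension of the zero loci via weight counting, which control the complex singularity exponents. Uniformity in $w$ then comes from Demailly--Koll\'ar stability together with relative monomialization. This is the content deferred to \Cref{s:sublevel-general-n} and \Cref{app:analytic-stability}.

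The second step builds from these local bounds a global Margulis-type function $\widetilde f$ on lattices. It combines, over $i$, powers of $d_{h\Lambda}(hV)^{-1}$, taken only over non-quasi-null-descended $V$, as in \eqref{eq:hat-local-height}, so that $\widetilde f\ge\widehat\alpha_{\eta,M}$. We then prove
\begin{equation*}
\int_B\widetilde f(a_tu_\xi\Delta)^{1+\theta}d\xi\le c\,e^{-\delta t}\widetilde f(\Delta)^{1+\theta}+b\,t^{C}
\end{equation*}
for $\xi$ outside an exceptional set. When a single short subspace dominates, the local estimate of the first step applies directly. When several short subspaces compete, we use the EMM intersection--sum inequality $d(V\cap W)d(V+W)\le d(V)d(W)$. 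Because the sum $V+W$ might become quasi-null even when $V$ and $W$ are not, we supplement this with the Benoist--Quint Mother Inequality, which controls the distance of $V+W$'s Plücker vector from $\mathcal M_{k,m}$.

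The third step handles avoidance. The exceptional set consists of $\xi$ for which $a_tu_\xi$ drives a rational Plücker vector $\mathbf w$ that is not quasi-null for $\Lambda$ close to $\mathcal M_{k,m}$. The Diophantine hypothesis gives $\|\mathbf w-\pi^{(r)}_{k,m}\mathbf w\|>\eta\|\mathbf w\|^{-M}$, both for single vectors ($r=1$) and for configurations spanning a full exceptional summand. Kleinbock's quantitative nondivergence then bounds the measure of the exceptional set by a negative power of $e^t$. The fourth step iterates the contraction inequality along times $t_j$, with step sizes growing enough to absorb the $t^C$ loss, and bounds the exceptional contributions using the measure estimate together with a crude polynomial bound for $\widetilde f$. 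This yields $\sup_t\int_B\widehat\alpha(a_tu_\xi;\Lambda)^{1+\theta}d\xi<\infty$. Finally, we transfer this bound to $K$ via the decomposition argument of \cite[Section~7.2]{Kim}, writing $k$ locally as $u^-a u^+$ with $u^-$ contracted.

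I expect the main obstacle to be the uniform sublevel estimate in the first step for $n\ge3$. The polynomials $p_w$ have high degree, and their vanishing orders vary from zero to zero, so elementary arguments fail. The key check is that the weight-counting codimension bound gives a log canonical threshold strictly greater than $1$ on every nonexceptional summand.
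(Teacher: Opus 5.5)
Your outline follows the paper's overall architecture, but it is missing the device that makes the exponent $1+\theta$ reachable at the critical degrees $kn$. What is missing is a local height that penalizes \emph{proximity} to $\mathcal M_{k,1}\oplus\mathcal M_{k,2}$, not merely membership in it.

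\textbf{Why ordinary covolumes do not contract.} Deleting quasi-null (hence isotropic) subspaces from $\widehat\alpha_{\eta,M}$ removes only Pl\"ucker vectors lying \emph{in} the exceptional summands. Consider a short non-isotropic rational $kn$-plane with Pl\"ucker vector $v=v_0+v_1$, where $v_1\in\mathcal M_{k,1}$, $v_0$ lies in the other summands, and $\|v_0\|=\epsilon\|v\|$. It is still counted by $\widehat\alpha_{\eta,M}$. For it, your Step~1 yields only $\int_B\|a_suv\|^{-(1+\theta)}du\ll e^{-\delta s}\epsilon^{-(1+\theta)}\|v\|^{-(1+\theta)}$. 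Once $\epsilon\le e^{-8n^3s}$, the size of $a_suv$ on $B$ is governed by $a_suv_1$. The exceptional summands have critical exponent exactly $1$, so at exponent $1+\theta$ you get only bounded expansion there (\Cref{lem:boundedexpansionfornn-1}). Hence your claim in Step~2 that the Step~1 estimate ``applies directly'' when one short subspace dominates is false in degree $kn$. Powers of ordinary covolumes therefore do not form a Margulis function.

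\textbf{Why enlarging the exceptional set does not help.} Putting all relative distances up to $e^{-\delta s/2}$ into the exceptional set does not rescue Step~4. In the paper the exceptional contribution is controllable only because $\alpha$ is capped on the exceptional set (see \eqref{eq:alpha-cap-on-exceptional-set}). That cap comes from using the threshold $\alpha(h\Delta)^{-bM'}$ in the deep cusp (and $e^{-M's}$ otherwise) and pulling back through the Diophantine condition. A threshold independent of $\alpha$ gives no cap. Moreover, a crude pointwise bound grows like $e^{Cs}$ with $C\asymp n^3$, whereas \Cref{prop:avoidance0} saves only $\varepsilon^{1/(b_1M)}$, a tiny exponent, so the crude bound cannot be beaten.

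\textbf{What the paper does instead.} It introduces the modified local height $\phi_{kn,\theta}(v)=\|v\|^{-1+4\theta}\|v-\pi_{k,1}(v)\|^{-2\theta}\|v-\pi_{k,2}(v)\|^{-2\theta}$ of \Cref{modifiedlocalheight}. This contracts at every exponent $\beta\in[1,1+\theta/(4n^2)]$ for \emph{all} $v\notin\mathcal M_{k,1}\cup\mathcal M_{k,2}$ (\Cref{prop:localMargulisineq;n-1}, via H\"older over the five regimes in \eqref{eq:phi-as-minimum}). Because the distance enters only to the power $2\theta$, the leftover neighbourhoods of size $e^{-M's}$ or $\alpha^{-bM'}$ cost only $e^{CM'\theta s}$ or $\alpha^{CbM'\theta}$. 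These losses are absorbed once $\theta\ll 1/M'$.

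The auxiliary height $\widetilde\alpha$, built from $\phi_i^{\beta_i}$ and $\bar\alpha$, majorizes $\widehat\alpha_{\eta,M}^{1+\theta'}$ (\Cref{lem:hatalpha-majorized-by-tilde}). After an exceptional event, the paper uses bounded expansion of $\bar\alpha'$ at a rate $C\theta'$ smaller than the avoidance saving, rather than a crude bound.

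\textbf{The role of the Mother Inequality.} It is needed because the intersection--sum inequality controls covolumes but not the distance factors appearing in $\phi$ (\Cref{lem:BQinequality}). The scenario you cite, $V+W$ becoming quasi-null, is excluded for free: $\widetilde{\mathscr Q}$ is downward closed and $V\subset V+W$.

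\textbf{The form of the loss.} That lemma needs a pigeonhole step to find two competitors with comparable norms. This produces a multiplicative $\log(3+\widehat\alpha_{\eta,M})$ loss in \Cref{thm:globalcontraction}, not an additive $t^C$. That is why the iteration needs the separate cusp sets $\mathcal T_{s,\theta}$, not just growing step sizes.

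\textbf{A separate error in Step~1.} Your ``key check'' is false: on a nonexceptional summand the complex threshold of the first nonnegative-weight coordinate can equal $1$. Take $n=3$ and $w=E_{11}\wedge E_{13}\in\mathsf S_{(2)}(\mathsf C)\otimes\mathsf S_{(1,1)}(\mathsf R)\subset\wedge^2\M_3(\mathbb R)$, which has $a_t$-weight $-1$. A direct computation gives $u_{\boldsymbol\xi}w=w+x\,(E_{11}\wedge E_{33}+E_{31}\wedge E_{13})+x^2E_{31}\wedge E_{33}$, where $x$ is the first coordinate of $\xi_1$. So the weight-$2$ coordinate is linear in $x$ alone, and $\int_B|p_w|^{-s}=\infty$ for every $s\ge1$. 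Contraction beyond exponent $1$ in such cases comes from the strictly positive first nonnegative weight, combined with a sublevel exponent just below $1$ (the case $\mu_\ell>0$ in \Cref{prop:generallocalcontraction}). When the first nonnegative weight is $0$, a negative-moment bound for that coordinate gives only boundedness. Decay then needs the next-weight estimate, \Cref{lem:next-weight-sublevel}.
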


For a lattice $\La<\M_n(\br)$, define
\be\label{iso}
\Lambda_{\operatorname{iso}}
:=
\bigcup_{V}\,\bigl(\Lambda\cap V\bigr),
\ee
where $V<\M_n(\br)$ ranges over all $\Lambda$-rational column- and row-isotropic subspaces. Thus $\Lambda_{\operatorname{iso}}$ is the set of lattice points lying in such a subspace. 

The singular asymptotic requires the {\it isotropic noncoincidence} condition. For a lattice $\Delta<\M_n(\mathbb R)$ and
a $\Delta$-rational subspace $V<\M_n(\mathbb R)$, define the quotient
lattice
$$
\Delta_V:=p_{V^\perp}(\Delta)<V^\perp,
$$
where $p_{V^\perp}$ denotes orthogonal projection onto $V^\perp<\M_n(\br)$. Since the kernel of $p_{V^\perp}|_\Delta$ is $\Delta\cap V$, the
projected lattice $\Delta_V$ is naturally identified with the quotient $\Delta/(\Delta\cap V)$.
\begin{definition}
\label{def:isotropic-noncoincidence}
We say that $\Delta$ satisfies the \emph{column isotropic noncoincidence
condition} if, for every
$\lceil n/2\rceil\le k\le n-2$, there do not exist distinct proper
$\Delta$-rational column-isotropic subspaces $V_1,V_2$, each maximal
with respect to inclusion among all proper $\Delta$-rational
column-isotropic subspaces, such that
$\dim V_1=\dim V_2=kn$
and, for some $h\in\GL_n(\mathbb R)$,
$$
h\Delta_{V_1}
\quad\text{is commensurable with}\quad
\Delta_{V_2}.
$$

We define the \emph{row isotropic noncoincidence condition} analogously,
with $\Delta_{V_1}h$ in place of $h\Delta_{V_1}$. We say that $\Delta$
satisfies the \emph{isotropic noncoincidence condition} if it satisfies
both the column and row conditions.

\end{definition}
The isotropic noncoincidence condition is generic in the
measure-theoretic sense. Haar-almost every lattice
$\Delta\in X$ satisfies
$$
\Delta\cap\{v\in\M_n(\R):\det v=0\}=\{0\}.
$$
To see this, write $\Delta=g\mathbb Z^N$. For each
$0\ne z\in\mathbb Z^N$ the equation $\det(gz)=0$ defines a proper
algebraic subset of $\SL_N(\R)$; taking the countable union over $z$
proves the assertion.
For such a lattice, there are no nonzero $\Delta$-rational isotropic
subspaces, so the isotropic noncoincidence condition holds vacuously.

For $n=3$, we have $\lceil n/2\rceil=2$ and $n-2=1$ and hence
the above noncoincidence condition is vacuous. Moreover, every diagonal lattice satisfies the noncoincidence condition; see Example
\ref{ex:diagonal-isotropically-noncoincident}.

\begin{theorem}\label{thm:quasinullcontribution2}
Let $\Lambda<\M_n(\mathbb R)$ be a Diophantine lattice that is not of
$\mathbb Q$-split type and satisfies the
isotropic noncoincidence condition. Then the limit
$$
c_{\Lambda,\mathrm{iso}}
:=
\lim_{T\to\infty}
T^{-n(n-1)}
\#\{v\in\Lambda_{\operatorname{iso}}:\|v\|< T\}\geq 0
$$
exists and is finite. Moreover, $c_{\Lambda,\mathrm{iso}}>0$ if and
only if $\Lambda$ contains a rank-$n(n-1)$ submodule on which the
determinant vanishes identically.
\end{theorem}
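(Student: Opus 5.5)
The plan is to organize the rational isotropic subspaces into families, count lattice points inside each subspace, and then handle overlaps between subspaces. By symmetry under transposition it suffices to treat column-isotropic subspaces and then combine with the row ones.

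\emph{Step 1: structure.} Every point of $\Lambda_{\operatorname{iso}}$ lies in a maximal proper $\Lambda$-rational column- or row-isotropic subspace $V=\mathcal L(\mathsf U)$ (or $\mathcal R(\mathsf U)$), with $\dim\mathsf U=k\le n-1$. Only $k=n-1$ can give mass of order $T^{n(n-1)}$: a subspace of dimension $kn<n(n-1)$ contributes $O(T^{kn})$ per subspace. However, there may be infinitely many such subspaces, so even this has to be summed. I would first show that the $\Lambda$-rational column-isotropic subspaces fall into finitely many families. Fix a rational $V_0=\mathcal L(\mathsf U_0)$. Every other rational $\mathcal L(\mathsf U)$ of the same type is obtained as follows. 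Take the maximal rational column-isotropic subspace $W=\mathcal L(\mathsf U_{\max})$ containing all of them; this is the $\Lambda$-rational span of the relevant $\mathsf U$'s. Inside $W\simeq \mathsf U_{\max}\otimes\mathsf R$, the lattice $\Lambda\cap W$ is identified with $g(\mathcal O')$ for a rational structure on $\mathsf U_{\max}$. \Cref{prop:rational-exceptional-summand-implies-det-rational} prevents the whole exceptional summand from being rational. The non-$\mathbb Q$-split hypothesis (via the last assertion of \Cref{thm:det-rational-characterization}) is what forces the rational structure to come from $\mathsf U_{\max}$, and not from all of $\mathsf C$. Consequently, the rational hyperplane-type subspaces $\mathcal L(\mathsf U)$ inside $W$ correspond to rational subspaces of a $\mathbb Q$-structure on $\mathsf U_{\max}$, exactly as in the $\Lambda_{\sqrt2}$ example.

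\emph{Step 2: counting within a family.} Within a family, the union $\bigcup_{\mathsf U}(\Lambda\cap\mathcal L(\mathsf U))$ equals the set of lattice points $v\in\Lambda\cap W$ whose column space, read in the rational coordinates of $\mathsf U_{\max}$, is a proper subspace. That is, we are counting matrices of bounded rank in a rational lattice $\simeq \M_{d\times n}(\mathbb Z)$ twisted by a real linear map, together with the points of $\Lambda\cap W$ if $W$ itself is proper. I would stratify by rank $\le n-1$. For each rational $\mathsf U$ of dimension $n-1$, the count $\#\{v\in\Lambda\cap\mathcal L(\mathsf U):\|v\|<T\}$ is $\vol(B_T\cap\mathcal L(\mathsf U))/\covol(\Lambda\cap\mathcal L(\mathsf U))+O(T^{n(n-1)-1}\cdot\text{error})$. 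Summing over $\mathsf U$ with weights $\covol^{-1}$ requires a summable majorant. The majorant is $\sum_{\mathsf U}\covol(\Lambda\cap\mathcal L(\mathsf U))^{-1}$ over rational $\mathsf U$, which is a height zeta function of $\Gr(n-1,d)$ at a point where it converges precisely because $d<n$. When $d=n$ this is exactly the $\mathbb Q$-split case, where Katznelson's $\log T$ appears, and that case is excluded. I would then apply dominated convergence: uniform Lipschitz lattice-point counts for each $\mathsf U$ (\`a la Schmidt), bounded by $C\,T^{n(n-1)}/\covol$ plus lower-order terms $\lambda_1$-dependent. This gives the limit $\sum_{\mathsf U}\vol(B_1\cap\mathcal L(\mathsf U))/\covol$. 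Points of rank $\le n-2$ are counted by the same argument in dimension $\le n(n-2)$, where they are negligible.

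\emph{Step 3: overlaps and positivity.} The union over distinct families, and between column and row families, must be shown to overcount by $o(T^{n(n-1)})$. Intersections $\mathcal L(\mathsf U)\cap\mathcal R(\mathsf U')$ have dimension $(n-1)^2$, and $\mathcal L(\mathsf U_1)\cap\mathcal L(\mathsf U_2)$ has dimension $\le n(n-2)$. Summing these over infinitely many pairs again needs summability. This is the main obstacle. The dangerous case is two families with large maximal spaces $W_1,W_2$ of dimension $kn$, $k\ge n/2$, whose intersections contain many coincident rational subspaces, so that the pairwise sums are of critical order. The isotropic noncoincidence condition is designed exactly to exclude the case where the two quotient structures are commensurable, which is when the double sum diverges. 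Otherwise I would bound the overlap by a product of two convergent height zeta functions, using same-type and mixed overlap estimates. Finally, $c_{\Lambda,\mathrm{iso}}>0$ iff some $(n-1)$-dimensional $\mathsf U$ is rational. By Dieudonn\'e's theorem, a rank-$n(n-1)$ singular submodule spans an isotropic $n(n-1)$-space, which gives the ``if'' direction; conversely, a positive limit requires some term in the convergent series.
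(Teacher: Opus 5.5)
There is a genuine gap in Steps 1--2: your family structure is inverted, and as written it cannot produce the main term. If a family contains two distinct $\mathcal L(\mathsf U)$, $\mathcal L(\mathsf U')$ with $\dim\mathsf U=\dim\mathsf U'=n-1$, then $\mathsf U+\mathsf U'=\mathbb R^n$, so no proper isotropic $W=\mathcal L(\mathsf U_{\max})$ contains both. Correspondingly, $\Gr(n-1,d)$ has at most one point when $d<n$. Moreover, rank-deficient points of $\Lambda\cap W$ for proper $W$ number only $O(T^{n(d-1)})=O(T^{n(n-2)})$, and $\Lambda\cap W$ itself has critical size only when $W$ is a single member.

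The members of an infinite family do not lie in a common isotropic subspace; they \emph{contain} one. The missing idea is to dualize. The space $V^\perp=\mathcal L(\mathbb Ru)$ is $\Lambda^*$-rational with $\Lambda^*\cap V^\perp=u\otimes\Lambda_u$. Group the normals $u$ by the homothetic commensurability class of $\Lambda_u$; each normal is supported in a single class. This yields finitely many subspaces $\mathsf U_j$ with $\Lambda^*\cap\mathcal L(\mathsf U_j)$ commensurable with a tensor-product lattice, up to at most $n$ exceptional $V$ (\Cref{lem:global-isotropic-structure}). Non-$\mathbb Q$-splitness then forces $k_j=\dim\mathsf U_j<n$. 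This is the correct form of your sound heuristic that convergence needs $d<n$, with Katznelson's logarithm as the split case. \Cref{prop:rational-exceptional-summand-implies-det-rational} is not the mechanism, and its input, non-determinant-rationality, is not a hypothesis of this theorem.

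A family is then the fibered set $\{v\in\Lambda:\rank(\iota_j(p_{W_j}v))\le k_j-1\}$ with $W_j=\mathcal L(\mathsf U_j)$. The base consists of rank-deficient matrices in a rational lattice of $\M_{k_j,n}(\mathbb R)$, about $T^{n(k_j-1)}$ of them. The fiber is $\Lambda\cap W_j^\perp$, with about $T^{n(n-k_j)}$ points.

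This also shows why your summation step fails. Every member of the family contains $\Lambda\cap W_j^\perp$, so $\#(\Lambda\cap\mathcal L(\mathsf U)\cap B_T)\gg T^{n(n-k_j)}$ regardless of the covolume. Hence the successive-minima-dependent lower-order terms in any Schmidt-type bound are not summable, and in fact $\sum_{\mathsf U}\#(\Lambda\cap\mathcal L(\mathsf U)\cap B_T)=\infty$. You must count each point once, on the stratum where the base matrix has rank exactly $k_j-1$, with unique primitive left kernel $L$. The uniform majorant $\ll T^{n(n-1)}\covol(L)^{-n}$ on that stratum comes from the rational product structure of the base: such a point of norm $<T$ supplies $k_j-1$ independent columns of norm $<T$ in $L^\perp_{\mathbb Z}$. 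See \Cref{rational-lattice,lem:fibered-rank-count}.

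Your treatment of the lower-dimensional isotropic subspaces, and the role you assign to isotropic noncoincidence, are also off. You note that maximal rational isotropic subspaces of dimension $kn$, $k\le n-2$, may be infinite in number and ``have to be summed'', but you never sum them. This is precisely where noncoincidence is needed. For $k<n/2$ there is at most one maximal such subspace, because two minimal dual subspaces $\mathcal L(\mathsf F)$, $\mathcal L(\mathsf F')$ with $\dim\mathsf F=\dim\mathsf F'=n-k>n/2$ must meet. For $\lceil n/2\rceil\le k\le n-2$ there are at most $n$, by \Cref{prop:finitely-many-maximal-isotropic}: an extra dual member is the graph of a map $B\otimes\id$ over some $Y_a$, which makes $g\Delta_{V_a}$ commensurable with $\Delta_V$. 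Finiteness then makes these pieces $O(T^{n(n-2)})$.

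The overlaps, to which you attribute the noncoincidence condition, need no such hypothesis. Distinct main column pieces overlap in $O(T^{n(n-2)})$ because $\mathsf U_j\cap\mathsf U_{j'}=\{0\}$. For a column piece with $W=\mathcal L(\mathsf U_j)$ and a row piece with $Z=\mathcal R(\mathsf V_{j'})$, the overlap is $O(T^{n^2-n-1}(\log T)^{3n})$. This comes from compatible rational coordinates on the quotient by $W^\perp\cap Z^\perp$ (\Cref{lem:compatible-split-coordinates}), stratification by the rank of the common block, and the uniform rank-extension bound (\Cref{lem:rank-extension}). A product of two height zeta functions cannot replace this. All intersections $\mathcal L(\mathsf U)\cap\mathcal R(\mathsf U')$ from these two families contain the same nonzero rational subspace $W^\perp\cap Z^\perp$, so the naive double sum diverges for the same reason as above. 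With these ingredients in place, your positivity argument via Dieudonn\'e's theorem is fine.
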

We will show later that the determinant-zero points outside $\Lambda_{\operatorname{iso}}$ contribute $o(T^{n(n-1)})$. Consequently,
$c_{\Lambda,\mathrm{iso}}$  agrees with the constant
$c_\Lambda^{\operatorname{sing}}$ in
\eqref{eq:intro-singular-asymptotic}; see Subsection \ref{isoo}.

\section{Sublevel estimates}
\label{s:sublevel-general-n}
The goal of this section is to prove the uniform sublevel estimate needed for
the contraction inequalities in
\Cref{prop:generallocalcontraction}. Working with one Schur module
${\mathsf S}_\lambda(\mathbb R^n)$ at a time, we reduce the problem to the
first nonnegative-weight coordinate of a horospherical translate. This
coordinate is a vector-valued polynomial, which may have degree greater
than one when $n\ge3$.

Using the semistandard Young tableau model for Schur modules, we relate the
$b_t$-weight of a vector to the number of independent horospherical
directions that act nontrivially on it.  This gives lower bounds for the
codimension of the space of directions annihilating the vector, which in
turn control the complex singularity exponents of the associated homogeneous
polynomials.  We then use stability of complex singularity exponents and the
uniform negative-moment result of \Cref{app:analytic-stability}.

Recall the notation $b_t$ from \eqref{btdef}, and let $U_0<\SL_n(\br)$ be
its expanding horospherical subgroup:
$$
U_0 =
\left\{
u_\xi:=
\begin{pmatrix}
\operatorname{Id}_{n-1} & 0\\
\xi & 1
\end{pmatrix}
:\xi\in\R^{n-1}
\right\}.
$$
We identify its Lie algebra with
$$
\mathfrak u_0
\simeq
\sum_{1\le j\le n-1}\R E_{nj}
\simeq
\R^{n-1},
$$
and for $\xi\in\R^{n-1}$, we write
$$
Y_\xi:=\log u_\xi=\sum_{j=1}^{n-1}\xi_jE_{nj}\in\mathfrak u_0.
$$
Thus $u_\xi=\exp(Y_\xi)$, and the map $\xi\mapsto Y_\xi$ is linear.
For $1\le i\le n-1$, put
$$
Y_i:=Y_{e_i}=E_{ni}.
$$
In this section, we fix
$$
V={\mathsf S}_\lambda(\R^n),
$$
where $\lambda=(\lambda_1, \cdots,  \lambda_n)$ is a partition with
$\lambda_1\ge  \cdots\ge  \lambda_n\ge 0 $  whose Young diagram fits inside the
$n\times n$ box. Since we work with $\SL_n(\R)$-representations, we may
replace $\lambda$ by the equivalent partition $(\lambda_1-\lambda_n \ge \cdots \ge \lambda_n-\lambda_n)$, and  therefore assume throughout that
\begin{equation}\label{eq:lambda-reduced}
\lambda_n=0.
\end{equation} In particular, $|\lambda|\le n(n-1)$.
We assume that $$\lambda\ne0.$$  Fix an inner product
on $V$ for which the weight spaces below are orthogonal. We use the same
symbol $Y_\xi$ for the derived action of $Y_\xi\in\mathfrak u_0$ on $V$.
The Young diagram of $\lambda$ has at most $n-1$ nonzero
rows and at most $n$ columns.

We shall occasionally use the standard semistandard-tableau model for
Schur modules; see, for example, \cite[Chapter~I]{FultonYT} or
\cite[Section~4.2]{FultonHarris}. Recall that a semistandard Young
tableau of shape $\lambda$ is a filling of the Young diagram of $\lambda$
with entries in $\{1,\ldots,n\}$ that are non-decreasing along rows and
strictly increasing down columns. Such tableaux index a weight
basis of ${\mathsf S}_\lambda(\mathbb C^n)$, with the weight determined by
the number of occurrences of each entry. In particular, if a tableau
contains $i$ entries equal to $n$, then its $b_t$-weight is
$-|\lambda|+ni$. We will also use the immediate consequence of column
strictness that any fixed label occurs at most once in each column.

Set
$$
\mu_i:=-|\lambda|+ni 
\quad (0\le i\le \lambda_1),
$$
and let $V_i$ be the $\log b_1$-eigenspace of weight $\mu_i$. Note that $0$ is a weight for $\log b_1$ if and only if $n\mid |\lambda|$. For a nonzero
vector $w\in V_i$, put $$\operatorname{wt}(w):=\mu_i.$$ We have
$$
V=\bigoplus_{i=0}^{\lambda_1}V_i.
$$
Indeed, in the semistandard-tableau model, the index $i$ is the number of
entries equal to $n$. Moreover,
$$
[\log b_1,E_{nj}]=nE_{nj},
\quad\text{
so } \quad Y_\xi V_i\subset V_{i+1} \quad \text{for all $\xi\in\R^{n-1}$ and $0\le i<\lambda_1$}.$$

Let
$$
L=
\begin{pmatrix}
\SL_{n-1}(\R)&0\\
0&1
\end{pmatrix}.
$$
Since $L$ commutes with $b_t$, every $V_i$ is $L$-invariant. Let
$$
p_i:V\to V_i
$$
denote the projection onto $V_i$. Because $\mathfrak u_0$ is abelian, for every $v\in V$ and $\xi\in\R^{n-1}$,
$$
u_\xi v=\sum_{k\ge 0}\frac{1}{k!}Y_\xi^k v.
$$
Since $Y_\xi^k$ maps $V_i$ to $V_{i+k}$, we obtain, for
$0\le m\le \lambda_1$,
\begin{equation}\label{eq:polynomialcoordinate}
p_m(u_\xi v)
=
\sum_{i=0}^{m}\frac{1}{(m-i)!}Y_\xi^{m-i}p_i(v).
\end{equation}
The formula is triangular with respect to the weight decomposition: the
$m$-th component depends only on $p_0(v),\ldots,p_m(v)$. In particular,
$p_m(u_\xi v)$ is a polynomial in $\xi$ of degree at most $m$.

Define
\begin{equation}\label{alphalambda}
\alpha_\lambda:=
\begin{cases}
\dfrac{n-1}{n-2}
&\text{if }n\mid |\lambda| \text{ and }\lambda\neq (n^k),\ 1\le k\le n-1\\ 
1
&\text{otherwise}.
\end{cases}
\end{equation}

\begin{remark}\label{rem:sublevel-n2} The first case can occur only when $n\ge3$.
For $n=2$, the only nontrivial $\lambda$ with $2\mid |\lambda|$ is
$\lambda=(2)$.
\end{remark}

The goal of this section is to prove the following uniform sublevel estimate.

\begin{proposition}[Sublevel estimate]\label{prop:sublevelestimate}
Let $1\le \ell_\lambda\le \lambda_1$ be the
smallest integer such that $\mu_{\ell_\lambda}\ge 0$.
For every bounded open subset $\Omega\subset \R^{n-1}$ and $0<\delta<\alpha_\lambda$, there exists $C=C(V,\Omega,\delta)>0$ such that, for
every $v\in V$ with $\max_{0\le i\le \ell_{\lambda}}\|p_i(v)\|>0$ and $0<\varepsilon<1$,
\be\label{dt}
\operatorname{Leb}
\Bigl(
\bigl\{\xi\in\Omega:\|p_{\ell_{\lambda}}(u_\xi v)\|\le \varepsilon \max_{0\le i\le \ell_{\lambda}}\|p_i(v)\|\bigr\}
\Bigr)
\le
C\varepsilon^{\alpha_\lambda-\delta}
\ee
where $\operatorname{Leb}$ denotes the Lebesgue measure on $\mathbb R^{n-1}$.
\end{proposition}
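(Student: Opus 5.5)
By homogeneity of \eqref{eq:polynomialcoordinate} in $v$, we may normalize $\max_{0\le i\le \ell_\lambda}\|p_i(v)\|=1$. The set of admissible $(p_0(v),\ldots,p_{\ell_\lambda}(v))$ is then the compact unit sphere $\mathcal S$ in $V_0\oplus\cdots\oplus V_{\ell_\lambda}$. For each $w=(w_0,\ldots,w_{\ell})\in\mathcal S$ put
$$
P_w(\xi):=\sum_{i=0}^{\ell}\frac{1}{(\ell-i)!}Y_\xi^{\ell-i}w_i\in V_\ell,\qquad \ell=\ell_\lambda ,
$$
a vector-valued polynomial of degree at most $\ell$, depending continuously (indeed linearly) on $w$. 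The estimate \eqref{dt} is equivalent to a uniform negative-moment bound
$$
\sup_{w\in\mathcal S}\int_{\Omega}\|P_w(\xi)\|^{-s}\,d\xi<\infty
\qquad\text{for every } s<\alpha_\lambda,
$$
by Chebyshev's inequality. I will aim for this moment bound, which is the form supplied by the uniform negative-moment result of \Cref{app:analytic-stability}. That result should require two inputs: (a) for each fixed $w$, the real (equivalently complex) log canonical threshold of $\|P_w\|^2$ at every point of $\overline\Omega$ is $>s$; (b) semicontinuity/stability along the compact family $w\in\mathcal S$, provided by Demailly--Koll\'ar and Cutkosky's relative monomialization.

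The main algebraic step is (a). First, $P_w\not\equiv0$: if $P_w(\xi)=0$ for all $\xi$, then $u_\xi$ applied to $w$ has vanishing $\ell$-th weight component for all $\xi$. Using the $L$-invariance of the weight spaces and the fact that $V=\mathsf S_\lambda$ is irreducible under $\SL_n$, generated by the opposite unipotent, $L$ and $U_0$, one sees that the top components cannot all be killed. More concretely, $Y_{\xi}^{\ell-i}:V_i\to V_\ell$ is injective for $\mu_i<0\le\mu_\ell$ by $\mathfrak{sl}_2$-type weight considerations along the grading $\log b_1$.

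Next we bound the vanishing order. Translating $\xi$ by $\xi_0$ replaces $w$ by $u_{\xi_0}w$, so it suffices to work at $\xi=0$. There the lowest-degree part of $P_w$ is $\xi\mapsto Y_\xi^{\ell-i_0}w_{i_0}/(\ell-i_0)!$, where $i_0$ is the largest index with $w_{i_0}\ne0$. The lct of $\|P\|^2$ is at least the lct of its initial homogeneous part whenever the latter has finite threshold. For a homogeneous map $Q(\xi)=Y_\xi^{d}w$ with zero locus $Z\subset\mathbb C^{n-1}$, I use $\mathrm{lct}\ge \operatorname{codim}_{\mathbb C}Z/d$ as a lower bound. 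Via the tableau model, I would show that a vector of weight $\mu_i$ is moved nontrivially by enough independent directions $E_{nj}$ that $\operatorname{codim} Z\ge$ roughly the number of columns lacking an $n$. This should give $\operatorname{codim}Z/d\ge1$ in general.

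When $n\mid|\lambda|$ and $\lambda\neq(n^k)$, we have $\mu_\ell=0$. In this case one should get codimension $\ge (n-1)d/(n-2)$. The only extremal situation is when the relevant vector is $L$-invariant, which forces the exceptional $\lambda=(n^k)$, where $V_\ell$ is the trivial line; those $\lambda$ are excluded here. This gives $\alpha_\lambda$.

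The main obstacle is this codimension-versus-degree count, uniformly over all nonzero $w_{i_0}$ and all shapes $\lambda$. I would first try to prove it by choosing a basis of $\mathfrak u_0$ adapted to $w$ (using $L$ to move $w$ into a generic position) and counting, column by column, how many $E_{nj}$ can replace a label $j$ by $n$. Step (b) I will take from the appendix.
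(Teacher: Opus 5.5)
\textbf{There is a genuine gap in step (a).} Your normalization to a compact family, the reduction to a uniform negative moment, and your appeal to the appendix for uniformity in $w$ all match the paper. The problem is the bound $\operatorname{lct}_0(Y_\xi^d w)\ge \operatorname{codim}_{\mathbb C}Z/d$, where $Z$ is the zero locus of $\xi\mapsto Y_\xi^d w$. This inequality is far too lossy. Worse, the codimension bounds you hope to prove, $\operatorname{codim}Z\ge d$ in general and $\operatorname{codim}Z\ge\frac{n-1}{n-2}d$ in the divisible non-exceptional case, are false.

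\emph{Noncritical counterexample.} Take $n=3$ and $\lambda=(3,1)$; this module occurs in $\wedge^4\M_3(\mathbb R)$. Here $\mu_0=-4$, so $\ell_\lambda=2$. Let $w\in V_0$ be the weight vector of the tableau with rows $(1,2,2)$ and $(2)$, and take $w_0=w$, $w_1=w_2=0$. Then $P_w=\frac12 Y_\xi^2w$ is its own initial form at $0$, with $d=2$.
\begin{itemize}
\item Since $w$ has a single entry $1$, we get $E_{31}^2w=0$.
\item Hence $Y_\xi^2w=\xi_2\bigl(2\xi_1E_{31}E_{32}w+\xi_2E_{32}^2w\bigr)$.
\item $E_{32}^2w\neq0$, because $w$ has $(E_{33}-E_{22})$-weight $-3$.
\item The two summands have different torus weights, $(0,2,2)$ and $(1,1,2)$.
\end{itemize}
So $Z=\{\xi_2=0\}$ has codimension $1<d$. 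Your bound gives only $\operatorname{lct}_0\ge 1/2$, whereas the true threshold is $1=\alpha_\lambda$.

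\emph{Critical counterexample.} Take $n=4$, $\lambda=(4,3,1)$, and the tableau with rows $(1,2,2,2)$, $(2,3,3)$, $(3)$. It lies in $V_0$ with $d=2$, and $E_{41}^2$ kills it. So $Z\supseteq\mathbb Ce_1$ and $\operatorname{codim}Z\le 2$, while you would need $3$.

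The same example refutes your injectivity claim for a fixed $\xi$. What is true is that $w\mapsto(\xi\mapsto Y_\xi^{\ell-i}w)$ is injective. This follows because a vector killed by all of $\mathfrak u_0$ has nonnegative weight, not from an $\mathfrak{sl}_2$ argument along $\log b_1$.

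\textbf{The missing idea} is to separate the \emph{linear} subspace $\mathcal Z^{\mathbb C}(w)=\{\xi:Y_\xi w=0\}$ from the zero locus $Z\supseteq\mathcal Z^{\mathbb C}(w)$.
\begin{itemize}
\item Since the $Y_\xi$ commute, $Y_\xi^dw$ is invariant under translation by $\mathcal Z^{\mathbb C}(w)$. It therefore factors through a complement of dimension $\mathsf c=\operatorname{codim}\mathcal Z^{\mathbb C}(w)$.
\item For a homogeneous map there, $c_0=\min\{\mathsf c/d,\ \inf_{y\neq0}c_y\}$.
\item The weight estimate, proved via the adjoint-positivity identity rather than a tableau count, gives $\mathsf c\ge\mathsf m(w)$. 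In the divisible non-exceptional case it gives $\mathsf c\ge\mathsf m(w)+1$, which together with $\mathsf m(w)\le n-2$ yields $\mathsf c/d\ge\frac{n-1}{n-2}$.
\item Nonzero zeros $y\notin\mathcal Z^{\mathbb C}(w)$ are handled by induction on raising depth. There the initial form is $Y_\xi^q(Y_y^jw)$ with $q<d$ and $\mathsf m(Y_y^jw)=q$, and semicontinuity under initial degeneration transfers the bound.
\end{itemize}
In the $n=3$ example, $\mathcal Z^{\mathbb C}(w)=0$, so $\mathsf c/d=1$. Points of $Z\smallsetminus\{0\}$ have initial form a multiple of $Y_\xi(E_{31}w)$, with threshold at least $1$. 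A bound depending only on $\operatorname{codim}Z$ cannot reach $\alpha_\lambda$.

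A minor point: real and complex thresholds are not equivalent, but you only need the direction complex $\Rightarrow$ real, which is what the appendix provides.
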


\subsection{A weight estimate and a codimension bound}
\label{s:parabolic-codim}
In this subsection, set
\[
V_i^{\mathbb C}:=V_i\otimes_{\mathbb R}\mathbb C,
\]
and continue to write $Y_\xi$, $p_i$, and $\operatorname{wt}$ for the
complexified action, projections, and weight function, respectively.
We identify
$\mathfrak u_0^{\mathbb C}$ with $\mathbb C^{n-1}$ as above.

For $0\ne v\in\mathsf S_\lambda(\mathbb C^n)$, define the linear
subspace
$$
\mathcal Z^{\mathbb C}(v)
:=
\{\xi\in\mathbb C^{n-1}:Y_\xi v=0\}.
$$
We write
$$
\operatorname{codim}_{\mathbb C}\mathcal Z^{\mathbb C}(v)
:=
(n-1)-\dim_{\mathbb C}\mathcal Z^{\mathbb C}(v).
$$

If $v$ is a $\log b_1$-weight vector of negative weight, define
\begin{equation}\label{eq:def-mv}
\mathsf m(v)
:=
\min\bigl\{m\in\mathbb Z_{\ge0}:
\operatorname{wt}(v)+nm\ge0\bigr\}
=
\left\lceil\frac{-\operatorname{wt}(v)}{n}\right\rceil.
\end{equation}
We call $\mathsf m(v)$ the $U_0$-raising depth of $v$.

Recall that scalar matrices $e^tI_n$ act on
$\mathsf S_\lambda(\mathbb C^n)$ by multiplication by
$e^{t|\lambda|}$. Thus the infinitesimal action of
$I_n=\sum_{i=1}^nE_{ii}
$
is $|\lambda|\operatorname{Id}$. Consequently, using the same notation
for elements of $\mathfrak{gl}_n$ and their induced actions on
$\mathsf S_\lambda(\mathbb C^n)$, we have
$$
\log b_1
=
-\sum_{i=1}^{n-1}E_{ii}+(n-1)E_{nn}
= -I_n + n E_{nn}=
-|\lambda|\operatorname{Id}+nE_{nn}.
$$
It follows that every $\log b_1$-weight vector satisfies
$$
E_{nn}v=qv,
\qquad
\operatorname{wt}(v)=-|\lambda|+nq
$$
for some $q\in\mathbb Z_{\ge0}$. Hence, for a negative-weight vector,
\begin{equation}\label{eq:weight-divisibility}
\operatorname{wt}(v)=-n\mathsf m(v)
\quad\Longleftrightarrow\quad
n\mid|\lambda|.
\end{equation}

Since $|\lambda|\le n(n-1)$, we have
$1\le\mathsf m(v)\le n-1$.
Moreover,
\begin{equation}\label{mv}
\mathsf m(v)\le n-2
\quad\text{if }n\mid|\lambda|
\text{ and }\lambda\ne(n^{n-1}).
\end{equation}
Indeed, if $n\mid|\lambda|$ and $\mathsf m(v)=n-1$, then
\eqref{eq:weight-divisibility} gives
$
\operatorname{wt}(v)=-n(n-1).
$
Since
$$
\operatorname{wt}(v)\ge-|\lambda|\ge-n(n-1),
$$
equality holds throughout. Thus $|\lambda|=n(n-1)$, which forces
$\lambda=(n^{n-1})$.

\begin{lemma}[Codimension bound]
\label{lem:codimensionbound}
Let $0\ne v\in\mathsf S_\lambda(\mathbb C^n)$ be a
$\log b_1$-weight vector of negative weight. Then
\begin{equation}\label{eq:kernel-weight-bound}
\operatorname{codim}_{\mathbb C}\mathcal Z^{\mathbb C}(v)
\ge
\mathsf m(v)
=
\left\lceil\frac{-\operatorname{wt}(v)}{n}\right\rceil.
\end{equation}
If $n\mid|\lambda|$ and equality holds, then
$\lambda=(n^{\mathsf m(v)}).
$
Consequently, if $n\mid|\lambda|$ and
$\lambda\ne(n^{\mathsf m(v)})$, then
$$
\operatorname{codim}_{\mathbb C}\mathcal Z^{\mathbb C}(v)
\ge\mathsf m(v)+1.
$$

Moreover, the polynomial
\begin{equation}\label{eq:critical-power-nonzero}
\xi\mapsto Y_\xi^{\mathsf m(v)}v
\quad\text{is not identically zero}.
\end{equation}
\end{lemma}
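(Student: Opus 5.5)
The plan is to reduce everything to $\mathfrak{sl}_2$-lowest-weight considerations inside the torus-weight decomposition of $\mathsf S_\lambda(\mathbb C^n)$, and then count tableau labels.

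\emph{Normalizing the kernel.} The Levi factor $L\simeq\SL_{n-1}$ (and in fact its $\GL_{n-1}$ extension, which also commutes with $\log b_1$) normalizes $\mathfrak u_0$. It acts on $\mathfrak u_0\simeq\mathbb C^{n-1}$ through a linear representation that is transitive on subspaces of a given dimension. It also preserves the weight spaces $V_q^{\mathbb C}$ and transforms $\mathcal Z^{\mathbb C}(v)$ equivariantly. Set $c:=\operatorname{codim}_{\mathbb C}\mathcal Z^{\mathbb C}(v)$. After replacing $v$ by $\ell v$ for suitable $\ell\in L(\mathbb C)$, I may assume that
$$\mathcal Z^{\mathbb C}(v)=\operatorname{span}\{e_{c+1},\dots,e_{n-1}\},$$
that is, $E_{nj}v=0$ for all $c<j\le n-1$.

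\emph{Passing to torus weights.} Write $v=\sum_\nu v_\nu$, a decomposition into weight components for the full diagonal torus of $\GL_n$. Each $E_{nj}$ shifts torus weights by a fixed amount, so it sends distinct components to distinct weight spaces. Hence $E_{nj}v_\nu=0$ for every $\nu$ and every $c<j\le n-1$. Fix one component $v_\nu\ne0$. In the tableau model, $\nu=(a_1,\dots,a_n)$ records the label multiplicities, with $a_n=q$, where $\operatorname{wt}(v)=-|\lambda|+nq$.

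\emph{The $\mathfrak{sl}_2$ bound.} For each $c<j\le n-1$, consider the $\mathfrak{sl}_2$-triple $(E_{jn},E_{nj},E_{jj}-E_{nn})$. The vector $v_\nu$ is killed by the lowering operator $E_{nj}$, so it is a lowest weight vector of this triple. Its eigenvalue $a_j-q$ is therefore $\le0$, giving $a_j\le q$. Column strictness gives $a_i\le\lambda_1\le n$ for every $i$. Summing,
$$|\lambda|=\sum_{i\le c}a_i+\sum_{c<j<n}a_j+q\le cn+(n-c)q\le cn+nq .$$
Thus $c\ge(|\lambda|-nq)/n$, and since $c$ is an integer, $c\ge\mathsf m(v)$.

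\emph{The equality case.} Suppose $n\mid|\lambda|$ and $c=\mathsf m(v)$. By \eqref{eq:weight-divisibility}, $c=(|\lambda|-nq)/n$, so both inequalities in the display are equalities.
\begin{itemize}
\item Since $c=\mathsf m(v)\ge1$, equality in the second step forces $cq=0$, hence $q=0$.
\item Equality in the first step then forces $a_i=n$ for $i\le c$ and $a_j=0$ for $c<j<n$.
\end{itemize}
So the tableau contains exactly the labels $1,\dots,c$, each occupying all $n$ columns, with no label $n$. By column strictness this means $\lambda$ consists of $c$ full rows of length $n$, that is, $\lambda=(n^{c})=(n^{\mathsf m(v)})$. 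The "consequently" statement follows at once.

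\emph{Nonvanishing of $Y_\xi^{\mathsf m(v)}v$.} Suppose $Y_\xi^{\mathsf m(v)}v\equiv0$. Polarizing (the $E_{nj}$ commute), every monomial $E_{nj_1}\cdots E_{nj_m}$ of degree $m=\mathsf m(v)$ kills $v$. Let $s\le m$ be minimal such that all degree-$s$ monomials kill $v$; then $s\ge1$ because $v\ne0$. Choose a degree-$(s-1)$ monomial $P$ with $w:=Pv\ne0$. Then:
\begin{itemize}
\item $w$ is a $\log b_1$-weight vector of weight $\operatorname{wt}(v)+n(s-1)<0$, because $s-1<\mathsf m(v)$;
\item $E_{nj}w=0$ for all $j$, so $\mathcal Z^{\mathbb C}(w)=\mathbb C^{n-1}$ and $\operatorname{codim}_{\mathbb C}\mathcal Z^{\mathbb C}(w)=0$.
\end{itemize}
This contradicts \eqref{eq:kernel-weight-bound} applied to $w$, which gives codimension at least $\mathsf m(w)\ge1$.

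\emph{Main obstacle.} The step needing the most care is the reduction to a single torus-weight component, in two respects. First, the $L$-conjugation must preserve the $\log b_1$-weight, which holds because $L$ commutes with $b_1$. Second, each component must be lowest for every relevant $\mathfrak{sl}_2$ simultaneously, so that the multiplicity bound $a_j\le q$ applies to all $c<j\le n-1$ at once.
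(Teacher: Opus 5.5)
Your proof is correct and follows essentially the same route as the paper. Both arguments normalize $\mathcal Z^{\mathbb C}(v)$ by the Levi factor and combine the $\mathfrak{sl}_2$ positivity from $[E_{jn},E_{nj}]=E_{jj}-E_{nn}$ with the column-strictness bound; both force $q=0$ and $\lambda=(n^{c})$ in the equality case, and both prove nonvanishing by polarization and the bound applied to a $U_0$-fixed vector. The only difference is cosmetic: you apply lowest-weight theory to each torus-weight component to get $a_j\le q$ pointwise, whereas the paper obtains the averaged form of the same inequality from $\sum_i\|Y_i^*v\|^2=\mathsf d q\|v\|^2-\langle Av,v\rangle$ for a $U(n)$-invariant Hermitian form.
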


\begin{proof}
Set
$
\mathsf d:=\dim_{\mathbb C}\mathcal Z^{\mathbb C}(v)$
and $\mathsf c:=n-1-\mathsf d.
$
Let
$$
L_{\mathbb C}
=
\left\{
\begin{pmatrix}
h&0\\
0&1
\end{pmatrix}
:h\in\SL_{n-1}(\mathbb C)
\right\}.
$$
Since $L_{\mathbb C}$ commutes with $b_1$ and acts transitively on
the $\mathsf d$-dimensional subspaces of $\mathbb C^{n-1}$, after
replacing $v$ by an $L_{\mathbb C}$-translate, we may assume that
$$
\mathcal Z^{\mathbb C}(v)
=
\operatorname{span}_{\mathbb C}\{e_1,\ldots,e_{\mathsf d}\}.
$$
Thus
$
Y_i v=0$ for all $1\le i\le\mathsf d$.

Choose a $U(n)$-invariant Hermitian form on
$\mathsf S_\lambda(\mathbb C^n)$. Then $Y_i=E_{ni}$ has adjoint
$Y_i^*=E_{in}$. Write
$
E_{nn}v=qv$ and
$\operatorname{wt}(v)=-|\lambda|+nq. $

Set
$$
A:=\sum_{i=1}^{\mathsf d}E_{ii},
\qquad
P:=\sum_{j=\mathsf d+1}^{n-1}E_{jj}.
$$
Both $A$ and $P$ are nonnegative self-adjoint operators. In the
semistandard-tableau model, $P$ counts entries with labels
$\mathsf d+1,\ldots,n-1$. There are $\mathsf c$ such labels, and
each can occur at most once in each column. Since the Young diagram of
$\lambda$ has $\lambda_1\le n$ columns,
$$
0\le P\le\mathsf c\lambda_1\operatorname{Id}
\le n\mathsf c\operatorname{Id}.
$$

Using
$[Y_i^*,Y_i]=E_{ii}-E_{nn}$
and $Y_i v=0$, we obtain
$$
\mathsf d q\|v\|^2-\langle Av,v\rangle
=
\sum_{i=1}^{\mathsf d}\|Y_i^*v\|^2
\ge0.
$$
Since
$A+P+E_{nn}=|\lambda|\operatorname{Id}$ and $
\mathsf d+\mathsf c=n-1,$
it follows that
$$
\begin{aligned}
\bigl(\operatorname{wt}(v)+n\mathsf c\bigr)\|v\|^2
={}&
\sum_{i=1}^{\mathsf d}\|Y_i^*v\|^2
+\mathsf c q\|v\|^2 +
\langle(n\mathsf c\operatorname{Id}-P)v,v\rangle
\ge0.
\end{aligned}
$$
Thus
\begin{equation}\label{eq:auxiliary-weight-bound}
\operatorname{wt}(v)\ge-n\mathsf c.
\end{equation}
Since $\mathsf c$ is an integer, this gives
$\mathsf c
\ge
\left\lceil\frac{-\operatorname{wt}(v)}{n}\right\rceil
= \mathsf m(v)$,
proving \eqref{eq:kernel-weight-bound}.

Suppose now that $n\mid|\lambda|$ and equality holds in
\eqref{eq:kernel-weight-bound}. By
\eqref{eq:weight-divisibility},
$$
\operatorname{wt}(v)
=
-n\mathsf m(v)
=
-n\mathsf c.
$$
 So every
nonnegative term in the preceding identity vanishes. Since
$\mathsf c=\mathsf m(v)\ge1$, we obtain
$$
q=0,\qquad Av=0,\qquad Pv=n\mathsf c\,v.
$$
Thus every semistandard tableau occurring in $v$ uses only the
$\mathsf c$ labels
$$
\mathsf d+1,\ldots,n-1,
$$
and the Young diagram of $\lambda$ has
$|\lambda|=n\mathsf c$
boxes. By column strictness, each of these $\mathsf c$ labels can occur at
most once in a given column, so each column contains at most $\mathsf c$
boxes. Since the diagram has at most $n$ columns, the equality
$|\lambda|=n\mathsf c$ forces it to have exactly $n$ columns, each
containing exactly $\mathsf c$ boxes. Hence
$$
\lambda=(n^{\mathsf c})=(n^{\mathsf m(v)}).
$$
Therefore, if $\lambda\ne(n^{\mathsf m(v)})$, equality in the codimension
bound is impossible; since the codimension is an integer, it is at least
$\mathsf m(v)+1$.

It remains to prove \eqref{eq:critical-power-nonzero}. Set
$\mathsf m:=\mathsf m(v)$, and suppose that $Y_\xi^{\mathsf m}v\equiv0$.
For fixed $\xi_1,\ldots,\xi_{\mathsf m}\in\mathbb C^{n-1}$, the polynomial
$Y_{t_1\xi_1+\cdots+t_{\mathsf m}\xi_{\mathsf m}}^{\mathsf m}v$ vanishes
identically in $t_1,\ldots,t_{\mathsf m}$. Since the operators $Y_\xi$
commute, the coefficient of $t_1\cdots t_{\mathsf m}$ is
$\mathsf m!Y_{\xi_1}\cdots Y_{\xi_{\mathsf m}}v$. Hence
$$
Y_{\xi_1}\cdots Y_{\xi_{\mathsf m}}v=0
$$
for all $\xi_1,\ldots,\xi_{\mathsf m}\in\mathbb C^{n-1}$. Choose $k<\mathsf m$
maximal such that
$$
w:=Y_{\xi_1}\cdots Y_{\xi_k}v\ne0
$$
for some $\xi_1,\ldots,\xi_k\in\mathbb C^{n-1}$. Then $Y_\eta w=0$ for every
$\eta\in\mathbb C^{n-1}$, so
$
\mathcal Z^{\mathbb C}(w)=\mathbb C^{n-1}.
$
The argument proving \eqref{eq:auxiliary-weight-bound}, applied to $w$
with $\mathsf c=0$, therefore gives
$\operatorname{wt}(w)\ge0$.
On the other hand,
$$
\operatorname{wt}(w)
=
\operatorname{wt}(v)+nk
\le
\operatorname{wt}(v)+n(\mathsf m-1)
<0,
$$
a contradiction.
\end{proof}

\subsection{Complex singularity exponents and uniform negative moments}
\label{s:analytic-stability}

We use complex singularity exponents to obtain uniformity in the polynomial
families arising below. If
$$
R=(R_1,\ldots,R_q):(\mathbb C^\ell,z_0)\to\mathbb C^q
$$
is a germ at $z_0$ of a holomorphic map, define
$$
{c}_{z_0}(R)
:=
\sup\left\{c>0:
\left(\sum_{j=1}^q|R_j|^2\right)^{-c}
\text{ is locally integrable near }z_0
\right\}.
$$
Here and below, we set $c_{z_0}(R)=0$ when $R$ is the zero germ. Thus
$c_{z_0}(R)>0$ if and only if $R$ is not the zero germ at $z_0$.
If $R(z_0)\ne 0$, then ${c}_{z_0}(R)=+\infty$. If $R(z_0)=0$ but $R$ is not the zero germ, then $0<c_{z_0}(R)<+\infty$.
This is the complex singularity exponent of $R$, or equivalently the
log canonical threshold at $z_0$ of the ideal generated by its coordinates $R_1,\ldots,R_q$.

We use the following consequence of \Cref{lem:complex-stability-slicing}(1)
in Appendix~A, which compares the complex singularity exponent with that
of the first nonzero homogeneous Taylor term. If
$$
R(z_0+z)
=
R_m^{\mathrm{hom}}(z)+R_{m+1}^{\mathrm{hom}}(z)+\cdots,
\qquad
R_m^{\mathrm{hom}}\ne0,
$$
where $R_j^{\mathrm{hom}}$ denotes the homogeneous component of degree $j$
in the Taylor expansion of $R$ at $z_0$,
then
\begin{equation}\label{eq:lct-initial-degeneration}
c_{z_0}(R)\ge c_0(R_m^{\mathrm{hom}}).
\end{equation}
Indeed, the formula
$$
F(r,z):=
\begin{cases}
r^{-m}R(z_0+rz),&r\ne0,\\
R_m^{\mathrm{hom}}(z),&r=0,
\end{cases}
$$
defines a holomorphic family near $(0,0)$. For $r\ne0$, multiplication by
the nonzero scalar $r^{-m}$ and the biholomorphic change of variables
$z\mapsto z_0+rz$ give
$$
c_0(F_r)=c_{z_0}(R),
$$
where $F_r:=F(r,\cdot)$. Applying \Cref{lem:complex-stability-slicing}(1) at $r=0$ and letting the
persistent exponent tend to $c_0(F_0)=c_0(R_m^{\mathrm{hom}})$ proves
\eqref{eq:lct-initial-degeneration}.

The following uniformity statement follows  immediately from
\Cref{prop:multiparameter-stability} in Appendix~A and compactness of the
parameter set. For each fixed parameter $a_0$, apply that
proposition to
$\mathcal Q(a_0+h,x)$. The uniform lower bound on the complex thresholds
gives a neighborhood of $a_0$ on which the negative moments are uniformly
bounded. 
\begin{lemma}[Uniform negative moments from complex thresholds]
\label{lem:uniform-negative-moments-complex}
Let $m, \ell, q \ge1$, let $A\subset\mathbb R^m$ be compact and let
$B\subset\mathbb R^\ell$ be bounded and open. Let
$\mathcal{Q}:\mathbb R^m\times\mathbb R^\ell\to \mathbb R^q $
be a polynomial map. Denote by
$\mathcal{Q}_a^{\mathbb C}$ the complexification of $\mathcal{Q}_a(x):=\mathcal{Q}(a,x)$. Let $\alpha>0$ and suppose that
\begin{equation}\label{eq:uniform-complex-threshold-hypothesis}
\inf_{a\in A} c_z(\mathcal{Q}_a^{\mathbb C})\ge\alpha
\end{equation}
for every $z$ in some complex neighborhood of $\overline{B}$. Then, for every $0<\beta<\alpha$,
\begin{equation}\label{eq:uniform-negative-moment-CM}
\sup_{a\in A}\int_B\|\mathcal{Q}_a(x)\|^{-\beta}\,dx<\infty.
\end{equation}
\end{lemma}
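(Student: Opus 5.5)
The proof reduces the uniform statement to a pointwise one via compactness of $A$, and gets the pointwise statement from \Cref{prop:multiparameter-stability} in Appendix~A. Fix $0<\beta<\alpha$. For each $a_0\in A$ we will find an open neighborhood $N_{a_0}\subset\mathbb R^m$ of $a_0$ such that
$$
\sup_{a\in N_{a_0}\cap A}\int_B\|\mathcal Q_a(x)\|^{-\beta}\,dx<\infty .
$$
Finitely many such neighborhoods cover $A$, and the maximum of the finitely many resulting bounds proves \eqref{eq:uniform-negative-moment-CM}.

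To produce $N_{a_0}$, consider the shifted family $h\mapsto\mathcal Q(a_0+h,\cdot)$, where $h$ ranges over a small ball in $\mathbb R^m$. This is a real-analytic (indeed polynomial) family of polynomial maps. By hypothesis \eqref{eq:uniform-complex-threshold-hypothesis}, its complexification at $h=0$ has complex singularity exponent at least $\alpha$ at every point $z$ of a complex neighborhood of $\overline B$. Apply \Cref{prop:multiparameter-stability} at the base parameter $h=0$ with the exponent $\beta<\alpha$. As the paper indicates, it converts the lower bound on complex thresholds at the base parameter into a neighborhood $|h|<\rho$ and a bound on $\int\|\mathcal Q(a_0+h,x)\|^{-\beta}\,dx$ that is uniform in $h$. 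This bound holds over a neighborhood of any point $x_0\in\overline B$, for some radius $\rho(x_0)$.

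Next we pass from local to global in $x$. Cover the compact set $\overline B$ by finitely many such $x$-neighborhoods, take the minimum of the corresponding radii $\rho$, and sum the local bounds. This gives the bound on all of $B$, uniform for $|h|<\rho$; set $N_{a_0}=\{a_0+h:|h|<\rho\}$. No separate treatment is needed for the zero germ: if $\mathcal Q_{a_0}$ vanished identically near some point, its exponent there would be $0<\alpha$, which the hypothesis excludes. Likewise, points where $\mathcal Q_{a_0}\neq0$ have exponent $+\infty$ and are covered trivially.

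The main obstacle is the content of \Cref{prop:multiparameter-stability}: real negative moments must be bounded uniformly in nearby parameters, including parameters outside $A$ where the thresholds may drop. This is where the Demailly--Koll\'ar semicontinuity of complex singularity exponents enters, together with the relative monomialization comparing real and complex integrability. Here we only invoke that proposition, so the remaining work is the bookkeeping of the two compactness arguments, in $x$ over $\overline B$ and in $a$ over $A$. The one point to check is that the hypothesis is available on a complex neighborhood of $\overline B$, not merely on $B$, so that the local applications at boundary points of $\overline B$ are legitimate.
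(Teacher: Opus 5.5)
Your argument is correct and is the paper's argument: for each $a_0\in A$, apply \Cref{prop:multiparameter-stability} to the shifted family $\mathcal Q(a_0+h,x)$, after shrinking the given complex neighborhood to a bounded open $V\supset\overline B$, and then cover the compact set $A$ by finitely many of the resulting neighborhoods. Your additional covering of $\overline B$ by small $x$-neighborhoods is harmless but unnecessary, because that proposition already gives the bound uniformly over all of $B$ once $\overline B\subset V$.
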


\subsection{Sublevel estimate for weight vectors}
\label{s:critical-weight-vector}

We first establish a complex singularity-exponent bound for the polynomials $\xi\mapsto Y_\xi^{\mathsf m(v)} v$ attached to $\log b_1$-weight vectors; the real
sublevel estimate will follow immediately.

\begin{proposition}[Complex threshold for the polynomial $\xi\mapsto Y_\xi^{\mathsf m(v)} v$]
\label{prop:critical-weight-complex-threshold}
For any negative $\log b_1$-weight vector $0\ne v\in{\mathsf S}_\lambda(\mathbb C^n)$, we have
\begin{equation}\label{eq:critical-complex-threshold}
c_z\bigl(\xi\mapsto Y_\xi^{\mathsf m(v)} v\bigr)\ge\alpha_{\lambda}
\qquad(z\in\mathbb C^{n-1})
\end{equation}
where $\alpha_\lambda$ is defined by \eqref{alphalambda}.
\end{proposition}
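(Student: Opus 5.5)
The plan is to reduce everything to the origin, using homogeneity and the initial-degeneration inequality \eqref{eq:lct-initial-degeneration}, and then to bound the log canonical threshold of a homogeneous polynomial map at $0$ by the codimension of its zero locus, fed by \Cref{lem:codimensionbound}. Set $\mathsf m:=\mathsf m(v)$ and $R(\xi):=Y_\xi^{\mathsf m}v$. By \eqref{eq:critical-power-nonzero}, $R$ is a nonzero homogeneous polynomial map of degree $\mathsf m$ from $\mathbb C^{n-1}$ to $\mathsf S_\lambda(\mathbb C^n)$.

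\emph{Reduction to $z=0$.} At a point $z$ with $R(z)\ne0$ the exponent is $+\infty$, so only zeros of $R$ matter. At a zero $z$, the lowest nonzero Taylor term of $R(z+\cdot)$ is again homogeneous, and \eqref{eq:lct-initial-degeneration} bounds $c_z(R)$ from below by the exponent at $0$ of that term. Working with this term directly is awkward, so instead I would use semicontinuity: for a homogeneous map, the $\mathbb C^\times$-scaling $\xi\mapsto r\xi$ moves $z$ toward $0$ while preserving $c$. The family $F(r,\xi)=r^{-\mathsf m}R(rz+r\xi)$ also equals $R(z+\xi)$ up to scaling, and \Cref{lem:complex-stability-slicing}(1) (lower semicontinuity of the exponent in families) gives $c_z(R)\ge c_0(R)$. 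Hence it suffices to prove $c_0(R)\ge\alpha_\lambda$.

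\emph{Bound at the origin.} For a homogeneous map $R$ of degree $\mathsf m$ with zero set $Z=R^{-1}(0)$, I would use the standard bounds $c_0(R)\ge 1/\mathsf m$ and, more usefully, $c_0(R)\ge\operatorname{codim}_{\mathbb C}Z/\mathsf m$. The latter I would obtain by reducing to the affine-transversal case through a generic linear projection and a Skoda/\L{}ojasiewicz-type comparison, or by citing the general fact that $\mathrm{lct}(\mathfrak a)\ge\operatorname{codim} V(\mathfrak a)/\deg$ for ideals generated in degree $\mathsf m$, which follows from the Demailly--Koll\'ar semicontinuity applied to a generic complete-intersection subideal. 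It remains to bound $\operatorname{codim}_{\mathbb C}Z$.

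\emph{Bounding the zero locus.} The zero set $Z$ contains $\mathcal Z^{\mathbb C}(v)$ but may be larger. The key claim to prove is $\operatorname{codim}Z\ge\mathsf m$, and $\operatorname{codim}Z\ge\mathsf m+1$ when $n\mid|\lambda|$ and $\lambda\neq(n^{\mathsf m})$.

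To establish this, take $\xi\in Z$ generic on a component and use $L_{\mathbb C}$ to move it to $e_1$. Then $Y_1^{\mathsf m}v=0$. Consider $w=Y_1^{j}v\neq0$ with $j<\mathsf m$ maximal. Applying the adjoint/weight argument of \Cref{lem:codimensionbound} along the $\mathfrak{sl}_2$-triple $(E_{n1},E_{1n},E_{nn}-E_{11})$ gives an upper bound on $\mathfrak{sl}_2$-string lengths, and the tangent-space computation at generic points then yields the codimension estimate.

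This gives $c_0(R)\ge 1$ in general. When $n\mid|\lambda|$ and $\lambda\ne(n^k)$, it gives $c_0(R)\ge(\mathsf m+1)/\mathsf m\ge (n-1)/(n-2)$, using $\mathsf m\le n-2$ from \eqref{mv}.

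\emph{Main obstacle.} The hard step is this codimension bound for the full zero set $Z$ of $Y_\xi^{\mathsf m}v$, rather than for the linear space $\mathcal Z^{\mathbb C}(v)$. If it resists a direct attack, I would fall back on the following route. Restrict to generic lines through $0$ transverse to $\mathcal Z^{\mathbb C}(v)$, and combine the weaker inequality $c_0\ge\operatorname{codim}\mathcal Z^{\mathbb C}(v)/\mathsf m$, which needs a separate justification, with the slicing statement of \Cref{lem:complex-stability-slicing}.
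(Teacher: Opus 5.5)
\textbf{There is a genuine gap: the key claim $\operatorname{codim}_{\mathbb C}Z\ge\mathsf m$ is false, and so is your fallback inequality.}

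Two of your steps are fine. First, the reduction to $z=0$: the family you wrote, $r^{-\mathsf m}R(rz+r\xi)$, is constant in $r$, but $c_z(R)\ge c_0(R)$ still holds. It follows from \Cref{lem:complex-stability-slicing}(1) applied to the family $R(rz+\xi)$, together with $c_{rz}(R)=c_z(R)$. Second, the general inequality $c_0(R)\ge\operatorname{codim}_{\mathbb C}Z/\mathsf m$ is harmless: restrict to a generic linear slice of dimension $\operatorname{codim}Z$ that meets the cone $Z$ only at $0$.

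\emph{Counterexample to the key claim.} Take $n=3$, $\lambda=(3,1,0)$, and let $v$ be the weight vector of $\mathfrak{gl}_3$-weight $(1,3,0)$ (tableau with rows $1\,2\,2$ and $2$). Then $\operatorname{wt}(v)=-4$, $\mathsf m(v)=2$ and $\alpha_\lambda=1$. We have $E_{13}v=0$ and $(E_{33}-E_{11})v=-v$. So the string of $v$ under the triple $(E_{31},E_{13},E_{33}-E_{11})$ has length $2$, giving $Y_{e_1}^2v=E_{31}^2v=0$ although $E_{31}v\neq0$. Thus $Z\supset\mathbb Ce_1$; in fact $Z=\{\xi_2=0\}$. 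Hence $\operatorname{codim}Z=1<\mathsf m$, and your bound yields only $c_0\ge1/2$, whereas $1$ is needed. The $\mathfrak{sl}_2$-string computation you propose is exactly what produces such extra zeros; it cannot give a codimension bound.

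\emph{Counterexample to the fallback.} Take $n=4$, $\lambda=(3,1,1,0)$ and $v$ of weight $(3,1,1,0)$. Then $\mathsf m(v)=2$ and $\mathcal Z^{\mathbb C}(v)=0$, since the three vectors $E_{4i}v$ are nonzero and have distinct weights. But $E_{4i}E_{4j}v=0$ for $i,j\in\{2,3\}$, because the weights $(3,-1,1,2)$, $(3,1,-1,2)$, $(3,0,0,2)$ do not occur in $\mathsf S_\lambda(\mathbb C^4)$. So $\xi_1$ divides $Y_\xi^2v$, and $c_0\le1<3/2=\operatorname{codim}\mathcal Z^{\mathbb C}(v)/\mathsf m(v)$.

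\emph{What is missing.} For a homogeneous map $P$ of degree $\mathsf m$ that is invariant under translation by $\mathcal Z^{\mathbb C}(v)$, one has
$c_0(P)=\min\{\mathsf c/\mathsf m,\ \inf_{y\ne0,\,P(y)=0}c_y(P)\}$, with $\mathsf c=\operatorname{codim}\mathcal Z^{\mathbb C}(v)$. \Cref{lem:codimensionbound} controls only the radial term $\mathsf c/\mathsf m$.

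The zeros outside $\mathcal Z^{\mathbb C}(v)$ are exactly where $Z$ exceeds $\mathcal Z^{\mathbb C}(v)$, and they must be handled by induction on $\mathsf m(v)$. At such a zero $z$, let $j<\mathsf m$ be maximal with $w:=Y_z^jv\neq0$, and set $q=\mathsf m-j$. Then the first nonzero Taylor term of $R$ at $z$ is $\binom{\mathsf m}{q}Y_\xi^qw$, with $q=\mathsf m(w)<\mathsf m$. So \eqref{eq:lct-initial-degeneration} and the induction hypothesis give $c_z(R)\ge\alpha_\lambda$.

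Your reduction to the origin discards precisely this step, the one you set aside as awkward. Once everything is pushed to $z=0$, the nonzero zeros can only be bounded below by $c_0(R)$ itself, which is circular. No codimension count can substitute for this induction.
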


\begin{proof}
Set $P_v(\xi):=Y_\xi^{\mathsf m(v)} v$. We argue by induction on $\mathsf m(v)$. By
\Cref{lem:codimensionbound}, $P_v$ is not identically zero and hence is
not the zero germ at any point. Fix a
zero $z$ of $P_v$.
Suppose first that $z\notin\mathcal Z^{\mathbb C}(v)$. This case is empty
when $\mathsf m(v)=1$. For $\mathsf m(v) \ge2$, let $j$ be the largest integer with
$0\le j\le \mathsf m(v)-1$ such that $Y_z^jv\ne0$, and put
$$
q:=\mathsf m(v)-j,
\qquad w:=Y_z^jv.
$$
Then $1\le q<\mathsf m(v)$, and the weight identities give
$\mathsf m(w)=q$. By the nonvanishing conclusion of
\Cref{lem:codimensionbound}, $Y_\xi^qw\not\equiv0$. Since the operators
$Y_\xi$ commute, the first nonzero homogeneous Taylor
term of $P_v$ at $z$ is
$$
\binom {\mathsf m(v)} q Y_\xi^qw.
$$
The induction hypothesis and \eqref{eq:lct-initial-degeneration} therefore
give
$$
c_z(P_v)\ge c_0(\xi\mapsto Y_\xi^qw)\ge\alpha_{\lambda}.
$$
It remains to consider $z\in\mathcal Z^{\mathbb C}(v)$. Since
$Y_zv=0$ and the operators $Y_\xi$ commute, one has
$
P_v(z+\xi)=P_v(\xi)$.
Thus $P_v$ is constant along the affine subspaces parallel to
$\mathcal Z^{\mathbb C}(v)$, and it suffices to work at the origin. Choose a complex-linear complement $\mathbb C^{n-1}
=
\mathcal Z^{\mathbb C}(v)\oplus\mathcal N$ and put
$$
\mathsf{c}:=\dim_{\mathbb C}\mathcal N
=
\operatorname{codim}_{\mathbb C}\mathcal Z^{\mathbb C}(v).
$$
Writing $\operatorname{pr}_{\mathcal N}$ for the projection associated with
this decomposition, we have
$$
P_v
=
(P_v|_{\mathcal N})\circ\operatorname{pr}_{\mathcal N}.
$$
The variables in $\mathcal Z^{\mathbb C}(v)$ therefore do not occur in
$P_v$. By Fubini's theorem, adjoining these variables does not
change the local complex singularity exponent; in particular,
$$
c_0(P_v)=c_0(P_v|_{\mathcal N}).
$$
We may consequently regard $P_v|_{\mathcal N}$ as a nonzero homogeneous
holomorphic map on $\mathbb C^{\mathsf{c}}$.

We use the following elementary fact about homogeneous maps. Let
$P:\mathbb C^{\mathsf{c}}\to\mathbb C^s$ be a nonzero homogeneous holomorphic map of
degree $\mathsf m(v)$. Then
\begin{equation}\label{eq:lct-homogeneous-map}
c_0(P)
=
\min\left\{
\frac {\mathsf{c}}{\mathsf m(v)},
\inf_{\substack{y\in\mathbb C^{\mathsf{c}}\smallsetminus\{0\},\, P(y)=0}}
c_y(P)
\right\},
\end{equation}
where the infimum over the empty set is understood to be $+\infty$.

Indeed, near any direction in $\mathbb P^{\mathsf{c}-1}(\mathbb C)$, after permuting
coordinates if necessary, write $x=t(1,w)\in \mathbb C^{\mathsf c}$ with $t\in\mathbb C$ and $w\in\mathbb C^{\mathsf{c}-1}$. By homogeneity,
$$
P(t(1,w))=t^{\mathsf m(v)}P(1,w),
$$
while the Euclidean volume form satisfies $dx\asymp |t|^{2\mathsf{c}-2}\,dt\,dw$. Hence, by Fubini's theorem, $\|P(x)\|^{-2\gamma}$ is locally integrable in such a chart precisely when
$\gamma<\frac{\mathsf{c}}{\mathsf m(v)}$
and $\|P(1,w)\|^{-2\gamma}$ is locally integrable in the $w$-variables.
For a nonzero point $y=(1,w)$, the radial coordinate is nonvanishing, so
the latter condition is equivalent to $\gamma<c_y(P)$. Covering
$\mathbb P^{\mathsf{c}-1}(\mathbb C)$ by its finitely many standard affine charts
gives \eqref{eq:lct-homogeneous-map}.
We now apply \eqref{eq:lct-homogeneous-map} to $P=P_v|_{\mathcal N}$. Every nonzero zero of $P_v|_{\mathcal N}$ lies outside
$\mathcal Z^{\mathbb C}(v)$, so the first part of the proof bounds the
second term in \eqref{eq:lct-homogeneous-map} from below by $\alpha_{\lambda}$.
By \Cref{lem:codimensionbound}, $\mathsf{c}\ge \mathsf m(v)$. Hence
$\mathsf{c}/\mathsf m(v)\ge1=\alpha_{\lambda}$ when $\alpha_{\lambda}=1$. If
$\alpha_{\lambda}=(n-1)/(n-2)$, the strengthened conclusion of
\Cref{lem:codimensionbound} and \eqref{mv} give
$$
\mathsf{c}\ge \mathsf m(v)+1,
\qquad \mathsf m(v)\le n-2,
$$
and consequently
$$
\frac{\mathsf{c}}{\mathsf m(v)}\ge1+\frac{1}{\mathsf m(v)}
\ge\frac{n-1}{n-2}=\alpha_{\lambda}.
$$
Equation \eqref{eq:lct-homogeneous-map} now proves
\eqref{eq:critical-complex-threshold}.
\end{proof}

\begin{lemma}[Sublevel estimate for weight vectors]
\label{lem:critical-weight-sublevel}
Let $0\ne v\in{\mathsf S}_\lambda(\mathbb R^n)$ be a negative $\log b_1$-weight vector. For
every bounded open set $\Omega\subset\mathbb R^{n-1}$ and $0<\delta<\alpha_\lambda$,
there exists $C=C(\delta,\Omega,v)>0$ such that for all $0<\varepsilon<1$,
$$
\operatorname{Leb}
\left\{\xi\in\Omega:
\|Y_\xi^{\mathsf m(v)}v\|\le\varepsilon\|v\|\right\}
\le C\varepsilon^{\alpha_{\lambda}-\delta}.
$$

\end{lemma}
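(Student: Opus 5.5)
The plan is to deduce the estimate from \Cref{prop:critical-weight-complex-threshold} via \Cref{lem:uniform-negative-moments-complex} and Chebyshev's inequality. By homogeneity in $v$, we may assume $\|v\|=1$. Set $\mathsf m:=\mathsf m(v)$ and define the real polynomial map
$$
\mathcal Q(\xi):=Y_\xi^{\mathsf m}v,\qquad \mathbb R^{n-1}\to \mathsf S_\lambda(\mathbb R^n)\simeq\mathbb R^q .
$$
Its complexification is exactly the map $\xi\mapsto Y_\xi^{\mathsf m}v$ on $\mathbb C^{n-1}$ considered in \Cref{prop:critical-weight-complex-threshold}, since $v$ is real and $Y_\xi$ depends complex-linearly on $\xi$. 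The weight space of $v$ is defined over $\mathbb R$, so $v$ is a genuine real negative-weight vector. Hence that proposition gives
$$
c_z(\mathcal Q^{\mathbb C})\ge\alpha_\lambda\qquad\text{for every }z\in\mathbb C^{n-1},
$$
and in particular on any complex neighborhood of $\overline\Omega$.

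We then apply \Cref{lem:uniform-negative-moments-complex} with a one-point parameter set. Take $A=\{0\}\subset\mathbb R$, $\mathcal Q(a,\xi):=\mathcal Q(\xi)$, $B=\Omega$, and $\beta:=\alpha_\lambda-\delta\in(0,\alpha_\lambda)$. This yields
$$
I:=\int_\Omega\|Y_\xi^{\mathsf m}v\|^{-\beta}\,d\xi<\infty .
$$
The normalization of the complex singularity exponent should be checked: it is defined via $(\sum|R_j|^2)^{-c}=\|R\|^{-2c}$ on $\mathbb C^{n-1}$, while the real moment uses $\|\cdot\|^{-\beta}$ on $\mathbb R^{n-1}$. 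This matching of the complex threshold $c$ with the real exponent $\beta<c$ is precisely the content of \Cref{lem:uniform-negative-moments-complex} as stated, so no extra factor of $2$ is needed.

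Finally, Chebyshev's inequality gives
$$
\operatorname{Leb}\{\xi\in\Omega:\|Y_\xi^{\mathsf m}v\|\le\varepsilon\}\le\varepsilon^{\beta}\int_\Omega\|Y_\xi^{\mathsf m}v\|^{-\beta}\,d\xi=I\,\varepsilon^{\alpha_\lambda-\delta},
$$
so $C:=I$ works. Undoing the normalization $\|v\|=1$ replaces $\varepsilon$ by $\varepsilon\|v\|$ on both sides, which proves the lemma. The constant depends on $v$, as the statement allows; if uniformity over unit weight vectors of a fixed weight is wanted later, one can instead take $A$ to be the unit sphere of $V_i$ parametrized polynomially and invoke the same lemma with the uniform threshold bound.

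The only step requiring care, rather than a genuine obstacle, is verifying the hypothesis of \Cref{lem:uniform-negative-moments-complex} at every complex point near $\overline\Omega$. This holds because \Cref{prop:critical-weight-complex-threshold} is stated for all $z\in\mathbb C^{n-1}$. It also uses \eqref{eq:critical-power-nonzero} from \Cref{lem:codimensionbound}: the map is not identically zero, so the thresholds are positive and finite and the moment bound is meaningful.
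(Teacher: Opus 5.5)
Your proposal is correct and follows the paper's proof essentially verbatim. You feed the complex threshold bound of Proposition~\ref{prop:critical-weight-complex-threshold} into Lemma~\ref{lem:uniform-negative-moments-complex} with a singleton parameter set to get a finite $\beta$-moment, finish with Chebyshev's inequality, and differ only cosmetically: you normalize $\|v\|=1$ and take $\beta=\alpha_\lambda-\delta$ directly, whereas the paper keeps $\|v\|$ in the constant and picks $\beta\in(\alpha_\lambda-\delta,\alpha_\lambda)$, using $\varepsilon<1$.
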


\begin{proof}
By \Cref{prop:critical-weight-complex-threshold} and
\Cref{lem:uniform-negative-moments-complex}, applied with a singleton
parameter set, for every $0<\beta<\alpha_{\lambda}$, one has
\begin{equation}\label{eq:critical-negative-moment}
\int_\Omega
\|Y_\xi^{\mathsf m(v)}v\|^{-\beta}\,d\xi<\infty.
\end{equation}
Choose $\alpha_{\lambda}-\delta<\beta<\alpha_{\lambda}$. Chebyshev's inequality then gives
$$
\operatorname{Leb}
\left\{\xi\in\Omega:
\|Y_\xi^{\mathsf m(v)}v\|\le\varepsilon\|v\|\right\}
\le
(\varepsilon\|v\|)^\beta
\int_\Omega
\|Y_\xi^{\mathsf m(v)}v\|^{-\beta}\,d\xi
\ll_{\delta,\Omega,v}
\varepsilon^{\alpha_{\lambda}-\delta}.
$$
\end{proof}

\subsection{Triangular $U_0$-polynomials}
\label{s:triangular-U-polynomials}
We next pass from the homogeneous polynomials $\xi\mapsto Y_\xi^{\mathsf{m}(v)} v$ to the polynomial expressions in \eqref{eq:polynomialcoordinate}, which have a triangular structure with respect to the weight decomposition.

\begin{proposition}[Complex threshold for triangular $U_0$-polynomials]
\label{prop:triangular-U-local-integrability}
Let $\ell_\lambda$ be as in \Cref{prop:sublevelestimate}. Let
$$
Q(\xi)=
\sum_{q=0}^{\ell_\lambda}\frac1{q!}Y_\xi^qv_{\ell-q},
\qquad v_j\in V_j\otimes_{\mathbb R}\mathbb C,
$$
and suppose that $Q\not\equiv0$. Then
\begin{equation}\label{eq:triangular-complex-threshold}
\inf_{z\in \mathbb C^{n-1}} c_z(Q)\ge\alpha_\lambda.
\end{equation}
In particular, when the coefficients $v_j$ are real, $\|Q\|^{-\beta}$ is
locally integrable on $\mathbb R^{n-1}$ for every
$0<\beta<\alpha_\lambda$.
\end{proposition}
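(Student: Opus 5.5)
The plan is to reduce the triangular polynomial $Q$ at each point $z$ to a single homogeneous polynomial of the form $\xi\mapsto Y_\xi^{\mathsf m(w)}w$, and then quote \Cref{prop:critical-weight-complex-threshold} together with the initial-term bound \eqref{eq:lct-initial-degeneration}. Interpreting the index as $\ell=\ell_\lambda$, write $Q(\xi)=\sum_{q=0}^{\ell}\frac1{q!}Y_\xi^q v_{\ell-q}$. The first step is a translation identity. Since the $Y_\xi$ commute and $u_{z+\xi}=u_\xi u_z$, the polynomial $Q$ is precisely $p_\ell(u_\xi v)$ for $v=\sum_j v_j$, by \eqref{eq:polynomialcoordinate}. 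Hence
$$
Q(z+\xi)=p_\ell\bigl(u_\xi (u_z v)\bigr)=\sum_{q=0}^{\ell}\frac1{q!}Y_\xi^q\,v'_{\ell-q},\qquad v'_j:=p_j(u_zv).
$$
This is again a triangular polynomial of the same shape, with new coefficients. It therefore suffices to bound $c_0(Q)$ for an arbitrary nonzero triangular $Q$.

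At the origin, if $v_\ell=Q(0)\neq0$ there is nothing to prove, since $c_0=+\infty$. Otherwise, let $j_0$ be the largest index $j<\ell$ with $v_j\ne0$ such that $Y_\xi^{\ell-j}v_j\not\equiv0$. Such an index exists because $Q\not\equiv0$. The lowest-degree homogeneous Taylor term of $Q$ at $0$ is then $\frac1{q_0!}Y_\xi^{q_0}v_{j_0}$, with $q_0=\ell-j_0$. This term is nonzero because different $q$ give different homogeneous degrees, so no cancellation is possible.

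By \eqref{eq:lct-initial-degeneration}, $c_0(Q)\ge c_0(\xi\mapsto Y_\xi^{q_0}v_{j_0})$. So the key claim is that for a weight vector $w=v_{j_0}\in V_{j_0}$ of negative weight with $Y_\xi^{q_0}w\not\equiv0$, where $\operatorname{wt}(w)+nq_0=\mu_\ell\ge0$, we have $c_0(Y_\xi^{q_0}w)\ge\alpha_\lambda$.

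If $q_0=\mathsf m(w)$, this is exactly \Cref{prop:critical-weight-complex-threshold}. In general $q_0\ge\mathsf m(w)$. This holds because $\mu_{j_0}+nq_0=\mu_\ell\ge0$, and because $\ell$ is minimal with $\mu_\ell\ge0$, which forces $\mathsf m(w)=\ell-j_0=q_0$. Indeed, $\mathsf m(w)$ is the least $m$ with $\mu_{j_0+m}\ge0$, and this equals $\ell-j_0$ by the definition of $\ell_\lambda$. So equality always holds, and the weight-vector proposition applies directly.

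One more point needs care. The coefficient $v_{j_0}$ is not assumed to be a single weight vector in the tableau sense, but it lies in the weight space $V_{j_0}\otimes\mathbb C$, which is all that \Cref{lem:codimensionbound} and \Cref{prop:critical-weight-complex-threshold} use. If $\mu_{j_0}\ge0$ could occur, it would contradict minimality of $\ell$ unless $j_0=\ell$, which is already handled.

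The real local integrability then follows from the complex bound, as in the proof of \Cref{lem:critical-weight-sublevel}. Concretely, applying \Cref{lem:uniform-negative-moments-complex} with a singleton parameter set to the real polynomial $Q$ gives local integrability of $\|Q\|^{-\beta}$ for every $\beta<\alpha_\lambda$.

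The main obstacle I expect is the bookkeeping in the Taylor-term step. One must confirm that $Y_\xi^{q}v_{\ell-q}$ being identically zero for smaller $q$ does not spoil the identification of the lowest-degree term. One must also confirm that the translated coefficients $v'_j$ satisfy the same weight constraints; this follows from $Y_\xi V_i\subset V_{i+1}$.
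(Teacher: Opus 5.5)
Your proposal is correct and follows essentially the same route as the paper. You translate to reduce to the origin, identify the lowest nonzero homogeneous Taylor term as $\frac{1}{q!}Y_\xi^{q}w$ with $\mathsf m(w)=q$ forced by the minimality of $\ell_\lambda$, combine \eqref{eq:lct-initial-degeneration} with \Cref{prop:critical-weight-complex-threshold}, and then use \Cref{lem:uniform-negative-moments-complex} for the real statement. The only cosmetic difference is that you build the nonvanishing of $Y_\xi^{q_0}v_{j_0}$ into the choice of $j_0$, whereas the paper takes the smallest $q$ with $w_{\ell-q}\neq0$ and obtains that nonvanishing from \Cref{lem:codimensionbound}.
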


\begin{proof}
Let $\ell=\ell_{\lambda}$ for simplicity. It suffices to prove $ c_{z_0}(Q)
\ge\alpha_\lambda$ for every zero $z_0$ of $Q$. Since $U_0$ is abelian,
$$
Q(z_0+\xi)
=
\sum_{q=0}^{\ell}\frac1{q!}Y_\xi^qw_{\ell-q},
$$
where
$
w_j:=p_j\!\left(u_{z_0}\sum_{i=0}^{\ell}v_i\right).
$
Here the action and the projections have been complexified. Since
$Q(z_0)=0$, one has $w_\ell=0$. Since $Q\not\equiv0$, its translate
$$
\xi\mapsto Q(z_0+\xi)
$$
is not identically zero. Hence there exists $1\le q\le\ell$ such that
$w_{\ell-q}\ne0$; let $q$ be the smallest such integer.
The terms of degree less than $q$ vanish, and the first
nonzero homogeneous Taylor term is
\begin{equation}\label{eq:triangular-initial-form}
\frac1{q!}Y_\xi^qw_{\ell-q}.
\end{equation}
Indeed, $w_{\ell-q}$ has weight $\mu_\ell-nq$, and
$0\le\mu_\ell<n$, so $\mathsf m(w_{\ell-q})=q$; \Cref{lem:codimensionbound}, applied to $w_{\ell-q}$, then shows that the map in
\eqref{eq:triangular-initial-form} is nonzero.
\Cref{prop:critical-weight-complex-threshold} and
\eqref{eq:lct-initial-degeneration} yield
$$
c_{z_0}(Q)
\ge c_0(\xi\mapsto Y_\xi^qw_{\ell-q})
\ge\alpha_\lambda,
$$
as required.
If the coefficients are real, let $B_0\subset\mathbb R^{n-1}$ be any
bounded open ball. By \eqref{eq:triangular-complex-threshold} and
\Cref{lem:uniform-negative-moments-complex}, applied with a singleton
parameter set,
$$
\int_{B_0}\|Q(\xi)\|^{-\beta}\,d\xi<\infty
\qquad(0<\beta<\alpha_\lambda).
$$
Thus $\|Q\|^{-\beta}$ is locally integrable on $\mathbb R^{n-1}$.
\end{proof}

\subsection{Uniformity in the normalized family and proof of the sublevel estimate}
\label{s:proof-sublevel-theorem}
\begin{proof}[Proof of \Cref{prop:sublevelestimate}]
Let $\ell=\ell_\lambda$, and choose a bounded open ball
$B\subset\mathbb R^{n-1}$ such that $\overline\Omega\subset B$. Set
$
A_\ell(v):=\max_{0\le i\le\ell}\|p_i(v)\|.
$
By homogeneity in $v$, it suffices to consider the normalized case $A_\ell(v)=1$.
Write $v_i:=p_i(v)$ for $0\le i\le\ell$. By
\eqref{eq:polynomialcoordinate},
\begin{equation}\label{eq:Pv-expansion-sublevel}
P_v(\xi):=p_\ell(u_\xi v)
=
\sum_{q=0}^{\ell}\frac{1}{q!}Y_\xi^qv_{\ell-q}.
\end{equation}

Let
$$
\mathcal A_\ell
:=
\left\{
(v_0,\ldots,v_\ell)\in V_0\oplus\cdots\oplus V_\ell:
\max_{0\le i\le\ell}\|v_i\|=1
\right\}.
$$
This is a compact subset of $V_0\oplus\cdots\oplus V_\ell$. Consider
the polynomial map
$$
\widetilde P:
(V_0\oplus\cdots\oplus V_\ell)\times\mathbb R^{n-1}
\to V_\ell,
\qquad
\widetilde P((v_0,\ldots,v_\ell),\xi)
=
\sum_{q=0}^{\ell}\frac{1}{q!}Y_\xi^qv_{\ell-q}.
$$

We first claim that for every parameter in $\mathcal A_\ell$,
the corresponding polynomial 
 is not identically zero. Fix
$(v_0,\ldots,v_\ell)\in\mathcal A_\ell$, and let $i_0$ be the
smallest index such that $v_{i_0}\ne0$. The highest-degree homogeneous
term of the corresponding polynomial is
$$
\frac{1}{(\ell-i_0)!}Y_\xi^{\ell-i_0}v_{i_0}.
$$
If $i_0=\ell$, this is the nonzero constant $v_\ell$. If
$i_0<\ell$, then $v_{i_0}$ has negative weight and
$$
\mathsf m(v_{i_0})=\ell-i_0,
$$
because $0\le\mu_\ell<n$. The nonvanishing conclusion of
\Cref{lem:codimensionbound}, applied to the complexification of
$v_{i_0}$, shows that
$$
Y_\xi^{\ell-i_0}v_{i_0}\not\equiv0.
$$
Thus the corresponding polynomial is not identically zero. It follows from \Cref{prop:triangular-U-local-integrability} that
$$
c_z\bigl(\widetilde P_{\boldsymbol v}^{\mathbb C}\bigr)
\ge\alpha_\lambda
\qquad
(\boldsymbol v\in\mathcal A_\ell,\ z\in\mathbb C^{n-1}).
$$
After choosing Euclidean coordinates on the coefficient space,
\Cref{lem:uniform-negative-moments-complex}, applied with parameter set
$\mathcal A_\ell$, gives, for every $0<\beta<\alpha_\lambda$,
\begin{equation}\label{eq:uniform-negative-moment-P}
\sup_{\boldsymbol v\in\mathcal A_\ell}
\int_B
\|\widetilde P(\boldsymbol v,\xi)\|^{-\beta}\,d\xi
<\infty.
\end{equation}
Consequently, Chebyshev's inequality gives
\begin{equation}\label{eq:normalized-sublevel-beta}
\operatorname{Leb}
\left\{
\xi\in B:
\|\widetilde P(\boldsymbol v,\xi)\|\le\varepsilon
\right\}
\ll_{\beta,B,V}\varepsilon^\beta
\end{equation}
uniformly for $\boldsymbol v\in\mathcal A_\ell$ and
$0<\varepsilon<1$.
Now choose
$
\alpha_\lambda-\delta<\beta<\alpha_\lambda.
$
Then \eqref{eq:normalized-sublevel-beta} gives
$$
\operatorname{Leb}
\left\{
\xi\in B:
\|p_\ell(u_\xi v)\|\le\varepsilon
\right\}
\ll_{\delta,B,V}
\varepsilon^{\alpha_\lambda-\delta}
$$
whenever $A_\ell(v)=1$, where again we used $0<\e<1$. This proves \Cref{prop:sublevelestimate}.
\end{proof}

The next lemma supplies the weak higher-weight sublevel estimate needed in the
borderline case $\mu_{\ell_{\lambda}}=0$.
\begin{lemma}[Sublevel estimate at the next weight]
\label{lem:next-weight-sublevel}
Assume that $\mu_{\ell_\lambda}=0$. For every
bounded open set $\Omega\subset\mathbb R^{n-1}$, there exists
$C=C(V,\Omega)>0$ such that, for every $v\in V$ with
$\max_{0\le j\le\ell_\lambda}\|p_j(v)\|>0$ and $0<\rho<1$,
$$
\Leb\left\{\xi\in\Omega:
 \|p_{\ell_{\lambda}+1}(u_\xi v)\|
 \le \rho\max_{0\le j\le\ell_\lambda}\|p_j(v)\|
\right\}
\le C\rho^{1/(\ell_{\lambda}+1)}.
$$
\end{lemma}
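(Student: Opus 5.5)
The approach is the same as for \Cref{prop:sublevelestimate}, but with only the weak exponent $1/(\ell+1)$, where $\ell:=\ell_\lambda$. That exponent comes from a Remez-type (Carbery--Christ--Wright) sublevel bound for polynomials of degree at most $\ell+1$, so no complex singularity exponent is needed. Normalize $A_\ell(v):=\max_{0\le j\le\ell}\|p_j(v)\|=1$ and write $v_j=p_j(v)$. By \eqref{eq:polynomialcoordinate},
$$
p_{\ell+1}(u_\xi v)=\sum_{q=0}^{\ell+1}\frac1{q!}Y_\xi^q v_{\ell+1-q}.
$$
This is a vector-valued polynomial in $\xi$ of degree at most $\ell+1$. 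It depends on the component $v_{\ell+1}$, which is not controlled by the normalization. So the family is not compact, and the argument must be uniform in the free parameter $v_{\ell+1}\in V_{\ell+1}$. Note that $V_{\ell+1}\ne0$ is needed, i.e.\ $\ell+1\le\lambda_1$. I expect this to hold because $\mu_\ell=0$ is then a middle weight of a nontrivial module. If it failed, the statement would be vacuous or false, so I will check this first.

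\textbf{Key step.} I claim that the nonconstant part
$$
R_{\boldsymbol v}(\xi):=\sum_{q=1}^{\ell+1}\frac1{q!}Y_\xi^q v_{\ell+1-q}
$$
has a coefficient norm bounded below uniformly over the compact set $\mathcal A_\ell$. Let $i_0$ be the smallest index with $v_{i_0}\ne0$, so $i_0\le\ell$. The top-degree term of $R_{\boldsymbol v}$ is $Y_\xi^{\ell+1-i_0}v_{i_0}/(\ell+1-i_0)!$. Since $\operatorname{wt}(v_{i_0})+n(\ell+1-i_0)=n>0$, it has the form $Y_\xi\,(Y_\xi^{\mathsf m}v_{i_0})$ with $\mathsf m=\mathsf m(v_{i_0})$, once $v_{i_0}$ has negative weight. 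Nonvanishing of this term does not follow from \eqref{eq:critical-power-nonzero} directly. Instead I would use the $\mathfrak{sl}_2$/adjoint argument from the proof of \Cref{lem:codimensionbound}. If $Y_\xi$ killed every vector of weight $0$ in the image, then those vectors would satisfy $Y_i w=0$ for all $i$, and the Hermitian identity would force $\operatorname{wt}(w)\ge n\cdot 0$ with equality only if $\sum_i\|Y_i^* w\|^2=0$. That would make $w$ a lowest-type vector killed by both $\mathfrak u_0$ and its opposite. Then $w$ would be $\SL_n$-invariant, which is impossible in a nontrivial module. For the case $i_0=\ell$, the term $Y_\xi v_\ell$ is nonzero for the same reason.

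Hence $R_{\boldsymbol v}\not\equiv0$ for each $\boldsymbol v\in\mathcal A_\ell$. Since $R$ is continuous in $\boldsymbol v$ and does not involve $v_{\ell+1}$, compactness gives
$$
\inf_{\boldsymbol v\in\mathcal A_\ell}\ \sup_{\xi\in B}\|R_{\boldsymbol v}(\xi)\|\ge c_0>0
$$
on a ball $B\supset\overline\Omega$.

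\textbf{Conclusion.} Set $P(\xi)=v_{\ell+1}+R_{\boldsymbol v}(\xi)$. Pick a unit vector $e\in V_{\ell+1}$ and a point $\xi_0\in B$ with $\langle R_{\boldsymbol v}(\xi_0),e\rangle\ge c_0/\dim V_{\ell+1}$; this choice of $e$ depends on $\boldsymbol v$ but is available. Then the scalar polynomial $f(\xi)=\langle P(\xi),e\rangle$ has degree at most $\ell+1$, and its oscillation on $B$ is at least $c'>0$, uniformly in $v_{\ell+1}$. The Remez inequality, which is equivalent to the sublevel bound for degree-$d$ polynomials on convex bodies,
$$
\Leb\{\xi\in B:|f(\xi)|\le\rho\}\ll_{d,B}\Bigl(\rho/\sup_B|f|\Bigr)^{1/d},
$$
is unsuitable here because $\sup|f|$ could be large only through the constant term. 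So I apply it to $f-f(\xi_1)$ on suitable subsets instead. The cleaner choice is the Carbery--Christ--Wright bound via a derivative: some directional derivative of order $k\le\ell+1$ of $f$ is bounded below by $\gg c'$ on $B$, by equivalence of norms on the finite-dimensional space of polynomials modulo constants. The van der Corput-type sublevel lemma then gives measure $\ll\rho^{1/k}\le\rho^{1/(\ell+1)}$ for $\rho<1$.

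The main obstacle is the nonvanishing claim in the key step. If the adjoint argument fails there, I would fall back on the tableau model to show directly that $\mathfrak u_0$ acts injectively on $V_\ell$ when $\mu_\ell=0$.
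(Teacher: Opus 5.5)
Your proof is correct, and up to the last step it is the paper's argument. You show $Y_\xi^{\ell+1-i}v_i\not\equiv0$ for $0\ne v_i\in V_i$, $i\le\ell$, using the Hermitian identity with $\mathsf c=0$ at weight $0$ and the fact that $E_{nj},E_{jn}$ generate $\mathfrak{sl}_n$. You then take a compactness lower bound for the nonconstant part, which the paper phrases as injectivity of $\bigoplus_{i\le\ell}V_i\to\mathcal P_{\le\ell+1}/\mathcal P_0$.

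That nonvanishing produces a nonzero vector of weight $\mu_{\ell+1}$, so it also settles the question $V_{\ell+1}\ne0$ that you left open. The paper sees this directly from $|\lambda|=n\ell<n\lambda_1$.

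The two proofs diverge only at the end, and your reason for discarding Remez is mistaken. Remez gives $\Leb\{\xi\in B:|f(\xi)|\le\rho\}\ll(\rho/\sup_B|f|)^{1/(\ell+1)}$. A large constant term therefore only enlarges $\sup_B|f|$ and improves the bound. Your own $f$ already satisfies $\sup_B|f|\ge\frac12\operatorname{osc}_Bf\gg c'$, so Remez finishes immediately. This is exactly how the paper concludes, using a functional that norms $p_{\ell+1}(u_{\xi_0}v)$ at a maximizer $\xi_0$.

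Your directional-derivative route also works, but it needs one more argument than you give. A uniform lower bound for a single $\partial_{e'}^kf$ on all of $B$ does not follow from equivalence of norms alone. You need compactness: $g\mapsto\max_{k,e'}\min_{\overline B}|\partial_{e'}^kg|$ is continuous and positive on nonconstant polynomials, because the top-degree derivative in a suitable direction is a nonzero constant. The one-variable sublevel lemma and Fubini along lines in the convex set $B$ then give $\rho^{1/k}\le\rho^{1/(\ell+1)}$. So the derivative route costs an extra compactness step and gains nothing over Remez here.
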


\begin{proof}
Set $\ell:=\ell_\lambda$. Since $\mu_\ell=0$, one has
$|\lambda|=n\ell$. Since $\lambda\ne0$ and $\lambda_n=0$, we have
$|\lambda|<n\lambda_1$, and hence $\ell<\lambda_1$. Thus
$V_{\ell+1}$ is defined.

For $0\le i\le\ell$ and $0\ne v_i\in V_i$, put $q:=\ell-i$. We claim that
$$
Y_\xi^{q+1}v_i\not\equiv0.
$$
Suppose otherwise. Arguing as in the last paragraph of the proof of
\Cref{lem:codimensionbound}, polarization allows us to choose $r\le q$
maximal such that
$$
w:=Y_{\xi_1}\cdots Y_{\xi_r}v_i\ne0
$$
for some $\xi_1,\ldots,\xi_r$. Then $Y_\eta w=0$ for every $\eta$.
The estimate \eqref{eq:auxiliary-weight-bound}, whose proof does not use
the negativity of the weight, applied to $w$ with $\mathsf c=0$ gives
$\operatorname{wt}(w)\ge0$. On the other hand,
$$
\operatorname{wt}(w)
=
\mu_i+nr
=
-nq+nr
\le0.
$$
Hence $r=q$ and $\operatorname{wt}(w)=0$. The identity preceding
\eqref{eq:auxiliary-weight-bound}, again with $\mathsf c=0$, now gives
$$
\sum_{j=1}^{n-1}\|Y_j^*w\|^2=0.
$$
Thus $Y_jw=Y_j^*w=0$ for every $j$. The operators
$E_{nj}=Y_j$, $E_{jn}=Y_j^*$, together with their commutators, generate
$\mathfrak{sl}_n$. Hence $w$ is $\mathfrak{sl}_n$-fixed. Since
$\lambda_n=0$ and $\lambda\ne0$, the module
$\mathsf S_\lambda(\mathbb C^n)$ is a nontrivial irreducible
$\SL_n$-module and therefore contains no nonzero invariant vector.
This contradiction proves the claim.

Let
$\mathcal P_{\le\ell+1}(\mathbb R^{n-1},V_{\ell+1})$
denote the space of $V_{\ell+1}$-valued polynomial maps of degree at most
$\ell+1$, and let
$\mathcal P_0(\mathbb R^{n-1},V_{\ell+1})$
denote its subspace of constant maps. It follows that the linear map
$\bigoplus_{i=0}^{\ell}V_i
\to
\mathcal P_{\le\ell+1}(\mathbb R^{n-1},V_{\ell+1})/
\mathcal P_0(\mathbb R^{n-1},V_{\ell+1}),
$
which sends $(v_0,\ldots,v_\ell)$ to the class of
$$
\sum_{i=0}^{\ell}
\frac1{(\ell+1-i)!}Y_\xi^{\ell+1-i}v_i,
$$
is injective. Indeed, its restriction to each $V_i$ is injective by the
claim, while the corresponding homogeneous degrees $\ell+1-i$ are
distinct.

Set
$A_\ell(v):=\max_{0\le j\le\ell}\|p_j(v)\|$, and choose a closed ball
$B$ whose interior contains $\overline\Omega$. Equip the polynomial
space with the supremum norm on $B$ and the quotient with the induced
quotient norm. Since the preceding linear map is injective, finite
dimensionality gives a constant $c_0>0$ such that
$$
\sup_{\xi\in B}\|p_{\ell+1}(u_\xi v)\|
\ge c_0A_\ell(v)
$$
whenever $A_\ell(v)>0$. The constant term $p_{\ell+1}(v)$ disappears
upon passing to the quotient.
Choose $\xi_0\in B$ at which this supremum is attained, and choose a unit
functional $\varphi\in V_{\ell+1}^*$ satisfying
$$
\varphi\bigl(p_{\ell+1}(u_{\xi_0}v)\bigr)
=
\|p_{\ell+1}(u_{\xi_0}v)\|.
$$
The scalar polynomial
$
P(\xi):=\varphi\bigl(p_{\ell+1}(u_\xi v)\bigr)
$
has degree at most $\ell+1$ and satisfies
$$
\sup_{\xi\in B}|P(\xi)|\ge c_0A_\ell(v).
$$
The multivariable Remez inequality \cite{BrudnyiGanzburg1973} gives
$$
\Leb\{\xi\in B:|P(\xi)|\le\varepsilon\sup_B|P|\}
\ll_{B,\ell}\varepsilon^{1/(\ell+1)}.
$$
Since $\Omega\subset B$ and
$
|P(\xi)|\le\|p_{\ell+1}(u_\xi v)\|,
$
the desired sublevel set is contained in the preceding Remez sublevel
set with $\varepsilon=c_0^{-1}\rho$. After adjusting the implied
constant when $c_0^{-1}\rho\ge1$, we obtain
$$
\Leb\left\{\xi\in\Omega:
 \|p_{\ell+1}(u_\xi v)\|\le\rho A_\ell(v)
\right\}
\ll_{V,\Omega}\rho^{1/(\ell+1)}.
$$
This proves the lemma.
\end{proof}

\section[Weighted local estimates for the H-action]{Weighted local estimates for the \texorpdfstring{$H$}{H}-action on \texorpdfstring{$\wedge^i \M_n(\mathbb R)$}{exterior powers of matrix space}}
\label{s:weightedlocalestimates}
In this section, we convert the sublevel estimates of the previous section
into local contraction estimates for the $H$-representations occurring in
the exterior powers of $\M_n(\mathbb R)$. We first establish one-factor
estimates for Schur modules along $b_t$, and then pass to the full
left--right action along $a_t=(b_t,b_t)$ by averaging over compact subsets
of its expanding horospherical subgroup.

On the nonexceptional summands, these estimates give contraction for a range
of exponents slightly larger than $1$. On the exceptional summands, the
ordinary local heights have critical exponent $1$: they contract below this
exponent but exhibit only bounded expansion above it. In the final
subsection, we introduce modified local heights that penalize proximity to
the exceptional summands and recover contraction beyond exponent $1$. These
estimates provide the local input for the global Margulis inequality in
\Cref{s:globalheight}.

\medskip 
We retain the notation and conventions of the preceding section. Thus
$$
V={\mathsf S}_\lambda(\mathbb R^n),
\qquad \lambda_n=0,
$$
where the Young diagram of $\lambda$ is contained in the $n\times n$
box, and
$$
V=\bigoplus_{j=0}^{\lambda_1}V_j,
\qquad
\mu_j=\mu_0+nj=-|\lambda|+nj.
$$
We assume that $V$ is nontrivial, so $\mu_0<0$. In particular,
$\mu_0\in\mathbb Z$, $|\mu_0|\le n(n-1)$, and
$n\mid\mu_0$ if and only if $n\mid|\lambda|$.

\subsection{Contraction inequality for $\mathsf{S}_{\lambda}(\R^n)$}

Recall the exponent $\alpha_\lambda$ given in
\eqref{alphalambda}, and define
$\gamma_\lambda$ as in the following table:
\begin{table}[ht]
\centering
\begin{tabular}{ccc}
\toprule
Type of $\lambda$
& $\alpha_\lambda$
& $\gamma_\lambda$ \\
\midrule
$n\nmid|\lambda|$
& $1$
& $1+\dfrac{1}{n^2}$ \\
\midrule
$n\mid|\lambda|$ and $\lambda\neq(n^k)$ for all $1\le k\le n-1$
& $\dfrac{(n-1)}{(n-2)}$
& $\dfrac{(n-1)}{(n-2)}$ \\
\midrule
$\lambda=(n^k)$ for some $1\le k\le n-1$
& $1$
& $1$ \\
\bottomrule
\end{tabular}
\caption{The exponents $\alpha_\lambda$ and $\gamma_\lambda$.}
\label{tab:alpha-gamma}
\end{table}

As noted in \Cref{rem:sublevel-n2}, the second case does not occur when $n=2$.

The main one-factor estimate is the following.

\begin{proposition}[Contraction for $\|\cdot\|^{-\beta}$]
\label{prop:generallocalcontraction}
For every $0<\beta<\gamma_\lambda$ and bounded open subset
$\Omega\subset\mathbb R^{n-1}$, there exists
$c=c(V,\beta,\Omega)>0$ such that, for every $t\ge1$ and 
$0\ne v\in V$,
$$
\int_\Omega \|b_tu_\xi v\|^{-\beta}\,d\xi
\le
c\exp\!\left(-\frac14\min\{\beta,\gamma_\lambda-\beta\}t\right)
\|v\|^{-\beta}.
$$
\end{proposition}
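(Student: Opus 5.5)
The approach is to bound $\|b_tu_\xi v\|$ from below by a single weight component of $u_\xi v$ and then integrate using the sublevel estimates of the previous section. Since $b_t$ acts on $V_j$ by $e^{\mu_jt}$ and the weight spaces are orthogonal,
$$
\|b_tu_\xi v\|\ge e^{\mu_jt}\|p_j(u_\xi v)\|\qquad\text{for every }j,
$$
and also $\|b_tu_\xi v\|\ge e^{\mu_0t}\|u_\xi v\|\gg_\Omega e^{\mu_0t}\|v\|$, because $u_\xi$ ranges over a compact set. Put $\ell=\ell_\lambda$, $A:=\max_{0\le i\le\ell}\|p_i(v)\|$ and $B:=\max_{i>\ell}\|p_i(v)\|$, so that $\|v\|\asymp\max(A,B)$. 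By \eqref{eq:polynomialcoordinate}, each $p_j(u_\xi v)$ equals $p_j(v)$ plus terms bounded by $C_\Omega\max_{i<j}\|p_i(v)\|$. Moreover, for $j>\ell$ we have $\mu_j\ge\mu_\ell+n\ge n$.

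\emph{Step 1 (high weights dominate).} Suppose $B\ge KA$ for a large constant $K=K(V,\Omega)$. Let $j>\ell$ be the smallest index with $\|p_j(v)\|\ge K^{-(j-\ell)}B$ (taking geometric thresholds so that the lower terms are absorbed). Then $\|p_j(u_\xi v)\|\gg B$ uniformly in $\xi\in\Omega$. Hence
$$
\|b_tu_\xi v\|\gg e^{nt}\|v\|,
$$
and the integral is $\ll e^{-\beta nt}\|v\|^{-\beta}$, which is acceptable.

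\emph{Step 2 ($\|v\|\asymp A$ and $\mu_\ell>0$, i.e.\ $n\nmid|\lambda|$, so $\alpha_\lambda=1$ and $\mu_\ell\ge1$).} Use $\|b_tu_\xi v\|\ge\max\{e^{\mu_\ell t}\|p_\ell(u_\xi v)\|,\ c\,e^{\mu_0t}A\}$ and decompose $\Omega$ dyadically according to the size $\varepsilon A$ of $\|p_\ell(u_\xi v)\|$. \Cref{prop:sublevelestimate} bounds the measure of each shell by $C\varepsilon^{1-\delta}$. If $\beta<1$, the sum $\sum\varepsilon^{1-\delta-\beta}$ converges, and the integral is $\ll e^{-\beta\mu_\ell t}A^{-\beta}$. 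If $1\le\beta<\gamma_\lambda=1+1/n^2$, the lower bound $e^{\mu_0t}A$ truncates the sum at $\varepsilon\ge e^{(\mu_0-\mu_\ell)t}\ge e^{-n^2t}$. This gives the bound $e^{-\beta\mu_\ell t}e^{n^2(\beta-1+\delta)t}A^{-\beta}$, which decays at a rate $\gg(\gamma_\lambda-\beta)t$ once $\delta$ is chosen small relative to $\gamma_\lambda-\beta$. The case $\lambda=(n^k)$ has $\gamma_\lambda=\alpha_\lambda=1$. It needs no truncation, but it has $\mu_\ell=0$, so it falls under Step 3.

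\emph{Step 3 ($\mu_\ell=0$; this is where I expect the real work).} Here $b_t$ does not expand $V_\ell$, so decay must come from $V_{\ell+1}$, which has weight $n$. Let
$$
S:=\{\xi\in\Omega:\|p_{\ell+1}(u_\xi v)\|\le e^{-nt/2}A\}.
$$
Off $S$ we have $\|b_tu_\xi v\|\ge e^{nt/2}A$, which contributes $\ll e^{-\beta nt/2}A^{-\beta}$. By \Cref{lem:next-weight-sublevel}, $|S|\ll e^{-nt/(2(\ell+1))}$. On $S$, use $\|b_tu_\xi v\|\ge\|p_\ell(u_\xi v)\|$ and Hölder's inequality with an exponent $\beta<\beta'<\alpha_\lambda=\gamma_\lambda$:
$$
\int_S\|p_\ell(u_\xi v)\|^{-\beta}\,d\xi\le|S|^{1-\beta/\beta'}\Bigl(\int_\Omega\|p_\ell(u_\xi v)\|^{-\beta'}\,d\xi\Bigr)^{\beta/\beta'}.
$$
The second factor is $\ll A^{-\beta}$ uniformly in $v$, by the uniform negative moment \eqref{eq:uniform-negative-moment-P} from the proof of \Cref{prop:sublevelestimate}. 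The first factor decays like $\exp\bigl(-c(1-\beta/\beta')t\bigr)$. Taking $\beta'$ close to $\gamma_\lambda$ gives decay at a rate proportional to $\gamma_\lambda-\beta$.

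Finally, the constants must be tracked so that every exponent obtained in Steps 1--3 is at least $\frac14\min\{\beta,\gamma_\lambda-\beta\}$. The Step 3 rate carries a factor $n/(2(\ell+1)\beta')\ge$ roughly $1/(2n)$, so reaching exactly $\frac14$ may require choosing the splitting threshold in $S$ differently. This bookkeeping, together with the uniformity in $v$ in the Hölder step, is the main obstacle.
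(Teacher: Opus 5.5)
Your proposal is correct in substance. It has the same three-case skeleton as the paper's proof: high weights dominate; $\mu_\ell>0$ via \Cref{prop:sublevelestimate}; $\mu_\ell=0$ via \Cref{lem:next-weight-sublevel}. You handle the two main cases more economically than the paper does.

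\emph{Case $\mu_\ell>0$.} The paper first selects a coordinate $r\le\ell$ with $\|p_r(u_\xi v)\|\gg A_\ell(v)$ on all of $\Omega$. It then works with $\|b_tu_\xi v\|\gg A_\ell(v)e^{\mu_rt}\max\{1,e^{n(\ell-r)t}F(\xi)\}$. You instead truncate the dyadic sum with the minimum-weight bound $\|b_tu_\xi v\|\ge e^{\mu_0t}\|u_\xi v\|\gg e^{\mu_0t}\|v\|$. This has the form of the paper's bound with $r=0$, but it holds unconditionally. It suffices because $\gamma_\lambda=1+n^{-2}$ lies strictly below $n\ell/|\mu_0|\ge1+\frac1{n(n-1)}$, so no dominant coordinate is needed.

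\emph{Case $\mu_\ell=0$.} The paper splits $\Omega$ into three regions tuned by $\rho_1,\rho_2$ and applies a layer-cake bound on $\{F\le\rho_2\}$. Your two-region split plus H\"older against the uniform moment \eqref{eq:uniform-negative-moment-P} rests on the same inputs and avoids tuning $\rho_2$.

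Two corrections.

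\emph{Step~1.} Your thresholds $K^{-(j-\ell)}B$ decrease in $j$. So the earlier coordinates, which may be as large as $K^{-1}B$, are not absorbed by $\|p_j(v)\|\ge K^{-(j-\ell)}B$; the thresholds would have to increase in $j$. It is simpler to argue as the paper does. Once $A\le(2C^2)^{-1}\|v\|$, the bounds $\|u_\xi v\|\ge C^{-1}\|v\|$ and $A_\ell(u_\xi v)\le CA$ force $\max_{j>\ell}\|p_j(u_\xi v)\|\ge C^{-1}\|v\|$ for each $\xi$. No $\xi$-independent index is required.

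\emph{Step~3.} The bookkeeping you flagged does close, and the factor is not of order $1/(2n)$. The rate on $S$ is $\frac{n}{2(\ell+1)}\cdot\frac{\beta'-\beta}{\beta'}$.
\begin{itemize}
\item For $\lambda=(n^k)$ one has $\ell=k\le n-1$. As $\beta'\uparrow1$, the rate tends to at least $\frac12(1-\beta)$.
\item If $\lambda\ne(n^k)$, then $|\lambda|<n(n-1)$ forces $\ell\le n-2$. As $\beta'\uparrow\gamma_\lambda=\frac{n-1}{n-2}$, the rate tends to at least $\frac{n(n-2)}{2(n-1)^2}(\gamma_\lambda-\beta)\ge\frac38(\gamma_\lambda-\beta)$.
\end{itemize}
Hence a suitable $\beta'<\gamma_\lambda$ gives rate at least $\frac14(\gamma_\lambda-\beta)$ on $S$. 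The complement of $S$ contributes rate $\frac n2\beta\ge\beta$.

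If you used only $\ell+1\le n$ and $\gamma_\lambda\le2$, the case $n=3$, $\lambda=(2,1)$ would sit exactly at the borderline $\frac14$. So the refinement $\ell\le n-2$ is genuinely needed. Alternatively, use a smaller threshold for $S$, such as the paper's $\rho_1=e^{-(n-1)t}$.
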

\begin{proof}
We may replace the given norm by an equivalent norm for which
$\|v\|=\max_{0\le j\le\lambda_1}\|p_j(v)\|$.
Set
$A_q(v):=\max_{0\le j\le q}\|p_j(v)\|.
$
Since $\Omega$ is bounded and the action of $u_\xi$ is triangular with
respect to the weight decomposition, there exists $C\ge1$ such that for every $\xi\in\Omega$, $q$, and $v\in V$,
\begin{equation}
\label{eq:triangular-boundedness-section5}
C^{-1}\|v\|\le\|u_\xi v\|\le C\|v\|,
\qquad
A_q(u_\xi v)\le CA_q(v).
\end{equation}

As before, let
$$
\ell:=\ell_\lambda
=\min\{j:\mu_j\ge0\}
=\left\lceil\frac{|\mu_0|}{n}\right\rceil.
$$
We distinguish two cases.

\medskip
\noindent
\textbf{Case 1:} $A_\ell(v)<(2C^2)^{-1}\|v\|$.

If $\ell=\lambda_1$, this case is empty. Otherwise,
\eqref{eq:triangular-boundedness-section5} implies
$$
\max_{\ell+1\le j\le\lambda_1}\|p_j(u_\xi v)\|
\ge C^{-1}\|v\|.
$$
Since $\mu_{\ell+1}\ge n$, we have
$$
\|b_tu_\xi v\|
\ge C^{-1}e^{\mu_{\ell+1}t}\|v\|
\ge C^{-1}e^{nt}\|v\|.
$$
Consequently,
$$
\int_\Omega\|b_tu_\xi v\|^{-\beta}\,d\xi
\ll_{V,\beta,\Omega}e^{-n\beta t}\|v\|^{-\beta},
$$
which is stronger than the required estimate.

\medskip
\noindent
\textbf{Case 2:} $A_\ell(v)\ge(2C^2)^{-1}\|v\|$.

The triangular formula \eqref{eq:polynomialcoordinate} and the boundedness
of $\Omega$ give a constant $C_1\ge1$ such that
$$
\|p_j(u_\xi v)-p_j(v)\|
\le C_1\max_{0\le i<j}\|p_i(v)\|
\qquad(1\le j\le\ell,\ \xi\in\Omega).
$$
Choose an index at which $A_\ell(v)$ is attained. If the maximum norm among preceding coordinates is larger than $(2C_1)^{-1}$ times the norm of the current
coordinate, replace the current index by an earlier index at which that maximum is attained, and
repeat. The process stops after at most $\ell$ steps. Its final index $r$
satisfies
\begin{equation}
\label{eq:dominant-weight-coordinate}
\|p_r(v)\|\ge(2C_1)^{-\ell}A_\ell(v)
\quad\text{and}\quad
\max_{0\le i<r}\|p_i(v)\|
\le(2C_1)^{-1}\|p_r(v)\|.
\end{equation}
Consequently, for $c_0:=2^{-1}(2C_1)^{-\ell}$,
\begin{equation}
\label{eq:dominant-coordinate-stable}
\|p_r(u_\xi v)\|
\ge\frac12\|p_r(v)\|
\ge c_0A_\ell(v)
\qquad(\xi\in\Omega).
\end{equation}

Set
$$
F(\xi):=A_\ell(v)^{-1} {\|p_\ell(u_\xi v)\|},
\qquad
s:=n(\ell-r)t.
$$
By \Cref{prop:sublevelestimate}, for every
$0<a<\alpha_\lambda$,
\begin{equation}
\label{eq:sublevel-normalized-section5}
\Leb\{\xi\in\Omega:F(\xi)\le\varepsilon\}
\ll_{a,V,\Omega}\varepsilon^a
\qquad(0<\varepsilon<1).
\end{equation}
Moreover, \eqref{eq:dominant-coordinate-stable} gives
\begin{equation}
\label{eq:two-coordinate-lower-bound}
\|b_tu_\xi v\|
\gg A_\ell(v)e^{\mu_rt}\max\{1,e^sF(\xi)\}.
\end{equation}

We first suppose that $\mu_\ell>0$, equivalently
$n\nmid|\lambda|$. In this case $\alpha_\lambda=1$ and
$\gamma_\lambda=1+n^{-2}$. Set
$$
\gamma:=\frac{n\ell}{|\mu_0|}
=1+\frac{\mu_\ell}{|\mu_0|}.
$$
Since $\mu_\ell$ is a positive integer and
$|\mu_0|\le n(n-1)$,
$$
\gamma\ge1+\frac1{n(n-1)}
>1+\frac1{n^2}
=\gamma_\lambda.
$$

Assume first that $0<\beta<1$, and choose
$\beta<a<1$. Decomposing the range of $F$ into
$\{F\le e^{-s}\}$ and the successive level ranges between $e^{-s}$
and $1$, and then using
\eqref{eq:sublevel-normalized-section5}, gives
$$
\int_\Omega\max\{1,e^sF(\xi)\}^{-\beta}\,d\xi
\ll_{a,\beta,V,\Omega}e^{-\beta s}.
$$
Together with \eqref{eq:two-coordinate-lower-bound}, this yields
$$
\begin{aligned}
\int_\Omega\|b_tu_\xi v\|^{-\beta}\,d\xi
&\ll A_\ell(v)^{-\beta}e^{-\beta\mu_rt-\beta s}=A_\ell(v)^{-\beta}e^{-\beta\mu_\ell t}
\ll e^{-\beta t}\|v\|^{-\beta}.
\end{aligned}
$$

Now let $1\le\beta<\gamma_\lambda$. Choose $a<1$ sufficiently close
to $1$ that
$$
\gamma a-\beta\ge\frac12(\gamma-\beta).
$$
If $r=\ell$, \eqref{eq:two-coordinate-lower-bound} already gives decay
at rate $\beta\mu_\ell\ge\beta$. Suppose that $r<\ell$. Then
$$
\frac{n(\ell-r)}{|\mu_r|}
=
\frac{n(\ell-r)}{|\mu_0|-nr}
\ge\gamma.
$$
The same decomposition of the range of $F$, now with $a<\beta$, gives
$$
\int_\Omega\max\{1,e^sF(\xi)\}^{-\beta}\,d\xi
\ll_{a,\beta,V,\Omega}e^{-as}.
$$
Therefore,
$$
\begin{aligned}
\int_\Omega\|b_tu_\xi v\|^{-\beta}\,d\xi
&\ll
A_\ell(v)^{-\beta}e^{-\beta\mu_rt-as}\\
&=
A_\ell(v)^{-\beta}
\exp\left(
-\left(an(\ell-r)-\beta|\mu_r|\right)t
\right)\\
&\le
A_\ell(v)^{-\beta}
\exp\left(
-|\mu_r|(\gamma a-\beta)t
\right).
\end{aligned}
$$
By the choice of $a$, the resulting decay exponent is at least
$$
\frac12|\mu_r|(\gamma-\beta)
\ge\frac12(\gamma-\beta)
\ge\frac12(\gamma_\lambda-\beta).
$$
This proves the proposition when $\mu_\ell>0$.
It remains to consider $\mu_\ell=0$. Then
$$
\gamma_{\lambda}=\alpha_\lambda=
\begin{cases}
1&\text{if }\lambda=(n^k),\\
\dfrac{n-1}{n-2}
&\text{if }\lambda\ne(n^k)\text{ for every }1\le k\le n-1,
\end{cases}.
$$
 For $0<\rho<1$, define
$$
D^+(\rho):=
\left\{\xi\in\Omega:
\max_{\ell+1\le j\le\lambda_1}
\|p_j(u_\xi v)\|\le\rho A_\ell(v)
\right\};
$$
$$
D(\rho):=
\left\{\xi\in\Omega:
\max_{\ell\le j\le\lambda_1}
\|p_j(u_\xi v)\|\le\rho A_\ell(v)
\right\}.
$$
Let $0<\rho_1<\rho_2\le1$. On
$\Omega\smallsetminus D^+(\rho_1)$,
we have
$\|b_tu_\xi v\|\ge e^{nt}\rho_1A_\ell(v)$,
and hence
\begin{equation}
\label{eq:borderline-contribution-one}
\int_{\Omega\smallsetminus D^+(\rho_1)}
\|b_tu_\xi v\|^{-\beta}\,d\xi
\ll e^{-n\beta t}\rho_1^{-\beta}A_\ell(v)^{-\beta}.
\end{equation}

Since
$$
D^+(\rho_1)
\subset
\left\{\xi\in\Omega:
\|p_{\ell+1}(u_\xi v)\|\le\rho_1A_\ell(v)
\right\},
$$
\Cref{lem:next-weight-sublevel} gives
$$
\Leb(D^+(\rho_1))
\ll\rho_1^{1/(\ell+1)}.
$$
If $\xi\in D^+(\rho_1)\smallsetminus D(\rho_2)$, then
$\|p_\ell(u_\xi v)\|>\rho_2A_\ell(v)$. Consequently,
\begin{equation}
\label{eq:borderline-contribution-two}
\int_{D^+(\rho_1)\smallsetminus D(\rho_2)}
\|b_tu_\xi v\|^{-\beta}\,d\xi
\ll
\rho_1^{1/(\ell+1)}
\rho_2^{-\beta}A_\ell(v)^{-\beta}.
\end{equation}

Finally, on $D(\rho_2)$ one has $F\le\rho_2$. Since
$\mu_r=-n(\ell-r)$, \eqref{eq:two-coordinate-lower-bound} becomes
$$
\|b_tu_\xi v\|
\gg A_\ell(v)\max\{e^{-n(\ell-r)t},F(\xi)\}.
$$
Decomposing $\{F\le\rho_2\}$ into the range
$F\le e^{-n(\ell-r)t}$ and the successive level ranges up to $\rho_2$,
and using \eqref{eq:sublevel-normalized-section5}, gives
\begin{equation}
\label{eq:borderline-contribution-three}
\int_{D(\rho_2)}
\|b_tu_\xi v\|^{-\beta}\,d\xi
\ll\rho_2^{a-\beta}A_\ell(v)^{-\beta}.
\end{equation}
where $a:=\alpha_\lambda-\frac1{10}(\alpha_\lambda-\beta)>\beta $.
If $n=2$, then $\ell=1$ and $\lambda=(2)$, so
$\alpha_\lambda=\gamma_\lambda=1$. Take
$\rho_1=e^{-3t/2}$ and $\rho_2=e^{-3t/10}$.
The three terms in
\eqref{eq:borderline-contribution-one}--\eqref{eq:borderline-contribution-three}
then decay at rates at least
$$
\frac{\beta}{2},
\qquad
\frac34-\frac{3\beta}{10},
\qquad
\frac{27}{100}(1-\beta),
$$
respectively. Each is at least
$\frac14\min\{\beta,1-\beta\}$, so the required estimate follows.

We may therefore assume that $n\ge3$. Take
$\rho_1=e^{-(n-1)t}$ and $\rho_2=e^{-\frac{n-1}{2(\ell+1)}t}$.
The three terms in
\eqref{eq:borderline-contribution-one}--\eqref{eq:borderline-contribution-three}
then decay at rates
$$
\beta,
\qquad
\frac{n-1}{\ell+1}\left(1-\frac\beta2\right),
\qquad
\frac{n-1}{2(\ell+1)}(a-\beta),
$$
respectively. Since $\gamma_\lambda\le2$, $\ell+1\le n$, and
$\beta<\gamma_\lambda$,
$$
\frac{n-1}{\ell+1}\left(1-\frac\beta2\right)
\ge
\frac13(\gamma_\lambda-\beta).
$$
Moreover, $a-\beta=\frac9{10}(\gamma_\lambda-\beta)$
and hence
$$
\frac{n-1}{2(\ell+1)}(a-\beta)
\ge
\frac3{10}(\gamma_\lambda-\beta).
$$
Thus all three rates are at least
$$
\frac14\min\{\beta,\gamma_\lambda-\beta\}.
$$
Since $A_\ell(v)\asymp\|v\|$ in Case~2, we conclude that
$$
\int_\Omega\|b_tu_\xi v\|^{-\beta}\,d\xi
\ll
\exp\!\left(
-\frac14\min\{\beta,\gamma_\lambda-\beta\}t
\right)
\|v\|^{-\beta}.
$$
This completes the proof.
\end{proof}

\begin{corollary}[Bounded expansion for $\|\cdot\|^{-\beta}$]
\label{prop:generallocalexpansion}
For every $\beta>\gamma_\lambda$, and bounded open subset
$\Omega\subset\mathbb R^{n-1}$, there exists
$c=c(V,\beta,\Omega)>0$ such that for every $t\ge1$ and $0\ne v\in V$,
$$
\int_\Omega\|b_tu_\xi v\|^{-\beta}\,d\xi
\le c e^{n^2(\beta-\gamma_\lambda)t}\|v\|^{-\beta}.
$$

\end{corollary}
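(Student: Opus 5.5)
The plan is to interpolate between the contraction estimate of \Cref{prop:generallocalcontraction}, applied with an exponent $\beta'$ slightly below $\gamma_\lambda$, and a crude pointwise lower bound for $\|b_tu_\xi v\|$ that costs at most the most contracting weight $\mu_0$.

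\emph{Pointwise bound.} As in the proof of \Cref{prop:generallocalcontraction}, fix a norm on $V$ with $\|v\|=\max_j\|p_j(v)\|$, and let $C\ge1$ be the constant in \eqref{eq:triangular-boundedness-section5}. Thus $\|u_\xi v\|\ge C^{-1}\|v\|$ for $\xi\in\Omega$. Every weight of $b_t$ on $V$ is at least $\mu_0=-|\lambda|\ge-n(n-1)$, so $b_t$ contracts each $p_j$-coordinate by at most $e^{-|\mu_0|t}$. Hence
$$
\|b_tu_\xi v\|\ge C^{-1}e^{-n(n-1)t}\|v\|\qquad(\xi\in\Omega,\ t\ge1).
$$

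\emph{Splitting the exponent.} Given $\beta>\gamma_\lambda$, choose
$$
\beta':=\gamma_\lambda-\varepsilon,\qquad 0<\varepsilon\le\frac{\beta-\gamma_\lambda}{n-1},\quad \varepsilon<\gamma_\lambda .
$$
This is possible because $\gamma_\lambda\ge1$, and it gives $0<\beta'<\gamma_\lambda$. Write
$$
\|b_tu_\xi v\|^{-\beta}=\|b_tu_\xi v\|^{-\beta'}\,\|b_tu_\xi v\|^{-(\beta-\beta')}.
$$
Bound the second factor by the pointwise estimate, which gives at most $C^{\beta-\beta'}e^{n(n-1)(\beta-\beta')t}\|v\|^{-(\beta-\beta')}$. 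Integrate the first factor over $\Omega$ using \Cref{prop:generallocalcontraction}; its exponential decay factor can simply be bounded by $1$. This yields
$$
\int_\Omega\|b_tu_\xi v\|^{-\beta}\,d\xi\le c(V,\beta',\Omega)\,C^{\beta-\beta'}\,e^{n(n-1)(\beta-\gamma_\lambda+\varepsilon)t}\,\|v\|^{-\beta}.
$$

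\emph{Comparing exponents.} By the choice of $\varepsilon$,
$$
n(n-1)(\beta-\gamma_\lambda+\varepsilon)\le n(n-1)(\beta-\gamma_\lambda)+n(\beta-\gamma_\lambda)=n^2(\beta-\gamma_\lambda),
$$
which is the claimed bound. The constant depends only on $V,\beta,\Omega$, since $\beta'$ is determined by $\beta$.

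The only point needing care is this last bookkeeping step. The crude bound alone would lose $e^{\beta|\mu_0|t}$, so it is essential to apply \Cref{prop:generallocalcontraction} to almost all of the exponent, namely $\beta'$ close to $\gamma_\lambda$, and to let only the excess $\beta-\beta'$ pay the worst-weight price. The slack between $n(n-1)$ and $n^2$ is what absorbs the gap $\varepsilon$.
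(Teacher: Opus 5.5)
Your proposal is correct and is essentially the paper's argument. The paper also bounds $\|b_tu_\xi v\|\gg e^{-n(n-1)t}\|v\|$ using the minimal weight and applies it only to the excess exponent. It applies \Cref{prop:generallocalcontraction} at $\beta_0=\gamma_\lambda-\delta$ with $\delta=\min\{(\beta-\gamma_\lambda)/(n-1),\gamma_\lambda/2\}$, and closes with the same inequality $n(n-1)(\beta-\gamma_\lambda+\delta)\le n^2(\beta-\gamma_\lambda)$.
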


\begin{proof}
We have 
$$
\|b_tu_\xi v\|\gg e^{-n(n-1)t}\|v\|,
$$
with the implied constant independent of all $\xi\in\Omega$,
because the smallest $b_t$-weight is at least $-n(n-1)$. Set
$$
\delta:=\min\left\{\frac{\beta-\gamma_\lambda}{n-1},\frac{\gamma_\lambda}{2}\right\},
\qquad
\beta_0:=\gamma_\lambda-\delta.
$$
Then $0<\beta_0<\gamma_\lambda$, so
\Cref{prop:generallocalcontraction} gives a uniform bound for the
$\beta_0$-moment. Therefore
$$
\begin{aligned}
\int_\Omega\|b_tu_\xi v\|^{-\beta}\,d\xi
&\ll e^{n(n-1)(\beta-\beta_0)t}\|v\|^{-(\beta-\beta_0)}
   \int_\Omega\|b_tu_\xi v\|^{-\beta_0}\,d\xi\\
&\ll e^{n(n-1)(\beta-\gamma_\lambda +\delta)t}\|v\|^{-\beta}.
\end{aligned}
$$
The definition of $\delta$ gives
$n(n-1)(\beta-\gamma_\lambda +\delta)\le n^2 (\beta-\gamma_\lambda)$, proving the claim.
\end{proof}

\subsection{One-factor estimates along the principal ray}

We now specialize to the representations occurring in
\begin{equation}\label{vall}
\mathcal V_{\op{all}}
:=\bigoplus_{1\le i\le n^2-1}\wedge^i\M_n(\mathbb R),
\end{equation}
viewed as a representation of
$H=\SL_n(\mathbb R)\times\SL_n(\mathbb R)$. Recalling
\eqref{eq:decompositionatdimnn-1}, set
$$
\mathcal V_{\op{gen}}
:=
\bigoplus_{\substack{1\le i\le n^2-1, \; n\nmid i}}
\wedge^i\M_n(\mathbb R)
\oplus\bigoplus_{k=1}^{n-1}\mathcal M_{k,0}.
$$

\begin{lemma}[Contraction on generic $\SL_n(\mathbb R)$-types]
\label{lem:generalweightedcontraction}
Let $W_1\otimes W_2\subset\mathcal V_{\op{gen}}$ be an irreducible
$H$-subrepresentation, let $m\in\{1,2\}$, and let
$\Omega\subset\mathbb R^{n-1}$ be bounded and open. For every $\frac12\le\beta\le1+\frac1{2n^2}$,
there exists $c=c(W_m,\beta,\Omega)>0$ such that
for every $t\ge1$ and nonzero $v\in W_m$,
$$
\int_\Omega\|b_tu_\xi v\|^{-\beta}\,d\xi
\le c e^{-t/(8n^2)}\|v\|^{-\beta}.
$$

\end{lemma}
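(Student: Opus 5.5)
The plan is to reduce directly to \Cref{prop:generallocalcontraction}, applied to the factor $W_m$ regarded as an $\SL_n(\mathbb R)$-module via the $m$-th copy of $\SL_n(\mathbb R)$. Everything then comes down to one representation-theoretic claim: every factor $W_m$ occurring in $\mathcal V_{\op{gen}}$ is a nontrivial Schur module ${\mathsf S}_\lambda(\mathbb R^n)$ whose reduced partition is not of the form $(n^k)$. Table~\ref{tab:alpha-gamma} then gives $\gamma_\lambda\ge 1+\frac1{n^2}$.

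\emph{Step 1 (identify the factors).} By the skew Cauchy formula, an irreducible summand of $\wedge^i\M_n(\mathbb R)$ is ${\mathsf S}_\lambda(\mathsf C)\otimes{\mathsf S}_{\lambda^{\intercal}}(\mathsf R)$ with $\lambda\vdash i$ inside the $n\times n$ box.
\begin{itemize}
\item ${\mathsf S}_\lambda$ is trivial exactly when $\lambda=(q^n)$, and ${\mathsf S}_{\lambda^{\intercal}}$ is trivial exactly when $\lambda=(n^q)$. Either case forces $n\mid i$ with $i=qn$.
\item For $n\nmid i$, therefore, both factors are nontrivial.
\item For $i=kn$, the summands with $\lambda=(k^n)$ and $\lambda=(n^k)$ are precisely $\mathcal M_{k,2}$ and $\mathcal M_{k,1}$. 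These are removed from $\mathcal M_{k,0}$, so again both factors are nontrivial.
\end{itemize}

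\emph{Step 2 (exclude the critical type).} Suppose the reduced partition $\lambda-\lambda_n$ of a factor equals $(n^j)$ with $1\le j\le n-1$. Then $\lambda_1-\lambda_n=n$. Since $\lambda_1\le n$, this forces $\lambda_n=0$ and $\lambda=(n^j)$. But then the other factor is ${\mathsf S}_{(j^n)}$, which is trivial, and Step~1 excludes this. The same reasoning applies to the column factor $\lambda^{\intercal}$ by symmetry.

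Hence each $W_m$ falls into one of the first two rows of Table~\ref{tab:alpha-gamma}. This gives
\[
\gamma_\lambda\ge\min\Bigl\{1+\tfrac1{n^2},\tfrac{n-1}{n-2}\Bigr\}=1+\tfrac1{n^2},
\]
using $\frac{n-1}{n-2}=1+\frac1{n-2}>1+\frac1{n^2}$ for $n\ge3$.

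\emph{Step 3 (apply the contraction estimate).} Fix $\frac12\le\beta\le1+\frac1{2n^2}$. Then $\beta<\gamma_\lambda$, and
\[
\min\{\beta,\gamma_\lambda-\beta\}\ge\min\Bigl\{\tfrac12,\tfrac1{2n^2}\Bigr\}=\tfrac1{2n^2}.
\]
\Cref{prop:generallocalcontraction}, applied to $V=W_m$ with this $\beta$ and $\Omega$, therefore yields
\[
\int_\Omega\|b_tu_\xi v\|^{-\beta}\,d\xi\le c\,e^{-\frac14\cdot\frac1{2n^2}t}\|v\|^{-\beta}=c\,e^{-t/(8n^2)}\|v\|^{-\beta}.
\]
The constant $c$ depends on $(W_m,\beta,\Omega)$, which is what the statement allows. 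If the norm on $W_m$ differs from the one fixed in \S\ref{s:sublevel-general-n}, the two are equivalent and only $c$ changes.

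The only step needing care is the bookkeeping in Steps~1--2. We must check that the reduction $\lambda\mapsto\lambda-\lambda_n$ never lands on an exceptional $(n^k)$ for a generic summand, and that this holds for both factors simultaneously. Once that is verified, the analytic content is entirely contained in \Cref{prop:generallocalcontraction}.
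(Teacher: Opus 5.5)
Your proposal is correct and follows the paper's proof essentially line for line. You use the $n\times n$ box condition to show that a trivial or $(n^j)$-type reduced partition for $W_m$ would make the summand one of the excluded exceptional pieces $\mathcal M_{k,1},\mathcal M_{k,2}$, conclude $\gamma_\lambda\ge 1+n^{-2}$, and then apply \Cref{prop:generallocalcontraction} with $\min\{\beta,\gamma_\lambda-\beta\}\ge \frac{1}{2n^2}$; the only point left implicit is that for $n=2$ the middle row of Table~\ref{tab:alpha-gamma} never occurs, so the comparison with $\frac{n-1}{n-2}$ is needed only when $n\ge3$.
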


\begin{proof}
Let $\lambda$ be the reduced partition corresponding to $W_m$. This
partition is nontrivial: a trivial reduced partition would give one of the
two exceptional summands, which is excluded from
$\mathcal V_{\op{gen}}$. Moreover, $\lambda$ is not of the form
$(n^j)$. Indeed, if the reduced partition were $(n^j)$, then the
$n\times n$ box condition would force the corresponding original partition
itself to be $(n^j)$. Its size would be $jn$, and the resulting summand
would again be exceptional. Hence $\gamma_\lambda\geq1+n^{-2}$. Since
$\min\left\{\beta,1+\frac1{n^2}-\beta\right\}
\ge\frac1{2n^2}$,
\Cref{prop:generallocalcontraction}, applied to $b_t$, gives
$$
\int_\Omega\|b_tu_\xi v\|^{-\beta}\,d\xi
\ll e^{-t/(8n^2)}\|v\|^{-\beta},
$$
which is the asserted estimate.
\end{proof}

The same estimate holds for every finite direct sum of generic types. Indeed,
use the maximum norm, choose a component of maximal norm, and apply the
preceding lemma to that invariant component. In addition, H\"older's
inequality gives
\begin{equation}
\label{eq:generic-small-exponent-bound}
\int_\Omega\|b_tu_\xi w\|^{-q}\,d\xi
\ll_{q,\Omega}\|w\|^{-q}
\qquad
\left(0\le q\le1+\frac1{2n^2}\right)
\end{equation}
for every such finite direct sum.

\begin{lemma}[Estimates for $\mathcal M_{k,1}$ and $\mathcal M_{k,2}$]
\label{lem:generalweightedcontraction'}
Let $1\le k\le n-1$, $m\in\{1,2\}$, and let
$\Omega\subset\mathbb R^{n-1}$ be bounded and open. For every
$\beta\in \left[\frac{1}2,1\right) \cup \left(1,1+ \frac{1}{2n^2}\right]$,
there exists $c=c(\beta,\Omega)>0$ such that, for every $t\ge1$ and
 nonzero vector $v$ in the nontrivial $H_m$-module
$\mathcal M_{k,m}$,
$$
\int_\Omega\|b_tu_\xi v\|^{-\beta}\,d\xi
\le
\begin{cases}
c e^{-\frac15(1-\beta)t}\|v\|^{-\beta},
& \frac12\le\beta<1\\ 
c e^{2n^2(\beta-1)t}\|v\|^{-\beta},
& 1<\beta\le1+\frac{1}{2n^2}.
\end{cases}
$$
\end{lemma}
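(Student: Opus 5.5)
The plan is to identify $\mathcal M_{k,m}$, as a module for the factor $H_m\simeq\SL_n(\mathbb R)$ acting on it, with the Schur module $\mathsf S_{(n^k)}(\mathbb R^n)$, and then read off both bounds from the one-factor estimates already proved. For $m=1$, $\mathcal M_{k,1}\simeq\mathsf S_{(n^k)}(\mathsf C)\otimes\mathbf 1$, so $H_1$ acts through $\mathsf S_{(n^k)}$ and $H_2$ trivially; the case $m=2$ is symmetric. The partition $\lambda=(n^k)$ has $\lambda_n=0$ since $k\le n-1$, so the reduction \eqref{eq:lambda-reduced} already holds and $\lambda$ is nontrivial. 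It falls in the third row of Table~\ref{tab:alpha-gamma}, so $\alpha_\lambda=\gamma_\lambda=1$. All norms on the finite-dimensional space $\mathcal M_{k,m}$ are equivalent, so constants may be adjusted freely.

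For $\tfrac12\le\beta<1$, I will apply \Cref{prop:generallocalcontraction} with $\gamma_\lambda=1$. This gives decay at rate $\tfrac14\min\{\beta,1-\beta\}$. Since $\beta\ge\tfrac12>1-\beta$ on this range, the minimum equals $1-\beta$. That yields $e^{-\frac14(1-\beta)t}\le e^{-\frac15(1-\beta)t}$, with constant $c(\beta,\Omega)$. There are only finitely many modules $\mathcal M_{k,m}$, so the constant can be made uniform in $k$.

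For $1<\beta\le1+\frac1{2n^2}$, I will use \Cref{prop:generallocalexpansion} with $\gamma_\lambda=1$. It gives the bound $c\,e^{n^2(\beta-1)t}\|v\|^{-\beta}$, and this is at most $c\,e^{2n^2(\beta-1)t}\|v\|^{-\beta}$ because $\beta>1$ and $t\ge1$. As a fallback I could instead interpolate directly, using $\|b_tu_\xi v\|\gg e^{-kn(n-1)\cdot\frac{1}{?}}$. More simply, the smallest $b_t$-weight on $\mathsf S_{(n^k)}$ is $-kn\ge-n(n-1)$, so $\|b_tu_\xi v\|\gg e^{-n(n-1)t}\|v\|$. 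Combining this with the $\beta_0$-moment bound for some $\beta_0<1$ gives exactly the computation in the corollary's proof.

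The only step needing care is confirming that the module really is $\mathsf S_{(n^k)}$ with the $b_t$-weights $-kn+ni$ used in Section~\ref{s:sublevel-general-n}. Under the identification $\mathcal M_{k,m}\simeq\mathsf S_{(n^k)}(\mathbb R^n)$ from \eqref{eq:decompositionatdimnn-1}, the $b_t$-action on the nontrivial factor is the standard one, so this holds.

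I expect no real obstacle. The substantive work, namely the borderline case $\mu_\ell=0$ handled by \Cref{lem:next-weight-sublevel}, is already contained in \Cref{prop:generallocalcontraction}.
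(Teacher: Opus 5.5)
Your proposal is correct and matches the paper's proof. You identify $\mathcal M_{k,m}$, as an $H_m$-module, with $\mathsf S_{(n^k)}(\mathbb R^n)$, so that $\gamma_\lambda=1$; the first bound then follows from \Cref{prop:generallocalcontraction}, since $\min\{\beta,1-\beta\}=1-\beta$ for $\beta\ge\frac12$, and the second from \Cref{prop:generallocalexpansion}, since $e^{n^2(\beta-1)t}\le e^{2n^2(\beta-1)t}$. Your ``fallback'' paragraph is unnecessary and its first displayed bound is garbled, but nothing in the argument depends on it.
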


\begin{proof}
As an $H_m\simeq\SL_n(\mathbb R)$-module,
$\mathcal M_{k,m}\simeq{\mathsf S}_{(n^k)}(\mathbb R^n),$
and hence $\gamma_\lambda=1$. For $\frac12\le\beta<1$,
\Cref{prop:generallocalcontraction} gives the first estimate. For
$1<\beta\le1+\frac1{2n^2}$, the second estimate follows from
\Cref{prop:generallocalexpansion}.
\end{proof}
\subsection{Local estimates for $H=\SL_n(\mathbb R)\times\SL_n(\mathbb R)$} We now pass from the one-factor estimates to estimates for the full left--right action. We apply the one-factor estimates successively in the two horospherical variables and use the tensor-product structure of the $H$-summands.

For $\boldsymbol{\xi}=(\xi_1,\xi_2)\in\mathbb R^{n-1}\times\mathbb R^{n-1}$, put
$$
u_{\boldsymbol{\xi}}:=
\left(
\begin{pmatrix}\operatorname{Id}_{n-1}&0\\ \xi_1&1\end{pmatrix},
\begin{pmatrix}\operatorname{Id}_{n-1}&0\\ \xi_2&1\end{pmatrix}
\right)\in H.
$$
We write $u_1(\xi_1)=u_{(\xi_1,0)}$ and
$u_2(\xi_2)=u_{(0,\xi_2)}$, so that
$u_{\boldsymbol{\xi}}=u_1(\xi_1)u_2(\xi_2)$. Let
$$
U_m:=\{u_m(\xi_m):\xi_m\in\mathbb R^{n-1}\},
\qquad
U:=U_1U_2,
$$
for $m\in\{1,2\}$. Write
$$
B_{U_m}(r):=\{u_m(\xi_m)\in U_m:\|\xi_m\|\le r\}.
$$

We use the Haar measures $du_m$ and $du$ induced by Lebesgue measures in these coordinates. Using the maximum norm $\|\boldsymbol{\xi}\|=\max (\|\xi_1\|, \|\xi_2\|)$, we 
have $B_{U}(r)=B_{U_1}(r)B_{U_2}(r)$.

\begin{proposition}
\label{lem:localcontractioninequalitygenerali}
\label{cor:generalweightedcontractiontensor}
Let $\Omega\subset\mathbb R^{n-1}\times\mathbb R^{n-1}$ be bounded and
open. For every
$\frac12\le\beta\le1+\frac1{2n^2}$, there exists $c>0$ such that
for every $t\ge1$ and nonzero $v\in\mathcal V_{\op{gen}}$, \begin{equation}
\label{eq:contractioninequalityatleasthalf}
\int_\Omega\|a_tu_{\boldsymbol{\xi}} v\|^{-\beta}\,d\boldsymbol{\xi}
\le c e^{-t/(5n^2)}\|v\|^{-\beta}.
\end{equation}
 Moreover, for all
$0\le\beta\le1+\frac1{2n^2}$,
\begin{equation}
\label{eq:contractioninequalitysmallrange}
\int_\Omega\|a_tu_{\boldsymbol{\xi}} v\|^{-\beta}\,d\boldsymbol{\xi}
\ll_{\beta,\Omega}\|v\|^{-\beta}.
\end{equation}
\end{proposition}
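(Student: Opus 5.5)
The plan is to reduce to a single irreducible summand $W_1\otimes W_2\subset\mathcal V_{\op{gen}}$ and then integrate in the two horospherical variables one after the other. We use the maximum norm over an $H$-invariant decomposition of $\mathcal V_{\op{gen}}$ into irreducibles. For each component $v_j$ we have $\|a_tu_{\boldsymbol\xi}v\|\ge\|a_tu_{\boldsymbol\xi}v_j\|$, so it suffices to treat the component of maximal norm. Thus we may assume $v\in W_1\otimes W_2$ irreducible. Enlarging $\Omega$, we may also take $\Omega=\Omega_1\times\Omega_2$, a product of balls. We have $a_tu_{\boldsymbol\xi}=(b_tu_1(\xi_1))\otimes(b_tu_2(\xi_2))$, since the two factors commute. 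For $W_1\otimes W_2\subset\mathcal V_{\op{gen}}$, at least one factor, say $W_m$, is a generic type: it is nontrivial and not of the form $\mathsf S_{(n^k)}$, as in the proof of \Cref{lem:generalweightedcontraction}. The other factor may be trivial, of type $(n^k)$, or generic.

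The core step is a tensor-product version of the one-factor estimates. Choose a basis $\{f_i\}$ of $W_{m'}$ ($m'\ne m$) consisting of weight vectors for $b_t$ that is orthonormal, and write $v=\sum_i w_i\otimes f_i$ with $w_i\in W_m$. Fix $\xi_{m'}$. Then $(\id\otimes b_tu_{m'}(\xi_{m'}))v=\sum_i w_i\otimes g_i$, with $g_i=b_tu_{m'}(\xi_{m'})f_i$. Rewrite this as $\sum_j w'_j\otimes f_j$ with $w'_j=\sum_i c_{ji}w_i$; each $w'_j$ lies in $W_m$. Applying $b_tu_m(\xi_m)\otimes\id$ and using the lower bound $\|\sum_j x_j\otimes f_j\|\ge\max_j\|x_j\|$, we get
$$\|a_tu_{\boldsymbol\xi}v\|\ge\max_j\|b_tu_m(\xi_m)w'_j\|.$$
Integrating first in $\xi_m$ and applying \Cref{lem:generalweightedcontraction} to a $w'_j$ of maximal norm gives
$$\int_{\Omega_m}\|a_tu_{\boldsymbol\xi}v\|^{-\beta}\,d\xi_m\ll e^{-t/(8n^2)}\bigl(\max_j\|w'_j\|\bigr)^{-\beta}.$$
Moreover, $\max_j\|w'_j\|\asymp\|(\id\otimes b_tu_{m'}(\xi_{m'}))v\|$. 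It remains to integrate $\|(\id\otimes b_tu_{m'}(\xi_{m'}))v\|^{-\beta}$ over $\xi_{m'}$. By the same slicing, this time with the roles of the factors swapped, this quantity is at least $\|b_tu_{m'}(\xi_{m'})y\|$ for a suitable $y\in W_{m'}$ with $\|y\|\gg\|v\|$.

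Now we distinguish cases according to the type of $W_{m'}$. If $W_{m'}$ is trivial, there is no $\xi_{m'}$-dependence. If it is generic, \Cref{lem:generalweightedcontraction} gives a further contraction. If it is of type $(n^k)$, then \Cref{lem:generalweightedcontraction'} allows expansion of size at most $e^{2n^2(\beta-1)t}\le e^{t/n^2\cdot(\ldots)}$. Here lies the main obstacle: for $\beta$ near $1+\frac1{2n^2}$ this expansion rate $2n^2(\beta-1)$ may reach $1$ and exceed the gain $1/(8n^2)$. I would therefore first avoid this by doing the integrations in the opposite order, or by using H\"older to split the exponent. More robustly, I would use the exact weight structure: on a summand $\mathsf S_\lambda\otimes\mathsf S_{\lambda^\intercal}$ with $\lambda^\intercal=(n^k)$ one has $\lambda=(k^n)$, whose reduced partition is trivial. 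Hence such summands are exactly the exceptional ones, which are excluded from $\mathcal V_{\op{gen}}$. In $\mathcal V_{\op{gen}}$ neither factor has type $(n^k)$, so both factors are generic or trivial and only the contraction case occurs. This gives \eqref{eq:contractioninequalityatleasthalf}, with the rate $1/(8n^2)$ improved to $1/(5n^2)$ by applying the contraction in both variables when both factors are nontrivial. When one factor is trivial, I would instead use a larger exponent from \Cref{prop:generallocalcontraction}, since $\gamma_\lambda-\beta\ge\frac1{2n^2}$.

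For \eqref{eq:contractioninequalitysmallrange}, the case $\frac12\le\beta\le 1+\frac1{2n^2}$ follows from the first part. For $0\le\beta<\frac12$, apply H\"older with exponent $\frac12/\beta$ to the $\beta=\frac12$ bound; the case $\beta=0$ is trivial. This is the same argument as for \eqref{eq:generic-small-exponent-bound}, and it holds uniformly in $t$.
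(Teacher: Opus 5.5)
Your argument is correct and is essentially the paper's proof. You reduce via the maximum norm to a single irreducible summand $W_1\otimes W_2$. You then apply \Cref{lem:generalweightedcontraction} successively in $\xi_1$ and $\xi_2$; your slicing along an orthonormal basis of the other factor is exactly the ``direct-sum extension'' the paper invokes. Finally, you obtain the range $0\le\beta<\frac12$ by H\"older from the case $\beta=\frac12$.

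One correction: the trivial-factor case you hold in reserve is also vacuous. The factor $\mathsf S_\lambda$ is trivial only for $\lambda=(q^n)$, and then its partner $\mathsf S_{(n^q)}$ makes the summand exceptional. This is precisely why \Cref{lem:generalweightedcontraction} holds for both $m=1,2$. That is fortunate, because your proposed treatment of that case via $\gamma_\lambda-\beta\ge\frac1{2n^2}$ would, in the worst case ($n\nmid|\lambda|$, $\beta=1+\frac1{2n^2}$), give only the rate $e^{-t/(8n^2)}$, which is weaker than the stated $e^{-t/(5n^2)}$.
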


\begin{proof}
Choose bounded open sets
$\Omega_i\subset\mathbb R^{n-1}$ such that
$\Omega\subset\Omega_1\times\Omega_2$. Equip
$\mathcal V_{\op{gen}}$ with the maximum norm associated with its finite
decomposition into irreducible $H$-modules. Given $0\ne v\in
\mathcal V_{\op{gen}}$, choose an irreducible component $v'$ such that
$\|v'\|=\|v\|$. Since the summands are $H$-invariant,
$$
\|a_tu_{\boldsymbol{\xi}}v\|
\ge\|a_tu_{\boldsymbol{\xi}}v'\|.
$$
It therefore suffices to consider
$v\in W_1\otimes W_2$ in a fixed irreducible summand.

Viewed as an $H_1$-module, $W_1\otimes W_2$ is a finite direct sum of
copies of $W_1$; similarly, as an $H_2$-module, it is a finite direct
sum of copies of $W_2$. Applying the direct-sum extension of
\Cref{lem:generalweightedcontraction} first in the $\xi_1$-variable
and then in the $\xi_2$-variable, using the slightly weaker decay rate
$e^{-t/(10n^2)}$ at each step, gives
$$
\begin{aligned}
&\int_{\Omega_2}\int_{\Omega_1}
 \|(b_tu_{\xi_1},b_tu_{\xi_2})v\|^{-\beta}
 \,d\xi_1\,d\xi_2\\
& \ll
e^{-t/(10n^2)}
\int_{\Omega_2}\|(e,b_tu_{\xi_2})v\|^{-\beta}\,d\xi_2 \ll
e^{-t/(5n^2)}\|v\|^{-\beta}.
\end{aligned}
$$
This proves \eqref{eq:contractioninequalityatleasthalf}.

For $\frac12\le\beta\le1+\frac1{2n^2}$,
\eqref{eq:contractioninequalitysmallrange} follows immediately from
\eqref{eq:contractioninequalityatleasthalf}. If
$0<\beta<\frac12$, H\"older's inequality and the case $\beta=\frac12$
give
$$
\begin{aligned}
\int_\Omega\|a_tu_{\boldsymbol{\xi}}v\|^{-\beta}
\,d\boldsymbol{\xi}
&\le
\Leb(\Omega)^{1-2\beta}
\left(
\int_\Omega\|a_tu_{\boldsymbol{\xi}}v\|^{-1/2}
\,d\boldsymbol{\xi}
\right)^{2\beta} \ll_{\beta,\Omega}\|v\|^{-\beta}.
\end{aligned}
$$
The case $\beta=0$ is immediate.
\end{proof}

\begin{proposition}
[Bounded expansion on $\mathcal M_{k,1}\oplus\mathcal M_{k,2}$]
\label{lem:boundedexpansionfornn-1}
Let $1\le k\le n-1$, and let
$\Omega\subset\mathbb R^{n-1}\times\mathbb R^{n-1}$ be bounded and
open. For every
$
\beta\in
\left[\frac12,1\right)
\cup
\left(1,1+\frac1{2n^2}\right],
$
there exists $c=c(\beta,\Omega)>0$ such that, for every $t\ge1$ and
nonzero
$v\in\mathcal M_{k,1}\oplus\mathcal M_{k,2}$,
$$
\int_\Omega\|a_tu_{\boldsymbol{\xi}} v\|^{-\beta}\,d\boldsymbol{\xi}
\le
\begin{cases}
c e^{-\frac15(1-\beta)t}\|v\|^{-\beta},
&\frac12\le\beta<1\\ 
c e^{2n^2(\beta-1)t}\|v\|^{-\beta},
&1<\beta\le1+\frac1{2n^2}.
\end{cases}
$$ \end{proposition}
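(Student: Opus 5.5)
The plan mirrors the proof of \Cref{lem:localcontractioninequalitygenerali}: reduce to a single factor, where the one-factor bounds of \Cref{lem:generalweightedcontraction'} apply, and treat the other factor as trivial. Equip $\mathcal M_{k,1}\oplus\mathcal M_{k,2}$ with the maximum norm $\|v\|=\max\{\|\pi_{k,1}v\|,\|\pi_{k,2}v\|\}$. Since both summands are $H$-invariant, we have
$$
\|a_tu_{\boldsymbol\xi}v\|\ge\|a_tu_{\boldsymbol\xi}v'\|,
$$
where $v'$ is the component with $\|v'\|=\|v\|$. It therefore suffices to prove the bound for nonzero $v\in\mathcal M_{k,m}$ with $m$ fixed. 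Changing to an equivalent norm only affects the constant $c$.

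Take $m=1$; the case $m=2$ is symmetric under interchanging the two factors. On $\mathcal M_{k,1}$ the factor $H_2$ acts trivially, so
$$
a_tu_{\boldsymbol\xi}v=(b_tu_{\xi_1},b_tu_{\xi_2})v=b_tu_{\xi_1}\cdot v,
$$
where the right-hand side denotes the action of $H_1\simeq\SL_n(\mathbb R)$ on the module $\mathcal M_{k,1}\simeq\mathsf S_{(n^k)}(\mathbb R^n)$. The integrand is thus independent of $\xi_2$. Choose bounded open sets $\Omega_1,\Omega_2\subset\mathbb R^{n-1}$ with $\Omega\subset\Omega_1\times\Omega_2$. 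By Fubini,
$$
\int_\Omega\|a_tu_{\boldsymbol\xi}v\|^{-\beta}\,d\boldsymbol\xi
\le\Leb(\Omega_2)\int_{\Omega_1}\|b_tu_{\xi_1}v\|^{-\beta}\,d\xi_1 .
$$
\Cref{lem:generalweightedcontraction'} applied to $\Omega_1$ now gives the bound $c\,e^{-\frac15(1-\beta)t}\|v\|^{-\beta}$ for $\tfrac12\le\beta<1$, and $c\,e^{2n^2(\beta-1)t}\|v\|^{-\beta}$ for $1<\beta\le1+\frac1{2n^2}$.

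No real obstacle arises. The only point needing care is the first reduction. Because the two components can behave differently, we bound $\|a_tu_{\boldsymbol\xi}v\|$ from below by a single component. This is legitimate because the projections $\pi_{k,m}$ are $H$-equivariant, so the max-norm decomposition commutes with the action of $a_tu_{\boldsymbol\xi}$. The only cost of the reduction is a norm-equivalence constant.
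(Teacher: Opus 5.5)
Your proposal is correct and follows the paper's proof essentially verbatim. Like the paper, you use the max-norm reduction to a single exceptional summand, observe that the other factor of $H$ acts trivially, enlarge $\Omega$ to a product and integrate out the constant variable, and then apply \Cref{lem:generalweightedcontraction'}.
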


\begin{proof}
By the same maximum-norm reduction as in the proof of
\Cref{lem:localcontractioninequalitygenerali}, it suffices to consider $v\in\mathcal M_{k,i}$ for some $i\in\{1,2\}$. The other factor of $H$
acts trivially on $\mathcal M_{k,i}$. After enlarging $\Omega$ to a
product and integrating over the variable on which the integrand is constant, the result follows
directly from \Cref{lem:generalweightedcontraction'}.
\end{proof}

\subsection{Modified local height functions and local contraction}
\label{s:modifiedlocalheight}
At a critical degree $kn$, $1\le k\le n-1$, the ordinary local height $\|\cdot\|^{-\beta}$ has critical exponent
$1$ because of the two exceptional summands
$\mathcal M_{k,1}$ and $\mathcal M_{k,2}$. To obtain contraction exponents larger than $1$, we modify the local height by penalizing both small vectors and proximity to either exceptional summand.

\begin{definition}[Modified local height]
\label{modifiedlocalheight}
Fix $1\le k\le n-1$ and $0<\theta<(2n)^{-6}$. For
$v\notin\mathcal M_{k,1}\cup\mathcal M_{k,2}$, define
$$
\phi_{kn,\theta}(v)
:=
\|v\|^{-1+4\theta}
\|v-\pi_{k,1}(v)\|^{-2\theta}
\|v-\pi_{k,2}(v)\|^{-2\theta}.
$$
For $v\in\mathcal M_{k,1}\cup\mathcal M_{k,2}$, set
$\phi_{kn,\theta}(v)=\infty$. We write $\phi_{kn}=\phi_{kn,\theta}$
when $\theta$ is fixed.
\end{definition} Choose the maximum norm associated with
$\mathcal M_{k,0}\oplus\mathcal M_{k,1}\oplus\mathcal M_{k,2}$, and
write let $\pi_{i, 0}$ denote the projection onto $\mathcal M_{k,0}$. Write $v_m=\pi_{k,m}(v)$ for $m\in \{0,1,2\}$. With the convention $0^{-q}=\infty$, define
$$
F_{00}(v):=\|v_0\|^{-1},
$$
$$
F_{12}(v):=\|v_1\|^{-1+2\theta}\|v_2\|^{-2\theta},
\qquad
F_{21}(v):=\|v_2\|^{-1+2\theta}\|v_1\|^{-2\theta},
$$
and
$$
F_{10}(v):=\|v_1\|^{-1+2\theta}\|v_0\|^{-2\theta},
\qquad
F_{20}(v):=\|v_2\|^{-1+2\theta}\|v_0\|^{-2\theta}.
$$
A direct comparison of the three component norms gives
\begin{equation}
\label{eq:phi-as-minimum}
\phi_{kn}(v)
=
\min\{F_{00}(v),F_{12}(v),F_{21}(v),F_{10}(v),F_{20}(v)\}.
\end{equation}
Equivalently,
$$
\phi_{kn}(v)=
\begin{cases}
\|v_0\|^{-1},&\|v\|=\|v_0\|,\\
\|v_1\|^{-1+2\theta}
\max\{\|v_0\|,\|v_2\|\}^{-2\theta},
&\|v\|=\|v_1\|,\\ 
\|v_2\|^{-1+2\theta}
\max\{\|v_0\|,\|v_1\|\}^{-2\theta},
&\|v\|=\|v_2\|.
\end{cases}
$$

\begin{proposition}[Contraction for $\phi_{kn}$]
\label{prop:localMargulisineq;n-1}
Let $1\le k\le n-1$, $0<\theta<(2n)^{-6}$, and
$1\le\beta\le1+\frac{\theta}{4n^2}$.
There exists $c=c(\beta,\theta)>0$ such that, for every
$t\ge1$ and 
$v\notin\mathcal M_{k,1}\cup\mathcal M_{k,2}$,
$$
\int_{B_{U}(1)}\phi_{kn}(a_tuv)^\beta\,du
\le c e^{-\theta t/(20n^2)}\phi_{kn}(v)^\beta.
$$
\end{proposition}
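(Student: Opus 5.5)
We propose to use the minimum representation \eqref{eq:phi-as-minimum} and bound $\phi_{kn}(a_tuv)^\beta$ by whichever of the five functions $F_{00},F_{12},F_{21},F_{10},F_{20}$ evaluated at $a_tuv$ matches the case into which $v$ falls. Since the decomposition $\mathcal M_{k,0}\oplus\mathcal M_{k,1}\oplus\mathcal M_{k,2}$ is $H$-invariant, $(a_tuv)_m=a_tuv_m$ for $m=0,1,2$. Each $F_{ij}(a_tuv)^\beta$ is then a product of powers of $\|a_tuv_i\|$ and $\|a_tuv_j\|$. We split into three cases according to which component realizes $\|v\|$. In each case, Hölder's inequality reduces the integral to the one-summand estimates of \Cref{lem:localcontractioninequalitygenerali} and \Cref{lem:boundedexpansionfornn-1}.

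\emph{Case $\|v\|=\|v_0\|$.} Here $\phi_{kn}(v)=\|v_0\|^{-1}$, and we bound $\phi_{kn}(a_tuv)^\beta\le F_{00}(a_tuv)^\beta=\|a_tuv_0\|^{-\beta}$. Since $v_0\in\mathcal V_{\op{gen}}$ and $\beta\le 1+\frac1{2n^2}$, \eqref{eq:contractioninequalityatleasthalf} gives the bound $ce^{-t/(5n^2)}\|v_0\|^{-\beta}$, which is much stronger than needed.

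\emph{Case $\|v\|=\|v_1\|$ (the case $\|v\|=\|v_2\|$ is symmetric).} Let $w\in\{v_0,v_2\}$ realize $\max\{\|v_0\|,\|v_2\|\}$, so that $w\ne0$ because $v\notin\mathcal M_{k,1}$. If $w=v_2$, use $F_{12}$; if $w=v_0$, use $F_{10}$. Write $p:=\beta(1-2\theta)$ and $q:=2\theta\beta$. Apply Hölder with exponents $s,s'$, where $s$ is chosen so that $ps<1$, for instance $s=(1-\theta)^{-1}$; this gives
$$
\int\|a_tuv_1\|^{-p}\|a_tuw\|^{-q}\,du\le\Big(\int\|a_tuv_1\|^{-ps}\Big)^{1/s}\Big(\int\|a_tuw\|^{-qs'}\Big)^{1/s'}.
$$
Here $ps=\beta(1-2\theta)/(1-\theta)<1$ precisely because $\beta\le 1+\theta/(4n^2)$. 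Moreover $ps\ge\frac12$, so \Cref{lem:boundedexpansionfornn-1} yields contraction $e^{-\frac15(1-ps)t}$ with $1-ps\gtrsim\theta$. For the second factor we have $qs'\approx 2\theta\beta/\theta\approx 2\beta$, which is too large. This forces a different split: choose $s'$ such that $qs'\le 1+\frac1{2n^2}$, for instance $qs'=1$, i.e.\ $s'=1/(2\theta\beta)$. Then $s=1/(1-2\theta\beta)$ and $ps=\beta(1-2\theta)/(1-2\theta\beta)$. This quantity is $<1$ as long as $\beta<1/(1-2\theta+2\theta\beta)$ roughly, and is about $1-2\theta(\beta-1)\cdots$. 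So $ps$ may exceed $1$ only marginally, namely by $O(\theta(\beta-1))\le O(\theta^2/n^2)$. In that regime \Cref{lem:boundedexpansionfornn-1} gives only expansion $e^{2n^2(ps-1)t}$, which is tiny.

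Meanwhile the factor for $w$ contributes genuine contraction. If $w=v_2$ lies in the other exceptional summand with $qs'<1$, it contracts at rate $\tfrac15(1-qs')$. If $w=v_0$, we use \eqref{eq:contractioninequalityatleasthalf}, which contracts at rate $t/(5n^2)$ raised to the power $1/s'\approx2\theta\beta$. Balancing these, we would tune $qs'$ slightly below $1$ (say $1-\theta$), which gives net exponent
$$
-\tfrac{2\theta\beta}{5}\cdot\theta\;+\;O\!\big(n^2\theta^2/n^2\big)\ \text{ or }\ -\tfrac{2\theta}{5n^2}+2n^2(ps-1).
$$
The main obstacle is exactly this exponent bookkeeping. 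We must choose Hölder exponents so that the exceptional factor $\|a_tuv_1\|^{-ps}$ sits at or just below its critical exponent $1$, while the partner factor, from $\mathcal M_{k,0}$ or the opposite exceptional summand, carries a strict decay of order $\theta/n^2$ that beats any residual expansion. The constraint $\beta\le1+\theta/(4n^2)$ together with $\theta<(2n)^{-6}$ should make the leftover margin positive and yield the stated rate $\theta/(20n^2)$.

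Finally, the norms recombine correctly: $\|v_1\|^{-p}\|w\|^{-q}=\phi_{kn}(v)^\beta$, since the Hölder bounds return $\|v_1\|^{-ps\cdot(1/s)}\|w\|^{-qs'\cdot(1/s')}$. This completes the plan.
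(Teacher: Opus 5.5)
Your reduction to the five functions in \eqref{eq:phi-as-minimum}, the $F_{00}$ case, and the recombination of the norms are fine. The case $\|v\|=\|v_1\|$, however, has a genuine gap, and it is fatal when $w=v_2$. There both factors lie in exceptional summands, and no H\"older split over $B_U(1)$ fed by \Cref{lem:boundedexpansionfornn-1} can give decay. With $P=ps$ and $Q=qs'$ one has $p/P+q/Q=1\le\beta=p+q$, so $P<1$ and $Q<1$ cannot hold at the same time. The only one-factor inputs are decay $e^{-\frac15(1-X)t}$ for $X<1$ and growth $e^{2n^2(X-1)t}$ for $X>1$, with nothing at $X=1$. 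Write $x=q/Q$. If $Q<1$ (which forces $P>1$), the net exponent is
\[
2n^2\,\frac{p}{P}(P-1)-\frac15\,\frac{q}{Q}(1-Q)=2n^2(\beta-1)+\Bigl(2n^2-\frac15\Bigr)(x-q)>0 .
\]
If instead $P<1$, then $Q>1$ and the computation is symmetric. Even with matching sharp rates $c\,|1-X|$ on both sides, the net exponent would be $c(\beta-1)\ge0$.

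The missing idea is that the two factors are independent, which is exactly what H\"older throws away. $H_2$ acts trivially on $\mathcal M_{k,1}$ and $H_1$ acts trivially on $\mathcal M_{k,2}$. Hence $\pi_{k,1}(a_tuv)=b_tu_1v_1$ depends only on $u_1$, and $\pi_{k,2}(a_tuv)$ depends only on $u_2$. Fubini then factors the $F_{12}$ integral exactly as
\[
\int_{B_{U_1}(1)}\|b_tu_1v_1\|^{-(1-2\theta)\beta}\,du_1\cdot\int_{B_{U_2}(1)}\|b_tu_2v_2\|^{-2\theta\beta}\,du_2 .
\]
The first exponent is below $1$ with margin $1-(1-2\theta)\beta>\theta$, so \Cref{lem:generalweightedcontraction'} gives $e^{-\theta t/5}$. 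The second exponent is below $\frac12$, so that factor is bounded. This is the paper's \Cref{lem:pi1pi2case;n-1}.

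In the subcase $w=v_0$ your bookkeeping is also wrong, though here H\"older can be repaired. Any choice $qs'\le1$ forces $ps\ge p/(1-q)\ge1$. At $qs'=1$ one has $ps-1=(\beta-1)/(1-2\theta\beta)$, not $O(\theta(\beta-1))$. At $\beta=1$ this lands exactly on the critical exponent, where \Cref{lem:boundedexpansionfornn-1} says nothing. For $\beta>1$ the $v_1$-factor grows at rate $2n^2(\beta-1)$, which can reach $\theta/2$, while the $v_0$-factor decays only at rate $2\theta\beta/(5n^2)$. Lowering $qs'$ below $1$ only makes this worse.

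The fix is to use that $\mathcal M_{k,0}$ is generic and raise its exponent to $Q=1+\frac1{2n^2}$. Then $1-p-q/Q=1-\beta+\frac{2\theta\beta}{2n^2+1}\ge\frac{\theta}{2n^2}$, so $\frac12\le P<1$, and the $v_1$-factor decays at rate $\frac15(1-p-q/Q)\ge\frac{\theta}{10n^2}$. This is essentially the paper's \Cref{lem:pi0pi1case;n-1}. The paper performs the H\"older step in $u_1$ only, then integrates the remaining $u_2$-dependence of the $v_0$-factor using the small exponent $2\theta\beta$.
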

 For the remainder of this subsection, fix $k$ and $\theta$ as in \Cref{prop:localMargulisineq;n-1}. We treat the five cases corresponding to
\eqref{eq:phi-as-minimum}.
\begin{lemma}
\label{lem:pi0pi0case}
Let $1\le\beta\le1+\frac1{2n^2}$. For every nonzero
$v_0\in\mathcal M_{k,0}$ and $t\ge1$,
$$
\int_{B_U(1)}\|a_tuv_0\|^{-\beta}\,du
\ll e^{-t/(5n^2)}\|v_0\|^{-\beta}.
$$
\end{lemma}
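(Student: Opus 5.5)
This is a direct specialization of \Cref{lem:localcontractioninequalitygenerali} to the summand $\mathcal M_{k,0}\subset\mathcal V_{\op{gen}}$. By definition, $\mathcal V_{\op{gen}}$ contains $\bigoplus_{k=1}^{n-1}\mathcal M_{k,0}$. So any nonzero $v_0\in\mathcal M_{k,0}$ is a nonzero vector of $\mathcal V_{\op{gen}}$. The only care needed is the choice of norm and of the domain.

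First I will check the norms. On $\wedge^{kn}\M_n(\mathbb R)$ we now use the maximum norm associated with $\mathcal M_{k,0}\oplus\mathcal M_{k,1}\oplus\mathcal M_{k,2}$. On $\mathcal M_{k,0}$ this restricts to some fixed norm. Since $\mathcal M_{k,0}$ is finite dimensional, this norm is comparable, with constants depending only on $n$, to the maximum norm over its irreducible $H$-summands, which is the norm used in the proof of \Cref{lem:localcontractioninequalitygenerali}. Changing norms therefore only affects the implied constant in the inequality $\|a_tuv_0\|^{-\beta}\ll\ldots\|v_0\|^{-\beta}$, uniformly in $t$ and $v_0$. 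This uses that $\mathcal M_{k,0}$ is $H$-invariant, so that $a_tuv_0$ stays in $\mathcal M_{k,0}$.

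Next I will check the domain. In the coordinates $u=u_{\boldsymbol\xi}$, the ball $B_U(1)$ is the product of closed unit balls in $\mathbb R^{n-1}\times\mathbb R^{n-1}$. It is contained in the bounded open set $\Omega:=\{\|\boldsymbol\xi\|<2\}$, and $du$ is Lebesgue measure $d\boldsymbol\xi$. The integrand is nonnegative, so enlarging the domain to $\Omega$ only increases the integral.

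Finally, since $1\le\beta\le 1+\frac1{2n^2}$ lies in the range $[\frac12,1+\frac1{2n^2}]$, \eqref{eq:contractioninequalityatleasthalf} applied with this $\Omega$ gives
$$
\int_{B_U(1)}\|a_tuv_0\|^{-\beta}\,du\le\int_\Omega\|a_tu_{\boldsymbol\xi}v_0\|^{-\beta}\,d\boldsymbol\xi\ll e^{-t/(5n^2)}\|v_0\|^{-\beta}.
$$
There is no real obstacle. The only point to confirm is that every irreducible summand of $\mathcal M_{k,0}$ is nonexceptional, so that \Cref{lem:generalweightedcontraction} applies with $\gamma_\lambda\ge 1+n^{-2}$. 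This is exactly the argument already given in its proof: a reduced partition that is trivial or equal to $(n^j)$ would force the summand to be $\mathcal M_{k,1}$ or $\mathcal M_{k,2}$.
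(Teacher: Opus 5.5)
Your proposal is correct and follows the paper's own proof: the paper likewise notes that $\mathcal M_{k,0}\subset\mathcal V_{\op{gen}}$ is $H$-invariant and applies \Cref{lem:localcontractioninequalitygenerali} directly to $v_0$. Your additional remarks on norm equivalence, on enlarging the closed ball $B_U(1)$ to a bounded open set, and on the nonexceptionality of the irreducible summands of $\mathcal M_{k,0}$ only make explicit details that the paper leaves implicit.
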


\begin{proof}
The projections are $H$-equivariant, and
$\mathcal M_{k,0}\subset\mathcal V_{\op{gen}}$. Apply
\Cref{lem:localcontractioninequalitygenerali} to $v_0$.
\end{proof}

\begin{lemma}
\label{lem:pi1pi2case;n-1}
Let $1\le\beta\le1+\frac{\theta}{4n^2}$.
If $v_1,v_2\ne0$, then
$$
\int_{B_{U}(1)}
\|\pi_{k,1}(a_tuv)\|^{-(1-2\theta)\beta}
\|\pi_{k,2}(a_tuv)\|^{-2\theta\beta}\,du
\ll e^{-\frac{\theta t}{20n^2}}\|v_1\|^{-(1-2\theta)\beta}\|v_2\|^{-2\theta\beta}.
$$
The same estimate holds with $1$ and $2$ interchanged.
\end{lemma}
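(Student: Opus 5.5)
The plan is to use $H$-equivariance of the projections to make the integral split into a product of two one-factor integrals, one over each horospherical variable. The first factor, with exponent just below the critical value $1$, then contracts by \Cref{lem:generalweightedcontraction'}. The second factor, with a small exponent, is bounded by H\"older's inequality.

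\emph{Step 1: separation of variables.} Since $\pi_{k,1}$ and $\pi_{k,2}$ are $H$-equivariant, we have $\pi_{k,m}(a_tuv)=a_tuv_m$ for $m=1,2$. Write $u=u_1(\xi_1)u_2(\xi_2)$ and $a_t=(b_t,e)(e,b_t)$. The factor $H_2$ acts trivially on $\mathcal M_{k,1}$ and $H_1$ acts trivially on $\mathcal M_{k,2}$. Hence $a_tuv_1=(b_tu_{\xi_1},e)v_1$ depends only on $\xi_1$, and $a_tuv_2=(e,b_tu_{\xi_2})v_2$ depends only on $\xi_2$. Since $B_U(1)=B_{U_1}(1)B_{U_2}(1)$ for the maximum norm, Fubini gives that the integral equals
\begin{equation*}
\int_{\|\xi_1\|\le1}\|(b_tu_{\xi_1},e)v_1\|^{-\beta_1}\,d\xi_1
\cdot
\int_{\|\xi_2\|\le1}\|(e,b_tu_{\xi_2})v_2\|^{-\beta_2}\,d\xi_2,
\end{equation*}
where $\beta_1:=(1-2\theta)\beta$ and $\beta_2:=2\theta\beta$. (To apply the earlier lemmas, which are stated for open sets, we may replace the closed unit ball by a slightly larger open ball.)

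\emph{Step 2: the first factor contracts.} Since $\theta<(2n)^{-6}$ and $1\le\beta\le1+\theta/(4n^2)$, we get $\beta_1\ge\frac12$. Also
\begin{equation*}
1-\beta_1=1-\beta+2\theta\beta\ge 2\theta-\frac{\theta}{4n^2}\ge\theta .
\end{equation*}
So $\beta_1\in[\frac12,1)$, and the first case of \Cref{lem:generalweightedcontraction'} for the $H_1$-module $\mathcal M_{k,1}$ gives
\begin{equation*}
\int_{\|\xi_1\|\le1}\|(b_tu_{\xi_1},e)v_1\|^{-\beta_1}\,d\xi_1\ll e^{-\frac15(1-\beta_1)t}\|v_1\|^{-\beta_1}\le e^{-\theta t/5}\|v_1\|^{-\beta_1}.
\end{equation*}

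\emph{Step 3: the second factor stays bounded.} Here $0<\beta_2<\frac12$. By H\"older's inequality, the second factor is at most
\begin{equation*}
\Leb(B)^{1-2\beta_2}\left(\int_{B}\|(e,b_tu_{\xi_2})v_2\|^{-1/2}\,d\xi_2\right)^{2\beta_2},
\end{equation*}
as in the proof of \eqref{eq:contractioninequalitysmallrange}. By the case $\beta=\frac12$ of \Cref{lem:generalweightedcontraction'}, applied to the $H_2$-module $\mathcal M_{k,2}$, this is $\ll\|v_2\|^{-\beta_2}$, uniformly in $t\ge1$.

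\emph{Conclusion.} Multiplying the two bounds gives decay $e^{-\theta t/5}$, which is stronger than the required $e^{-\theta t/(20n^2)}$. The interchanged statement follows by swapping the roles of $H_1$ and $H_2$.

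The main point to get right is Step 1: one must check that each projected vector depends on only one horospherical variable, so that the small exponent $2\theta\beta$ never meets the supercritical regime $\beta>1$ of \Cref{lem:generalweightedcontraction'}. Everything after that is arithmetic with the exponents.
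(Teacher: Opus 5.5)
Your proposal is correct and follows the paper's proof essentially step for step. Equivariance makes the $\pi_{k,1}$-factor depend only on $u_1$ and the $\pi_{k,2}$-factor only on $u_2$. The first factor is then contracted by \Cref{lem:generalweightedcontraction'} at exponent $(1-2\theta)\beta\in[\frac12,1)$, using $1-(1-2\theta)\beta>\theta$; the second is bounded via H\"older from the exponent-$\frac12$ case; and Fubini combines the two bounds.
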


\begin{proof} Let
$q:=(1-2\theta)\beta$ and $r:=2\theta\beta$.
We have $q\ge\frac12$, $r<\frac12$, and
$$
1-q
=1-(1-2\theta)\beta
\ge 2\theta-\frac{\theta}{4n^2}
>\theta.
$$
The first factor depends only on $u_1$, and the second only on $u_2$.
\Cref{lem:generalweightedcontraction'} therefore gives
$$
\int_{B_{U_1}(1)}\|b_tu_1v_1\|^{-q}\,du_1
\ll e^{-(1-q)t /5}\|v_1\|^{-q}.
$$
For the exponent $r<1/2$, H\"older's inequality and the estimate at
exponent $1/2$ give
$$
\int_{B_{U_2}(1)}\|b_tu_2v_2\|^{-r}\,du_2
\ll\|v_2\|^{-r}.
$$
Fubini's theorem completes the proof.
\end{proof}

\begin{lemma}
\label{lem:pi0pi1case;n-1}
Let $m\in\{1,2\}$ and
$1\le\beta\le1+\frac{\theta}{4n^2}$.
If $v_0,v_m\ne0$, then
$$
\int_{B_{U}(1)}
\|\pi_{k,0}(a_tuv)\|^{-2\theta\beta}
\|\pi_{k,m}(a_tuv)\|^{-(1-2\theta)\beta}\,du
\ll e^{-\frac{\theta t}{20n^2}}\|v_0\|^{-2\theta\beta}\|v_m\|^{-(1-2\theta)\beta}.
$$
\end{lemma}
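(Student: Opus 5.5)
We mimic the proof of \Cref{lem:pi1pi2case;n-1}, with the roles of the two factors of $H$ replaced by the two summands $\mathcal M_{k,0}$ and $\mathcal M_{k,m}$. Set
$$
q:=(1-2\theta)\beta,\qquad r:=2\theta\beta,
$$
so that $q\ge\frac12$, $r<\frac12$, $q+r=\beta$, and $1-q>\theta$ as computed there. Since the projections are $H$-equivariant, $\pi_{k,0}(a_tuv)=a_tuv_0$ and $\pi_{k,m}(a_tuv)=a_tuv_m$. The difference from the previous lemma is that $v_0$ is not fixed by either factor, so the integrand no longer splits into a function of $u_1$ times a function of $u_2$. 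Instead we decouple the two factors by H\"older's inequality with conjugate exponents $p,p'$, $1/p+1/p'=1$:
$$
\int_{B_U(1)}\|a_tuv_0\|^{-r}\|a_tuv_m\|^{-q}\,du
\le
\Bigl(\int_{B_U(1)}\|a_tuv_0\|^{-rp}\,du\Bigr)^{1/p}
\Bigl(\int_{B_U(1)}\|a_tuv_m\|^{-qp'}\,du\Bigr)^{1/p'} .
$$

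The exponents are chosen so that $qp'<1$ remains in the contracting range for the exceptional summand and $rp\le1+\frac1{2n^2}$ remains in the range of \eqref{eq:contractioninequalitysmallrange}. Concretely, take $p'$ with
$$
qp'=1-\tfrac{\theta}{2},
$$
which is possible since $q<1-\theta$. Then
$$
\frac1p=1-\frac{q}{1-\theta/2},
$$
which is of order $\theta$: it is at most $\frac{2\theta-\theta/(4n^2)+\theta/2}{1-\theta/2}$, and at least roughly $\theta$. Hence $rp\le 2\theta\beta\cdot\frac{1}{\theta}\cdot(1+O(\theta))$, which is about $2$. This is too large, so the crude choice fails. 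The fix is to pick $qp'$ closer to $1$ but not too close. We require $rp\le1+\frac{1}{2n^2}$, equivalently $\frac1p\ge\frac{r}{1+1/(2n^2)}$, and this constrains $qp'=\frac{q}{1-1/p}$ to be at most roughly $\frac{q}{1-r}$. Since $q+r=\beta$ may exceed $1$, the quantity $\frac{q}{1-r}$ can exceed $1$, and \textbf{this is the main obstacle}: a naive H\"older split loses exactly the margin the modified height was designed to create.

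To overcome it, I would use the weaker small-exponent bound on $v_0$ only after first gaining from the generic summand. Write $u=u_1u_2$ and integrate in stages, using that $\mathcal M_{k,m}$ is $H_{3-m}$-invariant. Say $m=1$, so $\|a_tuv_1\|=\|b_tu_1v_1\|$ depends only on $u_1$. For fixed $u_1$, integrate first over $u_2$. Applying \eqref{eq:generic-small-exponent-bound} to the $H_2$-module $\mathcal M_{k,0}$, which is a direct sum of generic types as an $H_2$-module (possibly with trivial-type pieces; these must be checked, and if present one passes to the other ordering of integration), gives
$$
\int\|a_tu_1u_2v_0\|^{-r}\,du_2\ll\|(b_tu_1,e)v_0\|^{-r}.
$$
This reduces the problem to a one-factor integral over $u_1$ of $\|(b_tu_1,e)v_0\|^{-r}\|b_tu_1v_1\|^{-q}$. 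Here H\"older with $p'$ satisfying $qp'=1-\theta/2$ works: then $rp\le 2\theta\beta/\bigl(\theta/2-O(\theta/n^2)\bigr)$, which is about $4$ and still too big. So instead I would bound $\|(b_tu_1,e)v_0\|\gg e^{-n(n-1)t}\|v_0\|$ on the exceptional-dominated part only when it is cheap. The honest alternative is a dyadic decomposition in the size of $\|(b_tu_1,e)v_0\|/\|v_0\|$ combined with the sublevel estimate \Cref{prop:sublevelestimate} for $v_0$'s $H_1$-components and the contraction $e^{-(1-q')t/5}$ of \Cref{lem:generalweightedcontraction'} for $v_1$ at an exponent $q'<1$.

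The plan I commit to therefore has three steps. First, do the Fubini reduction to the factor acting nontrivially on $v_m$. Second, apply H\"older with a small auxiliary exponent $s=rp$ in $(0,1+\frac1{2n^2}]$ on the generic $v_0$ side, using \Cref{lem:generalweightedcontraction} (its direct-sum version) to gain $e^{-t/(8n^2)}$ there. Third, absorb the slight excess $qp'-1=O(\theta/n^2)$ on the exceptional side via the bounded-expansion bound $e^{2n^2(qp'-1)t}$ of \Cref{lem:generalweightedcontraction'}. The constraint $\beta\le1+\frac{\theta}{4n^2}$ is exactly what makes the expansion rate $O(\theta)$ smaller than the generic gain. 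Balancing the two rates should leave net decay $e^{-\theta t/(20n^2)}$.
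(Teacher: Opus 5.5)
There is a genuine gap, and it comes from an arithmetic slip that made you discard the argument that actually works. In your first paragraph, the constraint $rp\le s:=1+\frac1{2n^2}$ means $\frac1p\ge\frac rs$. This forces $qp'=\frac{q}{1-1/p}$ to be \emph{at least} $\frac{q}{1-r/s}$, not ``at most roughly $\frac q{1-r}$''. Dropping the factor $s$ is exactly what loses the margin. Taking $\frac1p=\frac rs$ gives
\[
1-q-\frac rs=1-\beta+2\theta\beta\Bigl(1-\frac1s\Bigr)=1-\beta+\frac{2\theta\beta}{2n^2+1}\ge\frac{\theta}{2n^2},
\]
because $\beta-1\le\frac{\theta}{4n^2}$. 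Hence $\frac12\le q\le qp'<1$, and $(1-qp')(1-\frac rs)=1-q-\frac rs\ge\frac{\theta}{2n^2}$. Raising the small exponent $r$ on the generic side up to $s>1$ frees a margin $r(1-\frac1s)\asymp\theta/n^2$, which beats the excess $\beta-1$; this is precisely where the hypothesis on $\beta$ is used.

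With this choice, H\"older closes immediately. Use contraction at exponent $s$ for $v_0$, and the contracting case of \Cref{lem:generalweightedcontraction'} at exponent $qp'\in[\frac12,1)$ for $v_m$. Together they give a bound $\ll e^{-\frac15(1-q-r/s)t}\|v_0\|^{-r}\|v_m\|^{-q}\le e^{-\theta t/(10n^2)}\|v_0\|^{-r}\|v_m\|^{-q}$.

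The paper performs this H\"older step in $u_m$ for fixed $u_{3-m}$. It uses \Cref{lem:generalweightedcontraction} for the $H_m$-types of $\mathcal M_{k,0}$; these are all nontrivial and generic, since a trivial or $(n^j)$ reduced partition would give $\mathcal M_{k,2}$ or $\mathcal M_{k,1}$, so the trivial pieces you worried about do not occur. It then integrates in $u_{3-m}$ via \eqref{eq:generic-small-exponent-bound}, since $r<\frac12$. Your Fubini order would also work, as would a single H\"older over $B_U(1)$ using \Cref{lem:localcontractioninequalitygenerali} and \Cref{lem:boundedexpansionfornn-1}.

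The plan you actually commit to fails at step~3. With the correct exponents there is no excess $qp'-1>0$ to absorb. And if there were one of order $\theta/n^2$, the balancing you describe would not close. The generic gain $e^{-t/(8n^2)}$ enters raised to the H\"older weight $\frac1p\approx2\theta$, so it contributes a rate of order only $\theta/n^2$. The bounded-expansion factor of \Cref{lem:generalweightedcontraction'}, raised to $\frac1{p'}$, contributes the rate $2n^2(qp'-1)\frac1{p'}=2n^2(q-1+\frac1p)$, which is of order $\theta$ and dominates. Writing $x=\frac1p$, the net rate $\frac{x}{8n^2}-2n^2(q-1+x)$ is positive only when $q+x(1-\frac1{16n^4})<1$, that is, essentially only in the contracting regime $qp'<1$. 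So the expansion estimate cannot rescue the argument. What is needed is the choice $rp=s$, which keeps the exceptional exponent strictly below $1$.
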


\begin{proof} Let
$q_0:=2\theta\beta$ and $q_1:=(1-2\theta)\beta$.
We treat $m=1$; the other case is symmetric. Set
$$
s:=1+\frac1{2n^2},
\qquad
a:=\frac{q_0}{s},
\qquad
r:=\frac{q_1}{1-a}.
$$
Then $0<a<1$, and
$$
\begin{aligned}
1-a-q_1
&=1-\beta+2\theta\beta\left(1-\frac1s\right)\\
&=1-\beta+\frac{2\theta\beta}{2n^2+1}\ge -\frac{\theta}{4n^2}+\frac{2\theta}{2n^2+1}\ge \frac{\theta}{2n^2}.
\end{aligned}
$$
Consequently $\frac12<r<1$, and
\begin{equation}
\label{eq:mixed-holder-margin}
(1-a)(1-r)=1-a-q_1
\ge \frac{\theta}{2n^2}.
\end{equation}

Write $g_j=b_tu_j$. For fixed $u_2$, H\"older's inequality
in the $u_1$-variable gives
$$
\begin{aligned}
&\int_{B_{U_1}(1)}
 \|(g_1,g_2)v_0\|^{-q_0}\|g_1v_1\|^{-q_1}\,du_1\\
&\quad\le
\left(\int_{B_{U_1}(1)}\|(g_1,g_2)v_0\|^{-s}\,du_1\right)^a
\left(\int_{B_{U_1}(1)}\|g_1v_1\|^{-r}\,du_1\right)^{1-a}.
\end{aligned}
$$
Every irreducible $H_1$-type occurring in $\mathcal M_{k,0}$ is
nontrivial and generic. Indeed, a trivial reduced partition would give the
summand $\mathcal M_{k,2}$. If the reduced partition were $(n^j)$
with $j\ge1$, the $n\times n$ box condition would force the original
partition itself to be $(n^j)$. Since its size is $kn$, one would have
$j=k$, giving the summand $\mathcal M_{k,1}$. Both possibilities are
excluded from $\mathcal M_{k,0}$. Hence the direct-sum form of
\Cref{lem:generalweightedcontraction} gives
$$
\int_{B_{U_1}(1)}\|(g_1,g_2)v_0\|^{-s}\,du_1
\ll e^{-t/(10n^2)}\|(e,g_2)v_0\|^{-s}.
$$
\Cref{prop:generallocalcontraction}, applied along $b_t$, gives
$$
\int_{B_{U_1}(1)}\|g_1v_1\|^{-r}\,du_1
\ll e^{-(1-r)t/5}\|v_1\|^{-r}.
$$
Using \eqref{eq:mixed-holder-margin} and then integrating in $u_2$, we
obtain
$$
\begin{aligned}
&\int_{B_{U}(1)}
 \|a_tuv_0\|^{-q_0}\|a_tuv_1\|^{-q_1}\,du \ll e^{-\frac{\theta t}{20n^2} }\|v_1\|^{-q_1}
\int_{B_{U_2}(1)}\|(e,g_2)v_0\|^{-q_0}\,du_2.
\end{aligned}
$$
Finally, $q_0<1/2$, so
\eqref{eq:generic-small-exponent-bound}, applied to the $H_2$-action on
$\mathcal M_{k,0}$, bounds the last integral by
$\|v_0\|^{-q_0}$.
\end{proof}

\begin{proof}[Proof of \Cref{prop:localMargulisineq;n-1}]
For each $v\notin\mathcal M_{k,1}\cup\mathcal M_{k,2}$, choose one of
the five functions in \eqref{eq:phi-as-minimum}, say $F$, such that
$$
\phi_{kn}(v)=F(v).
$$
The value $F(v)$ is finite, so every component appearing with a negative
power in $F$ is nonzero. Since $\phi_{kn}(w)\le F(w)$ for every $w$,
$$
\int_{B_{U}(1)}\phi_{kn}(a_tuv)^\beta\,du
\le
\int_{B_{U}(1)}F(a_tuv)^\beta\,du.
$$
If $F=F_{00}$, apply \Cref{lem:pi0pi0case}; if
$F=F_{12}$ or $F_{21}$, apply
\Cref{lem:pi1pi2case;n-1}; and if $F=F_{10}$ or $F_{20}$,
apply \Cref{lem:pi0pi1case;n-1}. In every case, the resulting right-hand
side is at most
$
C e^{-\theta t/(20n^2)}F(v)^\beta
$.
This proves the proposition.
\end{proof}

\section{The global Margulis inequality for the modified height}
\label{s:globalheight}
The goal of this section is to prove a global Margulis inequality for the modified height outside the exceptional set (\Cref{thm:globalcontraction}). This estimate globalizes the
local estimates of \Cref{s:weightedlocalestimates} and will later be
combined with avoidance and iteration to obtain the uniform
$L^{1+\theta}$-bound for some $\theta>0$.
The inequality has a logarithmic loss and an additive exponential error.
Its proof combines the intersection--sum inequality of
Eskin--Margulis--Mozes \cite[Lemma~5.6]{eskin-margulis-mozes:1998} with
the Mother Inequality of Benoist--Quint
\cite[Proposition~3.1]{benoist-quint:2012} to control both covolumes and
projections
to the exceptional summands.
\medskip 

Set $N=n^2$, and fix
$$
0<\theta<(2n)^{-6}.
$$

For $1\le i\le N-1$, define
\begin{equation}\label{theta}
\theta_i:=10^{i-N}\theta
\quad \text{ and }\quad
\beta_i:=\left(1-\frac{\theta_i}{8n^2}\right)^{-1}.
\end{equation}
We also set
\begin{equation}\label{eta}
\tau_i:=
\left(1+\kappa i(N-i)\theta_1\right)^{-1},
\quad
\kappa:=\frac{1}{16n^2(N-1)},\quad\vartheta:=\frac{\theta_1}{100n}.
\end{equation}

The numerical choices in this paragraph are bookkeeping devices. Their only
purpose is to create a hierarchy of small exponents with enough room for the
local contraction estimates, the intersection--sum inequalities, and the
iteration argument. The parameters $\beta_i>1$ will be used for the
modified local heights; the parameters $\tau_i<1$ will be used for the
ordinary height $\bar\alpha$, where the concavity in the index $i$ is
needed when two short rational subspaces are replaced by their intersection
and sum. The rapidly increasing sequence
$\theta_1\ll\theta_2\ll\cdots\ll\theta_{N-1}$ is chosen only to make these
concavity inequalities strict.

The choices above have three elementary consequences. First,
\begin{equation}\label{eq:beta-local-range}
1<\beta_i\le 1+\frac{1}{4n^2}
\qquad (1\le i\le N-1),
\end{equation}
and, if $i=kn$, then
\begin{equation}\label{eq:beta-critical-range}
1\le\beta_i\le 1+\frac{\theta_i}{4n^2}.
\end{equation}
Indeed, $(1-x)^{-1}\le1+2x$ for the values of
$x=\theta_i/(8n^2)$ under consideration. Thus $\beta_{kn}$ lies in
the range of \Cref{prop:localMargulisineq;n-1}.

Second,
\begin{equation}\label{eq:tau-range}
\frac12<\tau_i<1,
\qquad
1-\tau_i\ge \frac{\vartheta}{10}
\quad\text{whenever }n\mid i.
\end{equation}
For the second assertion, write
$x_i=\kappa i(N-i)\theta_1$. If $n\mid i$, then
$$
x_i\ge \kappa n(N-n)\theta_1
=\frac{\theta_1}{16(n+1)}.
$$
Since $x_i<1$, it follows that
$1-\tau_i=x_i/(1+x_i)\ge x_i/2\ge\vartheta/10$.

Third,
\begin{equation}\label{eq:beta-tau-comparison}
\beta_i^{-1}<\tau_i
\qquad (1\le i\le N-1).
\end{equation}
Indeed, $\beta_i^{-1}=1-\theta_i/(8n^2)$, whereas
$$
\kappa i(N-i)\theta_1
\le \frac{\theta_i}{16n^2};
$$
the latter inequality is immediate for $i=1$, and for $i\ge2$ it
follows from $i(N-i)\le i(N-1)\le 2^{i-1}(N-1)\le10^{i-1}(N-1)$.
Therefore
$$
\tau_i=\frac1{1+x_i}\ge1-x_i
>1-\frac{\theta_i}{8n^2}=\beta_i^{-1}.
$$

For later endpoint conventions, put
$$
\alpha_0^+(\Delta)=\alpha_N^+(\Delta):=1,
\qquad
\alpha_i^+(\Delta):=\max\{1,\alpha_i(\Delta)\}
\quad (1\le i\le N-1).
$$

\subsection{The global and modified heights}
Define
\begin{equation}\label{eq:bar-alpha-definition}
\bar\alpha_\theta(\Delta)
:=
\max_{1\le i\le N-1}\alpha_i^+(\Delta)^{\tau_i}.
\end{equation}
We also introduce a second auxiliary height:
\begin{equation}\label{eq:bar-alpha-prime-definition}
\bar\alpha'_\theta(\Delta)
:=
\sum_{i=1}^{N-1}\alpha_i^+(\Delta)^{1+\frac{\theta_1}{100n^2}}.
\end{equation}
We usually write
$\bar\alpha,\bar\alpha'$ when $\theta$ is fixed. Since
$\tau_i\ge1-C_n\theta_1$, one has
\begin{equation}\label{eq:bar-alpha-comparison}
\max\{1,\alpha(\Delta)\}^{1-C_n\theta_1}
\le \bar\alpha(\Delta)
\le \max\{1,\alpha(\Delta)\},
\end{equation}
and
\begin{equation}\label{eq:bar-alpha'-comparison}
\max\{1,\alpha(\Delta)\}^{1+C_n^{-1}\theta}
\le \bar\alpha'(\Delta)
\ll_n \max\{1,\alpha(\Delta)\}^{2}.
\end{equation}
Here and below, $C_n$ denotes a positive constant depending only on $n$,
whose value may change from line to line. Unsubscripted implied constants may
also depend on the fixed parameter $\theta$.
 When we classify a component as maximal, we use the
maximum of the three component norms associated with
$\mathcal M_{k,0}\oplus\mathcal M_{k,1}\oplus\mathcal M_{k,2}$;
this auxiliary maximum is uniformly equivalent to the Euclidean norm.

\begin{definition}[Auxiliary weighted modified height]
\label{def:auxiliary-weighted-modified-height}
For $1\le i\le N-1$ and $v\in \wedge^i \M_n(\br)$, define
$$
\phi_i(v):=
\begin{cases}
\phi_{kn,\theta_{kn}}(v)&\text{ for } i=kn,\ 1\le k\le n-1,\\ 
\|v\|^{-1} &\text{ for } n\nmid i.
\end{cases}
$$
At the endpoints we use the convention
$\phi_0=\phi_N=1$.

For $1\le i\le N-1$, $h\in H$ and $\Delta\in X$, set
\begin{equation}\label{eq:modified-local-global-height}
\widetilde\alpha_{i,\eta,M}(h;\Delta)
:=
\max\left\{
1,\
\sup_{\substack{
V\text{ is $\Delta$-rational},\ \dim V=i\\
V\notin\widetilde{\mathscr Q}_{i,\eta,M}(\Delta)\\
0<\|h\mathsf w_{\Delta,V}\|\le1}}
\phi_i(h\mathsf w_{\Delta,V})^{\beta_i}
\right\}.
\end{equation}
Finally, define
\begin{equation}\label{eq:modified-global-height}
\widetilde\alpha_{\eta,M,\theta}(h;\Delta)
:=
\bar\alpha_\theta(h\Delta)
+
\max_{1\le i\le N-1}
\widetilde\alpha_{i,\eta,M}(h;\Delta).
\end{equation}
We suppress $\theta$ from the notation when it is fixed. At degrees
$0$ and $N$, we set
$$
\widehat\alpha_{0,\eta,M}
=\widehat\alpha_{N,\eta,M}
=
\widetilde\alpha_{0,\eta,M}
=\widetilde\alpha_{N,\eta,M}
:=1.
$$
\end{definition}

For $i=kn$, the supremum in
\eqref{eq:modified-local-global-height} never evaluates
$\phi_i=\phi_{kn,\theta_{kn}}$ on an exact exceptional vector: such a
vector belongs to $\mathscr Q_{kn,\eta,M}$, so its corresponding
subspace belongs to
$\widetilde{\mathscr Q}_{kn,\eta,M}(\Delta)$ and is excluded from the
supremum. Hence every value of $\phi_i$ occurring there is finite.

Let
$$
\beta_*:=\min_{1\le i\le N-1}\beta_i=\beta_1,
\qquad
\tau_*:=\min_{1\le i\le N-1}\tau_i>\frac12.
$$
At a critical degree $i=km$,
the fixed projections satisfy $\|v-\pi_{k,m}(v)\|\ll_n\|v\|$ for $m=1,2$. Together with the definition at noncritical degrees, this gives
$\phi_i(v)\gg_n\|v\|^{-1}$ for every $i$. Thus, for each vector contributing to $\widehat{\alpha}_{i,\eta,M}(h;\Delta)$, the term
$\|hv\|^{-\beta_i}$ is $O(1)$ when $\|hv\|>1$ and is dominated by the
corresponding term in $\widetilde\alpha_{i,\eta,M}(h;\Delta)$ when
$\|hv\|\le1$. Therefore
\begin{equation}\label{eq:sharp-majorized-by-tilde}
\widehat{\alpha}_{i,\eta,M}(h;\Delta)^{\beta_i}
\ll_n \widetilde\alpha_{i,\eta,M}(h;\Delta),
\qquad
\widehat{\alpha}_{\eta,M}(h;\Delta)^{\beta_*}
\ll_n \widetilde\alpha_{\eta,M}(h;\Delta).
\end{equation}

\begin{lemma}\label{lem:hatalpha-majorized-by-tilde}
If $0<\theta'\le \beta_*-1$, then
$$
\widehat\alpha_{\eta,M}(h;\Delta)^{1+\theta'}
\ll_n
\widetilde\alpha_{\eta,M,\theta}(h;\Delta)
\qquad ((h,\Delta)\in H\times X).
$$
\end{lemma}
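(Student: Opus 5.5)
The plan is to derive the lemma from \eqref{eq:sharp-majorized-by-tilde} by a degree-by-degree comparison. Fix $(h,\Delta)$ and choose an index $i$ with $\widehat\alpha_{\eta,M}(h;\Delta)=\widehat\alpha_{i,\eta,M}(h;\Delta)$. If this maximum equals $1$, the claim is immediate. Indeed, $\widetilde\alpha_{\eta,M,\theta}(h;\Delta)\ge\bar\alpha_\theta(h\Delta)\ge1$, since every $\alpha_i^+\ge1$. So we may assume $\widehat\alpha_{i,\eta,M}(h;\Delta)>1$.

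Next we use the exponent comparison. The hypothesis $\theta'\le\beta_*-1$ gives $1+\theta'\le\beta_*\le\beta_i$ for every $i$. Since $\widehat\alpha_{i,\eta,M}(h;\Delta)\ge1$, monotonicity of $x\mapsto x^s$ for $x\ge1$ yields
$$
\widehat\alpha_{\eta,M}(h;\Delta)^{1+\theta'}
=\widehat\alpha_{i,\eta,M}(h;\Delta)^{1+\theta'}
\le\widehat\alpha_{i,\eta,M}(h;\Delta)^{\beta_i}.
$$

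Finally, we apply the first inequality in \eqref{eq:sharp-majorized-by-tilde}. This bounds the right-hand side by a constant times $\widetilde\alpha_{i,\eta,M}(h;\Delta)$, which is at most $\max_j\widetilde\alpha_{j,\eta,M}(h;\Delta)\le\widetilde\alpha_{\eta,M,\theta}(h;\Delta)$.

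The only step needing care is the justification of \eqref{eq:sharp-majorized-by-tilde}, which I would recheck. Take a contributing subspace $V$ with $v=\mathsf w_{\Delta,V}$.

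\textbf{Case $\|hv\|>1$.} Then $\|hv\|^{-1}<1$, so this term is absorbed by the leading term $1$ of the maximum.

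\textbf{Case $\|hv\|\le1$.} We need $\|hv\|^{-\beta_i}\ll_n\phi_i(hv)^{\beta_i}$, which follows from $\phi_i(w)\gg_n\|w\|^{-1}$.
\begin{itemize}
\item At noncritical degrees this is an equality.
\item At a critical degree $kn$, we have $\|w-\pi_{k,m}(w)\|\ll_n\|w\|$. Substituting into the definition of $\phi_{kn,\theta}$ gives
$$
\phi_{kn,\theta}(w)\gg_n\|w\|^{-1+4\theta}\|w\|^{-4\theta}=\|w\|^{-1}.
$$
\end{itemize}
Also, $V\notin\widetilde{\mathscr Q}_{i,\eta,M}(\Delta)$ ensures that $\phi_i(hv)$ is finite.

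Taking the supremum over $V$ and raising to the power $\beta_i\le1+1/(4n^2)$ changes constants only by factors depending on $n$. This completes the argument.
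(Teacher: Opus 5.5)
Your proposal is correct and is essentially the paper's proof: it uses $\widehat\alpha_{i,\eta,M}\ge1$ and $1+\theta'\le\beta_*\le\beta_i$ to bound $\widehat\alpha_{\eta,M}^{1+\theta'}$ by $\max_i\widehat\alpha_{i,\eta,M}^{\beta_i}$, and then applies \eqref{eq:sharp-majorized-by-tilde} together with the definition of $\widetilde\alpha_{\eta,M,\theta}$. Your recheck of \eqref{eq:sharp-majorized-by-tilde} (terms with $\|hv\|>1$ are absorbed by the $1$, and $\phi_i(w)\gg_n\|w\|^{-1}$ otherwise) is the same justification the paper gives just before the lemma.
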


\begin{proof}
Since
$
\widehat\alpha_{\eta,M}(h;\Delta)
=
\max_{1\le i\le N-1}
\widehat\alpha_{i,\eta,M}(h;\Delta),
$
and $\widehat\alpha_{i,\eta,M}(h;\Delta)\ge1$, the assumption
$1+\theta'\le\beta_*\le\beta_i$ gives
$$
\widehat\alpha_{\eta,M}(h;\Delta)^{1+\theta'}
\le
\max_{1\le i\le N-1}
\widehat\alpha_{i,\eta,M}(h;\Delta)^{\beta_i}.
$$
The claim now follows from \eqref{eq:sharp-majorized-by-tilde} and the
definition of $\widetilde\alpha_{\eta,M,\theta}$.
\end{proof}

\subsection{Concavity of the exponents}
The exponents are chosen so that products arising from intersections and
sums of rational subspaces are absorbed by the global height. In the first
inequality below, the asymmetric exponents $\beta_{i+j}^{-1}$ and
$\beta_{i-j}$ reflect that the intersection factor is controlled by the
ordinary height, while the sum factor is controlled by the modified
height.

\begin{lemma}\label{lem:exponent-concavity}
Let $1\le i\le N-1$ and $1\le j\le\min\{i,N-i\}$.

\begin{enumerate}
\item If $i-j\ge1$ and $i+j\le N-1$, then
\begin{equation}\label{eq:beta-concavity}
2\beta_i^{-1}-\beta_{i+j}^{-1}-\beta_{i-j}
\ge \frac{\theta_1}{2n^2}.
\end{equation}
Consequently, for every $0<\rho\le1$ and $x,y,z\ge0$,
\begin{equation}\label{eq:Youngineq}
x^{\beta_i/2}y^{\beta_i/2}z
\ll
\rho\left(x^{\beta_{i+j}}+y^{1/\beta_{i-j}}\right)
+
1+\left(\rho^{-1}z\right)^{C_n\theta_1^{-1}}.
\end{equation}

\item If $i-j\ge1$ and $i+j\le N-1$, then
\begin{equation}\label{eq:tau-concavity}
2\tau_i^{-1}-\tau_{i+j}^{-1}-\tau_{i-j}^{-1}
=2\kappa j^2\theta_1.
\end{equation}
Consequently, for every $0<\rho\le1$,
\begin{equation}\label{eq:Youngineq'}
x^{\tau_i/2}y^{\tau_i/2}z
\ll
\rho\left(x^{\tau_{i+j}}+y^{\tau_{i-j}}\right)
+
1+\left(\rho^{-1}z\right)^{C_n\theta_1^{-1}}.
\end{equation}
\end{enumerate}
If $i-j=0$, we omit the $y$-factor on the left and the $y$-term on the
right; if $i+j=N$, we omit the $x$-factor and the $x$-term. \end{lemma}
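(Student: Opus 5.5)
The plan is a direct computation from the explicit formulas \eqref{theta} and \eqref{eta}, followed by a weighted arithmetic--geometric mean (Young) inequality.

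\emph{Part (1), inequality \eqref{eq:beta-concavity}.} We have $\beta_i^{-1}=1-\theta_i/(8n^2)$ exactly. For $\beta_{i-j}$ we use the bound $(1-x)^{-1}\le1+2x$ already invoked for \eqref{eq:beta-local-range}, which gives $\beta_{i-j}\le1+\theta_{i-j}/(4n^2)$. Substituting these,
$$
2\beta_i^{-1}-\beta_{i+j}^{-1}-\beta_{i-j}\ \ge\ \frac{\theta_{i+j}}{8n^2}-\frac{\theta_i}{4n^2}-\frac{\theta_{i-j}}{4n^2}.
$$
Since $j\ge1$, the geometric growth $\theta_{i+j}\ge10\,\theta_i$ and $\theta_{i-j}\le\theta_i/10$ holds. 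Hence the right-hand side is at least $\theta_i\bigl(\tfrac{10}{8}-\tfrac14-\tfrac1{40}\bigr)n^{-2}$, which is at least $\theta_1/(2n^2)$.

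\emph{Part (1), inequality \eqref{eq:Youngineq}.} Put $a:=x^{\beta_{i+j}}$ and $b:=y^{1/\beta_{i-j}}$. Then the left-hand side equals $a^pb^qz$ with
$$
p=\frac{\beta_i}{2\beta_{i+j}},\qquad q=\frac{\beta_i\beta_{i-j}}{2}.
$$
By \eqref{eq:beta-concavity}, $p+q\le\frac{\beta_i}{2}\bigl(2\beta_i^{-1}-\frac{\theta_1}{2n^2}\bigr)\le1-\frac{\theta_1}{4n^2}$. Set $r:=1-p-q$, so $r\ge\varepsilon:=\theta_1/(4n^2)$. Write
$$
a^pb^qz=(\rho a)^p(\rho b)^q\bigl(\rho^{-(p+q)}z\bigr),
$$
and apply the weighted AM--GM inequality with weights $p,q,r$. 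This bounds the expression by
$$
\rho a+\rho b+\bigl(\rho^{-(p+q)}z\bigr)^{1/r}.
$$
Since $\rho\le1$ and $p+q\le1$, we have $\rho^{-(p+q)}\le\rho^{-1}$. Since $r\ge\varepsilon$ and $r\le1$, we have $w^{1/r}\le1+w^{1/\varepsilon}$ for $w\ge0$. This yields \eqref{eq:Youngineq} with $C_n\theta_1^{-1}=4n^2/\theta_1$.

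\emph{Degenerate cases.} If $i-j=0$, drop $b$. Then $p=\beta_i/(2\beta_{i+j})\le\frac12(1+\frac1{4n^2})$, so $r$ is bounded below by an absolute constant and the same argument applies. The case $i+j=N$ is symmetric, with $q=\beta_i\beta_{i-j}/2$ close to $\frac12$.

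\emph{Part (2).} Here $\tau_i^{-1}=1+\kappa\, i(N-i)\theta_1$, so \eqref{eq:tau-concavity} reduces to the algebraic identity
$$
2i(N-i)-(i+j)(N-i-j)-(i-j)(N-i+j)=2j^2,
$$
which is checked by expanding. For \eqref{eq:Youngineq'}, put $a:=x^{\tau_{i+j}}$ and $b:=y^{\tau_{i-j}}$. Then $p=\tau_i/(2\tau_{i+j})$ and $q=\tau_i/(2\tau_{i-j})$, so
$$
p+q=1-\tau_i\kappa j^2\theta_1\le1-\tfrac12\kappa\theta_1,
$$
using $\tau_i>\frac12$ from \eqref{eq:tau-range}. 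The same AM--GM step then gives the exponent $2/(\kappa\theta_1)=C_n\theta_1^{-1}$. The endpoint cases go as before, using $\tau_i<1$.

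The only delicate point is bookkeeping: checking the asymmetric exponents in part (1) through the factor $10$ in $\theta_i$, and handling the endpoint conventions. Nothing deeper is expected.
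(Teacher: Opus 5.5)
Your proposal is correct and is essentially the paper's proof. It uses the same computation of the concavity defects from $\beta_r^{-1}=1-\theta_r/(8n^2)$, $\beta_r\le 1+\theta_r/(4n^2)$ and the identity $2i(N-i)-(i+j)(N-i-j)-(i-j)(N-i+j)=2j^2$, followed by weighted Young/AM--GM with the same exponents $p,q$; absorbing $\rho$ directly into the three AM--GM factors is just the paper's rescaling of the $\rho=1$ case done in one step.

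One small omission: in the endpoint cases of part (2), the margin $1-p$ (or $1-q$) is bounded away from $0$ because $\tau_{i\pm j}^{-1}=1+\kappa(i\pm j)(N-i\mp j)\theta_1$ is within $O(\theta_1)$ of $1$. The facts $\tau_i<1$ and $\tau_{i\pm j}>\tfrac12$ alone give only $p<1$; the fix is harmless.
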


\begin{proof}
Assume first that $i-j\ge1$ and $i+j\le N-1$. Set $a=(8n^2)^{-1}$. Since
$\beta_r^{-1}=1-a\theta_r$ and
$\beta_r\le1+2a\theta_r$,
$$
\begin{aligned}
2\beta_i^{-1}-\beta_{i+j}^{-1}-\beta_{i-j}
&\ge
 a\bigl(\theta_{i+j}-2\theta_i-2\theta_{i-j}\bigr)=a\theta_i\bigl(10^j-2-2\cdot10^{-j}\bigr)
\ge \frac{\theta_1}{2n^2}.
\end{aligned}
$$
For $\rho=1$, \eqref{eq:Youngineq} follows from weighted Young's
inequality with exponents
$$
\frac{2\beta_{i+j}}{\beta_i},
\qquad
\frac{2}{\beta_i\beta_{i-j}},
\qquad
\frac{2\beta_i^{-1}}
{2\beta_i^{-1}-\beta_{i+j}^{-1}-\beta_{i-j}}.
$$
The last exponent is at most $4n^2\theta_1^{-1}$. For general $\rho$,
put
$$
\gamma:=\frac{\beta_i}{2\beta_{i+j}}
+\frac{\beta_i\beta_{i-j}}2<1
$$
and apply the case $\rho=1$ to
$\rho^{1/\beta_{i+j}}x$, 
$\rho^{\beta_{i-j}}y$ and $\rho^{-\gamma}z$. This gives \eqref{eq:Youngineq}, after enlarging $C_n$. The identity \eqref{eq:tau-concavity} follows immediately from
$\tau_r^{-1}=1+\kappa r(N-r)\theta_1$, and the same scaled version of Young's inequality gives \eqref{eq:Youngineq'}. The endpoint cases follow by
omitting the absent factors.
\end{proof}

\subsection{Estimates for ordinary global heights}
For $s\ge 2\log n$, $u\in B_{U}(1)$, and
$v\in\wedge^i\M_n(\mathbb R)$, one has
\begin{equation}\label{eq:logLipschitzforwedge}
 e^{-4n^3 s}\|v\|
\le \|a_suv\|
\le e^{4n^3 s}\|v\|.
\end{equation}
At a critical degree $i=kn$, equivariance of the projections gives 
\begin{equation}\label{eq:logLipschitzformodifiedht}
e^{-4n^3s}\phi_{kn}(v)
\le \phi_{kn}(a_suv)
\le e^{4n^3s}\phi_{kn}(v)
\end{equation}
whenever $\phi_{kn}(v)<\infty$.

We first record the subharmonic estimate for $\bar\alpha$. We deduce it from the local contraction estimates of the previous section by following the argument of \cite{eskin-margulis-mozes:1998}. We normalize Haar measure $du$ on the abelian group $U$ by
$\operatorname{vol}(B_{U}(1))=1$.
\begin{lemma}[Margulis inequality for $\bar{\alpha}$]\label{prop:globalboundedexpansion'}
There are constants $c_n,C_n>0$ such that, for every $\Delta\in X$ and
$s\ge 2\log n$
\begin{equation}\label{eq:globalboundedexpansion'}
\int_{B_{U}(1)}
\bar\alpha(a_su\Delta)\,du
\ll
 e^{-c_n\theta_1 s}\bar\alpha(\Delta)
+
 e^{C_n\theta_1^{-1}s}.
\end{equation}
\end{lemma}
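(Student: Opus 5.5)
The plan is to follow the Eskin--Margulis--Mozes scheme: first prove a local contraction estimate for each degree $i$ with exponent $\tau_i$, then globalize it with the intersection--sum mechanism, using \Cref{lem:exponent-concavity}(2) to absorb the cross terms.

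\emph{Step 1 (local contraction at exponent $\tau_i$).} For $1\le i\le N-1$ and every nonzero decomposable $w\in\wedge^i\M_n(\mathbb R)$ I will show
$$
\int_{B_U(1)}\|a_suw\|^{-\tau_i}\,du\ll e^{-c_n\theta_1 s}\|w\|^{-\tau_i}.
$$
Since $\frac12<\tau_i<1$ by \eqref{eq:tau-range}, the following applies. Decompose $\wedge^i\M_n(\mathbb R)$ into $\mathcal V_{\op{gen}}$-summands and, when $i=kn$, the exceptional summands $\mathcal M_{k,1}\oplus\mathcal M_{k,2}$. Use the maximum norm over components and keep the component of maximal norm. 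A generic component contracts at rate $e^{-s/(5n^2)}$ by \Cref{lem:localcontractioninequalitygenerali}. An exceptional component contracts at rate $e^{-(1-\tau_i)s/5}$ by \Cref{lem:boundedexpansionfornn-1}. Here $1-\tau_i\ge\vartheta/10\gg_n\theta_1$, so both rates are at least $e^{-c_n\theta_1 s}$. Decomposability is not even needed for this step.

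\emph{Step 2 (one or several short subspaces).} Fix $\Delta$ and $i$, and set $A:=e^{4n^3s}$ as in \eqref{eq:logLipschitzforwedge}. I split into two cases.
\begin{enumerate}
\item Case (a): there is at most one $\Delta$-rational $V$ of dimension $i$ with $d_\Delta(V)^{-1}\ge A^{-2}\alpha_i^+(\Delta)$. For $u\in B_U(1)$, every other $W$ satisfies $\|a_su\mathsf w_{\Delta,W}\|^{-1}\le A\,d_\Delta(W)^{-1}<A^{-1}\alpha_i^+(\Delta)$. Hence
$$
\alpha_i^+(a_su\Delta)^{\tau_i}\le\max\bigl\{1,\ \|a_su\mathsf w_{\Delta,V}\|^{-\tau_i},\ A^{-\tau_i}\alpha_i^+(\Delta)^{\tau_i}\bigr\}.
$$
Step 1 then gives a contracted bound $\ll e^{-c_n\theta_1s}\alpha_i^+(\Delta)^{\tau_i}+1$.
\item Case (b): there are two distinct such subspaces $V,W$. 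Write $\dim(V+W)=i+j$ with $j\ge1$; then $V\cap W$ has dimension $i-j$ and $j\le\min\{i,N-i\}$. Apply the submodularity inequality $d_\Delta(V\cap W)\,d_\Delta(V+W)\le d_\Delta(V)\,d_\Delta(W)$. This yields $\alpha_i^+(\Delta)^2\le A^4\alpha_{i+j}^+(\Delta)\,\alpha_{i-j}^+(\Delta)$, with the endpoint conventions $\alpha_0^+=\alpha_N^+=1$ covering $i-j=0$ and $i+j=N$. Combined with the trivial Lipschitz bound $\alpha_i^+(a_su\Delta)\le A\alpha_i^+(\Delta)$, this gives
$$
\alpha_i^+(a_su\Delta)^{\tau_i}\le x^{\tau_i/2}y^{\tau_i/2}z,\qquad x=\alpha_{i+j}^+(\Delta),\ y=\alpha_{i-j}^+(\Delta),\ z=A^{3}.
$$
Now \eqref{eq:Youngineq'} bounds this by $\rho\bigl(\alpha_{i+j}^+(\Delta)^{\tau_{i+j}}+\alpha_{i-j}^+(\Delta)^{\tau_{i-j}}\bigr)+1+(\rho^{-1}A^3)^{C_n\theta_1^{-1}}$. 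The $\rho$-term is $\le2\rho\,\bar\alpha(\Delta)$.
\end{enumerate}

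\emph{Step 3 (assembly and the main obstacle).} Take $\rho:=e^{-s}$, or any $e^{-c's}$ with $c'\ge c_n\theta_1$. Then case (b) contributes $\le 2e^{-s}\bar\alpha(\Delta)+e^{C_n\theta_1^{-1}s}$. Taking the maximum over $i$ and integrating over $u$, using $\int\max\le\sum\int$ over the $N-1$ indices, gives \eqref{eq:globalboundedexpansion'}.

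I expect the delicate point to be case (a) rather than the Young step. The cutoff $A^{-2}\alpha_i^+(\Delta)$ must be chosen so that the nonexceptional competitors, after the Lipschitz distortion, stay strictly below the contracted main term. It must also be consistent with the threshold used in case (b). If $A^{-2}$ turns out not to work, I will first try a cutoff $A^{-3}$ and accept a larger constant in $z$, which only enlarges the additive term $e^{C_n\theta_1^{-1}s}$. One must also check that the restriction $s\ge2\log n$ in \eqref{eq:logLipschitzforwedge} is harmless. The concavity in \eqref{eq:tau-concavity} is exactly what makes the exponent in the additive term finite.
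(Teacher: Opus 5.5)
Your proposal is correct and follows the paper's proof essentially step for step. The structure is the same: a local contraction at exponent $\tau_i$ via the maximum-norm reduction to a generic or exceptional component, then a dichotomy on whether there are two distinct $\Delta$-rational $i$-subspaces within a factor $e^{8n^3s}=A^2$ of the shortest one, with the intersection--sum inequality and the scaled Young inequality \eqref{eq:Youngineq'} handling the two-competitor case. Your worry about the cutoff is unfounded: with $A^{-2}$ the competitors contribute at most $A^{-\tau_i}\alpha_i^+(\Delta)^{\tau_i}$, which is already exponentially small, so no adjustment is needed.
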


\begin{proof}
 Decompose
$\wedge^i\M_n(\mathbb R)$ into its generic part and, when $n\mid i$,
its two exceptional summands, and use the maximum norm. If a maximal component
of $v$ is generic, \Cref{lem:localcontractioninequalitygenerali}
applies. If it is exceptional, apply
\Cref{lem:boundedexpansionfornn-1}. The estimate
\eqref{eq:tau-range} places $\tau_i$ in the contracting range of that
proposition. This gives the same conclusion. Applying these estimates on a
fixed bounded open neighborhood of $B_{U}(1)$ and then restricting the
integral, we obtain, uniformly in $1\le i\le N-1$,
\begin{equation}\label{eq:local-tau-contraction}
\int_{B_{U}(1)}\|a_suv\|^{-\tau_i}\,du
\ll e^{-c_n\theta_1 s}\|v\|^{-\tau_i}.
\end{equation}

Fix $i$, and choose an $i$-dimensional $\Delta$-rational subspace
$V$ with
$d_\Delta(V)^{-1}=\alpha_i(\Delta)$. Such a subspace exists because
the primitive decomposable vectors in $\wedge^i\Delta$ form a discrete
set. Let
$$
\mathcal P_i(\Delta,s)
:=
\left\{
W:
W\text{ is $\Delta$-rational},\ \dim W=i,
d_\Delta(W)\le e^{8n^3s}d_\Delta(V)
\right\}.
$$
This set is finite, again by discreteness of the primitive exterior lattice.

\noindent{\bf{Case (1).}} $\#\mathcal P_i(\Delta,s)\leq 1 $.

If $\mathcal P_i(\Delta,s)=\{V\}$, then
\eqref{eq:logLipschitzforwedge} shows that $V$ realizes the least
covolume after applying $a_su$. Hence
\eqref{eq:local-tau-contraction} yields
$$
\int_{B_{U}(1)}
\alpha_i^+(a_su\Delta)^{\tau_i}\,du
\ll
 e^{-c_n\theta_1 s}
\alpha_i^+(\Delta)^{\tau_i}+1.
$$

\noindent{\bf{Case (2).}} $\#\mathcal P_i(\Delta,s)\geq 2 $.

Suppose that $\mathcal P_i(\Delta,s)$ contains two distinct
subspaces $W_1,W_2$. Write
$$
\dim(W_1\cap W_2)=i-j,
\qquad
\dim(W_1+W_2)=i+j.
$$
The intersection-sum inequality
\cite[Lemma~5.6]{eskin-margulis-mozes:1998} gives
\begin{equation}\label{eq:EMMintersectionsum}
    d_\Delta(W_1\cap W_2)d_\Delta(W_1+W_2)
\ll d_\Delta(W_1)d_\Delta(W_2).
\end{equation}
Consequently,
\begin{equation}\label{eq:two-short-subspaces}
\alpha_i^+(\Delta)^2
\ll
e^{16n^3s}
\alpha_{i-j}^+(\Delta)\alpha_{i+j}^+(\Delta).
\end{equation}
On the other hand, \eqref{eq:logLipschitzforwedge} implies
$$
\alpha_i^+(a_su\Delta)^{\tau_i}
\ll e^{4n^3s}
\alpha_i^+(\Delta)^{\tau_i}.
$$
Combining this with \eqref{eq:two-short-subspaces}, we obtain
$$
\alpha_i^+(a_su\Delta)^{\tau_i}
\ll
e^{2C_ns}
\alpha_{i-j}^+(\Delta)^{\tau_i/2}
\alpha_{i+j}^+(\Delta)^{\tau_i/2}.
$$
Apply \eqref{eq:Youngineq'} with
$\rho=e^{-c_n\theta_1 s}$, decreasing $c_n$ if necessary.
Since each height term produced by \eqref{eq:Youngineq'} is bounded by
$\bar\alpha(\Delta)$, this gives, pointwise in $u$,
$$
\alpha_i^+(a_su\Delta)^{\tau_i}
\ll
 e^{-c_n\theta_1 s}\bar\alpha(\Delta)
+e^{C_n\theta_1^{-1}s}.
$$
Summing over $i$ proves \eqref{eq:globalboundedexpansion'}.
\end{proof}

We next deduce a bounded-expansion estimate for $\bar\alpha'_\theta$ from
the one-vector estimates.
\begin{proposition}[Bounded expansion for $\bar{\alpha}'$]\label{prop:globalboundedexpansion''}
There is a constant $C_n>0$ such that, for every $\Delta\in X$ and 
$s\ge 2\log n$,
\begin{equation}\label{eq:globalboundedexpansion''}
\int_{B_{U}(1)}
\bar\alpha'_\theta(a_su\Delta)\,du
\ll
e^{C_n\theta_1s}
\left(\bar\alpha'_\theta(\Delta)+1\right).
\end{equation}
\end{proposition}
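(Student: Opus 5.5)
Since $\bar\alpha'_\theta$ is a finite sum over $1\le i\le N-1$, the plan is to prove for each fixed $i$ the bound
$$
\int_{B_U(1)}\alpha_i^+(a_su\Delta)^{p}\,du\ll e^{C_n\theta_1 s}\bigl(\alpha_i^+(\Delta)^p+1\bigr),
\qquad p:=1+\frac{\theta_1}{100n^2},
$$
and then sum over $i$. We will not try to obtain contraction. We only need bounded expansion with a loss $e^{C_n\theta_1 s}$, and this lets us avoid the intersection--sum machinery entirely.

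The first step is a one-vector estimate. Decompose $\wedge^i\M_n(\mathbb R)$ into its generic part and, when $n\mid i$, the exceptional summands $\mathcal M_{k,1},\mathcal M_{k,2}$, and use the maximum norm. Note that $p\le 1+\frac1{2n^2}$. If a maximal component of $v$ is generic, \eqref{eq:contractioninequalitysmallrange} of \Cref{lem:localcontractioninequalitygenerali} gives $\int_{B_U(1)}\|a_suv\|^{-p}du\ll\|v\|^{-p}$. If it is exceptional, \Cref{lem:boundedexpansionfornn-1} with $\beta=p>1$ gives the bound $e^{2n^2(p-1)s}\|v\|^{-p}=e^{\theta_1 s/50}\|v\|^{-p}$. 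To land on $B_U(1)$, we apply these on a bounded open neighborhood and restrict. In all cases this yields
$$
\int_{B_U(1)}\|a_suv\|^{-p}\,du\ll e^{C_n\theta_1 s}\|v\|^{-p}
$$
uniformly in $v\neq0$.

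The second step globalizes. A bounded-expansion statement does not need an intersection--sum argument, but we cannot simply take a sup over infinitely many subspaces inside the integral. Instead, use \eqref{eq:logLipschitzforwedge}: only subspaces $W$ with $d_\Delta(W)\le e^{8n^3s}d_\Delta(V)$ can realize $\alpha_i(a_su\Delta)$, where $V$ is a minimizer for $\alpha_i(\Delta)$. Call this finite set $\mathcal P_i(\Delta,s)$; any other $W$ gives $\|a_su\mathsf w_{\Delta,W}\|\ge\|a_su\mathsf w_{\Delta,V}\|$. Pointwise we then have $\alpha_i^+(a_su\Delta)^p\le 1+\sum_{W\in\mathcal P_i}\|a_su\mathsf w_{\Delta,W}\|^{-p}$, and each term integrates to at most $e^{C_n\theta_1 s}d_\Delta(W)^{-p}$. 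The cardinality of $\mathcal P_i$ is not controlled, though, so summing naively fails.

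The main obstacle is this counting issue. Our first approach is to split as in \Cref{prop:globalboundedexpansion'}. If $\#\mathcal P_i=1$, the one-vector estimate applies directly. If two distinct $W_1,W_2\in\mathcal P_i$ exist, \eqref{eq:two-short-subspaces} combined with \eqref{eq:logLipschitzforwedge} gives, pointwise,
$$
\alpha_i^+(a_su\Delta)^p\ll e^{C_ns}\,\alpha_{i-j}^+(\Delta)^{p/2}\alpha_{i+j}^+(\Delta)^{p/2}.
$$
The factor $e^{C_ns}$ is too large; it is not of size $e^{C\theta_1s}$. To fix this, we would first prove the estimate for $s$ in a bounded range $[2\log n,s_0]$ with $s_0$ fixed, where every exponential is a constant. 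There the AM--GM bound $\alpha_{i-j}^{p/2}\alpha_{i+j}^{p/2}\le\frac12(\alpha_{i-j}^p+\alpha_{i+j}^p)\le\bar\alpha'(\Delta)$ closes the estimate, with constant $c_0$. For larger $s$ we iterate, writing $a_s=a_{s_0}^{m}a_{r}$ and using $a_{s_0}B_U(1)a_{s_0}^{-1}\supset B_U(1)$ together with averaging over translates. This gives $\int\bar\alpha'(a_su\Delta)du\le c_0^{\,s/s_0+1}(\bar\alpha'(\Delta)+1)$. We would then choose $s_0$ large, depending on $\theta_1$, so that $\log c_0/s_0\le C_n\theta_1$. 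That works only if $c_0$ does not grow with $s_0$, which the one-vector estimate provides only in Case (1). Hence the crux is to get the Case (2) constant to be $e^{O(\theta_1 s_0)}$ rather than $e^{O(s_0)}$. If this fails, we fall back on the cruder pointwise bound $\alpha_i^+(a_su\Delta)\le e^{4n^3s}\alpha_i^+(\Delta)$ and accept a weaker exponent in \eqref{eq:globalboundedexpansion''}.
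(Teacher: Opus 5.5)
Your one-vector estimate, the reduction to the finite window $\mathcal P_i(\Delta,s)$, and your Case (1) argument are exactly what the paper does. The gap is the Case (2) difficulty you yourself flag: the proposal stops there, and neither of your remedies resolves it.

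In Case (2) the loss $e^{C_n s}$ comes from the width $e^{8n^3s}$ of the window and from \eqref{eq:logLipschitzforwedge}, so its rate does not depend on $\theta$. Even after your AM--GM step, the one-step constant at block length $s_0$ is therefore $c_0\gtrsim e^{c_n s_0}$ with $c_n$ independent of $\theta$. Your iteration compounds $c_0$ at every block, so the exponent $\log c_0/s_0$ stays of order $c_n$ however $s_0$ is chosen. Both the bounded-range argument and the fallback therefore give only $e^{C_n s}$. That is strictly weaker than the statement, and the smallness of the rate in $\theta$ is genuinely used later (it is what allows $C_n\theta'\delta^{-1}<M'/(20bM)$ in \Cref{s:iterations}).

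The missing idea is the Eskin--Margulis--Mozes device of iterating a degree-weighted height rather than $\bar\alpha'$ itself. Write $\chi:=p-1=\theta_1/(100n^2)$. Fix a large $s_0$, set $w_i:=e^{-C_ns_0\,i(N-i)}$ (so $w_0=w_N=1$), and put $\Phi_{s_0}(\Delta):=1+\sum_{i=1}^{N-1}w_i\,\alpha_i^+(\Delta)^{p}$. The identity $(i-j)(N-i+j)+(i+j)(N-i-j)=2i(N-i)-2j^2$ gives $w_i^2=e^{-2C_ns_0j^2}w_{i-j}w_{i+j}$. Hence, for $j\ge1$,
\[
w_i\,e^{C_ns_0/4}\bigl(\alpha_{i-j}^+(\Delta)\,\alpha_{i+j}^+(\Delta)\bigr)^{p/2}
\le\tfrac12\,e^{-3C_ns_0/4}\bigl(w_{i-j}\,\alpha_{i-j}^+(\Delta)^{p}+w_{i+j}\,\alpha_{i+j}^+(\Delta)^{p}\bigr).
\]
So after weighting, the Case (2) terms actually contract, once $C_n$ dominates the $\theta$-independent Case (2) loss.

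Now take $s_0$ large in terms of $\theta$, so that the constant in the one-vector estimate is absorbed into $e^{C_n\chi s_0/4}$. This gives $\int_{B_U(1)}\Phi_{s_0}(a_{s_0}u\Delta)\,du\le e^{C_n\chi s_0}\Phi_{s_0}(\Delta)$ with no extra multiplicative constant. That inequality can be iterated via \Cref{lem:randomwalkgeneral} to give $e^{C_n\chi t}$. The $s_0$-dependent constants now enter only once, through $e^{-C_ns_0\lfloor N^2/4\rfloor}\bar\alpha'_\theta\le\Phi_{s_0}-1\le\bar\alpha'_\theta$, instead of being compounded at every step.
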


\begin{proof}  Set
$\chi:=\frac{\theta_1}{100n^2}$.
Since $1<1+\chi<1+\frac1{2n^2}$, we apply the local estimates on a fixed bounded open neighborhood of
$B_{U}(1)$. They give, for every $1\le i\le N-1$ and nonzero
$v\in\wedge^i\M_n(\mathbb R)$,
\begin{equation}\label{eq:one-vector-alpha-prime-bounded-expansion}
\int_{B_{U}(1)}
\|a_suv\|^{-1-\chi}\,du
\ll_{n,\theta}
e^{2n^2\chi s}\|v\|^{-1-\chi}.
\end{equation}
Indeed, use the maximum norm associated with the decomposition into the
generic part and, when $n\mid i$, the two exceptional summands. Choose
an invariant component $v'$ of maximal norm. Since the summands are
$H$-invariant, we have $\|a_suv\|\ge\|a_suv'\|$. If $v'$ is generic, apply
\Cref{lem:localcontractioninequalitygenerali} at exponent $1+\chi$.
If $v'$ is exceptional, apply
\Cref{lem:boundedexpansionfornn-1} at the same exponent. This proves
\eqref{eq:one-vector-alpha-prime-bounded-expansion}.

Choose a sufficiently large constant $C_n$. Since $\theta$
is fixed, after increasing a lower threshold for $s$ if necessary, the
implied constant in
\eqref{eq:one-vector-alpha-prime-bounded-expansion} is absorbed by
$e^{C_n\chi s/4}$. Fix such an $s$. For $1\le i\le N-1$, choose an
$i$-dimensional $\Delta$-rational subspace $V$ such that
$d_\Delta(V)^{-1}=\alpha_i(\Delta)$, and put
$$
\mathcal P_i(\Delta,s)
:=
\left\{
W:
W\text{ is $\Delta$-rational},\ \dim W=i,
d_\Delta(W)\le e^{8n^3s}d_\Delta(V)
\right\}.
$$
This is a finite set containing $V$.

\noindent{\bf{Case (1).}} $\#\mathcal P_i(\Delta,s)=1$.

In this case $\mathcal P_i(\Delta,s)=\{V\}$. If an $i$-dimensional
$\Delta$-rational subspace realizes the least covolume after applying
$a_su$, then \eqref{eq:logLipschitzforwedge} shows that it belongs to
$\mathcal P_i(\Delta,s)$. Hence $V$ realizes the least covolume after
applying $a_su$ for every $u\in B_{U}(1)$. It follows from
\eqref{eq:one-vector-alpha-prime-bounded-expansion} that
\begin{equation}\label{eq:alpha-prime-case-one}
\int_{B_{U}(1)}
\alpha_i^+(a_su\Delta)^{1+\chi}\,du
\le
e^{C_n\chi s/4}\alpha_i^+(\Delta)^{1+\chi}+1.
\end{equation}

\noindent{\bf{Case (2).}} $\#\mathcal P_i(\Delta,s)\ge2$.

Choose two distinct subspaces $W_1,W_2\in\mathcal P_i(\Delta,s)$, and
write
$$
\dim(W_1\cap W_2)=i-j,
\qquad
\dim(W_1+W_2)=i+j.
$$
The intersection--sum inequality
\cite[Lemma~5.6]{eskin-margulis-mozes:1998} gives
$$
d_\Delta(W_1\cap W_2)d_\Delta(W_1+W_2)
\ll_n d_\Delta(W_1)d_\Delta(W_2).
$$
Consequently,
$$
\alpha_i^+(\Delta)^2
\ll_n
e^{16n^3s}
\alpha_{i-j}^+(\Delta)\alpha_{i+j}^+(\Delta).
$$
Combining this with \eqref{eq:logLipschitzforwedge}, and increasing the
lower threshold for $s$ if necessary, gives, pointwise in $u$,
\begin{equation}\label{eq:alpha-prime-case-two}
\alpha_i^+(a_su\Delta)^{1+\chi}
\le
e^{C_ns/4}
\left(
\alpha_{i-j}^+(\Delta)\alpha_{i+j}^+(\Delta)
\right)^{(1+\chi)/2}.
\end{equation}
The identity
$$
(i-j)(N-i+j)+(i+j)(N-i-j)
=
2i(N-i)-2j^2
$$
and Young's inequality give
\begin{align*}
&e^{-C_nsi(N-i)}
\left(
\alpha_{i-j}^+(\Delta)\alpha_{i+j}^+(\Delta)
\right)^{(1+\chi)/2} \le
\tfrac{e^{-C_ns}}2
e^{-C_ns(i-j)(N-i+j)}
\alpha_{i-j}^+(\Delta)^{1+\chi}
\\ \qquad &+
\tfrac{e^{-C_ns}}2
e^{-C_ns(i+j)(N-i-j)}
\alpha_{i+j}^+(\Delta)^{1+\chi}.
\end{align*}
Here, a term of degree $0$ or $N$ is interpreted as $1$. Multiplying
\eqref{eq:alpha-prime-case-one} and
\eqref{eq:alpha-prime-case-two} by $e^{-C_nsi(N-i)}$, summing over $i$,
and enlarging the fixed lower threshold for $s$ once more, we obtain
\begin{equation}\label{eq:weighted-alpha-prime-one-step}
\begin{aligned}
&\int_{B_{U}(1)}
\left(
1+\sum_{i=1}^{N-1}
e^{-C_nsi(N-i)}
\alpha_i^+(a_su\Delta)^{1+\chi}
\right)\,du\\
&\qquad\le
e^{C_n\chi s}
\left(
1+\sum_{i=1}^{N-1}
e^{-C_nsi(N-i)}
\alpha_i^+(\Delta)^{1+\chi}
\right).
\end{aligned}
\end{equation}
Indeed, after multiplication by the degree weight, each term from
Case~(2) carries the factor $e^{-3C_ns/4}$, and there are at most $N-1$
possible degrees. Increasing the fixed lower threshold for $s$ absorbs
their sum into the right-hand side of
\eqref{eq:weighted-alpha-prime-one-step}.

Fix one $s$ for which \eqref{eq:weighted-alpha-prime-one-step} holds and
iterate that estimate in blocks of length $s$. The comparison of iterated
horospherical averages in \cite[Lemma~6.1]{Kim}, recorded below as
\Cref{lem:randomwalkgeneral}, shows that, for every $m\ge1$,
\begin{align*}
&\int_{B_{U}(1/3)}
\left(
1+\sum_{i=1}^{N-1}
e^{-C_nsi(N-i)}
\alpha_i^+(a_{ms}u\Delta)^{1+\chi}
\right)\,du\\
&\qquad\ll
e^{C_n\chi ms}
\left(
1+\sum_{i=1}^{N-1}
e^{-C_nsi(N-i)}
\alpha_i^+(\Delta)^{1+\chi}
\right).
\end{align*}
For an arbitrary time $t$, write $t=ms+s'$, where $m\in\mathbb N$ and
$0\le s'<s$. Applying the elementary log-Lipschitz estimate on this fixed range to the single
$a_{s'}$-step reduces the estimate at time $t$ to the preceding estimate at
time $ms$. Since $s$ is now fixed in terms of $n$ and $\theta$, this
changes only the implied constant. Moreover,
$$
e^{-C_ns\lfloor N^2/4\rfloor}\bar\alpha'_\theta(\Delta)
\le
\sum_{i=1}^{N-1}
e^{-C_nsi(N-i)}\alpha_i^+(\Delta)^{1+\chi}
\le
\bar\alpha'_\theta(\Delta).
$$
We therefore obtain
$$
\int_{B_{U}(1/3)}
\bar\alpha'_\theta(a_tu\Delta)\,du
\ll_{n,\theta}
e^{C_n\chi t}\left(\bar\alpha'_\theta(\Delta)+1\right)
\qquad(t\ge0).
$$
Finally, cover $B_{U}(1)$ by finitely many sets $u_0B_{U}(1/3)$ with $u_0$
in a fixed bounded subset of $U$. Apply the preceding estimate to
$u_0\Delta$. Since $U$ is abelian, $u_0u=uu_0$, and the log-Lipschitz
estimate on a fixed bounded subset gives
$\bar\alpha'_\theta(u_0\Delta)\asymp_n
\bar\alpha'_\theta(\Delta)$. Summing over the finite cover gives the same
estimate over $B_{U}(1)$. Since
$\chi=\theta_1/(100n^2)$, this proves
\eqref{eq:globalboundedexpansion''}.
\end{proof}

\subsection{A consequence of the Mother Inequality}

For a critical degree $i=kn$, define
\begin{equation}\label{eq:bar-phi-definition}
\bar\phi_i(v)
:=
\|v-\pi_{k,1}(v)\|^{-1/2}
\|v-\pi_{k,2}(v)\|^{-1/2}\in (0, \infty],\quad (v\in \wedge^i(\M_n(\br)))
\end{equation}
Then
\begin{equation}\label{eq:phi-decomposition-global}
\phi_i(v)=\|v\|^{-1+4\theta_i}\bar\phi_i(v)^{4\theta_i}.
\end{equation}

Relative to the split torus of $H$, which is
the product of diagonal subgroups of $\SL_n(\br)$, let $q_\lambda$ denote the
$H$-equivariant projection onto the isotypic subspace of  highest
weight $\lambda$. On each exterior degree, this is the sum of the relevant
projections in the skew Cauchy decomposition. In the notation used here,
the Mother Inequality of Benoist--Quint
\cite[Proposition~3.1]{benoist-quint:2012} states the following:

\begin{theorem}[Benoist--Quint Mother Inequality]
\label{thm:BQ-Mother}
For all highest weights $\lambda,\mu$ and all decomposable vectors
$u\in\wedge^r\M_n(\mathbb R), v\in\wedge^s\M_n(\mathbb R), w\in\wedge^t\M_n(\mathbb R) $
with $r+s+t\le N$, one has
\begin{equation}\label{eq:BQ-Mother-form}
\|q_\lambda(u)\|\,\|q_\mu(u\wedge v\wedge w)\|
\ll
\sum_{\nu+\rho\succeq\lambda+\mu}
\|q_\nu(u\wedge v)\|\,\|q_\rho(u\wedge w)\|.
\end{equation}
Here $\nu$ and $\rho$ range over the highest weights occurring in
$\wedge^{r+s}\M_n(\mathbb R)$ and
$\wedge^{r+t}\M_n(\mathbb R)$, respectively, and
$\nu+\rho\succeq\lambda+\mu$ means that
$\nu+\rho-\lambda-\mu$ is a nonnegative integral combination of the
chosen positive roots. The implied constant is independent of
$u,v,w$.
\end{theorem}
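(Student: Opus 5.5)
The plan is to derive \eqref{eq:BQ-Mother-form} from a single $H$-equivariant (indeed $\GL(\M_n(\mathbb R))$-equivariant) bilinear identity, combined with the highest-weight constraint on tensor products. This is the mechanism behind \cite[Proposition~3.1]{benoist-quint:2012}, and it needs nothing specific to $H$ beyond complete reducibility of the exterior powers.

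\textbf{Step 1: the identity.} Let $\delta_{r,s}:\wedge^{r+s}\M_n(\mathbb R)\to\wedge^r\M_n(\mathbb R)\otimes\wedge^s\M_n(\mathbb R)$ be the comultiplication (the $(r,s)$-component of the exterior coalgebra). Define the linear map
$$
\Phi:\wedge^{r+s}\M_n(\mathbb R)\otimes\wedge^{r+t}\M_n(\mathbb R)\to\wedge^{r}\M_n(\mathbb R)\otimes\wedge^{r+s+t}\M_n(\mathbb R),
\qquad
\Phi(x\otimes y):=(\mathrm{id}\otimes\wedge_y)\,\delta_{r,s}(x),
$$
where $\wedge_y(z)=z\wedge y$. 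Being built from natural operations, $\Phi$ is $\GL(\M_n(\mathbb R))$-equivariant, hence $H$-equivariant.

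Now write $u=u_1\wedge\cdots\wedge u_r$ and $v=v_1\wedge\cdots\wedge v_s$. In $\delta_{r,s}(u\wedge v)$, every term whose second tensor factor contains some $u_i$ is killed after wedging with $y=u\wedge w$. The only surviving term is $u\otimes v$, so
$$
\Phi\bigl((u\wedge v)\otimes(u\wedge w)\bigr)=\pm\,u\otimes(v\wedge u\wedge w)=\pm\,u\otimes(u\wedge v\wedge w).
$$
Decomposability of $w$ is not even needed; only decomposability of $u$ and $v$ enters.

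\textbf{Step 2: applying the isotypic projections.} Apply $q_\lambda\otimes q_\mu$ to both sides. For the left-hand side, expand $u\wedge v=\sum_\nu q_\nu(u\wedge v)$ and $u\wedge w=\sum_\rho q_\rho(u\wedge w)$. By equivariance, $\Phi$ maps the $\nu\otimes\rho$-isotypic block $W_\nu\otimes W_\rho$ into an $H$-submodule of the target. Every irreducible constituent of $W_\nu\otimes W_\rho$ has highest weight $\preceq\nu+\rho$. On the other hand, $q_\lambda\otimes q_\mu$ projects onto constituents isomorphic to $W_\lambda\otimes W_\mu$, whose highest weight (for $H$ acting diagonally on the tensor product) is $\lambda+\mu$. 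Hence $(q_\lambda\otimes q_\mu)\circ\Phi$ vanishes on that block unless $\lambda+\mu\preceq\nu+\rho$.

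One point needs care here. The $\lambda\otimes\mu$ block of the target is not a single irreducible, so I will phrase the criterion correctly: $(q_\lambda\otimes q_\mu)\Phi|_{W_\nu\otimes W_\rho}\ne0$ forces a common irreducible constituent. The relevant highest weight is the one of a constituent of $W_\lambda\otimes W_\mu$. What I actually need is the weaker statement that the extremal weight $\lambda+\mu$ occurs in the image. To get it, take the highest-weight vector of $W_\lambda\otimes W_\mu$, note that it generates a constituent of highest weight $\lambda+\mu$, and compare norms componentwise. I expect this bookkeeping to be the main obstacle.

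If that matching fails, I would fall back on Benoist--Quint's own formulation. There, the $\lambda\otimes\mu$ component is measured through the highest-weight constituent $W_{\lambda+\mu}\subset W_\lambda\otimes W_\mu$ (Cartan component). On decomposable-type vectors $u\otimes z$, its norm is $\asymp\|q_\lambda u\|\,\|q_\mu z\|$, because the Cartan projection of a pure tensor $x\otimes y$ is nonzero whenever $x,y\ne0$, and compactness on unit spheres makes this uniform.

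\textbf{Step 3: norms and conclusion.} With the inner products fixed, we have $\|(q_\lambda\otimes q_\mu)(u\otimes z)\|=\|q_\lambda u\|\,\|q_\mu z\|$ (or $\asymp$ this after passing to the Cartan component, as above). The right-hand side is bounded by $\|\Phi\|\sum_{\nu+\rho\succeq\lambda+\mu}\|q_\nu(u\wedge v)\|\,\|q_\rho(u\wedge w)\|$. There are only finitely many triples $(r,s,t)$ and weights, so the implied constant is uniform, and this gives \eqref{eq:BQ-Mother-form}.
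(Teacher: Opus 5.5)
The paper gives no proof of this statement; it quotes it from \cite[Proposition~3.1]{benoist-quint:2012}. Your proposal is therefore a self-contained reconstruction of the cited result rather than a competing route, and the mechanism you use is the right one: an equivariant map $\Phi$ followed by projection to a Cartan component. Step~1 is correct, and as you note it uses only the decomposability of $u$.

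The argument is valid only in its fallback form. The first version of Step~2 does not follow. Schur's lemma only forces $W_\nu\otimes W_\rho$ and $W_\lambda\otimes W_\mu$ to share a constituent. Since $W_\lambda\otimes W_\mu$ has constituents whose highest weight lies strictly below $\lambda+\mu$ (already $V_1\otimes V_1\supset V_0$ for $\SL_2$), nothing forces $(q_\lambda\otimes q_\mu)\circ\Phi$ to vanish on $W_\nu\otimes W_\rho$ when $\nu+\rho\not\succeq\lambda+\mu$.

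You must instead apply $P:=p_{\lambda+\mu}\circ(q_\lambda\otimes q_\mu)$ throughout, where $p_{\lambda+\mu}$ is the equivariant projection of $W_\lambda\otimes W_\mu$ onto its Cartan component. Then $P\circ\Phi$ kills $W_\nu\otimes W_\rho$ unless $\lambda+\mu$ is a weight of $W_\nu\otimes W_\rho$. That happens only if $\nu+\rho\succeq\lambda+\mu$, which is exactly the index set in \eqref{eq:BQ-Mother-form}. It also helps to record two facts. Within a fixed exterior degree, the skew Cauchy summands are pairwise non-isomorphic $H$-modules, since two partitions of the same size differing by a multiple of $(1^n)$ coincide. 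They are also absolutely irreducible, since $H$ is split. Hence each $q_\lambda$ lands in a single irreducible module, and the Cartan component occurs with multiplicity one.

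The one substantive input you assert without proof is the lower bound $\|p_{\lambda+\mu}(x\otimes y)\|\ge c\|x\|\|y\|$ for $x\in W_\lambda$, $y\in W_\mu$. This amounts to $p_{\lambda+\mu}(x\otimes y)\neq0$ for nonzero $x,y$, followed by compactness of the product of unit spheres. The claim is true but is the heart of the inequality, so it should be justified. Here is a short argument.
\begin{enumerate}
\item By irreducibility, the coordinates of $gx$ and $gy$ along the highest-weight vectors $e_\lambda$ and $e_\mu$ are not identically zero as real-analytic functions of $g$ on the connected group $H$. Otherwise the $H$-span of $x$ would be a nonzero proper submodule. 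Hence some $g$ makes both coordinates nonzero.
\item The $(\lambda+\mu)$-weight space of $W_\lambda\otimes W_\mu$ is the line spanned by $e_\lambda\otimes e_\mu$, and this line lies in the Cartan component.
\item The projection $p_{\lambda+\mu}$ commutes with the torus, so it preserves weight spaces. Therefore $p_{\lambda+\mu}(gx\otimes gy)$ has a nonzero $(\lambda+\mu)$-component.
\item By equivariance, $p_{\lambda+\mu}(x\otimes y)\neq0$.
\end{enumerate}

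With these two repairs your proof is complete. It also makes explicit what the citation leaves implicit: the inequality needs only the decomposability of $u$, complete reducibility, the dominance bound on weights of tensor products, and the absence of zero divisors for the Cartan product.
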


 Suppose now that
$w_1,w_2$ are $\Delta$-Pl\"ucker vectors of two $\Delta$-rational
subspaces of dimension $kn$, and let $w_-$ and $w_+$ be 
$\Delta$-Pl\"ucker vectors of their intersection and sum. The 
intersection lattice is primitive in each of the two original lattices. By extending a
primitive basis of the intersection to bases of the two lattices and adjusting
signs if necessary, we may therefore choose decomposable exterior vectors $u,v,w$ such
that
$$
u=w_-,\quad u\wedge v=w_1,\quad u\wedge w=w_2,\quad
u\wedge v\wedge w=mw_+
$$
for an integer $m\ge1$. Applying \eqref{eq:BQ-Mother-form} and discarding
the factor $m$ on the left yields
\begin{equation}\label{eq:BQ-specialized-form}
\|q_\lambda(w_-)\|\,\|q_\mu(w_+)\|
\ll
\max_{\nu+\rho\succeq\lambda+\mu}
\|q_\nu(w_1)\|\,\|q_\rho(w_2)\|.
\end{equation}
If $\deg w_+<N$, then
\begin{equation}\label{eq:Mother-coarse-consequence}
\|w_-\|D_+(w_+)\ll \max\{\|w_1\|\|w_2-\pi_{k,1}(w_2)\|,\|w_2\|\|w_1-\pi_{k,1}(w_1)\|\},
\end{equation}
where
$$
D_+(w_+):=
\begin{cases}
\|w_+\|& \text{ if } n\nmid\deg w_+,\\
\|w_+-\pi_{\deg(w_+)/n,1}(w_+)\|&\text{ if } n\mid\deg w_+.
\end{cases}
$$

At any exterior degree, an irreducible summand is trivial under the second
$\SL_n$-factor only when the degree is a multiple of $n$, in which case
the summand is $\mathcal M_{\deg/n,1}$. Hence, if the output projection
lies outside this exceptional summand, the two input projections cannot
both lie in $\mathcal M_{k,1}$, since their product would be trivial under
the second factor. Thus, in every nonzero term of
\eqref{eq:BQ-specialized-form}, at least one input projection is controlled
by the distance from $\mathcal M_{k,1}$, while the other is bounded by the
norm of the corresponding input. Taking the maximum over the finitely many
isotypic projections and using norm equivalence gives
\eqref{eq:Mother-coarse-consequence}.

The following lemma treats the case $\dim(W_1+W_2)<N$. The case
$\dim(W_1+W_2)=N$ is handled separately in the proof of
\Cref{thm:globalcontraction}.
\begin{lemma}\label{lem:BQinequality}
Let $\Delta\in X$, and let $W_1,W_2$ be distinct
$\Delta$-rational subspaces of dimension $i=kn$ such that $W_1+W_2\ne \M_n(\br)$. Let $j$ be such that 
$$
\dim(W_1\cap W_2)=i-j\quad 
\text{ and }\quad
\dim(W_1+W_2)=i+j.
$$
Let $w_m=\mathsf w_{\Delta,W_m}$, $m=1,2$, and assume that
$w_m\notin\mathcal M_{k,1}\cup\mathcal M_{k,2}$ and
$$
\|\pi_{k,1}(w_m)\|
=\max_{q=0,1,2}\|\pi_{k,q}(w_m)\|
\qquad (m=1,2).
$$
Suppose also that, for some $\mathsf L\ge1$,
$$
\mathsf L^{-1}\|w_2\|\le\|w_1\|\le\mathsf L\|w_2\|\;\; \text{ and }\;\; 
\mathsf L^{-1}\bar\phi_i(w_2)
\le\bar\phi_i(w_1)
\le\mathsf L\bar\phi_i(w_2).
$$
Then
\begin{equation}\label{eq:BQ-modified-height-inequality}
\phi_i(w_1)\phi_i(w_2)
\ll
\mathsf L^{8\theta_i}
\|\mathsf w_{\Delta,W_1\cap W_2}\|^{-1}
\phi_{i+j}(\mathsf w_{\Delta,W_1+W_2}).
\end{equation}
The same conclusion holds with $\pi_{k,1}$ replaced by $\pi_{k,2}$.
\end{lemma}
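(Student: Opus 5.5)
The plan is to reduce \eqref{eq:BQ-modified-height-inequality} to two inequalities for the intersection and the sum: the Eskin--Margulis--Mozes intersection--sum inequality \eqref{eq:EMMintersectionsum}, which bounds covolumes, and the coarse Mother Inequality \eqref{eq:Mother-coarse-consequence}, which bounds distances to the exceptional summand. We then interpolate between them with exponents $1-2\theta_i$ and $2\theta_i$. Write $w_-:=\mathsf w_{\Delta,W_1\cap W_2}$ (with $w_-=1$ if the intersection is $\{0\}$) and $w_+:=\mathsf w_{\Delta,W_1+W_2}$. Put $\delta_m:=\|w_m-\pi_{k,1}(w_m)\|>0$.

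\emph{Step 1: simplify the left-hand side.} Since $\pi_{k,1}(w_m)$ is the maximal component, $\|w_m-\pi_{k,2}(w_m)\|\ge\|\pi_{k,1}(w_m)\|\asymp\|w_m\|$. Hence $\bar\phi_i(w_m)^{-2}\asymp\|w_m\|\delta_m$, and by \eqref{eq:phi-decomposition-global}
$$
\phi_i(w_m)\asymp\|w_m\|^{-1+2\theta_i}\delta_m^{-2\theta_i}.
$$
The hypotheses give $\|w_1\|\le\mathsf L\|w_2\|$ and $\|w_1\|\delta_1\asymp_{\mathsf L^2}\|w_2\|\delta_2$. Writing $P:=\|w_1\|\delta_1\|w_2\|\delta_2$, it follows that
$$
\max\{\|w_1\|\delta_2,\ \|w_2\|\delta_1\}\ll\mathsf L^{2}P^{1/2}.
$$
For instance, $\|w_1\|\delta_2\le\mathsf L\|w_2\|\delta_2$ and $\|w_2\|\delta_2\le\mathsf L P^{1/2}$; the other term is symmetric.

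\emph{Step 2: the two inputs.} Set $A:=\|w_-\|\|w_+\|$ and $B:=\|w_-\|D_+(w_+)$, so that $B\le A$. By \eqref{eq:EMMintersectionsum}, $A\ll\|w_1\|\|w_2\|$. By \eqref{eq:Mother-coarse-consequence}, which applies since $\dim(W_1+W_2)<N$, together with Step 1, $B\ll\mathsf L^2P^{1/2}$. Combining,
$$
A^{1-2\theta_i}B^{2\theta_i}\ll\mathsf L^{4\theta_i}(\|w_1\|\|w_2\|)^{1-2\theta_i}P^{\theta_i}\asymp\mathsf L^{4\theta_i}\bigl(\phi_i(w_1)\phi_i(w_2)\bigr)^{-1}.
$$
This is within the claimed factor $\mathsf L^{8\theta_i}$.

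\emph{Step 3: lower-bound the right-hand side by $\bigl(A^{1-2\theta_i}B^{2\theta_i}\bigr)^{-1}$.} There are two cases.

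If $n\nmid(i+j)$, then $D_+=\|w_+\|$ and $A=B$, so $\|w_-\|^{-1}\phi_{i+j}(w_+)=A^{-1}=(A^{1-2\theta_i}B^{2\theta_i})^{-1}$.

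If $i+j=k'n$, we may assume $w_+\notin\mathcal M_{k',1}$, since otherwise $\phi_{i+j}(w_+)=\infty$ and the claim is trivial. Because $\|w_+-\pi_{k',2}(w_+)\|\ll\|w_+\|$, we get $\bar\phi_{i+j}(w_+)\gg\|w_+\|^{-1/2}D_+^{-1/2}$. With $\theta':=\theta_{i+j}\ge\theta_i$,
$$
\|w_-\|^{-1}\phi_{i+j}(w_+)\gg\|w_-\|^{-1}\|w_+\|^{-1+2\theta'}D_+^{-2\theta'}=A^{-(1-2\theta')}B^{-2\theta'}.
$$
Since $B\le A$ and $\theta'\ge\theta_i$, we have $A^{1-2\theta'}B^{2\theta'}\le A^{1-2\theta_i}B^{2\theta_i}$. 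Combining Step 3 with Step 2 yields \eqref{eq:BQ-modified-height-inequality}. The $\pi_{k,2}$ case is identical with the two factors interchanged, using the row version of \eqref{eq:Mother-coarse-consequence}.

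The main obstacle is Step 2's use of the Mother Inequality. It must produce exactly the \emph{mixed} quantity $\max\{\|w_1\|\delta_2,\|w_2\|\delta_1\}$, and the comparability hypotheses (the role of $\mathsf L$) must convert this mixed bound into the symmetric $P^{1/2}$. I would verify this conversion carefully. The exponent bookkeeping in Step 3 is routine once one observes that $B\le A$ makes the larger exponent $\theta_{i+j}$ harmless.
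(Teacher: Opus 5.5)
Your route is the paper's: the intersection--sum inequality for $A=\|w_-\|\|w_+\|$, the coarse Mother Inequality \eqref{eq:Mother-coarse-consequence} for $B=\|w_-\|D_+(w_+)$, the $\mathsf L$-comparabilities to turn the mixed bound into $P^{1/2}$, and a split on whether $n\mid i+j$. Step 1 and the $P^{1/2}$ conversion are fine. The gap is the exponent bookkeeping in Step 2, where the relation
\[
(\|w_1\|\|w_2\|)^{1-2\theta_i}P^{\theta_i}\asymp\bigl(\phi_i(w_1)\phi_i(w_2)\bigr)^{-1}
\]
is false. By your own Step 1, the right side is $\asymp(\|w_1\|\|w_2\|)^{1-2\theta_i}(\delta_1\delta_2)^{2\theta_i}$. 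The left side equals $(\|w_1\|\|w_2\|)^{1-\theta_i}(\delta_1\delta_2)^{\theta_i}$. Their ratio is $(\|w_1\|\|w_2\|/\delta_1\delta_2)^{\theta_i}$, which is $\gg1$ and unbounded exactly in the near-exceptional regime the lemma addresses. Your chain therefore proves the inequality only up to this uncontrolled extra factor. The reason is that $\phi_i(w_1)\phi_i(w_2)$ carries both distances, $(\delta_1\delta_2)^{-2\theta_i}$, while the Mother Inequality controls only one distance times one norm. So the bound $B\ll\mathsf L^2P^{1/2}$ must be raised to the power $4\theta_i$, not $2\theta_i$.

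The correct interpolation is
\[
A^{1-4\theta_i}B^{4\theta_i}\ll\mathsf L^{8\theta_i}(\|w_1\|\|w_2\|)^{1-4\theta_i}P^{2\theta_i}\asymp\mathsf L^{8\theta_i}\bigl(\phi_i(w_1)\phi_i(w_2)\bigr)^{-1}.
\]
This is where the factor $\mathsf L^{8\theta_i}$ in the statement comes from; your $\mathsf L^{4\theta_i}$ was a symptom of the error. The change propagates to Step 3 when $n\mid i+j$. Your lower bound $\|w_-\|^{-1}\phi_{i+j}(w_+)\gg A^{-(1-2\theta_{i+j})}B^{-2\theta_{i+j}}$ is correct, since $\phi_{i+j}(w_+)$ carries only the single distance $D_+$. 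But to dominate $A^{-(1-4\theta_i)}B^{-4\theta_i}$ using $B\ll A$ you now need $2\theta_{i+j}\ge4\theta_i$, that is, $\theta_{i+j}\ge2\theta_i$, not merely $\theta_{i+j}\ge\theta_i$. This holds because $\theta_{i+j}=10^j\theta_i$ with $j\ge1$, and it is one reason for the rapidly increasing choice of the $\theta_i$. The case $n\nmid i+j$ is unaffected, since there $A=B$. With these two corrections your argument becomes the paper's proof.
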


\begin{proof}
We prove the first assertion. Write
$$
w_-:=\mathsf w_{\Delta,W_1\cap W_2},
\qquad
w_+:=\mathsf w_{\Delta,W_1+W_2}.
$$
Set
$$P:=\max\{\|w_1\|\|w_2-\pi_{k,1}(w_2)\|,\|w_2\|\|w_1-\pi_{k,1}(w_1)\|\}.$$
Since the first exceptional component is maximal, norm equivalence gives
$$
\bar\phi_i(w_m)\asymp(\|w_m\|\|w_m-\pi_{k,1}(w_m)\|)^{-1/2}.
$$
The two comparison assumptions imply
\begin{equation}\label{eq:P-barphi-comparison}
P\ll \mathsf L^2
\bar\phi_i(w_1)^{-1}\bar\phi_i(w_2)^{-1}.
\end{equation}
The standard intersection-sum inequality \cite[Lemma~5.6]{eskin-margulis-mozes:1998} gives
\begin{equation}\label{eq:ordinary-plucker-Mother-proof}
\|w_{-}\|\|w_+\|\ll \|w_1\|\|w_2\|,
\end{equation}
and \eqref{eq:Mother-coarse-consequence} gives
\begin{equation}\label{eq:projected-plucker-Mother-proof}
\|w_{-}\|D_+(w_+)\ll P.
\end{equation}

If $n\nmid i+j$, then $D_+(w_+)=\|w_{+}\|$. Using
\eqref{eq:P-barphi-comparison}--\eqref{eq:projected-plucker-Mother-proof}
in \eqref{eq:phi-decomposition-global}, we obtain
$$
\phi_i(w_1)\phi_i(w_2)
\ll \mathsf L^{8\theta_i}(\|w_{-}\|\|w_{+}\|)^{-1}
=\mathsf L^{8\theta_i}\|w_{-}\|^{-1}\phi_{i+j}(w_+).
$$

Suppose that $n\,|\,i+j$. If $D_+(w_+)=0$, then $w_+$ lies in the first exceptional
summand and the right-hand side of
\eqref{eq:BQ-modified-height-inequality} is infinite, so there is nothing
to prove. We may therefore assume that $D_+(w_+)>0$. From \eqref{eq:ordinary-plucker-Mother-proof} and
\eqref{eq:projected-plucker-Mother-proof},
$$
(\|w_1\|\|w_2\|)^{-(1-4\theta_i)}P^{-4\theta_i}
\ll
\|w_{-}\|^{-1}\|w_{+}\|^{-1+4\theta_i}D_+(w_+)^{-4\theta_i}.
$$
Since $D_+(w_+)\ll \|w_{+}\|$, the definition of $\phi_{i+j}$ and norm
equivalence give
$$
\phi_{i+j}(w_+)
\gg \|w_{+}\|^{-1}
\left(\frac{D_+(w_+)}{\|w_{+}\|}\right)^{-2\theta_{i+j}}.
$$
Moreover, $\theta_{i+j}=10^j\theta_i\ge2\theta_i$, and
$D_+(w_+)/\|w_{+}\|$ is bounded above by a constant depending only on the fixed
projections. Hence
$$
\|w_{+}\|^{-1+4\theta_i}D_+(w_+)^{-4\theta_i}
=\|w_{+}\|^{-1}\left(\frac{D_+(w_+)}{\|w_{+}\|}\right)^{-4\theta_i}
\ll \phi_{i+j}(w_+).
$$
This proves \eqref{eq:BQ-modified-height-inequality}. The row-exceptional
case is symmetric.
\end{proof}

\subsection{Main global Margulis inequality for $\widetilde{\alpha}_{\eta,M}$}\label{subsec:globalMargulis}

Fix $0<\eta<1$, $M\ge1$, $M'\ge1$, and $b\ge1$. 
For $1\le k\le n-1$, recall the notation
$\mathscr Q_{kn,\eta,M}$ and $\pi_{k,m}$ from
\eqref{hkd} and \eqref{eq:pikmdefinition}.
For 
$s\ge 2\log n$, let
$$\mathcal E_{ k ,s,\eta,M,M'}$$ be the set of pairs
$(h,\Delta)\in H\times X$ for which there is a $kn$-dimensional
$\Delta$-rational subspace $V$ satisfying
$$
\mathsf w_{\Delta,V}\notin
\mathscr Q_{ k  n,\eta,M},
\qquad
0<\|h\mathsf w_{\Delta,V}\|
\le e^{4n^3s},
$$
and
\begin{equation}\label{twocases}
\min_{m=1,2}
\|h\mathsf w_{\Delta,V}
-
\pi_{ k ,m}(h\mathsf w_{\Delta,V})\|
\le \begin{cases}
e^{-M's}
&\text{if } \alpha(h\Delta)
\le e^{20n^3s},\\ 
\alpha(h\Delta)^{-M'b}
&\text{if }\alpha(h\Delta)
>e^{20n^3s}.
\end{cases}
\end{equation}

Set
\begin{equation}\label{eq:exceptional-set-definition}
\mathcal E_{s,\eta,M,M'}
:=
\bigcup_{ k =1}^{n-1}
\mathcal E_{ k ,s,\eta,M,M'}.
\end{equation}
We suppress the fixed parameter $b$ from the notation
$\mathcal E_{s,\eta,M,M'}$.
The constant $b=b(n)$ will be fixed in the avoidance argument of \Cref{s:avoidance} (see \Cref{prop:avoidance'}). 

We first note that the identity eventually lies outside the exceptional set.

\begin{lemma}\label{lem:quasinullatidentity}
Let $\Lambda\in X$, $0<\eta<1$, and $M\ge1$. There exists
$T_0=T_0(\eta,M,\Lambda)\ge 2\log n$ such that, whenever $s\ge T_0$,
$$
(e,\Lambda)\notin
\mathcal E_{s,\eta,M,10n^3M}.
$$
\end{lemma}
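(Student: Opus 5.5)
The statement is a direct unwinding of the definitions. The idea is that failing to be quasi-null already forces a polynomial lower bound $\eta\|\mathsf w_{\Lambda,V}\|^{-M}$ on the distance to the exceptional summands. On the window $\|\mathsf w_{\Lambda,V}\|\le e^{4n^3s}$ this lower bound decays only like $e^{-4n^3Ms}$, whereas membership in the exceptional set requires the much smaller distance $e^{-10n^3Ms}$. I will take $T_0$ large enough that both the correct case of \eqref{twocases} is selected and this gap in exponents absorbs the factor $\eta$.

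First I fix the case in \eqref{twocases}. Here $h=e$ and $\Lambda$ is fixed, so $\alpha(\Lambda)<\infty$ is a fixed number. Once
$$
s\ge\frac{\log\max\{1,\alpha(\Lambda)\}}{20n^3},
$$
we have $\alpha(e\Lambda)\le e^{20n^3s}$. Only the first alternative of \eqref{twocases} is then relevant, with threshold $e^{-M's}$ and $M'=10n^3M$.

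Next, suppose that for some $1\le k\le n-1$ a $kn$-dimensional $\Lambda$-rational subspace $V$ witnesses $(e,\Lambda)\in\mathcal E_{k,s,\eta,M,10n^3M}$. Write $w=\mathsf w_{\Lambda,V}$. Then $0<\|w\|\le e^{4n^3s}$ and $w\notin\mathscr Q_{kn,\eta,M}$. By the definition \eqref{hkd}, the second condition means that for both $m=1,2$,
$$
\|w-\pi_{k,m}(w)\|>\eta\|w\|^{-M}\ge\eta e^{-4n^3Ms}.
$$
On the other hand, the witnessing condition requires $\min_m\|w-\pi_{k,m}(w)\|\le e^{-10n^3Ms}$. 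These two bounds are incompatible as soon as $\eta e^{-4n^3Ms}\ge e^{-10n^3Ms}$, which holds for
$$
s\ge\frac{\log(1/\eta)}{6n^3M}.
$$

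Therefore I take
$$
T_0:=\max\Bigl\{2\log n,\ \frac{\log\max\{1,\alpha(\Lambda)\}}{20n^3},\ \frac{\log(1/\eta)}{6n^3M}\Bigr\}.
$$
For $s\ge T_0$, no $k$ admits a witnessing subspace. Hence $(e,\Lambda)$ lies outside each $\mathcal E_{k,s,\eta,M,10n^3M}$, and so outside their union $\mathcal E_{s,\eta,M,10n^3M}$.

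There is no real obstacle here. The only points needing care are that non-membership in $\mathscr Q_{kn,\eta,M}$ gives the lower bound for both $m=1,2$, because the definition uses a minimum over $m$, and that $T_0$ must be chosen large enough to select the first case of \eqref{twocases}.
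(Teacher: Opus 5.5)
Your proposal is correct and follows the paper's proof: for large $s$ the first alternative of \eqref{twocases} is in force, and the threshold $e^{-10n^3Ms}$ then falls below $\eta e^{-4n^3Ms}\le\eta\|w\|^{-M}$, which contradicts $w\notin\mathscr Q_{kn,\eta,M}$. Your explicit $T_0$ simply makes the paper's ``for sufficiently large $s$'' quantitative.
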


\begin{proof}
For all sufficiently large $s$, the first line of
\eqref{twocases} applies. If $(e,\Lambda)$ belonged to the
exceptional set, there would be a vector
$v=\mathsf w_{\Lambda,V}\notin
\mathscr Q_{ k  n,\eta,M}$ with
$\|v\|\le e^{4n^3s}$ and
$$
\min_{m=1,2}\|v-\pi_{ k ,m}(v)\|
\le e^{-10n^3Ms}.
$$
For sufficiently large $s$, the right-hand side is at most
$\eta e^{-4n^3Ms}\le\eta\|v\|^{-M}$. Thus
$v\in\mathscr Q_{ k  n,\eta,M}$, a contradiction.
\end{proof}

We now combine the preceding estimates with the Mother Inequality.

\begin{theorem}[Main global Margulis inequality outside the exceptional set]\label{thm:globalcontraction}
Fix $b\ge1$ and $M'\ge1$. There exists
$\theta_0=\theta_0(n,b,M')>0$ such that for every
$0<\theta\le\theta_0$, $0<\eta<1$, $M\ge1$,
$s\ge 2\log n$ and
$(h,\Delta)\notin\mathcal E_{s,\eta,M,M'}$,
\begin{equation}\label{eq:globalcontraction}
\begin{aligned}
\int_{B_U(1)}
\widetilde\alpha(a_suh;\Delta)\,du
\ll{}&
e^{-c_n\theta s}
\widetilde\alpha(h;\Delta)
\log\!\bigl(3+\widehat\alpha_{\eta,M}(h;\Delta)\bigr)
+e^{C_n(M'\theta+\theta^{-1})s}.
\end{aligned}
\end{equation} Here the implicit constant depends only on $b$, $M'$, and $\theta$.
\end{theorem}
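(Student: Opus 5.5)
The plan is to treat the two summands of $\widetilde\alpha=\bar\alpha(h\Delta)+\max_i\widetilde\alpha_{i,\eta,M}(h;\Delta)$ separately. The first is handled by \Cref{prop:globalboundedexpansion'} applied to $h\Delta$: its contribution is $e^{-c_n\theta_1 s}\bar\alpha(h\Delta)+e^{C_n\theta_1^{-1}s}$, which is of the required form since $\theta_1\asymp_n\theta$. For the second, we fix a degree $1\le i\le N-1$ and bound $\int_{B_U(1)}\widetilde\alpha_{i,\eta,M}(a_suh;\Delta)\,du$. Write $v_V:=h\mathsf w_{\Delta,V}$ and let $V_0$ be the non-quasi-null subspace (more precisely, not in $\widetilde{\mathscr Q}_i$) that realizes the supremum at $h$. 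By \eqref{eq:logLipschitzforwedge} and \eqref{eq:logLipschitzformodifiedht}, every $V$ that can realize the supremum at $a_suh$, or contribute a value exceeding $1$, lies in the finite set
$$\mathcal P_i:=\{V\notin\widetilde{\mathscr Q}_i:\ \|v_V\|\le e^{4n^3s},\ \phi_i(v_V)\ge e^{-8n^3s}\phi_i(v_{V_0})\}.$$
As in the proof of \Cref{prop:globalboundedexpansion'}, we then split according to $\#\mathcal P_i$.

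\textbf{Case $\#\mathcal P_i=1$.} Only $V_0$ matters, and we apply the local estimate at exponent $\beta_i$. If $n\nmid i$, \Cref{lem:localcontractioninequalitygenerali} applies, by \eqref{eq:beta-local-range}. If $i=kn$, \Cref{prop:localMargulisineq;n-1} applies, by \eqref{eq:beta-critical-range}. Either way we get decay $e^{-c_n\theta s}\phi_i(v_{V_0})^{\beta_i}$. The restriction $\|a_suv_V\|\le 1$ is harmless, since terms with norm above $1$ contribute $O(1)$.

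\textbf{Case $\#\mathcal P_i\ge2$.} Take distinct $W_1,W_2\in\mathcal P_i$ with intersection of dimension $i-j$ and sum of dimension $i+j$. If $n\nmid i$, the ordinary intersection--sum inequality \eqref{eq:EMMintersectionsum} gives the pointwise bound
$$\phi_i(a_suv_{V})^{\beta_i}\ll e^{C_ns}\,\alpha_{i-j}^+(h\Delta)^{\beta_i/2}\,\|v_{W_1+W_2}\|^{-\beta_i/2},$$
and we absorb it with \eqref{eq:Youngineq}. Young's inequality is set up with $x=\phi_{i+j}$-type (modified height, exponent $\beta_{i+j}$), $y=\alpha_{i-j}^+$ (ordinary height, handled by $\bar\alpha$ through \eqref{eq:beta-tau-comparison}), and $\rho=e^{-c_n\theta s}$. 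The rightmost additive term becomes $e^{C_n\theta^{-1}s}$.

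For $W_1+W_2$ to enter $\widetilde\alpha_{i+j}$, we need it not to lie in $\widetilde{\mathscr Q}$. If it does lie in some quasi-null $V'$, then $W_1,W_2\subset V'$ as well, which contradicts $W_m\notin\widetilde{\mathscr Q}_i$. This downward-closure is exactly why $\widetilde{\mathscr Q}$ was defined as it was.

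If $i=kn$, we instead use \Cref{lem:BQinequality}, which requires three preliminary steps:
\begin{enumerate}
\item Pigeonhole $\mathcal P_i$ so that $W_1,W_2$ have the same maximal component. If the maximal component is generic ($\pi_{k,0}$), use the ordinary argument.
\item Arrange that $\|v_{W_1}\|,\|v_{W_2}\|$ and $\bar\phi_i(v_{W_1}),\bar\phi_i(v_{W_2})$ are comparable with $\mathsf L=e^{C_ns}$. This works because both subspaces lie in $\mathcal P_i$, except that $\bar\phi_i$ may be arbitrarily large.
\item Bound $\bar\phi_i$ from above using the hypothesis $(h,\Delta)\notin\mathcal E_{s,\eta,M,M'}$. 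Since $W_m$ is not quasi-null and $\|v_{W_m}\|\le e^{4n^3s}$, its distance to $\mathcal M_{k,1}\cup\mathcal M_{k,2}$ exceeds $e^{-M's}$, or exceeds $\alpha(h\Delta)^{-M'b}$ when $\alpha(h\Delta)$ is large. Hence $\bar\phi_i\le e^{M's}$, respectively $\le\alpha(h\Delta)^{M'b}$, and the factor $\bar\phi_i^{4\theta_i}$ costs only $e^{4\theta M's}$ or $\alpha^{4\theta M'b}$.
\end{enumerate}
In the large-$\alpha$ regime, the small power $\alpha^{C\theta M'b}$ must be absorbed by \eqref{eq:bar-alpha-comparison}. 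This forces $\theta\le\theta_0(n,b,M')$ and is where the $\log(3+\widehat\alpha)$ loss enters: the number of dyadic scales of $\bar\phi$ that must be separated is $O(\log\widehat\alpha)$. When $W_1+W_2=\M_n(\R)$, the sum factor is $1$ and only the intersection term remains, which \eqref{eq:Youngineq} handles with the $x$-term omitted.

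The main obstacle is the critical-degree pairing. Making $\mathsf L$ small enough that $\mathsf L^{8\theta_i}$ is dominated by the Young margin $\theta_1/(2n^2)$ of \eqref{eq:beta-concavity}, while the $\bar\phi$-ratio is controlled only through the exceptional-set hypothesis, is exactly what produces the logarithmic factor and the $M'\theta$ term in the error. Summing over $i$ and collecting the Case 1 and Case 2 bounds then yields \eqref{eq:globalcontraction}.
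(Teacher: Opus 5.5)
Your split into $\#\mathcal P_i=1$ versus $\#\mathcal P_i\ge2$ leaves a genuine hole at the critical degrees $i=kn$. That hole is exactly what produces the factor $\log(3+\widehat\alpha_{\eta,M}(h;\Delta))$.

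\Cref{lem:BQinequality} applies only to two competitors that share the same maximal exceptional component and have comparable Pl\"ucker norms and $\bar\phi_i$-values. With two or three competitors nothing forces this. The paper therefore takes as its first case $\#\mathcal P_i\le 3C_0\log(3+\widehat\alpha_{\eta,M}(h;\Delta))$. It applies the one-vector contraction (\Cref{prop:localMargulisineq;n-1}, or \Cref{lem:localcontractioninequalitygenerali} off the critical degrees) to \emph{every} competitor and sums. The multiplicative factor $\#\mathcal P_i$ is the logarithmic loss. Only with more competitors than this can one pigeonhole: first according to which component of $\mathcal M_{k,0}\oplus\mathcal M_{k,1}\oplus\mathcal M_{k,2}$ is maximal, then into dyadic ranges of $\|h\mathsf w_{\Delta,W}\|$.

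Your account of the logarithm is therefore not right, for three reasons.
\begin{itemize}
\item A pigeonhole step costs nothing by itself; the loss comes from the complementary case.
\item Membership in $\mathcal P_i$ gives no norm comparability.
\item The dyadic scales of $\bar\phi_i$ are not $O(\log\widehat\alpha)$ in number. Outside $\mathcal E_{s,\eta,M,M'}$ you only know $\bar\phi_i\le\varepsilon^{-1}$, with $\varepsilon=e^{-M's}$ or $\alpha(h\Delta)^{-bM'}$. A loss in $\log\alpha(h\Delta)$ is not what the theorem asserts and cannot be fed into \Cref{s:iterations}, which controls only $\widehat\alpha_{\eta,M}$ via the sets $\mathcal T_{s,\theta}$; $\alpha$ also sees exact isotropic subspaces.
\end{itemize}
The correct pigeonhole is over $\|h\mathsf w_{\Delta,W}\|\in[\widehat\alpha_{\eta,M}(h;\Delta)^{-1},1]$, which is why $\widehat\alpha$ appears. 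Comparability of $\bar\phi_i$ then follows from \eqref{eq:phi-decomposition-global} with $\mathsf L=e^{2C_ns/\theta_i}$. The resulting factor $\mathsf L^{8\theta_i}=e^{16C_ns}$ is simply absorbed into the variable $z$ of \eqref{eq:Youngineq}. There is no need to make $\mathsf L$ small against the exponent margin $\theta_1/(2n^2)$.

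For that pigeonhole, every competitor must also satisfy $\|h\mathsf w_{\Delta,W}\|\le1$ and $\phi_i(h\mathsf w_{\Delta,W})^{\beta_i}\le\widetilde\alpha_{i,\eta,M}(h;\Delta)$. So the subspaces with $1<\|h\mathsf w_{\Delta,W}\|\le e^{4n^3s}$ that you placed in $\mathcal P_i$ must be removed and bounded separately. At a critical degree this is where $(h,\Delta)\notin\mathcal E_{s,\eta,M,M'}$ is first used. It gives $\phi_i(h\mathsf w)^{\beta_i}\ll\varepsilon^{-4\theta_i\beta_i}$, hence an additive error $e^{2C_ns}\bigl(e^{C_nM'\theta s}+\alpha(h\Delta)^{C_nbM'\theta}\bigr)$. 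The last term is absorbed as a sublinear power of $\widetilde\alpha_{\eta,M}(h;\Delta)$, using $\alpha(h\Delta)\le\widetilde\alpha_{\eta,M}(h;\Delta)^{1/\tau_*}$ and $C_nbM'\theta\le1/20$.

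Finally, in the Mother-inequality case you treat only $W_1+W_2=\M_n(\mathbb R)$. When $n\mid i+j$ and $\|h\mathsf w_{\Delta,W_1+W_2}\|>1$, the quantity $\phi_{i+j}(h\mathsf w_{\Delta,W_1+W_2})$ is controlled neither by $\widetilde\alpha_{i+j,\eta,M}$ nor by the exceptional set, since the sum's norm need not be at most $e^{4n^3s}$. The paper handles this with the ordinary intersection--sum inequality plus the $\mathcal E$-bound on $\bar\phi_i(h\mathsf w_{\Delta,W_1})$. That yields $\widetilde\alpha_{i,\eta,M}(h;\Delta)\ll e^{C_n(1+M'\theta)s}\widetilde\alpha_{\eta,M}(h;\Delta)^{2/3}$, and this sublinear exponent is where $\theta\le\theta_0(n,b,M')$ is genuinely needed.
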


\begin{proof}
By reducing $\theta_0$, we may assume that
\begin{equation}\label{eq:theta-small-global-contraction}
C_nbM'\theta\le\frac1{20}.
\end{equation}

All local integral estimates below are applied on a fixed bounded open
neighborhood of $B_{U}(1)$ and then restricted to $B_{U}(1)$.
Fix $1\le i\le N-1$. Let
$\mathcal P_i=\mathcal P_i(h,\Delta,s)$ be the finite collection
of $i$-dimensional $\Delta$-rational subspaces $V$ such that
\begin{equation}\label{eq:relevant-subspaces-definition}
V\notin\widetilde{\mathscr Q}_{i,\eta,M}(\Delta),
\quad
0<\|h\mathsf w_{\Delta,V}\|\le1,
\quad 
\phi_i(h\mathsf w_{\Delta,V})^{\beta_i}
>e^{-12n^3s}\widetilde\alpha_{i,\eta,M}(h;\Delta).
\end{equation}
Finiteness follows because the corresponding primitive vectors in
$\wedge^i(h\Delta)$ lie in a bounded set.
We first bound the contribution of subspaces outside
$\mathcal P_i$. Let $W$ be any $\Delta$-rational subspace occurring
in the supremum in
$\widetilde\alpha_{i,\eta,M}(a_suh;\Delta)$, and write $w:=\mathsf w_{\Delta,W}$. Thus $W\notin\widetilde{\mathscr Q}_{i,\eta,M}(\Delta)$ and $0<\|a_suhw\|\le1$, and its contribution to the supremum is
$\phi_i(a_suhw)^{\beta_i}$. By
\eqref{eq:logLipschitzforwedge}, we have $\|hw\|\le e^{4n^3s}$.

If
$\|hw\|\le1$ and $W\notin\mathcal P_i$, then
\eqref{eq:logLipschitzformodifiedht} and
\eqref{eq:relevant-subspaces-definition} give
\begin{equation}
\begin{aligned}
        \phi_i(a_suhw)^{\beta_i}
&\le e^{4n^3\beta_i s}\phi_i(hw)^{\beta_i}
\\&\le e^{(4n^3\beta_i-12n^3)s}
\widetilde\alpha_{i,\eta,M}(h;\Delta)
\le e^{-4n^3s}\widetilde\alpha_{i,\eta,M}(h;\Delta),
\end{aligned}
\end{equation}
because $\beta_i<2$.

If $\|hw\|>1$ and $n\nmid i$, the same expression is
$O(e^{2C_ns})$. Finally, suppose $i=kn$ and $\|hw\|>1$. Since
$W\notin\widetilde{\mathscr Q}_{i,\eta,M}(\Delta)$, its own Pl\"ucker vector does not
belong to $\mathscr Q_{kn,\eta,M}$. Let
\begin{equation}\label{eq:exceptional-scale}
    \varepsilon_{s,\eta,M,M'}(h;\Delta)
:=
\begin{cases}
e^{-M's}
&\text{if } \alpha(h\Delta)
\le e^{20n^3s},\\ 
\alpha(h\Delta)^{-bM'}
&\text{if }\alpha(h\Delta)
>e^{20n^3s}.\end{cases}
\end{equation}  Because
$(h,\Delta)\notin\mathcal E_{s,\eta,M,M'}$,
$$
\min_{m=1,2}\|hw-\pi_{k,m}(hw)\|
>\varepsilon_{s,\eta,M,M'}(h;\Delta).
$$
Since $\|hw\|>1$ and $-1+4\theta_i<0$, the definition of $\phi_i$ gives
$\phi_i(hw)^{\beta_i}\ll
\varepsilon_{s,\eta,M,M'}(h;\Delta)^{-4\theta_i\beta_i}$.
Using \eqref{eq:logLipschitzformodifiedht} and the two alternatives in
\eqref{eq:exceptional-scale},
we obtain
\begin{equation}\label{eq:outside-P-error}
\phi_i(a_suhw)^{\beta_i}
\ll
e^{2C_ns}
\left(
 e^{C_nM'\theta s}
 +\alpha(h\Delta)^{C_nbM'\theta}
\right).
\end{equation}
With the convention that the maximum over the empty set is zero, it follows that
\begin{equation}\label{eq:competitor-reduction}
\begin{aligned}
\widetilde\alpha_{i,\eta,M}(a_suh;\Delta)
\ll{}&
\max_{V\in\mathcal P_i}
\phi_i(a_suh\mathsf w_{\Delta,V})^{\beta_i}
+e^{-4n^3s}\widetilde\alpha_{i,\eta,M}(h;\Delta)\\
&+e^{2C_ns}
\left(e^{C_nM'\theta s}+\alpha(h\Delta)^{C_nbM'\theta}\right).
\end{aligned}
\end{equation}

Fix a sufficiently large constant $C_0=C_0(n)$. 

\noindent{\bf{Case (1).}} First suppose that
\begin{equation}\label{eq:few-competitors}
\#\mathcal P_i
\le 3C_0\log(3+\widehat{\alpha}_{\eta,M}(h;\Delta)).
\end{equation}
For noncritical $i$, every summand of $\wedge^i\M_n(\mathbb R)$ is
generic, and \Cref{lem:localcontractioninequalitygenerali}
applies. For $i=kn$, use
\Cref{prop:localMargulisineq;n-1}. Thus
$$
\int_{B_{U}(1)}
\phi_i(a_suhv)^{\beta_i}\,du
\ll e^{-c_n\theta_1 s}\phi_i(hv)^{\beta_i}
$$
for every vector $v=\mathsf{w}_{\Delta,V}$ with $V\in \mathcal P_i$. Integrating \eqref{eq:competitor-reduction} and using
\eqref{eq:few-competitors} gives the required decaying term. For the error
term, the definition of $\bar\alpha$ gives $\alpha(h\Delta)
\le \bar\alpha(h\Delta)^{1/\tau_*}
\le \widetilde\alpha_{\eta,M}(h;\Delta)^{1/\tau_*}$.
Together with \eqref{eq:theta-small-global-contraction} and
$\tau_*>1/2$, this implies
$$
\alpha(h\Delta)^{C_nbM'\theta}
\ll 1+\widetilde\alpha_{\eta,M}(h;\Delta)^{1/2}.
$$
The inequality
$x\widetilde\alpha_{\eta,M}(h;\Delta)^{1/2}
\le \rho\widetilde\alpha_{\eta,M}(h;\Delta)+C\rho^{-1}x^2$, with
$\rho=e^{-c_n\theta_1 s}$, absorbs this sublinear power. Consequently,
\begin{multline}\label{eq:few-competitors-bound}
\int_{B_{U}(1)}
\widetilde\alpha_{i,\eta,M}(a_suh;\Delta)\,du
\\ \ll
 e^{-c_n\theta_1 s}\widetilde\alpha_{\eta,M}(h;\Delta)
 \log(3+\widehat{\alpha}_{\eta,M}(h;\Delta))
+e^{C_n(M'\theta+\theta_1^{-1})s}.
\end{multline}

\noindent{\bf{Case (2).}} Now suppose that
\begin{equation}\label{large}
\#\mathcal P_i
> 3C_0\log(3+\widehat{\alpha}_{\eta,M}(h;\Delta)).
\end{equation} If $n\nmid i$, put
$\mathcal R_i^0=\mathcal P_i$ and
$\mathcal R_i^1=\mathcal R_i^2=\emptyset$. If $i=kn$, partition
$\mathcal P_i$ into three sets. Let $\mathcal R_i^0$ consist of
those $V$ for which
$$
\phi_i(h\mathsf w_{\Delta,V})
\le e^{4n^3s}\|h\mathsf w_{\Delta,V}\|^{-1}.
$$
If the generic component of $h\mathsf w_{\Delta,V}$ were maximal, then
\eqref{eq:phi-as-minimum} and equivalence of the fixed norms would give
$\phi_i(h\mathsf w_{\Delta,V})\asymp_n
\|h\mathsf w_{\Delta,V}\|^{-1}$. Since $s\ge2\log n$, such a subspace
belongs to $\mathcal R_i^0$. Thus every remaining subspace has a maximal
exceptional component.
Place each remaining subspace in $\mathcal R_i^1$ or
$\mathcal R_i^2$ according to whether a maximal component of
$h\mathsf w_{\Delta,V}$ is the $\pi_{k,1}$-component or the
$\pi_{k,2}$-component, assigning ties to $\mathcal R_i^1$. At least one of
these three sets contains more than
$C_0\log(3+\widehat{\alpha}_{\eta,M}(h;\Delta))$ elements.

\noindent{\bf{Case (2-a)} $\#\mathcal R_i^0\ge 2$.} 
Let $W_1\ne W_2$ be two subspaces in $\mathcal R_i^0$. Set
$$
\dim(W_1\cap W_2)=i-j,
\qquad
\dim(W_1+W_2)=i+j.
$$
If $i+j<N$ and
$W_1+W_2\in
\widetilde{\mathscr Q}_{i+j,\eta,M}(\Delta)$,
then the downward-closure property and
$W_1\subset W_1+W_2$ would imply
\[
W_1\in\widetilde{\mathscr Q}_{i,\eta,M}(\Delta),
\]
contrary to the definition of $\mathcal P_i$. Therefore
\[
W_1+W_2
\notin
\widetilde{\mathscr Q}_{i+j,\eta,M}(\Delta).
\]
When $i+j=N$, we use the endpoint convention introduced above.
The Pl\"ucker covolume inequality and
\eqref{eq:relevant-subspaces-definition} give
\begin{equation}\label{eq:R0-product-bound}
\widetilde\alpha_{i,\eta,M}(h;\Delta)^{2/\beta_i}
\ll
e^{2C_ns}
\alpha_{i-j}^+(h\Delta)
\widehat\alpha_{i+j,\eta,M}(h;\Delta).
\end{equation}
Taking the $\beta_i/2$-power of
\eqref{eq:R0-product-bound} and absorbing the additional log-Lipschitz
factor from \eqref{eq:competitor-reduction} into $e^{C_ns}$, we obtain
$$
e^{2C_ns}\widetilde\alpha_{i,\eta,M}(h;\Delta)\ll e^{2C_ns}
\widehat\alpha_{i+j,\eta,M}(h;\Delta)^{\beta_i/2}
\alpha_{i-j}^+(h\Delta)^{\beta_i/2}.
$$
Apply \eqref{eq:Youngineq} with
$x=\widehat\alpha_{i+j,\eta,M}(h;\Delta)$,
$y=\alpha_{i-j}^+(h\Delta)$, $z=e^{2C_ns}$, and
$\rho=e^{-c_n\theta_1s}$, omitting an endpoint variable when necessary.
This gives
$$
e^{2C_ns}\widetilde\alpha_{i,\eta,M}(h;\Delta)
\ll
 e^{-c_n\theta_1 s}
\left(
\widehat\alpha_{i+j,\eta,M}(h;\Delta)^{\beta_{i+j}}
+
\alpha_{i-j}^+(h\Delta)^{1/\beta_{i-j}}
\right)
+e^{C_n\theta_1^{-1}s},
$$
with the absent endpoint term omitted. By \eqref{eq:beta-tau-comparison} and
\eqref{eq:sharp-majorized-by-tilde}, both displayed height terms are
$\ll \widetilde\alpha_{\eta,M}(h;\Delta)$. Consequently,
\begin{equation}\label{eq:R0-contraction}
e^{2C_ns}\widetilde\alpha_{i,\eta,M}(h;\Delta)
\ll e^{-c_n\theta_1 s}\widetilde\alpha_{\eta,M}(h;\Delta)
+e^{C_n\theta_1^{-1}s}.
\end{equation}
This settles the case (2-a).

\noindent{\bf{Case (2-b)} $\#\mathcal R_i^0\le1$.} 

Since $\mathcal P_i$ satisfies \eqref{large}, either
$\mathcal R_i^1$ or $\mathcal R_i^2$ then contains more than
$C_0\log(3+\widehat{\alpha}_{\eta,M}(h;\Delta))$ elements. By symmetry, assume that
$\mathcal R_i^1$ is large. Every
$W\in\mathcal R_i^1$ satisfies
$$
\widehat{\alpha}_{\eta,M}(h;\Delta)^{-1}
\le\|h\mathsf w_{\Delta,W}\|\le1.
$$
The interval
$[\widehat{\alpha}_{\eta,M}(h;\Delta)^{-1},1]$ is covered by at most
$\ll \log(3+\widehat{\alpha}_{\eta,M}(h;\Delta))$ dyadic
subintervals. Since $C_0$ was chosen sufficiently
large, the pigeonhole principle gives distinct
$W_1,W_2\in\mathcal R_i^1$ whose Pl\"ucker norms are comparable by an
absolute factor. By \eqref{eq:relevant-subspaces-definition} and the
definition of $\widetilde\alpha_{i,\eta,M}(h;\Delta)$, the
$\phi_i$-value of each lies between
$e^{-12n^3s/\beta_i}\widetilde\alpha_{i,\eta,M}(h;\Delta)^{1/\beta_i}$ and
$\widetilde\alpha_{i,\eta,M}(h;\Delta)^{1/\beta_i}$. They are therefore comparable by a factor
$e^{2C_ns}$. Using \eqref{eq:phi-decomposition-global} and the comparability
of their Pl\"ucker norms, we obtain
\begin{equation}\label{eq:barphi-comparison-competitors}
\mathsf L^{-1}\bar\phi_i(hw_2)
\le\bar\phi_i(hw_1)
\le\mathsf L\bar\phi_i(hw_2) \quad\text{for }\mathsf L:= e^{2C_ns/\theta_i},
\end{equation}
 where $w_m=\mathsf w_{\Delta,W_m}$. Set
$$
w_-:=\mathsf w_{\Delta,W_1\cap W_2},
\qquad
w_+:=\mathsf w_{\Delta,W_1+W_2}.
$$
If $i+j<N$, then, as above,
$W_1+W_2\notin\widetilde{\mathscr Q}_{i+j,\eta,M}(\Delta)$.

\noindent{\bf{Case (2-b-(i))}} Suppose first that $i+j<N$ and $\|hw_+\|\le1$.  Apply
\Cref{lem:BQinequality} to the lattice $h\Delta$ and the subspaces
$hW_1,hW_2$. Together with
\eqref{eq:barphi-comparison-competitors}, this yields
$$
\phi_i(hw_1)\phi_i(hw_2)
\ll e^{2C_ns}
\|hw_-\|^{-1}\phi_{i+j}(hw_+).
$$
Combining this with \eqref{eq:relevant-subspaces-definition} gives
$$
\widetilde\alpha_{i,\eta,M}(h;\Delta)^{2/\beta_i}
\ll
e^{2C_ns}
\alpha_{i-j}^+(h\Delta)
\phi_{i+j}(hw_+).
$$
Because $W_1+W_2\notin\widetilde{\mathscr Q}_{i+j,\eta,M}(\Delta)$ and
$\|hw_+\|\le1$, the last factor satisfies
$$
\phi_{i+j}(hw_+)
\le
\widetilde\alpha_{i+j,\eta,M}(h;\Delta)^{1/\beta_{i+j}}.
$$

Consequently, after absorbing the additional log-Lipschitz factor from
\eqref{eq:competitor-reduction} into $e^{C_ns}$, we obtain
$$
e^{2C_ns}\widetilde\alpha_{i,\eta,M}(h;\Delta)\ll e^{2C_ns}
\widetilde\alpha_{i+j,\eta,M}(h;\Delta)^{\beta_i/(2\beta_{i+j})}
\alpha_{i-j}^+(h\Delta)^{\beta_i/2}.
$$
Apply \eqref{eq:Youngineq} with
$x=\widetilde\alpha_{i+j,\eta,M}(h;\Delta)^{1/\beta_{i+j}}$,
$y=\alpha_{i-j}^+(h\Delta)$, $z=e^{2C_ns}$, and
$\rho=e^{-c_n\theta_1s}$, using its one-factor version when $i-j=0$.
Using
\eqref{eq:beta-tau-comparison}, we obtain
\begin{equation}\label{eq:R1-Mother-contraction}
e^{2C_ns}\widetilde\alpha_{i,\eta,M}(h;\Delta)
\ll e^{-c_n\theta_1 s}\widetilde\alpha_{\eta,M}(h;\Delta)
+e^{C_n\theta_1^{-1}s}.
\end{equation}

\noindent{\bf{Case (2-b-(ii))}} Now suppose that either $\|hw_+\|>1$ or $i+j=N$. In the latter
case, $hw_+$ is a top-degree Pl\"ucker vector of the unimodular lattice
$h\Delta$, and hence $\|hw_+\|=1$. Thus, in either case, the standard intersection-sum inequality and the comparability of $\|hw_1\|$ and
$\|hw_2\|$ imply
$$
\|hw_-\|\ll\|hw_1\|^2.
$$
Hence $\|hw_1\|^{-1}\ll
\alpha_{i-j}^+(h\Delta)^{1/2}$. In the region where the first
exceptional component is maximal, norm equivalence gives
$$
\phi_i(hw_1)\asymp_n
\|hw_1\|^{-1+2\theta_i}
\|hw_1-\pi_{k,1}(hw_1)\|^{-2\theta_i}.
$$
Because $(h,\Delta)$ is outside the exceptional set,
$$
\|hw_1-\pi_{k,1}(hw_1)\|
>\varepsilon_{s,\eta,M,M'}(h;\Delta).
$$
Using the formula for $\phi_i$ in the region where the first exceptional
component is maximal, we obtain
$$
\phi_i(hw_1)
\ll
\alpha_{i-j}^+(h\Delta)^{1/2}
\left(
 e^{2M'\theta_is}
 +\alpha(h\Delta)^{2bM'\theta_i}
\right).
$$
The relevance condition for $W_1$, together with
$\alpha_{i-j}^+(h\Delta)
\le \widetilde\alpha_{\eta,M}(h;\Delta)^{1/\tau_{i-j}}$ when $i-j\ge1$
(and $\alpha_0^+=1$ otherwise) and
$\alpha(h\Delta)\le \widetilde\alpha_{\eta,M}(h;\Delta)^{1/\tau_*}$, now gives
$$
\widetilde\alpha_{i,\eta,M}(h;\Delta)
\ll
e^{2C_ns}e^{C_nM'\theta s}
\widetilde\alpha_{\eta,M}(h;\Delta)^{q_i},
$$
where $q_i:=
\frac{\beta_i}{2\tau_{i-j}}
+
\frac{2bM'\theta_i\beta_i}{\tau_*}$ if $i-j\ge 1$ and
$q_i:=
\frac{2bM'\theta_i\beta_i}{\tau_*}$ if $i-j=0$.

There are only finitely many admissible pairs $(i,j)$. As $\theta\to0$,
$\beta_i$ and $\tau_*$ tend to $1$, as does $\tau_{i-j}$ when $i-j\ge 1$,
while $bM'\theta_i$ tends to $0$.
Thus $q_i$ tends to $1/2$, or to $0$ when $i-j=0$.
After decreasing $\theta_0(n,b,M')$, we therefore have $q_i\le2/3$
uniformly. Hence
\begin{equation}\label{eq:R1-endpoint-sublinear}
\widetilde\alpha_{i,\eta,M}(h;\Delta)
\ll e^{2C_ns}e^{C_nM'\theta s}
\widetilde\alpha_{\eta,M}(h;\Delta)^{2/3}.
\end{equation}
The elementary scaled inequality
$x\widetilde\alpha_{\eta,M}(h;\Delta)^{2/3}
\le \rho\widetilde\alpha_{\eta,M}(h;\Delta)+C\rho^{-2}x^3$, with
$\rho=e^{-c_n\theta_1 s}$, therefore gives the same conclusion as
\eqref{eq:R1-Mother-contraction}. The case of $\mathcal R_i^2$ is
identical.

In Case (2), the log-Lipschitz estimate gives
$$
\max_{V\in\mathcal P_i}
\phi_i(a_suh\mathsf w_{\Delta,V})^{\beta_i}
\ll e^{2C_ns}\widetilde\alpha_{i,\eta,M}(h;\Delta).
$$
Combining this with \eqref{eq:competitor-reduction},
\eqref{eq:R0-contraction}, \eqref{eq:R1-Mother-contraction}, and
\eqref{eq:R1-endpoint-sublinear}, and absorbing the same sublinear error as
in Case (1), yields \eqref{eq:few-competitors-bound}. Thus this
bound holds for every $1\le i\le N-1$. Summing over $i$ and adding
\Cref{prop:globalboundedexpansion'} gives the corresponding estimate in
terms of $\theta_1$. Since $\theta_1=10^{1-N}\theta$ and $N=n^2$, replacing
$\theta_1$ by $\theta$ only changes the dimensional constants. Absorbing
this change into $c_n$ and $C_n$ yields \eqref{eq:globalcontraction}.
\end{proof}

\section{Avoidance of near-exceptional Pl\"ucker vectors}
\label{s:avoidance}

The global Margulis inequality (\Cref{thm:globalcontraction}) from the previous section holds outside an
exceptional set $\mathcal E_{s,\eta,M,M'}$. This set records the event that a rational
Pl\"ucker vector that is not already quasi-null for the base lattice is moved close to one of the column- or row-isotropic summands. The purpose of this section is to show that, under the Diophantine condition, this exceptional event has small measure in the relevant horospherical
averages.

The proof has two ingredients. First, we use a quantitative nondivergence estimate for a
prescribed set of primitive Pl\"ucker vectors (\Cref{prop:Kle08variant}). Applied to the Pl\"ucker
vectors that are not already quasi-null, it shows that either the desired
measure estimate holds, or several such vectors have a wedge that
remains uniformly small throughout the averaging ball. Second, the
instability of nontrivial $H$-representations forces this wedge to be
almost entirely contained in an exterior power of a column- or row-isotropic summand.
The Diophantine condition then upgrades this near-containment to exact
containment. This would force the original Pl\"ucker
vectors themselves to be quasi-null, contradicting the way the prescribed
set was chosen.

Thus the avoidance argument concerns only near-exceptional non-isotropic subspaces.
Exact isotropic subspaces are not excluded here; they are omitted from the
modified height and counted separately in the singular counting theorem (\Cref{thm:quasinullcontribution2}).
\medskip

\begin{proposition}[Avoidance estimate]\label{prop:avoidance'}
There exists $b=b(n)\ge1$ such that the following holds. Let
$
0<\eta<1,\;  M>1,
$
and let $\Delta\in X$ be an $(\eta,M)$-Diophantine lattice. If
$
M'\ge 10n^3bM^2$, $s\ge 2\log n$, and
$t\ge 10n^3bM's$,
then
\begin{equation}\label{eq:avoidance}
\begin{aligned}
&\int_{B_{U}(1)}
\alpha(a_tu\Delta)^{2}\, 
\mathds{1}_{\mathcal E_{s,\eta,M,M'}}(a_tu;\Delta)\,du \ll
e^{-\frac{M'}{bM}s}
+ e^{-\frac{1}{bM}t}.
\end{aligned}
\end{equation}
The implicit constant depends only on
$\Delta,\eta$, and $M$.
\end{proposition}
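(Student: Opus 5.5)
We split the integral in \eqref{eq:avoidance} according to the two alternatives in \eqref{twocases}. On the set where $\alpha(a_tu\Delta)>e^{20n^3s}$, we discard the indicator for the moment and use the tail of $\alpha^2$. By Kleinbock's quantitative nondivergence for the polynomial maps $u\mapsto a_tu\mathsf w$, the measure of $\{u\in B_U(1):\alpha(a_tu\Delta)>R\}$ is $\ll R^{-c}$ once $t$ is large compared with $\log\alpha(\Delta)$. This bound alone does not make $\alpha^2$ integrable, however, since $c$ may be small. We therefore keep the indicator. On this piece the exceptional condition forces some rational $kn$-Plücker vector to be within $\alpha(a_tu\Delta)^{-bM'}$ of $\mathcal M_{k,1}\cup\mathcal M_{k,2}$, and we want the joint event ``$\alpha\sim R$ and such a near-exceptional vector exists'' to have measure $\ll R^{-3}e^{-t/(bM)}$ after dyadic summation in $R$. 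On the complementary piece $\alpha(a_tu\Delta)\le e^{20n^3s}$, the factor $\alpha^2\le e^{40n^3s}$ has to be beaten by a measure bound of size $e^{-M's/(bM)-40n^3s}$, which is where the hypothesis $M'\ge10n^3bM^2$ enters.

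Both pieces reduce to one measure estimate. Let $\mathcal W$ be the set of primitive Plücker vectors $\mathsf w_{\Delta,V}$ with $\dim V=kn$ and $\mathsf w_{\Delta,V}\notin\mathscr Q_{kn,\eta,M}$. For $\varepsilon$ of the scale $e^{-M's}$ or $R^{-bM'}$, we must bound the measure of the set of $u\in B_U(1)$ for which some $w\in\mathcal W$ has $\|a_tuw\|\le e^{4n^3s}$ (respectively $\le R^{C}$) and $\operatorname{dist}(a_tuw,\mathcal M_{k,m})\le\varepsilon$. We will apply the prescribed-set variant of quantitative nondivergence, \Cref{prop:Kle08variant}, to the family of functions $u\mapsto \|a_tu w-\pi_{k,m}(a_tuw)\|$ and to their wedges $\|(\operatorname{id}-\pi^{(r)}_{k,m})(a_tu\,w_1\wedge\cdots\wedge w_r)\|$. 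These functions are polynomial of bounded degree in $u$, hence $(C,\alpha)$-good, and the set $\mathcal W$ is closed under the relevant flag structure. The dichotomy then has two outcomes. Either the desired measure bound $\ll(\varepsilon/\rho)^{c}$ holds, with $c$ depending only on $n$, which after choosing $b$ large gives the two terms in \eqref{eq:avoidance}. Or there are $w_1,\dots,w_r\in\mathcal W$, with $r\le\dim\mathcal M_{k,m}$, such that $\sup_{u\in B_U(1)}\|(\operatorname{id}-\pi^{(r)}_{k,m})(a_tuw_1\wedge\cdots\wedge w_r)\|<\rho$.

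The main obstacle is ruling out the second alternative. Since $\mathcal M_{k,m}^{\perp}$, and its analogue in $\wedge^r$, is a sum of nontrivial $H$-representations, smallness of the non-exceptional part on all of $a_tB_U(1)$ forces the non-exceptional component of $\mathbf w=w_1\wedge\cdots\wedge w_r$ itself to be $\ll e^{-ct}\rho$. This uses the expansion from the contraction estimates, \Cref{prop:generallocalcontraction}, or simply the fact that for a nontrivial irreducible representation $\sup_{u\in B_U(1)}\|a_tuv\|\gg e^{ct}\|v\|$. Meanwhile $\|\mathbf w\|\ll e^{C t}$ by log-Lipschitz bounds. For $t\ge10n^3bM's$ and $b$ large, this distance is below $\eta\|\mathbf w\|^{-M}$. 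The $(\eta,M)$-Diophantine hypothesis, in the case $r=1$ or $r=\dim\mathcal M_{k,m}$, then gives $\mathbf w=\pi^{(r)}_{k,m}(\mathbf w)$ exactly. Intermediate values of $r$ are handled by completing to a full spanning configuration, or by first reducing via Plücker relations to $r=1$.

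Exact containment of the wedge in $\wedge^r\mathcal M_{k,m}$ means that each $w_j$ lies in the span of $\mathcal M_{k,m}$. When $r=1$, this makes $w_1$ exceptional and hence quasi-null, contradicting $w_1\in\mathcal W$. When $r=\dim\mathcal M_{k,m}$, the span of the $w_j$ equals $\mathcal M_{k,m}$ and is rational, so \Cref{prop:rational-exceptional-summand-implies-det-rational} makes $\Delta$ determinant-rational. In that case we instead argue that the $w_j$ individually lie in $\mathcal M_{k,m}$, again a contradiction. I expect the delicate bookkeeping to be the choice of $b=b(n)$ uniformly, so that the exponents from the nondivergence estimate, the $\alpha^2$ weight, and the Diophantine exponent $M$ all fit together.
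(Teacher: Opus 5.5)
Your overall architecture is the right one: selected-vector nondivergence over the non-quasi-null Pl\"ucker vectors, then instability and the Diophantine condition at $r=\dim\mathcal M_{k,m}$ to exclude the bad alternative, ending with each $w_j\in\mathcal M_{k,m}$. But the exclusion step fails as you set it up, for four reasons.

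(i) \Cref{prop:Kle08variant} needs the functions $\|h(x)w\|$ and $\|h(x)(w_1\wedge\cdots\wedge w_r)\|$ with $h(x)$ invertible. Your seminorms $\|(\mathrm{id}-\pi_{k,m})a_tuw\|$ and $\|(\mathrm{id}-\pi^{(r)}_{k,m})a_tu\mathbf w\|$ are not of this form. Its proof uses finiteness of short elements and the submultiplicativity $\psi_{W+\mathbb Rw}\le\psi_W\|h(x)w\|$, and neither is available here. For instance, $(\mathrm{id}-\pi^{(r+1)}_{k,m})(\mathbf w\wedge w)$ contains the term $\pi^{(r)}_{k,m}(\mathbf w)\wedge(w-\pi_{k,m}(w))$. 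The paper instead folds both constraints $\|y\|\le e^{4n^3s}$ and $\|y-\pi_{k,m}(y)\|\le\varepsilon$ into one invertible map $\psi$. This map scales $\mathcal M_{k,m}$ by $c_0\varepsilon e^{-4n^3s}$ and its complement by $c_0$, and nondivergence is applied to $\psi a_tu$ with $\rho=\varepsilon^{1-\delta}$.

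(ii) For $r\ge2$ the complement of $\wedge^r\mathcal M_{k,m}$ is not a sum of nontrivial $H$-modules. With $q=\dim\mathcal M_{k,1}=\dim\mathcal M_{k,2}$, the line $\wedge^q\mathcal M_{k,m'}$ with $m'\ne m$ is $H$-trivial. This happens exactly at the $r$ where you invoke the Diophantine condition. Instability says nothing about such components. The paper controls them because $\wedge^r\psi$ scales every summand with fewer than $r$ exceptional factors by at least $c_0^r\varepsilon^{r-1}e^{-4n^3s(r-1)}$, while the bad alternative gives $\varepsilon^{r(1-\delta)}$.

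(iii) The bound $\|\mathbf w\|\ll e^{Ct}$ is useless for the Diophantine inequality. We have $\varepsilon=e^{-M's}\ge e^{-t/(10n^3b)}$, and $t/s$ is unbounded. So no gain of the form $e^{-ct}\rho$, with $c=c(n)$ and $\rho\ge\varepsilon$, falls below $\eta e^{-CMt}$ once $M$ is large. What you need is a $t$-independent bound, namely $\|\mathbf w\|\ll\varepsilon^{-1/(4M)}$. It holds because the $H$-trivial part of $\mathbf w$ is unchanged by $a_tu$, and the nontrivial part is $\ll\varepsilon^{1/2}$ by \Cref{lem:liegroupinstability}. This is where the condition $t\ge b_1\log\varepsilon^{-1}$ enters.

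(iv) For $0<r<q$, neither ``completing to a spanning configuration'' nor Pl\"ucker relations gives an argument. What works is that a nonzero decomposable vector of $\wedge^r\mathcal M_{k,m}$ has nontrivial part comparable to its norm. This follows from irreducibility of $\mathcal M_{k,m}$ under $H_m$ and compactness of the Grassmannian. It contradicts the smallness of the nontrivial part and needs no Diophantine input.

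The passage from the measure estimate to \eqref{eq:avoidance} is also incomplete. The instability step needs $t\ge b_1\log\varepsilon^{-1}$, so at the scale $\varepsilon=R^{-bM'}$ it covers only $R\le e^{t/(b_1bM')}$. Meanwhile $\alpha(a_tu\Delta)$ may a priori be as large as $e^{Ct}\alpha(\Delta)$. Your target bound $R^{-3}e^{-t/(bM)}$ has no mechanism behind it.

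The missing idea is a cap: on $\mathcal E_{s,\eta,M,M'}$ one has $\alpha(a_tu\Delta)\le C_0e^{t/(b^2M)}$. To see this, pull the witness back by $(a_tu)^{-1}$. This gives $\|v\|\ll e^{4n^3(t+s)}$ and $\|v-\pi_{k,m}(v)\|\ll e^{4n^3t}\alpha(a_tu\Delta)^{-bM'}$. For larger $\alpha$, these bounds would make $v$ quasi-null for $\Delta$ itself, a contradiction.

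With the cap in hand, consider the levels $\mathsf h_i=e^{10n^3(2s+i)}$ with $b_1M'\log\mathsf h_{i-1}>t/2$. These are handled at the coarser scale $\varepsilon_*=e^{-t/(2b_1)}$, giving measure $\ll e^{-t/(2b_1^2M)}$ per level. This beats $\mathsf h_I^2\ll e^{3t/(b^2M)}$ times $O(t)$ levels, and it is the source of the term $e^{-t/(bM)}$.

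Note also that in both alternatives of \eqref{twocases} the norm constraint is $\|h\mathsf w_{\Delta,V}\|\le e^{4n^3s}$, not $R^C$.
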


The remainder of the section is devoted to the proof.

\subsection{Instability and selected-vector nondivergence}
We first record a quantitative instability statement. 

\begin{lemma}\label{lem:liegroupinstability}
Let $V$ be a nontrivial irreducible finite-dimensional $H$-module. There
exist  $c=c(V)>0$ and $\kappa=\kappa(V)>0$ such that
for every $0\ne v\in V$ and $t\ge1$,
$$
\sup_{u\in B_{U}(1)}\|a_tuv\|
\ge c\,  e^{\kappa t}\|v\|.
$$
\end{lemma}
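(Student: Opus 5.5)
The plan is to write $V\simeq W_1\otimes W_2$ with $W_m$ an irreducible $\SL_n(\mathbb R)$-module. Since $V$ is nontrivial, at least one of $W_1,W_2$ is nontrivial, say $W_1$ after relabeling. We then work with the $b_t$-weight decomposition of that factor, $W_1=\bigoplus_{j=0}^{\lambda_1}V_j$, with weights $\mu_j=-|\lambda|+nj$, where $\lambda$ is the reduced partition so that $\lambda_n=0$ and $\lambda\ne0$. Equip $V$ with a norm in which the $a_t$-weight spaces of $V$ are orthogonal. These are the tensor products of the weight spaces of the two factors, with weight $\mu_j+\mu'_{j'}$. The argument then reduces to two claims: a uniform-in-direction lower bound on the size of the top-weight component after a bounded unipotent translation, and a positive top weight.

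\textbf{Step 1: the top weight is positive.} The largest $a_t$-weight of $V$ is $\mu_{\lambda_1}+\mu'_{\lambda'_1}$. For a nontrivial factor, $\mu_{\lambda_1}=n\lambda_1-|\lambda|>0$, since $|\lambda|<n\lambda_1$ when $\lambda_n=0$ and $\lambda\ne0$. For a trivial factor the top weight is $0$. Hence the top weight $\kappa_0$ of $V$ is a positive integer.

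\textbf{Step 2: compactness.} We claim that for every $0\ne v\in V$ there is some $u\in B_U(1)$ whose top-weight projection $p_{\rm top}(uv)$ is nonzero. If not, the real-analytic (indeed polynomial) function $u\mapsto p_{\rm top}(uv)$ vanishes on the open set $B_U(1)$, hence on all of $U$. Then $p_{\rm top}(Uv)=0$, and the span of $Uv$ is a nonzero $U$-invariant subspace missing the highest weight space. We rule this out as follows. The span of $Uv$ contains a nonzero $U$-fixed vector. By irreducibility, the $U$-fixed vectors, with $U$ the expanding horospherical subgroup, are exactly the highest weight line relative to the opposite Borel, i.e. the $a_t$-top weight space. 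This contradicts $p_{\rm top}(uv)=0$.

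The continuous function
$$f(v):=\sup_{u\in B_U(1)}\|p_{\rm top}(uv)\|$$
is therefore positive on the unit sphere of $V$. By compactness of the sphere, $f(v)\ge c$ there, and by homogeneity $f(v)\ge c\|v\|$ for all $v$.

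\textbf{Step 3: conclusion.} Orthogonality of the weight spaces gives
$$\|a_tuv\|\ge e^{\kappa_0t}\|p_{\rm top}(uv)\|.$$
Taking the supremum over $u\in B_U(1)$ yields the claim with $\kappa=\kappa_0$ and $c=c(V)$.

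The main point needing care is Step 2: the identification of the $U$-invariants of the irreducible module $W_1\otimes W_2$ with the top-weight space for $a_t$. This holds because $U=U_1U_2$ is the unipotent radical of a parabolic whose Levi contains $a_t$ centrally. Its fixed vectors in an irreducible module form the irreducible module of the Levi on the extreme $a_t$-eigenspace, which is exactly the maximal $a_t$-weight space.
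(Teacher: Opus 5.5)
Your argument is correct and is essentially the paper's proof: you identify $V^U$ with the maximal $a_t$-weight space, on which $a_t$ expands at a positive exponential rate because one tensor factor is nontrivial. You then show that $v\mapsto\sup_{u\in B_U(1)}\|p_{\mathrm{top}}(uv)\|$ vanishes only at $v=0$, using polynomial continuation from $B_U(1)$ to $U$ and the existence of a nonzero $U$-fixed vector in the span of $Uv$, and conclude by homogeneity; the paper phrases this last step as the seminorm being a norm rather than via compactness of the unit sphere. One wording slip: for $n\ge3$ the space $V^U$ is generally not a line, since $U$ is the unipotent radical of a maximal parabolic rather than of a Borel subgroup, but as your final paragraph correctly states it is the full top $a_t$-weight space, and only its inclusion in that weight space is used.
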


\begin{proof}
 We first record that the subspace of $U$-fixed vectors
$V^{U}$ is non-zero and that $a_t$ expands $V^{U}$ exponentially. Indeed,
after complexification, each irreducible constituent is a tensor product of
irreducible modules for the two $\SL_n$-factors. For one factor, write the
module as ${\mathsf S}_\lambda(\mathbb C^n)$ after adding a determinant
twist, with $\lambda_n=0$. The determinant twist is trivial on $\SL_n$.
The $U_i$-fixed vectors form the top component for the maximal parabolic
with unipotent radical $U_i$. Along the principal element
$b_t=\operatorname{diag}(e^{-t},\ldots,e^{-t},e^{(n-1)t})$, every nonzero
weight on this top component is at least a positive multiple of $t$, unless
the representation of that factor is trivial. Since $V$ is nontrivial,
there are constants $c_1,\kappa_1>0$, depending only on $V$, such that
\begin{equation}\label{eq:UH-fixed-expansion}
\|a_tw\|
\ge c_1e^{\kappa_1t}\|w\|
\qquad
(w\in V^{U},\ t\ge0).
\end{equation}
Since $V^U$ is the maximal $a_t$-weight space, let
$P_U:V\to V^U$ be the projection along the sum of the remaining
$a_t$-weight spaces. Then $P_U$ commutes with $a_t$.
Define $q:V\to[0,\infty)$ by
$$
q(v):=\sup_{u\in B_U(1)}\|P_U(uv)\|.
$$
Absolute homogeneity and the triangle inequality are immediate, so $q$
is a seminorm. We claim that it is a norm. Indeed, if $q(v)=0$, then the polynomial map
$u\mapsto P_{U}(uv)$ vanishes on $B_{U}(1)$, hence on all of $U$. The span
of $Uv$ is then a nonzero finite-dimensional $U$-module with no nonzero
$U$-fixed vector, which is impossible for a unipotent group. Hence
$q(v)=0$ only for $v=0$. Therefore
$q(v)\gg_V\|v\|$. Hence we obtain
$$
\sup_{u\in B_{U}(1)}\|a_tuv\|
\gg_V
\sup_{u\in B_{U}(1)}\|a_tP_{U}(uv)\|
\gg_V e^{\kappa_1t}q(v)
\gg_V e^{\kappa_1t}\|v\|.
$$
This proves the lemma.
\end{proof}

We next state the form of quantitative non-divergence needed here. This is
in the spirit of the quantitative non-divergence theorem of
Kleinbock--Margulis \cite{kleinbock-margulis:1998}, but the short vectors
must belong to a prescribed subset of the primitive vectors. The bad
alternative must also retain the rank: for a rank-$m$ wedge, the lower bound
must occur at the natural scale $\rho^m$, rather than at a rank-independent
scale. The weighted-poset theorem of Kleinbock
\cite[Theorem~2.1]{kleinbock:2008} gives precisely this form.

Recall that for given constants $C, \tau>0$, a continuous function $f$ on a ball $B\subset\mathbb R^\ell$
is \emph{$(C,\tau)$-good} if, for every ball $B_0\subset B$ and 
$\varepsilon>0$,
$$
\operatorname{Leb}\{x\in B_0:|f(x)|<\varepsilon\}
\le
C\left(\frac{\varepsilon}{\sup_{B_0}|f|}\right)^\tau
\operatorname{Leb}(B_0).
$$
A continuous map $h:B\to\GL_k(\mathbb R)$ is called $(C,\tau)$-good if the function
$x\mapsto\|h(x)w\|$ is $(C,\tau)$-good for every
$0\ne w\in\wedge^i\mathbb R^k$ and $1\le i\le k$.

When $\Upsilon$ is the set of all primitive vectors in $\Delta$, the following is the usual quantitative nondivergence dichotomy of Kleinbock \cite{kleinbock:2008}; the
point of the present formulation is that the same dichotomy holds for any
prescribed subset of primitive vectors.
\begin{proposition}[Selected-vector quantitative nondivergence]
\label{prop:Kle08variant}
For every $\ell,k\in\mathbb N$, there is 
$c=c(\ell,k)>0$ with the following property. Let
$x_0\in \mathbb R^\ell$, $r>0$,  and
$h:B(x_0,3^kr)\to\GL_k(\mathbb R)$ be a $(C,\tau)$-good map for some $C, \tau>0$. Let
$\Delta<\mathbb R^k$ be a lattice and let
$\emptyset\ne\Upsilon\subset\Delta$ be a subset consisting of primitive
vectors. For every
$0<\varepsilon\le\rho\le1$, one of the following alternatives holds:

\begin{enumerate}
\item there are linearly independent
$w_1,\dots,w_m\in\Upsilon$ such that
$$
\sup_{x\in B(x_0,r)}
\|h(x)(w_1\wedge\cdots\wedge w_m)\|
\le \rho^m;
$$

\item
$$
\operatorname{Leb}\left\{x\in B(x_0, r):
\min_{w\in\Upsilon}\|h(x)w\|\le\varepsilon\right\}
\le
c\; C\; \left(\frac{\varepsilon}{\rho}\right)^\tau
\operatorname{Leb}(B(x_0,r)).
$$
\end{enumerate}
\end{proposition}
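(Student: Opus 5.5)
We follow Kleinbock's proof of \cite[Theorem~2.1]{kleinbock:2008}, which works with an arbitrary poset of subgroups and a norm-like function on it; the only change is which subgroups we allow. Define $\mathfrak P$ to be the poset of all primitive subgroups $\Gamma<\Delta$ that are $\mathbb Z$-spanned by elements of $\Upsilon$, i.e.\ $\Gamma=\Delta\cap\operatorname{span}_{\mathbb R}(w_1,\dots,w_m)$ for some linearly independent $w_1,\dots,w_m\in\Upsilon$. The relevant function on $\mathfrak P$ is not the covolume of $\Gamma$ but the quantity
$$
\psi_x(\Gamma):=\min\{\|h(x)(w_1\wedge\cdots\wedge w_m)\|:\ w_i\in\Upsilon\cap\Gamma,\ \operatorname{span}(w_i)=\Gamma\otimes\mathbb R\},
$$
or, more simply, we work directly with the finite-rank wedges $h(x)(w_1\wedge\cdots\wedge w_m)$. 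Each such function of $x$ is $(C,\tau)$-good by the hypothesis on $h$. Moreover the wedge of elements of $\Upsilon$ spanning $\Gamma$ has norm bounded below by $\operatorname{covol}(h(x)\Gamma)$, so the ``short vector'' conclusion is unaffected.

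The proof is an induction on the rank $k$, following the Kleinbock--Margulis covering scheme. Suppose alternative (1) fails, so that for every linearly independent $w_1,\dots,w_m\in\Upsilon$ there is a point of $B(x_0,r)$ at which $\|h(x)(w_1\wedge\cdots\wedge w_m)\|>\rho^m$. Fix $x\in B(x_0,r)$ with $\min_{w\in\Upsilon}\|h(x)w\|\le\varepsilon$. Consider the flags in $\mathfrak P$ built from wedges whose value at $x$ is at most $\rho^{\dim}$; these are nonempty, since they contain the rank-one element spanned by the short vector. Among them, choose a maximal chain of elements that are ``$\rho$-small at $x$.'' Around $x$ take the largest ball $B_x\subset B(x_0,3^kr)$ on which all wedges of this chain stay $\le\rho^{\dim}$. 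By maximality, on the ball $3B_x$ some wedge of the chain attains size $\ge\rho^{\dim}$.

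Apply the $(C,\tau)$-good property on $3B_x$ to the function $\|h(\cdot)w\|$ for the shortest selected vector $w$ in the bottom element of the flag. The set where this function is $\le\varepsilon$ has relative measure at most $C(\varepsilon/\rho)^\tau$, up to the scale comparison between rank one and rank $m$. Handling that comparison is exactly why Kleinbock's weighted normalization uses $\rho^m$; we reuse his computation verbatim. A Besicovitch covering of the bad set by the balls $B_x$ then gives (2), with a constant $c(\ell,k)$ coming from the Besicovitch constant and the $3^k$ dilations.

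The main obstacle is the step that uses the poset structure. Kleinbock needs that the elements of the chosen flag are comparable, and that for $\Gamma\subset\Gamma'$ the relevant function behaves submultiplicatively under taking successive quotients. For a general subset $\Upsilon$, a subgroup spanned by elements of $\Upsilon$ need not admit a basis from $\Upsilon$ compatible with a smaller such subgroup.

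The plan is to avoid this difficulty by never passing to quotients. Instead we index everything by wedges of actual elements of $\Upsilon$, and build the flag greedily: extend $w_1$ by selected vectors $w_2,\dots$ one at a time. This only requires that the functions $\|h(\cdot)(w_1\wedge\cdots\wedge w_m)\|$ be $(C,\tau)$-good, which holds, and that they increase (up to bounded constants) along the flag. That monotonicity is exactly what the covering argument uses. If this greedy version turns out to break the maximality step, the fallback is to replace $\Upsilon$ by its saturation and check that the bad alternative still produces wedges of selected vectors with the $\rho^m$ bound.
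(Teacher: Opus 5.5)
Your setup is the same as the paper's. You use the poset $\mathfrak P$ of $\Delta$-rational subspaces spanned by elements of $\Upsilon$. You use the function $\psi_W(x)$ given by the least norm of $h(x)(w_1\wedge\cdots\wedge w_m)$ over tuples in $\Upsilon\cap W$ spanning $W$; this is a fixed index times $\|h(x)\mathsf w_{\Delta,W}\|$, hence $(C,\tau)$-good. And you use the weights $\rho^{\dim W}$.

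The gap is in what you do next. \cite[Theorem~2.1]{kleinbock:2008} is purely abstract in the poset. It needs only:
\begin{itemize}
\item bounded chain length;
\item goodness of each $\psi_W$ on the enlarged ball;
\item a pointwise finiteness condition;
\item $\sup_{B(x_0,r)}\psi_W\ge\rho^{\dim W}$ for all $W$, whose failure is exactly alternative~(1).
\end{itemize}
Its proof involves no quotients or compatible bases. So the obstacle you identify does not arise, and the covering argument need not be redone.

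What the theorem returns is that the set of points that are not $\delta$-marked has measure $\ll_{\ell,k}C\delta^\tau\operatorname{Leb}(B(x_0,r))$. The one step that genuinely uses the structure of $\mathfrak P$ is missing from your proposal. That step is: a $\delta$-marked point, with $\delta=\lambda\varepsilon/\rho$ and $\lambda>1$ close to $1$, satisfies $\|h(x)w\|>\varepsilon$ for every $w\in\Upsilon$.

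The paper proves this as follows. Take a marking flag $W_1\subsetneq\cdots\subsetneq W_j$, let $i$ be least with $w\in W_i$ (or $i=j+1$), and set $U:=W_{i-1}+\mathbb Rw$. Then $U\in\mathfrak P$ is comparable with every $W_{i'}$, so marking gives $\psi_U(x)\ge\delta\rho^{\dim U}$. Appending $w$ to a minimizing tuple for $W_{i-1}$ gives $\psi_U(x)\le\psi_{W_{i-1}}(x)\|h(x)w\|\le\rho^{\dim W_{i-1}}\|h(x)w\|$. Hence $\|h(x)w\|\ge\lambda\varepsilon$, and letting $\lambda\to1$ yields alternative~(2).

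Your proposed substitute does not supply this step, and it rests on false claims.
\begin{itemize}
\item Wedge norms need not increase along a flag. For $h=\operatorname{diag}(e^t,e^{-2t})$ with $t>0$ and $w_1=e_1$, $w_2=e_2$, one has $\|h(w_1\wedge w_2)\|=e^{-t}<e^t=\|hw_1\|$. The covering argument never uses such monotonicity; the inequality actually needed goes the other way, namely the submultiplicativity above.
\item Your covering step applies goodness on $3B_x$ to $\|h(\cdot)w\|$ for the single vector $w$ that is short at $x$. But maximality only bounds from below the supremum of some member of the chain, not of this function. The estimate also says nothing about other selected vectors that are short at other points of $B_x$; controlling those is exactly what the induction over the poset inside Kleinbock's theorem accomplishes.
\item The saturation fallback would only produce a subgroup $W$ with $\|h(x)\mathsf w_{\Delta,W}\|\le\rho^{\dim W}$ on $B(x_0,r)$. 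This quantity is smaller than the selected wedges by a possibly unbounded index factor, so it does not give alternative~(1).
\end{itemize}
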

\begin{proof}
Set $B:=B(x_0,r)$ and $\widetilde B:=B(x_0,3^kr)$.
Let $\mathfrak P_\Upsilon$ be the partially ordered set of nonzero
$\Delta$-rational subspaces $W<\mathbb R^k$ spanned by finite subsets
of $\Upsilon$, ordered by inclusion.

For $W\in\mathfrak P_\Upsilon$, set
$\Delta_W:=\Delta\cap W$, and let $\mathsf w_{\Delta,W}$ be a primitive
Pl\"ucker vector of $\Delta_W$. If $m=\dim W$, define
$$
\iota_\Upsilon(W)
:=
\min
\left\{
[\Delta_W:\mathbb Zw_1+\cdots+\mathbb Zw_m]:
\begin{array}{l}
w_1,\dots,w_m\in\Upsilon\cap W,\\
\operatorname{span}_{\mathbb R}\{w_1,\dots,w_m\}=W
\end{array}
\right\}.
$$
 Set
$$
\psi_W(x)
:=
\iota_\Upsilon(W)\,
\|h(x)\mathsf w_{\Delta,W}\|.
$$
This function is $(C,\tau)$-good on $\widetilde B$ by the hypothesis on $h$. Moreover,
\begin{equation}\label{eq:selected-minimal-wedge}
\psi_W(x)
=
\min
\left\{
\|h(x)(w_1\wedge\cdots\wedge w_m)\|:
\begin{array}{l}
w_1,\dots,w_m\in\Upsilon\cap W,\\
\operatorname{span}_{\mathbb R}\{w_1,\dots,w_m\}=W
\end{array}
\right\}.
\end{equation}
Indeed, each wedge $w_1\wedge\cdots\wedge w_m$ is, up to sign, the
corresponding index times $\mathsf w_{\Delta,W}$. On $\mathfrak P_\Upsilon$, consider the weight function:
$$
W\mapsto \zeta(W):=\rho^{\dim W}.
$$
Conditions (A0), (A1), and (A3) of
\cite[Theorem~2.1]{kleinbock:2008} are satisfied. Condition (A0)
follows because every strictly increasing chain in
$\mathfrak P_\Upsilon$ has at most $k$ elements, and (A1) follows from
the goodness of the functions $\psi_W$. For (A3), observe that
$$
\psi_W(x)<\zeta(W)\le1
\quad\to\quad
\|h(x)\mathsf w_{\Delta,W}\|<1.
$$
For each fixed $x$, the lattice $h(x)\Delta$ has only finitely many
primitive subgroups of bounded covolume.

If condition (A2) fails, then for some
$W\in\mathfrak P_\Upsilon$,
$$
\sup_{x\in B}\psi_W(x)<\rho^{\dim W}.
$$
Choosing vectors that attain the minimum defining
$\iota_\Upsilon(W)$ and using
\eqref{eq:selected-minimal-wedge} gives alternative~(1).
We may therefore assume that (A2) holds, namely,
$$
\sup_{x\in B}\psi_W(x)\ge\rho^{\dim W}
\qquad(W\in\mathfrak P_\Upsilon).
$$
The conclusion is immediate when $\varepsilon=\rho$, after enlarging
the constant, so suppose that $\varepsilon<\rho$. Choose
$$
1<\lambda<{\rho}{\varepsilon}^{-1}
\qquad\text{and set}\qquad
\delta:={\lambda\varepsilon}{\rho}^{-1}<1.
$$
By \cite[Theorem~2.1]{kleinbock:2008},
$$
\operatorname{Leb}
\bigl(B\smallsetminus\Phi(\delta,\mathfrak P_\Upsilon)\bigr)
\ll_{\ell,k}
C\delta^\tau\operatorname{Leb}(B),
$$
where $\Phi(\delta,\mathfrak P_\Upsilon)$ denotes the set of
$\delta$-marked points.
We claim that every
$x\in\Phi(\delta,\mathfrak P_\Upsilon)$ satisfies
$$
\|h(x)w\|>\varepsilon
\qquad(w\in\Upsilon).
$$
Let
$W_1\subsetneq\cdots\subsetneq W_j$
be a marking flag at $x$, possibly empty, and put $W_0:=\{0\}$.
Fix $w\in\Upsilon$. If $j\ge1$ and $w\in W_j$, let $i$ be the least
index such that $w\in W_i$; otherwise put $i:=j+1$. In either case,
set
$$
U:=W_{i-1}+\mathbb Rw.
$$
Then $U\in\mathfrak P_\Upsilon$,
$\dim U=\dim W_{i-1}+1$, and $U$ is comparable with every member of
the marking flag. The marking inequalities therefore give
$$
\psi_U(x)
\ge
\delta\rho^{\dim U}
=
\lambda\varepsilon\rho^{\dim W_{i-1}}.
$$

Choose vectors in $\Upsilon\cap W_{i-1}$ attaining
$\iota_\Upsilon(W_{i-1})$; when $i=1$, use
the empty wedge and the convention
$\psi_{W_0}=1$. By
\eqref{eq:selected-minimal-wedge} and the submultiplicativity of the
exterior norm,
$$
\psi_U(x)
\le
\psi_{W_{i-1}}(x)\|h(x)w\|
\le
\rho^{\dim W_{i-1}}\|h(x)w\|.
$$
Consequently,
$$
\|h(x)w\|\ge\lambda\varepsilon>\varepsilon,
$$
which proves the claim.

Thus the sublevel set in alternative~(2) is contained in
$B\smallsetminus\Phi(\delta,\mathfrak P_\Upsilon)$. Hence
$$
\operatorname{Leb}\left\{
x\in B:
\min_{w\in\Upsilon}\|h(x)w\|\le\varepsilon
\right\}
\ll_{\ell,k}
C\left(\frac{\lambda\varepsilon}{\rho}\right)^\tau
\operatorname{Leb}(B).
$$
Letting $\lambda\to1$ proves alternative~(2).
\end{proof}

\subsection{Avoiding almost-isotropic Pl\"ucker vectors}
For $0\le\eta<1$, $M\ge1$, and $1\le k\le n-1$, recall the cone of Pl\"ucker vectors that are
$(\eta,M)$-close to one of the exceptional summands:
$$
\mathscr Q_{kn,\eta,M}
=
\left\{
0\ne w\in\wedge^{kn}\M_n(\mathbb R):
\min_{m=1,2}\|w-\pi_{k,m}(w)\|
\le \eta\|w\|^{-M}
\right\}.
$$

For $1\le k\le n-1$, $s\ge 2\log n$ and
$0<\varepsilon<1$, define
$$
\Xi_k(s,\varepsilon)
:=
\left\{
0\ne v\in\wedge^{kn}\M_n(\mathbb R):
\|v\|\le e^{4n^3s},
\min_{m=1,2}\|v-\pi_{k,m}(v)\|\le\varepsilon
\right\}.
$$
Let $\mathcal K_{\eta,M}(s,\varepsilon)$ be the set of pairs
$(h,\Delta)\in H\times X$ for which there exist $1\le k\le n-1$ and a
$kn$-dimensional $\Delta$-rational subspace $V$ such that
$$
\mathsf w_{\Delta,V}\notin\mathscr Q_{kn,\eta,M}
\qquad\text{and}\qquad
h\mathsf w_{\Delta,V}\in\Xi_k(s,\varepsilon).
$$
Thus $\mathcal K_{\eta,M}(s,\varepsilon)$ records the event that a
rational Pl\"ucker vector that is not already quasi-null for the base
lattice is moved close to an exceptional summand at scale
$s$.

The following quantitative estimate is the main input in the proof of \Cref{prop:avoidance'}.

\begin{proposition}\label{prop:avoidance0}
There exists $b_1=b_1(n)>1$ such that the following holds. Let
$0<\eta<1$, $M>1$, and let $\Delta\in X$ be
$(\eta,M)$-Diophantine. If
$s\ge 2\log n$, $0<\varepsilon<e^{-b_1Ms}$, $t\ge1$, and
$t\ge b_1\log \varepsilon^{-1},$
then
\begin{equation}\label{eq:avoidance0}
\operatorname{Leb}
\left\{u\in B_{U}(1):
(a_tu,\Delta)
\in\mathcal K_{\eta,M}(s,\varepsilon)
\right\}
\ll
\varepsilon^{1/(b_1M)},
\end{equation}
with the implicit constant depending only on
$\Delta,\eta$, and $M$.
\end{proposition}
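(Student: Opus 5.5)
We would follow the blueprint announced at the start of the section: apply \Cref{prop:Kle08variant} to a prescribed set of primitive vectors in an exterior lattice, and then use instability together with the Diophantine condition to rule out its first alternative. Fix $1\le k\le n-1$; it suffices to bound the measure for each $k$ and sum. Work in $\wedge^{kn}\M_n(\mathbb R)$ with the lattice $\wedge^{kn}\Delta$. Take $\Upsilon_k$ to be the set of primitive Pl\"ucker vectors $\mathsf w_{\Delta,V}$ with $\dim V=kn$ and $\mathsf w_{\Delta,V}\notin\mathscr Q_{kn,\eta,M}$.

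The key device is a composite map, because the event we want to control is not directly ``$\|h w\|$ small'' but ``$hw$ lies near $\mathcal M_{k,1}\cup\mathcal M_{k,2}$ and $\|hw\|\le e^{4n^3s}$''. Since the event is a union over $m=1,2$, we treat each $m$ separately. The complementary projection $\mathrm{id}-\pi_{k,m}$ is $H$-equivariant, so it suffices to control small values of $(\mathrm{id}-\pi_{k,m})(a_tuw)$. Its image $\mathcal M_{k,m}^{\perp}$ is a sum of nontrivial $H$-modules.

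To keep a lattice structure, we apply the nondivergence dichotomy to $h(u):=a_tu$ acting on $\wedge^{kn}\M_n$ itself, but with a rescaled norm. Concretely, let $D_\varepsilon$ be the linear map that is the identity on the complement of $\mathcal M_{k,m}$ and multiplies $\mathcal M_{k,m}$ by $\varepsilon e^{-4n^3s}$. Then $\|D_\varepsilon a_tuw\|\lesssim\varepsilon$ whenever $a_tuw\in\Xi_k(s,\varepsilon)$ with its $m$-component close. Moreover $u\mapsto D_\varepsilon a_tu$ is $(C,\tau)$-good, since its matrix entries are polynomials in $u$ of bounded degree; the constants $C,\tau$ depend only on $n$, by Kleinbock--Margulis.

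We then apply \Cref{prop:Kle08variant} with $\varepsilon$ replaced by $C'\varepsilon$ and with $\rho=\varepsilon^{1/(2b')}$. Alternative~(2) gives the measure bound $\varepsilon^{\tau/(2)}$-type, which is acceptable. In alternative~(1) we obtain $w_1,\dots,w_r\in\Upsilon_k$ with $\sup_{u\in B_U(1)}\|D_\varepsilon a_tu(w_1\wedge\cdots\wedge w_r)\|\le\rho^r$, with $r\le\dim\wedge^{kn}\M_n$.

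The next step uses instability. Decompose $\wedge^r(\wedge^{kn}\M_n)$ under $H$, and split $\mathbf w=w_1\wedge\cdots\wedge w_r$ according to $\wedge^r\mathcal M_{k,m}$ and its complement. The operator $\wedge^rD_\varepsilon$ is a scalar times a bounded operator on each piece, and the piece with no $\mathcal M_{k,m}$ factor is unscaled. On every nontrivial $H$-summand, \Cref{lem:liegroupinstability} gives $\sup_u\|a_tu x\|\ge c\,e^{\kappa t}\|x\|$.

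Since the $\mathcal M_{k,m}^{\perp}$-factors and the nontrivial components of $\wedge^r\mathcal M_{k,m}$ carry no scaling that could compensate, we get $\|\mathbf w-\pi^{(r)}_{k,m}\mathbf w\|\lesssim\varepsilon^{-C}e^{-\kappa t}\rho^r$. The factor $\varepsilon^{-C}$ accounts for the scaled factors mixing into mixed tensor components. We also get $\|\mathbf w\|\le e^{Ct}$ from the trivial bound. Here the $H_{3-m}$-invariant (trivial) part of $\wedge^r\mathcal M_{k,m}$ needs care: it is not expanded by $H_{3-m}$, but $\mathcal M_{k,m}$ is nontrivial for $H_m$. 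For $r<\dim\mathcal M_{k,m}$, its exterior powers are nontrivial $H_m$-modules. For $r=\dim\mathcal M_{k,m}$, the top power is trivial, which is exactly the case covered by $r=\dim\mathcal M_{k,m}$ in \Cref{lattice_Diophantine}.

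Taking $t\ge b_1\log\varepsilon^{-1}$ makes this distance smaller than $\eta\|\mathbf w\|^{-M}$. Here we also need $\|\mathbf w\|$ bounded polynomially in $\varepsilon^{-1}$, which we would get from $\|D_\varepsilon\mathbf w\|\le\rho^r$ at $u=e$ together with the inverse bound $a_t^{-1}$ restricted to vectors. This step is delicate, and we would handle it by instead using $\|\mathbf w\|\le \sup_u\|a_tu\mathbf w\|$ up to constants, via the unipotent-fixed projection argument. The Diophantine condition then forces $\mathbf w=\pi^{(r)}_{k,m}(\mathbf w)$. We have only stated this for $r\in\{1,\dim\mathcal M_{k,m}\}$, so intermediate $r$ must be reduced to $r=1$ by extracting a single $w_j$: near-containment of the wedge implies each $w_j$ is near $\mathcal M_{k,m}$ up to the loss $\|\mathbf w\|/\prod\|w_j\|$, which Minkowski-type bounds control polynomially. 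Exactness gives $w_j\in\mathcal M_{k,m}$, contradicting $w_j\in\Upsilon_k$.

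The main obstacle is the intermediate-rank case and the trivial-summand issue in the instability step, together with keeping all losses polynomial in $\varepsilon$ so that a fixed $b_1(n)$ works. Choosing $\varepsilon<e^{-b_1Ms}$ absorbs the $e^{4n^3s}$ norm window, and the final exponent $1/(b_1M)$ comes from $\rho=\varepsilon^{1/(b_1M)}$ balanced against the Diophantine exponent $M$.
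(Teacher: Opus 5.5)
There is a genuine gap. It comes from your choice of $\rho$ (a small power of $\varepsilon$, namely $\varepsilon^{1/(2b')}$ or $\varepsilon^{1/(b_1M)}$) combined with the claim $\|\mathbf w-\pi^{(r)}_{k,m}(\mathbf w)\|\lesssim\varepsilon^{-C}e^{-\kappa t}\rho^r$.

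Write $\mathcal M_{k,*}:=\mathcal M_{k,0}\oplus\mathcal M_{k,3-m}$. Then $\mathbf w-\pi^{(r)}_{k,m}(\mathbf w)$ is the sum of the components of $\mathbf w$ in the off-top summands $(\wedge^j\mathcal M_{k,m})\wedge(\wedge^{r-j}\mathcal M_{k,*})$ with $j<r$. These summands can contain nonzero $H$-invariant vectors, on which $a_tu$ acts as the identity. On such vectors \Cref{lem:liegroupinstability} gives nothing, and no factor $\varepsilon^{-C}$ helps. The invariant vectors that matter lie in these mixed summands, not in $\wedge^r\mathcal M_{k,m}$ where you looked for them.

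For example, put $q=\dim\mathcal M_{k,m}$. The line $\wedge^q\mathcal M_{k,1}\wedge\wedge^q\mathcal M_{k,2}$ is $H$-fixed and lies in an off-top summand of $\wedge^{2q}(\wedge^{kn}\M_n(\mathbb R))$, where $\pi^{(2q)}_{k,m}=0$. For $n=2$, $k=1$ this line is all of $\wedge^6(\wedge^2\M_2(\mathbb R))$, and your rescaling acts on it by $(\varepsilon e^{-4n^3s})^3$. With $\rho\ge\varepsilon^{1/2}$, as in your choices, the first alternative of \Cref{prop:Kle08variant} then holds as soon as $\Upsilon_1$ contains six independent vectors whose wedge has norm at most $e^{12n^3s}$. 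This happens for generic $\Delta$ and large $s$, so that alternative cannot be ruled out and your argument yields no measure bound.

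Let $P_j$ be the projection to the $j$-th summand and $P_{\mathrm{tr}}$ the projection to $H$-invariants. The only control on invariant off-top components is the rescaling itself: $\|P_jP_{\mathrm{tr}}\mathbf w\|\ll(\varepsilon e^{-4n^3s})^{-j}\rho^r$. This is small for all $j\le r-1$ and all $r\le D:=\max_k\dim\wedge^{kn}\M_n(\mathbb R)$ only when $\rho$ is very close to $\varepsilon$.

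This is why the paper takes $\rho=\varepsilon^{1-\delta}$ with $\delta=(20DM)^{-1}$. With this choice:
\begin{itemize}
\item the invariant top component is $\ll e^{4n^3sr}\varepsilon^{-r\delta}\ll\varepsilon^{-1/(4M)}$, so $\|\mathbf w\|\ll\varepsilon^{-1/(4M)}$ (this is where $\varepsilon<e^{-b_1Ms}$ enters);
\item the invariant off-top components are $\ll e^{4n^3s(r-1)}\varepsilon^{1-r\delta}\ll\varepsilon^{1/2}$;
\item the non-invariant part is $\ll\varepsilon^{1/2}$ by instability once $t\ge b_1\log\varepsilon^{-1}$.
\end{itemize}
The Diophantine bound $\eta\|\mathbf w\|^{-M}\gg\eta\varepsilon^{1/4}$ then gives the contradiction at $r=q$.

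The price is that alternative (2) only gives $(\varepsilon/\rho)^\tau=\varepsilon^{\tau/(20DM)}$. This is the true source of the exponent $1/(b_1M)$: it is $\varepsilon/\rho$, not $\rho$, that must be a small power of $\varepsilon$. It is also what lets $b_1$ depend on $n$ alone. With $\rho=\varepsilon^c$ you would only have $\|\mathbf w\|\lesssim e^{4n^3sr}\varepsilon^{-r(1-c)}$, and beating $\eta\|\mathbf w\|^{-M}$ through $e^{-\kappa t}$ would need an $M$-dependent $b_1$.

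Your reduction of intermediate $r$ to $r=1$ is also unjustified, since the $w_j$ supplied by the dichotomy carry no bound on $\|w_j\|$ in terms of $\|\mathbf w\|$. It is unnecessary anyway:
\begin{itemize}
\item For $r>q$ one has $\pi^{(r)}_{k,m}=0$, which contradicts $\|\mathbf w\|\ge c_\Delta>0$.
\item For $0<r<q$, no nonzero decomposable vector of $\wedge^r\mathcal M_{k,m}$ is $H$-fixed, because $\mathcal M_{k,m}$ is $H_m$-irreducible. By compactness of $\operatorname{Gr}(r,\mathcal M_{k,m})$, the non-invariant part of the decomposable vector $\pi^{(r)}_{k,m}(\mathbf w)$ is $\gg\|\mathbf w\|$. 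Since the two projections commute, this contradicts the smallness of the non-invariant part of $\mathbf w$.
\item Only $r=q$ uses the Diophantine condition. There $\mathbf w=\pi^{(q)}_{k,m}(\mathbf w)$ forces every $w_i\in\mathcal M_{k,m}$, contradicting $w_i\in\Upsilon_k$.
\end{itemize}
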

\begin{proof}
For $1\le k\le n-1$, let
$$
\Upsilon_{k,\eta,M}
:=
\left\{
\mathsf w_{\Delta,V}:
V\text{ is $kn$-dimensional and $\Delta$-rational},
\mathsf w_{\Delta,V}\notin\mathscr Q_{kn,\eta,M}
\right\}.
$$
After choosing one sign for each Pl\"ucker vector, this is a subset of
the primitive vectors of the exterior lattice
$\wedge^{kn}\Delta<\wedge^{kn}\M_n(\mathbb R)$. Indices $k$ for which
$\Upsilon_{k,\eta,M}$ is empty are omitted below.

Fix $k$. We first treat proximity to $\mathcal M_{k,1}$; the row case
is identical. Recall the decomposition
$$
\wedge^{kn}\M_n(\mathbb R)
=
\mathcal M_{k,0}\oplus\mathcal M_{k,1}\oplus\mathcal M_{k,2}.
$$
Choose a sufficiently small constant $c_0=c_0(n)>0$, and define
$\psi=\psi_{k,1,s,\varepsilon}\in
\GL(\wedge^{kn}\M_n(\mathbb R))$ by
$$
\psi|_{\mathcal M_{k,1}}
=
c_0\varepsilon e^{-4n^3s}\id,
\qquad
\psi|_{\mathcal M_{k,0}\oplus\mathcal M_{k,2}}
=
c_0\id.
$$
The constant $c_0$ may be chosen so that
$$
\|y\|\le e^{4n^3s}
\quad\text{and}\quad
\|y-\pi_{k,1}(y)\|\le\varepsilon
\quad\to\quad
\|\psi(y)\|\le\varepsilon.
$$
Indeed, the relevant projections have operator norms bounded in terms
of $n$ alone. Define $\psi_{k,2,s,\varepsilon}$ similarly, with
$\mathcal M_{k,2}$ in place of $\mathcal M_{k,1}$. By the definition
of $\mathcal K_{\eta,M}(s,\varepsilon)$,
\begin{align}
&\operatorname{Leb}
\left\{u\in B_U(1):
(a_tu,\Delta)\in\mathcal K_{\eta,M}(s,\varepsilon)
\right\}
\notag\\
&\qquad\le
\sum_{k=1}^{n-1}\sum_{m=1}^2
\operatorname{Leb}
\left\{u\in B_U(1):
\min_{w\in\Upsilon_{k,\eta,M}}
\|\psi_{k,m,s,\varepsilon}a_tuw\|
\le\varepsilon
\right\}.
\label{eq:avoidance-union-selected}
\end{align}

Let
$$
D:=
\max_{1\le k\le n-1}
\dim\wedge^{kn}\M_n(\mathbb R),
\qquad
\delta:=(20DM)^{-1}.
$$
Since $0<\delta<1$, we have
$0<\varepsilon\le\varepsilon^{1-\delta}\le1$. For every $k$ and $m$,
the map
$$
u\mapsto\psi_{k,m,s,\varepsilon}a_tu
$$
is $(C,\tau)$-good on the enlarged ball required in
\Cref{prop:Kle08variant}, where $C>0$ and $\tau>0$ depend only on $n$.
Indeed, all matrix coefficients in the relevant exterior powers are
polynomials of degree bounded only in terms of $n$, so the required
goodness follows from the multivariable Remez inequality; in the top
exterior degree, the corresponding function is a nonzero constant.

Fix $k$ and $m$, and abbreviate
$\psi=\psi_{k,m,s,\varepsilon}$. Applying
\Cref{prop:Kle08variant} with the parameters
$\varepsilon$ and $\varepsilon^{1-\delta}$, we obtain
\begin{equation}\label{eq:quantitativenondivergenceforwedge}
\operatorname{Leb}
\left\{u\in B_U(1):
\min_{w\in\Upsilon_{k,\eta,M}}
\|\psi a_tuw\|\le\varepsilon
\right\}
\ll
\varepsilon^{\tau\delta},
\end{equation}
unless there are linearly independent
$w_1,\dots,w_r\in\Upsilon_{k,\eta,M}$, with
$1\le r\le\dim\wedge^{kn}\M_n(\mathbb R)$, such that, for
$$
\mathbf w:=w_1\wedge\cdots\wedge w_r
\in\wedge^r\bigl(\wedge^{kn}\M_n(\mathbb R)\bigr),
$$
one has
\begin{equation}\label{eq:shortsubspaceW}
\sup_{u\in B_U(1)}
\|\psi a_tu\mathbf w\|
\le
\varepsilon^{r(1-\delta)}.
\end{equation}
Here and below, the same symbols denote the induced actions on exterior
powers. We show that this alternative is impossible when $\varepsilon$
is sufficiently small. The remaining range of $\varepsilon$ will be
absorbed into the implicit constant in \eqref{eq:avoidance0}.

It suffices to consider $m=1$. Write
$$
\wedge^{kn}\M_n(\mathbb R)
=
\mathcal M_{k,1}\oplus\mathcal M_{k,*},
\qquad
\mathcal M_{k,*}
:=
\mathcal M_{k,0}\oplus\mathcal M_{k,2},
$$
and put $q:=\dim\mathcal M_{k,1}$. The induced decomposition
\begin{equation}\label{eq:number-of-exceptional-factors}
\wedge^r\bigl(\wedge^{kn}\M_n(\mathbb R)\bigr)
=
\bigoplus_j
\left(\wedge^j\mathcal M_{k,1}\right)
\wedge
\left(\wedge^{r-j}\mathcal M_{k,*}\right)
\end{equation}
is $H$-invariant. On the $j$-th summand, the induced action of
$\psi$ is multiplication by
$c_0^r\varepsilon^je^{-4n^3sj}$.
The common factor $c_0^r$ will be absorbed into the implicit constants.
Let $P_j$ denote the projection onto the $j$-th summand. In particular,
$$
P_r=\pi_{k,1}^{(r)}:=\wedge^r\pi_{k,1},
$$
with the convention that $P_r=0$ when $r>q$.

By complete reducibility, we may choose, in each summand of
\eqref{eq:number-of-exceptional-factors}, an $H$-invariant complement
to its $H$-fixed subspace. This gives an $H$-equivariant decomposition
$$
\wedge^r\bigl(\wedge^{kn}\M_n(\mathbb R)\bigr)
=
V_{\mathrm{tr}}\oplus V_{\mathrm{nt}},
$$
where
$V_{\mathrm{tr}}=\left(
\wedge^r\bigl(\wedge^{kn}\M_n(\mathbb R)\bigr)
\right)^H$
and $V_{\mathrm{nt}}$ is a sum of nontrivial irreducible
$H$-modules. Let $P_{\mathrm{tr}}$ and $P_{\mathrm{nt}}$ denote the
corresponding projections. By construction, these projections commute
with $\psi$ and with every $P_j$.

Since $H$ acts trivially on $V_{\mathrm{tr}}$,
\eqref{eq:shortsubspaceW} implies
\begin{equation}\label{eq:trivial-top-bound}
\|P_rP_{\mathrm{tr}}\mathbf w\|
\ll
\varepsilon^{-r}e^{4n^3sr}
\varepsilon^{r(1-\delta)},
\qquad
\|(I-P_r)P_{\mathrm{tr}}\mathbf w\|
\ll
\varepsilon^{-(r-1)}e^{4n^3s(r-1)}
\varepsilon^{r(1-\delta)}.
\end{equation}
Indeed, up to the fixed factor $c_0^r$, the map $\psi$ scales the
top summand by $\varepsilon^re^{-4n^3sr}$ and every off-top summand
by at least
$\varepsilon^{r-1}e^{-4n^3s(r-1)}$.

Since $r\le D$, $\delta=(20DM)^{-1}$, and
$\varepsilon<e^{-b_1Ms}$, choosing $b_1=b_1(n)$ sufficiently large
gives
\begin{align}
\varepsilon^{-r}e^{4n^3sr}\varepsilon^{r(1-\delta)}
&=
e^{4n^3rs}\varepsilon^{-r\delta}
\ll
\varepsilon^{-1/(4M)},
\label{eq:top-exponent-bookkeeping}\\
\varepsilon^{-(r-1)}e^{4n^3s(r-1)}
\varepsilon^{r(1-\delta)}
&=
e^{4n^3s(r-1)}\varepsilon^{1-r\delta}
\ll
\varepsilon^{1/2}.
\label{eq:offtop-exponent-bookkeeping}
\end{align}

On $V_{\mathrm{nt}}$, the smallest singular value of the induced
action of $\psi$ is bounded below, up to a constant depending only on
$n$, by $\varepsilon^re^{-4n^3sr}$. Hence
\eqref{eq:shortsubspaceW} gives
$$
\sup_{u\in B_U(1)}
\|a_tuP_{\mathrm{nt}}\mathbf w\|
\ll
\varepsilon^{-r}e^{4n^3sr}
\varepsilon^{r(1-\delta)}
\ll
\varepsilon^{-1/(4M)}.
$$
Applying \Cref{lem:liegroupinstability} to the finitely many irreducible
summands of $V_{\mathrm{nt}}$, and increasing $b_1(n)$ if necessary,
we see that the condition $t\ge b_1\log\varepsilon^{-1}$ implies
\begin{equation}\label{eq:expansionboundirr1}
\|P_{\mathrm{nt}}\mathbf w\|
\ll
\varepsilon^{1/2}.
\end{equation}

Combining
\eqref{eq:trivial-top-bound}--\eqref{eq:expansionboundirr1}, we obtain
\begin{equation}\label{eq:almostisotropic0}
0<\|\mathbf w\|\ll\varepsilon^{-1/(4M)},
\qquad
\|\mathbf w-\pi_{k,1}^{(r)}(\mathbf w)\|
\ll\varepsilon^{1/2}.
\end{equation}
There exists $c_\Delta>0$ such that
$\|\mathbf w\|\ge c_\Delta$, because $\mathbf w$ is a nonzero vector
in one of the finitely many exterior lattices
$\wedge^r(\wedge^{kn}\Delta)$.

If $r>q$, then $\pi_{k,1}^{(r)}=0$, and
\eqref{eq:almostisotropic0} contradicts
$\|\mathbf w\|\ge c_\Delta$ for sufficiently small $\varepsilon$. Suppose next that $0<r<q$, and put
$$
\mathbf w'
:=
\pi_{k,1}^{(r)}(\mathbf w)
\in\wedge^r\mathcal M_{k,1}.
$$
The vector $\mathbf w'$ is decomposable, and for sufficiently small
$\varepsilon$, it is nonzero and satisfies
$\|\mathbf w'\|\asymp\|\mathbf w\|$. No nonzero decomposable vector
in $\wedge^r\mathcal M_{k,1}$ belongs to $V_{\mathrm{tr}}$:
otherwise the corresponding proper nonzero $r$-dimensional subspace
of the irreducible $H_1$-module $\mathcal M_{k,1}$ would be
$H_1$-invariant. By compactness of
$\operatorname{Gr}(r,\mathcal M_{k,1})$, there exists $c=c(n)>0$
such that
$$
\|P_{\mathrm{nt}}\mathbf w'\|
\ge c\|\mathbf w'\|
$$
for every decomposable
$\mathbf w'\in\wedge^r\mathcal M_{k,1}$. On the other hand,
$P_{\mathrm{nt}}$ commutes with $\pi_{k,1}^{(r)}$, so
\eqref{eq:expansionboundirr1} gives
$$
\|P_{\mathrm{nt}}\mathbf w'\|
\ll\varepsilon^{1/2}.
$$
Since $\|\mathbf w'\|\asymp\|\mathbf w\|\ge c_\Delta$, this is
impossible for sufficiently small $\varepsilon$.

It remains to consider $r=q$. If
$\mathbf w\ne\pi_{k,1}^{(q)}(\mathbf w)$, then the
$(\eta,M)$-Diophantine condition and
\eqref{eq:almostisotropic0} give
$$
\|\mathbf w-\pi_{k,1}^{(q)}(\mathbf w)\|
\ge
\eta\|\mathbf w\|^{-M}
\gg
\eta\varepsilon^{1/4},
$$
contradicting the second bound in
\eqref{eq:almostisotropic0} for sufficiently small $\varepsilon$.
Hence
$$
\mathbf w=\pi_{k,1}^{(q)}(\mathbf w).
$$
Since $\mathbf w$ is a nonzero decomposable $q$-vector and
$\dim\mathcal M_{k,1}=q$, the subspace
$\operatorname{span}\{w_1,\ldots,w_q\}$ is exactly
$\mathcal M_{k,1}$. Thus each $w_i$ belongs to
$\mathcal M_{k,1}$, and hence
$$
\min_{m=1,2}\|w_i-\pi_{k,m}(w_i)\|=0.
$$
It follows that $w_i\in\mathscr Q_{kn,\eta,M}$, contradicting
$w_i\in\Upsilon_{k,\eta,M}$.

We have therefore ruled out the exceptional alternative in
\Cref{prop:Kle08variant} for all sufficiently small $\varepsilon$.
It follows from
\eqref{eq:avoidance-union-selected} and
\eqref{eq:quantitativenondivergenceforwedge} that
$$
\operatorname{Leb}
\left\{u\in B_U(1):
(a_tu,\Delta)\in\mathcal K_{\eta,M}(s,\varepsilon)
\right\}
\ll
\varepsilon^{\tau\delta}.
$$
Finally, enlarge $b_1=b_1(n)$ so that
${b_1}^{-1}\le {\tau}/({20D})$.
Since $\tau\delta=\tau/(20DM)$, we then have
$\varepsilon^{\tau\delta}\le
\varepsilon^{1/(b_1M)}$. Enlarging the implicit constant to cover the
remaining bounded range of $\varepsilon$ proves
\eqref{eq:avoidance0}.
\end{proof}

\subsection{Proof of the avoidance estimate}

\begin{proof}[Proof of \Cref{prop:avoidance'}]
We first obtain an upper bound for $\alpha(a_tu\Delta)$ on the exceptional set. We claim
that there is $C_0=C_0(\Delta,\eta,M)\ge1$ such that
\begin{equation}\label{eq:alpha-cap-on-exceptional-set}
(a_tu,\Delta)\in
\mathcal E_{s,\eta,M,M'}
\quad\to\quad
\alpha(a_tu\Delta)\le C_0e^{t/(b^2M)}.
\end{equation}
Indeed, suppose that the pair is exceptional and that
$\alpha(a_tu\Delta)>e^{20n^3s}$, so that the second line of
\eqref{eq:exceptional-scale} applies. There are
$1\le k\le n-1$, $m\in\{1,2\}$, and $v=\mathsf w_{\Delta,V}\notin\mathscr Q_{kn,\eta,M}$ such that
$$0<\|a_tuv\|\le e^{4n^3s},\qquad
\|a_tuv-
\pi_{k,m}(a_tuv)\|
\le \alpha(a_tu\Delta)^{-bM'}.
$$
The projections are $H$-equivariant. Hence the log-Lipschitz estimate for
exterior powers gives
\begin{equation}\label{eq:pullback-exceptional-witness}
\|v\|
\ll e^{4n^3(t+s)},
\qquad
\|v-\pi_{k,m}(v)\|
\ll e^{4n^3t}\alpha(a_tu\Delta)^{-bM'}.
\end{equation}
If
$\alpha(a_tu\Delta)>C_0e^{t/(b^2M)}$, then, after choosing $C_0$ sufficiently large,
$$
e^{4n^3t}\alpha(a_tu\Delta)^{-bM'}
\le
\eta
\left(e^{4n^3t}e^{4n^3s}\right)^{-M}.
$$
To verify this inequality, take logarithms and use
$$
M'\ge 10n^3bM^2,
\qquad
t\ge 10n^3bM's.
$$
The negative term
$-bM'\log \alpha(a_tu\Delta)$ then dominates
$4n^3(M+1)t+4n^3Ms$; the remaining
constant is absorbed by $C_0$. Together with
\eqref{eq:pullback-exceptional-witness}, this would imply
$$
\|v-\pi_{k,m}(v)\|
\le\eta\|v\|^{-M},
$$
contrary to the choice of $v$. Finally, the hypothesis
$t\ge 10n^3bM's$ and the lower bound on $M'$ imply
$e^{20n^3s}\ll e^{t/(b^2M)}$. This proves
\eqref{eq:alpha-cap-on-exceptional-set}.

For $i\ge0$, set
\begin{equation}\label{eq:avoidance-height-levels}
\mathsf{h}_i:=e^{10n^3(2s+i)}.
\end{equation}
Let $I$ be the least nonnegative integer such that
$$
\mathsf{h}_I\ge C_0e^{t/(b^2M)}.
$$
Then \eqref{eq:alpha-cap-on-exceptional-set} implies that
$\mathcal E_{s,\eta,M,M'}$ is covered by the union of the following sets:
\begin{align*}
E_0
&:=
\left\{u\in B_{U}(1):
\alpha(a_tu\Delta)\le \mathsf{h}_0,
(a_tu,\Delta)
\in\mathcal K_{\eta,M}(s,e^{-M's})
\right\},\\
E_i
&:=
\left\{u\in B_{U}(1):
\alpha(a_tu\Delta)\le \mathsf{h}_i,
(a_tu,\Delta)
\in\mathcal K_{\eta,M}(s,\mathsf{h}_{i-1}^{-M'})
\right\},
\qquad 1\le i\le I.
\end{align*}
Moreover, for a constant $I_0=I_0(C_0,n)$,
\begin{equation}\label{eq:avoidance-number-of-levels}
I\le I_0+{t}({10n^3b^2M})^{-1}.
\end{equation}
Here we enlarged the scale
$\mathsf{h}_{i-1}^{-bM'}$ occurring in
\eqref{eq:exceptional-scale} to $\mathsf{h}_{i-1}^{-M'}$, which is legitimate
because $b\ge1$.

\Cref{prop:avoidance0} applies to $E_0$ and gives
\begin{equation}\label{eq:E0-measure}
\operatorname{Leb}(E_0)
\ll e^{-M's/(b_1M)}.
\end{equation}
Indeed, its two hypotheses follow from $M'\ge 10n^3bM^2$ and
$t\ge 10n^3bM's$, provided $b$ is chosen larger than $b_1$.

For $i\ge1$, put $\varepsilon_i:=\mathsf{h}_{i-1}^{-M'}$, and call the
index $i$ \emph{low} if
\begin{equation}\label{eq:avoidance-low-index-condition}
b_1\log{\varepsilon_i^{-1}}
=b_1M'\log \mathsf{h}_{i-1}
\le {t}/{2}.
\end{equation}
For every low index, \Cref{prop:avoidance0} applies and gives
\begin{equation}\label{eq:low-level-measure}
\operatorname{Leb}(E_i)
\ll \mathsf{h}_{i-1}^{-M'/(b_1M)}.
\end{equation}
Indeed, \eqref{eq:avoidance-low-index-condition} supplies the required lower
bound for $t$, while
$\varepsilon_i<e^{-b_1Ms}$ follows from
$M'\ge 10n^3bM^2$, after increasing $b$ in terms of $b_1$ and $n$.

For the remaining indices, set
\begin{equation}\label{eq:avoidance-tail-scale}
\varepsilon_*:=e^{-t/(2b_1)}.
\end{equation}
If $i$ is not low, then $\varepsilon_i<\varepsilon_*$, and hence
$$
\mathcal K_{\eta,M}(s,\varepsilon_i)
\subset
\mathcal K_{\eta,M}(s,\varepsilon_*).
$$
Moreover, $ t\ge b_1\log\frac1{\varepsilon_*}=\frac{t}{2}$,
and the assumptions on $M'$, $s$, and $t$ imply
$\varepsilon_*<e^{-b_1Ms}$, provided $b=b(n)$ is sufficiently large.
Thus \Cref{prop:avoidance0} also gives
\begin{equation}\label{eq:high-level-measure}
\operatorname{Leb}(E_i)
\ll e^{-t/(2b_1^2M)}
\qquad\text{for every non-low index }i.
\end{equation}

We have
\begin{align}
&\int_{B_{U}(1)}
\alpha(a_tu\Delta)^2
\mathds{1}_{\mathcal E_{s,\eta,M,M'}}
(a_tu;\Delta)\,du
\notag\\
&\qquad\le
\mathsf{h}_0^2\operatorname{Leb}(E_0) +
\sum_{\substack{1\le i\le I, i\ \mathrm{low}}}
\mathsf{h}_i^2\operatorname{Leb}(E_i)
+
\sum_{\substack{1\le i\le I, i\ \mathrm{not\ low}}}
\mathsf{h}_i^2\operatorname{Leb}(E_i).
\label{eq:avoidance-three-sums}
\end{align}
By \eqref{eq:E0-measure} and the lower bound on $M'$,
\begin{equation}\label{eq:avoidance-first-sum}
\mathsf{h}_0^2\operatorname{Leb}(E_0)
\ll e^{-M's/(bM)}.
\end{equation}
Since $\mathsf{h}_i=e^{10n^3}\mathsf{h}_{i-1}$ and
$M'/(b_1M)$ is larger than a sufficiently large constant depending only
on $n$, \eqref{eq:low-level-measure} gives the convergent geometric bound
\begin{equation}\label{eq:avoidance-low-sum}
\sum_{\substack{1\le i\le I, i\ \mathrm{low}}}
\mathsf{h}_i^2\operatorname{Leb}(E_i)
\ll e^{-M's/(bM)}.
\end{equation}
It remains to sum over the non-low indices. By the minimal choice of $I$,
$$
\mathsf{h}_I\le e^{10n^3}\max\left\{\mathsf{h}_0,C_0e^{t/(b^2M)}\right\}.
$$
The assumptions relating $s$, $t$, and $M'$ therefore give
\begin{equation}\label{eq:max-height-level}
\mathsf{h}_I^2
\ll e^{{3t}/({b^2M})}.
\end{equation}
Choose $b=b(n)$ so large that $\frac{1}{2b_1^2}-\frac{3}{b^2}
>\frac{2}{b}$. Using \eqref{eq:high-level-measure} and
\eqref{eq:max-height-level}, and absorbing the factor $I+1$ into half of
the remaining exponential decay, we obtain
\begin{equation}\label{eq:avoidance-high-sum}
\sum_{\substack{1\le i\le I,  i\ \mathrm{not\ low}}}
\mathsf{h}_i^2\operatorname{Leb}(E_i)
\ll e^{-t/(bM)}.
\end{equation}
Combining
\eqref{eq:avoidance-three-sums},
\eqref{eq:avoidance-first-sum},
\eqref{eq:avoidance-low-sum}, and
\eqref{eq:avoidance-high-sum} proves \eqref{eq:avoidance}.
\end{proof}

\section{Uniform moment bounds for the modified height via iteration}
\label{s:iterations}

We now prove uniform moment bounds for the $U$-averages of the modified
height $\widehat{\alpha}_{\eta,M}$ along the principal ray $a_t$ by
iterating the global Margulis inequality
(\Cref{thm:globalcontraction}) together with the avoidance estimate
(\Cref{prop:avoidance'}). The argument parallels \cite{Kim}, but the
multiplicative logarithmic loss in our Margulis inequality requires the
iteration to treat the near-exceptional and deep-cusp contributions
separately.

The following $U$-average estimate implies
\Cref{thm:modifieduniformboundedness}, passing from $U$- to $K$-averages.

\begin{proposition}\label{prop:maintheoremforu}
Let $0<\eta<1$, $M>1$, and let $\Lambda\in X$ be an
$(\eta,M)$-Diophantine lattice. Then there
exists $\theta'>0$ such that
\begin{equation}\label{supa}
\sup_{t\ge0}
\int_{B_{U}(1)} \widehat{\alpha}_{\eta,M}(a_tu;\Lambda)^{1+\theta'}\,du
<\infty.
\end{equation}
\end{proposition}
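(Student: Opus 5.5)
The plan is to iterate \Cref{thm:globalcontraction} along the ray $a_t$. At each step the exceptional set $\mathcal E_{s,\eta,M,M'}$ is discarded using \Cref{prop:avoidance'}. The multiplicative $\log(3+\widehat\alpha)$ loss is handled by splitting off the deep-cusp region. First fix parameters. Take $b=b(n)$ from \Cref{prop:avoidance'} and $M':=10n^3bM^2$. Then take $\theta\le\theta_0(n,b,M')$ from \Cref{thm:globalcontraction}, and set $\theta':=\beta_*-1$, so that \Cref{lem:hatalpha-majorized-by-tilde} gives $\widehat\alpha_{\eta,M}^{1+\theta'}\ll\widetilde\alpha_{\eta,M,\theta}$. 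It therefore suffices to prove
\[
\sup_{t\ge0}\int_{B_U(1)}\widetilde\alpha(a_tu;\Lambda)\,du<\infty .
\]
The Diophantine hypothesis enters only through \Cref{prop:avoidance'}, and through \Cref{lem:quasinullatidentity}, which starts the iteration outside the exceptional set.

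Next I would set up the one-step inequality with truncation. Fix a large step $s$, to be chosen at the end, and write $t=ms+s'$. Let $h=a_{ms}u$ and split $B_U(1)$ according to whether $(h,\Lambda)\in\mathcal E_{s,\eta,M,M'}$. On the good set, \Cref{thm:globalcontraction} bounds the next $U$-average by
\[
e^{-c_n\theta s}\,\widetilde\alpha\,\log(3+\widehat\alpha)+e^{C_n(M'\theta+\theta^{-1})s}.
\]
To absorb the logarithm, split further according to whether $\widehat\alpha(h;\Lambda)\le e^{\sigma s}$, with $\sigma$ small compared with $c_n\theta$. If it holds, the factor is at most $\sigma s+O(1)$ and $e^{-c_n\theta s}(\sigma s)\le e^{-c_n\theta s/2}$. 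If it fails, I would bound the deep-cusp contribution crudely. The log-Lipschitz estimates give $\widetilde\alpha(a_suh)\ll e^{C_ns}\widetilde\alpha(h)$. Using $\log x\ll_\epsilon x^\epsilon$ and Chebyshev applied to a higher moment, the contribution becomes $e^{-\epsilon' s}$ times a higher-exponent moment. That higher moment is controlled by the bounded-expansion estimate for $\bar\alpha'$ (\Cref{prop:globalboundedexpansion''}), together with \eqref{eq:bar-alpha'-comparison} and $\widehat\alpha\le\max(1,\alpha)$ up to constants. The resulting cost is only $e^{C_n\theta s}$ per step, far smaller than the gain. On the exceptional set I would bound $\widetilde\alpha\ll\max(1,\alpha)^2$ and apply \Cref{prop:avoidance'} at time $ms$, which gives $e^{-M's/(bM)}+e^{-ms/(bM)}$, summable in $m$. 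That proposition needs $ms\ge 10n^3bM's$, so the first $O(1)$ steps are treated by log-Lipschitz bounds from the starting point $\Lambda$.

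Finally, I would combine the steps through nested horospherical averages. Iterated averages $\int_{B_U(1)}\cdots\int_{B_U(1)}f(a_su_m\cdots a_su_1\Lambda)$ are comparable to $\int_{B_U(1/3)}f(a_{ms}u\Lambda)$ by the comparison lemma of \cite[Lemma~6.1]{Kim}, used already in \Cref{prop:globalboundedexpansion''}. This yields a recursion $I_{m+1}\le e^{-c s}I_m+E_m+K_s$ with $\sum_m E_m<\infty$, so $\sup_m I_m<\infty$. The $a_{s'}$ remainder and the passage from $B_U(1/3)$ to $B_U(1)$ are handled by covering, exactly as at the end of \Cref{prop:globalboundedexpansion''}.

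I expect the main obstacle to be the logarithmic loss, that is, making the deep-cusp and exceptional errors summable rather than merely bounded per step. The step size may need to grow with $m$, say $s_m\asymp\log m$ or geometrically, so that $e^{-c\theta s_m}\log$ stays contracting while the avoidance errors $e^{-M's_m/(bM)}$ still sum. In that case the constraint $t\ge10n^3bM's$ must be checked against the cumulative time at each stage.
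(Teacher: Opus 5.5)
Your proposal has a genuine gap: the recursion for $\int\widetilde\alpha$ cannot be closed on the exceptional set or in the deep cusp, and the paper's proof works differently in both places.

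\textbf{The exceptional set.} Your claim that $\widetilde\alpha\ll\max(1,\alpha)^2$ on $\mathcal E_{s,\eta,M,M'}$ is false. By definition, $\mathcal E_{s,\eta,M,M'}$ is where some non-quasi-null Pl\"ucker vector $h\mathsf w_{\Lambda,V}$ of degree $kn$ lies within $e^{-M's}$ (or $\alpha(h\Lambda)^{-bM'}$) of $\mathcal M_{k,1}\cup\mathcal M_{k,2}$. When $\|h\mathsf w_{\Lambda,V}\|\le1$, the term $\phi_{kn}(h\mathsf w_{\Lambda,V})^{\beta_{kn}}$ then contains the factor $\|h\mathsf w-\pi_{k,m}(h\mathsf w)\|^{-2\theta_{kn}\beta_{kn}}$. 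This factor is large while $\alpha(h\Lambda)$ can be $O(1)$. The only a priori control is the Diophantine bound transported through $h=a_{ms}u$, which loses a factor $e^{C\theta M\,ms}$ that grows with elapsed time. \Cref{prop:avoidance'} controls only $\int\alpha^2\mathbf 1_{\mathcal E}$ and says nothing about $\widetilde\alpha$. So the exceptional branch cannot be fed back into a recursion for $\int\widetilde\alpha$. Separately, for fixed $s$ the term $e^{-M's/(bM)}$ is constant in $m$, not summable as you state.

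\textbf{The deep cusp.} Your cusp branch has the same defect. Chebyshev on $\{\widehat\alpha>e^{\sigma s}\}$ needs $\int\widetilde\alpha\,\widehat\alpha^{\epsilon}$, and no estimate controls this, since $\widetilde\alpha$ is not dominated by $\bar\alpha'$. Moreover, \Cref{prop:globalboundedexpansion''} gives growth $e^{C\theta' ms}$ accumulated from time $0$, not a cost of $e^{C_n\theta s}$ per step.

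\textbf{What the paper does instead.} The missing idea is to abandon $\widetilde\alpha$ on a bad path and never return to it.

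\emph{Exceptional paths.} Once the path enters the exceptional set for the next increment $s_{m+1}$, the paper bounds the target $\widehat\alpha^{1+\theta'}\le\alpha^{1+\theta'}\le\bar\alpha'_{\theta^{\sharp}}$ at the final time directly. It iterates bounded expansion of $\bar\alpha'_{\theta^{\sharp}}$ over the entire remaining time $s'_{m+1}$, at cost $e^{C_n\theta's'_{m+1}}$. This cost is beaten by the avoidance gain $e^{-M's_{m+1}/(bM)}$ only because $s'_{m+1}\le2\delta^{-1}s_{m+1}$ and $\theta'\delta^{-1}\ll M'/(bM)$.

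These requirements force three things you do not have:
\begin{enumerate}
\item Geometrically \emph{decreasing} steps, via \Cref{lem:ndecomposition}: $s_1=Ds_2$ and $s_i=(1+\delta)s_{i+1}$. This is the opposite of the growing steps you suggest.
\item A large first step with $D=20n^3bM'$, which guarantees the avoidance hypothesis at every stage.
\item $\theta'$ far smaller than $\theta$, of order $\theta^2$, rather than $\theta'=\beta_*-1$.
\end{enumerate}

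\emph{Cusp paths.} The logarithmic loss is handled not by Chebyshev but by a doubly exponential cusp set $\{\log\widehat\alpha>e^{c_n\theta s_{m+1}}\}$ with a first-entrance decomposition. Since $\log\widehat\alpha\le4n^3t+O(1)$, entry forces $s_{m+1}\le C_n\theta^{-1}\log t$. From the entry point one applies a single Margulis step of length $s'_{m+1}$; this is legitimate because $\mathcal E_{s'_{m+1},\eta,M,M'}$ has also been excluded. The additive error of that step is absorbed because $\widetilde\alpha$ is enormous at the entry point.
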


We begin with the following standard comparison lemma for iterated
horospherical averages.

\begin{lemma}[Comparison of iterated averages]\label{lem:randomwalkgeneral}
Let $m\in\mathbb N$ and $t_1,\ldots,t_m\ge1$. Then, for every 
measurable function $f:H\times X\to[0,\infty)$ and $\Delta\in X$, we have
\begin{align*}
&\int_{B_{U}(1/3)}
 f(a_{t_1+\cdots+t_m}u;\Delta)\,du \le \int_{B_{U}(1)^m}
 f(a_{t_m}u_m\cdots a_{t_1}u_1;\Delta)
 \,du_1\cdots du_m;
\end{align*}
\begin{align*}
&\int_{B_{U}(1)^m}
 f(a_{t_m}u_m\cdots a_{t_1}u_1;\Delta)
 \,du_1\cdots du_m \le \int_{B_{U}(2)}
 f(a_{t_1+\cdots+t_m}u;\Delta)\,du.
\end{align*}

\end{lemma}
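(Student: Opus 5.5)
The plan is to collapse the iterated product into a single translate $a_{t_1+\cdots+t_m}u$ with $u$ depending affinely, with unit Jacobian, on $u_1$, and then change variables in $u_1$. I will use that $U$ is abelian and is expanded by conjugation under $a_t$. In the coordinates $u_{\boldsymbol\xi}$, a direct computation with $b_t=\operatorname{diag}(e^{-t},\ldots,e^{-t},e^{(n-1)t})$ gives $b_t u_\xi b_t^{-1}=u_{e^{nt}\xi}$. Hence
$$
a_{-s}u_{\boldsymbol\xi}a_s=u_{e^{-ns}\boldsymbol\xi}\qquad(s\ge0),
$$
and $u_{\boldsymbol\xi}u_{\boldsymbol\xi'}=u_{\boldsymbol\xi+\boldsymbol\xi'}$.

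Writing $u_k=u_{\boldsymbol\xi_k}$ and $S_k:=t_1+\cdots+t_k$ (with $S_0=0$), I will move each $a_{t_k}$ to the left by induction on $m$. This gives
$$
a_{t_m}u_m\cdots a_{t_1}u_1
=
a_{S_m}\,u_{\boldsymbol\xi_1+\mathbf c},
\qquad
\mathbf c:=\sum_{k=2}^{m}e^{-nS_{k-1}}\boldsymbol\xi_k .
$$
Here $\mathbf c$ depends only on $u_2,\ldots,u_m$. Since every $t_k\ge1$ and $n\ge2$, we have $S_{k-1}\ge k-1$. For $\boldsymbol\xi_k$ in the unit ball (maximum norm) this yields
$$
\|\mathbf c\|\le\sum_{k\ge1}e^{-2k}=\frac{1}{e^2-1}<\frac13 .
$$

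Now fix $u_2,\ldots,u_m\in B_U(1)$ and substitute $\boldsymbol\xi=\boldsymbol\xi_1+\mathbf c$ in the $u_1$-integral. This is a translation, so it preserves Haar measure.

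For the upper bound, the image of $B_U(1)$ under the translation lies in $B_U(1+\|\mathbf c\|)\subset B_U(2)$. Since $f\ge0$, the inner integral is therefore at most $\int_{B_U(2)}f(a_{S_m}u;\Delta)\,du$. Integrating over $u_2,\ldots,u_m$, each of which ranges over a set of measure $\operatorname{vol}(B_U(1))=1$ by the chosen normalization, gives the second inequality.

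For the lower bound, the translated ball $B_U(1)+\mathbf c$ contains $B_U(1-\|\mathbf c\|)\supset B_U(1/3)$, because $\|\mathbf c\|<2/3$. By nonnegativity the inner integral is at least $\int_{B_U(1/3)}f(a_{S_m}u;\Delta)\,du$, and integrating over the remaining variables again gives the first inequality. Measurability poses no difficulty: the map $(u_1,\ldots,u_m)\mapsto a_{S_m}u_{\boldsymbol\xi_1+\mathbf c}$ is polynomial, so Fubini--Tonelli applies to the nonnegative function $f$.

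The only step requiring care is the conjugation identity and the resulting geometric bound on $\|\mathbf c\|$. This is where the hypothesis $t_k\ge1$ is used: it makes the accumulated shift summable and uniformly smaller than $1/3$, independently of $m$. Everything else is a change of variables.
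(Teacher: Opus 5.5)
Your proposal is correct and follows essentially the same route as the paper: collapse the product to $a_{S_m}u_{\boldsymbol\xi_1+\mathbf c}$ using $a_{-s}u_{\boldsymbol\xi}a_s=u_{e^{-ns}\boldsymbol\xi}$, bound $\|\mathbf c\|\le\sum_{j\ge1}e^{-2j}<1/3$ using $t_k\ge1$ and $n\ge2$, and compare balls. The paper phrases your fiberwise translation in $u_1$ as the statement that the pushforward measure has density $\mathbf 1_{B_{U}(1)}*\nu$, which equals $1$ on $B_{U}(1/3)$, is at most $1$, and is supported in $B_{U}(2)$; this is the same computation carried out with Fubini.
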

\begin{proof} We adapt the convolution argument of \cite[Lemma~6.1]{Kim}. Write $u_j=u_{\xi_j}$, where $\xi_j$ belongs to the unit ball in the
parameter space of $U$,
set $T:=t_1+\cdots+t_m$. Since
$a_tu_\xi a_{-t}=u_{e^{nt}\xi}$, we have
$$
a_{t_m}u_{\xi_m}\cdots a_{t_1}u_{\xi_1}
=
a_Tu_{\xi_1+e^{-nt_1}\xi_2+\cdots+
e^{-n(t_1+\cdots+t_{m-1})}\xi_m}.
$$
Since $t_j\ge1$ and $n\ge2$, the sum of the radii contributed by all
terms after $\xi_1$ is at most
$\sum_{j\ge1}e^{-nj}<1/3.
$
Let $\nu$ be the pushforward of the normalized product measure on
$B_U(1)^{m-1}$ under the map
$$
(\xi_2,\ldots,\xi_m)
\mapsto
e^{-nt_1}\xi_2+\cdots+
e^{-n(t_1+\cdots+t_{m-1})}\xi_m.
$$
Then $\nu$ is a probability measure supported in $B_U(1/3)$ and the
pushforward of the product measure on $B_U(1)^m$ has density
$\kappa=\mathbf 1_{B_U(1)}*\nu
$
with respect to Haar measure on $U$. Here we use the normalization
$\operatorname{vol}(B_U(1))=1$. It follows that
$$
\kappa=1\text{ on }B_U(1/3),\qquad
0\le\kappa\le1\text{ on }U,\qquad
\operatorname{supp}\kappa\subset B_U(2).
$$
Consequently,
$$
\int_{B_U(1)^m}
f(a_{t_m}u_m\cdots a_{t_1}u_1;\Delta)\,du_1\cdots du_m
=
\int_U f(a_Tu;\Delta)\kappa(u)\,du.
$$
The asserted inequalities now follow from the preceding bounds on
$\kappa$.
\end{proof}

We will also use the following technical decomposition lemma from \cite{Kim}.

\begin{lemma}\cite[Lemma 6.2]{Kim}\label{lem:ndecomposition}
Let $D>1$, $0<\delta<\frac{1}{1+D}$, and $T>0$ be given.
For any $t\ge T/\delta$,
there exists a finite sequence $\{s_i\}_{1\le i\le I}$ such that
$$
t=s_1+\cdots+s_I,\;
s_1=Ds_2,\;
s_i=(1+\delta)s_{i+1}\;  (2\le i\le I-1), \; T\le s_I\le 2T.$$
\end{lemma}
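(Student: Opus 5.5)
The idea is to fix the number of terms $I\ge2$, make every $s_i$ a multiple of the last term $x:=s_I$, and then vary $x\in[T,2T]$ together with $I$ so that the possible totals cover the whole range $[T/\delta,\infty)$.

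\emph{Totals for fixed $I$.} The recursion $s_i=(1+\delta)s_{i+1}$ for $2\le i\le I-1$ and $s_1=Ds_2$ force
$$
s_i=(1+\delta)^{I-i}x\quad(2\le i\le I),\qquad s_1=D(1+\delta)^{I-2}x .
$$
Hence the total is $s_1+\cdots+s_I=x\,f(I)$, where
$$
f(I):=\frac{(1+\delta)^{I-1}-1}{\delta}+D(1+\delta)^{I-2}.
$$
For $I=2$ the middle constraints are vacuous and $f(2)=1+D$, which agrees with the formula. As $x$ runs over $[T,2T]$, the achievable totals for this $I$ form exactly the interval $J_I:=[Tf(I),\,2Tf(I)]$.

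\emph{The intervals $J_I$ cover $[T/\delta,\infty)$.} This needs three checks.

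First, the geometric-sum identity gives the recursion $f(I+1)=(1+\delta)f(I)+1$. So $f$ is increasing and $f(I)\to\infty$.

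Second, consecutive intervals overlap, i.e. $f(I+1)\le 2f(I)$. By the recursion this is equivalent to $(1-\delta)f(I)\ge1$. Since $f$ is increasing, it suffices to check $I=2$, namely $(1-\delta)(1+D)\ge1$. This holds because $\delta<1/(1+D)$ gives $(1-\delta)(1+D)>(1+D)-1=D>1$.

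Third, the first interval starts below the threshold: $Tf(2)=T(1+D)<T/\delta$, again because $\delta<1/(1+D)$.

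Therefore $\bigcup_{I\ge2}J_I\supset[T(1+D),\infty)\supset[T/\delta,\infty)$.

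\emph{Conclusion.} Given $t\ge T/\delta$, choose $I$ with $t\in J_I$, set $x:=t/f(I)\in[T,2T]$, and define the $s_i$ by the formulas above. These satisfy every required relation and sum to $t$.

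No step is a serious obstacle. The only point needing care is the overlap condition $f(I+1)\le 2f(I)$, and the recursion for $f$ together with the hypothesis $\delta<1/(1+D)$ settles it.
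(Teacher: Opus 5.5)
Your proof is correct and complete. The closed form $s_i=(1+\delta)^{I-i}x$, the recursion $f(I+1)=(1+\delta)f(I)+1$, the overlap criterion $(1-\delta)f(I)\ge 1$ (reduced to $I=2$ by monotonicity), and the inequality $T(1+D)<T/\delta$ together show that the intervals $[Tf(I),2Tf(I)]$, $I\ge 2$, cover $[T/\delta,\infty)$.

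The paper does not prove this lemma; it imports it from \cite[Lemma~6.2]{Kim}, so your argument serves as a self-contained substitute. Your explicit formulas also give at once the facts the iteration uses later. For example, $s_i\ge s_I\ge T$ for every $i$, and $s_{i+1}+\cdots+s_I=\delta^{-1}(s_i-s_I)$ for $2\le i\le I-1$, which yields \eqref{eq:tail-bound} with equality on the left.
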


\subsection{Proof of \Cref{prop:maintheoremforu} and \Cref{thm:modifieduniformboundedness}}
For bounded $t$, the claim follows immediately from the log-Lipschitz
property of $\widehat\alpha_{\eta,M}$. It therefore suffices to consider
sufficiently large $t$.

Let $b\ge1$ be the constant from \Cref{prop:avoidance'}. Recall from
\eqref{theta} that the first exponent associated with any height parameter is
obtained by multiplication by $10^{1-N}$.

We use the constants $c_n,C_n$ from Section~6 throughout the
iteration. After decreasing $c_n$ and increasing $C_n$ once and for all, we
assume that $0<c_n\le1\le C_n$, $C_n\ge2c_n^{-1}$, and that the
bounded-expansion estimate used below has multiplicative and additive rates
at most $C_n\theta'$ and $C_n$, respectively, while the decay rate in
\Cref{thm:globalcontraction} is at least $4c_n\theta$ and its additive rate
is at most $C_n(M'\theta+\theta^{-1})$. We choose the constants so that
these estimates hold with either $B_{U}(1)$ or
$B_{U}(2)$ as the averaging domain. This fixed-ball extension follows by
running the proofs of \Cref{prop:globalboundedexpansion'',thm:globalcontraction}
on $B_{U}(2)$; the constants $c_n,C_n$ remain fixed below.

Set
$$
M':=bM^2\max\{100n^{10},20C_n\}
\quad\text{and}\quad
D:=20n^3bM'.
$$
Choose $\theta>0$ sufficiently small that \Cref{thm:globalcontraction} holds
for both $B_{U}(1)$ and $B_{U}(2)$ and
$$
\theta<\frac12\min\left\{(2n)^{-6},(10bM')^{-1}\right\}.
$$
Set
$C_*:=C_n(M'\theta+\theta^{-1})$
and choose $\theta'>0$ so small that
\begin{equation}\label{eq:theta-prime-choice}
\theta'
<
\min\left\{
\frac{\theta}{200n^2 10^{N-1}},
\frac{c_n\theta M'}{4000C_nbMDC_*},
\frac{M'}{80C_nbM(1+D)}
\right\}.
\end{equation}
Set
$$
\theta^{\sharp}:=100n^2 10^{N-1}\theta'.
$$
Then
$$
\theta^{\sharp}
\le
100n^2 10^{N-1}\frac{\theta}{200n^2 10^{N-1}}
=
\frac{\theta}{2}
<
(2n)^{-6},
\qquad
(\theta^{\sharp})_1=100n^2\theta'.
$$
Hence the auxiliary height
$\bar\alpha'_{\theta^{\sharp}}$ is well defined, and
\eqref{eq:globalboundedexpansion''} applies to it. Moreover, each
$\alpha_i^+(\Delta)$ in $\bar\alpha'_{\theta^{\sharp}}(\Delta)$ is raised
to the power $1+\frac{(\theta^{\sharp})_1}{100n^2}
=1+\theta'$. Consequently,
\begin{equation}\label{eq:fixed-auxiliary-height-comparison}
\alpha(\Delta)^{1+\theta'}
\le \bar\alpha'_{\theta^{\sharp}}(\Delta)
\ll_n \max\{1,\alpha(\Delta)\}^{2}.
\end{equation}

Note that $0<\theta'<\theta<(2n)^{-6}$. By
\Cref{lem:hatalpha-majorized-by-tilde} and the choice
$\theta'\le\theta/(200n^2 10^{N-1})<\beta_*-1$, we have
\begin{equation}\label{eq:hatalpha-majorized}
\widehat\alpha_{\eta,M}(h;\Delta)^{1+\theta'}
\ll_n \widetilde\alpha_{\eta,M,\theta}(h;\Delta)
\qquad
\text{for all } (h,\Delta)\in H\times X.
\end{equation}

For $s\ge0$, define
$$
\mathcal{T}_{s,\theta}:=
\{(h,\Delta)\in H\times X:
\log \widehat{\alpha}_{\eta,M}(h;\Delta)>e^{c_n\theta s}\}.
$$

By \Cref{prop:globalboundedexpansion''}, for all sufficiently
large $s$ (with the threshold allowed to depend on the now-fixed
$\theta^{\sharp}$),
\begin{equation}\label{eq:one-step-trivial}
\int_{B_{U}(1)}
\bar{\alpha}'_{\theta^{\sharp}}(a_suh\Delta)\,du
\le
 e^{C_n\theta's}\bar{\alpha}'_{\theta^{\sharp}}(h\Delta)
+
 e^{C_ns}.
\end{equation}
Also, outside $\mathcal T_{2s,\theta}$ we have
$\log(3+\widehat\alpha_{\eta,M})\ll
e^{2c_n\theta s}$. Thus \Cref{thm:globalcontraction}, the choice of
$c_n$, and a further increase of the lower threshold for $s$ give, for
all sufficiently large $s$ and all
$(h,\Delta)\notin \mathcal E_{s,\eta,M,M'}\cup \mathcal{T}_{2s,\theta}$,
\begin{equation}\label{eq:one-step-good}
\int_{B_{U}(1)}
\widetilde\alpha_{\eta,M}(a_suh;\Delta)\,du
\le
e^{-c_n\theta s}\widetilde\alpha_{\eta,M}(h;\Delta)
+
e^{C_*s}.
\end{equation}
Moreover, by \Cref{lem:quasinullatidentity}, for all sufficiently large
$s$,
$$
(\id,\Lambda)\notin \mathcal E_{s,\eta,M,10n^3M}.
$$
Since $M'\ge10n^3M$, we have
$\mathcal E_{s,\eta,M,M'}\subset\mathcal E_{s,\eta,M,10n^3M}$.
Since $\widehat\alpha_{\eta,M}(\Lambda)$ is fixed, we also have
$(\id,\Lambda)\notin\mathcal T_{2s,\theta}$ for all sufficiently large
$s$. Hence
\begin{equation}\label{eq:one-step-base}
\int_{B_{U}(1)}
\widetilde\alpha_{\eta,M}(a_su;\Lambda)\,du
\le
e^{-c_n\theta s}\widetilde\alpha_{\eta,M}(\Lambda)
+
e^{C_*s}.
\end{equation}

Choose $0<\delta<\frac1{1+D}$ so that
\begin{equation}\label{eq:deltarange}
20C_nbM\theta'<M'\delta
<
\min\left\{
\frac{c_n\theta M'}{100DC_*},\frac{M'}{2(1+D)}
\right\}.
\end{equation}
The last two bounds in \eqref{eq:theta-prime-choice} show that this interval is nonempty. We finally choose $T\ge 2\log n$ sufficiently
large that \eqref{eq:one-step-trivial}, \eqref{eq:one-step-good}, and
\eqref{eq:one-step-base} hold for every $s\ge T$, and enlarge $T$ below
whenever a further condition depending only on the parameters fixed above
is required.
\medskip

Now fix a sufficiently large $t\ge1$.
By \Cref{lem:ndecomposition}, there exists a sequence
$\{s_i\}_{1\le i\le I}$ such that
$$
t=s_1+\cdots+s_I,
\qquad
s_i\ge0,\quad\text{and}
$$
$$
s_1=Ds_2,\qquad
s_i=(1+\delta)s_{i+1}\ (2\le i\le I-1),\qquad
T\le s_I\le 2T.
$$
For $1\le i\le I-1$, write
$$
s'_{i+1}:=s_{i+1}+\cdots+s_I.
$$
We also put $s'_{I+1}:=0$.
The geometric relations above also give
\begin{equation}\label{eq:tail-bound}
\Bigl(1-\frac{s_I}{s_i}\Bigr)\delta^{-1}s_i
\le s'_{i+1}
\le \delta^{-1}s_i
\qquad (2\le i\le I-1).
\end{equation}

We introduce the iterated integral
$$
\mathsf Z_{t}
:=
\int_{B_{U}(1)^I}
\widehat\alpha_{\eta,M}(a_{s_I}u_I\cdots a_{s_1}u_1;\Lambda)^{1+\theta'}
\,du_1\cdots du_I.
$$

By \Cref{lem:randomwalkgeneral},
\begin{equation}\label{eq:Ztexpression}
\int_{B_{U}(1/3)}
\widehat{\alpha}_{\eta,M}(a_tu;\Lambda)^{1+\theta'}\,du
\le \mathsf Z_{t}.
\end{equation}
Thus it suffices, for the moment, to show that
\begin{equation}\label{zt}
\sup_{t\ge0}\mathsf Z_{t}<\infty.
\end{equation}

\medskip

For $1\le m\le I-1$, we consider exceptional sets
$$
\widetilde{\mathcal{E}}_m:=\begin{cases}\mathcal E_{s_{m+1},\eta,M,M'}\cup \mathcal E_{s_{m+1}',\eta,M,M'}&\text{ if }s_{m+1}\leq C_n\theta^{-1}\log t,\\ \mathcal E_{s_{m+1},\eta,M,M'}&\text{otherwise}. \end{cases}
$$
Define
$$
\overline\Theta_m
:=
\Bigl\{(u_1,\dots,u_m)\in B_{U}(1)^m :
(a_{s_m}u_m\cdots a_{s_1}u_1,\Lambda)
\in \widetilde{\mathcal{E}}_m\Bigr\},
$$
and set
$$
\Theta_m:=\overline\Theta_m\times B_{U}(1)^{I-m}\subset B_{U}(1)^I,
\qquad
\Theta:=\bigcup_{m=1}^{I-1}\Theta_m.
$$
We also  define inductively
$$\overline\Omega_m':=\Bigl\{(u_1,\dots,u_m)\in B_{U}(1)^m :
(a_{s_m}u_m\cdots a_{s_1}u_1,\Lambda)
\in \mathcal T_{s_{m+1},\theta}\Bigr\},$$
$$\overline\Omega_m:=\overline\Omega_m'\smallsetminus\bigcup_{i=1}^{m-1}\left(\overline\Omega_i' \times B_{U}(1)^{m-i}\right),$$
$$\Omega_m:=\overline\Omega_m\times B_{U}(1)^{I-m}\subset B_{U}(1)^I, \qquad \Omega:=\bigcup_{m=1}^{I-1}\Omega_m.$$

We then have
$$
\mathsf Z_{t}\le \mathsf Y_{t}+\sum_{m=1}^{I-1}\mathsf T_{t,m}+\sum_{m=1}^{I-1}\mathsf E_{t,m},
$$
where
$$
\mathsf Y_{t}
:=
\int_{B_{U}(1)^I\smallsetminus (\Theta\cup \Omega)}
\widehat\alpha_{\eta,M}(a_{s_I}u_I\cdots a_{s_1}u_1;\Lambda)^{1+\theta'}
\,du_1\cdots du_I,
$$
$$
\mathsf T_{t,m}
:=
\int_{\Omega_m\smallsetminus \Theta}
\widehat\alpha_{\eta,M}(a_{s_I}u_I\cdots a_{s_1}u_1;\Lambda)^{1+\theta'}
\,du_1\cdots du_I,
$$
$$
\mathsf E_{t,m}
:=
\int_{\Theta_m}
\widehat\alpha_{\eta,M}(a_{s_I}u_I\cdots a_{s_1}u_1;\Lambda)^{1+\theta'}
\,du_1\cdots du_I.
$$

\medskip
\noindent{\bf {Estimate of the bad part $\mathsf E_{t,m}$.}}
Fix $1\le m\le I-1$. Write
$$
\mathsf E_{t,m}
=
\int_{\overline\Theta_m}
J(u_1,\dots,u_m)\,du_1\cdots du_m,
$$
where
$$
J(u_1,\dots,u_m)
:=
\int_{B_{U}(1)^{I-m}}
\widehat\alpha_{\eta,M}(a_{s_I}u_I\cdots a_{s_1}u_1;\Lambda)^{1+\theta'}
\,du_{m+1}\cdots du_I.
$$

Recall that
$$
\widehat\alpha_{\eta,M}^{1+\theta'}
\le \alpha^{1+\theta'}\leq \bar{\alpha}'_{\theta^{\sharp}}.
$$
Set
$$
x_m:=a_{s_m}u_m\cdots a_{s_1}u_1\Lambda.
$$
Since $s_i\ge T$ for every $i$, directly iterating the affine inequality
\eqref{eq:one-step-trivial} over the last $I-m$ steps gives
\begin{equation}\label{eq:Badestimate1}
\begin{aligned}
J(u_1,\dots,u_m)
&\le
e^{C_n\theta's'_{m+1}}\bar\alpha'_{\theta^{\sharp}}(x_m)+
\sum_{i=m+1}^I
e^{C_ns_i+C_n\theta'(s_{i+1}+\cdots+s_I)}.
\end{aligned}
\end{equation}
Since $\delta<1$, \eqref{eq:tail-bound} gives
$$
s'_{m+1}\le(1+\delta^{-1})s_{m+1}\le2\delta^{-1}s_{m+1},
\qquad
s_{i+1}+\cdots+s_I\le\delta^{-1}s_{m+1}
\quad(i\ge m+1).
$$
Moreover, the geometric definition of the $s_i$ gives
$I-m\ll1+\delta^{-1}\log(2+\frac{s_{m+1}}{T})$. After increasing $T$, we may therefore
assume that
$$
I-m+1\le \exp\left(\frac{M'}{20bM}s_{m+1}\right).
$$
Since $\bar\alpha'_{\theta^{\sharp}}\ge1$, the lower bound in
\eqref{eq:deltarange} and the choice of $M'$ imply
$$
C_n\theta'\delta^{-1}<\frac{M'}{20bM},
\qquad
C_n\le\frac{M'}{20bM}.
$$
It follows from \eqref{eq:Badestimate1} that
\begin{equation}\label{eq:Badestimate1-simplified}
J(u_1,\ldots,u_m)
\ll
e^{\frac{M'}{5bM}s_{m+1}}
\bar\alpha'_{\theta^{\sharp}}(x_m).
\end{equation}

Now set
$$
\varphi(h;\Delta)
:=
\alpha(h\Delta)^{2}\,
\mathds{1}_{\widetilde{\mathcal E}_{m}}(h,\Delta).
$$
We also have $s_1=Ds_2\ge Ds_{m+1}$. If
$s_{m+1}\le C_n\theta^{-1}\log t$, then
$s'_{m+1}\le\delta^{-1}s_{m+1}$. Since
$
s_1\ge \frac{D}{D+1+\delta^{-1}}t,
$
for sufficiently large $t$, we have
$$
s_1\ge D\delta^{-1}C_n\theta^{-1}\log t
\ge Ds'_{m+1}.
$$
By \Cref{prop:avoidance'} and \Cref{lem:randomwalkgeneral},
$$
\begin{aligned}
&\int_{\overline{\Theta}_m}
\bar{\alpha}'_{\theta^{\sharp}}(a_{s_m}u_m\cdots a_{s_1}u_1\Lambda)
\,du_1\cdots du_m\\ 
&\ll
\int_{B_{U}(1)^m}
\varphi(a_{s_m}u_m\cdots a_{s_1}u_1;\Lambda)\,du_1\cdots du_m \\
&\le
\int_{B_{U}(2)}
\varphi(a_{s_1+\cdots+s_m}u;\Lambda)\,du.
\end{aligned}
$$
The proof of \Cref{prop:avoidance'} gives the same estimate with
$B_U(R)$ in place of $B_U(1)$ for every fixed $R\ge1$, with the implicit
constant also allowed to depend on $R$. Indeed, the log-Lipschitz bounds are
uniform on $B_U(R)$, the Remez argument gives the required goodness on its
fixed enlargement, and the instability estimate remains valid because
$B_U(1)\subset B_U(R)$. We apply this extension with $R=2$. Applying
\Cref{prop:avoidance'} to the one or
two exceptional sets appearing in $\widetilde{\mathcal E}_m$, and using
$s'_{m+1}\ge s_{m+1}$ when the second exceptional set
is present, gives
$$
\int_{\overline{\Theta}_m}
\bar{\alpha}'_{\theta^{\sharp}}(a_{s_m}u_m\cdots a_{s_1}u_1\Lambda)
\,du_1\cdots du_m
\ll
e^{-\frac{M'}{bM}s_{m+1}}.
$$
Here we used \eqref{eq:fixed-auxiliary-height-comparison} and $\alpha(\Delta)\gg_n1$ on $X$.
Combining this with \eqref{eq:Badestimate1-simplified}, we get
for $1\le m\le I-1$,
\begin{equation}\label{eq:Badestimate3}
\begin{aligned}
\mathsf E_{t,m}
&\ll
e^{\frac{M'}{5bM}s_{m+1}}
e^{-\frac{M'}{bM}s_{m+1}} \ll e^{-s_{m+1}}.
\end{aligned}
\end{equation}

Summing over $m$,
$$
\sum_{m=1}^{I-1}\mathsf E_{t,m}
\ll
\sum_{m=1}^{I-1} e^{-s_{m+1}}
\le
\sum_{j=0}^{\infty}e^{-(1+\delta)^jT}
\le
e^{-T}\sum_{j=0}^{\infty}e^{-j\delta T}
\ll
\delta^{-1}T^{-1}.
$$

Choose $0<\varepsilon\le 1/100$, and let $I'$ be the largest
integer such that
$$
s_{I'}\ge \varepsilon^{-1}s_I.
$$
After increasing the lower bound on $t$, we have
$s_2\ge\varepsilon^{-1}s_I$, and hence $I'\ge2$.
Since $I'$ is maximal and
$s_{I'}=(1+\delta)^{I-I'}s_I$, we have
$$
\varepsilon^{-1}
\le (1+\delta)^{I-I'}
< (1+\delta)\varepsilon^{-1}.
$$
In particular,
$$
I-I'\asymp \delta^{-1}\log\frac1\varepsilon,
$$
with absolute implied constants.
Moreover, maximality and $s_I\le2T$ give
\begin{equation}\label{eq:I-prime-upper}
s_{I'}<(1+\delta)\varepsilon^{-1}s_I
<4\varepsilon^{-1}T.
\end{equation}

\medskip
\noindent{\bf{Estimate of the cusp part $\mathsf T_{t,m}$.}}
Fix $1\leq m\leq I-1$, and put
$$
x_i=(a_{s_i}u_i\cdots a_{s_1}u_1,\Lambda),\qquad 1\leq i\leq m.
$$
Define
$$
\overline\Omega_m^{\circ}
:=
\overline\Omega_m\smallsetminus
\bigcup_{i=1}^{m}
\left(\overline\Theta_i\times B_U(1)^{m-i}\right).
$$
Since
$$
\Omega_m\smallsetminus\Theta
\subset
\overline\Omega_m^{\circ}\times B_U(1)^{I-m},
$$
we may discard the exceptional restrictions on the last $I-m$ variables
when taking an upper bound.
For $(u_1,\ldots,u_m)\in\overline\Omega_m^{\circ}$, we have
$x_m\in \mathcal T_{s_{m+1},\theta}$. Hence, by the log-Lipschitz property of $\widehat\alpha_{\eta,M}$,
\eqlabel{eq:alphalowerbound}{4n^3t+O_{\Lambda}(1)
\geq \log\widehat\alpha_{\eta,M}(x_m)
>
\exp(c_n\theta s_{m+1}).}
Taking logarithms and using $C_n\geq 2c_n^{-1}$, we obtain, for sufficiently
large $t$, $s_{m+1}\leq C_n\theta^{-1}\log t$.
By the definition of $\widetilde {\mathcal{E}}_m$, this implies that $\mathcal E_{s'_{m+1},\eta,M,M'}\subset \widetilde{\mathcal{E}}_m$. Since $(u_1,\ldots,u_m)\in\overline\Omega_m^{\circ}$, we therefore have
$x_m\notin \mathcal E_{s'_{m+1},\eta,M,M'}$.

We next show that $x_m\notin \mathcal T_{2s'_{m+1},\theta}$ for sufficiently large $T$. If $m\geq 2$, then the definition of $\Omega_m$ implies that the preceding
intermediate point has not entered the corresponding cusp set, that is, $x_{m-1}\notin \mathcal T_{s_m,\theta}$. Thus
$$
\log\widehat\alpha_{\eta,M}(x_{m-1})
\leq \exp(c_n\theta s_m).
$$
Using the log-Lipschitz property of $\widehat\alpha_{\eta,M}$, we obtain
$$
\log\widehat\alpha_{\eta,M}(x_m)
\leq
\exp(c_n\theta s_m)+O(s_m).
$$
Since $2s'_{m+1}\geq \frac{2}{1+\delta}s_m>s_m$, the right-hand side is smaller than $\exp(c_n\theta 2s'_{m+1})$ for $T$ sufficiently large. Hence $x_m\notin \mathcal T_{2s'_{m+1},\theta}$. The case
$m=1$ is similar, since $\Lambda$ is fixed and the log-Lipschitz property gives
$$
\log\widehat\alpha_{\eta,M}(a_{s_1}u_1;\Lambda)\ll s_1+1,
$$
whereas $\exp(c_n\theta 2s'_2)$ grows exponentially in $s'_2$. Thus the same conclusion holds  for sufficiently large $T$. Consequently, 
$$
x_m\notin \mathcal E_{s'_{m+1},\eta,M,M'}\cup \mathcal T_{2s'_{m+1},\theta}.
$$
We may therefore apply the contraction estimate \eqref{eq:one-step-good} with $s=s'_{m+1}$.

    Applying \Cref{lem:randomwalkgeneral} to the last $I-m$ factors and
    then the $B_U(2)$-version of \eqref{eq:one-step-good}, we get
    \begin{equation}\label{eq:Jtildeiterationupperbound}{\begin{aligned}
        \widetilde{J}(u_1,\ldots,u_m)&=\int_{B_{U}(1)^{I-m}}\widetilde{\alpha}_{\eta,M}\big(a_{s_I}u_I\cdots a_{s_1}u_1;\Lambda\big)du_{m+1}\cdots du_I\\&\le \int_{B_U(2)} \widetilde{\alpha}_{\eta,M}\big(a_{s_{m+1}'}ua_{s_m}u_{m}\cdots a_{s_1}u_1;\Lambda\big)du\\&\leq e^{-c_n\theta s'_{m+1}}\widetilde{\alpha}_{\eta,M}\big(a_{s_m}u_m\cdots a_{s_1}u_1;\Lambda\big)+e^{C_*s_{m+1}'}\\&\leq 2e^{-c_n\theta s'_{m+1}}\widetilde{\alpha}_{\eta,M}\big(a_{s_m}u_m\cdots a_{s_1}u_1;\Lambda\big).
    \end{aligned}}\end{equation}
The last inequality requires the slightly stronger comparison
$$
\widetilde\alpha_{\eta,M}(x_m)
\ge
\exp\bigl((C_*+c_n\theta)s'_{m+1}\bigr).
$$
Indeed, \eqref{eq:hatalpha-majorized},
$x_m\in\mathcal T_{s_{m+1},\theta}$, and
$s'_{m+1}\le\delta^{-1}s_{m+1}$ give
$$
\log\widetilde\alpha_{\eta,M}(x_m)
\ge
(1+\theta')e^{c_n\theta s_{m+1}}-O_n(1),
$$
which is larger than
$(C_*+c_n\theta)s'_{m+1}$ for  sufficiently large $T$.

Since \eqref{eq:hatalpha-majorized} holds and the integrand is
nonnegative,
$$
\mathsf T_{t,m}
\ll_n
\int_{\overline\Omega_m^{\circ}}
\widetilde J(u_1,\ldots,u_m)\,du_1\cdots du_m.
$$

For $(u_1,\ldots,u_m)\in\overline\Omega_m^{\circ}$ and every
$1\le i<m$, the intermediate point
$$
(a_{s_i}u_i\cdots a_{s_1}u_1,\Lambda)
$$
lies outside the two sets relevant to the next increment $s_{i+1}$:
$$
(a_{s_i}u_i\cdots a_{s_1}u_1,\Lambda)
\notin
\mathcal E_{s_{i+1},\eta,M,M'}
\cup
\mathcal T_{2s_{i+1},\theta}.
$$
Here the exclusion from $\mathcal E_{s_{i+1},\eta,M,M'}$ follows
from being outside $\Theta$, while the exclusion from
$\mathcal T_{2s_{i+1},\theta}$ follows from the definition of
$\Omega_m$, since
$$
\mathcal T_{2s_{i+1},\theta}
\subset
\mathcal T_{s_{i+1},\theta}.
$$
Thus we may apply the contraction estimate \eqref{eq:one-step-good}
successively to the steps $s_m,\dots,s_2$, and then use
\eqref{eq:one-step-base} for the first step $s_1$. This gives
\begin{equation}\label{eq:Goodestimate0}
\begin{aligned}
&\int_{\overline\Omega_m^{\circ}}
\widetilde{\alpha}_{\eta,M}
(a_{s_m}u_m\cdots a_{s_1}u_1;\Lambda)
\,du_1\cdots du_m                                      \\
& \le
e^{-c_n\theta(s_1+\cdots+s_m)}
\widetilde{\alpha}_{\eta,M}(\Lambda)                  +
e^{C_*s_1
-c_n\theta(s_2+\cdots+s_m)}     +
\sum_{i=2}^m
e^{C_*s_i
-c_n\theta(s_{i+1}+\cdots+s_m)}.
\end{aligned}
\end{equation}
Here empty sums are interpreted as $0$.

Combining \eqref{eq:Jtildeiterationupperbound} and \eqref{eq:Goodestimate0}, and using the relation
$$
(s_{i+1}+\cdots+s_m)+s'_{m+1}
=
s'_{i+1},
$$
we obtain
\begin{align}
\mathsf T_{t,m}
\ll{}&
e^{-c_n\theta t}
\widetilde{\alpha}_{\eta,M}(\Lambda) + e^{C_*s_1-c_n\theta s'_2}
+
\sum_{i=2}^m
e^{C_*s_i
-c_n\theta s'_{i+1}}.
\label{eq:Ttmestimate-corrected}
\end{align}

Summing \eqref{eq:Ttmestimate-corrected} over $1\le m\le I-1$, and
interchanging the order of summation in the last term, gives
$$
\begin{aligned}
\sum_{m=1}^{I-1}\mathsf T_{t,m}
\ll{}&
I e^{-c_n\theta t}
\widetilde{\alpha}_{\eta,M}(\Lambda)  +
I e^{C_*s_1-c_n\theta s'_2} +
\sum_{i=2}^{I-1}
(I-i)
e^{C_*s_i
-c_n\theta s'_{i+1}}.
\end{aligned}
$$
The harmless polynomial factors $I$ and $I-i$ are absorbed by the 
exponential estimates established below in
\eqref{eq:exponentialsum0}--\eqref{eq:exponentialsum2}, after increasing the
initial threshold $T$ if necessary. Hence
\begin{equation}\label{eq:Dtmestimate}
\sum_{m=1}^{I-1}\mathsf T_{t,m}\ll 1.
\end{equation}

\noindent{\bf{Estimate of the good part $\mathsf Y_{t}$.}}
On $B_{U}(1)^I\smallsetminus(\Theta \cup \Omega)$, every intermediate point
$ (a_{s_m}u_m\cdots a_{s_1}u_1,\Lambda) $ lies outside the exceptional set for the next increment $s_{m+1}$:
$$
(a_{s_m}u_m\cdots a_{s_1}u_1,\Lambda)
\notin \mathcal E_{s_{m+1},\eta,M,M'}.
$$
Since the path is outside $\Omega$, for every $1\leq m< I$, we also have $(a_{s_m}u_m\cdots a_{s_1}u_1,\Lambda)\notin \mathcal T_{s_{m+1},\theta}$. Thus we may
apply the contraction estimate \eqref{eq:one-step-good} successively to the
steps $s_I,\dots,s_2$, and then use \eqref{eq:one-step-base}
for the first step $s_1$:
\begin{equation}\label{eq:Goodestimate1}
\begin{aligned}
\mathsf Y_{t}
&\le
\int_{B_{U}(1)^I\smallsetminus (\Theta\cup \Omega)}
\widetilde{\alpha}_{\eta,M}(a_{s_I}u_I\cdots a_{s_1}u_1;\Lambda)
\,du_1\cdots du_I \\
&\le
e^{-c_n\theta t}
\widetilde{\alpha}_{\eta,M}(\Lambda)
+
e^{C_*s_1-c_n\theta(s_2+\cdots+s_I)}  +
\sum_{m=2}^I
e^{C_*s_m
-c_n\theta(s_{m+1}+\cdots+s_I)} \\
&=
e^{-c_n\theta t}
\widetilde{\alpha}_{\eta,M}(\Lambda)
+
e^{C_*s_1-c_n\theta s_2'} +\sum_{m=2}^I
e^{C_*s_m-c_n\theta s_{m+1}'}.
\end{aligned}
\end{equation}
If $2\le m\le I'$, then
$$
s_{m+1}'
=
s_{m+1}+\cdots+s_I
\ge
\left(1-\frac{s_I}{s_{I'}}\right)\delta^{-1}s_m
\ge
(1-\varepsilon)\delta^{-1}s_m.
$$
Hence, using the upper bound in \eqref{eq:deltarange}, we have
\begin{equation}\label{eq:exponentialsum0}
\begin{aligned}
& \sum_{m=2}^{I'}
e^{C_*s_m-c_n\theta s_{m+1}'}
\le
\sum_{m=2}^{I'}
e^{\left(C_*-c_n(1-\varepsilon)\delta^{-1}\theta\right)s_m} \\
&\le
\sum_{m=2}^{I'}
e^{\left(C_*\delta-c_n(1-\varepsilon)\theta\right)(I-m)T}  \le
\sum_{j=I-I'}^\infty e^{-\frac{c_nj\theta T}{2}}
\ll
\varepsilon^{\frac{c_n\delta^{-1}\theta T}{4}},
\end{aligned}
\end{equation}
where we used $s_m\ge(1+\delta)^{I-m}T\ge(I-m)\delta T$,
$\varepsilon\le1/100$, and
$C_*\delta\le c_n\theta/(100D)$.

For $m=1$, we have $I'\ge2$ once $t$ is sufficiently large, and hence
$s_I\le\varepsilon s_2$. Thus the same geometric-tail calculation and
the upper bound in \eqref{eq:deltarange} give
$$
c_n\theta s_2'
\ge
\frac{c_n(1-\varepsilon)\theta s_2}{\delta}
=
\frac{c_n(1-\varepsilon)\theta s_1}{D\delta}
>
C_*s_1,
$$
and hence
\begin{equation}\label{eq:exponentialsum1}
e^{C_*s_1-c_n\theta s_2'}\le 1.
\end{equation}

Finally,
\begin{equation}\label{eq:exponentialsum2}
\begin{aligned}
\sum_{m=I'+1}^{I}
e^{C_*s_m-c_n\theta(s_{m+1}+\cdots+s_I)}
&\le
(I-I')e^{C_*s_{I'}}
\ll
\delta^{-1}\log \varepsilon^{-1} \,
e^{4\varepsilon^{-1}C_*T}.
\end{aligned}
\end{equation}

Combining \eqref{eq:Goodestimate1}, \eqref{eq:exponentialsum0},
\eqref{eq:exponentialsum1}, and \eqref{eq:exponentialsum2}, we conclude that
$\mathsf Y_{t}$ is uniformly bounded for all sufficiently large
$t$.

Combining the bounds for the good, bad, and cusp parts, we conclude that
$$
\mathsf Z_{t}
\leq 
\mathsf Y_{t}+\sum_{m=1}^{I-1}\mathsf T_{t,m}+\sum_{m=1}^{I-1}\mathsf E_{t,m}
$$
is uniformly bounded for all sufficiently large $t\ge1$.
By \eqref{eq:Ztexpression}, this proves
$$
\sup_{t\ge0}
\int_{B_{U}(1/3)}
\widehat\alpha_{\eta,M}(a_tu;\Lambda)^{1+\theta'}\,du
<\infty.
$$

It remains to enlarge the averaging ball. Choose
$u_1,\ldots,u_\ell\in U$ such that
$$
B_{U}(1)\subset\bigcup_{j=1}^{\ell}B_{U}(1/3)u_j.
$$
Each $u_j\Lambda$ is Diophantine. Since $u_j$ preserves every exceptional
summand and acts invertibly on every exterior power, norm comparison gives
parameters $0<\eta_j<1$ such that $u_j\Lambda$ is
$(\eta_{j},M)$-Diophantine and, in every dimension $r$,
$$
u_jV\in
\widetilde{\mathscr Q}_{r,\eta_j,M}(u_j\Lambda)
\quad\to\quad
V\in\widetilde{\mathscr Q}_{r,\eta,M}(\Lambda).
$$
Consequently, for every $h\in H$,
$$
\widehat\alpha_{\eta,M}(hu_j;\Lambda)
\le
\widehat\alpha_{\eta_j,M}(h;u_j\Lambda).
$$
The small-ball estimate just proved applies to each of the finitely many
triples $(u_j\Lambda,\eta_j,M)$. Therefore
$$
\begin{aligned}
&\int_{B_{U}(1)}
\widehat\alpha_{\eta,M}(a_tu;\Lambda)^{1+\theta'}\,du \ll
\sum_{j=1}^{\ell}
\int_{B_{U}(1/3)}
\widehat\alpha_{\eta_j,M}(a_tu;u_j\Lambda)^{1+\theta'}\,du
\ll1
\end{aligned}
$$
uniformly in $t$.
This proves \Cref{prop:maintheoremforu}.

\medskip
We deduce \Cref{thm:modifieduniformboundedness} from
\Cref{prop:maintheoremforu} by a transfer argument following
\cite[Section~7.2]{Kim}. Let $U^-<H$ be the contracting horospherical
subgroup for $a_t$, opposite to $U$, and let $P^-:=N_H(U^-)$.
Set
$M_K:=P^-\cap K$ which is the centralizer of $\{a_t:t\in\mathbb R\}$ in $K$.
Since the modified height is left $K$-invariant and $M_K$
commutes with $a_t$, the function
$k\mapsto\widehat\alpha_{\eta,M}(a_tk;\Lambda)$ descends to
$M_K\backslash K\simeq P^-\backslash H$.
By compactness, finitely many right translates of relatively compact
pieces of the open $U$-cell give local charts covering this
quotient, with representatives
$k=p(u)uk_j$, where $k_j\in K$, $u$ ranges over a subset of
$B_U(1)$, and $p(u)\in P^-$.
The elements $a_tp(u)a_{-t}$ remain in a fixed compact set
for all $t\ge0$.
Since each $k_j$ preserves the norms and commutes with the
exceptional projections, $k_j\Lambda$ satisfies the same
Diophantine condition, and
$$
\widehat\alpha_{\eta,M}(hk_j;\Lambda)
=
\widehat\alpha_{\eta,M}(h;k_j\Lambda)
\qquad (h\in H).
$$
Choose $\theta>0$ no larger than the finitely many exponents
provided by \Cref{prop:maintheoremforu} for these lattices.
Bounded coordinate Jacobians and norm comparison give
$$
\int_K \widehat\alpha_{\eta,M}(a_tk;\Lambda)^{1+\theta}\,dk
\ll
\sum_j \int_{B_U(1)}
\widehat\alpha_{\eta,M}(a_tu;k_j\Lambda)^{1+\theta}\,du,
$$
with an implied constant independent of $t\ge0$.
The right-hand side is uniformly bounded by
\Cref{prop:maintheoremforu}, proving
\Cref{thm:modifieduniformboundedness}.\qed

We shall also use the corresponding first-moment estimate for the ordinary
height. The standard iteration based on
\Cref{prop:globalboundedexpansion'}, followed by the same transfer from
$U$-averages to $K$-averages, gives, for every fixed $\Delta\in X$,
\begin{equation}\label{eq:standardalphauniformboundednesscompact}
\sup_{t\ge0}
\int_K\bar\alpha(a_tk\Delta)\,dk<\infty.
\end{equation}
This is the usual first-moment nondivergence estimate for the standard
Margulis height; compare \cite[Section~7]{eskin-margulis-mozes:1998}.

\section{Structure of isotropic subspaces}\label{s:quasinull}
In this section, we establish the structural results on isotropic subspaces
needed for the proof of \Cref{thm:quasinullcontribution2}. We first record
how the Diophantine hypothesis identifies quasi-null subspaces with
isotropic ones; the rest of the section does not use this hypothesis.

\subsection{Reduction to isotropic subspaces}

We first isolate the rational subspaces lying in column- or row-isotropic
subspaces. For $1\le k\le n-1$, let
$\mathcal X^{(1)}_{kn,\mathrm{iso}}(\Delta)$ and
$\mathcal X^{(2)}_{kn,\mathrm{iso}}(\Delta)$ denote respectively the
collections of $kn$-dimensional $\Delta$-rational column- and
row-isotropic subspaces. Set
$$
\mathcal X_{kn,\mathrm{iso}}(\Delta)
:=
\mathcal X^{(1)}_{kn,\mathrm{iso}}(\Delta)
\cup
\mathcal X^{(2)}_{kn,\mathrm{iso}}(\Delta).
$$
For $m=1,2$, let
$\mathscr Q^{(m)}_{kn,\eta,M}(\Delta)$ denote the collection of
$kn$-dimensional $\Delta$-rational subspaces $V$ such that
$$
\|\mathsf w_{\Delta,V}-\pi_{k,m}(\mathsf w_{\Delta,V})\|
\le
\eta\|\mathsf w_{\Delta,V}\|^{-M}.
$$
Set
$$
\mathscr Q_{kn,\eta,M}(\Delta)
:=
\mathscr Q^{(1)}_{kn,\eta,M}(\Delta)
\cup
\mathscr Q^{(2)}_{kn,\eta,M}(\Delta).
$$

For $1\le i\le N-1$, let
$$
\widetilde{\mathcal X}_{i,\mathrm{iso}}(\Delta)
$$
denote the collection of all $i$-dimensional $\Delta$-rational
subspaces $V$ for which there exist $1\le k\le n-1$ and
$V'\in\mathcal X_{kn,\mathrm{iso}}(\Delta)$ such that $V\subset V'$. Thus
$\widetilde{\mathcal X}_{i,\mathrm{iso}}(\Delta)$ is the
collection of $i$-dimensional subspaces contained in rational isotropic
subspaces.

\begin{lemma}\label{lem:quasinull-isotropic}
Let $0<\eta<1$, let $M>1$, and suppose that $\Delta$ is
$(\eta,M)$-Diophantine. Then, for every $1\le k\le n-1$ and
$m\in\{1,2\}$,
$$
\mathscr Q^{(m)}_{kn,\eta,M}(\Delta)
=
\mathcal X^{(m)}_{kn,\mathrm{iso}}(\Delta).
$$
Consequently,
$$
\widetilde{\mathscr Q}_{i,\eta,M}(\Delta)
=
\widetilde{\mathcal X}_{i,\mathrm{iso}}(\Delta)
\qquad (1\le i\le N-1).
$$
\end{lemma}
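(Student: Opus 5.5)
The plan is to prove the two inclusions for fixed $k$ and $m$, and then deduce the statement about downward closures directly from the definitions.

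First, the inclusion $\mathcal X^{(m)}_{kn,\mathrm{iso}}(\Delta)\subset\mathscr Q^{(m)}_{kn,\eta,M}(\Delta)$. If $V$ is a $kn$-dimensional $\Delta$-rational column-isotropic subspace (for $m=1$), then \Cref{lem:isotropic_iff_pi1_pi2} gives $\mathsf w_{\Delta,V}\in\mathcal M_{k,1}$. Hence $\mathsf w_{\Delta,V}-\pi_{k,1}(\mathsf w_{\Delta,V})=0$, and the quasi-null inequality holds trivially. The row case with $m=2$ is identical. This direction does not use the Diophantine hypothesis.

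Second, the reverse inclusion. Let $V\in\mathscr Q^{(m)}_{kn,\eta,M}(\Delta)$ and put $\mathbf w:=\mathsf w_{\Delta,V}$, which is nonzero. Apply \Cref{lattice_Diophantine} with $r=1$ and $V_1=V$. Either $\mathbf w=\pi_{k,m}^{(1)}(\mathbf w)=\pi_{k,m}(\mathbf w)$, or $\|\mathbf w-\pi_{k,m}(\mathbf w)\|>\eta\|\mathbf w\|^{-M}$. The second alternative contradicts the defining inequality of $\mathscr Q^{(m)}_{kn,\eta,M}(\Delta)$, so $\mathbf w\in\mathcal M_{k,m}$. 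Then \Cref{lem:isotropic_iff_pi1_pi2} shows that $V$ is column-isotropic when $m=1$ and row-isotropic when $m=2$. So $V\in\mathcal X^{(m)}_{kn,\mathrm{iso}}(\Delta)$. This is the only step that uses the Diophantine condition, and only its $r=1$ case.

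Finally, take unions over $m$ to get $\mathscr Q_{kn,\eta,M}(\Delta)=\mathcal X_{kn,\mathrm{iso}}(\Delta)$ for every $k$. By definition, $\widetilde{\mathscr Q}_{i,\eta,M}(\Delta)$ consists of the $i$-dimensional $\Delta$-rational $V$ contained in some $V'\in\mathscr Q_{kn,\eta,M}(\Delta)$, and $\widetilde{\mathcal X}_{i,\mathrm{iso}}(\Delta)$ is defined the same way with $\mathcal X_{kn,\mathrm{iso}}(\Delta)$ in place of $\mathscr Q_{kn,\eta,M}(\Delta)$. Since these index families coincide, so do the downward closures.

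There is no real obstacle. The only point to check is the normalization: the quasi-null condition uses $\le\eta\|\mathbf w\|^{-M}$ while the Diophantine alternative uses the strict $>$, so the two are genuinely complementary and no boundary case slips through.
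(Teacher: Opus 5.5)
Your proposal is correct and is essentially the paper's proof. Both show that isotropic implies quasi-null via \Cref{lem:isotropic_iff_pi1_pi2}, exclude the non-exceptional alternative using the $r=1$ case of the Diophantine condition, and deduce the downward-closure identity because the two index families coincide.
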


\begin{proof}
Fix $k$ and $m$. By
\Cref{lem:isotropic_iff_pi1_pi2}, a $kn$-dimensional
$\Delta$-rational subspace $V$ belongs to
$\mathcal X^{(m)}_{kn,\mathrm{iso}}(\Delta)$ if and only if
$
\mathsf w_{\Delta,V}
=
\pi_{k,m}(\mathsf w_{\Delta,V}).
$
If this equality holds, $V$ is $(\eta,M)$-quasi-null; otherwise, the
Diophantine condition excludes it. This proves the first equality. The second equality follows by taking the unions over $k$ and $m$ and
then taking downward closures.
\end{proof}

\subsection{Orthogonal-complement duality}
Let $\langle v,w\rangle:=\operatorname{tr}(vw^{\intercal})
$ be the Euclidean inner product on $\M_n(\R)$, and extend it to exterior
powers. Fix the orientation for which the lexicographically ordered standard matrix units $(E_{ij})_{1\le i,j\le n}$ form a positively oriented
orthonormal basis, and let $\mathrm{vol}$ be the corresponding unit volume
form.  The Hodge-star
$$
\ast:\wedge^r\M_n(\R)\to\wedge^{N-r}\M_n(\R)
$$
is characterized by
$$
\omega\wedge(\ast \omega')
=
\langle\omega,\omega'\rangle\,\mathrm{vol} \qquad
\left(\omega,\omega'\in\wedge^r\M_n(\R)\right).
$$
It is an isometry. 

Throughout this section and \Cref{s:isotropic}, we retain the
identification $\M_n(\R)\simeq\mathsf C\otimes\mathsf R$ of Section~3
and, when convenient, further identify the row factor $\mathsf R$ with
$(\R^n)^*$ via the Euclidean inner product. Thus
$\mathcal L(\mathsf U)=\mathsf U\otimes(\R^n)^*$ and $\mathcal R(\mathsf U)=\R^n\otimes \mathsf U$ for
$\mathsf U<\R^n$.

For a lattice $\Delta<\M_n(\br)$, let
$$
\Delta^*
:=
\{v\in\M_n(\mathbb R):\operatorname{tr}(v^{\intercal}w)\in\mathbb Z
\text{ for all }w\in\Delta\}
$$
be its dual lattice.

\begin{lemma}\label{lem:lattice-orthogonal-duality}
Let $\Delta<\M_n(\R)$ be a lattice, and let
$V<\M_n(\R)$ be a proper $\Delta$-rational subspace. Then $V^\perp$ is $\Delta^*$-rational and, up to sign,
\begin{equation}\label{eq:hodge-plucker-duality}
\mathsf w_{\Delta^*,V^\perp}
=
\frac{\ast\mathsf w_{\Delta,V}}{\covol(\Delta)}.
\end{equation}
Consequently,
\begin{equation}\label{eq:covol-orthogonal-duality}
\covol_{V^\perp}(\Delta^*\cap V^\perp)
=
\frac{\covol_V(\Delta\cap V)}{\covol(\Delta)}.
\end{equation}
Moreover, for every subspace $\mathsf U<\R^n$,
$$
\mathcal L(\mathsf U)^\perp=\mathcal L(\mathsf U^\perp)
\quad\text{and}\quad
\mathcal R(\mathsf U)^\perp=\mathcal R(\mathsf U^\perp).
$$
 Thus orthogonal complementation preserves the column-isotropic and
row-isotropic classes.
\end{lemma}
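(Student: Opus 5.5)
The plan is to reduce everything to the standard linear-algebra fact that for a lattice $\Delta=g\mathbb Z^N$ the dual lattice is $\Delta^*=(g^{\intercal})^{-1}\mathbb Z^N$ with respect to the trace inner product. Primitive sublattices of $\Delta$ then correspond to primitive sublattices of $\Delta^*$ by annihilators, and the Hodge star intertwines the two Pl\"ucker embeddings up to the factor $\covol(\Delta)$.

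First I show that $V^\perp$ is $\Delta^*$-rational. Since $V$ is $\Delta$-rational, it is defined over $\mathbb Q$ in the $\Delta$-coordinates. Hence its annihilator, read in the dual coordinates, is defined over $\mathbb Q$ with respect to $\Delta^*$, and $\Delta^*\cap V^\perp$ is a lattice in $V^\perp$.

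For the Pl\"ucker identity I choose a $\mathbb Z$-basis $v_1,\dots,v_N$ of $\Delta$ such that $v_1,\dots,v_r$ is a $\mathbb Z$-basis of $\Delta\cap V$; this is possible because $\Delta\cap V$ is primitive. Let $v_1^*,\dots,v_N^*$ be the dual basis of $\Delta^*$. Then $v_{r+1}^*,\dots,v_N^*$ span $V^\perp$ and form a $\mathbb Z$-basis of $\Delta^*\cap V^\perp$: any $w\in\Delta^*\cap V^\perp$ has integral coordinates in the dual basis, and its first $r$ coordinates vanish because $w\perp v_i$ for $i\le r$.

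It remains to check that $\ast(v_1\wedge\cdots\wedge v_r)=\pm\covol(\Delta)\,v_{r+1}^*\wedge\cdots\wedge v_N^*$. Both sides lie on the line $\wedge^{N-r}V^\perp$, since $\ast$ of a decomposable vector spanning $V$ is a decomposable vector spanning $V^\perp$. So I compare them by wedging with $\omega=v_1\wedge\cdots\wedge v_r$. The left side gives $\omega\wedge\ast\omega=\|\omega\|^2\mathrm{vol}$. For the right side, write $v_1\wedge\cdots\wedge v_N=\pm\covol(\Delta)\mathrm{vol}$ and $v_{r+1}^*\wedge\cdots\wedge v_N^*$ in terms of the dual basis. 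The pairing $\langle v_{r+1}\wedge\cdots\wedge v_N,\,v_{r+1}^*\wedge\cdots\wedge v_N^*\rangle=1$ is the identity that pins down the scalar; projecting each $v_j$ orthogonally off $V$ and using the Gram determinant shows the scalar is $\covol(\Delta)$. The cleanest route is to reduce to $\Delta=\mathbb Z^N$ and a general $V$ by noting that both sides transform compatibly under $g\mapsto g$, $\Delta^*\mapsto (g^{\intercal})^{-1}\Delta^*$, since $\ast\circ\wedge^r g=\det g\cdot\wedge^{N-r}(g^{\intercal})^{-1}\circ\ast$. For $g=\mathrm{id}$ the statement is the classical fact that a primitive sublattice of $\mathbb Z^N$ and its orthogonal primitive sublattice have equal covolume.

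Taking norms, and using that $\ast$ is an isometry, gives \eqref{eq:covol-orthogonal-duality}.

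For the isotropic statement, I compute directly: $\langle v,w\rangle=\operatorname{tr}(vw^{\intercal})$, and for $v\in\mathcal L(\mathsf U)$ the columns of $v$ lie in $\mathsf U$. Thus $w\perp\mathcal L(\mathsf U)$ if and only if $\langle u\otimes f, w\rangle=0$ for all $u\in\mathsf U$ and all $f$, which holds if and only if every column of $w$ is orthogonal to $\mathsf U$. Hence $\mathcal L(\mathsf U)^\perp=\mathcal L(\mathsf U^\perp)$, and the row case is the same with the roles of the two factors exchanged.

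The only delicate step is the sign-free scalar in the Hodge identity, that is, verifying the intertwining formula for $\ast$ under $\wedge g$. I would verify that formula on the standard basis, treating $g$ as diagonal times orthogonal, or else argue via the pairing characterization of $\ast$.
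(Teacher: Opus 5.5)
Your proof is correct and follows the paper's argument. You extend a $\mathbb Z$-basis of $\Delta\cap V$ to one of $\Delta$, pass to the dual basis to get $\Delta^*\cap V^\perp=\bigoplus_{i>r}\mathbb Z v_i^*$, and invoke $\ast(v_1\wedge\cdots\wedge v_r)=\pm\covol(\Delta)\,v_{r+1}^*\wedge\cdots\wedge v_N^*$, which the paper cites as standard and which your equivariance formula $\ast\circ\wedge^r g=\det g\cdot\wedge^{N-r}(g^{\intercal})^{-1}\circ\ast$ (or your dual-basis Gram computation) correctly verifies, with the isotropic complements then handled by the direct column computation.
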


\begin{proof}
The sublattice $\Delta\cap V$ is primitive in $\Delta$. Choose a
$\mathbb Z$-basis $v_1,\ldots,v_r$ of $\Delta\cap V$, and extend it
to a $\mathbb Z$-basis $v_1,\ldots,v_N$ of $\Delta$. Let
$v_1^*,\ldots,v_N^*$ be the dual basis of $\Delta^*$. Since a vector
of $\Delta^*$ lies in $V^\perp$ exactly when its first $r$ dual
coordinates vanish,
$$
\Delta^*\cap V^\perp
=
\mathbb Zv_{r+1}^*\oplus\cdots\oplus\mathbb Zv_N^*.
$$
After changing the sign of the extended basis if necessary, the standard
Hodge-star identity for a basis and its dual gives
$$
\ast\bigl(v_1\wedge\cdots\wedge v_r\bigr)
=
\covol(\Delta)
\,v_{r+1}^*\wedge\cdots\wedge v_N^*.
$$
This proves \eqref{eq:hodge-plucker-duality}, up to the choice
 of sign of the Pl\"ucker vectors. Taking norms and using that the Hodge-star
is an isometry yields \eqref{eq:covol-orthogonal-duality}.
The orthogonal-complement identities follow immediately from the
tensor-product descriptions of $\mathcal L(\mathsf U)$ and
$\mathcal R(\mathsf U)$ above.
\end{proof}

\subsection{Structure in codimension $n$}
When $\Delta$ is $(\eta,M)$-Diophantine, \Cref{lem:quasinull-isotropic} reduces the
codimension-$n$ quasi-null family to the isotropic family. 

\begin{proposition}\label{lem:global-isotropic-structure}
Let $\Delta<\M_n(\R)$ be a lattice with at least one $\Delta$-rational column-isotropic subspace of dimension $n(n-1)$. Then there exist subspaces
$\mathsf U_1,\ldots,\mathsf U_p<\R^n$, $p\ge 0$, such that
$$
\mathsf U_1\oplus\cdots\oplus \mathsf U_p\subset\R^n,
\qquad
\dim \mathsf U_j\ge2.
$$
All but at most $n$ of the $\Delta$-rational column-isotropic subspaces
$V$ of dimension $n(n-1)$ satisfy
$$
V^\perp\subset\mathcal L(\mathsf U_j)
\quad\text{for a unique $j$}.
$$
For every $j$, the lattice
$\Delta^*\cap\mathcal L(\mathsf U_j)$ is column-$\mathbb Q$-split; that
is, it is commensurable with a tensor-product lattice
$\Lambda_{1,j}\otimes\Lambda_{2,j}$, where $\Lambda_{1,j}$ is a full
lattice in $\mathsf U_j$ and $\Lambda_{2,j}$ is a full lattice in
$(\R^n)^*$.

If $\Delta$ is not of $\mathbb Q$-split type, then every $\mathsf U_j$ is
proper in $\R^n$. The analogous assertions hold for row-isotropic
subspaces.
\end{proposition}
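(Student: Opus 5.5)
The plan is to dualize, so that the statement becomes a statement about lines in $\R^n$, and then organize those lines with a matroid. By \Cref{lem:lattice-orthogonal-duality}, a subspace $V$ of dimension $n(n-1)$ is $\Delta$-rational and column-isotropic if and only if $V^\perp=\mathcal L(\ell)=\ell\otimes(\R^n)^*$ for a line $\ell<\R^n$ and $\mathcal L(\ell)$ is $\Delta^*$-rational. Let $\mathcal S$ be the set of such lines. For $\ell\in\mathcal S$ with spanning vector $u_\ell$, write
$$
\Delta^*\cap\mathcal L(\ell)=u_\ell\otimes M_\ell,
$$
where $M_\ell$ is a lattice in $(\R^n)^*$. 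The basic observation concerns a set of lines $\ell_0,\ell_1,\ldots,\ell_d\in\mathcal S$ such that $u_1,\dots,u_d$ are independent and $u_0=\sum_i c_iu_i$ with all $c_i\ne0$. In that case each $u_0\otimes m$ with $m\in M_0$ lies in the $\mathbb Q$-span of $\bigoplus_i u_i\otimes M_i$. Since $\Delta^*\cap\mathcal L(\operatorname{span}\{u_i\})$ is commensurable with $\bigoplus_i u_i\otimes M_i$, this forces $c_iM_0\subset\mathbb Q M_i$ for every $i$, and by symmetry $M_i$ is commensurable with $c_iM_0$. After rescaling $u_i\mapsto c_iu_i$, the lattice $\Delta^*\cap\mathcal L(\operatorname{span}\{u_0,\dots,u_d\})$ is commensurable with $\Lambda_1\otimes M_0$, where $\Lambda_1=\bigoplus\mathbb Z c_iu_i$. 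So the span of every circuit is column-$\mathbb Q$-split.

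Next, view $\mathcal S$ as the ground set of the linear matroid given by the vectors $u_\ell\in\R^n$. Its rank is at most $n$, and it has only finitely many connected components, each spanned by at most $n$ lines. Standard matroid theory gives two facts. First, the spans of the distinct connected components form a direct sum inside $\R^n$. Second, any two elements of a component lie in a common circuit, and overlapping circuits can be chained. Let $\mathsf U_1,\dots,\mathsf U_p$ be the spans of the components having at least two elements; these have $\dim\mathsf U_j\ge2$, and they form a direct sum by the first fact. Every line in such a component gives $V^\perp=\mathcal L(\ell)\subset\mathcal L(\mathsf U_j)$, and $j$ is unique because the sum is direct. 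The remaining lines are singleton components, i.e.\ coloops. They are linearly independent, so there are at most $n$ of them, and these are the exceptional $V$.

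To show that $\Delta^*\cap\mathcal L(\mathsf U_j)$ is column-$\mathbb Q$-split, I would propagate the circuit statement through the component. Within one circuit all $M_\ell$ are commensurable up to real scalars. Because the component is connected, chaining circuits that share a line gives a single lattice $\Lambda_{2,j}$ such that $M_\ell\sim c_\ell\Lambda_{2,j}$ for all $\ell$ in the component. Choose a basis of $\mathsf U_j$ from the component. Then $\Delta^*\cap\mathcal L(\mathsf U_j)$ is commensurable with $\bigoplus_\ell u_\ell\otimes M_\ell$, which is commensurable with $\Lambda_{1,j}\otimes\Lambda_{2,j}$ for $\Lambda_{1,j}=\bigoplus\mathbb Z c_\ell u_\ell$. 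Rationality of $\mathcal L(\mathsf U_j)$ holds because it is a sum of rational subspaces.

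Finally, suppose some $\mathsf U_j=\R^n$. Then $\Delta^*$ is commensurable with $\Lambda_1\otimes\Lambda_2=g_1\M_n(\mathbb Z)g_2$ for suitable $g_1,g_2$. Dualizing, $\Delta$ is commensurable with $(g_1^{\intercal})^{-1}\M_n(\mathbb Z)(g_2^{\intercal})^{-1}$, so it is of $\mathbb Q$-split type. Hence if $\Delta$ is not of $\mathbb Q$-split type, every $\mathsf U_j$ is proper. The row case follows by transposition.

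The main obstacle is the commensurability bookkeeping in the circuit step. One has to be careful that the rescalings $c_\ell$ are consistent when circuits are chained. The key check is that, for a fixed line $\ell$, the lattice $M_\ell$ is determined by $\Delta^*$ alone, so two circuits through $\ell$ determine compatible scalings up to $\mathbb Q^\times$, which commensurability absorbs.
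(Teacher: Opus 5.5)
Your proof is correct, and its key computation is the paper's: compare $u$-coordinates inside a lattice commensurable with $\bigoplus_r u_r\otimes M_r$ to get $c_rM_0\subset q^{-1}M_r$. What differs is how the lines are grouped.

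\textbf{The paper's grouping.} It fixes a basis $u_1,\dots,u_s$ of the span $\mathsf U$ of all the lines, with each $u_r$ spanning a line in $\mathcal S$, and writes $M_r$ for the lattice attached to $u_r$. It partitions the indices by commensurability of the $M_r$ up to real scalars. A single inclusion, $\Delta^*\cap\mathcal L(\mathsf U)\subset q^{-1}\bigoplus_r u_r\otimes M_r$, then shows that the support of every line lies in one class. From there the following are immediate, from the disjointness of index sets in a basis and the definition of the classes:
\begin{enumerate}
\item the sum $\bigoplus\mathsf U_j$ is direct and $j$ is unique;
\item there are at most $n$ exceptions, namely the basis lines in singleton classes;
\item each $\Delta^*\cap\mathcal L(\mathsf U_j)$ splits.
\end{enumerate}

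\textbf{Your grouping.} You use matroid connectivity on the whole set $\mathcal S$. This is basis-free, but it imports rank additivity over components and transitivity of the common-circuit relation. Since $\mathcal S$ may be infinite, you should say why these facts still hold. They do: circuits have at most $n+1$ elements, so each claim can be checked on a finite restriction, and the components of that restriction refine the global ones.

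\textbf{The two groupings coincide.} If $M_{r'}$ is commensurable with $cM_r$, then $\mathcal L(\mathbb R(u_r+cu_{r'}))$ contains $(u_r+cu_{r'})\otimes mM_r\subset\Delta^*$ for a suitable integer $m$. So this line is $\Delta^*$-rational, and $u_r,u_{r'}$ lie in a common circuit.

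\textbf{Two minor points.}
\begin{itemize}
\item No symmetry argument is needed in your circuit step. The inclusion $c_iM_0\subset q^{-1}M_i$, with both sides of full rank in $(\mathbb R^n)^*$, already gives commensurability.
\item The consistency worry in your last paragraph does not arise. Commensurability up to a nonzero real scalar is an equivalence relation, and the ambiguity in $M_\ell$ from rescaling $u_\ell$ is absorbed by that scalar.
\end{itemize}
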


\begin{proof}
We prove the column statement. Let
$$
\mathscr L
:=
\left\{
V^\perp:
V\in\mathcal X^{(1)}_{n(n-1),\mathrm{iso}}(\Delta)
\right\}.
$$
By \Cref{lem:lattice-orthogonal-duality}, each member of $\mathscr L$ is
a $\Delta^*$-rational subspace of the form
$\mathcal{L}(\mathbb Ru)$ for some nonzero $u\in\R^n$. Let
$$
\mathsf U:=\operatorname{span}\left\{
 u:\mathcal{L}(\mathbb Ru)\in\mathscr L
\right\}.
$$
Choose $L_1,\ldots,L_\ell\in\mathscr L$, with
${L}_r=\mathcal{L}(\mathbb Ru_r)$, so that $u_1,\ldots,u_\ell$ is a basis of
$\mathsf U$. Thus $\ell=\dim \mathsf U\le n$. For each $r$, the inverse image of ${L}_r\cap\Delta^*$ under the
isomorphism $(\R^n)^*\to {L}_r$, $\lambda\mapsto u_r\otimes\lambda$,
is a lattice $\Lambda_r<(\R^n)^*$; hence
$$
{L}_r\cap\Delta^*=u_r\otimes\Lambda_r.
$$
The direct sum
$$
\Upsilon_0
:=
\bigoplus_{r=1}^{\ell}u_r\otimes\Lambda_r
$$
is a full lattice in $\mathcal{L}(\mathsf U)$ contained in $\Delta^*$. The space
$\mathcal{L}(\mathsf U)$ is $\Delta^*$-rational, since it is the sum of finitely many
$\Delta^*$-rational subspaces. Therefore $\Upsilon_0$ and
$\Upsilon:=\Delta^*\cap \mathcal{L}(\mathsf U)$ are commensurable. In particular, there is
an integer $q\ge1$ such that
\begin{equation}\label{eq:global-isotropic-lattice-inclusion}
\Upsilon\subset q^{-1}\Upsilon_0.
\end{equation}

Define an equivalence relation on $\{1,\ldots,\ell\}$ by
$$
r\sim r'
\quad\Longleftrightarrow\quad
c\Lambda_r\text{ is commensurable with }\Lambda_{r'}
\text{ for some }c\in\R^\times.
$$
To see how this relation controls $\mathscr L$, take
$L=\mathcal{L}(\mathbb Ru)\in\mathscr L$, and write
$$
u=\sum_{r=1}^{\ell}c_ru_r,
\qquad
L\cap\Delta^*=u\otimes\Lambda
$$
for a lattice $\Lambda<(\R^n)^*$. From
\eqref{eq:global-isotropic-lattice-inclusion}, comparison of the
$u_r$-coordinates gives
$$
c_r\Lambda\subset q^{-1}\Lambda_r
\qquad(1\le r\le\ell).
$$
Whenever $c_r\ne0$, both sides are full lattices in $(\R^n)^*$, so
$c_r\Lambda$ is commensurable with $\Lambda_r$. It follows that
$$
\operatorname{supp}(u):=\{r:c_r\ne0\}
$$
is contained in a single $\sim$-equivalence class.

Let $E_1,\ldots,E_p$ be the equivalence classes having at least two
elements, and set
$$
\mathsf U_j:=\operatorname{span}\{u_r:r\in E_j\}.
$$
If there is no equivalence class containing at least two elements, we
take $p=0$; in this case all assertions involving the subspaces
$\mathsf U_j$ are understood vacuously. 

Because the $E_j$ are disjoint subsets of a basis index set,
$\mathsf U_1\oplus\cdots\oplus \mathsf U_p$ is direct. If
$\#\operatorname{supp}(u)\ge2$, then
$\operatorname{supp}(u)\subset E_j$ for a unique $j$, and hence
$L\subset \mathcal{L}(\mathsf U_j)$. If $\operatorname{supp}(u)=\{r\}$, then
$L={L}_r$; this subspace is also contained in some $\mathcal{L}(\mathsf U_j)$ unless
$\{r\}$ is a singleton equivalence class. Thus the only possible exceptions
are among $\ell\le n$ subspaces $L_r$. This proves the
asserted finite decomposition and the uniqueness statement.

It remains to verify the splitting assertion. Fix $j$ and choose $r_j\in E_j$. For
each $r\in E_j$, choose $c_r\in\R^\times$ such that
$c_r\Lambda_r$ is commensurable with $\Lambda_{r_j}$, and put
$\widetilde u_r:=c_r^{-1}u_r$. Then $u_r\otimes\Lambda_r
=
\widetilde u_r\otimes(c_r\Lambda_r)$
is commensurable with
$\widetilde u_r\otimes\Lambda_{r_j}$. Consequently,
$$
\bigoplus_{r\in E_j}u_r\otimes\Lambda_r
$$
is commensurable with
$$
\Lambda_{1,j}\otimes\Lambda_{2,j},
\qquad
\Lambda_{1,j}:=\bigoplus_{r\in E_j}\mathbb Z\widetilde u_r,
\qquad
\Lambda_{2,j}:=\Lambda_{r_j}.
$$
The lattice $\bigoplus_{r\in E_j}u_r\otimes\Lambda_r$ is a full sublattice of
$\Delta^*\cap \mathcal{L}(\mathsf U_j)$; hence $\Delta^*\cap \mathcal{L}(\mathsf U_j)$ is
column-$\mathbb Q$-split.

Finally, suppose that $\Delta$ is not of $\mathbb Q$-split type. If
$\mathsf U_j=\R^n$, then $\mathcal{L}(\mathsf U_j)=\M_n(\R)$, and the splitting just proved
would imply that $\Delta^*$ is commensurable with a full tensor-product lattice.
Dualizing would give the same conclusion for $\Delta$, contrary to the
hypothesis. Thus every $\mathsf U_j$ is proper.

The row statement follows by applying the column statement to
$\Delta^{\intercal}$ and then transposing back.
\end{proof}

\subsection{Structure in higher codimension}

We first reformulate the isotropic noncoincidence condition in terms of the
dual lattice. For every $\Delta$-rational subspace $V$, one has
$$
\Delta_V^*=V^\perp\cap\Delta^*.
$$
Suppose that $h\Delta_{V_1}$ is commensurable with $\Delta_{V_2}$ for
some $h\in\GL_n(\mathbb R)$. Then $hV_1^\perp=V_2^\perp$. Let
$$
T_h:=h|_{V_1^\perp}:V_1^\perp\to V_2^\perp.
$$
Taking dual lattices, the commensurability condition is equivalent to
$$
(T_h^{-1})^*(V_1^\perp\cap\Delta^*)
\quad\text{being commensurable with}\quad
V_2^\perp\cap\Delta^*.
$$
Since $V_1^\perp$ and $V_2^\perp$ are column-type subspaces,
$(T_h^{-1})^*$ is the restriction of left multiplication by an element of
$\GL_n(\mathbb R)$. Conversely, every such restricted inverse adjoint arises
in this way. Thus the column isotropic noncoincidence condition is
equivalent to requiring that there be no $g\in\GL_n(\mathbb R)$ such that
$$
g(V_1^\perp\cap\Delta^*)
\quad\text{is commensurable with}\quad
V_2^\perp\cap\Delta^*.
$$
Likewise, the row condition is equivalent to requiring that there be no
$g\in\GL_n(\mathbb R)$ such that
$$
(V_1^\perp\cap\Delta^*)g
\quad\text{is commensurable with}\quad
V_2^\perp\cap\Delta^*.
$$

\begin{proposition}
\label{prop:finitely-many-maximal-isotropic}
Let $\Delta<\M_n(\R)$ be a lattice satisfying the column isotropic
noncoincidence condition. For every
$\left\lceil\frac n2\right\rceil\le k\le n-2$,
there are at most $n$ proper $\Delta$-rational column-isotropic
subspaces of dimension $kn$ that are maximal, with respect to inclusion,
among all proper $\Delta$-rational column-isotropic subspaces.
The analogous statement holds in the row case.
\end{proposition}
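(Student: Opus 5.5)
Pass to the dual lattice and argue in terms of minimal isotropic subspaces rather than maximal ones. By \Cref{lem:lattice-orthogonal-duality}, the map $V\mapsto V^\perp$ is an inclusion-reversing bijection from proper nonzero $\Delta$-rational subspaces to proper nonzero $\Delta^*$-rational subspaces, and it sends $\mathcal L(\mathsf U)$ to $\mathcal L(\mathsf U^\perp)$. Hence the maximal proper $\Delta$-rational column-isotropic subspaces $V=\mathcal L(\mathsf U)$ with $\dim V=kn$ correspond to the subspaces $\mathcal L(\mathsf W)$, $\mathsf W=\mathsf U^\perp$, $\dim\mathsf W=n-k$, that are minimal among nonzero $\Delta^*$-rational column-isotropic subspaces. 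By the dual reformulation recorded just before the statement, the column noncoincidence condition says that for two distinct such minimal subspaces $\mathcal L(\mathsf W_1)\neq\mathcal L(\mathsf W_2)$ of this dimension, there is no $g\in\GL_n(\R)$ with $g(\Delta^*\cap\mathcal L(\mathsf W_1))$ commensurable with $\Delta^*\cap\mathcal L(\mathsf W_2)$. It therefore suffices to bound the number of such minimal $\mathsf W$.

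\emph{Step 1: minimal subspaces meet trivially.} The intersection of two $\Delta^*$-rational subspaces is $\Delta^*$-rational, and $\mathcal L(\mathsf W)\cap\mathcal L(\mathsf W')=\mathcal L(\mathsf W\cap\mathsf W')$. Minimality then forces $\mathsf W\cap\mathsf W'=0$ for distinct minimal $\mathsf W,\mathsf W'$. More generally, if $\mathsf S$ is a sum of minimal subspaces, then $\mathcal L(\mathsf S)$ is rational, being a sum of rational subspaces. So $\mathcal L(\mathsf W\cap\mathsf S)$ is rational, and minimality gives $\mathsf W\cap\mathsf S\in\{0,\mathsf W\}$.

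\emph{Step 2: direct sums are closed.} Choose minimal $\mathsf W_1,\dots,\mathsf W_m$ greedily so that $\mathsf S=\mathsf W_1\oplus\cdots\oplus\mathsf W_m$ is direct and maximal with this property. By Step 1, every other minimal $\mathsf W$ satisfies $\mathsf W\subset\mathsf S$. I claim this forces $\mathsf W=\mathsf W_i$ for some $i$.

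\begin{itemize}
\item For each $i$, the kernel of the projection $p_i:\mathsf S\to\mathsf W_i$ along $\bigoplus_{j\ne i}\mathsf W_j$, restricted to $\mathsf W$, is $\mathsf W\cap\bigoplus_{j\ne i}\mathsf W_j$. By Step 1 applied to the sum $\bigoplus_{j\ne i}\mathsf W_j$, this is $0$ unless $\mathsf W\subset\bigoplus_{j\ne i}\mathsf W_j$.
\item Choose $i$ with $\mathsf W\not\subset\bigoplus_{j\ne i}\mathsf W_j$, which exists if $\mathsf W\ne 0$. Then $p_i|_{\mathsf W}:\mathsf W\to\mathsf W_i$ is an isomorphism, since both have dimension $n-k$. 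Extend it to some $g\in\GL_n(\R)$.
\item Left multiplication by $p_i$ maps $\mathcal L(\mathsf S)\to\mathcal L(\mathsf W_i)$. The lattice $\bigoplus_j\Delta^*\cap\mathcal L(\mathsf W_j)$ has full rank in the rational space $\mathcal L(\mathsf S)$, so it has finite index $q$ in $\Delta^*\cap\mathcal L(\mathsf S)$.
\item Therefore $g(\Delta^*\cap\mathcal L(\mathsf W))\subset q^{-1}(\Delta^*\cap\mathcal L(\mathsf W_i))$, and both lattices have full rank in $\mathcal L(\mathsf W_i)$, so they are commensurable.
\end{itemize}

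If $\mathsf W\neq\mathsf W_i$, this contradicts the noncoincidence condition. Hence every minimal $\mathsf W$ is one of $\mathsf W_1,\dots,\mathsf W_m$. Since the sum is direct, $m(n-k)\le n$, so $m\le n$. This is in fact stronger, giving $m\le n/(n-k)$. The row case follows by applying the column case to $\Delta^\intercal$.

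\emph{Main obstacle.} The step needing most care is the commensurability bookkeeping in Step 2. One must check that the full-rank sublattice $\bigoplus_j\Delta^*\cap\mathcal L(\mathsf W_j)$ has finite index in $\Delta^*\cap\mathcal L(\mathsf S)$. One must also check that the maximal objects in the noncoincidence hypothesis, which are maximal among all proper rational column-isotropic subspaces of any dimension, correspond exactly to the minimal $\mathsf W$ used above, including that the relevant pairs have equal dimension $kn$. The range $\lceil n/2\rceil\le k\le n-2$ enters only through the hypothesis; the argument itself uses just $\dim\mathsf W=n-k$.
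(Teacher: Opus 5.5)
Your proposal is correct and follows essentially the same route as the paper. Both arguments pass to the minimal $\Delta^*$-rational subspaces $\mathcal L(\mathsf W)=V^\perp$, take a maximal direct subfamily, show every remaining member lies in its sum, and use a coordinate projection that restricts to an isomorphism $\mathsf W\to\mathsf W_i$ to produce a commensurability forbidden by the dualized noncoincidence condition. Your commensurability step is a slight streamlining of the paper's: the paper instead builds the graph map $\Psi$ on $\mathcal Y$, uses its restriction $A\colon Y_a\to W$, and dualizes back to $\Delta_{V_a}$ and $\Delta_V$. As in the paper's argument, yours in fact gives the sharper bound $m\le n/(n-k)$.
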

\begin{proof}
We prove the column statement. Fix
$\lceil n/2\rceil\le k\le n-2$. Thus
$2\le n-k \le\left\lfloor\frac n2\right\rfloor$.
Suppose for contradiction that there are $J>n$ distinct maximal
proper $\Delta$-rational column-isotropic subspaces
$V_1,\ldots,V_J$ of dimension $kn$. Set $W_i:=V_i^\perp$. By
\Cref{lem:lattice-orthogonal-duality}, each $W_i$ is a
$\Delta^*$-rational column-isotropic subspace of dimension $(n-k)n$,
so
$$
W_i=\mathcal{L}(\mathsf F_i)
$$
for an $(n-k)$-dimensional subspace $\mathsf F_i<\R^n$.

Each $W_i$ is minimal among the nonzero $\Delta^*$-rational
column-isotropic subspaces. Indeed, if
$0\ne Z\subsetneq W_i$ were such a subspace, then
$Z^\perp$ would be a proper $\Delta$-rational column-isotropic
subspace strictly containing $V_i=W_i^\perp$, contradicting the
maximality of $V_i$.

Choose a maximal subfamily $Y_1,\ldots,Y_r$ of
$\{W_1,\ldots,W_J\}$ whose sum is direct, and set
$$
\mathcal Y:=Y_1\oplus\cdots\oplus Y_r.
$$
Write $Y_j=\mathcal{L}(\mathsf E_j)$, where $\dim \mathsf E_j=n-k$. The sum
$\mathsf E_1\oplus\cdots\oplus \mathsf E_r$ is direct; hence
\begin{equation}\label{eq:number-direct-isotropic-summands}
r(n-k)\le n.
\end{equation}
Moreover, every $W_i$ is contained in $\mathcal Y$. Indeed,
$\mathcal Y$ is $\Delta^*$-rational and
$$
W_i\cap\mathcal Y
=
\mathcal{L}\bigl(\mathsf F_i\cap(\mathsf E_1\oplus\cdots\oplus \mathsf E_r)\bigr)
$$
is a $\Delta^*$-rational column-isotropic subspace of $W_i$. By
minimality, it is either $0$ or $W_i$. If $W_i\cap\mathcal Y=0$, then
$W_i$ could be added to the family, contrary to maximality.

Since $J>n\ge r$, choose a member
$$
W=\mathcal{L}(\mathsf F)\in\{W_1,\ldots,W_J\}\smallsetminus\{Y_1,\ldots,Y_r\}.
$$
For $1\le j\le r$, put
$$
L_j:=Y_j\cap\Delta^*,
\qquad
Q:=L_1\oplus\cdots\oplus L_r.
$$
The lattices $Q$ and $\mathcal Y\cap\Delta^*$ are commensurable.
Since $W\subset\mathcal Y$ is $\Delta^*$-rational, it is also
$Q$-rational. Let $p_j:\mathcal Y\to Y_j$ be the coordinate projection and set $\mathsf E:=\mathsf E_1\oplus\cdots\oplus \mathsf E_r$. Then
$$
p_j=P_j\otimes\id_{(\R^n)^*}
$$
for the coordinate projection $P_j:\mathsf E\to \mathsf E_j$. Therefore
$p_j(W)=\mathcal{L}(P_j\mathsf F)$ is column-isotropic. The map $p_j$ is rational with
respect to $Q$, so $p_j(W)$ is $L_j$-rational and hence
$\Delta^*$-rational. By the minimality of $Y_j$,
\begin{equation}\label{eq:projection-zero-or-surjective}
p_j(W)=0
\quad\text{or}\quad
p_j(W)=Y_j.
\end{equation}
Choose $a$ such that $p_a(W)=Y_a$. Since $W$ and $Y_a$ have the
same dimension, $p_a|_W:W\to Y_a$ is an isomorphism.

For $j\ne a$, define
$$
\phi_j:=p_j\circ(p_a|_W)^{-1}:Y_a\to Y_j;
$$
this map is zero when $p_j(W)=0$. Then
$$
W=
\left\{
 y+\sum_{j\ne a}\phi_j(y):y\in Y_a
\right\}.
$$
Since $p_j=P_j\otimes\id_{(\R^n)^*}$, there are
linear maps $A_j:\mathsf E_a\to \mathsf E_j$ such that
\begin{equation}\label{eq:phijtensor}
\phi_j=A_j\otimes\id_{(\R^n)^*}.
\end{equation}
Define $\Psi\in\GL(\mathcal Y)$ by
$$
\Psi|_{Y_a}
=
\id_{Y_a}+\sum_{j\ne a}\phi_j,
\qquad
\Psi|_{Y_b}=\id_{Y_b}\quad(b\ne a).
$$
Then $\Psi(Y_a)=W$. We claim that $\Psi(Q)$ is commensurable with $Q$. Since $W$ is
$Q$-rational, $W\cap Q$ is a full lattice in $W$, and
$$
L_a':=p_a(W\cap Q)
$$
is a full sublattice of $L_a$. Choose $m\ge1$ with
$mL_a\subset L_a'$. For $j\ne a$,
$$
\phi_j(L_a')\subset L_j,
$$
and therefore $m\phi_j(L_a)\subset L_j$. It follows that
$m\Psi(Q)\subset Q$. Since both are full lattices in $\mathcal Y$,
$\Psi(Q)$ and $Q$ are commensurable.

Set $A:=\Psi|_{Y_a}:Y_a\to W$. The displayed formula for $W$ and the injectivity
of $p_a|_W$ give
$$
A(L_a)=\Psi(Q)\cap W.
$$
Because $\Psi(Q)$ and $Q$ are commensurable, the lattices
$A(L_a)$ and $Q\cap W$ are commensurable. Since $Q$ is
commensurable with $\mathcal Y\cap\Delta^*$, we conclude that
\begin{equation}\label{eq:dual-quotient-lattices-commensurable}
A(Y_a\cap\Delta^*)
\quad\text{is commensurable with}\quad
W\cap\Delta^*.
\end{equation}

Let $V_a:=Y_a^\perp$ and $V:=W^\perp$. These are distinct elements of
$\mathcal X^{(1)}_{kn,\mathrm{iso}}(\Delta)$. Because
$$
\Delta_{V_a}^*=Y_a\cap\Delta^*,
\qquad
\Delta_V^*=W\cap\Delta^*,
$$ dualizing \eqref{eq:dual-quotient-lattices-commensurable} yields
$$
(A^*)^{-1}\Delta_{V_a}
\quad\text{is commensurable with}\quad
\Delta_V.
$$
By \eqref{eq:phijtensor}, the map $A$ has the form
$$
A=B\otimes\id_{(\R^n)^*}
$$
for an isomorphism $B:\mathsf E_a\to \mathsf F$. Hence
$$
(A^*)^{-1}=(B^*)^{-1}\otimes\id_{(\R^n)^*}.
$$
Extend $(B^*)^{-1}:\mathsf E_a\to \mathsf F$ to an element $g\in\GL_n(\R)$. Left
multiplication by $g$ on $\M_n(\R)$ restricts to $(A^*)^{-1}$ on
$Y_a=\mathcal{L}(\mathsf E_a)$. Therefore
$$
g\Delta_{V_a}
\quad\text{is commensurable with}\quad
\Delta_V.
$$
This contradicts the column isotropic noncoincidence condition, because $V_a\ne V$ are
maximal proper $\Delta$-rational column-isotropic subspaces of dimension
$kn$. Thus $J>n$ is impossible. The row case follows by transposition.
\end{proof}

\section{Counting singular matrices}
\label{s:isotropic}

In this section, we prove \Cref{thm:quasinullcontribution2}. The proof has
three steps. First, we establish an asymptotic formula for bounded-rank
matrices in a rational lattice. Second, we combine this formula with
fiberwise lattice-point counting for a rational projection. Finally,
using the structural results from \Cref{s:quasinull}, we reduce the column-
and row-isotropic contributions to finitely many such fibered counting
problems and show that their overlap is of lower order. This gives the
asymptotic for the isotropic part $\Lambda_{\operatorname{iso}}$; the
non-isotropic singular matrices are treated later in
\Cref{s:maincounting}.

Throughout this section, all implied constants depend only on the lattices,
linear maps, and norms under consideration, and are independent of~$T$.

\subsection{Bounded-rank matrices in rational lattices}
We first prove the bounded-rank asymptotic used throughout this section.
Katznelson proved the case
$\Delta=\M_{k,n}(\mathbb Z)$ in \cite{katznelson:1994}. Our argument is
different even in that case: it decomposes matrices according to their
primitive left kernels and also provides the summable majorants needed
later for weighted limits.

Throughout the remainder of this subsection, let $1\le k\le n-1$.
\begin{theorem}[Bounded-rank asymptotic in rational lattices]
\label{rational-lattice}
Let
$\Delta<\M_{k,n}(\mathbb Q)$ be a full-rank lattice. 
For any $1\le r\le k-1$, there is a constant $c_{\Delta,r}>0$ such that
$$
\#\{A\in\Delta:\ \|A\|< T,\ \rank(A)=r\}
\sim
c_{\Delta,r}T^{nr}
\qquad (T\to\infty).
$$
\end{theorem}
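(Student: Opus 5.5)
We decompose according to the primitive left kernel, as the paper indicates. Since $\Delta$ is commensurable with $\M_{k,n}(\mathbb Z)$, there is $q\ge1$ with $q\M_{k,n}(\mathbb Z)\subset\Delta\subset q^{-1}\M_{k,n}(\mathbb Z)$. A matrix $A\in\M_{k,n}(\mathbb Q)$ of rank $r$ has a rational row space $\operatorname{Row}(A)<(\mathbb Q^n)^*$ of dimension $r$ and a rational left kernel $\mathsf K=\{x\in\mathbb Q^k:x^{\intercal}A=0\}$ of dimension $k-r$. We organize the count by the rational $r$-dimensional subspace $\mathsf P:=\operatorname{Col}(A)<\mathbb Q^k$, which is the annihilator of $\mathsf K$. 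For a fixed rational $\mathsf P$, the set $\{A\in\Delta:\operatorname{Col}(A)\subset\mathsf P\}=\Delta\cap\mathcal L(\mathsf P)$ is a lattice of rank $nr$ in the $nr$-dimensional space $\mathcal L(\mathsf P)\simeq\mathsf P\otimes(\mathbb R^n)^*$. The matrices with column space exactly $\mathsf P$ are those of rank $r$ in this lattice; the rank-$<r$ ones form a lower-order set, $O(T^{n(r-1)}\log T)$ per fibre by induction or crude counting.

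\textbf{Step 1 (fibre asymptotics).} For each $\mathsf P$, lattice-point counting in a ball gives
\[
\#\{A\in\Delta\cap\mathcal L(\mathsf P):\|A\|<T\}=\frac{\vol(B_{nr})}{\covol(\Delta\cap\mathcal L(\mathsf P))}T^{nr}+O\bigl(\text{error}\bigr).
\]
The Frobenius norm restricted to $\mathcal L(\mathsf P)$ is Euclidean, so the ball has the same volume for every $\mathsf P$. We have $\covol(\Delta\cap\mathcal L(\mathsf P))\asymp H(\mathsf P)^{n}$, where $H(\mathsf P)=\covol(\mathbb Z^k\cap\mathsf P)$ is the height, with constants depending only on $q$. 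This follows because $\mathbb Z^k\cap\mathsf P\otimes\mathbb Z^n$ is sandwiched between scaled copies of $\Delta\cap\mathcal L(\mathsf P)$. The candidate constant is therefore
\[
c_{\Delta,r}=\vol(B_{nr})\sum_{\mathsf P}\covol(\Delta\cap\mathcal L(\mathsf P))^{-1}.
\]
This series converges: the number of rational $r$-planes in $\mathbb Q^k$ of height $\le H$ is $\asymp H^{k}$ (Schmidt), and $\sum H(\mathsf P)^{-n}$ converges because $n>k$. This is exactly where $k\le n-1$ is used.

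\textbf{Step 2 (uniform majorant and dominated convergence).} This is the main obstacle: the fibre count for a fixed $\mathsf P$ is only asymptotic, and the error must be uniform in $\mathsf P$. We need a bound
\[
T^{-nr}\#\{A\in\Delta\cap\mathcal L(\mathsf P),\ \rank A=r,\ \|A\|<T\}\ll H(\mathsf P)^{-n}
\]
uniformly in $T$, which is summable. For this we use the successive-minima bound
\[
\#\{\|A\|<T\}\ll\prod_{i}\max\{1,T/\lambda_i\},
\]
applied to the lattice $\Delta\cap\mathcal L(\mathsf P)\approx(\mathbb Z^k\cap\mathsf P)\otimes\mathbb Z^n$. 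Its minima are, up to constants, the minima $\mu_1\le\dots\le\mu_r$ of $\mathbb Z^k\cap\mathsf P$, each repeated $n$ times. Rank-$r$ points with $\|A\|<T$ exist only when $\mu_r\ll T$, since the columns must span $\mathsf P$. In that regime every factor is $T/\lambda_i\ge1$ up to constants, giving the bound $\ll T^{nr}\prod\mu_i^{-n}\asymp T^{nr}H(\mathsf P)^{-n}$. Dominated convergence over the countable index set $\mathsf P$, combined with the per-fibre asymptotic, then yields the claimed limit $c_{\Delta,r}$. Positivity of $c_{\Delta,r}$ is clear, since any single term is positive.

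Finally, the rank-exactly-$r$ restriction inside each fibre costs only lower-order terms, which are also dominated by the same majorant. This gives $\#\{\rank=r\}\sim c_{\Delta,r}T^{nr}$.
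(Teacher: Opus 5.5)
Your proof is correct and is essentially the paper's argument. Indexing by the column space $\mathsf P$ is the same as the paper's indexing by the primitive left kernel $L$ (with $\mathsf P=L^{\perp}$ and $H(\mathsf P)=\covol(L)$ by duality), and you then use the same fibrewise lattice-point asymptotic, the same successive-minima majorant $T^{nr}H(\mathsf P)^{-n}$, and dominated convergence. The only differences are cosmetic: you cite Schmidt's count of rational subspaces by height where the paper proves the cruder bound $\#\mathcal P_s(k;R)\ll R^k(\log R)^s$ (either suffices since $k<n$), and you fix the Frobenius norm, whereas for a general norm one simply replaces $\vol(B_{nr})$ by $\vol_{\mathcal L(\mathsf P)}(B_1\cap\mathcal L(\mathsf P))$.
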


For $1\le s\le k$, let $\mathcal P_s(k)$ denote the
set of primitive rank-$s$ sublattices of $\mathbb Z^k$.
For $R>0$, let $\mathcal P_s(k;R)$ denote the set
of primitive rank-$s$ sublattices $L<\mathbb Z^k$ satisfying
$\covol(L)\le R$.
\begin{lemma}[Primitive sublattices]
\label{primitive}
For every $1\le s\le k$, we have
$$
\#\mathcal P_s(k;R)
\ll
R^k(\log R)^s \qquad \text{ for all $R\ge 3$}.
$$
\end{lemma}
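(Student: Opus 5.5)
We need to bound the number of primitive rank-$s$ sublattices $L<\mathbb Z^k$ with $\covol(L)\le R$ by $R^k(\log R)^s$. The plan is to parametrize each such $L$ by a reduced basis, induct on $s$, and count lattice points fiberwise.

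\textbf{Basis reduction.} Every primitive $L$ of rank $s$ has a Minkowski-reduced basis $v_1,\dots,v_s$ with
$$
\|v_1\|\le\cdots\le\|v_s\|,\qquad \prod_{i=1}^s\|v_i\|\asymp_k\covol(L).
$$
Distinct $L$ give distinct reduced bases, so it suffices to count tuples $(v_1,\dots,v_s)$ of linearly independent integer vectors with $\|v_1\|\le\cdots\le\|v_s\|$ and $\prod\|v_i\|\le CR$. Primitivity will be dropped, since we only need an upper bound.

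\textbf{Counting the tuples.} Choose dyadic scales $\|v_i\|\in[2^{a_i},2^{a_i+1})$ with $a_1\le\cdots\le a_s$ and $\sum a_i\le\log_2(CR)$. The trivial count gives $\ll\prod_i 2^{ka_i}\le(CR)^k$ tuples for each scale vector. There are $\ll(\log R)^s$ scale vectors, since each $a_i$ ranges over $O(\log R)$ values. This yields $\ll R^k(\log R)^s$ directly.

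The one point to check is that each vector is counted in a full ball of radius $2^{a_i+1}$. The trivial bound $\#\{v\in\mathbb Z^k:\|v\|\le X\}\ll X^k$ holds for $X\ge1$, and $a_i\ge0$ because nonzero integer vectors have norm at least $1$. So no step should be a real obstacle. The only care needed is in constants: the reduced basis satisfies $\prod\|v_i\|\le\gamma_k\covol(L)$ with $\gamma_k$ depending only on $k$, which gives the sum constraint up to an additive constant and only changes the implied constant.

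If a sharper form were needed, for example when $s=k$, one could note that $\mathcal P_k(k)=\{\mathbb Z^k\}$ and the bound is trivial. The stated bound, however, follows uniformly in $s$ from the dyadic argument above.
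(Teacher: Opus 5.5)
Your proposal is correct and follows essentially the same argument as the paper. Both use Minkowski's second theorem to get $\prod\|v_i\|\asymp\covol(L)$, decompose the norms dyadically, apply the trivial count $\ll 2^{k\sum a_i}\ll R^k$ for each scale vector, and note there are $O((\log R)^s)$ scale vectors. The only cosmetic difference is that you take a reduced basis, whose $\mathbb Z$-span is $L$ itself, while the paper takes successive-minima vectors and uses primitivity to recover $L$ from their rational span.
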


\begin{proof}
Fix $L\in\mathcal P_s(k;R)$, and let
$\lambda_1(L)\le\cdots\le\lambda_s(L)$ be its successive minima. Choose
linearly independent vectors $v_i\in L$ with $\|v_i\|=\lambda_i(L)$. By
Minkowski's second theorem~\cite{cassels:1971},
$$
\prod_{i=1}^s\|v_i\|
\asymp_{k,s}
\covol(L)
\le R.
$$
Since $L<\mathbb Z^k$, every nonzero vector in $L$ has norm bounded below
by a positive constant depending only on the chosen norm. After shifting the dyadic scale by a fixed amount, choose integers
$m_i\ge0$ such that $\|v_i\|\asymp 2^{m_i}$. Then
$\sum_i m_i\ll\log_2(R) +O(1)$. For a fixed tuple
$(m_1,\ldots,m_s)$, the number of possible ordered tuples
$(v_1,\ldots,v_s)$ is
$$
\ll
\prod_{i=1}^s2^{km_i}
=
2^{k\sum_i m_i}.
$$
The rational span of $(v_1,\ldots,v_s)$ determines $L$, because $L$ is
primitive. Each tupe therefore contributes $O(R^k)$ possibilities, and there are $O((\log R)^s)$ admissible tuples.
Thus
$
\#\mathcal P_s(k;R)
\ll
R^k(\log R)^s.
$
\end{proof}

\begin{proof}[Proof of \Cref{rational-lattice}]
We argue by induction on $k$. The assertion is empty for $k=1$. After
multiplying $\Delta$ by a nonzero integer, we may assume that
$\Delta<\M_{k,n}(\mathbb Z)$. Fix $1\le r\le k-1$, and put $s:=k-r$. For
$L\in\mathcal P_s(k)$, define
\begin{equation}\label{vl}
V_L
:=
\{A\in\M_{k,n}(\mathbb R):u^{\intercal}A=0\text{ for every }u\in L\},
\qquad
\Delta_L:=\Delta\cap V_L.
\end{equation}
Then $V_L$ has dimension $nr$, and $\Delta_L$ is a full-rank lattice in
$V_L$.

For a rational matrix $A$ of rank $r$, let
$$
L(A):=\ker(A^{\intercal})\cap\mathbb Z^k.
$$
Then $L(A)$ is a primitive rank-$s$ sublattice. Conversely, if
$A\in V_L$ has rank $r$, then $L\subset L(A)$, and primitivity together
with equality of ranks gives $L=L(A)$. Hence the set of the rank-$r$ matrices
 decomposes disjointly according to the primitive left kernel:
\begin{equation}\label{eq:rank-kernel-decomposition}
\#\{A\in\Delta:\ \|A\|< T,\ \rank(A)=r\}
=
\sum_{L\in\mathcal P_s(k)}N_L(T),
\end{equation}
where
$N_L(T)
:=\#\{A\in\Delta_L:\ \|A\|< T,\ \rank(A)=r\}$.

For a fixed $L$, the standard lattice-point asymptotic in the
$nr$-dimensional space $V_L$ gives
$$
\#\{A\in\Delta_L:\ \|A\|< T\}
\sim
\beta_LT^{nr},
\qquad
\beta_L
:=
\frac{\vol_{V_L}(B_1\cap V_L)}{\covol(\Delta_L)}.
$$
After an
integral change of basis in $\mathbb Z^k$, the matrices in $\Delta_L$ of rank at most $r-1$ can be identified with bounded-rank matrices in a
full-rank rational lattice in $\M_{r,n}(\mathbb R)$. By the induction
hypothesis, their number is $O_L(T^{n(r-1)})$; when $r=1$, the only
rank-zero matrix is $0$. Therefore
\begin{equation}\label{eq:fixed-kernel-asymptotic}
N_L(T)
\sim
\beta_LT^{nr}.
\end{equation}

It remains to justify summation over $L$. Set
$$
L^{\perp}_{\mathbb Z}
:=
\{x\in\mathbb Z^k:u^{\intercal}x=0\text{ for every }u\in L\}.
$$
Then
$$
\M_{k,n}(\mathbb Z)\cap V_L
=
\bigl(L^{\perp}_{\mathbb Z}\bigr)^n
\quad\text{columnwise}.
$$
Since $L$ is primitive, the Hodge-star identification of the primitive
Pl\"ucker vectors of $L$ and $L^{\perp}_{\mathbb Z}$ gives
$\covol(L^{\perp}_{\mathbb Z})=\covol(L)$. Hence
$$
\covol\bigl(\M_{k,n}(\mathbb Z)\cap V_L\bigr)
=
\covol(L)^n.
$$
Because $\Delta_L$ is a sublattice of
$\M_{k,n}(\mathbb Z)\cap V_L$, we have
$\covol(\Delta_L)\ge\covol(L)^n$. Since the volumes
$\vol_{V_L}(B_1\cap V_L)$ are uniformly bounded as $V_L$ varies in the
Grassmannian,
\begin{equation}\label{eq:beta-majorant}
\beta_L\ll\covol(L)^{-n}.
\end{equation}
A successive-minima lattice-point estimate gives, uniformly in $L$ and $T\ge1$,
\begin{equation}\label{eq:NL-majorant}
N_L(T)
\ll
T^{nr}\covol(L)^{-n}.
\end{equation}
Indeed, if $N_L(T)>0$, the rank-$r$ condition forces the $r$th successive
minimum of $L_{\mathbb Z}^{\perp}$ to be $O(T)$; Minkowski's second theorem
then gives the displayed bound.
By \Cref{primitive},
$$
\sum_{L\in\mathcal P_s(k)}\covol(L)^{-n}
\ll
\sum_{j\ge0}
2^{-jn}\#\mathcal P_s(k;2^{j+1})
\ll
\sum_{j\ge0}2^{-j(n-k)}(1+j)^s
<
\infty,
$$
because $k<n$. Dominated convergence applied to
\eqref{eq:rank-kernel-decomposition}, using
\eqref{eq:fixed-kernel-asymptotic} and \eqref{eq:NL-majorant}, gives
$$
T^{-nr} \#\{A\in\Delta:\ \|A\|< T,\ \rank(A)=r\}
\to
\sum_{L\in\mathcal P_s(k)}\beta_L
=:
c_{\Delta,r}>0.
$$
This proves the theorem.
\end{proof}

\begin{corollary}[Crude bounded-rank estimate]
\label{cor:crude-bounded-rank}
Let $1\le m\le \ell$, let $0\le r\le m-1$, and let $T\ge3$. Then
$$
\#\{A\in\M_{m,\ell}(\mathbb Z):\ \|A\|< T,\ \rank(A)=r\}
\ll
T^{\ell r}(\log T)^{m+1}.
$$
If $m<\ell$, the logarithmic factor may be omitted.
\end{corollary}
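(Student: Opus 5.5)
We argue by the same primitive-left-kernel decomposition as in the proof of \Cref{rational-lattice}, but now keep track of uniformity and do not pass to a limit. The case $r=0$ is trivial (only the zero matrix), so fix $1\le r\le m-1$ and put $s:=m-r$. Every integer matrix $A\in\M_{m,\ell}(\mathbb Z)$ of rank $r$ has a unique primitive left kernel $L(A)=\ker(A^{\intercal})\cap\mathbb Z^m\in\mathcal P_s(m)$. Consequently
$$
\#\{A:\|A\|<T,\ \rank A=r\}\le\sum_{L\in\mathcal P_s(m)}\#\{A\in(L^\perp_{\mathbb Z})^\ell:\|A\|<T,\ \rank A=r\},
$$
where the columns of $A$ lie in the rank-$r$ lattice $L^\perp_{\mathbb Z}$, and $\covol(L^\perp_{\mathbb Z})=\covol(L)$.

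Next we bound each summand. Let $\lambda_1\le\cdots\le\lambda_r$ be the successive minima of $L^\perp_{\mathbb Z}$, so that $\lambda_1\gg1$ and $\prod\lambda_i\asymp\covol(L)$. A matrix of rank $r$ with columns in $L^\perp_{\mathbb Z}$ and norm $<T$ forces $r$ linearly independent vectors of norm $<T$ in $L^\perp_{\mathbb Z}$, so the summand vanishes unless $\lambda_r\ll T$. When $\lambda_r\ll T$, the number of lattice vectors of norm $<T$ is $\ll\prod_i (T/\lambda_i)\asymp T^r/\covol(L)$. Taking $\ell$ columns, each summand is therefore
$$
\ll T^{\ell r}\covol(L)^{-\ell}\,\mathbf 1_{\{\covol(L)\ll T^{r}\}},
$$
because $\covol(L)\asymp\prod\lambda_i\ll T^r$.

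Finally we sum over $L$, splitting dyadically in $\covol(L)\in[2^j,2^{j+1})$ and applying \Cref{primitive} with $k=m$:
$$
\sum_{L}\covol(L)^{-\ell}\mathbf 1_{\covol(L)\ll T^r}\ll\sum_{0\le j\ll\log T}2^{-j\ell}\,2^{(j+1)m}(1+j)^s .
$$
If $m<\ell$, the series converges geometrically, so the total is $\ll T^{\ell r}$ with no logarithm. If $m=\ell$, the terms are $\ll(1+j)^s$ over $O(\log T)$ indices, which gives $(\log T)^{s+1}\le(\log T)^{m+1}$, hence the bound $T^{\ell r}(\log T)^{m+1}$.

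The main obstacle is the uniform per-kernel bound. The lattice-point count must be uniform over all lattices $L^\perp_{\mathbb Z}$ and must vanish (or be controlled) when the lattice is very skewed. This is exactly where the rank condition and the truncation $\lambda_r\ll T$ are used: without the truncation, the $m=\ell$ case would not even be finite.
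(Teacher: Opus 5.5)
Your proposal is correct and is essentially the paper's argument: decompose by the primitive left kernel $L$, bound each kernel's contribution by $T^{\ell r}\covol(L)^{-\ell}$ with the truncation $\covol(L)\ll T^r$, and sum dyadically using \Cref{primitive}. Your successive-minima justification of the truncation is equivalent to the paper's Hodge-star remark, and your count in the square case gives the slightly sharper factor $(\log T)^{m-r+1}$.
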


\begin{proof}
The case $r=0$ is immediate. For $r\ge1$, decompose the rank-$r$
matrices according to their primitive rank-$(m-r)$ left kernel, as in the
proof of \Cref{rational-lattice}. The contribution of a fixed kernel $L$ is
$$
\ll T^{\ell r}\covol(L)^{-\ell}.
$$
Moreover, if such a matrix has norm at most $T$, then $\covol(L)\ll T^r$:
choose $r$ linearly independent columns and use the Hodge-star relation
between the saturated column lattice and its primitive orthogonal complement.
Thus it remains to sum $\covol(L)^{-\ell}$ over primitive $L$ with
$\covol(L)\ll T^r$. The dyadic estimate in \Cref{primitive} is summable when
$\ell>m$, and in the square case $\ell=m$ gives at most
$O((\log T)^{m+1})$.
\end{proof}

\subsection{A uniform rank-extension estimate}
\begin{lemma}[Uniform rank-extension estimate]
\label{lem:rank-extension} Let $m,q,\ell\ge1$ and $T\ge3$, and let
$A\in\M_{m,q}(\mathbb Z)$ satisfy
$\rank(A)<m$ and $\|A\|\le T$.
Then, uniformly in $A$,
$$
\begin{aligned}
\#&\left\{
B\in\M_{m,\ell}(\mathbb Z):\ \|B\|< T,\
\rank\begin{pmatrix}A&B\end{pmatrix}\le m-1
\right\}
\\&\hspace{35mm}
\ll
\begin{cases}
T^{m\ell}& \text{ if } \ell<m-\rank(A),\\ 
T^{\ell(m-1)}(\log T)^m& \text{ otherwise}
\end{cases}
\end{aligned}
$$
The analogous estimate for vertical concatenation follows by transposition.
\end{lemma}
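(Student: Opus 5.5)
The plan is to fix a primitive left kernel of the concatenation and count columnwise. Let $a:=\rank(A)$. If $\rank\begin{pmatrix}A&B\end{pmatrix}\le m-1$, there is a nonzero primitive vector $u\in\mathbb Z^m$ with $u^{\intercal}A=0$ and $u^{\intercal}B=0$. The first condition forces $u\in L_A:=\ker(A^{\intercal})\cap\mathbb Z^m$. This is a primitive lattice of rank $m-a\ge1$, and it depends only on $A$. So the count is bounded by
$$
\sum_{u\in L_A\ \mathrm{primitive}}\#\{B:\ \|B\|<T,\ u^{\intercal}B=0\}.
$$
This overcounts $B$ whose combined left kernel has rank $>1$, which is harmless for an upper bound. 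For fixed primitive $u$, each of the $\ell$ columns of $B$ lies in $u^\perp\cap\mathbb Z^m$. That is a lattice of rank $m-1$ and covolume $\|u\|$, by the Hodge-star duality already used in the proof of \Cref{rational-lattice}. The lattice-point count in a ball of radius $T$ is then $\ll T^{m-1}/\|u\|+(\text{lower order})$. The lower-order terms come from the successive minima of $u^\perp\cap\mathbb Z^m$, which are all $\ge1$, so uniformly the count is $\ll\prod_{i}\max(1,T/\lambda_i)\ll T^{m-1}/\|u\|$ when $\|u\|\ll T^{\,?}$. To make this uniform I would use the cruder per-column bound $\ll T^{m-1}\|u\|^{-1}+T^{m-2}$ and handle both terms.

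\textbf{Step 1 (large kernel).} When $\ell<m-a$, I would not use the decomposition at all: the trivial bound $\#\{B:\|B\|<T\}\ll T^{m\ell}$ is exactly what is claimed. So only the case $\ell\ge m-a$ needs work.

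\textbf{Step 2 (bounding $\|u\|$).} In the case $\ell\ge m-a$, I would restrict $u$. Any nonzero $B$ with $u^{\intercal}B=0$ and $\rank\begin{pmatrix}A&B\end{pmatrix}=m-1$ has column lattice of rank $m-1$ spanned by entries of size $\le T$. The primitive orthogonal complement $\mathbb Zu$ therefore has $\|u\|\ll T^{m-1}$, by the same Hodge-star argument as in \Cref{cor:crude-bounded-rank}. Configurations where the combined rank is smaller are covered by the same $u$-sum with smaller ranks, and they only improve the bound.

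\textbf{Step 3 (the sum over $u$).} Summing $(T^{m-1}/\|u\|)^\ell$ over primitive $u\in L_A$ with $\|u\|\ll T^{m-1}$, done dyadically, gives
$$
T^{\ell(m-1)}\sum_j 2^{-j\ell}\,\#\{u\in L_A:\|u\|\asymp2^j\}.
$$
Since $L_A$ has rank $m-a\le\ell$ and covolume $\ge1$, the number of lattice points at scale $2^j$ is $\ll 2^{j(m-a)}$, uniformly in $A$ because all successive minima are $\ge1$. The resulting sum is therefore $\ll T^{\ell(m-1)}\cdot\#\{j\ll\log T\}$, which is within $(\log T)^m$.

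\textbf{Main obstacle.} The hardest part is the uniform per-fiber count: the columns in $u^\perp\cap\mathbb Z^m$ with error terms, in the regime where $\|u\|$ is comparable to $T$. There the main term $T^{m-1}/\|u\|$ is no longer dominant over the lower-order terms $T^{j}/(\lambda_1\cdots\lambda_j)$. I would bound all of these by $\max_j T^j\prod_{i\le j}\lambda_i^{-1}$ and split the $u$-sum according to which $j$ attains the maximum. Each piece again gives a dyadic sum of the same shape, and the cases with $j<m-1$ produce at most the stated logarithmic powers. Tracking these finitely many regimes is where the $(\log T)^m$ loss originates.
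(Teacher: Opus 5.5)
There is a genuine gap in the decomposition itself. Summing over every primitive $u\in L_A$ with $u^{\intercal}B=0$ is not harmless overcounting. Let $\rho:=\rank\begin{pmatrix}A&B\end{pmatrix}$. A matrix $B$ is then counted once for each primitive vector of a rank-$(m-\rho)$ lattice. Your cutoff $\|u\|\ll T^{m-1}$, which you justified only when $\rho=m-1$, still leaves polynomially many such $u$. So lower combined rank makes your sum worse, not better, and your ``main obstacle'' is exactly this overcount. Concretely, take $m=\ell=3$ and $A=0$, where the claimed bound is $T^{6}(\log T)^{3}$. There are $\asymp T^{4}$ primitive $u=(0,u_2,u_3)$ with $T\le\|u\|\le T^{2}$. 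Each has $e_1\in u^{\perp}$, so taking all columns of $B$ in $\mathbb Z e_1$ gives $\#\{B:\|B\|<T,\ u^{\intercal}B=0\}\gg T^{3}$. Your sum is therefore $\gg T^{7}$. This lies in your regime $j=1$ ($\lambda_1=1\le T<\lambda_2$), so no splitting by the maximizing $j$ can leave only logarithmic losses. More generally, for $A=0$, $\ell=m$ and $u\perp e_1,\dots,e_{m-2}$, the excess is a factor $T^{m-2}$.

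The repair is to index $B$ by a canonical object. Take the full primitive left kernel $K:=\ker\bigl(\begin{pmatrix}A&B\end{pmatrix}^{\intercal}\bigr)\cap\mathbb Z^m$, a primitive rank-$(m-\rho)$ sublattice of $L_A$ uniquely determined by $B$. The columns of $\begin{pmatrix}A&B\end{pmatrix}$ span $K^{\perp}$ and have norm $\ll T$. Hence, for every $K$ that actually occurs, all successive minima of $K^{\perp}\cap\mathbb Z^m$ are $\ll T$. It follows that $\covol(K)=\covol(K^{\perp}\cap\mathbb Z^m)\ll T^{\rho}$, and each column of $B$ has $\ll T^{\rho}/\covol(K)$ choices, so your obstacle never arises. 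For $\rho=m-1$ this is your Step 3, restricted to the $u$ that really are kernels. It remains to bound $\sum_K\covol(K)^{-\ell}$ over primitive rank-$(m-\rho)$ sublattices $K<L_A$ with $\covol(K)\ll T^{\rho}$. A dyadic successive-minima count in the $1$-separated lattice $L_A$ does this: since $\ell\ge m-\rank(A)$, the sum is $\ll(\log T)^{m-\rho}$, giving $\ll T^{\ell\rho}(\log T)^{m-\rho}$. Summing over $\rank(A)\le\rho\le m-1$ proves the lemma. This is the paper's argument, phrased after projecting off the column space $E$ of $A$. There the fibers contribute $(cT^{r})^{\ell}$, and one sums over primitive $K<L^{*}=E^{\perp}\cap\mathbb Z^m=L_A$ using \eqref{eq:weighted-primitive-sum}.
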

\begin{proof}
Set
$
r:=\rank(A)
$
and
$
d:=m-r.
$
If $\ell<d$, then for every $B\in\M_{m,\ell}(\mathbb Z)$,
$$
\rank\begin{pmatrix}A&B\end{pmatrix}
\le r+\ell<m.
$$
 The trivial bound
$O(T^{m\ell})$ therefore proves the first case. Assume henceforth that
$\ell\ge d$. Let $E<\mathbb R^m$ be the column space of $A$, let
$p:\mathbb R^m\to E^\perp$ be the orthogonal projection, and set
$$
E_{\mathbb Z}:=E\cap\mathbb Z^m,
\qquad
L:=p(\mathbb Z^m),
\qquad
c:=\covol(L).
$$
The lattices $E_{\mathbb Z}$ and $L$ have ranks $r$ and $d$,
respectively. Since
$
\covol(E_{\mathbb Z})\,c=1
$
and $E_{\mathbb Z}$ contains $r$ linearly independent columns of
$A$, all of norm $O(T)$, we have
$$
1\le c^{-1}=\covol(E_{\mathbb Z})\ll T^r.
$$

Writing $B=(b_1,\ldots,b_\ell)$ in columns, let
$$
p(B):=(p(b_1),\ldots,p(b_\ell))\in L^\ell.
$$
Each nonempty fiber of $\mathbb Z^m\to L$ is an affine translate of
$E_{\mathbb Z}$. Since all successive minima of $E_{\mathbb Z}$ are
$O(T)$, the standard lattice-point estimate gives, uniformly in $y\in L$,
$$
\#\{b\in\mathbb Z^m:p(b)=y,\ \|b\|\ll T\}
\ll
\frac{T^r}{\covol(E_{\mathbb Z})}
=cT^r.
$$
Thus, for every $(y_1,\ldots,y_\ell)\in L^\ell$, the number of matrices
$B=(b_1,\ldots,b_\ell)$ with $\|B\|<T$ and
$p(b_j)=y_j$ for all $j$ is $\ll(cT^r)^\ell$.
The column space of $\begin{pmatrix}A&B\end{pmatrix}$ is
$
E+\operatorname{span}_{\mathbb R}\{b_1,\ldots,b_\ell\}.
$
Since $\ker p=E$, it follows that
$$
\rank\begin{pmatrix}A&B\end{pmatrix}
=
r+\dim\operatorname{span}_{\mathbb R}
\{p(b_1),\ldots,p(b_\ell)\}.
$$

It remains to show that
\begin{multline}\label{eq:projected-rank-bound}
\#\left\{
(y_1,\ldots,y_\ell)\in L^\ell:
\|y_j\|\ll T,\
\dim\operatorname{span}_{\mathbb R}\{y_1,\ldots,y_\ell\}\le d-1
\right\} \\
\ll
c^{-\ell}T^{\ell(d-1)}(\log T)^m.
\end{multline}

Note that
$L^*=E^\perp\cap\mathbb Z^m$.
Fix $1\le s\le d-1$, and consider a tuple
$(y_1,\ldots,y_\ell)$ whose span $W$ has dimension $s$. Then
$K:=L^*\cap W^\perp$
is a primitive rank-$d-s$ sublattice of $L^*$ and determines $W$
uniquely. Here and below, orthogonal complements are taken inside
$E^\perp$. Set
$
L_K:=L\cap K^\perp
$
so that
$\covol(L_K)=c\,\covol(K)$.

If $K$ arises from such a tuple, then $L_K$ contains $s$ linearly
independent vectors of norm $O(T)$. Consequently,
$$
c\,\covol(K)=\covol(L_K)\ll T^s\; \text{ and }\; 
\#\{y\in L_K:\|y\|\ll T\}
\ll
\frac{T^s}{c\,\covol(K)}.
$$
Let $N_s$ denote the number of tuples
$(y_1,\ldots,y_\ell)\in L^\ell$ such that
$$
\|y_j\|\ll T
\qquad\text{and}\qquad
\dim\operatorname{span}_{\mathbb R}\{y_1,\ldots,y_\ell\}=s.
$$
Then
\begin{equation}\label{eq:projected-rank-s}
N_s
\ll
c^{-\ell}T^{s\ell}
\sum_{\substack{
K< L^*\ \mathrm{primitive}\\
\rank(K)=d-s,\ \covol(K)\ll T^s/c}}
\covol(K)^{-\ell}.
\end{equation}

We use the following uniform estimate: for $R\ge1$ and $\ell\ge d$,
\begin{equation}\label{eq:weighted-primitive-sum}
\sum_{\substack{
K<L^*\ \mathrm{primitive}\\
\rank(K)=d-s,\ \covol(K)\le R}}
\covol(K)^{-\ell}
\ll
\begin{cases}
(1+\log R)^{d-s},&\ell=d,\\
1,&\ell>d.
\end{cases}
\end{equation}
Indeed, Minkowski's second theorem provides linearly independent vectors
$u_1,\ldots,u_{d-s}\in K$ such that
$$
\covol(K)
\le
\prod_{i=1}^{d-s}\|u_i\|
\ll
\covol(K).
$$
Since $K$ is primitive in $L^*$, it is determined by
$\operatorname{span}_{\mathbb R}\{u_1,\ldots,u_{d-s}\}$. Moreover,
$L^*\subset\mathbb Z^m$ is $1$-separated in the $d$-dimensional space
$E^\perp$, and hence
$$
\#\{u\in L^*:\|u\|\le X\}\ll(1+X)^d
$$
uniformly in $E$. If $\|u_i\|\asymp2^{j_i}$, the contribution of this
dyadic range to the sum in \eqref{eq:weighted-primitive-sum} is
$
\ll 2^{(d-\ell)(j_1+\cdots+j_{d-s})}.
$
When $\ell=d$, there are $O((1+\log R)^{d-s})$ relevant dyadic ranges;
when $\ell>d$, the contributions form a convergent geometric series.
This proves \eqref{eq:weighted-primitive-sum}.

Apply \eqref{eq:weighted-primitive-sum} with
$R\asymp T^s/c$. Since $c^{-1}\ll T^r$,
$
R\ll T^{s+r}\le T^{m-1}. $
It follows from \eqref{eq:projected-rank-s} that
$$
N_s
\ll
c^{-\ell}T^{s\ell}(\log T)^{d-s}
\ll
c^{-\ell}T^{\ell(d-1)}(\log T)^m.
$$
The rank-zero tuple is the zero tuple and satisfies the same bound because
$c\le1$. Summing over $0\le s\le d-1$ proves
\eqref{eq:projected-rank-bound}.

Finally, multiplying the projected count by the fiber bound gives
$$
(cT^r)^\ell
\cdot
c^{-\ell}T^{\ell(d-1)}(\log T)^m
=
T^{\ell(m-1)}(\log T)^m.
$$
This proves the second case. The vertical estimate follows by
transposition.
\end{proof}

\subsection{Weighted and fibered bounded-rank asymptotics}
We now record the weighted form of the bounded-rank asymptotic. 
\begin{lemma}[Weighted bounded-rank asymptotic]
\label{lem:weighted-bounded-rank}
Let $2\le k\le n-1$, and let
$\Delta<\M_{k,n}(\mathbb Q)$ be a full-rank lattice. Define a measure
$$
\mu_\Delta
:=
\sum_{L\in\mathcal P_1(k)}
\frac{1}{\covol(\Delta_L)}\,\vol_{V_L}
$$ where $V_L$ and $\Delta_L$ are as in \eqref{vl}, and
$\vol_{V_L}$ denotes the Lebesgue measure on $V_L$ induced by the
ambient Euclidean inner product.
Then $\mu_\Delta$ is locally finite and $
\mu_\Delta\bigl(\{A:\rank(A)\ne k-1\}\bigr)=0.
$. Moreover, for every
$f\in C_c(\M_{k,n}(\mathbb R))$,
\begin{equation}\label{eq:boundedrankasymptotic}
\lim_{T\to \infty} T^{-n(k-1)}
\sum_{\substack{A\in\Delta,\;  \rank(A)\le k-1}}
f(T^{-1}A) =
\int_{\M_{k,n}(\mathbb R)}f\,d\mu_\Delta.
\end{equation}
\end{lemma}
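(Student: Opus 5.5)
The plan is to redo the proof of \Cref{rational-lattice} with $r=k-1$ (so $s=1$), replacing the indicator of a norm ball by a general $f\in C_c(\M_{k,n}(\mathbb R))$, and to justify the exchange of limit and sum by the same majorants.

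First I will verify the two structural claims about $\mu_\Delta$. Each $V_L$ has dimension $n(k-1)$. The set of matrices in $V_L$ of rank $<k-1$ is a proper algebraic subset, so it has $\vol_{V_L}$-measure zero. Summing over the countably many $L$ gives $\mu_\Delta(\{\rank\ne k-1\})=0$. For local finiteness, the mass of a ball $B_R$ is $\sum_L \vol_{V_L}(B_R\cap V_L)/\covol(\Delta_L)\ll R^{n(k-1)}\sum_L\covol(L)^{-n}$, using $\covol(\Delta_L)\ge\covol(L)^n$ as in \eqref{eq:beta-majorant}. This sum converges by \Cref{primitive} with $s=1$, because $k<n$.

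Next comes the decomposition. Matrices of rank $\le k-2$ in $\Delta$ with norm $\ll T$ number $O(T^{n(k-2)}(\log T)^{k+1})$ by \Cref{cor:crude-bounded-rank}, after embedding $\Delta$ in $q^{-1}\M_{k,n}(\mathbb Z)$. Since $f$ is bounded with compact support, these contribute $o(T^{n(k-1)})$. Every rank-$(k-1)$ matrix lies in exactly one $V_L$, namely the one with $L=L(A)$, exactly as in \eqref{eq:rank-kernel-decomposition}. This gives
\[
\sum_{A\in\Delta,\ \rank A=k-1}f(T^{-1}A)=\sum_{L\in\mathcal P_1(k)}\ \sum_{A\in\Delta_L,\ \rank A=k-1}f(T^{-1}A).
\]
For fixed $L$, I will first drop the rank condition. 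Inside $V_L$, the lower-rank points number $O_L(T^{n(k-2)})$ by the argument already used in \Cref{rational-lattice}. Then the standard Riemann-sum equidistribution for the lattice $\Delta_L$ in $V_L$, applied to the continuous compactly supported function $f|_{V_L}$, gives
\[
T^{-n(k-1)}\sum_{A\in\Delta_L}f(T^{-1}A)\to\covol(\Delta_L)^{-1}\int_{V_L}f\,d\vol_{V_L}.
\]

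To pass to the sum over $L$, I need a majorant. If $\operatorname{supp}f\subset B_R$, the inner sum is bounded by $\|f\|_\infty\,N_L(RT)$. By \eqref{eq:NL-majorant}, $N_L(RT)\ll (RT)^{n(k-1)}\covol(L)^{-n}$ uniformly in $L$ and $T\ge1$. This majorant is summable over $\mathcal P_1(k)$. Dominated convergence over the countable index set then yields \eqref{eq:boundedrankasymptotic}, and the resulting limit is exactly $\int f\,d\mu_\Delta$.

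I expect the only delicate point to be the uniformity of \eqref{eq:NL-majorant} in $L$, since in \Cref{rational-lattice} it is only sketched via successive minima. If necessary, I will spell it out as follows. A nonzero count requires $\Delta_L$ to contain $k-1$ independent vectors of norm $\ll T$. Then the standard bound for lattice points in a ball, $\ll\prod_i\max(1,T/\lambda_i)\le T^{n(k-1)}/\covol(\Delta_L)$, holds up to constants, because the successive minima of $\Delta_L\subset q^{-1}\M_{k,n}(\mathbb Z)$ are bounded below. Combining this with $\covol(\Delta_L)\ge\covol(L)^n$ gives the majorant.
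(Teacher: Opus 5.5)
Your proposal is correct and follows the paper's proof essentially step for step: local finiteness from \eqref{eq:beta-majorant} and \Cref{primitive}, the disjoint decomposition of the rank-$(k-1)$ points by primitive left kernel, termwise Riemann-sum convergence on each $\Delta_L$, and dominated convergence over $L$ via \eqref{eq:NL-majorant}. Using \Cref{cor:crude-bounded-rank} instead of \Cref{rational-lattice} for the rank $\le k-2$ points makes no difference.

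One small correction to your optional justification of \eqref{eq:NL-majorant}. The bound $\prod_i\max(1,T/\lambda_i)\ll T^{n(k-1)}/\covol(\Delta_L)$ requires all $n(k-1)$ successive minima of $\Delta_L$ to be $O(T)$, not just $k-1$ of them. You can get this by rescaling so that $\Delta\subset\M_{k,n}(\mathbb Z)$, fixing an integer $q'$ with $q'\M_{k,n}(\mathbb Z)\subset\Delta$, and placing $q'$ times each of the $k-1$ independent columns of $A$ in each of the $n$ column slots. More simply, as the paper does, count in the larger lattice $(L^\perp_{\mathbb Z})^n$, whose covolume is $\covol(L)^n$.
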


\begin{proof} After rescaling $\Delta$, we may assume that
$\Delta<\M_{k,n}(\mathbb Z)$. Since $k<n$, it follows from \eqref{eq:beta-majorant} and
\Cref{primitive} that for $\beta_L:=\frac{\vol_{V_L}(B_1\cap V_L)}{\covol(\Delta_L)}$,
$$
\sum_{L\in\mathcal P_1(k)}\beta_L
\ll
\sum_{L\in\mathcal P_1(k)}\covol(L)^{-n}
<\infty.
$$
Since
$\dim V_L=n(k-1)$, for any $R>1$, 
$$
\mu_\Delta(B_R) = R^{n(k-1)}
\sum_{L\in\mathcal P_1(k)}\beta_L
<\infty.
$$
Thus $\mu_\Delta$ is locally finite.
For each $L$, the locus of matrices of rank at most $k-2$ is a proper algebraic subset
of $V_L$ and hence has $\vol_{V_L}$-measure zero. Thus $\mu_\Delta$ gives full measure to the rank-$(k-1)$ locus.

Rank-$(k-1)$ matrices decompose disjointly according to their unique
primitive one-dimensional left kernels. For each $L$, standard Riemann-sum
convergence for $\Delta_L$ gives the desired limit termwise. The majorant
\eqref{eq:NL-majorant} and the summability established above allow us to apply
dominated convergence over $L$. Finally, \Cref{rational-lattice} shows that
matrices of rank at most $k-2$ contribute $o(T^{n(k-1)})$, so including
them does not affect the limit.
\end{proof}

\begin{lemma}
\label{lem:fibered-rank-count}
Let $W<\M_n(\mathbb R)$ be a subspace with
$\dim W=kn$ for some $1\le k\le n-1$, and let
$p_W:\M_n(\mathbb R)\to W$ denote the orthogonal projection. Let
$\Delta<\M_n(\mathbb R)$ be a lattice such that
$\Delta\cap W^\perp$ is a lattice in $W^\perp$ and
$\Delta_W:=p_W(\Delta)$ is a lattice in $W$.

Let $\iota:W\to\M_{k,n}(\mathbb R)$ be a linear isomorphism such that
$\iota(\Delta_W)$ is commensurable with a full-rank lattice in
$\M_{k,n}(\mathbb Q)$. Then, for every norm on $\M_n(\mathbb R)$,
there exists $c_{\Delta,W}>0$ such that
$$
\#\left\{
v\in\Delta:\ \|v\|<T,\
\rank\bigl(\iota(p_W(v))\bigr)\le k-1
\right\}
\sim
c_{\Delta,W}T^{n(n-1)}.
$$
\end{lemma}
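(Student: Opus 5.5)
We count fiberwise over the projection $p_W$. The set $\Lambda_0:=\Delta\cap W^\perp$ is a lattice in $W^\perp$, of dimension $N-kn=(n-k)n$, and $p_W$ restricted to $\Delta$ has kernel exactly $\Lambda_0$. Hence $\Delta$ is a disjoint union of the fibers $\Delta\cap p_W^{-1}(y)=x_y+\Lambda_0$, one for each $y\in\Delta_W$, where $x_y\in\Delta$ is any lift of $y$. The rank condition depends only on $y$. The count therefore equals
$$
\sum_{\substack{y\in\Delta_W\\ \rank(\iota y)\le k-1}}
\#\{z\in x_y+\Lambda_0:\ \|z\|<T\}.
$$
The inner count is an affine lattice-point count in $W^\perp$ for the slice $\{x\in W^\perp:\|y+x\|<T\}$. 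After rescaling by $T$, it equals $\covol(\Lambda_0)^{-1}T^{n(n-k)}\,g(T^{-1}y)+o(T^{n(n-k)})$, where
$$
g(w):=\vol_{W^\perp}\{x\in W^\perp:\|w+x\|<1\}.
$$
The function $g$ is continuous and compactly supported on $W$, since the unit ball is a convex body. The error is uniform in $y$ with $\|y\|\le T$, because the slices are convex with uniformly bounded geometry. The uniform bound $\#\{\cdots\}\ll T^{n(n-k)}$ holds for all $y$, and the fiber is empty unless $\|y\|\ll T$.

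Next I replace $\Delta_W$ by a rational lattice. Since $\iota(\Delta_W)$ is commensurable with a full-rank $\Delta'<\M_{k,n}(\mathbb Q)$, the lattice $\Gamma:=\iota(\Delta_W)\cap\Delta'$ has finite index in $\iota(\Delta_W)$. I write $\iota(\Delta_W)$ as a finite union of cosets $a_j+\Gamma$ with $a_j\in\M_{k,n}(\mathbb Q)$. Choosing $m\ge1$ with $m a_j\in\Gamma$, each coset lies in the rational lattice $m^{-1}\Gamma$, and the rank condition is homogeneous. So I can sum $f(T^{-1}\iota^{-1}A)$ over $A\in m^{-1}\Gamma$ with $\rank A\le k-1$, weighted by the indicator of the cosets. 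To avoid coset bookkeeping, I apply \Cref{lem:weighted-bounded-rank} directly to the rational lattice $\iota(\Delta_W)\cap\M_{k,n}(\mathbb Q)$. This intersection equals $\iota(\Delta_W)$, because every element of $\iota(\Delta_W)$ lies in $m^{-1}\Gamma\subset\M_{k,n}(\mathbb Q)$. Thus $\iota(\Delta_W)$ is itself a rational full-rank lattice, and no coset decomposition is needed.

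For $k\ge2$ I apply \Cref{lem:weighted-bounded-rank} to $\iota(\Delta_W)$ with test function $f:=g\circ\iota^{-1}\in C_c(\M_{k,n}(\mathbb R))$. This gives
$$
\sum_{\rank(\iota y)\le k-1} g(T^{-1}y)\sim T^{n(k-1)}\int f\,d\mu_{\iota(\Delta_W)}.
$$
Multiplying by $T^{n(n-k)}/\covol(\Lambda_0)$ yields total exponent $n(n-1)$ and the constant
$$
c_{\Delta,W}=\covol(\Lambda_0)^{-1}\int g\circ\iota^{-1}\,d\mu_{\iota(\Delta_W)}.
$$
This constant is positive because $g>0$ near $0$ and $\mu$ charges every neighborhood of $0$. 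The case $k=1$ is degenerate: only $y=0$ has rank $0$, so the count is simply $\#\{v\in\Lambda_0:\|v\|<T\}\sim cT^{n(n-1)}$.

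The main obstacle is justifying the interchange of limits, i.e.\ controlling the $o(T^{n(n-k)})$ fiber errors summed over the $\asymp T^{n(k-1)}$ relevant $y$. I will bound the fiber error uniformly by $o(T^{n(n-k)})$ for $\|y\|\le CT$ (a Lipschitz/Davenport-type estimate for convex slices). Combined with \Cref{rational-lattice}, which bounds the number of rank-deficient $y$ with $\|y\|\ll T$ by $O(T^{n(k-1)})$, the total error is $o(T^{n(n-1)})$. An alternative is to sandwich $g$ between continuous compactly supported functions and apply the weighted asymptotic to both.
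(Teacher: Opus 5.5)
Your proposal follows the paper's proof step for step: the fiber decomposition over $\Delta_W$ with fibers $x_y+(\Delta\cap W^\perp)$, and a uniform Lipschitz-principle count in each convex slice. It also uses the $O(T^{n(k-1)})$ bound from \Cref{rational-lattice} to absorb the summed fiber errors, \Cref{lem:weighted-bounded-rank} applied to the slice-volume function, and positivity from $g>0$ near $0$. Your observation that $\iota(\Delta_W)$ itself lies in $\M_{k,n}(\mathbb Q)$ is correct; it is what the paper means by ``transporting'' the weighted asymptotic through $\iota$.

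The step that fails as written is the claim that $g$ is continuous because the unit ball is convex. Let $\mathsf B$ be the open unit ball. Convexity only gives continuity of $g$ on the open set $p_W(\mathsf B)$, for instance because $g^{1/(n(n-k))}$ is concave there by Brunn--Minkowski. Off $p_W(\mathsf B)$ one has $g=0$, but $g$ can jump across $\partial p_W(\mathsf B)$ when $\partial\mathsf B$ has a face containing a full-dimensional piece parallel to $W^\perp$. For example, take the max-entry norm, $\Delta=\M_n(\mathbb Z)$, and $W$ the matrices whose last $n-k$ rows vanish; then $g(w)=2^{n(n-k)}\mathbf 1\{\max_{i,j}|w_{ij}|<1\}$. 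The lemma is asserted for every norm, so you cannot feed $g\circ\iota^{-1}$ directly into \Cref{lem:weighted-bounded-rank}, which is stated only for $f\in C_c$.

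The missing ingredient is what makes your closing ``sandwich'' remark work, and it is what the paper supplies. The discontinuity set of $g$ lies in $\partial p_W(\mathsf B)$, and this set is null for the limiting measure. That measure is a countable sum of Lebesgue measures on $n(k-1)$-dimensional linear subspaces $S$. Each $S$ meets $\partial p_W(\mathsf B)$ only in the relative boundary of the convex set $p_W(\mathsf B)\cap S$, which contains $0$ in its relative interior and so has Lebesgue-null relative boundary. With this, approximating $g$ from above and below by functions in $C_c$ (Portmanteau) gives the limit, and the rest of your argument goes through unchanged.
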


\begin{proof}
If $k=1$, then the assertion is
the usual lattice-point asymptotic for the rank-$n(n-1)$ lattice
$\Delta\cap W^\perp$. Assume henceforth that $k\ge2$.
Let
$\mathsf B:=\{v\in\M_n(\mathbb R):\|v\|<1\}.$
For $x\in W$, define
$$
\mathsf B_x
:=
\{z\in W^\perp:x+z\in\mathsf B\},
\qquad
\psi(x)
:=
\operatorname{vol}_{W^\perp}(\mathsf B_x).
$$
The function $\psi$ is bounded, and its support is contained in the
compact set $\overline{p_W(\mathsf B)}$. For each $A\in\Delta_W$, choose $z_A\in W^\perp$ such that
$A+z_A\in\Delta$. Then
$$
\Delta\cap p_W^{-1}(A)
=
A+z_A+(\Delta\cap W^\perp).
$$
Moreover,
$$
\|A+z_A+z\|<T
\quad\Longleftrightarrow\quad
z_A+z\in T\mathsf B_{T^{-1}A}.
$$

We now justify the required uniformity in $A$. Each
$\mathsf B_x$ is convex. Since $\mathsf B$ is bounded, there exists
$R>0$ such that
$$
\mathsf B_x
\subset
\{z\in W^\perp:\|z\|_{\mathrm{Euc}}\le R\}
\qquad(x\in W).
$$
Consequently, the volumes of all positive-dimensional orthogonal
projections of $\mathsf B_x$ are uniformly bounded in $x$. The
successive-minima form of the Lipschitz principle therefore gives,
uniformly in $A$ and in the translate $z_A$,
\begin{equation}\label{eq:fiberwisecounting}
\begin{aligned}
&\#\left(
(z_A+\Delta\cap W^\perp)
\cap T\mathsf B_{T^{-1}A}
\right)\\
&\qquad=
\frac{T^{n^2-kn}}
{\operatorname{covol}(\Delta\cap W^\perp)}
\psi(T^{-1}A)
+
O_{\Delta,W,\|\cdot\|}
\bigl(T^{n^2-kn-1}+1\bigr).
\end{aligned}
\end{equation}

Only those $A$ for which
$\mathsf B_{T^{-1}A}\ne\emptyset$ contribute. For such $A$,
we have $
T^{-1}A\in p_W(\mathsf B),
$
and hence $\|A\|\ll T$. By \Cref{rational-lattice},
$$
\#\left\{
A\in\Delta_W:\|A\|\ll T,\
\rank(\iota(A))\le k-1
\right\}
\ll T^{n(k-1)}.
$$
Thus the total contribution of the error term in
\eqref{eq:fiberwisecounting} is
$$
O\left(
\bigl(T^{n^2-kn-1}+1\bigr)T^{n(k-1)}
\right)
=
o(T^{n(n-1)}).
$$

Transporting \Cref{lem:weighted-bounded-rank} through $\iota$ gives a
locally finite measure $\mu_{\Delta_W}$ on $W$. The function $\psi$ is
continuous on the interior of $p_W(\mathsf B)$, so its discontinuities
are contained in $\partial p_W(\mathsf B)$. Moreover,
$\mu_{\Delta_W}$ is a locally finite sum of Lebesgue measures on
$n(k-1)$-dimensional linear subspaces through the origin. Since $0$ is
an interior point of $p_W(\mathsf B)$, the intersection of its boundary
with each of these subspaces is Lebesgue null. Hence
$\partial p_W(\mathsf B)$ is $\mu_{\Delta_W}$-null.

The bounded-rank asymptotic therefore applies to $\psi$ and gives
$$
\sum_{{A\in\Delta_W,
\rank(\iota(A))\le k-1}}
\psi(T^{-1}A)
=
T^{n(k-1)}
\left(
\int_W\psi\,d\mu_{\Delta_W}+o(1)
\right).
$$
Summing \eqref{eq:fiberwisecounting} over the bounded-rank points in
$\Delta_W$ now yields
$$
\begin{aligned}
&\#\left\{
v\in\Delta:\ \|v\|<T,\
\rank\bigl(\iota(p_W(v))\bigr)\le k-1
\right\}\\
&\qquad\sim
\frac{1}{\operatorname{covol}(\Delta\cap W^\perp)}
\left(\int_W\psi\,d\mu_{\Delta_W}\right)
T^{n(n-1)}.
\end{aligned}
$$
Finally, the integral is positive because $\psi>0$ near $0$ and
$\mu_{\Delta_W}$ assigns positive measure to every neighborhood of
$0$.
\end{proof}

\subsection{Finite decompositions of the isotropic contribution}

Let $\Lambda_{\operatorname{iso}}^{(1)}$ and
$\Lambda_{\operatorname{iso}}^{(2)}$ denote, respectively, the lattice
points lying in a $\Lambda$-rational column- and row-isotropic subspace of
dimension $n,2n,\ldots,n(n-1)$. Recall that
$\Lambda_{\operatorname{iso}}$ consists of the lattice points lying in a
$\Lambda$-rational column- or row-isotropic subspace. Thus
\begin{equation}\label{eq:isotropic-union-column-row}
\Lambda_{\operatorname{iso}}
=
\Lambda_{\operatorname{iso}}^{(1)}\cup\Lambda_{\operatorname{iso}}^{(2)}.
\end{equation}

The next lemma packages the structural results of \Cref{s:quasinull} in the
form needed for counting.  It says that, up to lower-dimensional pieces, the
column-isotropic contribution is a finite union of fibered bounded-rank
conditions.

\begin{lemma}[Finite decomposition of the isotropic contribution]
\label{lem:finite-isotropic-piece-decomposition}
Assume that a lattice $\Lambda<\M_n(\br)$ is not of $\mathbb Q$-split type and satisfies the column
isotropic noncoincidence condition. Then there are
\begin{enumerate}[label=\textnormal{(\roman*)}]
\item finitely many proper $\Lambda$-rational subspaces
$L_1,\ldots,L_q$, each contained in a column-isotropic subspace and each of
dimension at most $n(n-2)$; 
\item finitely many column-type subspaces
$$
W_j=\mathcal L(\mathsf U_j)=\mathsf U_j\otimes(\mathbb R^n)^*,
\quad 1\le j\le J,
\quad
1\le k_j:=\dim \mathsf U_j\le n-1,
$$
together with rank-preserving tensor-product isomorphisms
$$
\iota_j:W_j\to\M_{k_j,n}(\mathbb R),
$$
such that $\iota_j(p_{W_j}(\Lambda))$ is commensurable with a full-rank
rational lattice.
\end{enumerate}
These objects may be chosen so that
\begin{equation}\label{eq:finite-column-decomposition}
\Lambda_{\operatorname{iso}}^{(1)}
=
\left(\Lambda\cap\bigcup_{m=1}^qL_m\right)
\cup
\bigcup_{j=1}^J\Sigma_j
\end{equation}
where
$
\Sigma_j
:=
\left\{
v\in\Lambda:
\rank\bigl(\iota_j(p_{W_j}(v))\bigr)\le k_j-1
\right\}$.
Moreover, the decomposition may be chosen so that, whenever $j\ne j'$,
\begin{equation}\label{eq:column-piece-overlap}
\#\{v\in\Sigma_j\cap\Sigma_{j'}:\|v\|< T\}
\ll
T^{n(n-2)}.
\end{equation}
The analogous statement holds for the row-isotropic contribution.
\end{lemma}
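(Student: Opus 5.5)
Throughout, \emph{isotropic} means column-isotropic. The plan is to stratify $\Lambda^{(1)}_{\operatorname{iso}}$ by the dimension $kn$ of the maximal proper $\Lambda$-rational column-isotropic subspaces, and to dualize each stratum via \Cref{lem:lattice-orthogonal-duality}. A point $v\in\Lambda$ lies in a rational $\mathcal L(\mathsf U)$ with $\dim\mathsf U=k$ exactly when $p_{W}(v)=0$ for $W:=\mathcal L(\mathsf U)^\perp=\mathcal L(\mathsf U^\perp)$. Here $W$ is $\Lambda^*$-rational of dimension $(n-k)n$, and $p_W(\Lambda)$ is the lattice dual to $\Lambda^*\cap W$.

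\textbf{Step 1: the codimension-$n$ stratum.} For the maximal isotropic subspaces of dimension $n(n-1)$, \Cref{lem:global-isotropic-structure} gives a direct sum $\mathsf U_1\oplus\cdots\oplus\mathsf U_p$. All but at most $n$ of the subspaces $V$ have $V^\perp\subset\mathcal L(\mathsf U_j)$ for a unique $j$. Each $\Lambda^*\cap\mathcal L(\mathsf U_j)$ is commensurable with a tensor lattice $\Lambda_{1,j}\otimes\Lambda_{2,j}$, and each $\mathsf U_j$ is proper because $\Lambda$ is not of $\mathbb Q$-split type.
\begin{itemize}
\item Put $W_j:=\mathcal L(\mathsf U_j)$ with $k_j=\dim\mathsf U_j\le n-1$. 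Identify $W_j\cong\M_{k_j,n}(\R)$ using a basis of $\Lambda_{1,j}$ together with the dual basis of the dual of $\Lambda_{2,j}$. This identification is a tensor-product map, so it preserves rank, and under it $p_{W_j}(\Lambda)$ is commensurable with a rational lattice, because its dual is $\Lambda^*\cap W_j$.
\item A point $v$ lies in some $V$ with $V^\perp=\mathcal L(\R u)\subset W_j$ exactly when $p_{W_j}(v)$, viewed as the linear map $\mathsf U_j^{*}\to\R^n$, kills a nonzero vector. Rationality of $V$ should then be automatic from the commensurability: a rational bounded-rank matrix has a rational kernel, which corresponds to a $\Lambda^*$-rational $\mathcal L(\R u)$. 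This condition is exactly $\operatorname{rank}\iota_j(p_{W_j}(v))\le k_j-1$, so the points of this stratum in $W_j$ form $\Sigma_j$.
\item The at most $n$ exceptional $V$ are added to the list. I would take them as singleton families $W=V^\perp$ with $k=1$, where $\Sigma$ is simply $\Lambda\cap V$. This agrees with the $k=1$ case of \Cref{lem:fibered-rank-count}.
\end{itemize}

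\textbf{Step 2: higher codimension.} Consider maximal proper rational isotropic subspaces of dimension $kn$ with $k\le n-2$.
\begin{itemize}
\item If $k<\lceil n/2\rceil$, then $kn\le n(n-2)$, so $V$ is proper of dimension at most $n(n-2)$. The issue is that there may be infinitely many such $V$. I would argue that any two distinct maximal ones of dimension $kn\le n(n-1)/2$ cannot be dual-summed into a tensor family without producing a larger isotropic subspace. I am unsure of this step, and I may need to treat these strata by the same equivalence-class construction as in \Cref{lem:global-isotropic-structure}, applied to the $\mathsf F_i$ of dimension $n-k$. That construction gives finitely many $W=\mathcal L(\mathsf U)$ in which the subspaces $\mathcal L(\mathsf F_i)$ sit, and the corresponding rank conditions yield further $\Sigma$'s.
\item If $\lceil n/2\rceil\le k\le n-2$, then \Cref{prop:finitely-many-maximal-isotropic} gives at most $n$ maximal subspaces. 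These are included among the $L_m$.
\end{itemize}
Subspaces that are not maximal lie inside maximal ones, so they are already covered. This gives \eqref{eq:finite-column-decomposition}.

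\textbf{Step 3: overlaps (main obstacle).} For $j\ne j'$, directness of $\mathsf U_j\oplus\mathsf U_{j'}$ means that $v\in\Sigma_j\cap\Sigma_{j'}$ lies in $\mathcal L(\R u)^\perp\cap\mathcal L(\R u')^\perp$ with $u\in\mathsf U_j$ and $u'\in\mathsf U_{j'}$ independent. That is, $v$ kills a $2$-dimensional subspace, so it lies in a rational column-isotropic subspace of dimension $n(n-2)$.

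To count these points:
\begin{itemize}
\item Write $v$ in coordinates adapted to $W_j\oplus W_{j'}\oplus(W_j+W_{j'})^\perp$.
\item The $W_j$-component and the $W_{j'}$-component each have deficient rank.
\item Fiber over the complement, and apply \Cref{lem:rank-extension} or \Cref{cor:crude-bounded-rank} to each block.
\end{itemize}
The resulting exponent is $n(k_j-1)+n(k_{j'}-1)+n(n-k_j-k_{j'})=n(n-2)$, but this count carries logarithmic factors. Removing those logarithms, which is the point of taking $k<n$ summable majorants as in \eqref{eq:NL-majorant}, is the delicate point I expect to fight with. The same majorant argument would also bound the overlaps involving the exceptional singleton families.
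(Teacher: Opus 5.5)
Your Step~1 and the range $\lceil n/2\rceil\le k\le n-2$ of Step~2 are what the paper does. Two steps, however, are not yet proofs.

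\textbf{The case $k<n/2$ in Step~2 is a genuine gap.} The lemma requires the list $L_1,\dots,L_q$ to be finite. You leave open whether there could be infinitely many maximal proper rational column-isotropic subspaces of dimension $kn$ with $k<n/2$. Your fallback would not close this. The equivalence-class construction in the proof of \Cref{lem:global-isotropic-structure} is tailored to codimension $n$, where the dual factors are lines $\R u$. The pieces it would produce here are cut out by conditions of the form $\rank\le k_j-(n-k)$, which are not of the shape $\Sigma_j$ allowed in \eqref{eq:finite-column-decomposition}.

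The missing observation is elementary: for each $k<n/2$ there is at most one such maximal subspace. Suppose $\mathcal L(\mathsf U_1)\ne\mathcal L(\mathsf U_2)$ were two of them. Then $\mathcal L(\mathsf U_1)+\mathcal L(\mathsf U_2)=\mathcal L(\mathsf U_1+\mathsf U_2)$ is $\Lambda$-rational, being a sum of rational subspaces, and it is column-isotropic. It is proper because $\dim(\mathsf U_1+\mathsf U_2)\le 2k<n$. It also strictly contains $\mathcal L(\mathsf U_1)$, which contradicts maximality. The paper phrases the same argument dually: the $(n-k)$-dimensional spaces $\mathsf F_i$ with $V_i^\perp=\mathcal L(\mathsf F_i)$ must intersect nontrivially, contradicting the minimality of $V_i^\perp$.

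\textbf{The logarithms in Step~3 do not actually arise.} As written, your count gives only $T^{n(n-2)}(\log T)^{C}$, which is weaker than \eqref{eq:column-piece-overlap}. Once the count is organized correctly no logarithm appears, and \Cref{lem:rank-extension} is unnecessary.

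Since $\mathsf U_j\cap\mathsf U_{j'}=\{0\}$, the map $v\mapsto(p_{W_j}v,p_{W_{j'}}v)$ is onto $W_j\times W_{j'}$ with kernel $(W_j+W_{j'})^\perp$. The dual of its image lattice is $\{(w,w'):w+w'\in\Lambda^*\}$, which is commensurable with $(\Lambda^*\cap W_j)\times(\Lambda^*\cap W_{j'})$. Hence the image is commensurable with $p_{W_j}(\Lambda)\times p_{W_{j'}}(\Lambda)$, and $\iota_j\times\iota_{j'}$ carries it to a lattice commensurable with a product of rational lattices.

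The two rank conditions then constrain independent factors. So the number of admissible pairs of norm $O(T)$ is at most a product of two counts, of sizes $O(T^{n(k_j-1)})$ and $O(T^{n(k_{j'}-1)})$. Neither carries a logarithm, by \Cref{rational-lattice}, or equivalently by \Cref{cor:crude-bounded-rank} with $m=k_j<\ell=n$. Multiplying by the $O(T^{n(n-k_j-k_{j'})})$ lifts in $\Lambda$ gives $T^{n(n-2)}$.

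Be careful to use these orthogonal projections rather than the components of $v$ in the direct sum $W_j\oplus W_{j'}\oplus(W_j+W_{j'})^\perp$. Membership in $\Sigma_j$ means $u^{\intercal}v=0$ for some $0\ne u\in\mathsf U_j$. This depends only on $p_{W_j}v$. In direct-sum coordinates, however, it also involves the $W_{j'}$-component unless $\mathsf U_j\perp\mathsf U_{j'}$, so the two conditions would look coupled.
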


\begin{proof}
We prove the column statement. First consider maximal proper
$\Lambda$-rational column-isotropic subspaces of dimension $kn$ with
$k\le n-2$. For $\lceil n/2\rceil\le k\le n-2$, their finiteness follows
from \Cref{prop:finitely-many-maximal-isotropic}. For $k<n/2$, there is at
most one such maximal subspace for each $k$. Indeed, if $V_1$ and $V_2$
were distinct maximal subspaces of this dimension, then $V_1^\perp$ and
$V_2^\perp$ would be minimal nonzero $\Lambda^*$-rational
column-isotropic subspaces of dimension $(n-k)n$.
The corresponding
$(n-k)$-dimensional subspaces of $\mathbb R^n$ intersect nontrivially,
because $n-k>n/2$. Their intersection is again $\Lambda^*$-rational and
column-isotropic, contradicting minimality unless $V_1^\perp=V_2^\perp$.
Taking all maximal subspaces in these finitely many families gives
$L_1,\ldots,L_q$, and we include the zero subspace if necessary. Every
column-isotropic point not contained in an $n(n-1)$-dimensional
column-isotropic subspace lies in one of these $L_m$.

It remains to treat the $n(n-1)$-dimensional column-isotropic subspaces. By
\Cref{lem:global-isotropic-structure} and  the construction in its
proof, there are subspaces $\mathsf U_1,\ldots,\mathsf U_J$, of dimensions
between $2$ and $n-1$ such that $\mathsf U_j\cap\mathsf U_{j'}=\{0\}$ for $j\ne j'$ and, apart from finitely
many exceptional ones, every $\Lambda$-rational
column-isotropic subspace $V$ of dimension $n(n-1)$ satisfies
$$
V^\perp\subset \mathcal L(\mathsf U_j)
$$
for a unique $j$. Moreover, $\Lambda^*\cap\mathcal L(\mathsf U_j)$ is
column-$\mathbb Q$-split. We discard from the exceptional list any member
whose orthogonal complement is already contained in one of the
$\mathcal L(\mathsf U_j)$. For each remaining exceptional subspace $V$, add to the list the
one-dimensional factor $\mathsf U$ determined by $V^\perp=\mathcal L(\mathsf U)$. Thus every $n(n-1)$-dimensional column-isotropic subspace is assigned
to one of the resulting spaces $W_j=\mathcal L(\mathsf U_j)$, possibly with
$k_j=1$.

Fix one such space $W_j=\mathcal L(\mathsf U_j)$, and write $p_j=p_{W_j}$. Since
$$
p_j(\Lambda)^*=\Lambda^*\cap W_j,
$$
and the tensor-product splitting property is preserved by duality, there is a
rank-preserving tensor-product isomorphism
$$
\iota_j=a_j\otimes b_j:
W_j\to\M_{k_j,n}(\mathbb R)
$$
for which $\iota_j(p_j(\Lambda))$ is commensurable with a full-rank rational
lattice. When $k_j=1$, this is automatic.

We claim that
\begin{equation}\label{eq:rankdecomposition}
\Lambda\cap
\bigcup
\{V : {V\in\mathcal X^{(1)}_{n(n-1),\mathrm{iso}}(\Lambda),\; V^\perp\subset W_j}\}
=
\Sigma_j.
\end{equation}
If $v\in V\cap\Lambda$ and $V^\perp=\mathcal L(\mathbb Ru)$ with
$0\ne u\in \mathsf U_j$, then every column of $p_j(v)$ lies in
$\mathsf U_j\cap u^\perp$. Since $\iota_j$ preserves rank,
$$
\rank\bigl(\iota_j(p_j(v))\bigr)\le k_j-1, \quad\text{so $v\in\Sigma_j$.}
$$

Conversely, let $v\in\Sigma_j$. Since $\iota_j(p_j(\Lambda))$ is
commensurable with a rational lattice, the left kernel of
$\iota_j(p_j(v))$ contains a nonzero rational covector. Pulling this covector
back through $a_j$, and using the Euclidean identification of $\mathsf U_j^*$ with
$\mathsf U_j$, we obtain a nonzero $u\in \mathsf U_j$ such that
$\mathcal L(\mathbb Ru)$ is $p_j(\Lambda)^*$-rational and orthogonal to
$p_j(v)$. Because $p_j(\Lambda)^*=\Lambda^*\cap W_j$, the space
$\mathcal L(\mathbb Ru)$ is $\Lambda^*$-rational. Hence $V:=\mathcal L(\mathbb Ru)^\perp$ is a $\Lambda$-rational column-isotropic subspace of dimension $n(n-1)$,
with $V^\perp\subset W_j$. Since $v-p_j(v)\in W_j^\perp$, the
orthogonality of $p_j(v)$ to $\mathcal L(\mathbb Ru)$ implies $v\in V$.
This proves \eqref{eq:rankdecomposition}, and hence
\eqref{eq:finite-column-decomposition}.

It remains to estimate the overlap of two distinct main pieces. By
construction, $\mathsf U_j\cap \mathsf U_{j'}=\{0\}$ for $j\ne j'$. Set
$$
W:=W_j\oplus W_{j'}=\mathcal L(\mathsf U_j\oplus \mathsf U_{j'}).
$$
The lattice $\Lambda^*\cap W$ is commensurable with $(\Lambda^*\cap W_j)\oplus(\Lambda^*\cap W_{j'})$. Using the two tensor-product coordinate maps on the two summands, the joint
quotient lattice is commensurable with a full-rank rational lattice in $\M_{k_j,n}(\mathbb R)\oplus\M_{k_{j'},n}(\mathbb R)$. Thus, after scaling, we may overcount its points by pairs of integral
matrices. The number of pairs $(A,A')$ of norm $O(T)$ satisfying
$$
\rank(A)\le k_j-1,
\qquad
\rank(A')\le k_{j'}-1
$$
is $O\bigl(T^{n(k_j-1)+n(k_{j'}-1)}\bigr)$ by \Cref{rational-lattice}.
If  $k_j=1$ or $k_{j'}=1$, the
corresponding matrix is zero and contributes $O(1)$. The kernel of the joint
quotient map has dimension $n(n-k_j-k_{j'})$, so each pair has
$O(T^{n(n-k_j-k_{j'})})$ lifts in $\Lambda$. Therefore
$$
\#\{v\in\Sigma_j\cap\Sigma_{j'}:\|v\|< T\}
\ll
T^{n(k_j-1)+n(k_{j'}-1)+n(n-k_j-k_{j'})}
=
T^{n(n-2)}.
$$
This proves \eqref{eq:column-piece-overlap}. The row statement follows after
transposition.
\end{proof}

Combining \Cref{lem:fibered-rank-count} with
\Cref{lem:finite-isotropic-piece-decomposition}, we obtain the following:
\begin{proposition}\label{prop:column-row-isotropic-asymptotic}
Assume that $\Lambda$ is not of $\mathbb Q$-split type and satisfies the isotropic
noncoincidence condition.
There exist constants
$c_{\Lambda,\mathrm{col}}^{\mathrm{iso}},
 c_{\Lambda,\mathrm{row}}^{\mathrm{iso}}\ge0$ such that
$$
\#\{v\in\Lambda_{\operatorname{iso}}^{(1)}:\|v\|< T\}
=
c_{\Lambda,\mathrm{col}}^{\mathrm{iso}}T^{n(n-1)} + o (T^{n(n-1)}) ;$$ 
$$\#\{v\in\Lambda_{\operatorname{iso}}^{(2)}:\|v\|< T\}
=
c_{\Lambda,\mathrm{row}}^{\mathrm{iso}}T^{n(n-1)}+ o (T^{n(n-1)}).$$
\end{proposition}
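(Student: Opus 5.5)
The proof will run through the column case; the row case follows by transposition. It rests on the decomposition \eqref{eq:finite-column-decomposition} of \Cref{lem:finite-isotropic-piece-decomposition}, applied to $\Lambda$; the column isotropic noncoincidence condition and the non-$\mathbb Q$-split hypothesis are exactly what that lemma requires. This expresses $\Lambda_{\operatorname{iso}}^{(1)}$ as a union of the low-dimensional pieces $\Lambda\cap L_m$, $1\le m\le q$, and the fibered bounded-rank pieces $\Sigma_1,\ldots,\Sigma_J$.

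\textbf{Step 1 (low-dimensional pieces).} Each $L_m$ is a $\Lambda$-rational subspace of dimension at most $n(n-2)$. Hence $\#\{v\in\Lambda\cap L_m:\|v\|<T\}\ll T^{n(n-2)}$, and since $q$ is finite their union contributes $o(T^{n(n-1)})$.

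\textbf{Step 2 (main pieces).} For each $j$, apply \Cref{lem:fibered-rank-count} with $W=W_j$, $k=k_j$ and $\iota=\iota_j$. The hypotheses need checking:
\begin{itemize}
\item $W_j=\mathcal L(\mathsf U_j)$ is $\Lambda^*$-rational, as a sum of rational pieces in the construction of \Cref{lem:global-isotropic-structure}. By \Cref{lem:lattice-orthogonal-duality}, $W_j^\perp$ is therefore $\Lambda$-rational, so $\Lambda\cap W_j^\perp$ is a lattice in $W_j^\perp$.
\item $p_{W_j}(\Lambda)$ is a lattice in $W_j$, because its dual is $\Lambda^*\cap W_j$.
\item $\iota_j(p_{W_j}(\Lambda))$ is commensurable with a full-rank rational lattice, which is part of (ii).
\end{itemize}
This gives $\#\{v\in\Sigma_j:\|v\|<T\}\sim c_{\Lambda,W_j}T^{n(n-1)}$.

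\textbf{Step 3 (inclusion--exclusion).} The count of the union $\bigcup_j\Sigma_j$ differs from $\sum_j\#\Sigma_j(T)$ by at most $\sum_{j\ne j'}\#(\Sigma_j\cap\Sigma_{j'})(T)$. By \eqref{eq:column-piece-overlap} this is $O(J^2T^{n(n-2)})$. Adding the $o(T^{n(n-1)})$ error from Step 1 gives the asymptotic with
\[
c_{\Lambda,\mathrm{col}}^{\mathrm{iso}}=\sum_{j}c_{\Lambda,W_j},
\]
which is $0$ when $J=0$. The same argument with the row statement of \Cref{lem:finite-isotropic-piece-decomposition} gives $c_{\Lambda,\mathrm{row}}^{\mathrm{iso}}$.

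\textbf{Main obstacle.} The argument is essentially bookkeeping once the finite decomposition is available. The step most likely to need care is checking the rationality hypotheses of \Cref{lem:fibered-rank-count} for the pieces added with $k_j=1$. For those pieces the statement reduces to ordinary lattice-point counting in the $n(n-1)$-dimensional rational subspace $W_j^\perp$, so they cause no difficulty.
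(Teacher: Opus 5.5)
Your proposal is correct and follows essentially the same route as the paper: it combines the finite decomposition of \Cref{lem:finite-isotropic-piece-decomposition} with \Cref{lem:fibered-rank-count} for each main piece $\Sigma_j$. It then absorbs the lower-dimensional pieces and the pairwise overlaps \eqref{eq:column-piece-overlap} into an $O(T^{n(n-2)})$ error, giving $c_{\Lambda,\mathrm{col}}^{\mathrm{iso}}=\sum_j c_{\Lambda,W_j}$. One small correction: $p_{W_j}(\Lambda)$ is a lattice because $\Lambda\cap W_j^\perp$, the kernel of $p_{W_j}|_\Lambda$, is a lattice in $W_j^\perp$; that is what makes the dual description $\Lambda^*\cap W_j$ available, not the other way around.
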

\begin{proof}
Each main piece $\Sigma_j$ has the asserted asymptotic, the finitely
many lower-dimensional spaces contribute $O(T^{n(n-2)})$, and
\eqref{eq:column-piece-overlap} gives the same bound for every pairwise
overlap of main pieces.\end{proof}

\subsection{Mixed column--row intersections}

The column and row asymptotics must now be combined by inclusion--exclusion.
The following compatibility lemma gives rational coordinates for the joint
projection associated with one column piece and one row piece. 

\begin{lemma}[Compatible split coordinates]
\label{lem:compatible-split-coordinates}
Let $\mathsf C$ and $\mathsf R$ be $n$-dimensional real vector spaces,
let $\mathsf U<\mathsf C$ and $\mathsf V<\mathsf R$ be subspaces, and put
$$
W:=\mathsf U\otimes\mathsf R\quad\text{and}\quad
Z:=\mathsf C\otimes\mathsf V.
$$
Let $\Delta<\mathsf C\otimes\mathsf R$ be a lattice. Assume that $W$ and
$Z$ are $\Delta^*$-rational and that
$\Delta^*\cap W$ and
$\Delta^*\cap Z $
are commensurable with tensor-product lattices in $W$ and $Z$, respectively.
Let $q_{W,Z}:
\mathsf C\otimes\mathsf R
\to
(\mathsf C\otimes\mathsf R)/(W^\perp\cap Z^\perp)
$
be the quotient map. One can choose bases of $\mathsf C$ and
$\mathsf R$, adapted to $\mathsf U$ and $\mathsf V$, such that
the quotient is identified with
$$
\bigl(\mathsf U\otimes\mathsf V\bigr)
\oplus
\bigl(\mathsf U\otimes(\mathsf R/\mathsf V)\bigr)
\oplus
\bigl((\mathsf C/\mathsf U)\otimes\mathsf V\bigr),
$$
and $q_{W,Z}(\Delta)$ is a full-rank rational lattice in these
coordinates.
\end{lemma}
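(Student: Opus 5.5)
The plan is to work on the dual side and reduce everything to a single rational structure on the joint quotient. First, identify the quotient concretely. Since $W^\perp=\mathsf U^\perp\otimes\mathsf R$ and $Z^\perp=\mathsf C\otimes\mathsf V^\perp$ (by \Cref{lem:lattice-orthogonal-duality}), we have $W^\perp\cap Z^\perp=\mathsf U^\perp\otimes\mathsf V^\perp$. Hence $(\mathsf C\otimes\mathsf R)/(W^\perp\cap Z^\perp)$ is dual to $W+Z=\mathsf U\otimes\mathsf R+\mathsf C\otimes\mathsf V$. Once adapted bases are fixed, the quotient splits as $\mathsf U\otimes\mathsf V\oplus\mathsf U\otimes(\mathsf R/\mathsf V)\oplus(\mathsf C/\mathsf U)\otimes\mathsf V$. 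Moreover, $q_{W,Z}(\Delta)$ is dual to $\Delta^*\cap(W+Z)$, and $W+Z$ is $\Delta^*$-rational as a sum of rational subspaces. It therefore suffices to find bases of $\mathsf C,\mathsf R$ adapted to $\mathsf U,\mathsf V$ in which $\Delta^*\cap(W+Z)$ is commensurable with the integer points of $W+Z$. Rationality and full rank of the dual quotient lattice then follow, since duals of commensurable lattices are commensurable and the dual of an integral lattice in rational coordinates is rational.

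Next, use the splitting hypotheses. Write $\Delta^*\cap W\sim\Lambda_1\otimes\Lambda_2$ with $\Lambda_1<\mathsf U$ and $\Lambda_2<\mathsf R$, and $\Delta^*\cap Z\sim\Lambda_3\otimes\Lambda_4$ with $\Lambda_3<\mathsf C$ and $\Lambda_4<\mathsf V$. The key intermediate step is to show compatibility on the overlap $W\cap Z=\mathsf U\otimes\mathsf V$. This overlap is $\Delta^*$-rational as an intersection of rational subspaces. Its lattice is commensurable with $\Lambda_1\otimes(\Lambda_2\cap\mathsf V)$ and with $(\Lambda_3\cap\mathsf U)\otimes\Lambda_4$. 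The second factor intersections are lattices because the overlap is rational: a tensor $\Lambda_1\otimes\Lambda_2$-rational subspace of the form $\mathsf U\otimes\mathsf V$ forces $\mathsf V$ to be $\Lambda_2$-rational, by choosing an element of $\Lambda_1^*$ and contracting. I would then show that if $A\otimes B\sim A'\otimes B'$ in $\mathsf U\otimes\mathsf V$, there is $c\in\R^\times$ with $A'\sim cA$ and $B'\sim c^{-1}B$. For this, contract with dual vectors of $A^*$ to see that $B'$ lies in the $\q$-span of $B$ up to a scalar, and symmetrically for $A'$. After rescaling, we may therefore take $\Lambda_1\sim\Lambda_3\cap\mathsf U$ and $\Lambda_4\sim\Lambda_2\cap\mathsf V$.

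Now choose the bases. Pick a basis of $\mathsf C$ consisting of a basis of $\Lambda_1$ (spanning $\mathsf U$) extended by elements of $\Lambda_3$ to a basis of $\Lambda_3$ up to finite index. Pick a basis of $\mathsf R$ consisting of a basis of $\Lambda_4$ (spanning $\mathsf V$) extended by elements of $\Lambda_2$. In these coordinates $\Lambda_1\otimes\Lambda_2$ and $\Lambda_3\otimes\Lambda_4$ are both commensurable with the integer points of $W$ and $Z$ respectively, because $\Lambda_3\sim\q$-span of the chosen $\mathsf C$-basis, and similarly for $\Lambda_2$. Then $\Delta^*\cap W$ and $\Delta^*\cap Z$ are both commensurable with the integral points. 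Hence their sum, which has finite index in $\Delta^*\cap(W+Z)$ because $W+Z$ is rational and the sum is full rank, is commensurable with the integral points of $W+Z$. Dualizing finishes the argument.

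The main obstacle is the compatibility step on $\mathsf U\otimes\mathsf V$. One must check that both tensor decompositions restrict to tensor decompositions there, and that the scalar ambiguity can be absorbed. If $\mathsf U\otimes\mathsf V=0$, or one factor is trivial, no compatibility is needed and the bases can be chosen independently.
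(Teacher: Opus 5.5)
Your proposal is correct and follows essentially the same route as the paper. It shows that the two tensor splittings are compatible on the overlap $\mathsf U\otimes\mathsf V$, because commensurable tensor-product lattices have factors commensurable up to reciprocal scalars; chooses bases of $\mathsf C$ and $\mathsf R$ adapted to $\mathsf U$, $\mathsf V$ and the relevant factor lattices; and then dualizes from $\Delta^*\cap(W+Z)$, which is commensurable with $(\Delta^*\cap W)+(\Delta^*\cap Z)$, to $q_{W,Z}(\Delta)$. The only cosmetic difference is that you prove the reciprocal-scalar fact by contracting with dual lattice vectors, whereas the paper reads it off from ratios of entries of a rational Kronecker product.
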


\begin{proof}
Choose tensor-product lattices $L_{\mathsf U}\otimes L_{\mathsf R}$ and
$M_{\mathsf C}\otimes M_{\mathsf V}$ commensurable with $\Delta^*\cap W$ and
$\Delta^*\cap Z$, respectively. Here $L_{\mathsf U}<\mathsf U$,
$L_{\mathsf R}<\mathsf R$, $M_{\mathsf C}<\mathsf C$, and $M_{\mathsf V}<\mathsf V$ are
full lattices in the indicated spaces. Since
$W\cap Z=\mathsf U\otimes\mathsf V$ is $\Delta^*$-rational, $\Delta^*\cap(W\cap Z)$ is a
full lattice in $W\cap Z$. It is commensurable with each of the lattices
$$
L_{\mathsf U}\otimes(L_{\mathsf R}\cap\mathsf V)
\qquad\text{and}\qquad
(M_{\mathsf C}\cap\mathsf U)\otimes M_{\mathsf V}.
$$
In particular, $\mathsf V$ is rational with respect to $L_{\mathsf R}$, $\mathsf U$ is
rational with respect to $M_{\mathsf C}$, and the two displayed
product lattices are commensurable.

We use the elementary fact that if two full tensor-product lattices
$L_1\otimes L_2$ and $L_1'\otimes L_2'$ are commensurable, then there is a
scalar $c\ne0$ such that $L_1$ is commensurable with $cL_1'$ and
$L_2$ is commensurable with $c^{-1}L_2'$.
Equivalently, if a Kronecker product $A_1\otimes A_2$ of two invertible change-of-basis matrices has rational entries, then, after multiplying $A_1$ by a scalar and $A_2$ by the inverse scalar, both $A_1$ and $A_2$ have rational entries. Indeed, fix $(i_0,j_0)$ and $(k,l)$ with $(A_1)_{i_0j_0}(A_2)_{kl}\ne0$; then every ratio $(A_1)_{ij}/(A_1)_{i_0j_0}=\bigl((A_1)_{ij}(A_2)_{kl}\bigr)\big/\bigl((A_1)_{i_0j_0}(A_2)_{kl}\bigr)$ is a ratio of entries of $A_1\otimes A_2$, hence rational, so $A_1$ is a scalar multiple of a rational matrix. The same argument applies to $A_2$, with mutually inverse scalars modulo $\mathbb Q^\times$.

It follows that the lattice structures induced on $\mathsf U$ by $L_{\mathsf U}$ and
$M_{\mathsf C}\cap \mathsf U$ are homothetically commensurable.  Similarly the
lattice structures induced on $\mathsf V$ by $L_{\mathsf R}\cap\mathsf V$ and $M_{\mathsf V}$
are homothetically commensurable. Choose a basis of $\mathsf C$ adapted to
the rational subspace $\mathsf U$ and the lattice $M_{\mathsf C}$, and choose a
basis of $\mathsf R$ adapted to $\mathsf V$ and the lattice $L_{\mathsf R}$.
Rescaling one tensor-factor basis absorbs the common homothety. In these
coordinates, both $\Delta^*\cap W$ and $\Delta^*\cap Z$ are rational
lattices.

Since $W+Z$ is $\Delta^*$-rational,
$W^\perp\cap Z^\perp$ is $\Delta$-rational, so
$q_{W,Z}(\Delta)$ is a lattice. Its dual is
$$
q_{W,Z}(\Delta)^*=\Delta^*\cap(W+Z).
$$
This lattice is commensurable with
$(\Delta^*\cap W)+(\Delta^*\cap Z)$,
which is rational in the chosen coordinates. Hence
$q_{W,Z}(\Delta)$ is also rational.
\end{proof}

\begin{lemma}[Mixed-piece estimate]
\label{lem:mixed-isotropic-pieces}
Let $\Sigma_{\mathrm c}$ be a column piece and
$\Sigma_{\mathrm r}$ a row piece supplied by
\Cref{lem:finite-isotropic-piece-decomposition}. Then
$$
\#\{v\in\Sigma_{\mathrm c}\cap\Sigma_{\mathrm r}:\|v\|< T\}
\ll
T^{n^2-n-1}(\log T)^{3n}.
$$
\end{lemma}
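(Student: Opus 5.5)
We will reduce the count to a rank problem in the rational coordinates of \Cref{lem:compatible-split-coordinates}. Let $\Sigma_{\mathrm c}=\Sigma_j$ be attached to $W=\mathsf U\otimes\mathsf R$ with $k=\dim\mathsf U$, and $\Sigma_{\mathrm r}$ to $Z=\mathsf C\otimes\mathsf V$ with $l=\dim\mathsf V$. The hypotheses of \Cref{lem:compatible-split-coordinates} hold by the splitting assertions of \Cref{lem:finite-isotropic-piece-decomposition}, with $k=1$ or $l=1$ automatic. So $q_{W,Z}(\Lambda)$ is, after scaling, contained in the integral points of $\M_{k,l}\oplus\M_{k,n-l}\oplus\M_{n-k,l}$. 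In these adapted bases, $v$ has the block form $\begin{pmatrix}X&Y\\ Z'&*\end{pmatrix}$, where $X\in\M_{k,l}$, $Y\in\M_{k,n-l}$, $Z'\in\M_{n-k,l}$, and the block $*$ ranges over the fiber $W^\perp\cap Z^\perp$. Membership in $\Sigma_{\mathrm c}$ means that the $k\times n$ matrix $(X\;Y)$ has rank at most $k-1$. Membership in $\Sigma_{\mathrm r}$ means that the $n\times l$ matrix $\begin{pmatrix}X\\ Z'\end{pmatrix}$ has rank at most $l-1$. Since the fiber lattice $\Lambda\cap W^\perp\cap Z^\perp$ has rank $(n-k)(n-l)$, each admissible triple has $\ll T^{(n-k)(n-l)}$ lifts, by the same uniform fiberwise counting used in \Cref{lem:fibered-rank-count}. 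Two bookkeeping points: $\|v\|<T$ forces all blocks to have norm $O(T)$, and rational coordinates introduce only a bounded denominator, so we may overcount by integral matrices.

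It remains to count integral triples $(X,Y,Z')$ of norm $O(T)$ satisfying both rank conditions, and we aim for
\[
\ll T^{kn+ln-kl-n-1}(\log T)^{3n}.
\]
Multiplied by the fiber count $T^{(n-k)(n-l)}$, this gives $T^{n^2-n-1}(\log T)^{3n}$. We first sum over $X$, stratified by $\rho=\rank X\le\min(k,l)-1$ (or $X$ arbitrary if that bound is vacuous; handle by cases).

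\begin{enumerate}
\item Given $X$, \Cref{lem:rank-extension} with $m=k$, $q=l$, $\ell=n-l$ bounds the number of $Y$ with $\rank(X\;Y)\le k-1$. Since $n-l\ge 1$ and typically $n-l\ge k-\rho$, this is $\ll T^{(n-l)(k-1)}(\log T)^k$. In the regime $\ell<k-\rho$ it is instead $T^{k(n-l)}$, but then $\rho$ is small and the $X$-count pays for it.
\item Independently, the vertical version of \Cref{lem:rank-extension} bounds the number of $Z'$ by $\ll T^{(n-k)(l-1)}(\log T)^l$, with the analogous alternative.
\item The number of $X$ of rank $\rho$ is $\ll T^{\max(k,l)\rho}(\log T)^{\min(k,l)+1}$ by \Cref{cor:crude-bounded-rank}.
\end{enumerate}

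The main obstacle is the exponent arithmetic in the generic case. Taking $\rho=\min(k,l)-1$ with, say, $k\le l$, the total exponent is
\[
l(k-1)+(n-l)(k-1)+(n-k)(l-1)=n(k-1)+(n-k)(l-1),
\]
while the target is $kn+ln-kl-n-1$. The two differ by $k-l-1$ in our favour only when $k-l-1\le -1$. So in this worst stratum we gain exactly when $k\le l$; the case $k>l$ is symmetric after swapping roles.

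The degenerate regimes also need checking. When $\ell<m-\rank$, the saving has to come from $\rho$ being small. We must verify, case by case in $\rho$, that the exponent is always at most the target, i.e.\ a deficit of at least $1$. The $\log$ powers $k+l+\min(k,l)+1\le 3n$ are absorbed. If a crude count in some stratum leaves no saving, we will refine it by decomposing $X$ according to its primitive left and right kernels, as in the proof of \Cref{rational-lattice}, so that both rank conditions are imposed relative to a fixed kernel. Finally, the cases $k=1$ or $l=1$ force a zero row or column block, and follow directly from the same computation.
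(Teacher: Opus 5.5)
Your reduction is the paper's: compatible split coordinates, $O(T^{(n-k)(n-l)})$ lifts per triple, stratification by $\rho=\rank X$, and the inputs \Cref{cor:crude-bounded-rank} and \Cref{lem:rank-extension}. But the exponent bookkeeping is the whole content of this lemma, and as written it is partly wrong and unfinished.

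\emph{The top stratum.} For $\rho=\min(k,l)-1$ with $k\le l$, your total $n(k-1)+(n-k)(l-1)$ minus the target $kn+ln-kl-n-1$ equals $k-n+1$, not $k-l-1$. The $l$-dependence cancels, and since $k\le n-1$ this stratum is always fine. Equivalently, after adding the fiber the exponent is $n^2-2n+\min(k,l)\le n^2-n-1$. Because the exponent is increasing in $\rho$, this already covers every stratum in which both extension bounds of \Cref{lem:rank-extension} are the nontrivial ones.

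\emph{The degenerate regime.} You leave this to an unspecified case-by-case check and a possible kernel refinement. It closes with the tools you already have once you notice that the two thresholds coincide: $n-l<k-\rho$ and $n-k<l-\rho$ are both equivalent to $\rho<k+l-n$. So either both bounds are nontrivial (the case above), or both rank conditions are automatic and $Y,Z'$ are counted trivially. In the latter case, put $K:=\max(k,l)$. The exponent including the fiber is
\[
K\rho+k(n-l)+l(n-k)+(n-k)(n-l)=n^2-K\bigl(\min(k,l)-\rho\bigr)\le n^2-K(n-K+1)\le n^2-n-1 .
\]
Here $\rho\le k+l-n-1$ gives $\min(k,l)-\rho\ge n-K+1$, and the last step is $(K-1)(n-K)\ge 1$. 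This regime is nonempty only when $k+l\ge n+1$, which forces $2\le K\le n-1$. This is exactly the second case of the paper's proof, and no decomposition of $X$ by primitive kernels is needed.
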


\begin{proof}
Set $\mathsf C:=\mathbb R^n$ and $\mathsf R:=(\mathbb R^n)^*$. Let
$
W:=\mathsf U\otimes\mathsf R$ and
$Z:=\mathsf C\otimes\mathsf V$
be the $\Lambda^*$-rational subspaces used in the definitions of
$\Sigma_{\mathrm c}$ and $\Sigma_{\mathrm r}$, respectively, and put
$$
k:=\dim\mathsf U,
\qquad
\ell:=\dim\mathsf V.
$$
Thus $1\le k,\ell\le n-1$, and $\Lambda^*\cap W$ and
$\Lambda^*\cap Z$ are commensurable with tensor-product lattices. By
\Cref{lem:compatible-split-coordinates} applied with $\Delta=\Lambda$,  choose bases $e_1,\ldots,e_n$ of $\mathsf C$ and
$f_1,\ldots,f_n$ of $\mathsf R$ such that
$$
\mathsf U=\operatorname{span}\{e_1,\ldots,e_k\},
\qquad
\mathsf V=\operatorname{span}\{f_1,\ldots,f_\ell\}.
$$
For $k<i\le n$ and $\ell<j\le n$, let $\overline e_i$ and
$\overline f_j$ denote the classes of $e_i$ and $f_j$ in
$\mathsf C/\mathsf U$ and $\mathsf R/\mathsf V$, respectively. Under the identification
of the quotient in \Cref{lem:compatible-split-coordinates} with
$$
(\mathsf U\otimes\mathsf V)
\oplus
(\mathsf U\otimes(\mathsf R/\mathsf V))
\oplus
((\mathsf C/\mathsf U)\otimes\mathsf V),
$$
the three summands have respective bases
$$
\{e_i\otimes f_j:1\le i\le k,\ 1\le j\le\ell\},
$$
$$
\{e_i\otimes\overline f_j:1\le i\le k,\ \ell<j\le n\},
\qquad
\{\overline e_i\otimes f_j:k<i\le n,\ 1\le j\le\ell\}.
$$
With respect to their union, $q_{W,Z}(\Lambda)$ is a full-rank rational
lattice. After multiplying all coordinates by a fixed positive integer, we
may identify it with a sublattice of the corresponding integral lattice and
hence overcount it by integral triples.

The kernel $W^\perp\cap Z^\perp$ of $q_{W,Z}$ has dimension
$(n-k)(n-\ell)$. Consequently, each projected point has
$O(T^{(n-k)(n-\ell)})$ lifts in $\Lambda$ of norm at most $T$. Write the
three quotient coordinates as
$$
A\in\M_{k,\ell}(\mathbb Z),
\qquad
B\in\M_{k,n-\ell}(\mathbb Z),
\qquad
C\in\M_{n-k,\ell}(\mathbb Z).
$$
The two piece conditions become
\begin{equation}\label{eq:mixed-rank-conditions}
\rank\begin{pmatrix}A&B\end{pmatrix}\le k-1,
\qquad
\rank\begin{pmatrix}A\\ C\end{pmatrix}\le\ell-1.
\end{equation}
Interchanging the column and row pieces if necessary, assume $k\ge\ell$.

We stratify by $r:=\rank(A)$, where $0\le r\le\ell-1$. Applying
\Cref{cor:crude-bounded-rank} to $A^{\intercal}\in\M_{\ell,k}(\mathbb Z)$
gives
\begin{equation}\label{eq:A-rank-stratum}
\#\{A:\|A\|< T,\ \rank(A)=r\}
\ll
T^{kr}(\log T)^n.
\end{equation}

First suppose that $r\ge k+\ell-n$. Then $n-\ell\ge k-r$ and
$n-k\ge\ell-r$. For fixed $A$, apply \Cref{lem:rank-extension} to $(A,B)$, and
its transposed version to $(A,C)$ to obtain
$$\begin{aligned}
    \#\{B: \rank\begin{pmatrix}A&B\end{pmatrix}\le k-1\}
&\ll
T^{(n-\ell)(k-1)}(\log T)^n,\\\#\{C:\rank\begin{pmatrix}A\\ C\end{pmatrix}\leq\ell-1\}
&\ll
T^{(n-k)(\ell-1)}(\log T)^n.
\end{aligned}$$
Including the lifts in the omitted block, the rank-$r$ stratum contributes
at most
$$
T^{kr+(n-\ell)(k-1)+(n-k)(\ell-1)+(n-k)(n-\ell)}(\log T)^{3n}.
$$
Since $r\le\ell-1$, the exponent of $T$ is at most $n^2-2n+\ell
\le
n^2-n-1$.

Now suppose that $r<k+\ell-n$. Then the two rank conditions in
\eqref{eq:mixed-rank-conditions} are automatic once $A$ is fixed, because
$$
r+(n-\ell)\le k-1,
\qquad
r+(n-k)\le\ell-1.
$$
Thus $B$, $C$, and the omitted block contribute respectively
$$
O(T^{k(n-\ell)}),
\qquad
O(T^{\ell(n-k)}),
\qquad
O(T^{(n-k)(n-\ell)}).
$$
Together with \eqref{eq:A-rank-stratum}, the exponent of $T$ is
$$
kr+k(n-\ell)+\ell(n-k)+(n-k)(n-\ell)
=
n^2-k(\ell-r).
$$
Since $r\le k+\ell-n-1$, we have $\ell-r\ge n-k+1$. Hence
$$
n^2-k(\ell-r)
\le
n^2-k(n-k+1)
\le
n^2-n-1.
$$
The last inequality follows from
$$
k(n-k+1)-(n+1)=(k-1)(n-k)-1\ge0;
$$
this second case can occur only for $2\le k\le n-1$.

Summing over $0\le r\le\ell-1$ proves the lemma.
\end{proof}

\subsection{Proof of \Cref{thm:quasinullcontribution2}}

\begin{proof}[Proof of \Cref{thm:quasinullcontribution2}]
The column and row asymptotics are given in
\Cref{prop:column-row-isotropic-asymptotic}. We claim that the intersection of the two contributions is
negligible.

Apply the finite piece decomposition in both the column and row cases. The
intersection of a lower-dimensional piece with any other set contributes
only $O(T^{n(n-2)})$. There are only finitely many pairs of main column
and row pieces, and \Cref{lem:mixed-isotropic-pieces} gives
$$
\#\left\{
v\in\Lambda_{\operatorname{iso}}^{(1)}\cap\Lambda_{\operatorname{iso}}^{(2)}:
\|v\|< T
\right\}
\ll
T^{n^2-n-1}(\log T)^{3n}
+
T^{n(n-2)}.
$$
In particular,
\begin{equation}\label{eq:column-row-overlap-negligible}
\#\left\{
v\in\Lambda_{\operatorname{iso}}^{(1)}\cap\Lambda_{\operatorname{iso}}^{(2)}:
\|v\|< T
\right\}
=
o(T^{n(n-1)}).
\end{equation}

Using \eqref{eq:isotropic-union-column-row} and inclusion--exclusion, we
therefore obtain
$$
\begin{aligned}
\#\{v\in\Lambda_{\operatorname{iso}}:\|v\|< T\}
&=
\#\{v\in\Lambda_{\operatorname{iso}}^{(1)}:\|v\|< T\}
+
\#\{v\in\Lambda_{\operatorname{iso}}^{(2)}:\|v\|< T\}\\
&\qquad-
\#\{v\in\Lambda_{\operatorname{iso}}^{(1)}\cap\Lambda_{\operatorname{iso}}^{(2)}:
\|v\|< T\}\\
&=
\left(
 c_{\Lambda,\mathrm{col}}^{\mathrm{iso}}
 +c_{\Lambda,\mathrm{row}}^{\mathrm{iso}}
\right)T^{n(n-1)}
+o(T^{n(n-1)}).
\end{aligned}
$$
Thus the limit in \Cref{thm:quasinullcontribution2} exists, with
$$
c_{\Lambda,\mathrm{iso}}
=
c_{\Lambda,\mathrm{col}}^{\mathrm{iso}}
+c_{\Lambda,\mathrm{row}}^{\mathrm{iso}}
\ge0.
$$

If $\Lambda$ contains a
rank-$n(n-1)$ submodule on which the determinant vanishes identically,
then its real span is an $n(n-1)$-dimensional linear subspace of
$\{\det=0\}$, and is therefore column- or row-isotropic by Dieudonn\'e's
theorem on maximal singular subspaces of $\M_n (\br)$ \cite{dieudonne:1948}.
Being spanned by lattice vectors, this span is $\Lambda$-rational, so the
submodule is contained in $\Lambda_{\operatorname{iso}}$. Standard
lattice-point counting in this span then gives
$c_{\Lambda,\mathrm{iso}}>0$. 

Conversely, suppose that $c_{\Lambda,\mathrm{iso}}>0$. Since the
lower-dimensional pieces and all relevant overlaps contribute
$o(T^{n(n-1)})$, the finite-piece decomposition in
\Cref{lem:finite-isotropic-piece-decomposition} implies that at least one
main column- or row-isotropic piece has a positive $T^{n(n-1)}$-coefficient. By the construction of
these pieces, there is a $\Lambda$-rational column- or row-isotropic
subspace $V$ of dimension $n(n-1)$. Hence
$\Lambda\cap V$ is a rank-$n(n-1)$ submodule of $\Lambda$ on
which the determinant vanishes identically. This completes the proof.
\end{proof}

\section{Fiber integrals and integral identities}\label{s:fiberintegral}
In this section, we define the fiber kernel $J_f$, show that 
every continuous compactly supported function on $B_0^+\times\mathbb R$ arises this way, and derive the
integral identities needed for counting.

For $v\in \M_n(\R)$, we write
\be\label{vzero}
v=\begin{pmatrix}
v^0 & v^+\\
v^- & v_{nn}
\end{pmatrix},
\ee
where $v^0\in \M_{n-1}(\R)$, $v^+\in \R^{n-1}$, and
$v^-\in (\R^{n-1})^{\intercal}$.

Let
$$
\M_n^+=\{v\in \M_n(\R): \det v^0>0\}.
$$  For a test function
$f\in C_c(\M_n^+)$, the main object is the fiber integral
$J_f(\mathsf r,\zeta)$ defined in \Cref{fiberi}, where $\mathsf r\in B_0^+$ records the $(n-1)$-dimensional
singular-value data and $\zeta$ records the determinant.

By the singular value decomposition, for each $v\in \M_n(\R)$, there is a unique diagonal element $$b(v):=\diag\bigl(e^{\kappa_1(v)},\dots,e^{\kappa_{n-1}(v)},\operatorname{sgn}(\det v)e^{\kappa_n(v)}\bigr)
$$ with $\kappa_1(v)\ge \cdots \ge \kappa_n(v)\ge -\infty$ such that
$$
v=k\cdot b(v) \quad\text{for some $k\in K$.}
$$
For every $v\in\M_n(\mathbb R)$ with
$\operatorname{rank}(v)\ge n-1$, define
$$
b^0(v)
:=
\diag(e^{\kappa_1(v)},\dots,e^{\kappa_{n-1}(v)})
\in\GL_{n-1}^+(\mathbb R),
$$
where $\kappa_1(v)\ge \cdots \ge \kappa_{n-1}(v) >-\infty$. Since $\det v^0>0$ implies $\rank(v)\ge n-1$, this defines
$b^0(v)$ for every $v\in\M_n^+$.

\subsection{The fiber integrals $J_f$ and approximation of $K$-averages}
Let  \begin{align*} B_0^+ &=\{\mathsf r:=\diag (r_1, \ldots, r_{n-1})\in \GL_{n-1}^+(\br): r_1\ge \cdots \ge r_{n-1}>0\}\\ &=\{b^0(v): v\in \M_n^+\}.\end{align*}
Let
$
\mathsf M:=\SO(n-1)\times\SO(n-1)
$
act on $\GL_{n-1}^+(\mathbb R)$ by
$$
(m_1,m_2)\cdot A=m_1 Am_2^{\intercal}.
$$
By the singular-value decomposition, for any
$A\in \GL_{n-1}^+(\R)$, there exists a unique $\r\in B_0^+$ such that $A\in \mathsf M\cdot \r$.
Thus any continuous function on  $B_0^+$ determines a unique continuous $\mathsf M$-invariant function on $\GL_{n-1}^+(\R)$, and conversely. We denote by $dm$ the probability Haar measure on $\mathsf M$.

\begin{definition}\label{fiberi} Given $f\in C_c({\M_n^+})$,  we define the associated fiber integral function
$$J_f: B_0^+ \times \R\to \R$$ as follows: for any $ (\r ,\zeta)\in \bp\times  \R$,
$$
J_f(\r ,\zeta)
:=
\frac{1}{\det(\r)^2}
\int_{\R^{n-1}\times \R^{n-1}}\int_{\mathsf M}
f\begin{pmatrix}
m\cdot \r & x_2\\
x_1^{\intercal} & x_{nn}
\end{pmatrix}
\, dm \, dx_1\,dx_2,
$$
where $x_{nn}=x_{nn}(\r,\zeta, m,x_1,x_2)$ is determined by the condition
$$
\det\begin{pmatrix}
m\cdot \r & x_2\\
x_1^{\intercal} & x_{nn}
\end{pmatrix}
=\zeta.
$$
\end{definition}
We may consider $J_f$ as a function on $\GL_{n-1}^+(\R)\times \br$:
$$J_f(A,\zeta):=J_f(\r,\zeta),$$ where $\r\in B_0^+$ is the unique element such that $A\in \mathsf M\cdot\r$. The resulting function is $\mathsf M$-invariant and belongs to
$C_c(\GL_{n-1}^+(\mathbb R)\times\mathbb R)$. Indeed, compact support
follows from the compactness of $\operatorname{supp}(f)\subset\M_n^+$, while continuity
follows from the explicit formula for $x_{nn}$ together with dominated
convergence.

\begin{lemma}\label{lem:Jf-surjective}
For any $\mathsf M$-invariant function
$h\in C_c(\GL_{n-1}^+(\br)\times \R)$, there exists
$f\in C_c(\M_n^+)$ such that
$$
J_f(A,\zeta)=h(A,\zeta)
\qquad
\text{for all }(A,\zeta)\in \GL_{n-1}^+(\br)\times \R.
$$
\end{lemma}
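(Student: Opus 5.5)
The plan is to build $f$ as a product of $h$ with a bump function in the remaining coordinates, and to use a change of variables that turns $x_{nn}$ into the determinant value $\zeta$. For $v\in\M_n^+$ written as in \eqref{vzero}, the Schur complement formula gives
$$
\det v=\det(v^0)\bigl(v_{nn}-v^-(v^0)^{-1}v^+\bigr).
$$
So when the other entries are fixed, $\det v$ is an affine function of $v_{nn}$ with slope $\det(v^0)>0$. This shows that the map
$$
(v^0,v^+,v^-,v_{nn})\mapsto(v^0,v^+,v^-,\det v)
$$
is a diffeomorphism of $\M_n^+$ onto $\GL_{n-1}^+(\mathbb R)\times\mathbb R^{n-1}\times\mathbb R^{n-1}\times\mathbb R$. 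It also shows that $x_{nn}$ in \Cref{fiberi} is uniquely determined and depends continuously on all the data.

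Fix $\rho\in C_c(\mathbb R^{n-1}\times\mathbb R^{n-1})$ with $\rho\ge0$ and $\int\rho=1$. Then define
$$
f(v):=\det(v^0)^2\,h\bigl(v^0,\det v\bigr)\,\rho\bigl(v^-{}^{\intercal},v^+\bigr)\qquad(v\in\M_n^+).
$$
It remains to check three things.

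\emph{Continuity.} The factors $v^0\mapsto\det(v^0)^2$, $h$, and $\rho$ are continuous, and $\det v$ is a polynomial in $v$, so $f$ is continuous.

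\emph{Compact support in $\M_n^+$.} The support of $h$ lies in $K_1\times[-Z,Z]$ for some compact $K_1\subset\GL_{n-1}^+(\mathbb R)$. On $K_1$, $\det v^0$ is bounded below by a positive constant and $(v^0)^{-1}$ is bounded. The support of $\rho$ bounds $v^\pm$. With $|\det v|\le Z$, the Schur complement formula then bounds $v_{nn}$. Hence $\operatorname{supp} f$ is a compact subset of $\M_n^+$; it stays inside $\M_n^+$ because $\det v^0$ is bounded away from $0$ on it.

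\emph{The identity $J_f=h$.} Plug $v^0=m\cdot\mathsf r$, $v^-=x_1^{\intercal}$, $v^+=x_2$ and $\det v=\zeta$ into the definition of $J_f$. This gives
$$
J_f(\mathsf r,\zeta)=\frac{1}{\det(\mathsf r)^2}\int\!\!\int_{\mathsf M}\det(m\cdot\mathsf r)^2\,h(m\cdot\mathsf r,\zeta)\,\rho(x_1,x_2)\,dm\,dx_1\,dx_2 .
$$
Since $m\in\SO(n-1)\times\SO(n-1)$, we have $\det(m\cdot\mathsf r)=\det\mathsf r$. Since $h$ is $\mathsf M$-invariant, $h(m\cdot\mathsf r,\zeta)=h(\mathsf r,\zeta)$. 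The integrand therefore reduces to $\det(\mathsf r)^2h(\mathsf r,\zeta)\rho(x_1,x_2)$, and integrating gives $J_f(\mathsf r,\zeta)=h(\mathsf r,\zeta)$. Both sides are $\mathsf M$-invariant functions of $A$, so $J_f(A,\zeta)=h(A,\zeta)$ for every $A\in\GL_{n-1}^+(\mathbb R)$.

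The only point needing care is the compact-support verification in $\M_n^+$. It is handled by the lower bound on $\det v^0$ over $K_1$, so I expect no real obstacle.
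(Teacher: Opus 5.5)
Your proposal is correct and is the same construction as the paper's. Because $h$ is $\mathsf M$-invariant and $\det(m\cdot\mathsf r)=\det\mathsf r$, your $f(v)=\det(v^0)^2\,h(v^0,\det v)\,\rho\bigl((v^-)^{\intercal},v^+\bigr)$ is exactly the paper's $f_h$ written in the coordinates $v^0=m\cdot\mathsf r$, and $J_f=h$ follows by the same one-line computation; your Schur-complement check that the support is a compact subset of $\M_n^+$ spells out a step the paper only asserts.
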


\begin{proof}
Choose a nonnegative $\psi\in C_c(\R^{n-1}\times\R^{n-1})$ such that
$$
\int_{\R^{n-1}\times\R^{n-1}}\psi(x_1,x_2)\,dx_1\,dx_2=1.
$$
For an $\mathsf M$-invariant function
$h\in C_c(\GL_{n-1}^+(\br)\times \R)$, define $f_h:\M_n^+\to \br$ by
\be\label{fh}
f_h\begin{pmatrix}
m\cdot \r & x_2\\
x_1^{\intercal} & x_{nn}
\end{pmatrix}
:=
\det(\r)^2\,h\!\left(\r,
\det\begin{pmatrix}
m\cdot \r & x_2\\
x_1^{\intercal} & x_{nn}
\end{pmatrix}\right)\psi(x_1,x_2)
\ee where $\r\in B_0^+$, $m\in\mathsf M$, $x_1, x_2\in \br^{n-1}$.
Then $f_h\in C_c(\M_n^+)$, and the definition of $J_{f_h}$ and the normalization of $\psi$ give
$J_{f_h}(\r,\zeta)=h(\r,\zeta)$ for all $(\r, \zeta)\in B_0^+\times \br$. This proves the claim.
\end{proof}

For the remainder of this section and throughout the following counting section,
we normalize the fixed $K$-invariant norm so that
$
\|E_{nn}\|=1.
$
We set
$$
\mathbb S^{N-1}
=
\mathbb S(\M_n(\mathbb R))
=
\{v\in\M_n(\mathbb R):\|v\|=1\}.
$$

We denote by
$\widehat v$ the adjugate matrix $\operatorname{adj}(v)$, that is, the
transpose of the cofactor matrix of $v$. For $v\in\M_n(\R)$,
$$
\widehat v\ne0
\quad\Longleftrightarrow\quad
\rank(v)\ge n-1.
$$
Thus $\widehat v/\|\widehat v\|$ is defined precisely when
$\rank(v)\ge n-1$.
 Note that the locus $\{v\in\M_n(\mathbb R):\operatorname{rank}(v)\le n-2\}
=
\{v \in \M_n(\mathbb R):\widehat v=0\}$ has Lebesgue measure zero. Throughout this section, any integrand
involving $b^0(v)$ or
$\widehat v/\|\widehat v\|$ is understood to be zero on the rank-at-most-$(n-2)$ locus.

We compare the $K$-average of $f(a_tk\cdot v)$ with the
corresponding averaged kernel $J_f$.
\begin{proposition}[Uniform asymptotic for $K$-averages]\label{jf}
Let $f\in C_c(\M_n^+)$ and
$\nu\in C(\mathbb S^{N-1})$. For every $\varepsilon>0$, there exist $t_0,T_0>0$ such that, for all $t\ge t_0$ and
$v\in\M_n(\mathbb R)$ satisfying $\|v\|\ge T_0$ and $\widehat v\ne0$,
\begin{multline*}
\left|
\frac{{\omega}_{n-1}^2}{2} \, e^{2n(n-1)t}
\int_K f(a_t k \cdot v)\,
\nu\bigl((k^{\intercal} \cdot E_{nn})^{\intercal}\bigr)\, dk \right. \\
\left.
- J_f\bigl( b^0(e^{-2t}v), \det(v)\bigr)\,
\nu\!\left(\frac{\widehat v}{\|\widehat v\|}\right)
\right| < \e.
\end{multline*}
Here $\omega_{n-1}$ denotes the surface area of 
$\mathbb S^{n-1}=\{x\in\mathbb R^n:\|x\|=1\}$.
\end{proposition}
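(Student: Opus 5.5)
We write $v=k_0 b(v) k_0'$ by the singular value decomposition and reduce the $K$-average to an explicit change of variables near the principal ray. With $k=(k_1,k_2)$, the action is $a_tk\cdot v=b_tk_1vk_2^{\intercal}b_t$. Since $f$ is supported in a fixed compact subset of $\M_n^+$, only those $k$ contribute for which $b_tk_1vk_2^{\intercal}b_t$ is bounded with top-left block bounded away from singular. Because $b_t$ contracts the first $n-1$ coordinates by $e^{-t}$ on each side and expands the last by $e^{(n-1)t}$, this forces $k_1vk_2^{\intercal}$ to have its $(n-1)\times(n-1)$ block of size $\asymp e^{2t}$, its off-diagonal blocks of size $O(e^{-(n-2)t})$, and its $(n,n)$ entry of size $O(e^{-2(n-1)t})$. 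Equivalently, the last row of $k_1$ and the last column of $k_2^{\intercal}$ must lie within $O(e^{-nt}\cdot)$ of the left and right singular directions attached to the smallest singular value of $v$. These are exactly the directions detected by $\widehat v=\operatorname{adj}(v)$, whose normalized form is the rank-one matrix $\pm u_nw_n^{\intercal}$ of the bottom singular vectors.

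The plan is to parametrize a neighborhood of these contributing $k$ by $\mathsf M\times(\R^{n-1}\times\R^{n-1})$. Write $k_1=m_1 \exp(Z_1)k_{0}^{-1}$ and similarly for $k_2$, where $Z_i$ lies in the complement $\mathfrak p\simeq\R^{n-1}$ of $\mathfrak{so}(n-1)$ in $\mathfrak{so}(n)$, and $m_i\in\SO(n-1)$. The Haar measure $dk$ then factors as $dm\,dZ_1dZ_2$ times a Jacobian which equals $(\omega_{n-1})^{-2}(1+O(|Z|))$ near the identity coset. This normalization is where the constant $\omega_{n-1}^2$ comes from, and the factor $\tfrac12$ comes from the two sign choices $\pm$ of the bottom singular direction pairs, which must be counted together. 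We then rescale $Z_i=e^{-nt}\,\r^{-1}x_i$ (roughly). The top block of $a_tk\cdot v$ becomes $m\cdot(e^{-2t}b^0(v))+o(1)$. The off-diagonal blocks become $x_1^{\intercal},x_2$ up to $o(1)$, using that the $(n-1)$ large singular values are multiplied by $e^{-2t}$ while $Z$ moves them into the last row and column at rate $e^{nt}$. The $(n,n)$-entry is then pinned by the exact invariance $\det(a_tk\cdot v)=\det v$. The Jacobian of $x\mapsto Z$ produces $e^{-2n(n-1)t}\det(e^{-2t}b^0(v))^{-2}$, which matches the prefactor $e^{2n(n-1)t}$ and the $\det(\r)^{-2}$ in the definition of $J_f$. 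Finally, on the contributing set the weight $\nu((k^{\intercal}\cdot E_{nn})^{\intercal})$ equals $\nu$ of the rank-one matrix built from the last rows of $k_1,k_2$, which is $\nu(\widehat v/\|\widehat v\|)+o(1)$ by uniform continuity of $\nu$.

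The main obstacle is uniformity: the estimate must hold for all $v$ with $\|v\|\ge T_0$, not for a fixed $v$, including $v$ whose $n$-th singular value is tiny or zero and $v$ whose smaller top singular values are not large. We handle this via compact support. If $b^0(e^{-2t}v)$ leaves a fixed compact subset of $B_0^+$, then both sides vanish, since $J_f$ has compact support and the $K$-integral vanishes by the first-paragraph localization. Otherwise all of $e^{\kappa_1},\dots,e^{\kappa_{n-1}}\asymp e^{2t}$, so the error terms in the linearization are $O(e^{-ct})$ uniformly. The condition $\|v\|\ge T_0$ combined with $t\ge t_0$ is used only to keep $\zeta=\det v$ in a bounded range where $f$ is supported. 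Dominated convergence on the compact $(m,x)$-domain, with $f$ uniformly continuous, then gives the $\varepsilon$-closeness. I expect the delicate point to be justifying that the second-order terms in $\exp(Z)$ do not affect the $(n,n)$-entry; we sidestep this by solving for that entry from the determinant instead of expanding it.
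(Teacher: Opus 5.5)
Your proposal is correct and follows essentially the paper's route: reduce to $b(v)$; localize $k$ to the two same-sign components where $k_1^{\intercal}e_n$ and $k_2^{\intercal}e_n$ lie within $O(e^{-nt})$ of the bottom singular directions; use $\mathsf M\times\mathbb R^{n-1}\times\mathbb R^{n-1}$ coordinates with Haar density $\omega_{n-1}^{-2}$; rescale by $e^{nt}\mathsf r$ to produce $e^{-2n(n-1)t}\det(\mathsf r)^{-2}$; and take $\tfrac12$ from the two components (recovering the $(n,n)$-entry from $\det(a_tk\cdot v)=\det v$, rather than by the paper's explicit block multiplication, is a harmless streamlining). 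When you write it up, fix three minor slips: the normalized adjugate is approximately $w_nu_n^{\intercal}$, not $u_nw_n^{\intercal}$, and is exactly rank one only when $\det v=0$; the mismatched-sign components are excluded because $\det u^0>0$ on $\operatorname{supp}(f)$; and the bound on $\det v$ comes from $\operatorname{supp}(f)$ together with $\det(a_tk\cdot v)=\det v$, not from $\|v\|\ge T_0$ (indeed $T_0=1$ suffices).
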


\begin{proof}
Choose $k_v\in K$ such that $k_v\cdot b(v)=v$.
Substituting $kk_v^{-1}$ for the original integration variable and
using Haar invariance, we obtain
\begin{align}
&\int_K f(a_tk\cdot v)\,
\nu\bigl((k^{\intercal}\cdot E_{nn})^{\intercal}\bigr)\,dk \notag\\
&\qquad=
\int_K f(a_tk\cdot b(v))\,
\nu\bigl(((kk_v^{-1})^{\intercal}\cdot E_{nn})^{\intercal}\bigr)\,dk.
\label{eq:jf-reduced-integral}
\end{align}

\smallskip
\noindent\emph{Uniform bounds from the support of $f$.}
Compactness of $\operatorname{supp}(f)$ gives $c,C,D>0$ such that for every $u\in\operatorname{supp}(f)$,
$$
c\le \sigma_{n-1}(u^0)\le \sigma_1(u^0)\le C,
\qquad
\|u\|\le C,
\qquad
|\det u|\le D.
$$
 We claim that, for all
sufficiently large $t$, each of the conditions
$$
f(a_tk\cdot b(v))\ne0\quad\text{for some }k\in K,
\qquad
J_{|f|}\bigl(b^0(e^{-2t}v),\det v\bigr)\ne0
$$
implies
\begin{equation}\label{eq:jf-uniform-support}
\frac c2\le e^{-2t}e^{\kappa_{n-1}(v)}
\le\cdots\le e^{-2t}e^{\kappa_1(v)}\le 2C,
\qquad
|\det v|\le D.
\end{equation}

Suppose first that $u=a_tk\cdot b(v)\in\operatorname{supp}(f)$. Since
$a_{-t}\cdot u=k\cdot b(v)$, the singular values of
$$
e^{-2t}(a_{-t}\cdot u)
=
\begin{pmatrix}
u^0&e^{-nt}u^+\\
e^{-nt}u^-&e^{-2nt}u_{nn}
\end{pmatrix}
$$
are $e^{-2t}e^{\kappa_i(v)}$. The displayed matrix converges uniformly,
for $u\in\operatorname{supp}(f)$, to $\operatorname{diag}(u^0,0)$.
Since
$$
\left|e^{-2t}e^{\kappa_i(v)}-\sigma_i(u^0)\right|
\ll_f e^{-nt}
\qquad(1\le i\le n-1)
$$
and
$
c\le\sigma_{n-1}(u^0)\le\sigma_1(u^0)\le C,
$
this gives the first part of \eqref{eq:jf-uniform-support} for all
sufficiently large $t$; the second follows from $\det v=\det u$.

If instead
$J_{|f|}\bigl(b^0(e^{-2t}v),\det v\bigr)\ne0$, then there exist
$m=(m_1,m_2)\in\mathsf M$ and $x^\pm\in\mathbb R^{n-1}$ such that
$$
f\!\begin{pmatrix}
m_1b^0(e^{-2t}v)m_2^{\intercal}&x^+\\
(x^-)^{\intercal}&x_{nn}
\end{pmatrix}
\ne0.
$$
The singular values of the upper-left block are the diagonal entries of
$b^0(e^{-2t}v)$, and the determinant of the full matrix is $\det v$.
This again gives \eqref{eq:jf-uniform-support}. It also shows that
$x^-$ and $x^+$ range over a fixed bounded set whenever the displayed
matrix belongs to $\operatorname{supp}(f)$.

If neither support condition holds, then the $K$-integrand vanishes
identically and $J_{|f|}=0$, hence also $J_f=0$. Thus both sides of the
desired asymptotic vanish. We may therefore assume that at least one
support condition holds; all subsequent estimates are uniform on this
set.

We next localize $k$ under the condition
$f(a_tk\cdot b(v))\ne0$. If
$u=a_tk\cdot b(v)\in\operatorname{supp}(f)$, covariance of the
adjugate gives
$$
\widehat{k\cdot b(v)}=b_t\widehat u\,b_t.
$$
Here $(\widehat u)_{nn}=\det(u^0)$ is bounded away from zero, whereas
the largest scaling factor of any other entry of
$b_t\widehat u\,b_t$ is $e^{(n-2)t}$. Consequently,
\begin{equation}\label{eq:jf-adj-kb}
\left\|
\frac{\widehat{k\cdot b(v)}}{\|\widehat{k\cdot b(v)}\|}-E_{nn}
\right\|
=O_f(e^{-nt}).
\end{equation}

Write
$$
b^0(e^{-2t}v)=\diag(r_1,\ldots,r_{n-1}).
$$
Then
$$
b(v)=\diag\left(
e^{2t}r_1,\ldots,e^{2t}r_{n-1},
\frac{\det v}{e^{2(n-1)t}r_1\cdots r_{n-1}}
\right).
$$
This includes the rank-$(n-1)$ case, in which $\det v=0$. By
\eqref{eq:jf-uniform-support},
\begin{equation}\label{eq:jf-adj-b}
\left\|
\frac{\widehat{b(v)}}{\|\widehat{b(v)}\|}-E_{nn}
\right\|
=O_f(e^{-2nt}).
\end{equation}
Indeed, the $(n, n)$-entry of $\widehat{b(v)}$ is
$e^{2(n-1)t}r_1\cdots r_{n-1}$, whereas its $ii$-entry is
$\det v/(e^{2t}r_i)$ for $i<n$.

Adjugate covariance also gives
$
k_2^{\intercal}\widehat{k\cdot b(v)}k_1=\widehat{b(v)}.
$
Therefore, \eqref{eq:jf-adj-kb} and \eqref{eq:jf-adj-b} imply
\begin{equation}\label{eq:jf-k-localization}
\|k_2^{\intercal}E_{nn}k_1-E_{nn}\|=O_f(e^{-nt}).
\end{equation}
Thus $k_1^{\intercal}e_n$ and $k_2^{\intercal}e_n$ are both
$O_f(e^{-nt})$-close to $e_n$, or both $O_f(e^{-nt})$-close to
$-e_n$. Hence the integrand on the right-hand side of
\eqref{eq:jf-reduced-integral} can be nonzero only when $k$ belongs to one of these two
neighborhoods.

The same calculation controls the $\nu$-factor. Write
$kk_v^{-1}=(q_1,q_2)$. Since
$k\cdot b(v)=(kk_v^{-1})\cdot v$, adjugate covariance gives
$$
q_2^{\intercal}
\frac{\widehat{k\cdot b(v)}}{\|\widehat{k\cdot b(v)}\|}
q_1
=
\frac{\widehat v}{\|\widehat v\|}.
$$
Moreover,
$
((kk_v^{-1})^{\intercal}\cdot E_{nn})^{\intercal}
=q_2^{\intercal}E_{nn}q_1.
$
Hence, by uniform continuity of $\nu$ and \eqref{eq:jf-adj-kb},
\begin{equation}\label{eq:jf-nu-uniform}
\nu\bigl(((kk_v^{-1})^{\intercal}\cdot E_{nn})^{\intercal}\bigr)
=
\nu\!\left(\frac{\widehat v}{\|\widehat v\|}\right)
+o_{f,\nu}(1)
\end{equation}
uniformly whenever $f(a_tk\cdot b(v))\ne0$.

\smallskip
\noindent\emph{Coordinates near $k_i^{\intercal}e_n=e_n$.}
For $y\in\mathbb R^{n-1}$ with $\|y\|<1$, set
$$
c(y):=(1-\|y\|^2)^{1/2},
\;
C(y):=I_{n-1}-\frac{yy^{\intercal}}{1+c(y)},\;
\sigma(y):=
\begin{pmatrix}
C(y)&-y\\
y^{\intercal}&c(y)
\end{pmatrix}
\in\SO(n).
$$
Thus
$$
\sigma(y)^{\intercal}e_n=(y,c(y)),
\qquad
\sigma(y)e_n=(-y,c(y)).
$$
Every $k=(k_1,k_2)\in K$ for which both
$k_i^{\intercal}e_n$ are sufficiently close to $e_n$ can be written uniquely as
\begin{equation}\label{eq:jf-hemisphere-coordinates}
k_1=m_1\sigma(y^-),
\qquad
k_2=m_2\sigma(y^+)^{\intercal},
\qquad
m_i\in\SO(n-1),
\end{equation}
with $y^\pm$ close to zero, where $\SO(n-1)$ is embedded in $\SO(n)$
via $m\mapsto\diag(m,1)$. In these coordinates, Haar probability
measure is
\begin{equation}\label{eq:jf-haar-hemisphere}
dk=\omega_{n-1}^{-2}
\frac{dm\,dy^-\,dy^+}{c(y^-)c(y^+)}.
\end{equation}

Throughout the remainder of the proof, write
$$
\mathsf r:=b^0(e^{-2t}v).
$$
Define
$$
x^-:=e^{nt}m_2\mathsf r y^-,
\qquad
x^+:=-e^{nt}m_1\mathsf r y^+.
$$
The corresponding fiber matrix is
\begin{equation}\label{eq:jf-fiber-matrix}
\Phi_{\mathsf r,\det v}(m,x^-,x^+)
:=
\begin{pmatrix}
m_1\mathsf r m_2^{\intercal}&x^+\\
(x^-)^{\intercal}&
\displaystyle
\frac{\det v}{\det\mathsf r}
+(x^-)^{\intercal}
(m_1\mathsf r m_2^{\intercal})^{-1}x^+
\end{pmatrix}.
\end{equation}
Its lower-right entry is the unique value that makes
 the determinant equal to
$\det v$.

If \eqref{eq:jf-uniform-support} holds and $x^\pm$ range over a fixed
bounded set, direct block multiplication gives
\begin{equation}\label{eq:jf-matrix-approximation}
\left\|a_tk\cdot b(v)-
\Phi_{\mathsf r,\det v}(m,x^-,x^+)\right\|
=O_f(e^{-2nt}).
\end{equation}
For completeness, set $c_\pm=c(y^\pm)$ and
$C_\pm=C(y^\pm)$. The four blocks of
$$
\diag(m_1^{\intercal},1)(a_tk\cdot b(v))\diag(m_2,1)
$$
are
$$
\begin{aligned}
&C_-\mathsf r C_+
-e^{-2nt}\frac{\det v}{\det\mathsf r}
y^-(y^+)^{\intercal},\\
&C_-m_1^{\intercal}x^+
-e^{-nt}\frac{\det v}{\det\mathsf r}y^-c_+,\\
&(m_2^{\intercal}x^-)^{\intercal}C_+
+e^{-nt}\frac{\det v}{\det\mathsf r}c_-(y^+)^{\intercal},\\
&(m_2^{\intercal}x^-)^{\intercal}\mathsf r^{-1}
m_1^{\intercal}x^+
+c_-c_+\frac{\det v}{\det\mathsf r}.
\end{aligned}
$$
Since $y^\pm=O_f(e^{-nt})$, these formulas prove
\eqref{eq:jf-matrix-approximation}. They remain valid when
$\rank(v)=n-1$, in which case $\det v=0$.

We now compare the supports of the two integrands. If
$$
\Phi_{\mathsf r,\det v}(m,x^-,x^+)
\in\operatorname{supp}(f),
$$
then $x^\pm$ lie in a fixed bounded set. For $x^\pm$ in this bounded
set, define
$$
y^-=e^{-nt}\mathsf r^{-1}m_2^{\intercal}x^-,
\qquad
y^+=-e^{-nt}\mathsf r^{-1}m_1^{\intercal}x^+.
$$
By \eqref{eq:jf-uniform-support}, $\|y^\pm\|<1$ for all sufficiently
large $t$, uniformly in $m$ and in $x^\pm$ in this bounded set. Thus
\eqref{eq:jf-hemisphere-coordinates} defines $k\in K$, and
\eqref{eq:jf-matrix-approximation} holds. Conversely, if
$f(a_tk\cdot b(v))\ne0$ and both $k_i^{\intercal}e_n$ are
$O_f(e^{-nt})$-close to $e_n$, then $y^\pm=O_f(e^{-nt})$, and the
definitions of $x^\pm$, together with \eqref{eq:jf-uniform-support},
show that $x^\pm$ lie in a fixed bounded set. Consequently, there is
$R>0$, independent of $t$ and $v$, such that both integrands vanish
outside
\begin{equation}\label{eq:jf-common-domain}
m\in\mathsf M,
\qquad
\|x^-\|,\|x^+\|\le R.
\end{equation}
For all sufficiently large $t$, the formulas for $y^\pm$ satisfy
$\|y^\pm\|<1$ throughout \eqref{eq:jf-common-domain}. Hence
the coordinate map \eqref{eq:jf-hemisphere-coordinates} is defined wherever either integrand can be nonzero.

The change of variables has Jacobian
$$
dx^-\,dx^+
=e^{2n(n-1)t}\det(\mathsf r)^2\,dy^-\,dy^+.
$$
Together with \eqref{eq:jf-haar-hemisphere}, this gives
\begin{equation}\label{eq:jf-final-jacobian}
dk=\omega_{n-1}^{-2}e^{-2n(n-1)t}
\frac{dm\,dx^-\,dx^+}
{\det(\mathsf r)^2c(y^-)c(y^+)}.
\end{equation}
On the fixed domain \eqref{eq:jf-common-domain},
$c(y^-)c(y^+)=1+O_f(e^{-2nt})$. Therefore
\eqref{eq:jf-matrix-approximation},
\eqref{eq:jf-common-domain}, uniform continuity of the zero extension
of $f$, and \eqref{eq:jf-final-jacobian} give, uniformly in $v$,
\begin{align}
&\omega_{n-1}^2e^{2n(n-1)t}
\int_{K^+}f(a_tk\cdot b(v))\,dk \notag\\
&\quad=
\frac{1}{\det(\mathsf r)^2}
\int_{\mathbb R^{n-1}\times\mathbb R^{n-1}}
\int_{\mathsf M}
f\bigl(\Phi_{\mathsf r,\det v}(m,x^-,x^+)\bigr)
\,dm\,dx^-\,dx^++o_f(1)\notag\\
&\quad=J_f(\mathsf r,\det v)+o_f(1),
\label{eq:jf-positive-component}
\end{align}
where
$$
K^+:=\left\{k=(k_1,k_2)\in K:
\langle k_1^{\intercal}e_n,e_n\rangle>0,
\ \langle k_2^{\intercal}e_n,e_n\rangle>0
\right\}.
$$
By \eqref{eq:jf-k-localization}, if $k\in K^+$ and
$f(a_tk\cdot b(v))\ne0$, then $k$ lies in the coordinate neighborhood
\eqref{eq:jf-hemisphere-coordinates} for all sufficiently large $t$.
The element
$
\left(
\diag(-1,I_{n-2},-1),
\diag(-1,I_{n-2},-1)
\right)
$  fixes $b(v)$, and the right multiplication by this element
interchanges the positive and negative components.  
The contributions from these two components are therefore equal, while
\eqref{eq:jf-k-localization} excludes all other contributions. Hence
\eqref{eq:jf-positive-component} gives
$$
\frac{\omega_{n-1}^2}{2}e^{2n(n-1)t}
\int_K f(a_tk\cdot b(v))\,dk
=
J_f\bigl(b^0(e^{-2t}v),\det v\bigr)+o_f(1).
$$
Applying the same estimate to $|f|$ shows that
$$
\int_K |f(a_tk\cdot b(v))|\,dk
\ll_f e^{-2n(n-1)t}
$$
uniformly under the support conditions above, since
$J_{|f|}(b^0(e^{-2t}v),\det v)$ is uniformly bounded there by
\eqref{eq:jf-uniform-support} and \eqref{eq:jf-common-domain}.
It follows from \eqref{eq:jf-nu-uniform} that
\begin{align*}
&\int_K f(a_tk\cdot b(v))\,
\nu\bigl(((kk_v^{-1})^{\intercal}\cdot E_{nn})^{\intercal}\bigr)\,dk\\
&\qquad=
\nu\!\left(\frac{\widehat v}{\|\widehat v\|}\right)
\int_K f(a_tk\cdot b(v))\,dk
+o_{f,\nu}\bigl(e^{-2n(n-1)t}\bigr).
\end{align*}
Combining the last two displays with \eqref{eq:jf-reduced-integral}
gives
$$
\frac{\omega_{n-1}^2}{2}e^{2n(n-1)t}
\int_K f(a_tk\cdot v)\,
\nu\bigl((k^{\intercal}\cdot E_{nn})^{\intercal}\bigr)\,dk
=
J_f\bigl(b^0(e^{-2t}v),\det v\bigr)
\nu\!\left(\frac{\widehat v}{\|\widehat v\|}\right)
+o_{f,\nu}(1),
$$
where the error tends to zero uniformly in $v$. After increasing
$t_0$, we may therefore assume that the error is smaller than
$\varepsilon$. The argument is uniform for all $v$ with
$\widehat v\ne0$; so taking $T_0=1$ proves the proposition.
\end{proof}
\subsection{Integral identities for $J_f$}
We now turn to the integral identities satisfied by $J_f$.
\begin{proposition}\label{emm3.9}
Let $f\in C_c({\M_n^+})$ and $\nu\in C(\S^{N-1})$ be nonnegative.
Then
$$
\lim_{t\to\infty}
e^{-2n(n-1)t}
\int_{\M_n(\R)}
{J}_f( b^0(e^{-2t} v),\det(v))\,
\,\nu\!\left(\frac{\widehat v}{\|\widehat v\|}\right)\,dv
$$
$$
=
\frac{\omega_{n-1}^2}{2}
\left(\int_{\M_n(\R)} f(v)\,dv\right)
\left(\int_K \nu\bigl((k^{\intercal}\!\cdot E_{nn})^{\intercal}\bigr)\,dk\right).
$$
\end{proposition}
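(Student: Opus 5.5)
The most direct route is to integrate the pointwise asymptotic of \Cref{jf} over $v$ and then use Fubini together with the invariance of Lebesgue measure under $K$ and under $a_t$. Set
$$
I(t):=\int_{\M_n(\R)}\int_K f(a_tk\cdot v)\,\nu\bigl((k^{\intercal}\cdot E_{nn})^{\intercal}\bigr)\,dk\,dv .
$$
The map $v\mapsto a_tk\cdot v$ has determinant $1$ on $\M_n(\R)$, since $a_t,k\in H\subset\SL_N(\R)$. Fubini therefore gives the exact identity
$$
I(t)=\Bigl(\int_{\M_n(\R)}f\Bigr)\Bigl(\int_K\nu\bigl((k^{\intercal}\cdot E_{nn})^{\intercal}\bigr)\,dk\Bigr)
$$
for every $t$. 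The proposition then follows once we show
$$
e^{-2n(n-1)t}\int J_f\bigl(b^0(e^{-2t}v),\det v\bigr)\,\nu\bigl(\widehat v/\|\widehat v\|\bigr)\,dv-\frac{\omega_{n-1}^2}{2}I(t)\longrightarrow 0 .
$$

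To do this I would change variables $v=e^{2t}w$ in both integrals. The Jacobian is $e^{2N t}=e^{2n^2t}$, and the rescaled integrals live on the natural domain
$$
D_t:=\Bigl\{w:\ c/2\le\sigma_{n-1}(w)\le\sigma_1(w)\le 2C,\ |\det w|\le De^{-2nt}\Bigr\},
$$
where $c,C,D$ come from $\operatorname{supp} f$ exactly as in \eqref{eq:jf-uniform-support}. That estimate already shows both integrands vanish outside $e^{2t}D_t$ for $t$ large. The rank $\le n-2$ locus has measure zero, so \Cref{jf} applies almost everywhere (the condition $\|v\|\ge T_0$ is automatic on $e^{2t}D_t$). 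By \Cref{jf}, the difference of the two integrands, after multiplying the $K$-side by $\tfrac{\omega_{n-1}^2}{2}e^{2n(n-1)t}$, is bounded by $\varepsilon$ on this set. The proof therefore reduces to the volume bound
$$
\vol(e^{2t}D_t)=e^{2n^2t}\vol(D_t)\ll e^{2n^2t}e^{-2nt}=e^{2n(n-1)t},
$$
i.e.\ $\vol(D_t)\ll e^{-2nt}$. Granting this, the total error is $\ll\varepsilon$, and letting $\varepsilon\to0$ gives the stated limit with the exact main term.

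The step I expect to be the main obstacle is the volume bound $\vol\{w:\ \|w\|\le 2C,\ \sigma_{n-1}(w)\ge c/2,\ |\det w|\le\delta\}\ll\delta$. To prove it I would use the block coordinates of \eqref{vzero}. After a bounded rotation, or by covering with finitely many $K$-translates, one may assume $|\det w^0|\gg1$ on each piece. With $w^0,w^\pm$ fixed, $\det w$ is affine in $w_{nn}$ with slope $\det w^0$, so the admissible $w_{nn}$ form an interval of length $\ll\delta$. Integrating over the bounded remaining coordinates gives $O(\delta)$. Alternatively, the coarea formula with $|\nabla\det|=\|\widehat w\|\gg1$ on this region gives the same bound.

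Finally, because $f$ and $\nu$ are nonnegative, \Cref{jf} applied to $|f|=f$ also yields the uniform boundedness needed for dominated convergence, so interchanging the limit with the integral is harmless.
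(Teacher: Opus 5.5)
Your argument is correct and is essentially the paper's. The paper obtains the proposition by applying \Cref{hfunctional} with $h=J_f$, and the proof of that lemma is your scheme: integrate the pointwise estimate of \Cref{jf} over the support set, bound that set's volume by $O(e^{2n(n-1)t})$, and use Fubini with the $H$-invariance of Lebesgue measure. The one difference is that the paper applies \Cref{jf} to the auxiliary function $f_{J_f}$ of \Cref{lem:Jf-surjective}, so it needs \Cref{siegelJf} (twice) to convert $\int f_{J_f}$ back to $\int f$; applying \Cref{jf} directly to $f$, as you do, makes \Cref{siegelJf} unnecessary for this statement, and your cofactor (affine-in-one-entry) bound $\operatorname{vol}(D_t)\ll e^{-2nt}$ is a valid elementary substitute for the paper's use of the Weyl integration formula (the paper's detour through general $h$ exists only because \Cref{hfunctional} is reused later for the volume asymptotic).
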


We deduce this proposition from \Cref{siegelJf,hfunctional}. The first lemma shows that $J_f$ recovers the Lebesgue integral of $f$.

Let $d^\times A:=\det(A)^{-(n-1)}\,dA$ denote the Haar measure on
$\GL_{n-1}^+(\R)$, and let $d\r$ be the measure on $\bp$ characterized by
\begin{equation}\label{meas}
d^\times A=dm\,d\r
\qquad(A=m\cdot\r).
\end{equation}
If $\r=\diag(e^{a_1},\ldots,e^{a_{n-1}})$, then
\begin{equation}\label{eq:Haarmeasureformula}
d\r
=c\prod_{1\le i<j\le n-1}\sinh(a_i-a_j)
\,da_1\cdots da_{n-1}
\end{equation}
for some constant $c>0$.

\begin{lemma}
\label{siegelJf}
For any $f\in C_c(\M_n^+)$, we have
\begin{align*}
\int_{\M_n(\br)}  f \, dv  = \int_\R \int_{\r\in \bp} J_f(\r, \zeta) \det(\r)^{n} {d\r} \, d\zeta.
\end{align*}
\end{lemma}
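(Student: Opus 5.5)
The plan is a direct change of variables on $\M_n^+$ adapted to the block decomposition \eqref{vzero}, carried out in three steps.

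\emph{Step 1: parametrize $\M_n^+$.} Use coordinates $(A,x_1,x_2,\zeta)$ with $A=v^0\in\GL_{n-1}^+(\R)$, $x_2=v^+$, $x_1^{\intercal}=v^-$ and $\zeta=\det v$. Since
$$\det v=\det(A)\bigl(v_{nn}-x_1^{\intercal}A^{-1}x_2\bigr),$$
we can recover $v_{nn}=\zeta/\det A+x_1^{\intercal}A^{-1}x_2$. This gives a diffeomorphism
$$\M_n^+\to\GL_{n-1}^+(\R)\times\R^{n-1}\times\R^{n-1}\times\R .$$
For fixed $(A,x_1,x_2)$ we have $dv_{nn}=d\zeta/\det A$. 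Lebesgue measure therefore becomes
$$dv=\det(A)^{-1}\,dA\,dx_1\,dx_2\,d\zeta .$$
Here $dA$ is Lebesgue measure on $\M_{n-1}(\R)$, and the set $\det A>0$ is exactly $\M_n^+$, so nothing outside the support of $f$ is lost.

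\emph{Step 2: pass to Haar measure and polar decomposition.} Write $dA=\det(A)^{n-1}d^\times A$. This gives
$$\int f\,dv=\int_\R\int_{\GL_{n-1}^+}\det(A)^{n-2}\int f\,dx_1\,dx_2\,d^\times A\,d\zeta .$$
Next use \eqref{meas}, writing $A=m\cdot\r$ with $\det A=\det\r$ since $m\in\mathsf M=\SO(n-1)\times\SO(n-1)$. The inner integral over $m$ and $x_1,x_2$ is exactly $\det(\r)^2J_f(\r,\zeta)$ by \Cref{fiberi}. The lower-right entry defined there is the same $v_{nn}$ as in Step 1.

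\emph{Step 3: combine the powers of $\det\r$.} The total exponent is $(n-2)+2=n$, which gives the claimed identity
$$\int_{\M_n(\R)}f\,dv=\int_\R\int_{\bp}J_f(\r,\zeta)\det(\r)^n\,d\r\,d\zeta .$$
Fubini is justified because $f$ is compactly supported and continuous, and the coordinate map is smooth on $\M_n^+$.

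The main point needing care is the bookkeeping of the Jacobian exponents. The factor $\det(A)^{-1}$ comes from solving for $v_{nn}$, the factor $\det(A)^{n-1}$ from the Haar measure, and the factor $\det(\r)^{-2}$ from the normalization in $J_f$.

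One should also check that \eqref{meas} is consistent with the probability normalization of $dm$. The positive constant $c$ in \eqref{eq:Haarmeasureformula} is defined so that $d\r$ satisfies \eqref{meas}, so no additional constant appears. A remaining subtlety is that polar decomposition is not unique on the locus of repeated singular values, but that locus is null for $d\r$.
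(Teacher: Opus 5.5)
Your proposal is correct and follows the paper's proof essentially step for step. The paper uses the same change of variables $v_{nn}\mapsto\zeta=\det v$ with Jacobian $\det(A)$, then rewrites $\det(A)^{-1}\,dA=\det(A)^{n-2}\,d^\times A$ and applies the polar decomposition \eqref{meas}. It absorbs the factor $\det(\r)^2$ from the normalization of $J_f$ in the same way, giving the exponent $n$.
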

\begin{proof}
For $v\in \M_n^+$, write
$$
v=
\begin{pmatrix}
A & x_2\\
x_1^{\intercal} & x_{nn}
\end{pmatrix},
\qquad
A\in \GL_{n-1}^+(\R),\quad x_1,x_2\in \R^{n-1},\quad x_{nn}\in \R.
$$
Set
$$
\zeta:=\det(v)
=
\det(A)\bigl(x_{nn}-x_1^{\intercal}A^{-1}x_2\bigr).
$$
For fixed $A,x_1,x_2,\zeta$, this determines $x_{nn}$ uniquely as
$$
x_{nn}(A,\zeta,x_1,x_2)
=
\det(A)^{-1}\zeta+x_1^{\intercal}A^{-1}x_2.
$$
Since $\frac{\partial \zeta}{\partial x_{nn}}=\det(A)$,
the change of variables
$
(A,x_1,x_2,x_{nn})\mapsto (A,x_1,x_2,\zeta)$
has Jacobian $\det(A)$, and therefore
$$
dv=dA\,dx_1\,dx_2\,dx_{nn}
={\det(A)}^{-1}\,dA\,dx_1\,dx_2\,d\zeta.
$$

Hence
\begin{equation*}
\begin{aligned}
\int_{\M_n^+} f(v)\,dv
&=
\int_{\R}
\int_{\GL_{n-1}^+(\R)}
\int_{\R^{n-1}\times \R^{n-1}}
f\big(\begin{smallmatrix}
A & x_2\\
x_1^{\intercal} & x_{nn}(A,\zeta,x_1,x_2)
\end{smallmatrix}\big)
\tfrac{dx_1\,dx_2\,dA}{\det(A)}\,d\zeta.
\end{aligned}
\end{equation*}

Now write
$$
{\det(A)}^{-1} {dA}
=
\det(A)^{n-2}\,d^\times A.
$$
Using  \eqref{meas}, we get
$$
\begin{aligned}
\int_{\M_n^+} f(v)\,dv
&=
\int_{\R}
\int_{\r\in B_0^+}\int_{m\in\mathsf M}
\int_{\R^{n-1}\times \R^{n-1}}
f\big(\begin{smallmatrix}
m\cdot \r & x_2\\
x_1^{\intercal} & x_{nn}(m\cdot \r,\zeta,x_1,x_2)
\end{smallmatrix}\big)  \\
&\hspace{4cm}\cdot
\det(\r)^{n-2}\,dx_1\,dx_2\,dm\,d\r\,d\zeta
\\ & =\int_{\R}
\int_{\r\in B_0^+} 
J_f(\r,\zeta)
\det(\r)^{n} d\r\,d\zeta
\end{aligned}
$$
as claimed.
\end{proof}

The next lemma converts the preceding asymptotic $K$-average into an explicit volume integral. We will later apply it with $h=J_f$.
\begin{lemma}\label{hfunctional}
Let $h\in C_c(\GL_{n-1}^+(\R)\times \R)$ be $\mathsf M$-invariant and let
$\nu\in C(\S^{N-1})$ be nonnegative.
Then
$$
\lim_{t\to\infty}
e^{-2n(n-1)t}
\int_{\M_n(\R)} h( b^0(e^{-2t}v),\det(v))
\nu\!\left(\frac{\widehat v}{\|\widehat v\|}\right)\,dv
$$
$$
=
\frac{\omega_{n-1}^2}{2}
\left(\int_K \nu\bigl((k^{\intercal}\!\cdot E_{nn})^{\intercal}\bigr)\,dk\right)
\left(
\int_{\R}\int_{\bp}
h(\r,\zeta)\,\det (\r)^n d\r \,d\zeta
\right).
$$
\end{lemma}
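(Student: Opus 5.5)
We compute the left-hand side directly in singular-value coordinates, rather than going through \Cref{jf}. Write $v=k\cdot b(v)$ with $k=(k_1,k_2)\in K$. Lebesgue measure on $\M_n(\R)$ then takes the form $dv = c_n'\,\prod_{i<j}|e^{2\kappa_i}-e^{2\kappa_j}|\,e^{\sum\kappa_i}\,dk\,d\kappa$, where $dk$ is Haar measure on $K$ (counted modulo the finite stabilizer of a generic diagonal element) and $\kappa=(\kappa_1\ge\cdots\ge\kappa_n)$. The integrand depends on $v$ in two ways. The $h$-factor depends only on $\kappa_1,\ldots,\kappa_{n-1}$ through $b^0(e^{-2t}v)$, and on $\det v=\pm e^{\sum\kappa_i}$. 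The $\nu$-factor depends on $\widehat v/\|\widehat v\|=k_2\widehat{b(v)}k_1^{\intercal}/\|\widehat{b(v)}\|$. By \eqref{eq:jf-adj-b}, once $e^{\kappa_{n-1}}\gg e^{2t}$ and $|\det v|$ is bounded (which holds on the support of the $h$-factor), we have $\widehat{b(v)}/\|\widehat{b(v)}\|=E_{nn}+O(e^{-2nt})$. Uniform continuity of $\nu$ therefore lets us replace the $\nu$-factor by $\nu(k_2E_{nn}k_1^{\intercal})$ at cost $o(1)$ times the total mass. Since $K$ is unimodular and the transpose/inverse map preserves Haar measure, $\int_K\nu(k_2E_{nn}k_1^{\intercal})\,dk=\int_K\nu((k^{\intercal}\cdot E_{nn})^{\intercal})\,dk$. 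This splits the $K$-integral off as the displayed factor.

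The remaining radial integral is handled by a change of variables. Set $r_i=e^{\kappa_i-2t}$ for $i\le n-1$ and let $\zeta=\det v$ replace $\kappa_n$. Then $e^{\kappa_n}=|\zeta|/(e^{2(n-1)t}\det\r)$, so $d\kappa_n=d\zeta/|\zeta|$. The factor $e^{\sum\kappa_i}$ equals $|\zeta|$ and cancels this, both signs of $\det$ being covered by the sign in $b(v)$. In the Vandermonde-type product, the pairs $i<j\le n-1$ contribute $e^{4t\binom{n-1}{2}}\prod_{i<j<n}|r_i^2-r_j^2|$. The pairs $(i,n)$ contribute $e^{2\kappa_i}(1+O(e^{-4nt}))=e^{4t}r_i^2(1+o(1))$, giving $e^{4t(n-1)}\det(\r)^2$. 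The total power of $e^{t}$ is $4\binom{n-1}{2}+4(n-1)=2n(n-1)$, which is exactly the normalization in the statement.

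The remaining task is to identify the measure. We need
\[
\prod_{i<j<n}|r_i^2-r_j^2|\,\det(\r)^2\,\prod_i \frac{dr_i}{r_i}\cdot\text{(const)}=\det(\r)^n\,d\r .
\]
By \eqref{eq:Haarmeasureformula} with $r_i=e^{a_i}$, we have $\prod_{i<j}|r_i^2-r_j^2| = 2^{\binom{n-1}{2}}\prod_{i<j}\sinh(a_i-a_j)\,\prod_{i<j}r_ir_j$ and $\prod_{i<j}r_ir_j=\det(\r)^{n-2}$. So the left-hand side equals $\det(\r)^n\,d\r$ up to a constant depending only on $n$.

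Dominated convergence applies because $h$ has compact support in $\GL^+_{n-1}\times\R$, and the $O(e^{-4nt})$ errors are uniform there. The main obstacle is pinning down the absolute constant as $\omega_{n-1}^2/2$. We plan to avoid computing it by hand. Instead, apply the identity just derived (with unknown constant $c_n$) to $h=J_f$, and compare with what \Cref{jf} and \Cref{siegelJf} give for $\int_{\M_n}e^{2n(n-1)t}\int_K f(a_tk\cdot v)\,dk\,dv=\int f$, which holds by Haar invariance. Taking $\nu\equiv1$ shows that $c_n$ must equal $\omega_{n-1}^2/2$. The region where $\widehat v$ is small or $\|v\|$ is bounded carries negligible volume after the $e^{-2n(n-1)t}$ normalization, so this comparison is legitimate.
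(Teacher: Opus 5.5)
Your argument is correct, but it is organized differently from the paper's proof.

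\textbf{The paper's route.} The paper never writes out the singular-value density. It realizes $h=J_{f_h}$ (\Cref{lem:Jf-surjective}) and applies \Cref{jf} pointwise in $v$, with the weight $\nu$ already built in. It then integrates over the support set $\Xi_t$, invoking the singular-value integration formula only for the crude bound $\operatorname{vol}(\Xi_t)\ll e^{2n(n-1)t}$. Finally, Fubini and the invariance of Lebesgue measure under $a_tk$ turn $\int\!\int_K f_h(a_tk\cdot v)\,\nu\bigl((k^{\intercal}\cdot E_{nn})^{\intercal}\bigr)\,dk\,dv$ into $\int_K\nu\bigl((k^{\intercal}\cdot E_{nn})^{\intercal}\bigr)\,dk\cdot\int f_h$, and \Cref{siegelJf} finishes.

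\textbf{Your route.} You compute the limit directly in singular-value coordinates. This makes transparent that the factorization and the weight $\det(\mathsf r)^n$ come purely from the Jacobian $\prod_{i<j}|e^{2\kappa_i}-e^{2\kappa_j}|\,e^{\sum\kappa_i}$. The individual steps are all right: the exponent count $4\binom{n-1}{2}+4(n-1)=2n(n-1)$, the cancellation $e^{\sum\kappa_i}\,d\kappa_n=d\zeta$, the identity $\widehat v=k_2\widehat{b(v)}k_1^{\intercal}$, and the $k\mapsto k^{-1}$ comparison with $(k^{\intercal}\cdot E_{nn})^{\intercal}$. The density identification against $d\mathsf r$ via $r_i^2-r_j^2=2r_ir_j\sinh(a_i-a_j)$ is also correct.

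\textbf{What each buys.} The cost of your route is the constant. Your calibration step, run with $\nu\equiv1$ and $h=J_f$, is exactly the paper's proof specialized to $\nu\equiv1$. Since \Cref{jf} already carries an arbitrary $\nu$ at no extra expense, your direct computation is logically redundant as written. It would buy something genuinely new only if you evaluated the Weyl constant by hand. You would then have a proof independent of \Cref{jf}, and an independent check of the normalization $\omega_{n-1}^2/2$ appearing there. For the calibration you also need, as the paper does, two facts: both integrands are supported where $\|v\|\gg e^{2t}$ (and $\widehat v\neq0$ off a null set), and this support has volume $O(e^{2n(n-1)t})$. Your own change of variables supplies the volume bound.
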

\begin{proof} Regard $h$ as a function on $B_0^+\times \br$, and
let  $f_h\in C_c(\M_n^+)$ be as in \eqref{fh}.
Apply \Cref{jf} to $f_h$ and $\nu$. Let
\begin{equation*}
\begin{aligned}
\Xi_t:=\bigl\{v\in\M_n(\mathbb R):\ {}&\widehat v\ne0,\\
&h(b^0(e^{-2t}v),\det v)\ne0
\text{ or }\exists k\in K:\ f_h(a_tk\cdot v)\ne0\bigr\}.
\end{aligned}
\end{equation*}
Since $h$ and $f_h$ are compactly supported, there exists $C>1$ such that
for all sufficiently large $t$,
\begin{equation}\label{xit}
\Xi_t\subset
\left\{
v\in \M_n(\R):
C^{-1}e^{2t}\le e^{\kappa_i(v)}\le Ce^{2t}\ (1\le i\le n-1),\
|\det v|\le C
\right\}.
\end{equation}
Applying the Weyl integration formula for singular values, with
$\zeta=\det v$ as the final radial coordinate, and
using \eqref{eq:Haarmeasureformula} and \eqref{xit}, we obtain 
\begin{equation}\label{eq:Xi-volume}
e^{-2n(n-1)t}\operatorname{vol}(\Xi_t)\ll1.
\end{equation}

Fix $\e>0$. By \eqref{eq:Xi-volume}, choose $\e_1>0$ such that
$\e_1e^{-2n(n-1)t}\operatorname{vol}(\Xi_t)<\e$ for all sufficiently large $t$.
Apply \Cref{jf} with $\varepsilon=\varepsilon_1$, and let
$t_0,T_0>0$ be the resulting constants. For $v\in\Xi_t$, the
definition of $\Xi_t$ gives $\widehat v\ne0$, while \eqref{xit} gives
$$
\|v\|\gg e^{\kappa_1(v)}\ge C^{-1}e^{2t}.
$$
After increasing $t_0$ so that $C^{-1}e^{2t_0}\ge T_0$ and \eqref{xit} holds for every $t\ge t_0$, all the hypotheses
of \Cref{jf} are satisfied for $v\in\Xi_t$. Thus, for all $t\ge t_0$ and
$v\in \Xi_t$, we have
\begin{equation*}
\begin{aligned}
&\left|h(b^0(e^{-2t}v),\det v)\,
\nu\!\left(\frac{\widehat v}{\|\widehat v\|}\right)\right.\\
&\qquad\left.-
\frac{\omega_{n-1}^2}{2}e^{2n(n-1)t}
\int_K f_h(a_tk\cdot v)\,
\nu\bigl((k^{\intercal}\!\cdot E_{nn})^{\intercal}\bigr)\,dk
\right|<\e_1.
\end{aligned}
\end{equation*}
By the definition of $\Xi_t$, both integrands above vanish outside $\Xi_t$.  Integrating this over $\Xi_t$, multiplying by $e^{-2n(n-1)t}$, and using
\eqref{eq:Xi-volume}, we obtain
$$
\begin{aligned}
&
\left|
e^{-2n(n-1)t}
\int_{\M_n(\R)}
h(b^0(e^{-2t}v),\det(v))\,
\nu\!\left(\frac{\widehat v}{\|\widehat v\|}\right)\,dv \right. \\
&\qquad\qquad\left.
-
\frac{\omega_{n-1}^2}{2}
\int_{\M_n(\R)}\int_K
f_h(a_tk\cdot v)\,
\nu\bigl((k^{\intercal}\!\cdot E_{nn})^{\intercal}\bigr)\,dk\,dv
\right|
<\e.
\end{aligned}
$$
Applying Fubini's theorem and the change of variables $u=a_tk\cdot v$, we obtain
$$
\begin{aligned}
&\frac{\omega_{n-1}^2}{2}
\int_{\M_n(\R)}\int_K
f_h(a_tk\cdot v)\,
\nu\bigl((k^{\intercal}\!\cdot E_{nn})^{\intercal}\bigr)\,dk\,dv \\
&\qquad=
\frac{\omega_{n-1}^2}{2}
\left(\int_K \nu\bigl((k^{\intercal}\!\cdot E_{nn})^{\intercal}\bigr)\,dk\right)
\left(\int_{\M_n(\R)} f_h(u)\,du\right).
\end{aligned}
$$
 Since $\e>0$ is arbitrary, the claim now follows
 from  \Cref{siegelJf} and the identity $J_{f_h}=h$.
\end{proof}

Applying \Cref{hfunctional} with $h=J_f$ and then using
\Cref{siegelJf} proves \Cref{emm3.9}.
\section{Modified Siegel transforms and counting}
\label{s:maincounting}

We deduce the counting asymptotics from the uniform moment bound in
\Cref{thm:modifieduniformboundedness}. We first extend Shah's
equidistribution theorem to modified Siegel transforms, then compare the
resulting averages with the fiber kernels of \Cref{s:fiberintegral}, and
finally remove the compact singular-value restrictions required in that
comparison.

Until \Cref{subsec:general-bounds}, assume that $\Lambda\in X$ is Diophantine,
and fix
$0<\eta<1$ and $M>1$ such that $\Lambda$ is
$(\eta,M)$-Diophantine. By \Cref{lem:quasinull-isotropic}, its
$(\eta,M)$-quasi-null rational subspaces are precisely its rational
isotropic subspaces.

\subsection{Equidistribution of modified Siegel transforms}

For $\Delta\in X$, define
\begin{equation}\label{ni}
\Delta_{\operatorname{ni}}
:=
\Delta\smallsetminus
\bigl(\Delta_{\operatorname{iso}}\cup\{0\}\bigr),
\end{equation}
where $\Delta_{\operatorname{iso}}$ is the union of all proper
$\Delta$-rational column- and row-isotropic subspaces, as in \eqref{iso}.
For $f\in C_c(\M_n(\R)\smallsetminus\{0\})$, define
$$
\widehat f_{\operatorname{ni}}(\Delta)
:=
\sum_{v\in\Delta_{\operatorname{ni}}}f(v).
$$
Since the $H$-action preserves the families of isotropic subspaces, for $h\in H$, we
have
$$
\widetilde f_{\operatorname{ni}}(h;\Delta)
:=
\widehat f_{\operatorname{ni}}(h\Delta)
=
\sum_{v\in\Delta_{\operatorname{ni}}}f(hv).
$$

\begin{lemma}[Modified Lipschitz principle]
\label{lem:modifiedlipschitz}
Let $\Delta\in X$, let $0<\eta<1$, and let $M\ge1$. Suppose that all
$(\eta,M)$-quasi-null rational subspaces of $\Delta$ are isotropic.
Then the following hold.

\begin{enumerate}[label=\textnormal{(\roman*)}]
\item For every $f\in C_c(\M_n(\R)\smallsetminus\{0\})$,
$$
|\widetilde f_{\operatorname{ni}}(h;\Delta)|
\ll_f
1+\widehat\alpha_{\eta,M}(h;\Delta)
\qquad(h\in H).
$$

\item For $R,L\ge1$, set
\begin{equation}\label{brl}
\mathscr B_R(L)
:=
\left\{x\in\M_n(\R):
|x_{ij}|\le R\ \text{for }(i,j)\ne(n,n),\quad
|x_{nn}|\le RL
\right\}.
\end{equation}
Then, uniformly for $h\in H$ and $L\ge1$,
$$
\#\{v\in\Delta_{\operatorname{ni}}:hv\in\mathscr B_R(L)\}
\ll_R
L\bigl(1+\widehat\alpha_{\eta,M}(h;\Delta)\bigr).
$$
\end{enumerate}
\end{lemma}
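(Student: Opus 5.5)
We will derive both assertions from a Schmidt/Minkowski successive-minima count, carried out in a reduced basis of the lattice $h\Delta$ and relative to a flag of rational subspaces that avoids the isotropic ones.

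Part (i) will be derived from part (ii) with $L=1$. Since $f$ is compactly supported, its support lies in some $\mathscr B_R(1)$, so
$$
|\widetilde f_{\operatorname{ni}}(h;\Delta)|\le\|f\|_\infty\,\#\{v\in\Delta_{\operatorname{ni}}:hv\in\mathscr B_R(1)\}.
$$
It therefore suffices to prove (ii).

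For (ii), fix $h$ and set $\Delta':=h\Delta$. Let $\lambda_1\le\cdots\le\lambda_N$ be the successive minima of $\Delta'$, realized by linearly independent vectors $w_1,\dots,w_N$. Put $V_i:=\operatorname{span}\{w_1,\dots,w_i\}$, which is $\Delta'$-rational. The box $\mathscr B_R(L)$ is contained in a ball of radius $\ll_R L$. We will count lattice points in this box using a Minkowski-type reduced basis. Let $j$ be the largest index with $\lambda_j\le C_R$ for a suitable constant $C_R$.

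The basic estimate is
$$
\#(\Delta'\cap\mathscr B_R(L))\ll_R \frac{\vol(\mathscr B_R(L)\text{-projections})}{\covol(\Delta'\cap V_j)},
$$
which is bounded by $\ll L\cdot d_{\Delta'}(V_j)^{-1}$ plus the smaller-dimensional terms. The factor is $L$ and not $L^N$ because $\mathscr B_R(L)$ is elongated in only one coordinate direction. This is the standard convex-body count in which only the long direction contributes the length $L$.

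If $V_j$ is not contained in any isotropic subspace, then $d_{\Delta'}(V_j)^{-1}\le\widehat\alpha_{\eta,M}(h;\Delta)$. Here we use that the $H$-action carries the quasi-null (equivalently isotropic) families of $\Delta$ to those of $h\Delta$, together with the hypothesis that quasi-null implies isotropic. In that case the bound follows directly.

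The main obstacle is the case where the short subspace $V_j$ lies inside an isotropic space $V'$. Then the points of $\Delta'\cap V'$ are excluded from the count, but those points are precisely what would make $d(V_j)^{-1}$ large. To handle this, we will choose instead the largest $i\le j$ such that $V_i\notin\widetilde{\mathscr Q}_i$. Every $v\in\Delta'_{\operatorname{ni}}\cap\mathscr B_R(L)$ lies outside every isotropic subspace, and so it lies outside $V_{i+1}$ if $V_{i+1}$ is contained in an isotropic space. We will then project along $V_i$ and count the images in the quotient lattice $\Delta'/(\Delta'\cap V_i)$, using the bound $\ll L\,d(V_i)^{-1}$ for the fibers together with a lower bound on the next minimum of the quotient that is restricted to non-isotropic directions. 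A cleaner route is to apply Kleinbock's weighted-poset form of nondivergence (\Cref{prop:Kle08variant}) in its counting version with the selected poset of non-quasi-null rational subspaces. This route replaces $\alpha$ by $\widehat\alpha$ directly in the Schmidt estimate
$$
\#\ll\max_{V\notin\widetilde{\mathscr Q}}\frac{L}{d(V)}.
$$
This downward-closed-poset version of the Lipschitz principle, in which only flags avoiding $\widetilde{\mathscr Q}$ are used, is the step that would need careful verification.
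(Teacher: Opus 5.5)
\textbf{Genuine gap.} Your reduction of (i) to (ii) is fine. The one nontrivial point in (ii) is excluding the isotropic vectors, and you leave it unresolved. Both of your proposed fixes fail.
\begin{itemize}
\item $\widetilde{\mathscr Q}_{i,\eta,M}(\Delta)$ is downward closed by definition: it consists of the subspaces contained in some quasi-null subspace. So if $V_j\in\widetilde{\mathscr Q}_{j,\eta,M}$, then every $V_i$ with $i\le j$ also lies in $\widetilde{\mathscr Q}_{i,\eta,M}$. Hence ``the largest $i\le j$ with $V_i\notin\widetilde{\mathscr Q}_i$'' does not exist in exactly the case where you need it.
\item \Cref{prop:Kle08variant} is a measure estimate for sublevel sets on a parameter ball. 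It has no lattice-point counting version, so it cannot be used to replace $\alpha$ by $\widehat\alpha$ in a Schmidt-type count.
\end{itemize}
In addition, the single factor $L$ comes out cleanly only from successive minima relative to the box (or its sections), not from Euclidean minima with an $L$-independent threshold.

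\textbf{The missing idea} is to go up rather than down. Work with the span of the counted points themselves, $V:=\operatorname{span}_{\R}\bigl(h\Delta_{\operatorname{ni}}\cap\mathscr B_R(L)\bigr)$, and set $r=\dim V$.
\begin{itemize}
\item If $r<N$, then $h^{-1}V$ is a proper $\Delta$-rational subspace containing a vector of $\Delta_{\operatorname{ni}}$. So it lies in no rational isotropic subspace. By the hypothesis that quasi-null subspaces are isotropic, $h^{-1}V\notin\widetilde{\mathscr Q}_{r,\eta,M}(\Delta)$. Hence $\covol_V(h\Delta\cap V)^{-1}\le\widehat\alpha_{\eta,M}(h;\Delta)$.
\item If $r=N$, this covolume is $1$.
\item The convex body $C=\mathscr B_R(L)\cap V$ contains $r$ linearly independent points of $h\Delta\cap V$. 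So all successive minima of $h\Delta\cap V$ with respect to $C$ are at most $1$.
\item The successive-minima count together with Minkowski's second theorem then gives $\#\ll_n \vol_r(C)/\covol_V(h\Delta\cap V)$.
\item Finally, $\vol_r(C)\le\vol_r(p_V\mathscr B_R(L))\ll_R L$, because only one side of the box is long.
\end{itemize}
No flag selection or nondivergence input is needed.

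This also dissolves the obstacle you identified. Suppose $I$ is a short isotropic sublattice and $v_0$ is a bounded non-isotropic lattice vector. Then $\R v_0+\operatorname{span}_{\R} I$ is not in $\widetilde{\mathscr Q}$. Its covolume is at most $\|v_0\|$ times that of $I$. So $\widehat\alpha$ does detect the shortness of $I$ whenever non-isotropic points are actually being counted.
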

\begin{proof}
It suffices to prove \textnormal{(ii)}, since \textnormal{(i)} follows
by choosing $R\ge1$ such that
$\operatorname{supp}(f)\subset\mathscr B_R(1)$.
Fix $h\in H$ and $R,L\ge1$, and set
$\mathscr B:=\mathscr B_R(L)$ and $
\mathscr S:=h\Delta_{\operatorname{ni}}\cap\mathscr B$.
We may assume that $\mathscr S\ne\emptyset$. Put
$$
V:=\operatorname{span}_{\mathbb R}\mathscr S,
\qquad
r:=\dim V,
\qquad
\Delta':=h\Delta\cap V,
\qquad
C:=\mathscr B\cap V.
$$
Then $\Delta'$ is a lattice in $V$, and $C$ is a centrally symmetric
convex body containing $r$ linearly independent vectors of $\Delta'$.
Hence its successive minima satisfy
$0<\lambda_1\le\cdots\le\lambda_r\le1$.
The standard successive-minima counting bound and Minkowski's second
theorem give
$$
\#\mathscr S
\le \#(\Delta'\cap C)
\ll_n \prod_{j=1}^r\lambda_j^{-1}
\ll_n
\frac{\operatorname{vol}_r(C)}
     {\operatorname{covol}_V(\Delta')}.
$$

If $r<n^2$, then $W:=h^{-1}V$ is a proper nonzero
$\Delta$-rational subspace containing a vector of
$\Delta_{\operatorname{ni}}$. Thus
$W\notin\widetilde{\mathscr Q}_{r,\eta,M}(\Delta)$:
otherwise $W$ would be contained in a quasi-null rational subspace,
which is isotropic by hypothesis. Consequently,
$$
\frac{1}{\operatorname{covol}_V(\Delta')}
\le \widehat\alpha_{\eta,M}(h;\Delta).
$$
If $r=n^2$, then $\operatorname{covol}_V(\Delta')=1$.
Let $p_V:\M_n(\mathbb R)\to V$ denote the orthogonal projection.
In either case, since $C\subset p_V\mathscr B$,
$$
\#\mathscr S
\ll_n
\operatorname{vol}_r(p_V\mathscr B)
\bigl(1+\widehat\alpha_{\eta,M}(h;\Delta)\bigr).
$$
The $r$-dimensional volume of the orthogonal projection
$p_V\mathscr B$ is bounded by the sum of the products of
$r$ distinct side lengths of $\mathscr B$.
Since only one side length depends on $L$, we obtain
$
\operatorname{vol}_r(p_V\mathscr)\ll_R L
$
uniformly in $V$. This proves \textnormal{(ii)}.
\end{proof}

We denote by $m_X$ the $\SL_N(\R)$-invariant probability measure on
$X$.

\begin{lemma}[Regularity of the modified Siegel transform]
\label{lem:modified-transform-regularity}
For every $f\in C_c(\M_n(\R)\smallsetminus\{0\})$,
$\widehat f_{\operatorname{ni}}$ is Borel measurable and agrees
$m_X$-almost everywhere with the ordinary Siegel transform $\widehat f$.
In particular, it is continuous outside an $m_X$-null set.
\end{lemma}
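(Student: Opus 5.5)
The plan is to show that, for $m_X$-almost every $\Delta$, the set $\Delta_{\operatorname{iso}}$ is empty. Once that holds, $\widehat f_{\operatorname{ni}}(\Delta)=\widehat f(\Delta)$ for those $\Delta$. Continuity of the ordinary Siegel transform $\widehat f$ on $X$, which holds for $f\in C_c(\M_n(\R)\smallsetminus\{0\})$, then gives continuity of $\widehat f_{\operatorname{ni}}$ off the null set where the two disagree. Measurability is handled separately.

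\emph{Almost-everywhere agreement.} As noted after \Cref{def:isotropic-noncoincidence}, Haar-almost every $\Delta=g\mathbb Z^N$ satisfies $\Delta\cap\{\det=0\}=\{0\}$. For each $0\ne z\in\mathbb Z^N$, the set $\{g\in\SL_N(\R):\det(gz)=0\}$ is a proper algebraic subset of $\SL_N(\R)$, hence Haar-null, and a countable union of such sets is still null. Every isotropic subspace lies in $\{\det=0\}$, so for such $\Delta$ we have $\Delta_{\operatorname{iso}}\subset\{0\}$. Therefore $\Delta_{\operatorname{ni}}=\Delta\smallsetminus\{0\}$ and $\widehat f_{\operatorname{ni}}(\Delta)=\widehat f(\Delta)$ for $\Delta$ outside this null set.

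\emph{Borel measurability.} Write $\Delta=g\mathbb Z^N$ and
$$
\widehat f_{\operatorname{ni}}(g\mathbb Z^N)=\sum_{0\ne z\in\mathbb Z^N} f(gz)\,\mathbf 1\{gz\notin\Delta_{\operatorname{iso}}\}.
$$
This is a countable sum, so it suffices to show that for fixed $z$ the set $\{g: gz\in (g\mathbb Z^N)_{\operatorname{iso}}\}$ is Borel. It is also right-$\SL_N(\mathbb Z)$-equivariant, so Borel measurability descends to $X$.

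To describe this set, use the fact that $gz$ lies in a $g\mathbb Z^N$-rational column-isotropic subspace $\mathcal L(\mathsf U)$ exactly when there is a $k\le n-1$ and a primitive rank-$kn$ sublattice $P<\mathbb Z^N$ with $z\in P$ such that $gP\otimes\R=\mathcal L(\mathsf U)$ for some $\mathsf U$. By \Cref{lem:isotropic_iff_pi1_pi2}, the last condition is equivalent to $\wedge^{kn}g\,\mathsf w_P\in\mathcal M_{k,1}$. The row-isotropic case is the same with $\mathcal M_{k,2}$. Each such condition is closed, in fact algebraic, in $g$, and there are countably many $P$. The set in question is therefore a countable union of closed sets, hence Borel.

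The main thing to check is that the rationality requirement is correctly encoded by quantifying over the countably many sublattices $P<\mathbb Z^N$. This is routine, since $g\mathbb Z^N$-rational subspaces are exactly the spans $g(P\otimes\R)$. Finally, $\widehat f$ is continuous everywhere, so $\widehat f_{\operatorname{ni}}$ is continuous at every point outside the null set described above. This proves the lemma.
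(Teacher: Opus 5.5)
Your measurability argument (a countable union, over primitive sublattices $P\ni z$, of the algebraic conditions $\wedge^{kn}g\,\mathsf w_P\in\mathcal M_{k,m}$ supplied by \Cref{lem:isotropic_iff_pi1_pi2}) and your almost-everywhere agreement argument are essentially those of the paper. The gap is in the continuity step.

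From ``$\widehat f_{\operatorname{ni}}=\widehat f$ outside a null set $\mathcal Z$, and $\widehat f$ is continuous'' you conclude that $\widehat f_{\operatorname{ni}}$ is continuous at every point of $X\smallsetminus\mathcal Z$. That inference is invalid. A function that agrees almost everywhere with a continuous function can be discontinuous everywhere when the exceptional set is dense, and that is the situation here. The set $\mathcal Z$ of lattices containing a nonzero singular matrix is dense in $X$. Indeed, the directions of the vectors of any lattice are dense in $\mathbb S^{N-1}$, so an element of $\SL_N(\R)$ arbitrarily close to the identity moves a suitable lattice vector onto $\{\det=0\}$. Almost-everywhere agreement therefore says nothing about the values of $\widehat f_{\operatorname{ni}}$ at points of $\mathcal Z$ close to a given $\Delta\notin\mathcal Z$. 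Yet control of the discontinuity set is exactly what the later Portmanteau argument needs.

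The missing idea is that the two transforms agree on a whole neighborhood of each $\Delta=g\Z^N\notin\mathcal Z$. Take a compact neighborhood $\mathcal N$ of $g$ in $\SL_N(\R)$.
\begin{enumerate}
\item By discreteness and compactness of $\operatorname{supp}f$, only finitely many $z\in\Z^N\smallsetminus\{0\}$ satisfy $g'z\in\operatorname{supp}f$ for some $g'\in\mathcal N$.
\item For each such $z$ one has $\det(gz)\ne0$, since $\Delta\notin\mathcal Z$. After shrinking $\mathcal N$, it follows that $\det(g'z)\ne0$ for all $g'\in\mathcal N$ and all these $z$.
\item Isotropic matrices are singular, so no vector of $g'\Z^N$ lying in $\operatorname{supp}f$ is isotropic.
\item Hence $\widehat f_{\operatorname{ni}}=\widehat f$ on a neighborhood of $\Delta$ in $X$, and continuity of $\widehat f$ gives continuity of $\widehat f_{\operatorname{ni}}$ at $\Delta$.
\end{enumerate}
This is the local argument the paper uses; with it inserted, your proof is complete.
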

\begin{proof}
We first verify measurability. Let
$
\pi:\SL_N(\mathbb R)\to X
$
be the quotient map. Fix a relatively compact open set
$U\subset X$ admitting a smooth local section
$
s:U\to\SL_N(\mathbb R),
$
so that $\pi\circ s=\operatorname{id}_U$. Such sets form a countable
cover of $X$. 
Let $\mathscr W$ be the countable family of proper nonzero
$\mathbb Q$-rational subspaces of $\mathbb R^N$. For
$q\in\mathbb Z^N\smallsetminus\{0\}$, set
$$
E_q
:=
\bigcup_{{W\in\mathscr W, q\in W}}
\{x\in U:s(x)W\text{ is column- or row-isotropic}\}.
$$
For each $W\in\mathscr W$, the set
$\{g\in\SL_N(\mathbb R):gW\text{ is column- or row-isotropic}\}$
is algebraic by \Cref{lem:isotropic_iff_pi1_pi2}. Its preimage under
the continuous map $s$ is therefore Borel. Hence $E_q$ is Borel.

For $x\in U$, we have
$$
\widehat f_{\operatorname{ni}}(x)
=
\sum_{q\in\mathbb Z^N\smallsetminus\{0\}}
f(s(x)q)\mathbf 1_{U\smallsetminus E_q}(x).
$$
At each $x$, this sum has only finitely many nonzero terms because
$s(x)\mathbb Z^N$ is discrete and $\operatorname{supp}(f)$ is compact.
Thus $\widehat f_{\operatorname{ni}}$ is Borel on $U$, and hence on
$X$.

For $q\in\Z^N\smallsetminus\{0\}$, the set
$$
\mathcal Z_q
:=
\{g\in\SL_N(\R):\det(gq)=0\}
$$
is a proper algebraic subvariety of $\SL_N(\R)$. The set
$\bigcup_{q\ne0}\mathcal Z_q$ is a countable union of  proper algebraic subvarieties and is right
$\SL_N(\Z)$-invariant. Its image $\mathcal Z\subset X$ therefore has
$m_X$-measure zero. If $\Delta\notin\mathcal Z$, then $\Delta$ contains
no nonzero singular vector. Since every vector belonging to an isotropic subspace
is singular,
$$
\widehat f_{\operatorname{ni}}(\Delta)=\widehat f(\Delta).
$$

To prove continuity,  fix $\Delta\in U\smallsetminus\mathcal Z$. Choose
an open neighborhood $U_\Delta$ of $\Delta$ such that
$\overline{U_\Delta}\subset U$ is compact.
Only finitely many
$q\in\mathbb Z^N\smallsetminus\{0\}$ satisfy
$$
s(x)q\in\operatorname{supp}(f)
\qquad\text{for some }x\in \overline{U_\Delta}.
$$
For each such $q$, we have
$
\det(s(\Delta)q)\ne0
$
because $\Delta\notin\mathcal Z$. After shrinking $U_\Delta$, continuity
therefore gives
$$
\det(s(x)q)\ne0
\qquad(x\in U_\Delta)
$$
for every relevant $q$. The corresponding vectors cannot belong to an isotropic
subspace. Hence
$$
\widehat f_{\operatorname{ni}}(x)=\widehat f(x)
\qquad(x\in U_\Delta).
$$
Since $\widehat f$ is continuous, so is
$\widehat f_{\operatorname{ni}}$ at $\Delta$.
\end{proof}

For $k\in K$, define
$$
\omega(k):=(k^{\intercal}\!\cdot E_{nn})^{\intercal}
\in\mathbb S^{N-1}.
$$

We use the following form of Shah's theorem
\cite[Corollary~1.2]{shah}.

\begin{theorem}[Shah]
\label{thm:Ratnerequidistributionbounded}
Let $F\in C_c(X)$ and $\nu\in C(\mathbb S^{N-1})$. If
$H\Lambda$ is dense in $X$, then
$$
\lim_{t\to\infty}
\int_KF(a_tk\Lambda)\nu(\omega(k))\,dk
=
\left(\int_XF\,dm_X\right)
\left(\int_K\nu(\omega(k))\,dk\right).
$$
\end{theorem}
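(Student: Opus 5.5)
The statement to be established is Shah's theorem, that weighted $K$-averages of $a_t$-translates of a dense $H$-orbit equidistribute. The plan is to reduce it to \cite[Corollary~1.2]{shah}, which treats translates $a_tK\Lambda$ with a continuous density on $K$. Two points need to be checked: that the weight $k\mapsto\nu(\omega(k))$ is an admissible continuous density, and that the hypotheses on $(H,\{a_t\},\Lambda)$ hold.

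First, $\omega(k)=(k^{\intercal}\cdot E_{nn})^{\intercal}$ is a continuous map $K\to\mathbb S^{N-1}$. Indeed, $K$ acts isometrically, and the norm is normalized by $\|E_{nn}\|=1$. Hence $\psi(k):=\nu(\omega(k))$ lies in $C(K)$. If $\int_K\psi\,dk=0$ and $\psi\ge0$ there is nothing to prove. In general we split $\nu=\nu^+-\nu^-$ and rescale each part, so that $\psi\,dk/\int\psi$ is a probability measure on $K$ that is absolutely continuous with continuous density. Shah's corollary gives exactly equidistribution of $a_t$-pushforwards of such measures $k\mapsto a_tk\Lambda$. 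The hypothesis required there is that the orbit $H\Lambda$ is dense, where $H$ is a connected semisimple subgroup containing $\{a_t\}$ and $K$ is its maximal compact subgroup. The second hypothesis is that $a_t$ is a one-parameter subgroup whose projection to each simple factor of $H$ is nontrivial.

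Second, we verify the factor condition. The group $H=\SL_n(\R)\times\SL_n(\R)$ has two simple factors, and $a_t=(b_t,b_t)$ projects nontrivially to each. So no factor of $H$ is centralized by $a_t$, which is the non-degeneracy Shah requires. Density of $H\Lambda$ is assumed in the statement. By \Cref{prop:closed-rational-csa}, this density is what holds when $\Lambda$ is not determinant-rational.

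The only real obstacle is matching our normalization with Shah's formulation. His corollary is stated for $\int_K F(a_tk x)\,\psi(k)\,dk$ with $\psi\in C(K)$. We therefore just need the weight to be continuous on $K$, not merely $M_K$-invariant, and this was verified above. The theorem then follows by linearity in $\nu$ and $F$.
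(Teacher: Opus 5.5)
Your proposal is correct and takes the same approach as the paper, which gives no argument beyond citing \cite[Corollary~1.2]{shah} for the dense orbit $H\Lambda$ and the continuous weight $k\mapsto\nu(\omega(k))$ on $K$. Your added checks are exactly the hypotheses that citation needs: continuity of $\nu\circ\omega$, the reduction to nonnegative weights, and the fact that $a_t=(b_t,b_t)$ is nontrivial, indeed unbounded, in each of the two simple factors $\SL_n(\R)$ of $H$.
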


\begin{theorem}[Equidistribution for modified Siegel transforms]
\label{thm:Ratnerequidistributionunbounded}
Let $f\in C_c(\M_n^+)$ and let
$\nu\in C(\mathbb S^{N-1})$. Suppose that $\Lambda$ is Diophantine and
not determinant-rational. Then
\begin{align}\label{eq:modified-Siegel-equidistribution}
&\lim_{t\to\infty}
\int_K
\widetilde f_{\operatorname{ni}}(a_tk;\Lambda)
\nu(\omega(k))\,dk=
\left(\int_{\M_n(\R)}f(v)\,dv\right)
\left(\int_K\nu(\omega(k))\,dk\right).
\end{align}
\end{theorem}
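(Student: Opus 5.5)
The plan is the standard truncation argument: approximate $\widetilde f_{\operatorname{ni}}$ by compactly supported continuous functions on $X$, apply Shah's theorem (\Cref{thm:Ratnerequidistributionbounded}) to these, and control the tail using the uniform $L^{1+\theta}$-bound of \Cref{thm:modifieduniformboundedness}. Since $\Lambda$ is not determinant-rational, \Cref{prop:closed-rational-csa} shows that $H\Lambda$ is dense in $X$, so Shah's theorem applies. We may assume $f,\nu\ge0$ after splitting into positive and negative parts. For a cutoff level $R\ge1$, fix a continuous $\chi_R:X\to[0,1]$ with $\chi_R=1$ on $\{\alpha\le R\}$ and $\chi_R=0$ on $\{\alpha\ge 2R\}$; by Mahler's criterion it is compactly supported. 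Then split
$$\widehat f_{\operatorname{ni}}=\widehat f_{\operatorname{ni}}\chi_R+\widehat f_{\operatorname{ni}}(1-\chi_R).$$

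For the first term, $\widehat f_{\operatorname{ni}}\chi_R$ is bounded and compactly supported. By \Cref{lem:modified-transform-regularity} it is continuous off an $m_X$-null set and agrees $m_X$-a.e. with $\widehat f\chi_R$. Shah's theorem is stated only for $C_c(X)$, so we sandwich this function between continuous compactly supported functions whose $m_X$-integrals differ by an arbitrarily small amount. Concretely, take upper and lower semicontinuous regularizations, approximated monotonically by continuous functions; this is the standard extension of weak-$*$ convergence to bounded functions that are $m_X$-a.e. continuous. Applying \Cref{thm:Ratnerequidistributionbounded} to both bounds, we get
$$\lim_t\int_K(\widehat f_{\operatorname{ni}}\chi_R)(a_tk\Lambda)\,\nu(\omega(k))\,dk=\Bigl(\int_X\widehat f\chi_R\,dm_X\Bigr)\Bigl(\int_K\nu(\omega(k))\,dk\Bigr).$$
Here we use that the product of the $K$-limit with $\nu$ factorizes, which is exactly the form of Shah's theorem quoted. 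As $R\to\infty$, the Siegel mean value theorem (monotone convergence) gives $\int_X\widehat f\chi_R\,dm_X\to\int_X\widehat f\,dm_X=\int f\,dv$.

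The tail term is the main obstacle. By \Cref{lem:modifiedlipschitz}(i), which applies because all quasi-null subspaces of $\Lambda$ are isotropic by \Cref{lem:quasinull-isotropic}, we have
$$\widetilde f_{\operatorname{ni}}(a_tk;\Lambda)\ll_f 1+\widehat\alpha_{\eta,M}(a_tk;\Lambda).$$
The cutoff is defined in terms of $\alpha(a_tk\Lambda)$, but $\widehat\alpha_{\eta,M}$ may be small while $\alpha$ is large, namely when the cusp excursion is caused only by isotropic subspaces. So we split the set $\{\alpha(a_tk\Lambda)\ge R\}$ into two pieces. On the part where $\widehat\alpha_{\eta,M}\ge R^{1/2}$, Hölder's inequality gives
$$\int\widehat\alpha_{\eta,M}\,\mathbf 1_{\{\widehat\alpha_{\eta,M}\ge R^{1/2}\}}\,dk\le R^{-\theta/2}\sup_t\int_K\widehat\alpha_{\eta,M}(a_tk;\Lambda)^{1+\theta}\,dk\ll R^{-\theta/2},$$
uniformly in $t$, by \Cref{thm:modifieduniformboundedness}. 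On the complementary part the integrand is $\ll R^{1/2}$. There the measure of $\{k:\alpha(a_tk\Lambda)\ge R\}$ is $\ll R^{-\tau_*}$, uniformly in $t$, by the first-moment bound \eqref{eq:standardalphauniformboundednesscompact} and \eqref{eq:bar-alpha-comparison}. Since $\tau_*>1/2$, this part contributes $\ll R^{1/2-\tau_*}$. The constant term $1$ contributes at most this same measure. Hence the tail is $o_R(1)$ uniformly in $t$. Letting $t\to\infty$ and then $R\to\infty$ gives \eqref{eq:modified-Siegel-equidistribution}.
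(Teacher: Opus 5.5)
Your proof is correct. It uses the same core inputs as the paper: density of $H\Lambda$, Shah's theorem extended to bounded $m_X$-a.e.\ continuous functions via \Cref{lem:modified-transform-regularity}, the bound $\widetilde f_{\operatorname{ni}}\ll_f 1+\widehat\alpha_{\eta,M}$, the uniform $L^{1+\theta}$ bound of \Cref{thm:modifieduniformboundedness}, and Siegel's formula. The difference is in how you truncate.

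The paper truncates in the \emph{values}. It compares $\widehat f_{\operatorname{ni}}$ with $\min\{\widehat f_{\operatorname{ni}},s\}$, whose tail is pointwise at most $s^{-\theta}\widehat f_{\operatorname{ni}}^{\,1+\theta}$. The $L^{1+\theta}$ bound therefore controls the tail for all $\mu_t$ simultaneously, and a Portmanteau plus monotone-convergence step transfers the same moment bound to $m_X$. Since only the size of $\widehat f_{\operatorname{ni}}$ enters, the paper never has to deal with the fact that $\widehat\alpha_{\eta,M}$ is blind to cusp excursions caused by isotropic subspaces.

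You truncate in \emph{space}, with a cutoff $\chi_R$ phrased in terms of $\alpha$. You must then bound the $dk$-mass of $\{k:\alpha(a_tk\Lambda)\ge R\}$ separately. That is why you need \eqref{eq:standardalphauniformboundednesscompact}, the split according to the size of $\widehat\alpha_{\eta,M}$, and the numerology $\tau_*>1/2$. In exchange, on the limit side you need only $\widehat f\in L^1(m_X)$, not an $L^{1+\theta}(m_X)$ bound.

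The extra input is in fact avoidable, because you let $t\to\infty$ before $R\to\infty$. Apply Shah's theorem with $\nu\equiv1$ to $\chi_{R'}$, using that $\chi_{R'}\le\mathbf 1_{\{\alpha<2R'\}}$. This gives
\[
\limsup_{t\to\infty}\operatorname{vol}_K\{k:\alpha(a_tk\Lambda)\ge 2R'\}\le\int_X(1-\chi_{R'})\,dm_X\longrightarrow 0 .
\]
H\"older's inequality against the $L^{1+\theta}$ bound then disposes of the tail without the first-moment estimate.
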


\begin{proof}
By linearity, it suffices to assume that $f$ and $\nu$ are nonnegative.
The zero extension of $f$, still denoted by $f$, belongs to
$C_c(\M_n(\R)\smallsetminus\{0\})$. Set
$$
c_\nu:=\int_K\nu(\omega(k))\,dk.
$$
If $c_\nu=0$, then both sides of
\eqref{eq:modified-Siegel-equidistribution} are zero. We may therefore
assume that $c_\nu>0$.
By \Cref{prop:closed-rational-csa}, the hypothesis implies that
$H\Lambda$ is dense in $X$. Define finite measures $\mu_t$ on $X$ by
$$
\int_XF\,d\mu_t
:=
\int_KF(a_tk\Lambda)\nu(\omega(k))\,dk.
$$
By \Cref{thm:Ratnerequidistributionbounded},
$\mu_t\to c_\nu m_X$ vaguely. Since
$\mu_t(X)=c_\nu$ for every $t$, the convergence is weak.
By \Cref{lem:modified-transform-regularity},
$\widehat f_{\operatorname{ni}}$ is Borel measurable and continuous
outside an $m_X$-null set. Moreover,
\Cref{lem:modifiedlipschitz}(i) gives
$$
\widehat f_{\operatorname{ni}}(a_tk\Lambda)
\ll_f
1+\widehat\alpha_{\eta,M}(a_tk;\Lambda).
$$
Choose $\theta>0$ as in
\Cref{thm:modifieduniformboundedness}. Since $\nu$ is bounded,
$$
\begin{aligned}
\int_X\widehat f_{\operatorname{ni}}^{\,1+\theta}\,d\mu_t
&=
\int_K
\widehat f_{\operatorname{ni}}(a_tk\Lambda)^{1+\theta}
\nu(\omega(k))\,dk \ll_{f,\nu}
1+\int_K
\widehat\alpha_{\eta,M}(a_tk;\Lambda)^{1+\theta}\,dk.
\end{aligned}
$$
Consequently,
\begin{equation}\label{eq:modified-Siegel-uniform-moment}
C:=
\sup_{t\ge0}
\int_X\widehat f_{\operatorname{ni}}^{\,1+\theta}\,d\mu_t
<\infty.
\end{equation}
Fix $s>0$. Since both
$
\min\{\widehat f_{\operatorname{ni}},s\}$ and $
\min\{\widehat f_{\operatorname{ni}}^{\,1+\theta},s\}$
are bounded and have $m_X$-null discontinuity sets, the Portmanteau
theorem gives
$$
\int_X\min\{\widehat f_{\operatorname{ni}},s\}\,d\mu_t
\to 
c_\nu\int_X\min\{\widehat f_{\operatorname{ni}},s\}\,dm_X
$$
and
$$
c_\nu\int_X
\min\{\widehat f_{\operatorname{ni}}^{\,1+\theta},s\}\,dm_X
=
\lim_{t\to\infty}\int_X
\min\{\widehat f_{\operatorname{ni}}^{\,1+\theta},s\}\,d\mu_t
\le C.
$$
Letting $s\to\infty$ and applying the monotone convergence theorem, we
obtain
$$
c_\nu
\int_X\widehat f_{\operatorname{ni}}^{\,1+\theta}\,dm_X
\le C.
$$
Since
$
0\le
\widehat f_{\operatorname{ni}}
-\min\{\widehat f_{\operatorname{ni}},s\}
\le
s^{-\theta}\widehat f_{\operatorname{ni}}^{\,1+\theta},
$
we have
$$
\int_X
\left(
\widehat f_{\operatorname{ni}}
-\min\{\widehat f_{\operatorname{ni}},s\}
\right)d\mu_t
\le Cs^{-\theta}\;\text{and}\;
c_\nu\int_X
\left(
\widehat f_{\operatorname{ni}}
-\min\{\widehat f_{\operatorname{ni}},s\}
\right)dm_X
\le Cs^{-\theta}.
$$
It follows that
$$
\limsup_{t\to\infty}
\left|
\int_X\widehat f_{\operatorname{ni}}\,d\mu_t
-
c_\nu\int_X\widehat f_{\operatorname{ni}}\,dm_X
\right|
\le2Cs^{-\theta}.
$$
Letting $s\to\infty$ yields
$$
\lim_{t\to\infty}
\int_X\widehat f_{\operatorname{ni}}\,d\mu_t
=
c_\nu\int_X\widehat f_{\operatorname{ni}}\,dm_X.
$$
Finally, \Cref{lem:modified-transform-regularity} and Siegel's formula
give
$$
\int_X\widehat f_{\operatorname{ni}}\,dm_X
=
\int_X\widehat f\,dm_X
=
\int_{\M_n(\R)}f(v)\,dv,
$$
which proves \eqref{eq:modified-Siegel-equidistribution}.
\end{proof}

\subsection{Counting singular-value shells}
\label{subsec:singular-value-shells}
We discretize the first $n-1$ logarithmic singular values and separate
the non-deep and deep ranges. 
For $v\in \M_n(\br)$, let
$$
\kappa_1(v)\ge\cdots\ge\kappa_n(v)
$$
denote the logarithms of its singular values, with the convention
$\log 0=-\infty$.
For an integer $\tau\ge1$ and
$$
\mathbf m=(m_1,\ldots,m_{n-1})\in\mathbb Z_{\ge0}^{n-1},
\qquad
0=m_1\le\cdots\le m_{n-1},
$$
set
$$
|\mathbf m|:=\sum_{i=2}^{n-1}m_i
$$
and, for $D>0$, define
\begin{equation}\label{eq:integer-shell-definition}
\begin{aligned}
\mathcal S_{\tau,\mathbf m}(D)
:=\bigl\{v\in\M_n(\R):\;&|\det v|\le D,\\
&\tau-m_i-1<\kappa_i(v)\le\tau-m_i
\quad(1\le i\le n-1)\bigr\}.
\end{aligned}
\end{equation}
Also set
\begin{equation}\label{eq:deep-shell-set}
\mathcal D_\tau
:=
\left\{0\ne v\in\M_n(\R):
\tau-1<\kappa_1(v)\le\tau,\quad
\kappa_{n-1}(v)\le-\frac{(n-2)}{2}\tau
\right\}.
\end{equation}

These sets give the required decomposition. If $\kappa_1(v)>0$ and
$\tau=\lceil\kappa_1(v)\rceil$, then either $v\in\mathcal D_\tau$ or,
on setting
$$
m_i:=\tau-\lceil\kappa_i(v)\rceil,
$$
one has
\begin{equation}\label{eq:non-deep-deficits}
0=m_1\le\cdots\le m_{n-1}<\frac n2\tau,
\qquad
v\in\mathcal S_{\tau,\mathbf m}(D)
\quad\text{if }|\det v|\le D.
\end{equation}

For $q\ge0$, $D>0$, and $R>0$, set
$$
\mathcal B_{q,D}(R)
:=
\left\{x\in\M_n(\mathbb R):
|x_{ij}|\le R \text{ for all }(i,j)\ne(n,n),
|x_{nn}|\le R(1+D)e^q
\right\}.
$$

\begin{lemma}[Geometric estimate for non-deep shells]
\label{lem:one-long-direction-box}
For every $D>0$, there exist $R=R_{n,D}>1$ and
$c=c_{n,D}>0$ such that, for every integer $\tau\ge1$ and 
$\mathbf m\in\mathbb Z_{\ge0}^{n-1}$ satisfying
$
0=m_1\le\cdots\le m_{n-1}\le n\tau/2,
$
every
$v\in\mathcal S_{\tau,\mathbf m}(D)$ satisfies
\begin{equation}\label{eq:one-long-direction-lower-bound}
\operatorname{vol}_K
\left\{k\in K:
a_{\tau/2}k\cdot v\in\mathcal B_{|\mathbf m|,D}(R)
\right\}
\ge
c e^{-n(n-1)\tau+2|\mathbf m|}.
\end{equation}
\end{lemma}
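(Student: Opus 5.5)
We reduce to the singular-value representative and then construct the good set of $k$ explicitly near the identity, in the same hemisphere coordinates used in the proof of \Cref{jf}. Write $v=k_v\cdot b(v)$ with $k_v\in K$. Substituting $k\mapsto kk_v^{-1}$, which preserves Haar measure, it suffices to bound from below the measure of $\{k\in K: a_{\tau/2}k\cdot b(v)\in\mathcal B_{|\mathbf m|,D}(R)\}$. Put $\mathsf r:=b^0(e^{-\tau}v)=\diag(r_1,\dots,r_{n-1})$. Since $a_{\tau/2}$ multiplies the upper-left block by $e^{-\tau}$, the definition of $\mathcal S_{\tau,\mathbf m}(D)$ gives $e^{-m_i-1}<r_i\le e^{-m_i}$, so $\det\mathsf r\asymp e^{-|\mathbf m|}$ and $\mathsf r$ is bounded. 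The $(n,n)$-entry of $a_{\tau/2}b(v)$ is $e^{(n-1)\tau}\det v/(e^{(n-1)\tau}\det\mathsf r)=\det v/\det\mathsf r$, which has size $\le De^{|\mathbf m|+n-1}$. This is the reason the box $\mathcal B_{|\mathbf m|,D}(R)$ has its long side $R(1+D)e^{|\mathbf m|}$ in the $(n,n)$ direction.

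Next take $k=(m_1\sigma(y^-),m_2\sigma(y^+)^{\intercal})$ in the coordinates \eqref{eq:jf-hemisphere-coordinates}, with $m_i\in\SO(n-1)$ arbitrary. The four blocks computed in the proof of \Cref{jf}, with $e^{nt}$ replaced by $e^{n\tau/2}$, give the entries of $a_{\tau/2}k\cdot b(v)$ in terms of $x^-=e^{n\tau/2}m_2\mathsf r y^-$ and $x^+=-e^{n\tau/2}m_1\mathsf r y^+$:
\begin{itemize}
\item the upper-left block is $C_-\mathsf rC_+$ plus a small term;
\item the off-diagonal blocks are $x^\pm$ plus $O(e^{-n\tau/2}|\det v|/\det\mathsf r)$;
\item the corner is $c_-c_+\det v/\det\mathsf r+(x^-)^{\intercal}(m_1\mathsf r m_2^{\intercal})^{-1}x^+$.
\end{itemize}
We restrict to the region $\|x^\pm\|\le 1$. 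The quadratic corner term is then bounded by $\|\mathsf r^{-1}\|\le e^{m_{n-1}+1}\le e^{|\mathbf m|+1}$, which is at most $e^{|\mathbf m|}$ up to a constant, so it fits in the long side. The error terms are $O(De^{|\mathbf m|-n\tau/2})$; the hypothesis $m_{n-1}\le n\tau/2$ (hence $|\mathbf m|\le (n-2)n\tau/2$, though only $m_{n-1}$ enters) keeps them under control. To make this work we restrict further to $\|y^\pm\|\le e^{-n\tau/2}e^{\,?}$; the bookkeeping here needs care so that $y^\pm$ stays small and the error $e^{-n\tau/2}\det v/\det\mathsf r\cdot\|y\|$-type terms remain $O(1)$. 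Choosing $R=R_{n,D}$ large then places every such $k$ in the box.

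It remains to compute the measure of this region. By \eqref{eq:jf-haar-hemisphere}, $dk\asymp dm\,dy^-dy^+$. The region $\{\|x^\pm\|\le1\}$ corresponds to $y^\pm$ lying in the ellipsoids $e^{-n\tau/2}\mathsf r^{-1}(\text{unit ball})$, each of volume $\asymp e^{-n(n-1)\tau/2}(\det\mathsf r)^{-1}\asymp e^{-n(n-1)\tau/2}e^{|\mathbf m|}$. The product of the two volumes is $\asymp e^{-n(n-1)\tau+2|\mathbf m|}$, which is exactly \eqref{eq:one-long-direction-lower-bound}.

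The main obstacle is ensuring these ellipsoids stay inside the coordinate chart, i.e.\ $\|y^\pm\|<1/2$. This requires $e^{-n\tau/2}r_{n-1}^{-1}\le e^{-n\tau/2+m_{n-1}+1}$ to be small, and the hypothesis $m_{n-1}\le n\tau/2$ gives only $O(1)$. If the chart fails, we shrink $x^\pm$ to a ball of radius $\rho=\rho_n$, which costs only a constant factor in the volume. Alternatively, we cap each semi-axis of the ellipsoid at $1/2$; but that loses volume exactly when $m_{n-1}\approx n\tau/2$, so the first fix (shrinking to radius $\rho$) is the one to try.
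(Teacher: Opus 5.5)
Your construction is essentially the paper's. The paper works in the chart $\operatorname{diag}(h,1)\exp\mathsf k(x)$ with the box $|x_i|\le\varepsilon_0e^{-n\tau/2+m_i}$, which up to constants is your ellipsoid $\|x^\pm\|\le\rho$. Your fix of shrinking to radius $\rho$ so that the ellipsoid stays in the chart is exactly its $\varepsilon_0$, and your volume count $e^{-n(n-1)\tau+2|\mathbf m|}$ is correct.

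The gap is in showing that the non-corner entries are $O(1)$; the box $\mathcal B_{|\mathbf m|,D}(R)$ allows only $R$ there. You control $|\det v|/\det\mathsf r$ only by $De^{|\mathbf m|+n-1}$. That makes your off-diagonal error $O(De^{|\mathbf m|-n\tau/2})$, or $O(De^{|\mathbf m|+m_{n-1}-n\tau})$ once you use $\|y^\pm\|\le\rho e^{1-n\tau/2+m_{n-1}}$. For $n\ge4$ this is unbounded: with $n=4$ and $m_2=m_3=2\tau$ both expressions are $e^{2\tau}$. The upper-left error $e^{-n\tau}\frac{|\det v|}{\det\mathsf r}\|y^-\|\|y^+\|$ has the same problem for $n\ge5$. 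Restricting $y^\pm$ further cannot repair this. The main off-diagonal term $x^\pm$ already forces the semi-axes to be $\lesssim e^{-n\tau/2+m_i}$, and the volume bound needs a fixed proportion of that ellipsoid. So your remark that "only $m_{n-1}$ enters" is not right: $|\mathbf m|$ enters through your bound on $\det v$, and the $e^{?}$ bookkeeping you left open cannot be closed with that bound.

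The missing observation is the ordering $\kappa_n(v)\le\kappa_{n-1}(v)$ of singular values. It gives
\[
\frac{|\det v|}{\det\mathsf r}=e^{(n-1)\tau+\kappa_n(v)}\le e^{(n-1)\tau+\kappa_{n-1}(v)}=e^{n\tau}r_{n-1}.
\]
Combine this with $\|y^\pm\|\le\rho e^{-n\tau/2}r_{n-1}^{-1}$. The off-diagonal error becomes $e^{-n\tau/2}\frac{|\det v|}{\det\mathsf r}\|y^\mp\|\le\rho$. The upper-left error becomes $\le\rho^2e^{-n\tau}r_{n-1}^{-1}\le e\rho^2$, using $m_{n-1}\le n\tau/2$. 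The bound $|\det v|\le D$ is then needed only for the $(n,n)$-entry, where $De^{|\mathbf m|+n-1}$ is exactly the right size.

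This is what the paper means when it says the final singular direction satisfies the same bound because $\lambda_n\le\lambda_i$. With this inserted, and $R=R_{n,D}$ chosen accordingly, your argument goes through.
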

\begin{proof}
By the singular-value decomposition, it suffices to consider
$$
v=
\operatorname{diag}
\left(e^{\lambda_1},\ldots,e^{\lambda_{n-1}},
\epsilon e^{\lambda_n}\right),
$$
where $\epsilon\in\{-1,0,1\}$,
$\tau-m_i-1<\lambda_i\le\tau-m_i$ for $1\le i\le n-1$,
and the last entry is understood to be zero when $\det v=0$.
Set
$$
q_i:=\frac n2\tau-m_i\ge0,
\qquad
\rho_i:=\varepsilon_0e^{-q_i},
$$
where $\varepsilon_0>0$ is sufficiently small in terms of $n$. For
$x=(x_1,\ldots,x_{n-1})$, set
$$
\mathsf k(x):=\sum_{i=1}^{n-1}x_i(E_{in}-E_{ni})
\in\mathfrak{so}(n).
$$
Fix a sufficiently small neighborhood $\mathscr U$ of the
identity in $\SO(n-1)$. The map
$$
(h,x)\mapsto
\begin{pmatrix}h&0\\0&1\end{pmatrix}\exp\mathsf k(x)
$$
is a smooth coordinate chart near the identity in $\SO(n)$, with
Jacobian bounded above and below. Let $\mathcal K_{\tau,\mathbf m}$ be
the image of
$$
h\in\mathscr U,
\qquad
|x_i|\le\rho_i\quad(1\le i\le n-1).
$$
Then
\begin{equation}\label{eq:moderate-K-volume}
\operatorname{vol}_{\SO(n)}(\mathcal K_{\tau,\mathbf m})
\gg_n
\prod_{i=1}^{n-1}\rho_i
=
\varepsilon_0^{n-1}e^{-n(n-1)\tau/2+|\mathbf m|}.
\end{equation}

We claim that, for all $k_1,k_2\in\mathcal K_{\tau,\mathbf m}$,
$$
a_{\tau/2}(k_1,k_2)\cdot v
\in\mathcal B_{|\mathbf m|,D}(R)
$$
for a sufficiently large $R=R_{n,D}$. Every entry of the upper-left block is
$O_n(1)$ because $e^{-\tau+\lambda_i}=O(1)$. The contribution of the
$i$th singular direction to a mixed entry is
$$
O_n\left(e^{(n-2)\tau/2+\lambda_i}\rho_i\right)
=O_n(1).
$$
The final singular direction satisfies the same bound because
$\lambda_n\le\lambda_i$.

For the $(n,n)$-entry, the final singular direction contributes zero
when $\det v=0$ and otherwise contributes
$$
e^{(n-1)\tau+\lambda_n}
=
|\det v|
\exp\left((n-1)\tau-\sum_{i=1}^{n-1}\lambda_i\right)
\ll_{n,D}e^{|\mathbf m|}.
$$
For $i<n$, the corresponding contribution satisfies
$$
e^{(n-1)\tau+\lambda_i}\rho_i^2
\ll_n
e^{m_i}
\le e^{|\mathbf m|}.
$$
This proves the claim. Applying \eqref{eq:moderate-K-volume} to each factor of $K$ proves
\eqref{eq:one-long-direction-lower-bound}.
\end{proof}

\begin{lemma}[Deep shells]
\label{lem:deep-shell-weyl-box}
There exist $R_n>1$ and $c_n>0$ such that, for every $\tau\ge1$ and
$v\in\M_n(\R)$ satisfying
\begin{equation}\label{eq:deep-shell-hypotheses}
\tau-1<\kappa_1(v)\le \tau,
\qquad
\kappa_{n-1}(v)\le-\frac{(n-2)}{2}\tau,
\end{equation}
one has
\begin{equation}\label{eq:deep-shell-lower-bound}
\operatorname{vol}_K
\left\{k\in K:
\|a_{\tau/2}k\cdot v\|\le R_n
\right\}
\ge
c_n e^{-\bigl(n(n-1)-\frac n2\bigr)\tau}.
\end{equation}
\end{lemma}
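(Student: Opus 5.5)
Following the proof of \Cref{lem:one-long-direction-box}, we reduce by the singular-value decomposition to a diagonal matrix
$$
v=\operatorname{diag}\bigl(e^{\lambda_1},\ldots,e^{\lambda_{n-1}},\epsilon e^{\lambda_n}\bigr),\qquad \tau-1<\lambda_1\le\tau,\quad \lambda_{n-1}\le -\tfrac{n-2}{2}\tau .
$$
The monotonicity $\lambda_1\ge\cdots\ge\lambda_n$ gives $e^{\lambda_i}\le e^{-(n-2)\tau/2}$ for $i\ge n-1$. We then exhibit an explicit piece of $K=\SO(n)\times\SO(n)$ of the required volume on which $a_{\tau/2}k\cdot v$ stays bounded. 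Recall that $a_{\tau/2}$ acts on the $(i,j)$-entry by $e^{-\tau}$ when $i,j<n$, by $e^{(n-2)\tau/2}$ on mixed entries, and by $e^{(n-1)\tau}$ on the $(n,n)$-entry.

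\textbf{Choice of the chart.} We use the chart near the identity of the previous proof, $k_j=\operatorname{diag}(h_j,1)\exp\mathsf k(x^{(j)})$, with $h_j\in\mathscr U$, but now with asymmetric radii.
\begin{itemize}
\item For the large directions $1\le i\le n-2$, we force rotation away from the $n$th coordinate and take $|x^{(j)}_i|\le\rho_i:=\varepsilon_0 e^{-(n-1)\tau/2-\lambda_i/2}$. The heuristic is that the $(n,n)$-contribution $e^{(n-1)\tau+\lambda_i}\rho_i^2$ is then $O(1)$. Since $\lambda_i\le\tau$, this gives at least $\rho_i\ge\varepsilon_0e^{-n\tau/2}$.
\item For direction $n-1$, which is already small, we allow a full-size rotation, $|x^{(j)}_{n-1}|\le\varepsilon_0$. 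This is permissible because $e^{(n-1)\tau+\lambda_{n-1}}\le e^{n\tau/2}$ is not bounded, so it does not work as stated.
\end{itemize}

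\textbf{Adjustment.} The second choice has to be corrected: the natural bound on the last two directions is $e^{(n-1)\tau}e^{\lambda}\rho^2\lesssim1$, giving $\rho\sim e^{-(n-1)\tau/2+(n-2)\tau/4}=e^{-n\tau/4}$. The better route is not to localize $k$ near the identity but near a permutation that swaps the $(n-1)$-th and $n$-th coordinates. Since $\lambda_{n-1}$ is small, the singular direction mapped to the expanding $(n,n)$-slot can be any unit vector in the span of the last two small singular directions. Rotations within $\mathrm{span}(e_{n-1},e_n)$ then cost nothing: the mixed entries pick up $e^{(n-2)\tau/2}e^{\lambda_{n-1}}\le1$, and the $(n,n)$-entry pick up $e^{(n-1)\tau+\lambda_{n-1}}\theta^2$. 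This requires $\theta\lesssim e^{-(n-1)\tau/2-\lambda_{n-1}/2}$, which is larger than $e^{-n\tau/2}$ by a factor $e^{(\tau/2)-\lambda_{n-1}/2}\ge e^{\tau/2}$.

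\textbf{Volume count.} Counting the $2(n-1)$ small parameters over both factors gives a volume
$$
\gg\prod_{i\le n-2}\rho_i^2\cdot\theta^2\ \ge\ e^{-n(n-2)\tau}\cdot e^{-(n-1)\tau-\lambda_{n-1}}\ \ge\ e^{-(n(n-1)-\frac n2)\tau}.
$$
This holds since $\lambda_{n-1}\le-(n-2)\tau/2$ forces $-(n-1)\tau-\lambda_{n-1}\ge-\tau\cdot n/2$.

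\textbf{Boundedness check.} The remaining work is routine entry-by-entry bookkeeping. The upper-left block is bounded by $e^{-\tau+\lambda_1}\le1$. The mixed entries are bounded by $e^{(n-2)\tau/2+\lambda_i}\rho_i\ll e^{(\lambda_i-\tau)/2}\le1$, and cross terms $\rho_i\rho_j$ are dominated by Cauchy–Schwarz. This yields a uniform $R_n$.

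The main obstacle is getting this exponent bookkeeping exactly right, in particular verifying that the cross terms in the $(n,n)$-entry among different small directions, and the contribution of $\lambda_n$, remain $O(1)$ on the chosen region.
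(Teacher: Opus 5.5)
There is a genuine gap, and it sits in the last two singular directions, precisely the ``contribution of $\lambda_n$'' you defer as routine bookkeeping. Your treatment of the large directions $1\le i\le n-2$ is fine: your radii $\rho_i$, capped at $\varepsilon_0$, are even larger than the paper's $\varepsilon_0e^{-n\tau/2}$. The mixed entries coming from the last two directions are also $O(1)$, since $e^{(n-2)\tau/2+\lambda_{n-1}}\le1$.

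The problem is the $(n,n)$-entry. The last two directions contribute $e^{(n-1)\tau}\bigl(e^{\lambda_{n-1}}(k_1)_{n,n-1}(k_2)_{n,n-1}+\epsilon e^{\lambda_n}(k_1)_{nn}(k_2)_{nn}\bigr)$ to it. The hypotheses only give $\lambda_n\le\lambda_{n-1}\le-\frac{n-2}{2}\tau$, so both coefficients $e^{(n-1)\tau+\lambda_{n-1}}$ and $e^{(n-1)\tau+\lambda_n}$ can be as large as $e^{n\tau/2}$. Your claim that a $\theta$-rotation costs only $e^{(n-1)\tau+\lambda_{n-1}}\theta^2$ ignores the second term, and it fails in every placement of the chart:
\begin{itemize}
\item With both factors near the identity, the $\lambda_n$-term is $\approx\epsilon e^{(n-1)\tau+\lambda_n}$.
\item With both factors near the swap, the $\lambda_{n-1}$-term is $\approx e^{(n-1)\tau+\lambda_{n-1}}$.
\item With one factor near each, the dependence on the two angles $\alpha,\beta$ is linear, $\approx e^{(n-1)\tau}\bigl(-e^{\lambda_{n-1}}\alpha+\epsilon e^{\lambda_n}\beta\bigr)$. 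On a positive proportion of your box $|\alpha|,|\beta|\le\theta=e^{-(n-1)\tau/2-\lambda_{n-1}/2}$ this has size $e^{(n-1)\tau/2+\lambda_{n-1}/2}$, which can reach $e^{n\tau/4}$.
\end{itemize}
In fact, as a function of the two angles this expression has a critical zero only when $e^{\lambda_n}=0$, so no $\theta\times\theta$ box of the area you need lies in the bounded set. A product box that ignores cancellation, $|\alpha|\lesssim e^{-(n-1)\tau-\lambda_{n-1}}$ and $|\beta|\lesssim e^{-(n-1)\tau-\lambda_n}$, has area only $e^{-n\tau}$ when $\lambda_n=\lambda_{n-1}=-\frac{n-2}{2}\tau$.

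The missing idea is to exploit cancellation along the zero curve of this expression. Take $k_1$ near the identity and $k_2$ near $w_0$, where $w_0e_{n-1}=e_n$ and $w_0e_n=-e_{n-1}$. The contribution then becomes $\cos\|x\|\cos\|y\|\,F_\epsilon(\alpha,\beta)$ with $F_\epsilon(\alpha,\beta)=-e^{\lambda_{n-1}}\sin\alpha\cos\beta+\epsilon e^{\lambda_n}\cos\alpha\sin\beta$. For every $\beta$ in a fixed small interval, $F_\epsilon(\cdot,\beta)$ has its zero $\tan\alpha=\epsilon e^{\lambda_n-\lambda_{n-1}}\tan\beta$ inside the chart, because $e^{\lambda_n}\le e^{\lambda_{n-1}}$. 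Moreover $|\partial_\alpha F_\epsilon|\asymp e^{\lambda_{n-1}}$ there. Hence $\{|F_\epsilon|\le\varepsilon_0e^{-(n-1)\tau}\}$ is a strip: full length in $\beta$, and of width $\asymp\min\{1,e^{-(n-1)\tau-\lambda_{n-1}}\}\ge e^{-n\tau/2}$ in $\alpha$. This is the same area your volume count uses, but it is realized by a thin strip around a curve rather than a product box, and only that shape keeps the $(n,n)$-entry bounded.
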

\begin{proof}
By the singular-value decomposition, we may assume that
$$
v=
\operatorname{diag}
(e^{\lambda_1},\ldots,e^{\lambda_{n-1}},
\epsilon e^{\lambda_n}),
$$
where $\lambda_i=\kappa_i(v)$ and we use the convention
$e^{-\infty}=0$. We set
$\epsilon=\operatorname{sgn}(\det v)$ if $\det v\ne0$ and
$\epsilon=0$ otherwise.
Let $\varepsilon_0>0$ be a sufficiently small constant.
For $1\le i\le n-1$, write
$X_i=E_{in}-E_{ni}$. Let $w_0\in\SO(n)$ be the identity on
$\langle e_1,\ldots,e_{n-2}\rangle$ and satisfy
$$
w_0e_{n-1}=e_n,
\qquad
w_0e_n=-e_{n-1}.
$$
Near the identity and $w_0$, respectively, we use the coordinate charts
$$
\begin{aligned}
k_1&=
\begin{pmatrix}h_1&0\\0&1\end{pmatrix}
\exp\left(\sum_{i=1}^{n-2}x_iX_i\right)
\exp(\alpha X_{n-1}),\\
k_2&=
\begin{pmatrix}h_2&0\\0&1\end{pmatrix}
\exp\left(\sum_{i=1}^{n-2}y_iX_i\right)
\exp(\beta X_{n-1})w_0,
\end{aligned}
$$
where $h_1,h_2$ range over fixed sufficiently small neighborhoods of the identity in
$\SO(n-1)$. These are full-dimensional smooth charts, and their
Jacobians are bounded above and below on fixed coordinate boxes.

Restrict the coordinates by
\begin{equation}\label{eq:smallboxrestriction}
|x_i|,|y_i|\le\varepsilon_0e^{-n\tau/2}
\quad(1\le i\le n-2),
\qquad
|\alpha|,|\beta|\le\varepsilon_0.
\end{equation}
Define
$$
F_\epsilon(\alpha,\beta)
:=
-e^{\lambda_{n-1}}\sin\alpha\cos\beta
+\epsilon e^{\lambda_n}\cos\alpha\sin\beta.
$$
Direct multiplication shows that the contribution of the last two
singular directions to the $(n,n)$-entry is
$\cos\|x\|\cos\|y\|F_\epsilon(\alpha,\beta)$; the remaining
contribution is
$$
O\left(\sum_{i=1}^{n-2}e^{\lambda_i}|x_i||y_i|\right).
$$
We impose the additional condition
\begin{equation}\label{eq:deep-strip-condition}
|F_\epsilon(\alpha,\beta)|
\le
\varepsilon_0e^{-(n-1)\tau}.
\end{equation}

If $e^{\lambda_{n-1}}=0$, then also $e^{\lambda_n}=0$, so this
condition is automatic. Otherwise, for every $\beta$ in a fixed
smaller interval, the equation $F_\epsilon(\alpha,\beta)=0$ has a
solution satisfying
$$
\tan\alpha
=
\epsilon e^{\lambda_n-\lambda_{n-1}}\tan\beta.
$$
Since $e^{\lambda_n}\le e^{\lambda_{n-1}}$, this solution lies in the
chosen $\alpha$-interval. After decreasing $\varepsilon_0$ if necessary, we also
have
$$
|\partial_\alpha F_\epsilon(\alpha,\beta)|
\asymp e^{\lambda_{n-1}}
$$
throughout the coordinate square. Hence the admissible
$(\alpha,\beta)$-set has area
$$
\gg
\min\left\{1,e^{-(n-1)\tau-\lambda_{n-1}}\right\}
\gg e^{-n\tau/2},
$$
where the final inequality follows from
\eqref{eq:deep-shell-hypotheses}. It follows that the set of pairs
$(k_1,k_2)$ satisfying
\eqref{eq:smallboxrestriction} and
\eqref{eq:deep-strip-condition} has Haar measure
\begin{equation}\label{eq:deep-chart-volume}
\gg
e^{-n(n-2)\tau}e^{-n\tau/2}
=e^{-\bigl(n(n-1)-\frac n2\bigr)\tau}.
\end{equation}

It remains to bound the image of $v$. An entry in the upper-left
$(n-1)\times(n-1)$ block of
$b_{\tau/2}k_1vk_2^{\intercal}b_{\tau/2}$ is bounded by
$$
O(e^{-\tau}e^{\lambda_1})=O(1).
$$
A mixed entry has weight $e^{(n-2)\tau/2}$. The contribution of a
singular direction $i\le n-2$ contains one of the factors $x_i,y_i$
and is therefore
$$
O\left(
e^{(n-2)\tau/2}e^{\lambda_i}e^{-n\tau/2}
\right)
=
O(e^{\lambda_i-\tau})
=
O(1),
$$
since $\lambda_i\le\lambda_1\le\tau$. The contribution of either of
the last two singular directions is also $O(1)$, since
$e^{(n-2)\tau/2}e^{\lambda_{n-1}}\le1$.

Finally, the $(n,n)$-entry has weight $e^{(n-1)\tau}$. Its
contribution from the last two singular directions is
$$
\cos\|x\|\cos\|y\|F_\epsilon(\alpha,\beta),
$$
and hence is $O(1)$ after multiplication by this weight, by
\eqref{eq:deep-strip-condition}. For $i\le n-2$, both a left and a
right small coordinate occur, so the corresponding contribution is
$$
O\left(
e^{(n-1)\tau}e^{\lambda_i}e^{-n\tau}
\right)
=
O(e^{\lambda_i-\tau})
=
O(1).
$$
Thus all entries are uniformly bounded on the chosen coordinate set.
Choosing $R_n$ sufficiently large gives
$\|a_{\tau/2}(k_1,k_2)\cdot v\|\le R_n$. Together with
\eqref{eq:deep-chart-volume}, this proves
\eqref{eq:deep-shell-lower-bound}.
\end{proof}

\begin{lemma}[Averaging principle]
\label{lem:shell-averaging}
Let $E,\mathscr B\subset\M_n(\R)$ be Borel sets, and suppose that for some $t\ge0$ and $p>0$,
$$
\int_K\mathbf 1_{\mathscr B}(a_tk\cdot v)\,dk\ge p
\qquad(v\in E).
$$
 Then for every discrete set $S\subset\M_n(\R)$,
\begin{align*}
\operatorname{vol}(E)
&\le p^{-1}\operatorname{vol}(\mathscr B),\\
\#(S\cap E)
&\le p^{-1}\int_K
\#\{v\in S:a_tk\cdot v\in\mathscr B\}\,dk.
\end{align*}
\end{lemma}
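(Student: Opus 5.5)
Both inequalities follow from a single Fubini computation against the probability Haar measure $dk$ on $K$, combined with the invariance of Lebesgue measure under the unimodular maps $v\mapsto a_tk\cdot v$. Note that $a_t=(b_t,b_t)$ with $b_t\in\SL_n(\R)$ and $k\in\SO(n)\times\SO(n)$, so the induced linear map $v\mapsto g_1vg_2^{\intercal}$ on $\M_n(\R)$ has determinant $(\det g_1)^n(\det g_2)^n=1$.

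For the volume bound, we integrate the hypothesis over $E$:
$$
p\operatorname{vol}(E)\le\int_E\int_K\mathbf 1_{\mathscr B}(a_tk\cdot v)\,dk\,dv
\le\int_K\int_{\M_n(\R)}\mathbf 1_{\mathscr B}(a_tk\cdot v)\,dv\,dk .
$$
Tonelli applies because the integrand is a nonnegative Borel function on $K\times\M_n(\R)$; the map $(k,v)\mapsto a_tk\cdot v$ is continuous and $\mathscr B$ is Borel. For each fixed $k$, the substitution $u=a_tk\cdot v$ has Jacobian $1$, so the inner integral equals $\operatorname{vol}(\mathscr B)$. Since $dk$ is a probability measure, we obtain $p\operatorname{vol}(E)\le\operatorname{vol}(\mathscr B)$.

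For the counting bound, we replace Lebesgue measure by counting measure on $S\cap E$:
$$
p\,\#(S\cap E)\le\sum_{v\in S\cap E}\int_K\mathbf 1_{\mathscr B}(a_tk\cdot v)\,dk
\le\int_K\sum_{v\in S}\mathbf 1_{\mathscr B}(a_tk\cdot v)\,dk .
$$
The right-hand side is exactly $\int_K\#\{v\in S:a_tk\cdot v\in\mathscr B\}\,dk$. Since $S$ is discrete it is countable, so the interchange of sum and integral is monotone convergence. The sum is measurable in $k$ as a countable sum of measurable functions. If $S\cap E$ is infinite, the same inequality applied to its finite subsets shows that the right-hand side is infinite, so the bound still holds trivially.

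There is no real obstacle here. The only points to check are the unimodularity of the $a_tK$-action, which gives the measure preservation in the volume bound, and the measurability and countability remarks that justify the interchanges.
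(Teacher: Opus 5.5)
Your proof is correct and follows essentially the same route as the paper: integrate the hypothesis over $E$ (resp.\ against counting measure on $S\cap E$), interchange by Tonelli, use that $v\mapsto a_tk\cdot v$ preserves Lebesgue measure so that $\operatorname{vol}\bigl((a_tk)^{-1}\mathscr B\bigr)=\operatorname{vol}(\mathscr B)$, and enlarge $S\cap E$ to $S$ in the counting case. Your added remarks on unimodularity ($\det(g_1\otimes g_2)=1$), Borel measurability, and countability of discrete sets just make explicit the points the paper leaves implicit.
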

\begin{proof}
By Tonelli's theorem,
$$
\begin{aligned}
p\,\operatorname{vol}(E)
&\le
\int_E\int_K
\mathbf 1_{\mathscr B}(a_tk\cdot v)\,dk\,dv \le
\int_K\operatorname{vol}\bigl((a_tk)^{-1}\mathscr B\bigr)\,dk
=
\operatorname{vol}(\mathscr B).
\end{aligned}
$$
The same calculation with counting measure on $S\cap E$, followed by
enlarging $S\cap E$ to $S$ in the resulting counting function, proves
the second assertion.
\end{proof}

\begin{proposition}[Local shell estimates]
\label{lem:count-one-long-direction-shell}
Let $D>0$, let $\tau\ge1$ be an integer, and let
$
\mathbf m\in\mathbb Z_{\ge0}^{n-1}
$
satisfy
$
0=m_1\le\cdots\le m_{n-1}\le n\tau/2.
$
Then
\begin{equation}\label{eq:moderate-shell-count}
\begin{aligned}
\#\bigl(\Lambda_{\operatorname{ni}}
\cap\mathcal S_{\tau,\mathbf m}(D)\bigr)
&\ll_{\Lambda,D}e^{n(n-1)\tau-|\mathbf m|}\text{ and }\operatorname{vol}\bigl(\mathcal S_{\tau,\mathbf m}(D)\bigr)
&\ll_D e^{n(n-1)\tau-|\mathbf m|}.
\end{aligned}
\end{equation}
Moreover, for every $\tau\ge1$,
\begin{equation}\label{eq:deep-shell-count}
\begin{aligned}
\#\bigl(\Lambda_{\operatorname{ni}}\cap\mathcal D_\tau\bigr)
&\ll_\Lambda e^{\bigl(n(n-1)-\frac n2\bigr)\tau}\;\; \text{ and }\;\; \operatorname{vol}(\mathcal D_\tau)
&\ll e^{\bigl(n(n-1)-\frac n2\bigr)\tau}.
\end{aligned}
\end{equation}
\end{proposition}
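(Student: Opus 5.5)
The plan is to combine the geometric lower bounds of \Cref{lem:one-long-direction-box} and \Cref{lem:deep-shell-weyl-box} with the averaging principle \Cref{lem:shell-averaging}. The volume bounds then follow from the volumes of the target boxes, and the count bounds from the modified Lipschitz principle \Cref{lem:modifiedlipschitz}(ii) together with the uniform moment bound for $\widehat\alpha_{\eta,M}$ along $a_{\tau/2}K$.

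\emph{Moderate shells.} Take $E=\mathcal S_{\tau,\mathbf m}(D)$, $t=\tau/2$, and $\mathscr B=\mathcal B_{|\mathbf m|,D}(R)$ with $R=R_{n,D}$. By \Cref{lem:one-long-direction-box}, the hypothesis of \Cref{lem:shell-averaging} holds with $p=c\,e^{-n(n-1)\tau+2|\mathbf m|}$. For the volume bound, I compute $\operatorname{vol}(\mathscr B)\ll_{D} e^{|\mathbf m|}$, since only the $(n,n)$ side is long. This gives
\[
\operatorname{vol}(\mathcal S_{\tau,\mathbf m}(D))\ll e^{n(n-1)\tau-2|\mathbf m|}e^{|\mathbf m|}.
\]
For the count, I apply \Cref{lem:shell-averaging} with $S=\Lambda_{\operatorname{ni}}$. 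The box $\mathcal B_{|\mathbf m|,D}(R)$ is contained in $\mathscr B_{R'}(L)$ of \eqref{brl}, with $R'=R(1+D)$ and $L=e^{|\mathbf m|}$. Since $a_{\tau/2}k\in H$ preserves the isotropic families, \Cref{lem:modifiedlipschitz}(ii) bounds the inner count by $\ll e^{|\mathbf m|}(1+\widehat\alpha_{\eta,M}(a_{\tau/2}k;\Lambda))$. Here the hypothesis that quasi-null subspaces are isotropic holds by \Cref{lem:quasinull-isotropic}. Integrating over $K$ and invoking \Cref{thm:modifieduniformboundedness} gives the first-moment bound $\sup_t\int_K\widehat\alpha_{\eta,M}(a_tk;\Lambda)\,dk<\infty$, which follows from the $(1+\theta)$-moment by H\"older. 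The count bound follows with the same exponent $n(n-1)\tau-|\mathbf m|$.

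\emph{Deep shells.} The argument is identical, using \Cref{lem:deep-shell-weyl-box} in place of \Cref{lem:one-long-direction-box}. Take $\mathscr B=\{\|x\|\le R_n\}$, which lies in $\mathscr B_{R''}(1)$ for a suitable $R''$, so $L=1$. Then $p=c_n e^{-(n(n-1)-n/2)\tau}$, the box has bounded volume, and the count per $k$ is $\ll 1+\widehat\alpha_{\eta,M}(a_{\tau/2}k;\Lambda)$. Averaging over $K$ as before yields \eqref{eq:deep-shell-count}. Since $\tau$ need not be an integer here, I will check that the moment bound is uniform over all real $t\ge0$; \Cref{thm:modifieduniformboundedness} already states it in that form.

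\emph{Main obstacle.} The delicate point is the bookkeeping: the target boxes must fit the shape $\mathscr B_R(L)$ of \Cref{lem:modifiedlipschitz} with $L$ linear in $e^{|\mathbf m|}$, and the shell must sit inside the measurable framework of \Cref{lem:shell-averaging}. I expect both to be routine given the constructions already made, since the one long direction in \Cref{lem:one-long-direction-box} was designed to be exactly the $(n,n)$ coordinate. The implied constants depend on $\Lambda$ only through the supremum in \Cref{thm:modifieduniformboundedness}, and on $D$ through $R_{n,D}$ and $c_{n,D}$.
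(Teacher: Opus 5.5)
Your proposal is correct and follows the paper's proof essentially step for step. Both combine the averaging principle (\Cref{lem:shell-averaging}) with the lower bounds of \Cref{lem:one-long-direction-box} and \Cref{lem:deep-shell-weyl-box}, use the containment $\mathcal B_{|\mathbf m|,D}(R)\subset\mathscr B_{R(1+D)}(e^{|\mathbf m|})$ to apply \Cref{lem:modifiedlipschitz}(ii), and bound the resulting $K$-average by \Cref{thm:modifieduniformboundedness}. The only difference is cosmetic: for deep shells the paper averages over $\mathscr B_R(1)$ itself rather than the norm ball $\{\|x\|\le R_n\}$ it contains, which changes nothing.
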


\begin{proof}
Let $R=R_{n,D}$ be as in \Cref{lem:one-long-direction-box}. Since
\begin{equation}\label{eq:long-box-containment}
\mathcal B_{|\mathbf m|,D}(R)
\subset\mathscr B_{R(1+D)}(e^{|\mathbf m|}),
\end{equation}
\Cref{lem:modifiedlipschitz}(ii) gives
$$
\#\left\{v\in\Lambda_{\operatorname{ni}}:
a_{\tau/2}k\cdot v\in\mathcal B_{|\mathbf m|,D}(R)
\right\}
\ll_D
e^{|\mathbf m|}
\bigl(1+\widehat\alpha_{\eta,M}(a_{\tau/2}k;\Lambda)\bigr).
$$
Apply \Cref{lem:shell-averaging} with
$$
E=\mathcal S_{\tau,\mathbf m}(D),
\quad
\mathscr B=\mathcal B_{|\mathbf m|,D}(R),
\quad
p\asymp e^{-n(n-1)\tau+2|\mathbf m|}.
$$
The counting assertion of that lemma, together with
\Cref{thm:modifieduniformboundedness}, gives
\eqref{eq:moderate-shell-count}. Since
$\operatorname{vol}(\mathscr B)\ll_D e^{|\mathbf m|}$, its volume
assertion gives \eqref{eq:moderate-shell-count}.

Choose $R\ge1$, depending only on $n$ and the fixed norm, such that
$$
\{u\in\M_n(\mathbb R):\|u\|\le R_n\}
\subset\mathscr B_R(1),
$$
where $R_n$ is as in \Cref{lem:deep-shell-weyl-box}.
Apply \Cref{lem:shell-averaging} with
$E=\mathcal D_\tau$, $\mathscr B=\mathscr B_R(1)$, and
$p\asymp e^{-\bigl(n(n-1)-\frac n2\bigr)\tau}$. The counting assertion, followed by
\Cref{lem:modifiedlipschitz}(ii) and
\Cref{thm:modifieduniformboundedness}, gives
the first estimate in \eqref{eq:deep-shell-count}; the volume assertion gives
the second estimate in \eqref{eq:deep-shell-count}.
\end{proof}

The shell estimates imply both the compact-window bound needed in the kernel
comparison and the cusp-tail bound needed for norm balls.

\begin{corollary}[Counting in a compact singular-value window]
\label{lem:weightedcontributioncount}
Let $\mathscr C\subset B_0^+$ be compact and let $D>0$.  Then uniformly for all $T\ge2$,
$$
\#\left\{v\in\Lambda_{\operatorname{ni}}:
\widehat v\ne0,\quad
T^{-1}b^0(v)\in\mathscr C,
\ |\det v|\le D
\right\}
\ll_{\Lambda,\mathscr C,D}
T^{n(n-1)}.
$$
\end{corollary}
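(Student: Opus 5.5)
The plan is to reduce the count to finitely many non-deep singular-value shells and apply \Cref{lem:count-one-long-direction-shell}. Since $\mathscr C\subset B_0^+$ is compact, there is $A\ge1$ such that every $\mathsf r=\diag(r_1,\ldots,r_{n-1})\in\mathscr C$ satisfies $A^{-1}\le r_{n-1}\le\cdots\le r_1\le A$. Take $v$ in the set being counted, so $\widehat v\ne0$ and $T^{-1}b^0(v)\in\mathscr C$. Then
$$
\log T-\log A\le\kappa_{n-1}(v)\le\cdots\le\kappa_1(v)\le\log T+\log A .
$$
In particular $\kappa_1(v)>0$ once $T$ is large; bounded $T$ only change the constant.

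Put $\tau:=\lceil\kappa_1(v)\rceil$ and $m_i:=\tau-\lceil\kappa_i(v)\rceil$. Then $\tau=\log T+O_{\mathscr C}(1)$, and every $m_i$ lies in a bounded range $0\le m_i\le m_0(\mathscr C)$. Hence, for $T$ large in terms of $\mathscr C$, we have $m_{n-1}<n\tau/2$, and $v$ is not in the deep set $\mathcal D_\tau$, because $\kappa_{n-1}(v)\ge\log T-\log A>-\frac{n-2}{2}\tau$. By \eqref{eq:non-deep-deficits}, together with $|\det v|\le D$, we get $v\in\mathcal S_{\tau,\mathbf m}(D)$. So $\tau$ ranges over $O_{\mathscr C}(1)$ integers near $\log T$, and $\mathbf m$ ranges over a finite set of size $O_{\mathscr C}(1)$. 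The counted set is therefore contained in a union of $O_{\mathscr C}(1)$ sets of the form $\Lambda_{\operatorname{ni}}\cap\mathcal S_{\tau,\mathbf m}(D)$.

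Applying \eqref{eq:moderate-shell-count} to each of these sets gives the bound $\ll_{\Lambda,D}e^{n(n-1)\tau-|\mathbf m|}\le e^{n(n-1)\tau}\ll_{\mathscr C}T^{n(n-1)}$. Summing over the boundedly many pairs $(\tau,\mathbf m)$ gives the claim uniformly in $T\ge2$.

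The only point needing care is that the shell estimate is valid for the $\tau$ arising here. It requires $\tau$ to be an integer $\ge1$ and $m_{n-1}\le n\tau/2$. Both hold for $T\ge T_0(\mathscr C)$, and the range $2\le T\le T_0$ is covered because the counted set is then finite. All the substantive input, namely the uniform moment bound of \Cref{thm:modifieduniformboundedness} entering through \Cref{lem:modifiedlipschitz}, is already contained in \Cref{lem:count-one-long-direction-shell}. So I expect no real obstacle beyond this bookkeeping.
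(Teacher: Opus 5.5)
Your proposal is correct and follows the paper's proof essentially verbatim: you localize $\kappa_i(v)=\log T+O_{\mathscr C}(1)$ for $1\le i\le n-1$, place $v$ in one of $O_{\mathscr C}(1)$ non-deep shells $\mathcal S_{\tau,\mathbf m}(D)$ with $\tau=\log T+O_{\mathscr C}(1)$ and bounded $\mathbf m$, and apply \eqref{eq:moderate-shell-count} to each shell. One small correction for the range $2\le T\le T_0$: finiteness of the counted set for each individual $T$ is not enough. What gives a bound uniform in $T$ is that all counted vectors lie in a fixed ball of radius $\ll_{\mathscr C}T_0$, and this is what the paper's enlarging of the implied constant amounts to.
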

\begin{proof}
Let $v$ be a matrix in the set being counted. Then
$
\kappa_i(v)=\log T+O_{\mathscr C}(1)$
for all $1\le i\le n-1$.
Consequently, all such matrices lie in
$O_{\mathscr C}(1)$ shells
$\mathcal S_{\tau,\mathbf m}(D)$ satisfying
$
\tau=\log T+O_{\mathscr C}(1)$ and $|\mathbf m|=O_{\mathscr C}(1)$.
For sufficiently large $T$, these shells are non-deep, and
\eqref{eq:moderate-shell-count} bounds each of them by
$$
e^{n(n-1)\tau-|\mathbf m|}
\ll_{\mathscr C}T^{n(n-1)}.
$$
Summing over the $O_{\mathscr C}(1)$ possible shells proves the result
for large $T$. Enlarging the implied constant proves it for all $T\ge 2$.
\end{proof}

\begin{lemma}[Shell summation]\label{lem:deficit-sum}
For every integer $\ell\ge1$ and $A\ge0$,
\begin{equation}\label{eq:deficit-sum}
\sum_{\substack{(m_1,\ldots,m_\ell)\in\mathbb Z_{\ge0}^{\ell}\\
0=m_1\le\cdots\le m_\ell,\ m_\ell\ge A}}
\exp\left(-\sum_{i=2}^{\ell}m_i\right)
\ll_\ell e^{-A/2}.
\end{equation}
Moreover, for $s>1/2$ and $C_0\ge0$,
\begin{equation}\label{eq:outer-shell-sum}
\sum_{\nu\ge0}e^{-s\nu}
e^{-\frac12(R-\nu-C_0)_+}
\ll_{s,C_0}e^{-R/2}
\qquad(R\ge0)
\end{equation} where $x_+=\max\{x, 0\}$.
\end{lemma}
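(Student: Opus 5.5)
For \eqref{eq:deficit-sum}, the plan is to reparametrize by the increments. Put $d_i:=m_{i+1}-m_i\ge0$ for $1\le i\le\ell-1$. Since $m_1=0$, we have $m_i=d_1+\cdots+d_{i-1}$, and therefore
$$\sum_{i=2}^{\ell}m_i=\sum_{j=1}^{\ell-1}(\ell-j)\,d_j\ \ge\ \sum_{j=1}^{\ell-1}d_j=m_\ell .$$
This gives the pointwise bound $\exp(-\sum_{i\ge2}m_i)\le e^{-m_\ell/2}\exp(-\tfrac12\sum_j d_j)$. Summing over all tuples $(d_j)\in\mathbb Z_{\ge0}^{\ell-1}$ with $\sum_j d_j=m_\ell\ge A$, we obtain at most
$$e^{-A/2}\prod_{j=1}^{\ell-1}\sum_{d\ge0}e^{-d/2}\ll_\ell e^{-A/2}.$$
When $\ell=1$ the sum has a single term, nonzero only if $A\le0$, i.e.\ $A=0$, so the bound also holds there.

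For \eqref{eq:outer-shell-sum}, the plan is to split the sum over $\nu$ at $\nu=R-C_0$.

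On the range $\nu\le R-C_0$ we have $(R-\nu-C_0)_+=R-\nu-C_0$. The summand is then $e^{C_0/2}e^{-R/2}e^{-(s-1/2)\nu}$, and since $s>1/2$ the geometric series in $\nu$ converges. This range therefore contributes $\ll_{s,C_0}e^{-R/2}$.

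On the range $\nu>R-C_0$ the second factor equals $1$, so this range contributes $\sum_{\nu>R-C_0}e^{-s\nu}\ll_s e^{-s(R-C_0)}$ (when $R-C_0<0$ the sum runs over all $\nu\ge0$ and is simply $O_s(1)$). Because $s>1/2$, this is at most $e^{sC_0}e^{-R/2}$ for $R\ge C_0$. In the bounded range $0\le R<C_0$ both sides are comparable to constants depending on $s$ and $C_0$, so that range is absorbed into the implied constant.

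The only step that needs care is the weight identity $\sum_{i\ge2} m_i\ge m_\ell$ used in the first part; once it is in place, everything else reduces to convergent geometric series.
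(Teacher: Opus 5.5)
Your proof is correct and essentially follows the paper's argument: the second estimate uses the same split at $\nu=R-C_0$ into two geometric series, and the first uses the same two ingredients, namely $\sum_{i\ge2}m_i\ge m_\ell$ and spending half of the exponential decay to produce $e^{-A/2}$. The only difference is cosmetic. The paper fixes $m_\ell=m$, bounds the number of intermediate tuples by $(m+1)^{\ell-2}$, and sums $\sum_{m\ge A}(m+1)^{\ell-2}e^{-m}$. Your increment parametrization instead turns that count into a product of geometric series. Note also that $\sum_{i\ge2}m_i\ge m_\ell$ needs no weight identity, since $m_\ell$ is itself one of the nonnegative summands.
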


\begin{proof}
For $\ell=1$, \eqref{eq:deficit-sum} is immediate.  If $\ell\ge2$,
then, for fixed $m_\ell=m$, there are at most $(m+1)^{\ell-2}$ choices
of $(m_2,\ldots,m_{\ell-1})$, and
$\sum_{i=2}^{\ell}m_i\ge m$.  Hence the left-hand side of
\eqref{eq:deficit-sum} is at most
$$
\sum_{m\ge A}(m+1)^{\ell-2}e^{-m}\ll_\ell e^{-A/2}.
$$
To prove \eqref{eq:outer-shell-sum}, split the sum into the ranges
$\nu\le R-C_0$ and $\nu>R-C_0$.  In the first range, the summand equals
$e^{-(R-C_0)/2}e^{-(s-1/2)\nu}$; in the second, it is at most
$e^{-s\nu}$.  Both resulting geometric series are $O_{s,C_0}(e^{-R/2})$.
\end{proof}

For $D,R,T>0$, set
$$
\mathcal E_{D,R}(T):=
\left\{0\ne v\in\M_n(\mathbb R):
\|v\|<T,\ |\det v|\le D,\
\kappa_{n-1}(v)<\log T-R\right\}.
$$

\begin{proposition}[Uniform singular-value off-diagonal tails]
\label{prop:singular-value-cusp-tail}
Fix $D>0$ and a $K$-invariant norm on $\M_n(\mathbb R)$.  For
$R\ge1$ and $T\ge2$,
\begin{align}
\#\bigl(\Lambda_{\operatorname{ni}}\cap\mathcal E_{D,R}(T)\bigr)
&\ll_{\Lambda,D}e^{-R/2}T^{n(n-1)}+T^{n(n-1)-\frac n2},
\label{eq:singular-value-cusp-tail}\\
\operatorname{vol}\bigl(\mathcal E_{D,R}(T)\bigr)
&\ll_D e^{-R/2}T^{n(n-1)}+T^{n(n-1)-\frac n2}.
\label{eq:volume-singular-value-cusp-tail}
\end{align}
\end{proposition}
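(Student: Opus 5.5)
The plan is to decompose $\mathcal E_{D,R}(T)$ into the integer singular-value shells of \Cref{subsec:singular-value-shells} and sum the local shell estimates of \Cref{lem:count-one-long-direction-shell} using \Cref{lem:deficit-sum}. Every $v\in\mathcal E_{D,R}(T)$ with $\kappa_1(v)\le 0$ lies in a fixed bounded set. Such $v$ contribute $O_\Lambda(1)$ lattice points and $O_D(1)$ volume, which is absorbed into the right-hand sides since $T\ge2$. For the remaining $v$, put $\tau=\lceil\kappa_1(v)\rceil\ge1$. Since the norm is $K$-invariant and all norms are comparable, $\|v\|<T$ forces $\tau\le\log T+C_0$ for a constant $C_0$ depending only on the norm. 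Write $\nu:=\lfloor\log T\rfloor+C_0-\tau\ge0$. By the dichotomy recorded before \eqref{eq:non-deep-deficits}, $v$ lies either in the deep set $\mathcal D_\tau$ or in a non-deep shell $\mathcal S_{\tau,\mathbf m}(D)$ with $0=m_1\le\cdots\le m_{n-1}<n\tau/2$.

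First I handle the deep part. By \eqref{eq:deep-shell-count}, summing over $1\le\tau\le\log T+C_0$ gives a geometric series
$$\sum_{\tau\le\log T+C_0}e^{(n(n-1)-\frac n2)\tau}\ll T^{n(n-1)-\frac n2},$$
and this bounds both the count and the volume. That yields the second term in \eqref{eq:singular-value-cusp-tail} and \eqref{eq:volume-singular-value-cusp-tail}.

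Next I handle the non-deep part. If $v\in\mathcal S_{\tau,\mathbf m}(D)$, then $\kappa_{n-1}(v)>\tau-m_{n-1}-1$. The condition $\kappa_{n-1}(v)<\log T-R$ therefore forces
$$m_{n-1}>\tau-\log T+R-1\ge R-\nu-C_0-1.$$
Fix $\tau$, and sum \eqref{eq:moderate-shell-count} over the admissible $\mathbf m$ with $\ell=n-1$ and $A=(R-\nu-C_0-1)_+$ in \eqref{eq:deficit-sum}. This gives
$$\ll_{\Lambda,D}e^{n(n-1)\tau}e^{-\frac12(R-\nu-C_0-1)_+}\ll T^{n(n-1)}e^{-n(n-1)\nu}e^{-\frac12(R-\nu-C_0-1)_+},$$
and the same bound holds for the volumes. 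Summing over $\nu\ge0$ by \eqref{eq:outer-shell-sum} with $s=n(n-1)>1/2$ yields $e^{-R/2}T^{n(n-1)}$, the first term.

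The only delicate step is keeping the constants uniform. The shell estimates must hold uniformly in $\tau$ and $\mathbf m$, which they do by \Cref{lem:count-one-long-direction-shell} for a fixed $D$. I also need the overlap between the deep and non-deep decompositions to be harmless; since we only overcount, it is. Finally, the integer-part bookkeeping between $\tau$ and $\log T$ introduces only additive constants, and these are absorbed into $C_0$.
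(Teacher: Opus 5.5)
Your proposal is correct and follows essentially the same route as the paper. Both split off the bounded set $\kappa_1\le 0$, bound the deep shells by the geometric sum from \eqref{eq:deep-shell-count}, and handle the non-deep shells via the constraint $m_{n-1}\ge R-\nu-O(1)$ together with \eqref{eq:deficit-sum} and \eqref{eq:outer-shell-sum}, treating counting and Lebesgue measure simultaneously. The only bookkeeping point is to take $C_0$ an integer so that $\nu$ is a nonnegative integer; the paper achieves this by setting $\nu=L_T-\tau$ with $L_T=\lfloor\log T+c_0+1\rfloor$.
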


\begin{proof}
We prove the two estimates simultaneously, using  counting
measure on $\Lambda_{\operatorname{ni}}$ and Lebesgue measure, respectively. Denote
either measure by $\mu$.  By equivalence of norms, there is $c_0\ge0$
such that
$$
\kappa_1(v)\le\log\|v\|+c_0\qquad(v\ne0).
$$
The set on which $\kappa_1(v)\le0$ and $|\det v|\le D$ is bounded, so
its $\mu$-measure is $O_{\Lambda,D}(1)$ in the lattice case and $O_D(1)$
in the Lebesgue case.  For every remaining $v$, set
$\tau=\lceil\kappa_1(v)\rceil$.  Then
$$
1\le\tau\le L_T:=\lfloor\log T+c_0+1\rfloor.
$$

By \eqref{eq:deep-shell-count}, the
total contribution of the deep shells is
$$
\ll\sum_{1\le\tau\le L_T}e^{\bigl(n(n-1)-\frac n2\bigr)\tau}
\ll T^{n(n-1)-\frac n2}.
$$
Every remaining $v$ has a unique deficit vector
$\mathbf m=(m_1,\ldots,m_{n-1})$ satisfying
\eqref{eq:non-deep-deficits}.  By
\eqref{eq:moderate-shell-count}, its shell has $\mu$-measure
$$
\ll e^{n(n-1)\tau-|\mathbf m|}.
$$
Set $\nu=L_T-\tau$. The condition
$\kappa_{n-1}(v)<\log T-R$ implies
$$
m_{n-1}
\ge\tau-\log T+R-1
\ge R-\nu-1.
$$
Thus \eqref{eq:deficit-sum} bounds the sum over the admissible deficit
vectors by $O(e^{-\frac12(R-\nu-1)_+})$. Since
$e^{n(n-1)\tau}\ll T^{n(n-1)}e^{-n(n-1)\nu}$, \eqref{eq:outer-shell-sum} gives
$$
\sum_{\nu\ge0}T^{n(n-1)}e^{-n(n-1)\nu}
e^{-\frac12(R-\nu-1)_+}
\ll T^{n(n-1)}e^{-R/2}.
$$
Adding the bounded and deep contributions to this estimate proves both assertions.
\end{proof}

\subsection{Comparison with the fiber kernel}
\begin{proposition}
\label{emm3.7}
Let $f\in C_c(\M_n^+)$, and let
$\nu\in C(\mathbb S^{N-1})$.  Then as $T\to\infty$,
\begin{align}\label{eq:kernel-lattice-comparison}
&T^{-n(n-1)}
\sum_{\substack{v\in\Lambda_{\operatorname{ni}},\widehat v\ne0}}
J_f(T^{-1}b^0(v),\det v)
\nu\!\left(\frac{\widehat v}{\|\widehat v\|}\right)\notag\\
&\quad-
\frac{\omega_{n-1}^2}{2}
\int_K
\widetilde f_{\operatorname{ni}}
   (a_{\frac12\log T}k;\Lambda)
\nu(\omega(k))\,dk
\to0.
\end{align}

\end{proposition}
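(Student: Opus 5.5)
With $t:=\frac12\log T$ we have $e^{2t}=T$ and $e^{2n(n-1)t}=T^{n(n-1)}$. The plan is to apply \Cref{jf} termwise and then sum over $v\in\Lambda_{\operatorname{ni}}$, controlling the number of terms with \Cref{lem:weightedcontributioncount}.

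First, unfold the $K$-integral. Since $\widetilde f_{\operatorname{ni}}(a_tk;\Lambda)=\sum_{v\in\Lambda_{\operatorname{ni}}}f(a_tk\cdot v)$ and all terms are finitely supported, Tonelli/Fubini (after splitting $f$ and $\nu$ into positive and negative parts) gives
\[
\frac{\omega_{n-1}^2}{2}\int_K \widetilde f_{\operatorname{ni}}(a_tk;\Lambda)\nu(\omega(k))\,dk
=T^{-n(n-1)}\sum_{v\in\Lambda_{\operatorname{ni}}}\frac{\omega_{n-1}^2}{2}e^{2n(n-1)t}\int_K f(a_tk\cdot v)\nu(\omega(k))\,dk .
\]
Note that $\omega(k)=(k^{\intercal}\cdot E_{nn})^{\intercal}$, so this is exactly the expression in \Cref{jf}.

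Next, identify the relevant set of $v$. If $f(a_tk\cdot v)\neq0$ for some $k$, then $\det v=\det(a_tk\cdot v)$ is bounded by some $D$. Also $a_tk\cdot v\in\operatorname{supp} f\subset\M_n^+$, so its upper-left block has rank $n-1$, hence $\rank v\ge n-1$ and $\widehat v\ne0$. The support argument at the start of the proof of \Cref{jf} then shows $T^{-1}b^0(v)=b^0(e^{-2t}v)$ lies in a fixed compact set $\mathscr C\subset B_0^+$ (singular values between $c/2$ and $2C$). The same holds whenever $J_f(T^{-1}b^0(v),\det v)\ne0$, because $J_f$ has compact support in $B_0^+\times\R$. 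So both sums run over
\[
\Xi_T:=\{v\in\Lambda_{\operatorname{ni}}:\widehat v\ne0,\ T^{-1}b^0(v)\in\mathscr C,\ |\det v|\le D\}.
\]
By \Cref{lem:weightedcontributioncount}, $\#\Xi_T\ll_{\Lambda,\mathscr C,D}T^{n(n-1)}$.

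Then, compare term by term. Fix $\varepsilon>0$ and take $t_0,T_0$ from \Cref{jf}. Every $v\in\Xi_T$ has $\|v\|\gg e^{\kappa_1(v)}\gg T$, so once $T$ is large enough that $\|v\|\ge T_0$ and $t\ge t_0$, each summand difference is at most $\varepsilon$. Summing, the left side of \eqref{eq:kernel-lattice-comparison} is bounded by $T^{-n(n-1)}\varepsilon\,\#\Xi_T\ll\varepsilon$, and letting $\varepsilon\to0$ gives the claim.

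The main obstacle is the uniform counting bound $\#\Xi_T\ll T^{n(n-1)}$, which relies on the modified height moment bound (\Cref{thm:modifieduniformboundedness}) through \Cref{lem:weightedcontributioncount}. That bound is already available, so what remains is to check that the support localization in \Cref{jf} is uniform in $v$ and yields a fixed compact $\mathscr C$ independent of $T$.
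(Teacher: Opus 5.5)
Your proposal is correct and follows the paper's proof essentially step for step. You localize both sides to $\Xi_T$ using the support bounds from the proof of \Cref{jf} (the paper applies them to $|f|$ via $|J_f|\le J_{|f|}$), bound $\#\Xi_T\ll T^{n(n-1)}$ by \Cref{lem:weightedcontributioncount}, and sum the uniform pointwise error supplied by \Cref{jf}, using $\|v\|\gg T$ on $\Xi_T$.
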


\begin{proof}
Set $t=\frac12\log T$. Since $|J_f|\le J_{|f|}$, the support
implication \eqref{eq:jf-uniform-support}, applied to $|f|$, yields a compact set
$\mathscr C_f\subset B_0^+$ and $D_f>0$ such that every vector
contributing to either side of
\eqref{eq:kernel-lattice-comparison} belongs to
$$
\Xi_T
:=
\left\{v\in\Lambda_{\operatorname{ni}}:
\widehat v\ne0,\quad
T^{-1}b^0(v)\in\mathscr C_f,\quad
|\det v|\le D_f
\right\}.
$$
Therefore \Cref{lem:weightedcontributioncount} gives
\begin{equation}\label{eq:XiT-card}
\#\Xi_T\ll_{\Lambda,f} T^{n(n-1)}.
\end{equation}
Compactness of $\mathscr C_f$ gives $\|v\|\gg_f T$ on $\Xi_T$.
Hence \Cref{jf}
applies uniformly on $\Xi_T$ and gives
\begin{align*}
&\frac{\omega_{n-1}^2}{2}T^{n(n-1)}
\int_K f(a_tk\cdot v)\nu(\omega(k))\,dk=
J_f(T^{-1}b^0(v),\det v)
\nu\!\left(\frac{\widehat v}{\|\widehat v\|}\right)+r_T(v),
\end{align*}
with $\sup_{v\in\Xi_T}|r_T(v)|\to0$. Both sides of the required comparison are supported on $\Xi_T$.
After summing the last identity and multiplying by $T^{-n(n-1)}$, the
absolute value of the error is at most
$$
T^{-n(n-1)}\#\Xi_T\sup_{v\in\Xi_T}|r_T(v)|=o(1)
$$
by \eqref{eq:XiT-card}. This proves
\eqref{eq:kernel-lattice-comparison}.
\end{proof}

\begin{proposition}[Counting in the $(b^0,\det)$-parameter space]
\label{prop:countinglimit-quotient}
Suppose that $\Lambda$ is Diophantine and not determinant-rational.  Let
$h\in C_c(B_0^+\times\R)$ and
$\nu\in C(\mathbb S^{N-1})$.  Then
\begin{align}\label{eq:parameter-space-counting-limit}
&\lim_{T\to\infty}
T^{-n(n-1)}
\sum_{\substack{v\in\Lambda_{\operatorname{ni}}, \widehat v\ne0}}
h(T^{-1}b^0(v),\det v)
\nu\!\left(\frac{\widehat v}{\|\widehat v\|}\right)\notag\\
&\quad=
\frac{\omega_{n-1}^2}{2}
\left(\int_K\nu(\omega(k))\,dk\right)
\left(
\int_{\R}\int_{B_0^+}
h(\r,\zeta)\det(\r)^n\,d\r\,d\zeta
\right).
\end{align}
\end{proposition}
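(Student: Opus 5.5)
The plan is to realize $h$ as a fiber kernel and then chain the results already proved. Regard $h$ as an $\mathsf M$-invariant function in $C_c(\GL_{n-1}^+(\R)\times\R)$. By \Cref{lem:Jf-surjective}, there is $f=f_h\in C_c(\M_n^+)$ with $J_f=h$. Since both sides of \eqref{eq:parameter-space-counting-limit} are linear in $(h,\nu)$, we may assume that $h,\nu\ge0$; the construction \eqref{fh} then gives $f\ge0$. With $f$ fixed, \Cref{emm3.7} shows that the normalized lattice sum on the left of \eqref{eq:parameter-space-counting-limit} differs by $o(1)$ from
\[
\frac{\omega_{n-1}^2}{2}\int_K\widetilde f_{\operatorname{ni}}(a_{\frac12\log T}k;\Lambda)\,\nu(\omega(k))\,dk .
\]

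Next, set $t=\frac12\log T$. Because $\Lambda$ is Diophantine and not determinant-rational, \Cref{thm:Ratnerequidistributionunbounded} applies with this $f$ and $\nu$. As $t\to\infty$, the $K$-average converges to
\[
\Bigl(\int_{\M_n(\R)}f\,dv\Bigr)\Bigl(\int_K\nu(\omega(k))\,dk\Bigr).
\]
Finally, \Cref{siegelJf} rewrites $\int_{\M_n(\R)} f\,dv$ as $\int_\R\int_{B_0^+}J_f(\r,\zeta)\det(\r)^n\,d\r\,d\zeta$. Substituting $J_f=h$ gives exactly the right-hand side, including the factor $\omega_{n-1}^2/2$.

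The only points to check concern the hypotheses of the results being chained. \Cref{emm3.7} requires $f\in C_c(\M_n^+)$, which is the output of \Cref{lem:Jf-surjective}. The rescaling $T^{-1}b^0(v)=b^0(e^{-2t}v)$ matches the normalization $T=e^{2t}$ used there, so it needs no further argument. I would also confirm that no uniformity in $h$ is needed, which holds because everything is done for a single fixed $f$.

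All the analytic difficulty has already been absorbed by \Cref{emm3.7} (the uniform kernel approximation together with the compact-window count) and by the moment bound behind \Cref{thm:Ratnerequidistributionunbounded}. The remaining proof is therefore bookkeeping rather than a genuine obstacle.
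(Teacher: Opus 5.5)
Your proposal is correct and follows the paper's proof essentially verbatim: you realize $h=J_f$ via \Cref{lem:Jf-surjective}, pass to the $K$-average with \Cref{emm3.7}, take the limit by \Cref{thm:Ratnerequidistributionunbounded}, and convert back with \Cref{siegelJf}. Your preliminary reduction to $h,\nu\ge0$ is harmless but unnecessary, since none of the cited results requires $f\ge0$.
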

\begin{proof}
Choose $f\in C_c(\M_n^+)$ with $J_f=h$ by
\Cref{lem:Jf-surjective}. By \Cref{emm3.7}, the normalized sum in
\eqref{eq:parameter-space-counting-limit} differs by $o(1)$ from
$$
\frac{\omega_{n-1}^2}{2}
\int_K
\widetilde f_{\operatorname{ni}}
\left(a_{\frac12\log T}k;\Lambda\right)
\nu(\omega(k))\,dk.
$$
By \Cref{thm:Ratnerequidistributionunbounded}, this converges to
$$
\frac{\omega_{n-1}^2}{2}
\left(\int_{\M_n(\mathbb R)}f(v)\,dv\right)
\left(\int_K\nu(\omega(k))\,dk\right).
$$
Finally, \Cref{siegelJf} and $J_f=h$ give
$$
\int_{\M_n(\mathbb R)}f(v)\,dv
=
\int_{\mathbb R}\int_{B_0^+}
h(\mathsf r,\zeta)\det(\mathsf r)^n\,d\mathsf r\,d\zeta.
$$
This proves \eqref{eq:parameter-space-counting-limit}.
\end{proof}

For fixed $\nu\in C(\mathbb S^{N-1})$, the conclusion of
\eqref{eq:parameter-space-counting-limit} extends to bounded, compactly
supported Borel functions $h$ whose discontinuity set has
$d\mathsf r\,d\zeta$-measure zero. Indeed, for any continuous
approximation $h_j$ of $h$, the corresponding weighted approximation
error is at most
$$
\|\nu\|_\infty T^{-n(n-1)}
\sum_{{v\in\Lambda_{\operatorname{ni}}, \widehat v\ne0}}
|h-h_j|(T^{-1}b^0(v),\det v).
$$
The case $\nu\equiv1$ and the Portmanteau theorem control the last
quantity by the $L^1$-error of $h_j$. Letting $j\to\infty$ proves the
extension.

\subsection{Passage from compact windows to norm balls}
Fix a $K$-invariant norm $\|\cdot\|$ on $\M_n(\R)$.  For
$\r\in B_0^+$ and $\tau\in\R$, put
\begin{equation}
\Phi(\r,\tau)
:=
\left\|
\operatorname{diag}\left(\r,\frac{\tau}{\det \r}\right)
\right\|.
\end{equation}
Because the norm is $K$-invariant, for every matrix $v$ of rank at
least $n-1$,
\begin{equation}\label{eq:K-inv-reduction}
\|v\|=\Phi(b^0(v),\det v).
\end{equation}
Consequently,
\begin{equation}\label{eq:scaled-K-inv-reduction}
\|v\|<T
\quad\Longleftrightarrow\quad
\Phi(T^{-1}b^0(v),T^{-n}\det v)<1.
\end{equation}

\begin{theorem}
\label{thm:13.11-corrected}
Suppose that $\Lambda\in X$ is Diophantine and
non-determinant-rational. Then, for every $a<b$,
\begin{equation}\label{eq:norm-ball-counting}
\lim_{T\to\infty}T^{-n(n-1)}
\#\{v\in\Lambda_{\operatorname{ni}}:
\|v\|<T,\ a<\det v<b\}
=
C_{\|\cdot\|}(b-a),
\end{equation}
where
\begin{equation}\label{mconstant}
C_{\|\cdot\|}
:=
\frac{\omega_{n-1}^2}{2}
\int_{\{\mathsf r\in B_0^+:
\|\operatorname{diag}(\mathsf r,0)\|<1\}}
\det(\mathsf r)^n\,d\mathsf r.
\end{equation}
\end{theorem}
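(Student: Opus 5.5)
We reduce the norm-ball count to \Cref{prop:countinglimit-quotient} (in its extension to Riemann-integrable $h$), applied with $\nu\equiv1$. We then remove the compactness restriction on the singular-value parameter using \Cref{prop:singular-value-cusp-tail}.

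First fix $D>\max\{|a|,|b|\}$ and discard the vectors with $\widehat v=0$. Such vectors have rank at most $n-2$, hence $\kappa_{n-1}(v)=-\infty$. They therefore lie in $\mathcal E_{D,R}(T)$ for every $R$, and are absorbed in the tail estimate below. For the remaining $v$, \eqref{eq:scaled-K-inv-reduction} turns the conditions $\|v\|<T$, $a<\det v<b$ into
$$
\Phi(T^{-1}b^0(v),T^{-n}\det v)<1,\qquad a<\det v<b .
$$
The first condition depends on $T$ through $T^{-n}\det v$. Since $|\det v|\le D$, we have $T^{-n}\det v\to0$ uniformly. So for large $T$ this condition is sandwiched between $\Phi(\mathsf r,0)<1\mp\epsilon$, after using uniform continuity of $\Phi$ on compact sets. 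Here is where compactness is needed: we cut the parameter domain to
$$
\mathscr C_R:=\{\mathsf r\in B_0^+:\ \Phi(\mathsf r,0)\le 2,\ r_{n-1}\ge e^{-R}\}.
$$
On $\mathscr C_R$, $\Phi$ is uniformly continuous in $(\mathsf r,\tau)$ for $|\tau|$ small.

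Second, define
$$
h^{\pm}_{\epsilon,R}(\mathsf r,\zeta):=\mathbf 1\{\Phi(\mathsf r,0)<1\pm\epsilon,\ r_{n-1}\ge e^{-R}\}\,\mathbf 1_{(a,b)}(\zeta).
$$
These are bounded, compactly supported functions on $B_0^+\times\R$. Their discontinuity sets lie in level sets of $\Phi(\cdot,0)$, of $r_{n-1}$, and in $\zeta\in\{a,b\}$. These are null for $d\mathsf r\,d\zeta$, since $\Phi(\cdot,0)$ is a norm restricted to a cone, so its level sets are hypersurfaces. For large $T$, the lattice count restricted to $\kappa_{n-1}(v)\ge\log T-R$ is squeezed between the sums of $h^{\pm}_{\epsilon,R}(T^{-1}b^0(v),\det v)$. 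The extended form of \Cref{prop:countinglimit-quotient} then gives limits
$$
\frac{\omega_{n-1}^2}{2}(b-a)\int_{\{\Phi(\mathsf r,0)<1\pm\epsilon,\ r_{n-1}\ge e^{-R}\}}\det(\mathsf r)^n\,d\mathsf r ,
$$
using $\int_K\nu(\omega(k))\,dk=1$.

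Third, we bound the contribution of $\kappa_{n-1}(v)<\log T-R$. This is exactly $\Lambda_{\operatorname{ni}}\cap\mathcal E_{D,R}(T)$. By \eqref{eq:singular-value-cusp-tail} it is $O(e^{-R/2}T^{n(n-1)}+T^{n(n-1)-n/2})$. Dividing by $T^{n(n-1)}$ and letting $T\to\infty$, then $R\to\infty$, then $\epsilon\to0$, we obtain \eqref{eq:norm-ball-counting} with constant $\frac{\omega_{n-1}^2}{2}\int_{\{\Phi(\mathsf r,0)<1\}}\det(\mathsf r)^n\,d\mathsf r$. This passage requires $\int_{\{\Phi(\mathsf r,0)<1\}}\det(\mathsf r)^nd\mathsf r<\infty$ and null boundary, by monotone convergence. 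Finiteness follows because the integral equals the normalized volume limit, which \eqref{eq:volume-singular-value-cusp-tail} together with \Cref{lem:deficit-sum} shows is finite. Alternatively, one checks it directly from \eqref{eq:Haarmeasureformula}, since the weight $\det\mathsf r^{\,n}$ kills the region $r_{n-1}\to0$. Since $\Phi(\mathsf r,0)=\|\operatorname{diag}(\mathsf r,0)\|$, this is \eqref{mconstant}.

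The main obstacle is the interchange of limits near $r_{n-1}\to0$, i.e.\ uniformity as the window exhausts $B_0^+$. This is handled entirely by the cusp-tail proposition. The only delicate points are checking that the boundary discontinuities are null, and that the sandwich in $T^{-n}\det v$ is uniform on $\mathscr C_R$.
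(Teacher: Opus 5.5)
Your proposal is correct and follows essentially the same route as the paper. Both arguments truncate to $\kappa_{n-1}(v)\ge\log T-R$, sandwich the condition $\Phi(T^{-1}b^0(v),T^{-n}\det v)<1$ between $\Phi(\mathsf r,0)<1\pm\epsilon$ on the truncated region, apply the extended form of \Cref{prop:countinglimit-quotient} with $\nu\equiv1$, and absorb the remainder with \Cref{prop:singular-value-cusp-tail}. The differences are cosmetic. The paper lets $\epsilon\downarrow0$ at fixed $R$ before letting $R\to\infty$, and it gets convergence of the truncated constants from the volume analogue ($0\le A_{R'}-A_R\ll e^{-R/2}$). Your direct finiteness check via \eqref{eq:Haarmeasureformula} works equally well.
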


\begin{lemma}[Volume asymptotic]
\label{lem:volume-main-term}
For every $a<b$,
\begin{equation}\label{eq:volume-main-term}
\operatorname{vol}
\{v\in\M_n(\mathbb R):
\|v\|<T,\ a<\det v<b\}
\sim
C_{\|\cdot\|}(b-a)T^{n(n-1)}.
\end{equation}
\end{lemma}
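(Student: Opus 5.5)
The plan is to compute the volume directly in the $(b^0,\det)$-coordinates. This uses the disintegration already established in \Cref{siegelJf} and \Cref{hfunctional}, together with the volume tail bound \eqref{eq:volume-singular-value-cusp-tail}.

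First, write the indicator of the region $\{\|v\|<T,\ a<\det v<b\}$ via \eqref{eq:scaled-K-inv-reduction} as $h_T(T^{-1}b^0(v),\det v)$, where
\[
h_T(\mathsf r,\zeta)=\mathbf 1\{\Phi(\mathsf r,T^{-n}\zeta)<1\}\,\mathbf 1\{a<\zeta<b\}.
\]
The rank-$\le n-2$ locus is Lebesgue null, so this is legitimate. The scaled cut-off depends on $T$ only through $T^{-n}\zeta\to0$ uniformly for $\zeta\in[a,b]$. Hence $h_T$ converges pointwise, off the null set $\{\|\operatorname{diag}(\mathsf r,0)\|=1\}$, to
\[
h_\infty(\mathsf r,\zeta)=\mathbf 1\{\|\operatorname{diag}(\mathsf r,0)\|<1\}\,\mathbf 1_{(a,b)}(\zeta).
\]

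Second, handle the region where $\mathsf r$ is not in a compact subset of $B_0^+$. Since $\|v\|<T$ already bounds $r_1$, the only noncompactness is $r_{n-1}\to0$, i.e.\ $\kappa_{n-1}(v)<\log T-R$. Split the volume into two parts. The first is $\kappa_{n-1}(v)\ge\log T-R$, where the parameter $T^{-1}b^0(v)$ lies in the compact window $\mathscr C_R=\{\mathsf r\in B_0^+: r_{n-1}\ge e^{-R},\ r_1\le C\}$. The second is the tail $\mathcal E_{D,R}(T)$ with $D=\max(|a|,|b|)$. By \eqref{eq:volume-singular-value-cusp-tail}, the tail has volume $\ll e^{-R/2}T^{n(n-1)}+T^{n(n-1)-n/2}$, which is negligible after taking $T\to\infty$ and then $R\to\infty$.

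On the compact window, I would use the exact change of variables from the proof of \Cref{siegelJf}. It gives $dv=\det(A)^{-1}dA\,dx_1\,dx_2\,d\zeta$ with $A=m\cdot\mathsf r'$, where $\mathsf r'=b^0(v)$ has replaced the upper-left block. The difficulty is that the top-left block of $v$ is not $b^0(v)$, so the cleanest route is \Cref{hfunctional} with $\nu\equiv1$. For continuous compactly supported $h$ with $h\le h_\infty$ approximations, that lemma, after the substitution $e^{2t}=T$, gives
\[
T^{-n(n-1)}\int h(T^{-1}b^0(v),\det v)\,dv\to\frac{\omega_{n-1}^2}{2}\int\!\!\int h\,\det(\mathsf r)^n\,d\mathsf r\,d\zeta,
\]
using $\int_K dk=1$. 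Sandwich $h_T\mathbf 1_{\mathscr C_R}$ between continuous functions $h^\pm$ supported in a slightly larger compact window. These are close in $L^1(\det(\mathsf r)^n d\mathsf r\,d\zeta)$ to $h_\infty\mathbf 1_{\mathscr C_R}$, using the null boundary and monotone behaviour in $T$: for $T$ large, $h_T$ is sandwiched between cutoffs with $\Phi(\mathsf r,0)<1\mp\epsilon$. The limit is then $\frac{\omega_{n-1}^2}{2}(b-a)\int_{\mathscr C_R\cap\{\|\operatorname{diag}(\mathsf r,0)\|<1\}}\det(\mathsf r)^n\,d\mathsf r$. Letting $R\to\infty$ with monotone convergence gives $C_{\|\cdot\|}(b-a)$, and finiteness of $C_{\|\cdot\|}$ follows from the same upper bounds.

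The main obstacle is the uniform control of the $\Phi(\mathsf r,T^{-n}\zeta)<1$ cutoff near the boundary of the compact window. Continuity of $\Phi$ on $\mathscr C_R\times[-1,1]$ gives uniform closeness of $\Phi(\mathsf r,T^{-n}\zeta)$ to $\Phi(\mathsf r,0)$, since $\mathsf r$ is bounded below there. This yields the $\epsilon$-sandwich, and the boundary set $\{\Phi(\mathsf r,0)=1\}$ has $d\mathsf r$-measure zero because $\Phi(s\mathsf r,0)$ is strictly increasing in $s$.
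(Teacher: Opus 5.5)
Your proposal is correct and follows essentially the same route as the paper. The paper also truncates at $\kappa_{n-1}(v)\ge\log T-R$ and squeezes the cutoff $\Phi(\mathsf r,T^{-n}\zeta)<1$ between the sets $\{\Phi(\mathsf r,0)<1\mp\varepsilon\}$ on the compact window. It then applies \Cref{hfunctional} with $\nu\equiv1$, controls the remainder by \eqref{eq:volume-singular-value-cusp-tail}, and lets $R\to\infty$ using monotone convergence together with the bound $0\le A_{R'}-A_R\ll e^{-R/2}$, which is what your ``same upper bounds'' remark amounts to.
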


\begin{proof}[Proof of
\Cref{thm:13.11-corrected,lem:volume-main-term}]
For $R>1$, define
$$
D_R
:=
\left\{\mathsf r\in B_0^+:
\mathsf r_{n-1}\ge e^{-R},\quad
\Phi(\mathsf r,0)<1
\right\}.
$$
Set also
$$
A_R
:=
\frac{\omega_{n-1}^2}{2}(b-a)\int_{D_R}\det(\mathsf r)^n\,d\mathsf r.
$$
Let
$$
N_R(T)
:=
\#\left\{v\in\Lambda_{\operatorname{ni}}:
\|v\|<T,\ a<\det v<b,\ 
\kappa_{n-1}(v)\ge\log T-R
\right\};
$$
$$
V_R(T)
:=
\operatorname{vol}\left\{v\in\M_n(\mathbb R):
\|v\|<T,\ a<\det v<b,\ 
\kappa_{n-1}(v)\ge\log T-R
\right\}.
$$

We first fix $R$. If $\mathsf r_{n-1}\ge e^{-R}$ and
$\zeta\in[a,b]$, then
$\det(\mathsf r)\ge e^{-(n-1)R}$, and therefore
\begin{equation}\label{eq:Phi-uniform-approximation}
\left|
\Phi(\mathsf r,T^{-n}\zeta)-\Phi(\mathsf r,0)
\right|
\le
\frac{T^{-n}|\zeta|}{\det(\mathsf r)}\|E_{nn}\|
\ll_{R,a,b}T^{-n}.
\end{equation}
If the integrand defining $N_R(T)$ or $V_R(T)$ is nonzero, then
\eqref{eq:Phi-uniform-approximation} gives
$
\Phi(\mathsf r,0)\le1+O_{R,a,b}(T^{-n}).
$
Thus, for all sufficiently large $T$, every relevant parameter
$\mathsf r$ belongs to the compact set
$$
\{\mathsf r\in B_0^+:
\mathsf r_{n-1}\ge e^{-R},\ \Phi(\mathsf r,0)\le2\}.
$$
The boundary of $D_R\times(a,b)$ is contained in
$$
\{\mathsf r_{n-1}=e^{-R}\}
\cup\{\Phi(\mathsf r,0)=1\}
\cup\{\zeta=a\}
\cup\{\zeta=b\},
$$
each of which is null with respect to
$d\mathsf r\,d\zeta$.

For $\varepsilon>0$, put
$$
D_R^\pm(\varepsilon)
:=
\left\{\mathsf r\in B_0^+:
\mathsf r_{n-1}\ge e^{-R},\quad
\Phi(\mathsf r,0)<1\pm\varepsilon
\right\}.
$$
By \eqref{eq:scaled-K-inv-reduction} and
\eqref{eq:Phi-uniform-approximation}, for all sufficiently large $T$,
the parameter set defining $N_R(T)$ and $V_R(T)$ lies between
$$
D_R^-(\varepsilon)\times(a,b)
\quad\text{and}\quad
D_R^+(\varepsilon)\times(a,b).
$$
Apply \Cref{prop:countinglimit-quotient}, with $\nu\equiv1$, to these
two fixed sets and then let $\varepsilon\downarrow0$. The boundary
nullity proved above gives
\begin{equation}\label{eq:truncated-norm-ball-limit}
\lim_{T\to\infty}T^{-n(n-1)}N_R(T)=A_R.
\end{equation}
The identical squeeze, applied to \Cref{hfunctional} with
$\nu\equiv1$, gives
\begin{equation}\label{eq:truncated-volume-limit}
\lim_{T\to\infty}T^{-n(n-1)}V_R(T)=A_R.
\end{equation}
The locus of matrices of rank at most $n-2$ is Lebesgue null and does
not affect the latter identity.

Define
$$
\begin{aligned}
N(T)
&:=
\#\{v\in\Lambda_{\operatorname{ni}}:
\|v\|<T,\ a<\det v<b\};\\
V(T)
&:=
\operatorname{vol}\{v\in\M_n(\mathbb R):
\|v\|<T,\ a<\det v<b\}.
\end{aligned}
$$
Choose $D>\max\{|a|,|b|\}$. The two estimates in
\Cref{prop:singular-value-cusp-tail} give the following bounds; the
zero matrix omitted from $\mathcal E_{D,R}(T)$ is a Lebesgue-null set.
\begin{equation}\label{eq:discrete-continuous-omitted-bounds}
\begin{aligned}
0\le N(T)-N_R(T)
&\ll_{\Lambda,D}e^{-R/2}T^{n(n-1)}+T^{n(n-1)-\frac n2},\\
0\le V(T)-V_R(T)
&\ll_D e^{-R/2}T^{n(n-1)}+T^{n(n-1)-\frac n2}.
\end{aligned}
\end{equation}
Combining this with
\eqref{eq:truncated-norm-ball-limit} and
\eqref{eq:truncated-volume-limit}, we obtain, for $Q=N$ and $Q=V$,
\begin{equation}\label{eq:norm-ball-sandwich}
A_R
\le
\liminf_{T\to\infty}T^{-n(n-1)}Q(T)
\le
\limsup_{T\to\infty}T^{-n(n-1)}Q(T)
\le
A_R+O(e^{-R/2}).
\end{equation}

If $R'>R$, then
$
0\le V_{R'}(T)-V_R(T)\le V(T)-V_R(T).
$
Equations \eqref{eq:truncated-volume-limit} and
\eqref{eq:discrete-continuous-omitted-bounds} therefore imply
$$
0\le A_{R'}-A_R\ll e^{-R/2}.
$$
Thus $A_R$ has a finite limit. Since
$$
D_R\uparrow
\{\mathsf r\in B_0^+:\Phi(\mathsf r,0)<1\},
$$
monotone convergence and \eqref{mconstant} give
$$
\lim_{R\to\infty}A_R=C_{\|\cdot\|}(b-a).
$$
Letting $R\to\infty$ in \eqref{eq:norm-ball-sandwich} proves
\eqref{eq:norm-ball-counting} and \eqref{eq:volume-main-term}.
\end{proof}
\subsection{Completion of the counting theorems}\label{isoo}

For a lattice $\Lambda$, put
$$
S_{\Lambda,\operatorname{ni}}(T)
:=
\#\{v\in\Lambda_{\operatorname{ni}}:
\|v\|<T,\ \det v=0\}.
$$
Assume first that $\Lambda\in X$ is Diophantine and
non-determinant-rational. For every $\varepsilon>0$,
$$
S_{\Lambda,\operatorname{ni}}(T)
\le
\#\{v\in\Lambda_{\operatorname{ni}}:
\|v\|<T,\ -\varepsilon<\det v<\varepsilon\}.
$$
Consequently, \Cref{thm:13.11-corrected} gives
$$
\limsup_{T\to\infty}T^{-n(n-1)}S_{\Lambda,\operatorname{ni}}(T)
\le2C_{\|\cdot\|}\varepsilon.
$$
Letting $\varepsilon\downarrow0$, we obtain
\begin{equation}\label{eq:nonisotropic-singular-negligible}
S_{\Lambda,\operatorname{ni}}(T)=o(T^{n(n-1)}).
\end{equation}

Every nonsingular matrix is non-isotropic. Hence the difference between
$$
\#\{v\in\Lambda:
\|v\|<T,\ a<\det v<b,\ \det v\ne0\}
$$
and the counting function in \eqref{eq:norm-ball-counting} is at most
$S_{\Lambda,\operatorname{ni}}(T)$. It follows from
\eqref{eq:norm-ball-counting} and
\eqref{eq:nonisotropic-singular-negligible} that the former is
asymptotic to $C_{\|\cdot\|}(b-a)T^{n(n-1)}$.

Now let $\Lambda$ be an arbitrary lattice satisfying the same
Diophantine and non-determinant-rational hypotheses, and set
$$
c:=\covol(\Lambda)^{-1/n^2}.
$$
Then $c\Lambda$ is unimodular. Homothety preserves both hypotheses and
the isotropic decomposition; in particular,
$
(c\Lambda)_{\operatorname{ni}}=c\Lambda_{\operatorname{ni}}.
$
Therefore
\begin{equation}\label{eq:nonisotropic-singular-negligible-general}
S_{\Lambda,\operatorname{ni}}(T)
=
S_{c\Lambda,\operatorname{ni}}(cT)
=
o(T^{n(n-1)}).
\end{equation}
Moreover,
$$
N_\Lambda^\times(a,b;T)
=
N_{c\Lambda}^\times(c^na,c^nb;cT).
$$
Applying the unimodular asymptotic gives
$$
N_\Lambda^\times(a,b;T)
\sim
C_{\|\cdot\|}c^n(b-a)(cT)^{n(n-1)}
=
\frac{C_{\|\cdot\|}}{\covol(\Lambda)}
(b-a)T^{n(n-1)},
$$
because $n(n-1)+n=n^2$. Together with \Cref{lem:volume-main-term}, this
proves \Cref{main}. Applying
\Cref{prop:algebraicityimpliesDiophantine} gives \Cref{m1}. The
determinant-form statement follows from the coordinate change defining
$F_{\Lambda,\mathcal B}$, as recorded in \Cref{m2general}.

Finally, suppose that $\Lambda$ satisfies the isotropic noncoincidence
condition. A non-determinant-rational lattice is not of
$\mathbb Q$-split type, so \Cref{thm:quasinullcontribution2} gives
$$
T^{-n(n-1)}\#\{v\in\Lambda_{\operatorname{iso}}:\|v\|<T\}
\to c_{\Lambda,\mathrm{iso}}.
$$
The singular set decomposes as
$$
\#\{v\in\Lambda:\|v\|<T,\ \det v=0\}
=
\#\{v\in\Lambda_{\operatorname{iso}}:\|v\|<T\}
+
S_{\Lambda,\operatorname{ni}}(T)
+O(1).
$$
Thus \eqref{eq:nonisotropic-singular-negligible-general} proves
that
$
c_\Lambda^{\operatorname{sing}}=c_{\Lambda,\mathrm{iso}}.
$
The positivity characterization follows from
\Cref{thm:quasinullcontribution2}; applying
\Cref{prop:algebraicityimpliesDiophantine} to the lattices in
\Cref{m11} proves that theorem.

\subsection{Bounds without a Diophantine hypothesis}
\label{subsec:general-bounds}
We conclude this section with bounds that do not require a
Diophantine hypothesis.

\begin{theorem}\label{thm:general-irrational-bounds}
Let $\Lambda<\M_n(\mathbb R)$ be a lattice. For every $a<b$ and 
$\varepsilon>0$,
$$
N_\Lambda(a,b;T)
\ll_{\Lambda,a,b,\varepsilon}
T^{n(n-1)+\varepsilon}
\qquad(T\ge2).
$$
If, in addition, $\Lambda$ is not determinant-rational, then
$$
\frac{C_{\|\cdot\|}}{\covol(\Lambda)}(b-a)
\le
\liminf_{T\to\infty}T^{-n(n-1)}
N_\Lambda^\times(a,b;T).
$$
In particular,
$$
N_\Lambda^\times(a,b;T)
=T^{n(n-1)+o(1)}
\quad\text{and}\quad
N_\Lambda(a,b;T)=T^{n(n-1)+o(1)}.
$$
\end{theorem}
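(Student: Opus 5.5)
For the upper bound, I will replace the modified height by the ordinary Margulis height $\alpha$ and accept an $\varepsilon$-loss. The shell decomposition of \Cref{subsec:singular-value-shells} does not use the Diophantine hypothesis except through \Cref{lem:modifiedlipschitz}(ii) and \Cref{thm:modifieduniformboundedness}. Without discarding isotropic points, the ordinary Lipschitz principle gives
$$
\#\{v\in\Lambda:hv\in\mathscr B_R(L)\}\ll_R L\,(1+\alpha(h\Lambda)),
$$
by the same successive-minima argument, now with $1/\operatorname{covol}$ bounded by $\alpha$. The first-moment bound \eqref{eq:standardalphauniformboundednesscompact} then controls the $K$-averages $\int_K\alpha(a_{\tau/2}k\Lambda)\,dk$ uniformly in $\tau$. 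This estimate came from the standard iteration of \Cref{prop:globalboundedexpansion'}, which holds for every $\Delta\in X$.

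I will rerun \Cref{lem:count-one-long-direction-shell} with this input, using \Cref{lem:shell-averaging} together with \Cref{lem:one-long-direction-box} and \Cref{lem:deep-shell-weyl-box}. This gives $\#(\Lambda\cap\mathcal S_{\tau,\mathbf m}(D))\ll e^{n(n-1)\tau-|\mathbf m|}$ and the deep-shell bound, now for all of $\Lambda$. The $\varepsilon$ has to enter only at the boundary of the exponent ranges: $\bar\alpha$ is a power $\tau_i<1$ of $\alpha_i$, so the first moment of $\bar\alpha$ controls only $\alpha^{1-C_n\theta_1}$ by \eqref{eq:bar-alpha-comparison}. I will therefore pass through $\bar\alpha$ and interpolate: on the set where $\alpha(a_{\tau/2}k\Lambda)\le e^{C\tau}$ (the log-Lipschitz bound), we have $\alpha\le\bar\alpha\cdot e^{C C_n\theta_1\tau}$. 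Taking $\theta$ small relative to $\varepsilon$ makes the loss $e^{\varepsilon\tau}=T^{\varepsilon}$. Summing over shells as in \Cref{prop:singular-value-cusp-tail}, with $R=0$ and $D=\max(|a|,|b|)$, gives $N_\Lambda(a,b;T)\ll T^{n(n-1)+\varepsilon}$. I expect this moment comparison between $\alpha$ and $\bar\alpha$ to be the main technical point. If the interpolation fails, the fallback is to run the iteration of \Cref{prop:globalboundedexpansion''} for $\bar\alpha'$, which yields $e^{C_n\theta' t}$ growth, and again take $\theta'$ small.

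For the lower bound, I will use Fatou-type monotonicity instead of uniform integrability. Take $0\le f\in C_c(\M_n^+)$ and $\nu\equiv1$. Since $\widehat f_{\operatorname{ni}}$ is lower semicontinuous off an $m_X$-null set, or more simply since $\min\{\widehat f_{\operatorname{ni}},s\}$ is $m_X$-a.e. continuous and bounded (\Cref{lem:modified-transform-regularity}), Shah's theorem (\Cref{thm:Ratnerequidistributionbounded}) applies once density of $H\Lambda$ is known, and density follows from \Cref{prop:closed-rational-csa}. This gives
$$
\liminf_{t}\int_K\widetilde f_{\operatorname{ni}}(a_tk;\Lambda)\,dk\ \ge\ \int_X\min\{\widehat f_{\operatorname{ni}},s\}\,dm_X
$$
for every $s$, and hence the liminf is at least $\int f$ by Siegel's formula. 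Here $\Lambda_{\operatorname{ni}}$ is defined without any quasi-null input, so no Diophantine hypothesis is needed.

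\Cref{emm3.7} needs only a bound of order $T^{n(n-1)}$ on $\#\Xi_T$ to absorb the error. With the upper bound just proved this becomes $T^{n(n-1)+\varepsilon}\cdot o(1)$, which is not sufficient. I will therefore apply \Cref{jf} pointwise instead: for $v\in\Xi_T$ the two quantities differ by less than $\varepsilon_1$, and I will choose $f$ with $J_f\le h$ minus a margin, or simply absorb the error into a lower bound. Because I am proving a one-sided inequality for nonnegative functions, the pointwise $o(1)$ error multiplied by the count only needs control on the side being bounded. Concretely, I will take $h_-\le \mathbf 1$ of the compact window, apply \Cref{jf} to $f$ with $J_f=h_-+\delta\,\mathbf 1$-type slack, and let $\delta\to0$.

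Finally, I will sum over compact windows $D_R$ as in the proof of \Cref{thm:13.11-corrected}, using only the lower direction, and let $R\to\infty$. This gives $\liminf T^{-n(n-1)}\#\{v\in\Lambda_{\operatorname{ni}}:\|v\|<T,\ a<\det v<b\}\ge C_{\|\cdot\|}(b-a)$ in the unimodular case. To obtain the statement for $N^\times$, I will remove the singular non-isotropic points by shrinking $(a,b)$ to exclude $0$ and summing over the finitely many subintervals. Rescaling by $\covol(\Lambda)$ as in \Cref{isoo} then completes the proof.
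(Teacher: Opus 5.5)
Your upper bound is the paper's argument. It uses the ordinary Lipschitz estimate $\#(\Delta\cap\mathscr B_R(L))\ll_R L(1+\alpha(\Delta))$, the averaging lemmas for non-deep and deep shells, and the interpolation $\max\{1,\alpha\}\le\bar\alpha_\theta\,B_t^{\rho}$. Here $B_t=e^{2Nt}\max\{1,\alpha(\Lambda)\}$ is the log-Lipschitz cap and $\rho=\rho(\theta)\to0$ comes from \eqref{eq:bar-alpha-comparison}. This interpolation turns \eqref{eq:standardalphauniformboundednesscompact} into $\int_K(1+\alpha(a_tk\Lambda))\,dk\ll_\varepsilon e^{\varepsilon t}$. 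Your earlier sentences claim that \eqref{eq:standardalphauniformboundednesscompact} bounds $\int_K\alpha(a_{\tau/2}k\Lambda)\,dk$ uniformly, and hence gives the shell bound $e^{n(n-1)\tau-|\mathbf m|}$ with no loss. That is not justified: only $\bar\alpha$ has bounded $K$-averages, so the shell bounds carry an extra factor $e^{\varepsilon\tau}$. Your interpolation already repairs this.

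The lower bound uses the paper's ingredients: density of $H\Lambda$, Shah's theorem with truncation, Siegel's formula, and \Cref{jf} applied pointwise. Your first step, $\liminf_t\int_K\widetilde f_{\operatorname{ni}}(a_tk;\Lambda)\,dk\ge\int f$, is fine. The gap is the comparison with the count. You rightly note that \Cref{emm3.7} is unavailable, since $\#\Xi_T$ is only known to be $\ll T^{n(n-1)+\varepsilon}$. But the remedy you sketch, $J_f=h_-+\delta\,\mathbf 1$-type slack, does not address this: $J_f$ is compactly supported, and slack added to $J_f$ does nothing about the $\varepsilon_1$-error incurred at each contributing $v$.

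What is needed is a support containment you never state. Let $\mathcal U=\{(\mathsf r,\zeta)\in B_0^+\times\mathbb R:\Phi(\mathsf r,0)<1,\ a<\zeta<b,\ \zeta\ne0\}$. Take $0\le h\le1$ in $C_c(\mathcal U)$, so that $\Phi(\mathsf r,0)\le1-\delta$ on $\operatorname{supp}h$, and let $f=f_h$. If $u=a_tk\cdot v\in\operatorname{supp}f$, then $\|T^{-1}k\cdot v-\operatorname{diag}(u^0,0)\|\ll_f e^{-nt}$ and $\|\operatorname{diag}(u^0,0)\|=\Phi(\mathsf r,0)\le1-\delta$. Hence, for large $t$, every $v$ with $f(a_tk\cdot v)\ne0$ satisfies $\|v\|<T$, $a<\det v<b$ and $\det v\ne0$.

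For each such $v$, \Cref{jf} gives $\frac{\omega_{n-1}^2}{2}T^{n(n-1)}\int_Kf(a_tk\cdot v)\,dk\le h(T^{-1}b^0(v),\det v)+\eta\le1+\eta$. Summing over $\Lambda$ gives $\frac{\omega_{n-1}^2}{2}T^{n(n-1)}\int_K\widehat f(a_tk\Lambda)\,dk\le(1+\eta)N_\Lambda^\times(a,b;T)$. The pointwise error is thus absorbed multiplicatively by the very count being bounded, and no bound on $\#\Xi_T$ is needed. Your alternative ``$J_f\le h$ minus a margin'' also works if it means $J_f\le1-\delta$ with $\varepsilon_1<\delta$, but again only together with this containment.

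Since such an $f$ is supported on $\{\det\ne0\}$, you may use the ordinary continuous $\widehat f$ with cutoffs $\chi_j\uparrow1$, as the paper does, and obtain $N^\times$ directly. Passing through $\widehat f_{\operatorname{ni}}$, the windows $D_R$, and splitting $(a,b)$ around $0$ is harmless but unnecessary.
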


\begin{proof}
We may assume that $\Lambda$ is unimodular without loss of generality. For every
$\varepsilon>0$,
\begin{equation}\label{eq:ordinary-height-subpower}
\int_K\bigl(1+\alpha(a_tk\Lambda)\bigr)\,dk
\ll_{\Lambda,\varepsilon}e^{\varepsilon t}
\qquad(t\ge0).
\end{equation}
To prove this, choose $0<\theta<(2n)^{-6}$ and let
$\rho=\rho(\theta)$ be given by
\eqref{eq:bar-alpha-comparison}, so that
$$
\max\{1,\alpha(\Delta)\}^{1-\rho}
\le\bar\alpha_\theta(\Delta).
$$
Since $\rho(\theta)\to0$ as $\theta\to0$, we may assume that
$\rho<1$ and $2N\rho<\varepsilon$. Set
$$
X_t(k):=\max\{1,\alpha(a_tk\Lambda)\},
\qquad
B_t:=e^{2Nt}\max\{1,\alpha(\Lambda)\}.
$$
The smallest singular value of $a_t$ on every
$\wedge^i\M_n(\mathbb R)$ is at least $e^{-2Nt}$; hence
$X_t(k)\le B_t$. Therefore
$$
X_t(k)
=
X_t(k)^{1-\rho}X_t(k)^\rho
\le
\bar\alpha_\theta(a_tk\Lambda)B_t^\rho.
$$
Since $1+\alpha\le2\max\{1,\alpha\}$, integration and
\eqref{eq:standardalphauniformboundednesscompact} prove
\eqref{eq:ordinary-height-subpower}.

We also require the ordinary Lipschitz estimate
\begin{equation}\label{eq:ordinary-long-box}
\#\bigl(\Delta\cap\mathscr B_R(L)\bigr)
\ll_R L\bigl(1+\alpha(\Delta)\bigr)
\qquad(\Delta\in X,\ L\ge1).
\end{equation}
Indeed, the projection calculation in the proof of
\Cref{lem:modifiedlipschitz} gives, uniformly over every nonzero linear
subspace $V\subset\M_n(\mathbb R)$,
$$
\operatorname{vol}_{\dim V}(p_V\mathscr B_R(L))\ll_R L.
$$
The ordinary successive-minima flag estimate, together with the bound
$d_\Delta(V)^{-1}\le\alpha(\Delta)$ for every proper
$\Delta$-rational subspace $V$, gives
\eqref{eq:ordinary-long-box}.

Fix $D>\max\{|a|,|b|\}$, and let $R=R_{n,D}$ be as in
\Cref{lem:one-long-direction-box}. We now derive ordinary versions of
the two local shell estimates. If
$
0=m_1\le\cdots\le m_{n-1}\le n\tau/2,
$
then \Cref{lem:shell-averaging,lem:one-long-direction-box}, \eqref{eq:ordinary-long-box} and \eqref{eq:long-box-containment} give
$$
\begin{aligned}
\#\bigl(\Lambda\cap\mathcal S_{\tau,\mathbf m}(D)\bigr)
&\ll_D
e^{n(n-1)\tau-2|\mathbf m|}
\int_K
\#\bigl(a_{\tau/2}k\Lambda
       \cap\mathcal B_{|\mathbf m|,D}(R)\bigr)\,dk\\
&\ll_D
e^{n(n-1)\tau-|\mathbf m|}
\int_K\bigl(1+\alpha(a_{\tau/2}k\Lambda)\bigr)\,dk.
\end{aligned}
$$
Applying \eqref{eq:ordinary-height-subpower} with $2\varepsilon$ in
place of $\varepsilon$ yields
\begin{equation}\label{eq:general-moderate-shell}
\#\bigl(\Lambda\cap\mathcal S_{\tau,\mathbf m}(D)\bigr)
\ll_{\Lambda,D,\varepsilon}
e^{(n(n-1)+\varepsilon)\tau-|\mathbf m|}.
\end{equation}

Choose $R_1\ge1$ such that
$
\{u:\|u\|\le R_n\}\subset\mathscr B_{R_1}(1),
$
where $R_n$ is as in \Cref{lem:deep-shell-weyl-box}. The same averaging
argument, now using \eqref{eq:deep-shell-lower-bound}, gives
$$
\begin{aligned}
\#\bigl(\Lambda\cap\mathcal D_\tau\bigr)
&\ll
e^{\bigl(n(n-1)-\frac n2\bigr)\tau}
\int_K\#\bigl(a_{\tau/2}k\Lambda
             \cap\mathscr B_{R_1}(1)\bigr)\,dk\\
&\ll_{R_1}
e^{\bigl(n(n-1)-\frac n2\bigr)\tau}
\int_K\bigl(1+\alpha(a_{\tau/2}k\Lambda)\bigr)\,dk.
\end{aligned}
$$
Thus
\begin{equation}\label{eq:general-deep-shell}
\#\bigl(\Lambda\cap\mathcal D_\tau\bigr)
\ll_{\Lambda,\varepsilon}
e^{(n(n-1)-\frac n2+\varepsilon)\tau}.
\end{equation}

It remains to sum the local estimates. Choose $c_0\ge0$ such that
$$
\kappa_1(v)\le\log\|v\|+c_0
\qquad(v\ne0).
$$
The nonzero lattice vectors with $\kappa_1(v)\le0$ lie in a fixed
compact set and contribute $O_\Lambda(1)$. For every remaining vector
with $\|v\|<T$, put $\tau=\lceil\kappa_1(v)\rceil$. Then
$$
1\le\tau\le\log T+c_0+1.
$$
The deep shells contribute, by \eqref{eq:general-deep-shell},
$$
\ll_{\Lambda,\varepsilon}
\sum_{\tau\le\log T+c_0+1}e^{(n(n-1)-\frac n2+\varepsilon)\tau}
\ll_{\Lambda,\varepsilon}T^{n(n-1)-\frac n2+\varepsilon}.
$$
If $v\notin\mathcal D_\tau$, define
$$
m_i:=\tau-\lceil\kappa_i(v)\rceil
\qquad(1\le i\le n-1).
$$
Then \eqref{eq:non-deep-deficits} holds and
$v\in\mathcal S_{\tau,\mathbf m}(D)$. Hence
\eqref{eq:general-moderate-shell} and
\eqref{eq:deficit-sum} with $A=0$ show that the contribution for fixed
$\tau$ is
$$
\ll_{\Lambda,D,\varepsilon}
e^{(n(n-1)+\varepsilon)\tau}
\sum_{\substack{
\mathbf m\in\mathbb Z^{n-1}\\
0=m_1\le\cdots\le m_{n-1}}}
e^{-|\mathbf m|}
\ll_{\Lambda,D,\varepsilon}e^{(n(n-1)+\varepsilon)\tau}.
$$
Summing over $\tau\le\log T+c_0+1$ proves
\begin{equation}\label{eq:general-upper-unimodular}
N_\Lambda(a,b;T)
\ll_{\Lambda,a,b,\varepsilon}T^{n(n-1)+\varepsilon}.
\end{equation}
Every nonzero matrix of rank at most $n-2$ with $\kappa_1(v)>0$
belongs to a deep shell. Those with $\kappa_1(v)\le0$ were already
absorbed into $O_\Lambda(1)$, and the zero matrix contributes at
most one point.

Suppose now that $\Lambda$ is not determinant-rational. Then
$H\Lambda$ is dense in $X$ by
\Cref{prop:closed-rational-csa}. Set
$$
\mathcal U
:=
\left\{
(\mathsf r,\zeta)\in B_0^+\times\mathbb R:
\Phi(\mathsf r,0)<1,\quad
a<\zeta<b,\quad
\zeta\ne0
\right\}.
$$
Let $0\ne h\in C_c(\mathcal U)$ satisfy $0\le h\le1$, and let
$f=f_h$ be the nonnegative function constructed in the proof of
\Cref{lem:Jf-surjective}. Then $J_f=h$, and
\begin{equation}\label{eq:f-support-projection}
u\in\operatorname{supp}(f),\quad
m\in\mathsf M,\quad \mathsf r\in B_0^+,\quad
u^0=m\cdot\mathsf r
\quad\to\quad
(\mathsf r,\det u)\in\operatorname{supp}(h).
\end{equation}
Since $\operatorname{supp}(h)$ is a compact subset of $\mathcal U$,
there exists $\delta>0$ such that
$$
\Phi(\mathsf r,0)\le1-\delta
\qquad
((\mathsf r,\zeta)\in\operatorname{supp}(h)).
$$
Moreover, since $\operatorname{supp}(f)$ is a compact subset of
$\M_n^+$,
$$
c_f
:=
\frac12\min_{u\in\operatorname{supp}(f)}
\left\|\operatorname{diag}(u^0,0)\right\|>0.
$$
Thus, for every $u\in\operatorname{supp}(f)$, writing
$u^0=m\cdot\mathsf r$ and using the $K$-invariance of the norm, we have
\begin{equation}\label{eq:f-inner-support}
a<\det u<b,\qquad
\det u\ne0,\qquad
2c_f
\le
\left\|\operatorname{diag}(u^0,0)\right\|
=
\Phi(\mathsf r,0)
\le1-\delta.
\end{equation}
The continuous zero extension of $f$ to $\M_n(\mathbb R)$, still
denoted by $f$, belongs to
$C_c(\M_n(\mathbb R)\smallsetminus\{0\})$. Set $T=e^{2t}$. If $u=a_tk\cdot v\in\operatorname{supp}(f)$, then
$$
T^{-1}k\cdot v
=
\begin{pmatrix}
u^0&e^{-nt}u^+\\
e^{-nt}u^-&e^{-2nt}u_{nn}
\end{pmatrix}.
$$
Consequently, uniformly for $u\in\operatorname{supp}(f)$,
$$
\left\|
T^{-1}k\cdot v-\operatorname{diag}(u^0,0)
\right\|
\ll_f e^{-nt}.
$$
The $K$-invariance of the norm, together with
\eqref{eq:f-inner-support}, controls the
norm of $v$; the $H$-action also gives $\det v=\det u$. Therefore, for
all sufficiently large $t$,
\begin{equation}\label{eq:f-contribution-inside-count}
f(a_tk\cdot v)\ne0
\quad\to\quad
c_fT\le\|v\|<T,\quad
a<\det v<b,\quad
\det v\ne0.
\end{equation}
Also $u^0$ is invertible, so $\widehat v\ne0$. Thus \Cref{jf} applies
uniformly to every contributing vector. Given $\eta>0$, it yields
$$
\frac{\omega_{n-1}^2}{2}T^{n(n-1)}
\int_Kf(a_tk\cdot v)\,dk
\le
h(T^{-1}b^0(v),\det v)+\eta
\le1+\eta
$$
for all sufficiently large $t$. Summing over $v\in\Lambda$ and using
\eqref{eq:f-contribution-inside-count}, we obtain
\begin{equation}\label{eq:count-dominates-K-average}
\frac{\omega_{n-1}^2}{2}T^{n(n-1)}
\int_K\widehat f(a_tk\Lambda)\,dk
\le
(1+\eta)N_\Lambda^\times(a,b;T).
\end{equation}

Choose functions $0\le\chi_j\le1$ in $C_c(X)$ such that
$\chi_j\uparrow1$. Since $\operatorname{supp}(f)$ avoids the origin,
$\widehat f$ is continuous, and
\Cref{thm:Ratnerequidistributionbounded} gives
$$
\begin{aligned}
\liminf_{t\to\infty}\int_K\widehat f(a_tk\Lambda)\,dk
&\ge
\lim_{t\to\infty}
\int_K(\chi_j\widehat f)(a_tk\Lambda)\,dk =
\int_X\chi_j\widehat f\,dm_X.
\end{aligned}
$$
Letting $j\to\infty$, monotone convergence and Siegel's formula give
\begin{equation}\label{eq:ordinary-Siegel-liminf}
\liminf_{t\to\infty}
\int_K\widehat f(a_tk\Lambda)\,dk
\ge
\int_{\M_n(\mathbb R)}f(v)\,dv.
\end{equation}
Combining \eqref{eq:count-dominates-K-average},
\eqref{eq:ordinary-Siegel-liminf}, and \Cref{siegelJf}, we obtain
$$
\liminf_{T\to\infty}T^{-n(n-1)}N_\Lambda^\times(a,b;T)
\ge
\frac{\omega_{n-1}^2}{2(1+\eta)}
\int_{\mathbb R}\int_{B_0^+}
h(\mathsf r,\zeta)\det(\mathsf r)^n\,d\mathsf r\,d\zeta.
$$
Letting $\eta\downarrow0$ and taking the supremum over
$0\le h\le1$ in $C_c(\mathcal U)$ gives
$$
\begin{aligned}
\liminf_{T\to\infty}T^{-n(n-1)}N_\Lambda^\times(a,b;T)
&\ge
\frac{\omega_{n-1}^2}{2}(b-a)
\int_{\{\mathsf r\in B_0^+:\Phi(\mathsf r,0)<1\}}
\det(\mathsf r)^n\,d\mathsf r\\
&=
C_{\|\cdot\|}(b-a).
\end{aligned}
$$
Here deleting the slice $\zeta=0$ does not change the integral.
\end{proof}

\section{Algebraic lattices and examples}\label{s:example}
In this section, we prove
\Cref{prop:algebraicityimpliesDiophantine} and deduce that every determinant
form with algebraic coefficients is Diophantine. We also give several
examples, including a cubic norm-form lattice with a positive singular
constant, and show that every diagonal lattice satisfies the isotropic
noncoincidence condition.

\begin{lemma}[Liouville-type lower bound for algebraic matrices]
\label{lem:algebraic-linear-map}
For every matrix $A\in\op{M}_{t\times s}(\overline{\q})$, there exist
 $c>0$ and $M\ge0$ such that for every $z\in \mathbb Z^s$ such that $Az\ne 0$,  
$$
\|Az\|\ge c\max\{1,\|z\|\}^{-M}.
$$
\end{lemma}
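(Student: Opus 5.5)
The plan is to reduce to a single coordinate and apply the classical Liouville inequality via the product formula over a number field.

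First, since $Az\ne0$, some coordinate $(Az)_i=\sum_j a_{ij}z_j$ is nonzero, and $\|Az\|\ge c_0|(Az)_i|$ for a constant $c_0$ depending only on the norm. There are only finitely many rows, so it suffices to treat one row: a fixed vector $(a_1,\dots,a_s)\in\overline{\mathbb Q}^s$ and the linear form $\ell(z)=\sum_j a_jz_j$. We then take the minimum of the resulting constants $c$ and the maximum of the exponents $M$ over the rows.

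Next, let $F=\mathbb Q(a_1,\dots,a_s)$, of degree $d$, and fix its embeddings $\sigma_1=\mathrm{id},\sigma_2,\dots,\sigma_d$ into $\mathbb C$. Choose $D\in\mathbb Z_{\ge1}$ such that every $Da_j$ is an algebraic integer. For $z\in\mathbb Z^s$ with $\ell(z)\ne0$, the quantity $\beta:=D\ell(z)$ is a nonzero algebraic integer in $F$. Hence its norm $N_{F/\mathbb Q}(\beta)=\prod_k\sigma_k(\beta)$ is a nonzero rational integer, and so
\[
\Bigl|\prod_k\sigma_k(\beta)\Bigr|\ge1 .
\]
Each conjugate satisfies
\[
|\sigma_k(\beta)|\le D\sum_j|\sigma_k(a_j)||z_j|\le C_1\max\{1,\|z\|\},
\]
where $C_1$ depends only on $A$. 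Combining these gives
\[
|\ell(z)|\ge D^{-1}C_1^{-(d-1)}\max\{1,\|z\|\}^{-(d-1)}.
\]
This is the claim with $M=d-1$.

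There is no serious obstacle. The only step needing care is the bookkeeping that makes the constants uniform: pass to a common number field containing all the entries of $A$ and a common denominator $D$. With that done, $c$ and $M=[F:\mathbb Q]-1$ depend only on $A$. The argument also covers real-algebraic entries as a special case.
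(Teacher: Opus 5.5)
Your proof is correct and follows essentially the same route as the paper. Both pass to a number field of degree $d$ containing the entries of $A$, fix a common denominator $D$, and observe that $D$ times a nonzero coordinate of $Az$ is a nonzero algebraic integer, so its norm has absolute value at least $1$; bounding the other $d-1$ conjugates linearly in $\max\{1,\|z\|\}$ then gives the claim with $M=d-1$.
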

\begin{proof}
Choose a number field $\mathscr K\subset\mathbb C$ containing the entries
of $A$ and an integer $D\ge1$ such that
$DA\in\op{M}_{t\times s}(\mathcal O_{\mathscr K})$. Set
$d=[\mathscr K:\mathbb Q]$, and choose $C\ge1$ such that
$\|\sigma(DA)\|_{\mathrm{op}}\le C$ for every embedding
$\sigma:\mathscr K\hookrightarrow\mathbb C$.

Suppose that $Az\ne0$ for $z\in\mathbb Z^s$, and choose a nonzero
coordinate $\beta$ of $Az$. Then $D\beta$ is a nonzero algebraic integer,
and
$|\sigma(D\beta)|\le C\max\{1,\|z\|\}$ for every $\sigma:\mathscr K\hookrightarrow\mathbb C$.
Hence
$$
\begin{aligned}
1
&\le \bigl|N_{\mathscr K/\mathbb Q}(D\beta)\bigr|
=
|D\beta|
\prod_{{\sigma:\mathscr K\hookrightarrow\mathbb C, \sigma\ne\operatorname{id}}}
|\sigma(D\beta)| \le
|D\beta|\,C^{d-1}\max\{1,\|z\|\}^{d-1}.
\end{aligned}
$$
Thus
$|\beta|\ge D^{-1}C^{1-d}\max\{1,\|z\|\}^{1-d}$.
Since $\|Az\|\ge|\beta|$, the assertion follows.
\end{proof}

\begin{proof}[Proof of \Cref{prop:algebraicityimpliesDiophantine}]
Choose a $\mathbb Z$-basis of $\Lambda$ and
write
$
\Lambda=g\mathbb Z^N$ for some
$g\in\GL_N(\overline{\mathbb Q}\cap\mathbb R).
$ We first note that the projections appearing in
\Cref{lattice_Diophantine} are defined over $\mathbb Q$. Indeed, the
skew Cauchy decomposition in \eqref{eq:decompositionatdimnn-1} is obtained
by base change from the corresponding decomposition over $\mathbb Q$,
and each of its Schur-functor summands is defined over $\mathbb Q$; see,
for example, \cite{ABW,FultonYT}. Consequently, the maps
$\pi_{k,m}:\wedge^{kn}\mathbb R^N\to\mathcal M_{k,m}$
and $\pi_{k,m}^{(r)}=\wedge^r\pi_{k,m} $
have rational matrices with respect to the standard exterior bases.

Let $1\le k\le n-1$, $m\in\{1,2\}$, and $r\in\{1,\dim {\mathcal M_{k, m}} \}$. Let $V_1,\ldots,V_r$ be $kn$-dimensional
$\Lambda$-rational subspaces of $\br^N$. For each $j$, set
$$
L_j:=g^{-1}(\Lambda\cap V_j)
      =\mathbb Z^N\cap g^{-1}V_j.
$$
Then $L_j$ is a primitive sublattice of $\mathbb Z^N$ of rank
$kn$. Choose a $\mathbb Z$-basis of $L_j$, and let $q_j\in\wedge^{kn}\mathbb Z^N$
be the corresponding primitive Pl\"ucker vector, with its sign chosen so
that
$\mathsf w_{\Lambda,V_j}=(\wedge^{kn}g)q_j. $
Define
\begin{equation}\label{eq:algebraic-plucker-transfer}
\mathbf w
:=
\mathsf w_{\Lambda,V_1}\wedge\cdots\wedge
\mathsf w_{\Lambda,V_r}=\wedge^r\!\left(\wedge^{kn}g\right) q_1\wedge\cdots\wedge q_r.
\end{equation}

If $q_1\wedge\cdots\wedge q_r=0$, then $\mathbf w=0$, so the first alternative in
\Cref{lattice_Diophantine} holds. Suppose henceforth that $q_1\wedge\cdots\wedge q_r\ne0$. Since
$\wedge^r\!\left(\wedge^{kn}g\right)$ is invertible, there is a constant $C_{k,r}\ge1$ such that
\begin{equation}\label{eq:q-w-comparison}
\|q_1\wedge\cdots\wedge q_r\|\le C_{k,r}\|\mathbf w\|.
\end{equation}
In particular, because $q_1\wedge\cdots\wedge q_r$ is a nonzero integral vector,
\begin{equation}\label{eq:nonzero-w-lower-bound}
\|\mathbf w\|\ge C_{k,r}^{-1}.
\end{equation}
Since
the linear map $\bigl(I-\pi_{k,m}^{(r)}\bigr)\wedge^r\!\left(\wedge^{kn}g\right)
$
has algebraic matrix entries, 
 \Cref{lem:algebraic-linear-map} gives 
$c_{k,m,r}>0$ and $M_{k,m,r}\ge0$ such that
$$
\begin{aligned}
\|\mathbf w-\pi_{k,m}^{(r)}(\mathbf w)\| 
&\ge c_{k,m,r}\|q_1\wedge\cdots\wedge q_r\|^{-M_{k,m,r}}\ge
c_{k,m,r}C_{k,r}^{-M_{k,m,r}}
\|\mathbf w\|^{-M_{k,m,r}},
\end{aligned}
$$
whenever $\mathbf w\ne\pi_{k,m}^{(r)}(\mathbf w)$. 
There are only finitely many admissible triples $(k,m,r)$. Using
\eqref{eq:nonzero-w-lower-bound}, we may enlarge the finitely many
exponents to a common exponent $M_0>1$ and decrease the corresponding
constants to obtain $\eta_0>0$ such that
$$
\|\mathbf w-\pi_{k,m}^{(r)}(\mathbf w)\|
\ge
\eta_0\|\mathbf w\|^{-M_0}
$$
whenever $\mathbf w\ne\pi_{k,m}^{(r)}(\mathbf w)$. Taking
$\eta:=\frac12\min\{1,\eta_0\}$ gives the strict inequality required in
\Cref{lattice_Diophantine}, with $M=M_0$.
\end{proof}

\begin{corollary}\label{cor:algebraic-form-diophantine}
Every determinant form $F$ with algebraic coefficients is Diophantine.
\end{corollary}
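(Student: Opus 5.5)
The plan is to reduce the corollary directly to \Cref{prop:algebraicityimpliesDiophantine} via \Cref{form_Diophantine}. Let $F$ have algebraic coefficients. By definition of a determinant form, there is a lattice $\Lambda$ and a $\mathbb Z$-basis $\mathcal B=\{v_{ij}\}$ with $F=F_{\Lambda,\mathcal B}$, i.e.\ $F=\det\circ g$, where $g\in\GL_N(\mathbb R)$ sends $E_{ij}$ to $v_{ij}$. The entries of $g$ need not be algebraic. The main step is therefore to replace this realization by one with algebraic entries. Since Diophantineness of a form is independent of the realization (as remarked after \Cref{form_Diophantine}), this replacement is harmless.

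To find an algebraic realization, consider the affine variety
$$
\mathcal G_F:=\{g'\in\GL_N:\ \det\circ g'=F\}.
$$
It is defined by polynomial equations whose coefficients lie in the number field generated by the coefficients of $F$, hence over $\overline{\mathbb Q}\cap\mathbb R$ after adjoining complex conjugates. We know $\mathcal G_F(\mathbb R)\ne\emptyset$, since it contains $g$. I would then show that real algebraic points are dense in the real points, or at least that some exist. This follows because $\mathcal G_F$ is defined over the real closed field $\overline{\mathbb Q}\cap\mathbb R$, and by Tarski--Seidenberg transfer a nonempty real solution set implies a solution over that field. This mirrors the Nullstellensatz argument in the proof of \Cref{thm:det-rational-characterization}, but requires realness.

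This yields $g'\in\GL_N(\overline{\mathbb Q}\cap\mathbb R)$ with $\det\circ g'=F$. Set $\Lambda':=g'\mathbb Z^N$ with basis $\{g'E_{ij}\}$. Then $F=F_{\Lambda',\mathcal B'}$, and $\Lambda'$ consists of matrices with algebraic entries. By \Cref{prop:algebraicityimpliesDiophantine}, $\Lambda'$ is Diophantine, so $F$ is Diophantine by \Cref{form_Diophantine}.

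The expected obstacle is the transfer step producing a real algebraic point. If one wants to avoid model theory, an alternative is to note that $g'g^{-1}$ must preserve $\det$. By \Cref{Fro}, $\Lambda$ and $\Lambda'$ then differ by a left–right multiplication, possibly composed with transposition, and the invariance of the Diophantine property under these operations gives the same conclusion directly.
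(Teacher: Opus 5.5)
Your argument is correct and is essentially the paper's proof: Tarski's transfer principle over the real closed field $\overline{\mathbb Q}\cap\mathbb R$ produces $g_0\in\GL_N(\overline{\mathbb Q}\cap\mathbb R)$ with $F=\det\circ g_0$, and you then apply \Cref{prop:algebraicityimpliesDiophantine} to the lattice $g_0\mathbb Z^N$ together with \Cref{form_Diophantine}. Two small points: the coefficients of $F$ are already real, so nothing needs to be adjoined; and your closing ``alternative'' does not avoid the transfer step, because the \Cref{Fro} argument presupposes an algebraic realization $g'$ and only re-proves the realization-independence already recorded after \Cref{form_Diophantine}.
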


\begin{proof} Let $g\in\GL_N(\mathbb R)$ be such
that
$
F=\det\circ g.
$
Comparing coefficients in this identity and introducing an auxiliary
variable $u$ satisfying $u\det(g)=1$, we obtain a finite system of
polynomial equations in the entries of $g$ and $u$ with coefficients in
$\overline\q\cap \mathbb R$. This system has a solution over $\mathbb R$.
Since $\overline\q\cap \mathbb R$ is real closed, Tarski's transfer
principle \cite{BCR1998} gives a solution over
$\overline\q\cap \mathbb R$. In particular, there exists
$
g_0\in\GL_N(\overline\q\cap \mathbb R)
$
such that $F=\det\circ g_0$.
Thus $F$ is the determinant form associated with the algebraic lattice
$g_0\mathbb Z^N$. The conclusion follows from
\Cref{prop:algebraicityimpliesDiophantine,form_Diophantine}.
\end{proof}

Consequently, \Cref{m1} follows from \Cref{main} and
\Cref{prop:algebraicityimpliesDiophantine}; the determinant-form version
follows from \Cref{m2general} and
\Cref{cor:algebraic-form-diophantine}.

\subsection{Examples}\label{sub:ex}

\begin{example}[Entrywise algebraic lattices]
Let $\alpha,\beta,\gamma,\delta\in
\overline{\mathbb Q}\cap\mathbb R$ be nonzero, and define the lattice
$$
\Lambda_{\alpha,\beta,\gamma,\delta}
:=
\begin{pmatrix}
\alpha\mathbb Z&\beta\mathbb Z&\mathbb Z\\
\gamma\mathbb Z&\delta\mathbb Z&\mathbb Z\\
\mathbb Z&\mathbb Z&\mathbb Z
\end{pmatrix}
<\M_3(\mathbb R).
$$

It is easy to check that unless all $\alpha,\beta,\gamma,\delta$ are rational,
$\Lambda_{\alpha,\beta,\gamma,\delta}$ is not determinant-rational, and
hence \Cref{m1} applies. 
The rank-six submodule obtained by setting the first row equal to zero is
contained in a column-isotropic subspace. Since $n=3$, the
isotropic noncoincidence condition is vacuous. Thus
\Cref{thm:quasinullcontribution2} applies, and
$c_{\Lambda_{\alpha,\beta,\gamma,\delta}}^{\operatorname{sing}}>0$.
\end{example}

\begin{example}[A cubic norm-form lattice]\label{ex:cubic-norm}
Let $\mathscr{K}=\mathbb Q(\varpi)$ where $\varpi^3=2$, and let
$$
\iota:\mathscr{K}\hookrightarrow\operatorname{End}_{\mathbb Q}(\mathscr{K})
\simeq\M_3(\mathbb Q)
$$
be the regular representation with respect to the basis
$1,\varpi,\varpi^2$. Then $\iota(1)=I$ and
$$
\iota(\varpi)=
T:=
\begin{pmatrix}
0&0&2\\
1&0&0\\
0&1&0
\end{pmatrix},
\qquad
\iota(\varpi^2)=T^2=
\begin{pmatrix}
0&2&0\\
0&0&2\\
1&0&0
\end{pmatrix}.
$$
In particular,
$$
\det(xI+yT+zT^2)
=N_{\mathscr{K}/\mathbb Q}(x+y\varpi+z\varpi^2)
=x^3+2y^3+4z^3-6xyz.
$$

Let $\alpha\in\overline{\mathbb Q}\cap\mathbb R$ satisfy
$\alpha^3\notin\mathbb Q$, and put
$$
W:=\{A\in\M_3(\mathbb Q):\text{the first column of }A\text{ is zero}\}.
$$
Since the first column of $\iota(\xi)$ is the coordinate vector of
$\xi\in \mathscr{K}$, one has $\iota(\mathscr{K})\cap W=\{0\}$, and therefore
$\M_3(\mathbb Q)=\iota(\mathscr{K})\oplus W$.
Define
$$
\Lambda_{\mathscr{K},\alpha}
:=
\mathbb Z(\alpha I)\oplus\mathbb ZT\oplus\mathbb ZT^2
\oplus
\bigoplus_{r=1}^3
\bigl(\mathbb ZE_{r2}\oplus\mathbb ZE_{r3}\bigr).
$$
Writing
$
x=(x_1,x_2,x_3)\in\mathbb R^3$ and
$y=(y_{rj})_{1\le r\le 3,\ 2\le j\le 3}\in\mathbb R^{3\times 2}$
for the coordinates with respect to the displayed basis of $\Lambda_{\mathscr{K},\alpha}$, the associated
determinant form is
\begin{align*}
F_{\Lambda_{\mathscr{K},\alpha}}(x,y) =
\det\!\begin{pmatrix}
\alpha x_1&2x_3+y_{12}&2x_2+y_{13}\\
x_2&\alpha x_1+y_{22}&2x_3+y_{23}\\
x_3&x_2+y_{32}&\alpha x_1+y_{33}
\end{pmatrix}.
\end{align*}

Since $\tfrac{\det(\alpha I)}{\det(T)}
=\tfrac{\alpha^3}{2}\notin\mathbb Q,$
$\Lambda_{\mathscr{K},\alpha}$ is not determinant-rational, so \Cref{m1} applies. Moreover, the rank-six submodule
$\bigoplus_{r=1}^3\bigl(\mathbb ZE_{r2}\oplus\mathbb ZE_{r3}\bigr) $
consists entirely of matrices with zero first column. Since $n=3$,
\Cref{thm:quasinullcontribution2} applies, and
$c_{\Lambda_{\mathscr{K},\alpha}}^{\operatorname{sing}}>0$.
\end{example}

\begin{example}[Diagonal lattices satisfy isotropic noncoincidence]
\label{ex:diagonal-isotropically-noncoincident}
Let $\boldsymbol\lambda=(\lambda_{ij})_{1\le i,j\le n}\in(\mathbb R^\times)^{n^2}$, and set
$$
\Delta_{\boldsymbol\lambda}
:=
\bigoplus_{i,j=1}^n \lambda_{ij}\mathbb Z E_{ij}
<\M_n(\mathbb R).
$$
 Then $\Delta_{\boldsymbol\lambda}$ satisfies the
isotropic noncoincidence condition.
\end{example}

\begin{proof}
We prove the column statement; the row statement is identical. Write
$$
\Lambda_j:=\bigoplus_{i=1}^n \lambda_{ij}\mathbb Z e_i<\mathbb R^n.
$$
 If
$\mathsf U<\mathbb R^n$, then
$\mathcal L(\mathsf U)=\{X\in\M_n(\mathbb R):\operatorname{Col}(X)\subset \mathsf U\}$
is $\Delta_{\boldsymbol\lambda}$-rational if and only if $\mathsf U$ is $\Lambda_j$-rational for every $1\le j\le n$. Indeed,
$$
\Delta_{\boldsymbol\lambda}\cap \mathcal L(\mathsf U)
=
\bigoplus_{j=1}^n(\Lambda_j\cap \mathsf U),
$$
under the decomposition of a matrix into its columns.
Now suppose that $\mathcal L(\mathsf U)$ is
$\Delta_{\boldsymbol\lambda}$-rational and that
$\dim\mathsf U\le n-2$. By the preceding characterization,
$\mathsf U$ is $\Lambda_j$-rational for every $j$. Choose a coordinate
vector $e_r\notin\mathsf U$. Since $\mathbb Re_r$ is
$\Lambda_j$-rational for every $j$, so is
$
\mathsf U':=\mathsf U+\mathbb Re_r.
$
Moreover,
$
\dim\mathsf U'=\dim\mathsf U+1\le n-1,
$
so $\mathsf U'$ is proper. Hence
$\mathcal L(\mathsf U')$ is a proper
$\Delta_{\boldsymbol\lambda}$-rational column-isotropic subspace that
properly contains $\mathcal L(\mathsf U)$.

Therefore no proper $\Delta_{\boldsymbol\lambda}$-rational
column-isotropic subspace of dimension $kn$ with $k\le n-2$ is maximal
among proper rational column-isotropic subspaces. Thus the column isotropic
noncoincidence condition is vacuous. The same argument applied to the row
coordinate lattices proves the row condition. Hence
$\Delta_{\boldsymbol\lambda}$ satisfies the isotropic noncoincidence
condition.
\end{proof}

\appendix

\section{Uniform real moments from complex singularity exponents}
\label{app:analytic-stability}
This appendix is notationally self-contained; its symbols are independent of those used in the body of the paper. 

For a germ of a holomorphic map $F$ at $z$, we define
$$
 c_z(F):=\sup\{c>0:\|F\|^{-2c}\text{ is locally integrable near }z\}.
$$
Here integrability is with respect to Lebesgue measure in complex
coordinates. Note that $c_z(F)=+\infty$ when $F(z)\ne0$ and $c_z(F)=0$ when $F$ is the zero
germ, meaning that $F$ vanishes identically on a neighborhood of $z$.

Let
$p,\ell,q\ge1$ and 
$$
\mathcal Q:\mathbb R^p\times\mathbb R^\ell\to\mathbb R^q
$$
be a polynomial map. Write $\mathcal Q_h(x):=\mathcal Q(h,x)$, and let
$\mathcal Q^{\mathbb C}$ denote its complexification, given by the same
polynomial formulas. 

The main goal of this appendix is to prove the following multiparameter
stability statement, which is used in
\Cref{lem:uniform-negative-moments-complex}.

\begin{proposition}[Multiparameter stability from a complex threshold]
\label{prop:multiparameter-stability}
Suppose that there exist $\alpha>0$ and bounded open sets
$B\subset\mathbb R^\ell$ and $V\subset\mathbb C^\ell$ with
$\overline B\subset V$ such that
$$
c_z(\mathcal Q_0^{\mathbb C})\ge\alpha
\qquad(z\in V).
$$
Then, for every $0<\beta<\alpha$, there is an open neighborhood $U$ of $0$
in $\mathbb R^p$ such that
$$
\sup_{h\in U}\int_B\|\mathcal Q_h(x)\|^{-\beta}\,dx<\infty,
$$
where $dx$ denotes Lebesgue measure on $\mathbb R^\ell$.
\end{proposition}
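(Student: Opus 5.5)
We combine two facts: lower semicontinuity of complex singularity exponents in families (Demailly--Koll\'ar), and a passage from complex thresholds to real negative moments. This passage must be uniform in the parameter, and for that we use Cutkosky's relative monomialization.

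\emph{Step 1: complex stability.} We view $\mathcal Q^{\mathbb C}$ as a holomorphic family over the parameter space $\mathbb C^p$. Fix $0<\beta<\beta'<\alpha$ and a point $z_0\in\overline B$. Since $c_{z_0}(\mathcal Q_0^{\mathbb C})\ge\alpha>\beta'$, the Demailly--Koll\'ar semicontinuity theorem applies. In its effective $L^2$ form, integrability of $\|\mathcal Q_0^{\mathbb C}\|^{-2\beta'}$ near $z_0$ gives a polydisc $P_{z_0}$ around $z_0$ and a neighborhood $W_{z_0}$ of $0\in\mathbb C^p$ with
$$
\sup_{h\in W_{z_0}}\int_{P_{z_0}}\|\mathcal Q_h^{\mathbb C}(z)\|^{-2\beta'}\,d\lambda(z)<\infty.
$$
The uniformity in $h$ is part of their statement. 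Zero-germ issues do not arise, because $\mathcal Q_0^{\mathbb C}$ is not the zero germ anywhere on $V$. Covering $\overline B$ by finitely many such polydiscs yields a uniform complex bound on a complex neighborhood of $\overline B$ for $h$ in a neighborhood $U_0$ of $0$. We record this in the appendix as \Cref{lem:complex-stability-slicing}(1). Its slicing part is the statement used for the initial-degeneration inequality in the body.

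\emph{Step 2: complex to real.} A complex integral bound does not directly control the real integral over $B\subset\mathbb R^\ell\subset\mathbb C^\ell$, which lives on a totally real slice of measure zero. Two routes are available.

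The first route is a slicing argument. In one complex variable, with $z=x+iy$, subharmonicity of $\log\|F\|$ gives $\int_{\mathbb R}\|F\|^{-\beta}\,dx$ bounds in terms of averages over nearby horizontal lines. This works because $\|F(x+iy)\|$ is comparable to $\|F(x)\|$ up to a controlled loss. We expect such comparisons to lose exponent, so we prefer the second route.

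The second route uses resolution. Apply Cutkosky's relative monomialization to the morphism given by the ideal $(\mathcal Q_1,\dots,\mathcal Q_q)$ over the parameter space, after possibly shrinking to a neighborhood of $0$ and stratifying parameters. This gives real-analytic modifications $\pi_h$, locally uniform in $h$, under which the pulled-back ideal is principal and monomial, $\prod y_j^{a_j}$ times a unit, with Jacobian $\prod y_j^{b_j}$ times a unit. The exponents $(a_j,b_j)$ range over a finite set.

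\emph{Step 3: exponent comparison and assembly.} In monomial charts, both the real integrability exponent and the complex threshold are governed by the same combinatorics. The real exponent is $\min_j (b_j+1)/a_j$. The complex threshold is $\min_j (b_j+1)/a_j$ as well, since the complex Jacobian is $|{\prod y_j^{b_j}}|^2$ and the integrand is $\|\cdot\|^{-2c}$. Hence real integrability of $\|\mathcal Q_h\|^{-\beta}$ holds whenever $\beta<c(\mathcal Q_h^{\mathbb C})$ at the relevant points. By Step 1 this holds for $\beta<\alpha$, uniformly, and the units give uniform constants on compact pieces. Summing over finitely many charts covering $\pi_h^{-1}(\overline B)$ gives the claim on a neighborhood $U$ of $0$.

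The main obstacle is the uniformity of the monomialization in $h$ near $h=0$. Fibres may degenerate as $h\to0$, so a resolution that is uniform over a neighborhood of the parameter origin is exactly what the relative version must supply, and the units must stay bounded above and below there. If that fails, we would fall back on the Demailly--Koll\'ar uniform complex bound plus a real-analytic comparison on the totally real slice, accepting an $\varepsilon$-loss in the exponent. That loss is harmless since $\beta<\alpha$ is strict.
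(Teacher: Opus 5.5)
Your Step~1 matches the paper (\Cref{lem:complex-stability-slicing}(1)). The proof breaks in Steps~2--3, at the very point you flag as the ``main obstacle'', and you do not resolve it.

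\textbf{Where Steps~2--3 fail.} Cutkosky's relative monomialization does not produce modifications $\pi_h$ of the individual fibres that are uniform in $h$ with units bounded above and below. What it does is different:
\begin{itemize}
\item it also modifies the parameter space, via $\tau_i:T_i\to T$;
\item on the total space $M_i$ it makes the ideal monomial, $\sigma_i^*\mathcal I_A=(u^{a})$, and the projection monomial, $\zeta_j\circ p_i=u^{\alpha_j}$;
\item the exponents $a,\kappa$ are total-space data, and the fibres $p_i^{-1}(t)$ degenerate as $t$ approaches the coordinate divisor.
\end{itemize}
So a fibrewise comparison of real and complex exponents only gives finiteness of $\int_B\|\mathcal Q_h\|^{-\beta}$ for each fixed $h$. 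It never gives a bound uniform in $h$.

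The mechanism you describe cannot be right. Take $\mathcal Q(h,x)=x^2-h$ with $p=\ell=q=1$. Then $\alpha=c_0(\mathcal Q_0^{\mathbb C})=1/2$. For $h>0$ the fibre is already normal crossings, with $a=1$, $b=0$ at $x=\pm\sqrt h$. If the units were bounded uniformly for $h\in(0,\epsilon)$, your reasoning would give bounds uniform there for every $\beta<1$. But $\int_{-1}^1|x^2-h|^{-\beta}\,dx\asymp h^{1/2-\beta}\to\infty$ for $1/2<\beta<1$. The unit $x+\sqrt h$ degenerates, and the uniform exponent $1/2$ is invisible to the fibres $h\ne0$.

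Your fallback, a ``real-analytic comparison on the totally real slice'', is not an argument. Subharmonicity of $\log\|F\|$ bounds $\|F(x)\|$ from above by averages, which is the wrong direction for bounding $\|F(x)\|^{-\beta}$.

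\textbf{What is missing.} You need a way to turn the \emph{uniform} complex bound into exponent inequalities on the total space of the relative monomialization. The paper does this with \Cref{lem:complex-stability-slicing}(2), the parameter-weighted part you set aside. Its proof runs as follows.
\begin{enumerate}
\item By Demailly--Koll\'ar plus Fubini in the parameter, $\bigl(|\zeta_1\cdots\zeta_p|\,\|F(t,z)\|^{\beta'}\bigr)^{-2\theta}$ is jointly integrable for every $\theta<1$.
\item Pull this back by $\rho_{i,\mathbb C}$, which is injective and biholomorphic off the coordinate divisor, and test along each $\{u_k=0\}$. This gives $e_k\ge\beta'a_k$, where $e_k=\kappa_k+1-\sum_j\alpha_{jk}$ is the total Jacobian exponent corrected by the parameter Jacobian.
\item Set $\delta_k=e_k-\beta a_k\ge0$. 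The logarithmic Jacobian identity shows that the columns of the exponent matrix $\mathsf A$ indexed by $\{\delta_k=0\}$ are linearly independent.
\item Choose $J\supset\{\delta_k=0\}$ with $\mathsf A_J$ invertible. In the fibre coordinates $(u_k)_{k\notin J}$ the integrand is then bounded by $\prod_{k\notin J}|u_k|^{\delta_k-1}$ with every $\delta_k>0$. This is integrable with a bound independent of $t$.
\item The area formula gives the uniform bound for a full-measure set of $h$: those over which $\tau_i$ is an embedding and the fibre avoids the target coordinate boundary. Fatou's lemma extends it to all $h\in U$.
\end{enumerate}
None of these ingredients appears in your proposal: not the parameter-weighted density, not the choice of $J$, and not the handling of exceptional parameters.
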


The proof uses two external inputs. Lemma 3.2 of Demailly--Koll\'ar
\cite{DemaillyKollar2001} gives lower semicontinuity and a uniform
complex negative-moment bound in a holomorphic family, while
Theorem 9.7 of Cutkosky \cite{Cutkosky2017} provides, locally near each
point, finitely many monomialization diagrams obtained by finite sequences
of local blow-ups, in which the prescribed analytic ideal pulls back to a
principal ideal generated by a coordinate monomial. Compatible complexifications of these
diagrams come from \cite[Proposition~9.6 and the proof of
Theorem~9.7]{Cutkosky2017}. The
remainder of the proof computes the resulting monomial exponents and then
applies the area formula and Fatou's lemma.

\begin{lemma}[Complex stability and parameter-weighted integrability]
\label{lem:complex-stability-slicing}
Let $S$ and $V$ be complex manifolds. Let $t_0\in S$, $K\subset V$ a
compact subset, and 
$$
F=(F_1,\ldots,F_q)
$$
be a holomorphic map on a neighborhood of $\{t_0\}\times K$.  Write
$F_t(z):=F(t,z)$.

\begin{enumerate}
\item If $c_z(F_{t_0})\ge\alpha>0$ for every $z\in K$, then, for every
$0<\eta<\alpha$, after replacing $S$ by a sufficiently small neighborhood of $t_0$, one has
$$
c_z(F_t)>\eta
\qquad(t\in S,\ z\in K).
$$

\item Suppose that $\dim_{\mathbb C}S=p$, and let
$\zeta_1,\ldots,\zeta_p$ be local coordinates centered at $t_0$.  If
$z_0\in K$ and $c_{z_0}(F_{t_0})>\eta>0$, then, for every $0<c<1$,
\begin{equation}\label{eq:coordinate-slicing-density}
\left(
 |\zeta_1(t) \cdots\zeta_p (t) |\,\|F(t,z)\|^\eta
\right)^{-2c}
\end{equation}
is locally integrable near $(t_0,z_0)$.
\end{enumerate}
\end{lemma}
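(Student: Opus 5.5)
Part (1) is essentially the semicontinuity theorem of Demailly--Koll\'ar, and I would deduce it directly from \cite[Lemma~3.2]{DemaillyKollar2001}. That lemma states the following: if $\|F_{t_0}\|^{-2c}$ is integrable on a neighborhood of a point $z_0$, then for every $c'<c$ there is a neighborhood $W$ of $(t_0,z_0)$ such that $\|F_t\|^{-2c'}$ is integrable on the $z$-slices of $W$, uniformly in $t$. Let $0<\eta<\alpha$ and pick $\eta<c'<c<\alpha$. For each $z_0\in K$, the hypothesis $c_{z_0}(F_{t_0})\ge\alpha>c$ gives integrability of $\|F_{t_0}\|^{-2c}$ near $z_0$. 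Hence we obtain a product neighborhood $S_{z_0}\times W_{z_0}$ on which $c_z(F_t)\ge c'>\eta$ for all $(t,z)$. Compactness of $K$ lets us cover it by finitely many $W_{z_0}$, and we then replace $S$ by the intersection of the corresponding $S_{z_0}$. The only point that needs care is that the Demailly--Koll\'ar statement really gives a neighborhood in the product, not merely pointwise lower semicontinuity. Their proof proceeds through the Ohsawa--Takegoshi extension theorem and does yield exactly this, so I would cite it in that form.

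For part (2) I would use a Fubini argument built on the uniform complex moment bound from part (1). Fix $z_0$ with $c_{z_0}(F_{t_0})>\eta$ and choose $\eta<\eta'<c_{z_0}(F_{t_0})$. Applying the uniform version of Demailly--Koll\'ar with $c=\eta'$ gives neighborhoods $S_0\ni t_0$ and $W\ni z_0$ and a constant $C$ such that $\int_W\|F_t\|^{-2\eta'}\,d\lambda(z)\le C$ for all $t\in S_0$. Now fix $0<c<1$. The function in \eqref{eq:coordinate-slicing-density} is a product of the parameter factor $|\zeta_1\cdots\zeta_p|^{-2c}$ and the map factor $\|F(t,z)\|^{-2c\eta}$. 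Since $c<1$, we have $c\eta<\eta'$, so on each slice $\{t\}\times W$ Hölder's inequality bounds $\int_W\|F_t\|^{-2c\eta}$ uniformly by $C'$, for a constant $C'$ depending only on $C$ and $\operatorname{vol}(W)$. Tonelli then gives
$$\int_{S_0\times W}|\zeta_1\cdots\zeta_p|^{-2c}\|F\|^{-2c\eta}\le C'\int_{S_0}|\zeta_1\cdots\zeta_p|^{-2c}\,d\lambda(t),$$
and the right-hand side is finite because it factors into one-variable integrals $\int|\zeta_j|^{-2c}$ over a disc in $\mathbb C$, each of which converges since $2c<2$.

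The main obstacle is therefore in part (1): we need the uniform-in-$t$ local integrability bound, not just the semicontinuity of $c_z(F_t)$. Part (2) relies on that uniformity, since lower semicontinuity alone would leave the slice integrals uncontrolled in $t$. I would extract this bound explicitly from \cite[Lemma~3.2]{DemaillyKollar2001} and then run the compactness argument over $K$ as described.
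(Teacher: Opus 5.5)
Your proposal is correct and follows the paper's proof. Both parts rest on the uniform-in-$t$ slice integrability from \cite[Lemma~3.2]{DemaillyKollar2001}: part (1) adds a compactness argument over $K$, and part (2) adds Fubini together with the one-variable integrals $\int|\zeta_j|^{-2c}$. The only cosmetic difference is that the paper applies the uniform bound directly at exponent $c\eta<c_{z_0}(F_{t_0})$, so your auxiliary exponent $\eta'$ and the H\"older step are unnecessary, though harmless.
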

\begin{proof}
Working in local holomorphic coordinates, set
$$
\varphi(t,z):=\log\|F(t,z)\|.
$$
Then $\varphi$ is a H\"older plurisubharmonic function in the sense of
\cite[Definition~2.3]{DemaillyKollar2001}. By
\cite[Proposition~1.2]{DemaillyKollar2001}, the complex singularity
exponent of $\varphi(t_0,\cdot)$ on a compact set is the infimum of the
pointwise exponents. Part~\textnormal{(1)} therefore follows from
\cite[Lemma~3.2(1)]{DemaillyKollar2001}, after covering $K$ by finitely
many coordinate neighborhoods.
For part~\textnormal{(2)}, fix $0<c<1$. Since
$c\eta<c_{z_0}(F_{t_0})$, \cite[Lemma~3.2(2)]{DemaillyKollar2001} gives
neighborhoods $S'\ni t_0$ and $V'\ni z_0$, and a constant $C>0$, such
that
$$
\sup_{t\in S'}\int_{V'}\|F(t,z)\|^{-2c\eta}\,dz\le C.
$$
Shrinking $S'$ in the coordinates $\zeta_1,\ldots,\zeta_p$ and applying
Fubini's theorem, we obtain
$$
\begin{aligned}
&\int_{S'\times V'}
\left(
|\zeta_1(t)\cdots\zeta_p(t)|\,\|F(t,z)\|^\eta
\right)^{-2c}
\,dt\,dz \le
C\prod_{j=1}^p
\int_{|\zeta_j|<\varepsilon}|\zeta_j|^{-2c}\,dA(\zeta_j)
<\infty,
\end{aligned}
$$
since $c<1$. This proves part~\textnormal{(2)}.
\end{proof}

We use the terminology of
\cite[Definition~3.2 and Section~9]{Cutkosky2017} for monomial morphisms,
coordinate divisors, and local blow-ups with smooth centers. A compact
coordinate box means a compact rectangle in analytic coordinates. By a
\emph{compatible local complexification}, we mean holomorphic extensions of
the local diagrams and coordinates for which the displayed identities remain
valid and whose restrictions to the real loci recover the original data.

\begin{lemma}[Finite monomial charts over the parameter space]
\label{lem:finite-relative-monomial-charts}
Let $T\subset\mathbb R^p$ be open, let $X$ be a compact real-analytic
$\ell$-manifold, and let $A\subset T\times X$ be a closed real-analytic
subspace with ideal sheaf $\mathcal I_A$. Let $K\subset T\times X$ be
compact, and suppose that $A$ contains no connected component of
$T\times X$ meeting $K$.

Then there are finitely many commutative diagrams
$$
\begin{array}{ccc}
M_i&\xrightarrow{\ \sigma_i\ }&T\times X\\
{\scriptstyle p_i}\downarrow&&
\downarrow{\scriptstyle\operatorname{pr}_T}\\
T_i&\xrightarrow{\ \tau_i\ }&T,
\end{array}
$$
where $M_i$ and $T_i$ are smooth real-analytic manifolds of dimensions
$p+\ell$ and $p$, respectively, together with compact coordinate boxes
$K_i\subset M_i$ such that
$$
K\subset\bigcup_i\sigma_i(K_i).
$$
Each $K_i$ lies in a coordinate neighborhood centered at a point
$m_i\in M_i$ with $\sigma_i(m_i)\in K$. On the corresponding coordinate
neighborhoods, $\sigma_i$ and $\tau_i$ are finite compositions of local
blow-ups with smooth centers.

There are analytic coordinates
$$
u=(u_1,\ldots,u_{p+\ell})
\quad\text{on }M_i,
\qquad
\zeta=(\zeta_1,\ldots,\zeta_p)
\quad\text{on }T_i,
$$
centered at $m_i$ and $p_i(m_i)$, respectively, such that
\begin{equation}\label{eq:relative-monomial-normal-form}
\zeta_j\circ p_i=u^{\alpha_j^{(i)}}
\qquad(1\le j\le p),
\qquad
\operatorname{rank}\mathsf A_i=p,
\end{equation}
where
$$
u^\nu:=\prod_{k=1}^{p+\ell}u_k^{\nu_k},
\qquad
\mathsf A_i
:=
\bigl(\alpha_{jk}^{(i)}\bigr)_{
1\le j\le p,\ 1\le k\le p+\ell}.
$$
Set
$$
D_i:=\{u_1\cdots u_{p+\ell}=0\}.
$$
Then $\sigma_i$ is an open embedding on $M_i\smallsetminus D_i$. Moreover,
\begin{equation}\label{eq:relative-ideal-and-jacobian}
\sigma_i^*\mathcal I_A=(u^{a_i}),
\qquad
\rho_i^*
\bigl(
d\zeta_1\wedge\cdots\wedge d\zeta_p\wedge\omega_X
\bigr)
=
\varepsilon_i u^{\kappa_i}
du_1\wedge\cdots\wedge du_{p+\ell},
\end{equation}
where
$
\rho_i
:=
\bigl(p_i,\operatorname{pr}_X\circ\sigma_i\bigr):
M_i\to T_i\times X,
$
$\omega_X$ is any nowhere-vanishing local analytic $\ell$-form on $X$,
$a_i,\kappa_i\in\mathbb Z_{\ge0}^{p+\ell}$, and $\varepsilon_i$ is an
analytic unit.

After further finite restrictions and relabeling, each $T_i$ may be taken
to be a relatively compact coordinate neighborhood, and there is a
nowhere-dense closed analytic subset $\mathcal E_i\subset T_i$ such that
$\tau_i$ is an open embedding on $T_i\smallsetminus\mathcal E_i$,
$p_i^{-1}(\mathcal E_i)$ is nowhere dense, and the Lebesgue measure of $\tau_i(\mathcal E_i)$ is zero.
The local diagrams and coordinates admit compatible complexifications for
which $\sigma_{i,\mathbb C}$ is an open embedding off
$
D_{i,\mathbb C}:=\{u_1\cdots u_{p+\ell}=0\}.
$
Consequently, $\rho_{i,\mathbb C}$ is injective and a local biholomorphism
off $D_{i,\mathbb C}$.
\end{lemma}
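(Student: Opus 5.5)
The plan is to deduce this almost entirely from Cutkosky's relative monomialization theorem \cite[Theorem~9.7]{Cutkosky2017}, applied locally at each point of the compact set $K$. The rest is a compactness argument plus some bookkeeping about Jacobians and exceptional loci.

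First, fix a point $(t_0,x_0)\in K$. Near this point, apply \cite[Theorem~9.7]{Cutkosky2017} to the projection $\operatorname{pr}_T:T\times X\to T$ and the ideal $\mathcal I_A$. The hypothesis that $A$ contains no connected component meeting $K$ makes $\mathcal I_A$ a nonzero ideal there. The theorem yields finitely many commutative diagrams built from finite sequences of local blow-ups with smooth centers, $\sigma_i$ upstairs and $\tau_i$ on the base. In suitable coordinates $p_i$ is monomial, which gives \eqref{eq:relative-monomial-normal-form}, and $\sigma_i^*\mathcal I_A$ is principal, generated by a coordinate monomial. The rank condition $\operatorname{rank}\mathsf A_i=p$ follows because $p_i$ is dominant, hence generically a submersion. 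The images of small compact coordinate boxes around the chosen centers cover a neighborhood of $(t_0,x_0)$. Compactness of $K$ then reduces everything to finitely many diagrams.

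Second, the Jacobian identity in \eqref{eq:relative-ideal-and-jacobian} needs a separate check. Here $\rho_i=(p_i,\operatorname{pr}_X\circ\sigma_i)$ is an analytic map between manifolds of equal dimension $p+\ell$. It is a local isomorphism off the exceptional divisor, since off $D_i$ both $\sigma_i$ and the monomial map $p_i$ are unramified there. So its Jacobian determinant vanishes only on $D_i$. It is not automatic that this determinant is a monomial times a unit. To get that, I would include the Jacobian ideal of $\rho_i$ in the ideal being principalized: apply monomialization to the product of $\mathcal I_A$ with the pulled-back Jacobian ideal, or refine the charts by a further toroidal sequence of blow-ups, which preserves the monomial forms already obtained. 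This refinement is the step I expect to be the main obstacle, because the monomial form of $p_i$, the principality of $\sigma_i^*\mathcal I_A$, and the Jacobian must all be kept simultaneously monomial in one coordinate system. If this fails, I would fall back on \cite[Proposition~9.6]{Cutkosky2017}, which gives the toroidal structure.

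Third, restrict each $T_i$ to a relatively compact coordinate neighborhood. Take $\mathcal E_i$ to be the union of the exceptional loci of the finitely many blow-ups composing $\tau_i$. This is a proper closed analytic subset, hence nowhere dense. Its image under $\tau_i$ is a finite union of images of lower-dimensional analytic sets, so it is Lebesgue null. The preimage $p_i^{-1}(\mathcal E_i)$ is nowhere dense because $p_i$ is generically submersive.

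Finally, construct the compatible complexifications. All the data are real-analytic local blow-ups in explicit coordinates, so they extend holomorphically, as in the proof of \cite[Theorem~9.7]{Cutkosky2017}. The identities persist by the identity theorem. Injectivity and local biholomorphy of $\rho_{i,\mathbb C}$ off $D_{i,\mathbb C}$ follow because $\sigma_{i,\mathbb C}$ is an open embedding there and $\rho_{i,\mathbb C}$ factors through it.
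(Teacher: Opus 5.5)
Your overall route is the paper's: \cite[Theorem~9.7]{Cutkosky2017} at each point of $K$, then compactness, then bookkeeping. However, the Jacobian identity in \eqref{eq:relative-ideal-and-jacobian}, which is the step you flag, is left unproved, and neither proposed fix closes it.

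Principalizing ``the Jacobian ideal of $\rho_i$'' together with $\mathcal I_A$ is circular. The map $\rho_i$ exists only after the charts are built. Re-monomializing on $M_i$ then yields new maps $\sigma_i'$, $\tau_i'$ and a new $\rho_i'$ with $(\tau_i'\times\operatorname{id}_X)\circ\rho_i'=\rho_i\circ\sigma_i'$. Its Jacobian brings in those of $\sigma_i'$ and $\tau_i'$, so the same question recurs. Further toroidal blow-ups do preserve the monomial forms you already have. But they are isomorphisms off the coordinate divisor, so over $\mathbb C$ they cannot make the Jacobian a unit times a monomial unless it already is one.

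The missing idea is that this form is automatic once you use the complexification, which you only bring in at the end.

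\begin{itemize}
\item Complexifying the diagram gives $\sigma_{i,\mathbb C}=(\tau_{i,\mathbb C}\times\operatorname{id}_X)\circ\rho_{i,\mathbb C}$. Since $\sigma_{i,\mathbb C}$ is an open embedding off $D_{i,\mathbb C}$, the chain rule makes $D\rho_{i,\mathbb C}$ invertible there.
\item Hence the holomorphic Jacobian $\mathcal J_i$ of $\rho_{i,\mathbb C}$, taken relative to $d\zeta_1\wedge\cdots\wedge d\zeta_p\wedge\omega_X$ and $du_1\wedge\cdots\wedge du_{p+\ell}$, is not identically zero. Its zero set lies in $\{u_1\cdots u_{p+\ell}=0\}$.
\item The ring of convergent power series is factorial. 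Each irreducible factor of $\mathcal J_i$ cuts out an irreducible hypersurface germ inside the coordinate divisor, so that germ equals some $\{u_k=0\}$. By the Nullstellensatz the factor is a unit times $u_k$.
\item Thus $\mathcal J_i=\varepsilon_i u^{\kappa_i}$, and restricting to the real locus gives the identity.
\end{itemize}

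Your worry is legitimate only over $\mathbb R$. There, vanishing on the real coordinate divisor does not force this form; think of $u_1^2+u_2^2$. That is exactly why the complex zero set is used. The complexified form is also what the proof of \Cref{prop:multiparameter-stability} relies on later.

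The same factorization gives injectivity of $\rho_{i,\mathbb C}$ off $D_{i,\mathbb C}$, but it runs in the direction displayed above, not as ``$\rho_{i,\mathbb C}$ factors through $\sigma_{i,\mathbb C}$.'' Two points off $D_{i,\mathbb C}$ with equal $\rho_{i,\mathbb C}$-images have equal $\sigma_{i,\mathbb C}$-images, and so they coincide.

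Finally, Cutkosky's theorem allows the alternative $\sigma_i^{-1}(A)=M_i$, which you should exclude explicitly. Since $\sigma_i$ is an open embedding off $D_i$, that alternative would force $A$ to contain an open set. By analytic continuation $A$ would then contain a whole component of $T\times X$ meeting $K$, contradicting the hypothesis.
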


\begin{proof}
Apply \cite[Theorem~9.7]{Cutkosky2017} at each point of $K$ to the
real-analytic morphism
$$
\operatorname{pr}_T:T\times X\to T
$$
and the prescribed closed analytic subspace $A$. By compactness, finitely
many of the resulting diagrams and compact subsets suffice to cover $K$.

Combining that theorem with the local construction in
\cite[Proposition~9.6]{Cutkosky2017}, and restricting to connected
components whose images meet $K$, gives the local blow-up descriptions,
the compatible complexifications, the coordinate divisors, and the
monomial form of $p_i$. Moreover, either
$$
\sigma_i^{-1}(A)=M_i
\qquad\text{or}\qquad
\sigma_i^*\mathcal I_A=\mathcal O_{M_i}(-G_i),
$$
where $G_i$ is an effective divisor supported on $D_i$ and
 $\mathcal O_{M_i}(-G_i)$ is the ideal sheaf of analytic functions vanishing along $G_i$ with at least the multiplicities specified by $G_i$. The first alternative cannot occur. Indeed, since $\sigma_i$ is an open
embedding off $D_i$, it would imply that $A$ contains a nonempty open
subset of a connected component of $T\times X$ meeting $K$. By analytic
continuation, $A$ would then contain that entire component, contrary to the
hypothesis. Hence
$
\sigma_i^*\mathcal I_A=\mathcal O_{M_i}(-G_i).
$
Writing locally
$G_i=\sum_k a_{ik}\{u_k=0\}$ gives
$$\sigma_i^*\mathcal I_A=(u^{a_i}).$$

Theorem 9.7 of \cite{Cutkosky2017} also provides a nowhere-dense closed analytic subset
$\mathcal E_i\subset T_i$ such that $\tau_i$ is an open embedding on its
complement and $p_i^{-1}(\mathcal E_i)$ is nowhere dense. On the dense open
set
$
(M_i\smallsetminus D_i)\smallsetminus p_i^{-1}(\mathcal E_i),
$
both $\sigma_i$ and $\tau_i$ are local analytic isomorphisms. The
commutative diagram therefore gives locally
$$
p_i
=
\tau_i^{-1}\circ\operatorname{pr}_T\circ\sigma_i,
$$
so $p_i$ has rank $p$ there. In Cutkosky's monomial normal form
\cite[Definition~3.2]{Cutkosky2017}, the nonzero pullbacks of the coordinate
functions on $T_i$ have linearly independent exponent vectors. Since
$p_i$ has generic rank $p$, none of the $p$ coordinate pullbacks vanishes
identically, and hence
$$
\operatorname{rank}\mathsf A_i=p.
$$

Cover the compact subsets of the domains lying over $K$ by finitely many of
these monomial coordinate neighborhoods, choose compact coordinate boxes
centered at points mapping into $K$, and relabel. This gives the boxes
$K_i$ and the asserted finite covering of $K$.

It remains to establish the Jacobian identity and the final assertions
about complexification. By definition of $\rho_i$, the commutative diagram
gives
\begin{equation}\label{eq:rho-factorization-cutkosky}
(\tau_i\times\operatorname{id}_X)\circ\rho_i=\sigma_i.
\end{equation}
The compatible complexification supplied by
\cite[Proposition~9.6]{Cutkosky2017} makes
$\sigma_{i,\mathbb C}$ an open embedding off $D_{i,\mathbb C}$. After
complexifying \eqref{eq:rho-factorization-cutkosky}, the chain rule shows
that $D\rho_{i,\mathbb C}$ is invertible off $D_{i,\mathbb C}$.

Let $\mathcal J_i$ be defined by
$$
\rho_{i,\mathbb C}^*
\bigl(
d\zeta_1\wedge\cdots\wedge d\zeta_p\wedge\omega_X
\bigr)
=
\mathcal J_i(u)\,
du_1\wedge\cdots\wedge du_{p+\ell},
$$
where $\omega_X$ has been extended holomorphically after shrinking the
coordinate neighborhoods. The function $\mathcal J_i$ is nonzero, and its
zero set is contained in $D_{i,\mathbb C}$. Since the local ring of
convergent complex power series is a unique-factorization domain, it follows
that
$$
\mathcal J_i=\varepsilon_{i,\mathbb C}u^{\kappa_i}
$$
for some $\kappa_i\in\mathbb Z_{\ge0}^{p+\ell}$ and a holomorphic unit
$\varepsilon_{i,\mathbb C}$. Restriction to the real locus gives the second
identity in \eqref{eq:relative-ideal-and-jacobian}. If two points of
$M_{i,\mathbb C}\smallsetminus D_{i,\mathbb C}$ have the same image under
$\rho_{i,\mathbb C}$, then
\eqref{eq:rho-factorization-cutkosky} implies that they have the same image
under $\sigma_{i,\mathbb C}$. Thus they coincide. Hence
$\rho_{i,\mathbb C}$ is injective off $D_{i,\mathbb C}$, and the
invertibility of its differential makes it a local biholomorphism there.

Finally, after restricting $T_i$ to a relatively compact coordinate
neighborhood, the nowhere-dense analytic set $\mathcal E_i$ has dimension
at most $p-1$. Since $\tau_i$ is locally Lipschitz on this neighborhood, the Lebesgue measure of
$\tau_i(\mathcal E_i)$ is zero. This completes the proof.
\end{proof}

\begin{proof}[Proof of \Cref{prop:multiparameter-stability}]
Choose $\beta'$ with $\beta<\beta'<\alpha$, and choose a bounded
open set $V_0\subset\mathbb C^\ell$ such that
$\overline B\subset V_0$ and $\overline V_0\subset V$.
Applying \Cref{lem:complex-stability-slicing}\textnormal{(1)} with
$K=\overline V_0$ and shrinking a complex neighborhood
$U_{\mathbb C}\subset\mathbb C^p$ of $0$, we obtain
\begin{equation}\label{eq:nearby-complex-threshold}
c_z(\mathcal Q_h^{\mathbb C})>\beta'
\qquad
(h\in U_{\mathbb C},\ z\in V_0).
\end{equation}

Choose a connected relatively compact neighborhood
$U_0\subset U_{\mathbb C}\cap\mathbb R^p$ of $0$. Let $D$ be an integer at least as large as the
$x$-degree of every component of $\mathcal Q$. The degree-$D$
homogenizations of these components generate a well-defined analytic ideal
sheaf $\overline{\mathcal I}$ on
$U_0\times\mathbb P^\ell(\mathbb R).$
On the affine chart $x_0=1$, this is the ideal generated by the components
of $\mathcal Q$. Let $A$ be the corresponding closed analytic
subspace. Since $\mathcal Q_0^{\mathbb C}$ is not identically zero,
$A$ is proper.

Regard $\overline B\subset\mathbb R^\ell$ as a compact subset of the
affine chart $x_0=1$ in $\mathbb P^\ell(\mathbb R)$.
Choose an open neighborhood $U$ of $0$ with $\overline U\subset U_0$, and
apply \Cref{lem:finite-relative-monomial-charts} to the projection
$$
U_0\times\mathbb P^\ell(\mathbb R)\to U_0,
$$
the subspace $A$, and the compact set
$\overline U\times\overline B$. We obtain finitely many diagrams indexed by
$i$, with compact coordinate boxes $K_i\subset M_i$, such that
$$
\overline U\times\overline B
\subset
\bigcup_i\sigma_i(K_i).
$$
Fix one chart. Let $m_i$ be the center of its coordinates, and write
$$
\sigma_i(m_i)=(h_i,x_i),
\;\;
t_i:=p_i(m_i), \text{ so that $\tau_i(t_i)=h_i$.}
$$
 Let
$ u=(u_1,\ldots,u_{p+\ell})$ and $\zeta=(\zeta_1,\ldots,\zeta_p)$
be the source and target coordinates supplied by
\Cref{lem:finite-relative-monomial-charts}. Thus
\begin{equation}\label{eq:parameter-monomials-in-proof}
\zeta_j\circ p_i=u^{\alpha_j},
\qquad
\alpha_j=(\alpha_{j1},\ldots,\alpha_{j,p+\ell}),
\end{equation}
and the exponent matrix
$\mathsf A=(\alpha_{jk})_{1\le j\le p,\ 1\le k\le p+\ell}$ has rank $p$.

Pass to the compatible complexifications and shrink the coordinate
neighborhoods so that $\tau_{i,\mathbb C}$ takes values in
$U_{\mathbb C}$. For $t$ near $t_i$ and $z\in V_0$, define the
holomorphic map
$$
F_i(t,z)
:=
\mathcal Q^{\mathbb C}(\tau_{i,\mathbb C}(t),z).
$$
By \eqref{eq:nearby-complex-threshold},
$$
c_z(F_{i,t_i})>\beta'
\qquad
(z\in V_0).
$$
Since $x_i\in\overline B\subset V_0$, it follows from
\Cref{lem:complex-stability-slicing}\textnormal{(2)} that, for every
$0<\theta<1$,
\begin{equation}\label{eq:chart-slicing-density}
\left(
|\zeta_1(t) \cdots\zeta_p(t)|\,\|F_i(t,z)\|^{\beta'}
\right)^{-2\theta}
\end{equation}
is locally integrable near $(t_i,x_i)$ with respect to Lebesgue
measure in the complex coordinates $(\zeta,z)$.

Set
$$
K_{i,B}
:=
K_i\cap
\sigma_i^{-1}(\overline U\times\overline B).
$$
On the affine chart $x_0=1$, take
$\omega_X=dx_1\wedge\cdots\wedge dx_\ell$, and write
$$
|u|^\nu:=\prod_{k=1}^{p+\ell}|u_k|^{\nu_k}, \qquad (\nu\in\mathbb R^{p+\ell}).
$$
 The principalization identity in
\Cref{lem:finite-relative-monomial-charts} gives
\begin{equation}\label{eq:chart-monomial-data}
\|\mathcal Q\circ\sigma_i\|
\asymp
|u|^a
\qquad\text{on }K_{i,B}.
\end{equation}
Indeed, the components have the form
$$
\mathcal Q_r\circ\sigma_i=u^a g_r,
$$
where the analytic functions $g_1,\ldots,g_q$ have no common zero. The same
factorization holds after complexification, so $a_k$ is the vanishing order
of the ideal generated by the components of
$F_i\circ\rho_{i,\mathbb C}$ along $\{u_k=0\}$.
Define
$$
b_k:=\sum_{j=1}^p\alpha_{jk},
\qquad
e_k:=\kappa_k+1-b_k
\qquad
(1\le k\le p+\ell),
$$
and write $b=(b_k)$, $e=(e_k)$, and
$\mathbf 1=(1,\ldots,1)$. Dividing the Jacobian identity in
\Cref{lem:finite-relative-monomial-charts} by
$
\zeta_1\cdots\zeta_p=u^b
$
gives, as an identity of meromorphic top forms,
\begin{equation}\label{eq:chart-log-jacobian}
\rho_i^*\left(
\bigwedge_{j=1}^p\frac{d\zeta_j}{\zeta_j}\wedge\omega_X
\right)
=
\varepsilon u^e
\bigwedge_{k=1}^{p+\ell}\frac{du_k}{u_k}.
\end{equation}
Equivalently,
\begin{equation}\label{eq:chart-ordinary-jacobian}
|\det D\rho_i|
\asymp
|u|^\kappa
=
|u|^{b+e-\mathbf 1}.
\end{equation}

 Since
$\rho_{i,\mathbb C}$ is injective and biholomorphic off the complex
coordinate divisor, the complex change-of-variables formula may be applied
to \eqref{eq:chart-slicing-density}. Near a generic point of
$\{u_k=0\}$, its pullback, including the squared complex Jacobian, is
comparable in the variable $u_k$ to
$$
|u_k|^{2(e_k-1+(1-\theta)b_k-\theta\beta'a_k)}.
$$
Complex one-variable integrability therefore gives
$$
e_k+(1-\theta)b_k-\theta\beta'a_k>0.
$$
Letting $\theta\to1$ yields
\begin{equation}\label{eq:multiparameter-order-inequality}
e_k\ge\beta'a_k
\qquad
(1\le k\le p+\ell).
\end{equation}

Set
\begin{equation}\label{eq:delta-positive}
\delta_k:=e_k-\beta a_k.
\end{equation}
By \eqref{eq:multiparameter-order-inequality},
$$
\delta_k
\ge
(\beta'-\beta)a_k
\ge0.
$$
Let
$$
Z:=\{k:\delta_k=0\}.
$$
For $k\in Z$, one necessarily has $a_k=e_k=0$.

We claim that the columns of $\mathsf A$ indexed by $Z$ are linearly
independent. This is immediate when $Z=\emptyset$. Otherwise, in the
complexified chart, consider
$$
S_Z^\circ
:=
\{u_k=0\text{ for }k\in Z,\quad
  u_k\ne0\text{ for }k\notin Z\}.
$$
Extract from \eqref{eq:chart-log-jacobian} the coefficient of
$\bigwedge_{k\in Z}\frac{du_k}{u_k}$
and restrict it to $S_Z^\circ$. The resulting coefficient on the
right-hand side is nonzero because $e_k=0$ for $k\in Z$. On the left,
$$
\frac{d\zeta_j}{\zeta_j}
=
\sum_{k=1}^{p+\ell}
\alpha_{jk}\frac{du_k}{u_k},
$$
while $\rho_i^*\omega_X$ is holomorphic. Thus every logarithmic factor
indexed by $Z$ must come from one of the parameter forms
$d\zeta_j/\zeta_j$. It follows that some minor of $\mathsf A$ involving
the columns indexed by $Z$ is nonzero, proving the claim.

Since $\operatorname{rank}\mathsf A=p$, we may therefore choose a set
$$
J\subset\{1,\ldots,p+\ell\},
\qquad
|J|=p,
\qquad
Z\subset J,
$$
such that the $p\times p$ minor $\mathsf A_J$ is nonsingular. Away from the
target coordinate boundary $\{\zeta_1\cdots\zeta_p=0\}$,
\begin{equation}\label{eq:parameter-jacobian-normal-form}
\left|
\det
\frac{\partial(\zeta_1,\ldots,\zeta_p)}
     {\partial(u_k)_{k\in J}}
\right|
=
|\det\mathsf A_J|\,
\frac{|\zeta_1\cdots\zeta_p|}
     {\prod_{k\in J}|u_k|}
\asymp
|u|^b\prod_{k\in J}|u_k|^{-1}.
\end{equation}
Thus, after a finite subdivision into sign regions,
$(\zeta,(u_k)_{k\notin J})$ are local coordinates.

For $t\in T_i$ outside the target coordinate boundary, define
$$
\sigma_{i,t}
:=
\operatorname{pr}_X\circ
\sigma_i|_{p_i^{-1}(t)}
:
p_i^{-1}(t)\to\mathbb P^\ell(\mathbb R).
$$
The block-determinant formula gives
$$
|\det D\rho_i|
=
\left|
\det
\frac{\partial(\zeta_1,\ldots,\zeta_p)}
     {\partial(u_k)_{k\in J}}
\right|
|\det D\sigma_{i,t}|,
$$
where the last determinant is taken along the fiber of $p_i$. Combining
this identity with
\eqref{eq:chart-ordinary-jacobian},
\eqref{eq:parameter-jacobian-normal-form}, and
\eqref{eq:chart-monomial-data}, 
we obtain
$$\begin{aligned}
\|\mathcal Q\circ\sigma_i\|^{-\beta}|\det D\sigma_{i,t}|
&\ll
|u|^{-\beta a}|u|^{e-\mathbf 1}
\prod_{k\in J}|u_k|=
\prod_{k\in J}|u_k|^{\delta_k}
\prod_{k\notin J}|u_k|^{\delta_k-1}.
\end{aligned}$$
The fiberwise area formula therefore bounds the
contribution of the chart by
\begin{equation}\label{eq:multiparameter-fiber-integral}
C
\int
\prod_{k\in J}|u_k|^{\delta_k}
\prod_{k\notin J}|u_k|^{\delta_k-1}
\,du_{J^c},
\end{equation}
where $du_{J^c}:=\prod_{k\notin J}du_k$ denotes Lebesgue measure
in the real fiber coordinates $(u_k)_{k\notin J}$.
After rescaling the coordinate boxes, assume that $|u_k|\le1$. The factors
indexed by $J$ are then at most $1$. Moreover, since $Z\subset J$, one has
$\delta_k>0$ for every $k\notin J$. Consequently,
$$
\eqref{eq:multiparameter-fiber-integral}
\ll
\prod_{k\notin J}\frac{2}{\delta_k}
<\infty.
$$
The bound is uniform after the finite subdivision into sign regions and
coordinate boxes.

For each chart, define
$$
\mathcal B_i
:=
p_i(K_{i,B})\cap
\left(
\mathcal E_i\cup\{\zeta_1\cdots\zeta_p=0\}
\right).
$$
By \Cref{lem:finite-relative-monomial-charts} and the local Lipschitz
continuity of $\tau_i$ on the relevant compact target neighborhood,
$\tau_i(\mathcal B_i)$ has measure zero. Hence
$$
U_{\rm reg}
:=
U\smallsetminus\bigcup_i\tau_i(\mathcal B_i)
$$
is dense and has full measure in $U$. Fix $h\in U_{\rm reg}$. For each $i$, there is at most one
$$
t\in p_i(K_{i,B})
\quad\text{such that}\quad
\tau_i(t)=h,
$$
and any such $t$ lies outside the target coordinate boundary. Set
$$
E_{i,t,h}
:=
K_{i,B}\cap p_i^{-1}(t)
\cap\sigma_i^{-1}(\{h\}\times B).
$$
The images $\sigma_{i,t}(E_{i,t,h})$, over the finitely many relevant
pairs $(i,t)$, cover $B$. Applying the area formula on each smooth fiber
to the truncations
$$
\min\{\|\mathcal Q_h\|^{-\beta},R\},
$$
and then letting $R\to\infty$, gives
$$
\int_B\|\mathcal Q_h(x)\|^{-\beta}\,dx
\le C
\qquad
(h\in U_{\rm reg}),
$$
where $C$ is independent of $h$. Here the image of the critical set may be
discarded by Sard's theorem.

Finally, let $h\in U$ and choose $h_\nu\in U_{\rm reg}$ with
$h_\nu\to h$. Since
$$
\mathcal Q_{h_\nu}(x)\to\mathcal Q_h(x)
$$
pointwise, Fatou's lemma gives
$$
\int_B\|\mathcal Q_h(x)\|^{-\beta}\,dx
\le
\liminf_{\nu\to\infty}
\int_B\|\mathcal Q_{h_\nu}(x)\|^{-\beta}\,dx
\le C.
$$
This proves the proposition.
\end{proof}

\def\cprime{$'$}


\begin{thebibliography}{10}

\bibitem{ABW}
Kaan Akin, David~A. Buchsbaum, and Jerzy Weyman.
\newblock Schur functors and {S}chur complexes.
\newblock {\em Adv. Math.}, 44(3):207--278, 1982.

\bibitem{benoist-quint:2012}
Yves Benoist and Jean-Fran\cedilla{c}ois Quint.
\newblock Random walks on finite volume homogeneous spaces.
\newblock {\em Invent. Math.}, 187(1):37--59, 2012.

\bibitem{birch:1962}
B.~J. Birch.
\newblock Forms in many variables.
\newblock {\em Proc. Roy. Soc. London Ser. A}, 265:245--263, 1962.

\bibitem{BCR1998}
Jacek Bochnak, Michel Coste, and Marie-Fran\cedilla{c}oise Roy.
\newblock {\em Real Algebraic Geometry}, volume~36 of {\em Ergebnisse der Mathematik und ihrer Grenzgebiete (3)}.
\newblock Springer, Berlin, Heidelberg, 1998.

\bibitem{borel-harish:1962}
A.~Borel and Harish-Chandra.
\newblock Arithmetic subgroups of algebraic groups.
\newblock {\em Ann. of Math. (2)}, 75:485--535, 1962.

\bibitem{BrowningDH}
T.~D. Browning.
\newblock The Davenport--Heilbronn method: 80 years on.
\newblock {\em J. Lond. Math. Soc. (2)}, 113(1):e70371, 2026.

\bibitem{BrudnyiGanzburg1973}
Yu.~A. Brudnyi and M.~I. Ganzburg.
\newblock On an extremal problem for polynomials in $n$ variables.
\newblock {\em Izv. Akad. Nauk SSSR Ser. Mat.}, 37:344--355, 1973.

\bibitem{cassels:1971}
J. W. S. Cassels.
\newblock {\em An Introduction to the Geometry of Numbers}.
\newblock Springer-Verlag, Berlin-New York, 1971.
\newblock Second printing, corrected, Die Grundlehren der mathematischen Wissenschaften, Band 99.

\bibitem{Cutkosky2017}
Steven Dale Cutkosky.
\newblock Local monomialization of analytic maps.
\newblock {\em Adv. Math.}, 307:833--902, 2017.
\newblock DOI: 10.1016/j.aim.2016.11.023.


\bibitem{dani-margulis:1993}
S.~G. Dani and G.~A. Margulis.
\newblock Limit distributions of orbits of unipotent flows and values of quadratic forms.
\newblock In {\em I. M. Gel'fand Seminar}, volume~16 of {\em Advances in Soviet Mathematics}, pages 91--137. American Mathematical Society, Providence, RI, 1993.

\bibitem{davenport:1951}
H.~Davenport.
\newblock On a principle of Lipschitz.
\newblock {\em J. London Math. Soc.}, 26:179--183, 1951.

\bibitem{DemaillyKollar2001}
Jean-Pierre Demailly and J{\'a}nos Koll{\'a}r.
\newblock Semi-continuity of complex singularity exponents and
{K\"ahler--Einstein} metrics on {Fano} orbifolds.
\newblock {\em Ann. Sci. {\'E}cole Norm. Sup. (4)}, 34(4):525--556, 2001.
\newblock DOI: 10.1016/S0012-9593(01)01069-2.

\bibitem{dieudonne:1948}
Jean Dieudonn\'e.
\newblock Sur une g\'en\'eralisation du groupe orthogonal \`a quatre variables.
\newblock {\em Arch. Math.}, 1:282--287, 1948.

\bibitem{dynkin}
E.~B. Dynkin.
\newblock Maximal subgroups of the classical groups.
\newblock {\em Trudy Moskov. Mat. Ob\hacek{s}\hacek{c}.}, 1:39--166, 1952.

\bibitem{eskin-margulis-mozes:1998}
A.~Eskin, G.~Margulis, and S.~Mozes.
\newblock Upper bounds and asymptotics in a quantitative version of the Oppenheim conjecture.
\newblock {\em Ann. of Math. (2)}, 147(1):93--141, 1998.

\bibitem{eskin-margulis-mozes:2005}
A.~Eskin, G.~Margulis, and S.~Mozes.
\newblock Quadratic forms of signature $(2, 2)$ and eigenvalue spacings on rectangular 2-tori.
\newblock {\em Ann. of Math. (2)}, 161(2):679--725, 2005.

\bibitem{FultonYT}
William Fulton.
\newblock {\em Young tableaux}, volume~35 of {\em London Mathematical Society Student Texts}.
\newblock Cambridge University Press, Cambridge, 1997.
\newblock With applications to representation theory and geometry.

\bibitem{FultonHarris}
William Fulton and Joe Harris.
\newblock {\em Representation Theory: A First Course}, volume 129 of {\em Graduate Texts in Mathematics}.
\newblock Springer, New York, 1991.


\bibitem{katznelson:1994}
Y.~R. Katznelson.
\newblock Integral matrices of fixed rank.
\newblock {\em Proc. Amer. Math. Soc.}, 120(3):667--675, 1994.

\bibitem{Kim}
Wooyeon Kim.
\newblock Moments of Margulis functions and indefinite ternary quadratic forms.
\newblock {\em Preprint, arXiv:2403.16563v3}, 2025.

\bibitem{KimOh_two}
Wooyeon Kim and Hee Oh.
\newblock {Quantitative Oppenheim in signature $(2,2)$ via determinant values}.
\newblock {\em Preprint, arXiv:2607.18037}, 2026.


\bibitem{kleinbock:2008}
Dmitry Kleinbock.
\newblock An extension of quantitative nondivergence and applications to Diophantine exponents.
\newblock {\em Trans. Amer. Math. Soc.}, 360:6497--6523, 2008.

\bibitem{kleinbock-margulis:1998}
D.~Y. Kleinbock and G.~A. Margulis.
\newblock Flows on homogeneous spaces and {D}iophantine approximation on manifolds.
\newblock {\em Ann. of Math. (2)}, 148(1):339--360, 1998.


\bibitem{MarcusMoyls}
Marvin Marcus and B.~N. Moyls.
\newblock Linear transformations on algebras of matrices.
\newblock {\em Canad. J. Math.}, 11:61--66, 1959.

\bibitem{Margulis:1987}
G.~A. Margulis.
\newblock Formes quadratiques ind{\'e}finies et flots unipotents sur les espaces homog{\`e}nes.
\newblock {\em C. R. Acad. Sci. Paris S{\'e}r. I Math.}, 304(10):249--253, 1987.

\bibitem{Margulis:1989}
G.~A. Margulis.
\newblock Discrete subgroups and ergodic theory.
\newblock In {\em Number Theory, Trace Formulas and Discrete Groups ({O}slo, 1987)}, pages 377--398. Academic Press, Boston, MA, 1989.

\bibitem{ratner:1991}
M.~Ratner.
\newblock Raghunathan's topological conjecture and distributions of unipotent flows.
\newblock {\em Duke Math. J.}, 63(1):235--280, 1991.

\bibitem{RydinMyersonThesis}
S.~L. Rydin Myerson.
\newblock {\em Systems of forms in many variables}.
\newblock D.Phil. thesis, University of Oxford, 2016.

\bibitem{Sch68}
W. Schmidt, \emph{Asymptotic formulae for point lattices of bounded determinant and
subspaces of bounded height}, Duke Math. J. 35(1968), 327–339.

\bibitem{shah}
Nimish~A. Shah.
\newblock Limit distributions of expanding translates of certain orbits on homogeneous spaces.
\newblock {\em Proc. Indian Acad. Sci. Math. Sci.}, 106(2):105--125, 1996.

\bibitem{WaterhouseTwistedDeterminant}
William~C. Waterhouse.
\newblock Twisted forms of the determinant.
\newblock {\em J. Algebra}, 86(1):60--75, 1984.

\bibitem{zimmer1984ergodic}
Robert~J. Zimmer.
\newblock {\em Ergodic Theory and Semisimple Groups}, volume~81 of {\em Monographs in Mathematics}.
\newblock Birkh\"auser, Boston, 1984.



\end{thebibliography}
\end{document}